\newtheorem{prop}[subsubsection]{Proposition}
\newtheorem{cor}[subsubsection]{Corollary}
\newtheorem{lem}[subsubsection]{Lemma}
\newtheorem{defn}[subsubsection]{Definition}
\newtheorem{thm}[subsubsection]{Theorem}
\theoremstyle{remark}
\newtheorem{rem}[subsubsection]{Remark}
\newtheorem{example}[subsubsection]{Example}
\numberwithin{equation}{section}
\newcommand{\lemref}[1]{Lemma~\ref{#1}}
\newcommand{\thmref}[1]{Theorem~\ref{#1}}
\newcommand{\secref}[1]{Sect.~\ref{#1}}
\newcommand{\corref}[1]{Corollary~\ref{#1}}
\newcommand{\propref}[1]{Proposition~\ref{#1}}
\newcommand{\nc}{\newcommand}
\nc{\ssec}{\subsection}
\nc{\sssec}{\subsubsection}
\newcommand{\iso}{\buildrel{\sim}\over{\longrightarrow}}
\nc{\renc}{\renewcommand}
\nc{\on}{\operatorname}
\nc\ol{\overline}
\nc\wt{\widetilde}
\nc\wh{\widehat}
\nc{\Loc}{\on{Loc}}
\nc{\Bun}{\on{Bun}}
\nc{\BZ}{{\mathbb{Z}}}
\nc{\BQ}{{\mathbb{Q}}}
\nc{\BA}{{\mathbb{A}}}
\nc{\BC}{{\mathbb{C}}}
\nc{\BH}{{\mathbb{H}}}
\nc{\BG}{{\mathbb{G}}}
\nc{\BK}{{\mathbb{K}}}
\nc{\BN}{{\mathbb{N}}}
\nc{\BD}{{\mathbb{D}}}
\nc{\BV}{{\mathbb{V}}}
\nc{\BL}{{\mathbb{L}}}
\nc{\CA}{{\mathcal{A}}}
\nc{\CC}{{\mathcal{C}}}
\nc{\CE}{{\mathcal{E}}}
\nc{\CG}{{\mathcal{G}}}
\nc{\CK}{{\mathcal{K}}}
\nc{\CO}{{\mathcal{O}}}
\nc{\CP}{{\mathcal{P}}}
\nc{\CR}{{\mathcal{R}}}
\nc{\CT}{{\mathcal{T}}}
\nc{\CW}{{\mathcal{W}}}
\nc{\CM}{{\mathcal{M}}}
\nc{\CN}{{\mathcal{N}}}
\nc{\CL}{{\mathcal{L}}}
\nc{\CF}{{\mathcal{F}}}
\nc{\CX}{{\mathcal{X}}}
\nc{\CY}{{\mathcal{Y}}}
\nc{\CZ}{{\mathcal{Z}}}
\nc{\D}{{\mathcal{D}}}
\nc{\fg}{{\mathfrak{g}}}
\nc{\fD}{{\mathfrak{D}}}
\nc{\fh}{{\mathfrak{h}}}
\nc{\fn}{{\mathfrak{n}}}
\nc{\sM}{{\mathsf M}}
\nc{\ppart}{(\!(t)\!)}
\nc{\hg}{{\widehat\fg}}
\nc{\sF}{{\mathsf F}}
\nc{\sG}{{\mathsf G}}
\nc{\bC}{{\mathbf{C}}}
\nc{\bZ}{{\mathbf{Z}}}
\nc{\bD}{{\mathbf{D}}}
\nc{\bO}{{\mathbf{O}}}
\nc{\bU}{{\mathbf{U}}}
\nc{\bc}{{\mathbf{c}}}
\nc{\be}{{\mathbf{e}}}
\nc{\bM}{{\mathbf{M}}}
\nc{\bA}{{\mathbf{A}}}
\nc{\bK}{{\mathbf{K}}}
\nc{\fW}{{\mathfrak{W}}}
\nc{\reg}{{\text{\rm reg}}}
\nc{\nilp}{{\text{\rm nilp}}}
\nc{\cG}{{\check{G}}}
\nc{\cB}{{\check{B}}}
\nc{\cg}{{\check{\fg}}}
\nc{\cb}{{\check{\fb}}}
\nc{\cn}{{\check{\fn}}}
\nc{\mer}{{\on{mer}}}
\nc{\Const}{\mathsf{Const}}
\nc{\Whit}{\on{Whit}}
\nc{\KL}{\on{KL}}
\nc{\FS}{\on{FS}}
\nc{\LocSys}{\on{LocSys}}
\nc{\QCoh}{\on{QCoh}}
\nc{\Coh}{\on{Coh}}
\nc{\IndCoh}{\on{IndCoh}}
\nc{\Cat}{\on{Cat}}
\nc{\Op}{\on{Op}}
\nc{\Gr}{\on{Gr}}
\nc{\Fl}{\on{Fl}}
\nc{\Rep}{\on{Rep}}
\renc{\mod}{{\on{-mod}}}
\nc{\Conn}{\on{Conn}}
\nc{\unit}{{\mathbf{1}}}
\nc{\Ho}{\on{Ho}}
\nc{\Hom}{\on{Hom}}
\nc{\rank}{\on{rank}}
\nc{\End}{\on{End}}
\nc{\Ext}{\on{Ext}}
\nc{\Vect}{\on{Vect}}
\nc{\Av}{\on{Av}}
\nc{\id}{\on{id}}
\nc{\Ind}{\on{Ind}}
\nc{\Spec}{\on{Spec}}
\nc{\KG}{K\backslash G}
\nc{\comult}{{co\text{-}mult}}
\nc{\counit}{{co\text{-}unit}}
\nc{\uHom}{{\underline{\Hom}}}
\nc{\Sch}{\on{Sch}}
\nc{\dgSch}{\on{DGSch}}
\nc{\affSch}{Sch^{aff}}
\nc{\affdgSch}{DGSch^{aff}}
\nc{\Groupoids}{Grpd}
\nc{\inftygroup}{\infty\on{-Grpd}}
\nc{\inftyCat}{\infty\on{-}\on{Cat}}
\nc{\StinftyCat}{\on{DGCat}}
\nc{\MoninftyCat}{\infty\on{-}\on{Cat}^{Mon}}
\nc{\SymMoninftyCat}{\infty\on{-}\on{Cat}^{SymMon}}
\nc{\SymMonStinftyCat}{\on{DGCat}^{SymMon}}
\nc{\MonStinftyCat}{\on{DGCat}^{Mon}}
\nc{\inftystack}{\infty\on{-}Stk}
\nc{\inftystackalg}{\infty\on{-}Stk^{1\text{-}alg}}
\nc{\inftyprestack}{\infty\on{-}preStk}
\nc{\inftydgstack}{\infty\on{-}DGStk}
\nc{\inftydgstackalg}{\infty\on{-}DGStk^{1\text{-}alg}}
\nc{\inftydgprestack}{\infty\on{-}DGpreStk}
\nc{\mmod}{{\on{-}{\mathbf{mod}}}}
\nc{\oZ}{\overset{\circ}{Z}{}}
\nc{\of}{\overset{\circ}{f}{}}
\nc{\oCF}{\overset{\circ}{\CF}{}}
\nc{\oCY}{\overset{\circ}{\CY}{}}
\nc{\oX}{\overset{\circ}X{}}
\nc{\ind}{{\mathbf{ind}}}
\nc{\oblv}{{\mathbf{oblv}}}
\nc{\Oblv}{{\mathbf{Oblv}}}
\nc{\one}{{\mathbf{one}}}
\nc{\dr}{{\on{dR},*}}
\nc{\rd}{{\on{ren-dR}}}
\nc{\Dmod}{\on{D-mod}}
\nc{\lleft}{\on{left}}
\nc{\rright}{\on{right}}
\nc{\sotimes}{\overset{!}\otimes}
\DeclareMathOperator{\colim}{{colim}}
\DeclareMathOperator{\Cone}{{Cone}}
\DeclareMathOperator{\cd}{{cd}}
\nc{\CMaps}{{\mathcal Maps}}
\begin{document}

\title[Finiteness questions]{On some finiteness questions for algebraic stacks}

\author{Vladimir Drinfeld and Dennis Gaitsgory}

\date{\today}

\begin{abstract}
We prove that under a certain mild hypothesis, the DG category of D-modules on a quasi-compact
algebraic stack is compactly generated. We also show that under the same hypothesis, the
functor of global sections on the DG category of quasi-coherent sheaves is continuous.
\end{abstract}

\maketitle

\tableofcontents

\section*{Introduction}



\ssec{Introduction to the introduction}

This paper arose from an attempt to answer the following question: let $\CY$ be a quasi-compact
algebraic stack over a field $k$ of characteristic $0$; is it true that the DG category 
of D-modules on $\CY$, denoted $\on{D-mod}(\CY)$, is compactly generated? 

\medskip

We should remark that we did not pursue the above question out of pressing practical reasons:
most (if not all) algebraic stacks that one encounters in practice are \emph{perfect}
in the sense of \cite{BFN}, and in this case the compact generation assertion is easy to
prove and probably well-known. According to \cite[Sect. 3.3]{BFN}, the class of perfect stacks is 
quite large. We decided to analyze the case of a general quasi-compact stack for aesthetic reasons. 

\sssec{}

Before we proceed any further let us explain why one should care about such questions as compact generation
of a given DG category, and a description of its compact objects. 

\medskip

First, we should specify what is the world of DG categories that we work in. The world in question is that of 
DG categories that are cocomplete and continuous functors between them, see \secref{sss:DG categories} 
for a brief review. The choice of this particular paradigm for DG categories appears to be a convenient framework in which to study 
various categorical aspects of algebraic geometry. 

\medskip

Compactness (resp., compact generation) are properties of an object in a given cocomplete DG category
(resp., of a DG category). The relevance and usefulness of these notions in algebraic geometry 
was first brought to light in the paper of Thomason and Trobaugh, \cite{TT}.

\medskip

The reasons for the importance of these notions can be summarized as follows: compact objects 
are those for which we can compute (or say something about) Hom out of them; and compactly generated 
categories are those for which we can compute (or say something about) continuous functors out of them. 

\sssec{}  \label{sss:results}

The new results proved in the present paper fall into three distinct groups.

\medskip

\noindent (i) Results about D-modules, that we originally started from, but which we treat last in the paper.

\medskip

\noindent (ii) Results about the DG category of quasi-coherent sheaves on $\CY$, denoted $\QCoh(\CY)$,
which are the most basic, and which are treated first.

\medskip

\noindent (iii) Results about yet another category, namely, $\IndCoh(\CY)$, which forms
a bridge between $\QCoh(\CY)$ and $\Dmod(\CY)$.

\sssec{}  \label{sss:logical structure}

The logical structure of the paper is as follows: 

\medskip

Whatever we prove about $\QCoh(\CY)$ 
will easily imply the relevant results about $\IndCoh(\CY)$: for algebraic stacks the
latter category differs only slightly from the former one. 

\medskip

The results about $\Dmod(\CY)$ are 
deduced from those about $\IndCoh(\CY)$ using a conservative forgetful functor 
$\oblv_{\Dmod(\CY)}:\Dmod(\CY)\to \IndCoh(\CY)$, which admits a left adjoint. 

\sssec{}

There is essentially only one piece of technology used in the proofs of all the main results: we stratify
a given algebraic stack $\CY$ by locally closed substacks, which are essentially of the form 
$Z/G$, where $Z$ is a quasi-compact scheme and $G$ an algebraic group acting on it. 

\sssec{}

Finally, we should comment on why this paper came out so long (the first draft that contained
all the main theorems had only five pages). 

\medskip

The reader will notice that the parts of the paper that
contain any innovation (Sects. \ref{s:deducing}, \ref{s:2Dmods} and \ref{s:3Dmods}) take less
than one fifth of the volume. 

\medskip

The rest of the paper is either abstract nonsense (e.g., Sects. \ref{s:dualizability} and \ref{s:renormalized}), or
background material. 

\medskip

Some of the latter (e.g., the theory of D-modules
on stacks) is included because we could not find adequate references in the literature. Some other things,
especially various notions related to derived algebraic geometry, have been written down thanks to the
work of Lurie and To\"en-Vezzosi, but we decided to review them due to the novelty of the subject,
in order to facilitate the job of the reader.

\ssec{Results on $\on{D-mod}(\CY)$}   \label{ss:introduction-D}

\sssec{}

We have not been able to treat the question of compact generation of $\Dmod(\CY)$ for arbitrary algebraic stacks. But we have obtained
the following partial result (see Theorems \ref{t:Dmods} and \ref{t:Dmods, generalized}):

\begin{thm}  \label{preview: Dmods}
Let $\CY$ be an algebraic stack of finite type over $k$. Assume that the automorphism groups of geometric points of $\CY$ are affine. 
Then $\on{D-mod}(\CY)$ is compactly generated.
\end{thm}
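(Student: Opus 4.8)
The plan is to obtain the theorem formally from the compact generation of $\QCoh(\CY)$, which is the geometric content established in the earlier sections; the D-module-specific work is then just a short adjunction argument. Concretely, the forgetful functor $\oblv := \oblv_{\Dmod(\CY)} : \Dmod(\CY) \to \IndCoh(\CY)$ is continuous, conservative, and admits a left adjoint $\ind$. Continuity of $\oblv$ forces $\ind$ to preserve compactness, and conservativity forces $\ind$ of a generating family to remain generating: if $\{E_\alpha\}$ compactly generates $\IndCoh(\CY)$ and $\Hom(\ind(E_\alpha), M[n]) \simeq \Hom(E_\alpha, \oblv(M)[n])$ vanishes for all $\alpha$ and all $n$, then $\oblv(M) = 0$, hence $M = 0$. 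So $\Dmod(\CY)$ is compactly generated as soon as $\IndCoh(\CY)$ is; and since on an algebraic stack $\IndCoh(\CY)$ differs only slightly from $\QCoh(\CY)$, compact generators of the latter transport to compact generators of the former. Thus the whole problem reduces to showing that $\QCoh(\CY)$ is compactly generated.

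For that reduced statement --- which I would import from the earlier part of the paper --- the strategy is Noetherian induction along a finite stratification of $\CY$ by locally closed substacks, each of which is (essentially) a quotient stack $Z/G$ with $Z$ a quasi-compact scheme and $G$ an \emph{affine} algebraic group. This is exactly where the hypotheses ``finite type'' and ``affine automorphism groups'' are used: affineness of the stabilizers is what guarantees that the strata can be taken of this form. The inductive step treats a decomposition $\CY = U \sqcup \CZ$ into an open substack $j : U \hookrightarrow \CY$ and its closed complement $i : \CZ \hookrightarrow \CY$, assembling compact generators of $\QCoh(\CY)$ from those of $\QCoh(U)$ and $\QCoh(\CZ)$ by means of the recollement of $\QCoh(\CY)$ by $\QCoh(\CZ)$ and $\QCoh(U)$; this needs various pushforwards to be continuous --- and, ultimately, the functor of global sections on the strata to be continuous --- together with the extension of compact objects from $U$ to $\CY$ in the style of Thomason--Trobaugh. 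The base case, $\QCoh(Z/G)$ for $G$ affine, rests on the fact that in characteristic $0$ the functor $(-)^G$ on $\Rep(G) = \QCoh(BG)$ is continuous (exact for reductive $G$, while the finite-dimensionality of Chevalley--Eilenberg cohomology handles the unipotent radical), so that $\QCoh(BG)$ is compactly generated; pulling back along the affine morphism $Z/G \to BG$ then manufactures compact generators of $\QCoh(Z/G)$.

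Since the D-module step is essentially formal, the real obstacles are upstream. The chief one is the continuity of the functor of global sections (equivalently of the relevant pushforwards) on the quotient-stack strata --- the second theorem advertised in the abstract --- which is precisely what affineness of the stabilizers buys in characteristic $0$, and which is the linchpin of the gluing; for proper stabilizers, e.g. $BE$ with $E$ an elliptic curve, it genuinely fails, so this hypothesis cannot simply be dropped. A more foundational, bookkeeping-type obstacle is setting up the theory of D-modules on algebraic stacks carefully enough to know that $\oblv_{\Dmod(\CY)}$ is continuous, conservative, and carries a left adjoint, and likewise establishing the comparison between $\IndCoh$ and $\QCoh$; as the introduction notes, this is where most of the length of the paper goes. (Alternatively one could glue $\Dmod$ directly along the stratification, using that in the D-module formalism $i_*$ and $j_!$ preserve compactness; but the route through $\IndCoh$ is shorter once the $\QCoh$ results are in hand.)
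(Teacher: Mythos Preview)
Your D-module step is exactly right and matches the paper's argument in \thmref{t:Dmods}: the adjunction $(\ind_{\Dmod(\CY)},\oblv_{\Dmod(\CY)})$ with $\oblv$ continuous and conservative reduces compact generation of $\Dmod(\CY)$ to that of $\IndCoh(\CY)$, with $\ind_{\Dmod(\CY)}(\Coh(\CY))$ serving as the compact generators.

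The gap is in how you pass from $\QCoh$ to $\IndCoh$. You write that ``compact generators of the latter transport to compact generators of the former'' and then propose to prove that $\QCoh(\CY)$ is compactly generated. But the paper explicitly does \emph{not} know whether $\QCoh(\CY)$ is compactly generated for a general QCA stack (see \secref{sss:subtle} and the remark after \thmref{t:QCoh dualizable}); your Thomason--Trobaugh style extension of perfect complexes across open substacks is precisely the step that is not available in this generality. Moreover, even granting compact generation of $\QCoh(\CY)$, it would not obviously yield compact generation of $\IndCoh(\CY)$: the functor $\Psi_\CY:\IndCoh(\CY)\to\QCoh(\CY)$ is not conservative (it kills $\IndCoh(\CY)^{\leq -\infty}$), so perfect complexes need not detect all of $\IndCoh(\CY)$.

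What the paper actually uses from the $\QCoh$ side is not compact generation but the \emph{continuity of} $\Gamma(\CY,-)$ (\thmref{main}). This is the key: for $\CF\in\Coh(\CY)$ one has
\[
\CMaps_{\IndCoh(\CY)}(\CF,\CF')\simeq \Gamma\bigl(\CY,\underline{\Hom}_{\QCoh(\CY)}(\CF,\CF')\bigr),
\]
and the internal Hom here commutes with colimits in $\CF'$ (since $\CF$ is coherent), so continuity of $\Gamma$ gives $\Coh(\CY)\subset\IndCoh(\CY)^c$ (\propref{coh is compact}). That $\Coh(\CY)$ \emph{generates} $\IndCoh(\CY)$ is then a separate, easier stratification argument (\propref{p:coh_generates}) using the recollement triangle in $\IndCoh$ rather than extension of compacts. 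Your third paragraph correctly identifies continuity of $\Gamma$ as the geometric heart of the matter, but you have misplaced where it enters: it is not a tool toward compact generation of $\QCoh$, it is the direct input that makes $\Coh(\CY)$ compact in $\IndCoh(\CY)$.
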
 

\sssec{}

In addition to this theorem, and under the above assumptions on $\CY$ (we call algebraic stacks with this property ``QCA"),
we prove a result characterizing the subcategory $\on{D-mod}(\CY)^c$
of compact objects in $\on{D-mod}(\CY)$ inside the larger category $\on{D-mod}_{\on{coh}}(\CY)$ of coherent
objects. (We were inspired by the following well known result: for any noetherian scheme $Y$, a bounded coherent object of $\QCoh (Y)$ 
is compact if and only if it has finite Tor-dimension.) 

\medskip

We characterize $\on{D-mod}(\CY)^c$ by a condition that we call \emph{safety}, see 
Proposition~\ref{compactness via safety} and Theorem~\ref{criter for safety}. We note
that safety of an object can be checked strata-wise:
if $i:\CX\hookrightarrow\CY$ is a closed substack and $j:(\CY-\CX)\hookrightarrow\CY$ the
complementary open, then an object $\CM\in \on{D-mod}(\CY)$ is safe if and only if $i^!(\CF)$ and $j^!(\CF)$ are
(see Corollary~\ref{stratification}). However, the subcategory of safe objects is not preserved by the
truncation functors with respect to the canonical t-structure on $\on{D-mod}(\CY)$.

\medskip

Furthermore, we prove \corref{c:more precise} that characterizes those stacks $\CY$ of finite type over $k$
for which the functor of global De~Rham cohomlogy $\Gamma_{\on{dR}}(\CY,-)$ is continuous (i.e., commutes with colimits): 
this happens if and only if the neutral connected component of the automorphism group of any geometric point of $\CY$ is unipotent. 
We call such stacks \emph{safe}.
For example, any Deligne-Mumford stack is safe. 

\sssec{}

Let $\pi:\CY_1\to \CY_2$ be a morphism between QCA algebraic stacks. The functor of D-module direct image
$\pi_\dr:\Dmod(Z_1)\to \Dmod(Z_2)$ is in general not continuous, and consequently, it fails to have the base change
property or satisfy the projection formula.  In \secref{ss:ren direct image} we introduce a new functor
$\pi_{\blacktriangle}$ of \emph{renormalized direct image}, which fixes the above drawbacks of $\pi_\dr$. There always
is a natural transformation $\pi_{\blacktriangle}\to \pi_\dr$, which is an isomorphism on safe objects. 

\ssec{Results on $\QCoh(\CY)$}
Let $\Vect$ denote the DG category of complexes of vector spaces over $k$.

\sssec{}

We deduce \thmref{preview: Dmods} from the following more basic
result about $\QCoh(\CY)$ (see Theorem~\ref{main}):

\begin{thm}  \label{preview: qc}
Let $k$ be a field of characteristic $0$ and
let $\CY$ be a QCA algebraic stack of finite type over $k$. Then the (always derived) functor of global sections 
$$\Gamma(\CY,-):\QCoh(\CY)\to \Vect$$
commutes with colimits. In other words, the structure sheaf $\CO_\CY$ is a compact object of
$\QCoh(\CY)$.
\end{thm}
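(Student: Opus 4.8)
The plan is to reduce to the case of a global quotient stack $Z/G$ with $Z$ a quasi-compact scheme and $G$ an affine algebraic group, and then to analyze $\Gamma$ in terms of $G$-equivariant cohomology. First I would use the stratification technology mentioned in the introduction: by noetherian induction on $\CY$, choose a dense open substack $U \hookrightarrow \CY$ of the form $Z/G$ with $Z$ a scheme and $G$ affine, with closed complement $\CX = \CY \setminus U$. Continuity is preserved under taking cones in an appropriate sense, so from the excision triangle relating $\Gamma(\CY,-)$, $\Gamma(U,-)$ (composed with restriction $j^*$) and $\Gamma(\CX,-)$ (composed with $i^*$), together with the inductive hypothesis that $\Gamma(\CX,-)$ is continuous, it would suffice to prove continuity of $\Gamma$ on $Z/G$. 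Here I must be a little careful: continuity of $\Gamma(\CY,-)$ does not follow formally from continuity on the pieces, since $j_*$ is not obviously continuous; the correct statement to propagate is compactness of $\CO_\CY$, and one checks that if $\CO_U$ and $\CO_\CX$ are compact and the relevant closed-open gluing behaves well, then $\CO_\CY$ is compact. So the induction step I actually want is: $\CO_{Z/G}$ compact, plus a gluing lemma.

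For the quotient-stack case, I would use the description $\QCoh(Z/G) \simeq \QCoh(Z)^{G\text{-equiv}}$ and the fact that $\Gamma(Z/G, \CF) = \Gamma(G, \Gamma(Z,\CF))$ where $\Gamma(G,-)$ denotes derived $G$-invariants (equivalently, cohomology of the Čech/bar complex of the groupoid $[Z/G]$, i.e. sections over the simplicial scheme $\cdots Z\times G\times G \rightrightarrows Z\times G \rightrightarrows Z$). Since $Z$ is a quasi-compact scheme, $\Gamma(Z,-)$ is continuous (it has finite cohomological amplitude on a quasi-compact quasi-separated scheme), so the whole problem reduces to continuity of derived $G$-invariants $\Vect^{G} \to \Vect$, i.e. to the statement that $\Gamma(BG, -): \QCoh(BG) \to \Vect$ is continuous, i.e. that $\CO_{BG}$ is compact in $\QCoh(BG) = \Rep(G)$.

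The heart of the matter is therefore: for $G$ an affine algebraic group over a field $k$ of characteristic $0$, the trivial representation is a compact object of $\Rep(G) = \QCoh(BG)$, equivalently $R\Gamma(G, -)$ has bounded cohomological amplitude on $\Rep(G)$. I would prove this using the structure theory of affine groups in characteristic $0$: write $1 \to U \to G \to G/U \to 1$ with $U$ unipotent and $G/U$ reductive (Levi decomposition, available since $\mathrm{char}\, k = 0$). For the reductive quotient, $\Rep(G/U)$ is semisimple, so $R\Gamma$ is exact — here characteristic $0$ is essential. For the unipotent part, $R\Gamma(U,-)$ on $\Rep(U)$ has cohomological amplitude bounded by $\dim U$ (Lie-algebra cohomology computes it, and $\fn$ is finite-dimensional). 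Composing via the Hochschild–Serre spectral sequence gives that $R\Gamma(G,-)$ has amplitude at most $\dim U < \infty$, hence is continuous. This is the step I expect to be the main obstacle — not because it is deep, but because it is where the characteristic-$0$ hypothesis and the affineness-of-automorphism-groups hypothesis are genuinely used, and one must take care that the Levi decomposition and the identification of $R\Gamma$ with relative Lie algebra cohomology are set up correctly at the DG level (e.g. via the Chevalley–Eilenberg complex), rather than merely on cohomology groups.

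Finally, the gluing lemma: given $i: \CX \hookrightarrow \CY$ closed with open complement $j: U \hookrightarrow \CY$, and given that $\CO_U$ is compact in $\QCoh(U)$ and $\CO_\CX$ is compact in $\QCoh(\CX)$, I would deduce that $\CO_\CY$ is compact. The key point is that $j_* \CO_U$ is the colimit of the $\CO_\CY/I^n$-type tower is not literally true for stacks, so instead one argues: $\Gamma(\CY, \CF)$ sits in a triangle with $\Gamma(\CX, i^* \CF)$ and $\Gamma(\CY, j_* j^* \CF)$; the first is continuous in $\CF$ by induction; for the second, one uses that $\Gamma(\CY, j_*(-)) = \Gamma(U,-)$ is continuous by induction, provided one also knows $j^*$ is continuous (clear) — but this requires $j_*$ applied after $j^*$ to interact well, which is where one needs $\CY$ (hence $U$) to have finite cohomological dimension, again guaranteed inductively. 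Assembling these, continuity of $\Gamma(\CY,-)$ follows, completing the noetherian induction.
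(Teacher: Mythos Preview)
Your overall strategy---stratify, reduce to quotient stacks $Z/G$, then handle $BG$ using characteristic-zero representation theory---is the same as the paper's. Your treatment of $BG$ via the Levi decomposition is a valid alternative to the paper's device of embedding $G$ into a reductive group $G'$ and using the schematic map $Z/G \to BG'$; both arguments bottom out in the semisimplicity of $\Rep(L)$ for $L$ reductive in characteristic~$0$. One small omission: the strata produced by the stratification lemma are not themselves global quotients but only admit finite \'etale covers by such---this is handled by a trace-splitting argument, which you should insert.

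The genuine gap is in your gluing step. You claim a triangle relating $\Gamma(\CY,\CF)$ to $\Gamma(\CX, i^*\CF)$ and $\Gamma(\oCY, j^*\CF)$, but no such triangle exists in $\QCoh$. The recollement triangle $j_!j^*\CF \to \CF \to i_*i^*\CF$ requires $j_!$, which is not available for open immersions in $\QCoh$. The triangle that \emph{does} exist is $\CF' \to \CF \to j_*j^*\CF$ with $\CF'$ set-theoretically supported on $\CX$; but $\CF'$ is not $i_*$ of anything simple, and the functor $\CF \mapsto i^!\CF$ (whose pushforward would give $\CF'$) is not continuous, since $i_*$ does not send perfect complexes to perfect complexes unless $i$ is a regular immersion. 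So continuity does not propagate through the d\'evissage as written.

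The paper's fix is to change what is being propagated: rather than continuity, one proves that $\Gamma(\CY,-)$ has \emph{bounded cohomological amplitude} (i.e., there exists $n$ with $H^i(\Gamma(\CY,\CF))=0$ for $i>n$ and all $\CF \in \QCoh(\CY)^{\leq 0}$). This weaker-looking statement glues cleanly: for $\CF \in \QCoh(\CY)^\heartsuit$, the object $\CF'$ is bounded with cohomologies admitting finite filtrations by pushforwards from $\CX$, so the bound on $\CX$ transfers. One then checks separately (using left-completeness of the $t$-structure on $\QCoh(\CY)$ and compatibility with filtered colimits) that bounded cohomological amplitude implies continuity. This two-step reduction---amplitude bound via d\'evissage, then continuity from amplitude---is the missing idea in your proposal.
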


We also obtain a relative version of \thmref{preview: qc} for morphisms of algebraic stacks $\pi:\CY_1\to \CY_2$ (see \corref{c:relative}). 
It gives a sufficient condition for the functor $$\pi_*:\QCoh(\CY_1)\to \QCoh(\CY_2)$$  to commute with colimits (and thus have a 
base change property and satisfy the projection formula). 

\sssec{} \label{sss:subtle}
The question of compact generation of $\QCoh(\CY)$ 
is subtle. It is easy to see that $\QCoh(\CY)^c$ is contained in the category 
$\QCoh(\CY)^{\on{perf}}$ of perfect complexes, and if $\CY$ satisfies the assumptions of
Theorem~\ref{preview: qc} then $\QCoh(\CY)^c=\QCoh(\CY)^{\on{perf}}$ (see 
Corollary~\ref{c:perfects_are_compact}). But we do not know
if under these assumptions $\QCoh(\CY)^{\on{perf}}$ always generates $\QCoh(\CY)$.
Ben-Zvi, Francis, and Nadler showed in  \cite[Section 3]{BFN} that this is true for
most of  the stacks that one encounters in practice (e.g., see Lemma~\ref{l:BFN} below).

\medskip

However, we were able to establish a property of $\QCoh(\CY)$, which is weaker than compact generation,
but still implies many of the favorable properties enjoyed by compactly generated categories (see \thmref{t:QCoh dualizable}):

\begin{thm} \label{preview:QCoh dualizable}
Let $\CY$ be QCA algebraic stack. Then the category $\QCoh(\CY)$ is \emph{dualizable}.
\end{thm}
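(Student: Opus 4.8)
The plan is to deduce dualizability of $\QCoh(\CY)$ from the global sections theorem, \thmref{preview: qc}, together with a stratification argument. Recall that a cocomplete DG category $\mathbf{C}$ is dualizable precisely when it is a retract (in $\StinftyCat$) of a compactly generated category; equivalently, for perfect stacks this is automatic, and in general one wants to exhibit $\QCoh(\CY)$ as built out of pieces of the form $\QCoh(Z/G)$ with $Z$ a quasi-compact scheme and $G$ an affine algebraic group. So the first step is to fix a stratification of $\CY$ by locally closed substacks $\CY_\alpha$ of the form $Z_\alpha/G_\alpha$, using the technology advertised in the introduction, and to record the recollement (gluing) situation: for a closed substack $i:\CX\hookrightarrow\CY$ with open complement $j:\CU\hookrightarrow\CY$, the category $\QCoh(\CY)$ sits in a fiber-sequence-type diagram relating $\QCoh(\CX)$ and $\QCoh(\CU)$ via $i_*, i^*, j^*$ and $j_*$. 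The key general fact I would use is that dualizability of DG categories is stable under such gluings: if the two outer terms are dualizable and the gluing functor is nice enough (in particular, if $j_*$ is continuous, which is exactly where \thmref{preview: qc} in its relative form, \corref{c:relative}, enters), then the middle term is dualizable. Hence by induction on the length of the stratification it suffices to treat a single stratum $Z/G$ with $G$ affine.

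Next I would reduce the case of $Z/G$ to something compactly generated. When $G$ is affine, $\QCoh(BG) = \Rep(G)$ is compactly generated (by the regular representation, or for $G$ reductive by the irreducibles; in general by a bar-type argument), and more generally $\QCoh(Z/G)$ is compactly generated whenever $Z$ is a quasi-compact \emph{scheme} with affine diagonal carrying a $G$-action — this is essentially the perfect-stack case of \cite{BFN}, or can be seen directly by descent along $Z \to Z/G$ writing $\QCoh(Z/G)$ as comodules over the Hopf-algebroid and using that $\QCoh(Z)$ is compactly generated by perfect complexes. A compactly generated category is a fortiori dualizable, so each stratum is dualizable, and the induction closes.

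The step I expect to be the main obstacle is making the gluing/induction argument for dualizability genuinely work: one must check that the recollement data really do assemble $\QCoh(\CY)$ as a limit (or colimit) of a diagram of the $\QCoh(\CY_\alpha)$'s in a way compatible with the duality functor on $\StinftyCat$, and this requires exactly the continuity of the pushforwards $j_*$ along open immersions and, more subtly, along the affine-stack maps that appear — which is precisely the content of \thmref{preview: qc} and its relative version. In other words, dualizability of $\QCoh(\CY)$ is not a formal consequence of the stratification alone; it is a formal consequence of the stratification \emph{plus} the fact that global sections (and relative global sections) are continuous. So the logical order is: (1) prove \thmref{preview: qc} and \corref{c:relative} (continuity of $\pi_*$); (2) establish the abstract lemma that a gluing of dualizable categories along a continuous functor is dualizable; (3) establish that $\QCoh(Z/G)$ is compactly generated, hence dualizable, for $Z$ a quasi-compact scheme and $G$ affine; (4) induct over the strata of a QCA stack, invoking affineness of the automorphism groups to guarantee each stratum has the required form and that the relative global sections along the stratum inclusions are continuous. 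The one genuinely delicate point inside (2) is that $\StinftyCat$ with its symmetric monoidal structure has internal duals only on the dualizable objects, so one has to phrase the gluing so that dualizability is propagated rather than assumed — typically by exhibiting the middle category as a retract of $\QCoh(\CX) \oplus \QCoh(\CU)$-modules, or by citing the general principle that a category admitting a "semiorthogonal-type" decomposition with dualizable pieces is dualizable.
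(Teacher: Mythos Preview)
Your approach is genuinely different from the paper's, and it has a real gap.

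The paper does \emph{not} run a direct stratification/gluing argument on $\QCoh$. Instead it passes through the auxiliary category $\IndCoh(\CY)$: one first proves (by a stratification argument, \thmref{IndCoh}) that $\IndCoh(\CY)$ is compactly generated, hence dualizable. Then, assuming $\CY$ is eventually coconnective, the functor $\Psi_\CY:\IndCoh(\CY)\to\QCoh(\CY)$ admits a fully faithful left adjoint, so $\QCoh(\CY)$ is a \emph{retract} of $\IndCoh(\CY)$ in $\StinftyCat_{\on{cont}}$. Finally one invokes the clean general lemma: in any monoidal category with internal Homs, a retract of a dualizable object is dualizable. The stratification is used only inside the $\IndCoh$ argument, where the $!$-pullback and the fact that $\Coh$ genuinely generates make the induction go through.

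Your proposed induction on $\QCoh$ directly breaks at the inductive step, and not only at the ``abstract gluing lemma'' you flagged. For a closed substack $\imath:\CX\hookrightarrow\CY$ with open complement $\jmath:\CU\hookrightarrow\CY$, the recollement on $\QCoh(\CY)$ has pieces $\QCoh(\CU)$ and $\ker(\jmath^*)=\QCoh_{\CX}(\CY)$, the full subcategory of objects set-theoretically supported on $\CX$. This is \emph{not} $\QCoh(\CX)$: it is rather the colimit of the $\QCoh(\CX_n)$ over infinitesimal thickenings, and there is no reason for it to inherit dualizability from $\QCoh(\CX)$. So even granting that each stratum $Z_\alpha/G_\alpha$ has compactly generated $\QCoh$, the inductive hypothesis does not say what you need about the closed piece of the recollement. (This is exactly the phenomenon the paper alludes to in Remark~\ref{r:end_of}: the $!$-side of the recollement is badly behaved for $\QCoh$, which is why $\IndCoh$ is the right intermediate home for the devissage.)

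Separately, your justification of the gluing lemma---``exhibit the middle category as a retract of $\QCoh(\CX)\oplus\QCoh(\CU)$''---is not correct; a semiorthogonal decomposition does not make the ambient category a retract of the direct sum of its pieces. A correct version of such a lemma would require both inclusions to have continuous adjoints on both sides and a more careful argument, but in view of the previous paragraph this would still not close the induction as you set it up.
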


We refer the reader to \secref{sss:dualizabilitydef} for a review of the notion of dualizable DG category.

\sssec{}

In addition, we show that for a QCA algebraic stack $\CY$ and for
any (pre)stack $\CY'$, the natural functor
$$\QCoh(\CY )\otimes\QCoh(\CY' )\to \QCoh(\CY\times \CY')$$
is an equivalence (\corref{c:QCoh on product}). 

\sssec{}

We should mention that in reviewing the above results about $\QCoh(\CY)$ we were tacitly assuming that
we were dealing with \emph{classical algebraic stacks}. However, in the main body of the paper, we work
in the setting of derived algebraic geometry, and henceforth by a ``(pre)stack" we shall understand what one 
might call a ``DG (pre)stack".  

\medskip

In particular, some caution is needed when dealing with the notion of algebraic stack of finite type, and for 
boundedness condition of the structure sheaf. We refer the reader to the main body of the text for the precise
formulations of the above results in the DG context.

\ssec{Ind-coherent sheaves}

In addition to the categories $\QCoh(\CY)$ and $\Dmod(\CY)$, there is a third player in this paper,
namely, the DG category of ind-coherent sheaves, denoted $\IndCoh(\CY)$. We refer the reader to 
\cite{IndCoh} where this category is introduced and its basic properties are discussed. 

\medskip

As was mentioned in {\it loc.cit.}, Sects. 0.1 and 0.2, the assignment $\CY\mapsto \IndCoh(\CY)$ 
is a natural sheaf-theoretic context in its own right. In particular,
the category $\IndCoh(\CY)$ is indispensable to treat the spectral side of the Geometric Langlands correspondence, see \cite{AG}.

\medskip

In this paper the category $\IndCoh(\CY)$ is used to prove Theorem~\ref{preview:QCoh dualizable}. More importantly, this category serves as an 
intermediary between D-modules and $\CO$-modules on $\CY$. Below we explain more details on the latter role of  $\IndCoh(\CY)$.

\sssec{}

For an arbitrary (pre)stack, there is a naturally defined conservative forgetful functor 
$$\oblv_{\Dmod(\CY)}:\Dmod(\CY)\to \IndCoh(\CY),$$
and this functor is compatible with morphisms of (pre)stacks $\pi:\CY_1\to \CY_2$
under !-pullback functors on both sides. 

\medskip

Now, for a large class of prestacks, including algebraic stacks, 
the functor $\oblv_{\Dmod(\CY)}$ admits a left adjoint, 
denoted $\ind_{\Dmod(\CY)}$. This adjoint pair of functors plays an important role in this
paper: we use them to deduce \thmref{preview: Dmods} from \thmref{preview: qc}.

\sssec{}

The category $\IndCoh$ may be viewed as an accounting device that encodes the convergence
of certain spectral sequences (equivalently, the basic properties of $\IndCoh$ established
in \cite{IndCoh}, ensure that certain colimits commute with certain limits). 

\medskip

In light of this, the reader who is unfamiliar or not interested in the category $\IndCoh$,
may bypass it and relate the categories $\QCoh(\CY)$ and $\Dmod(\CY)$ directly by the pairs of
adjoint  functors\footnote{The pair $({}'\oblv_{\Dmod(\CY)},{}'\ind_{\Dmod(\CY)})$ is related to the realization of
$\Dmod (\CY )$ as ``right" D-modules. The other pair is related to the realization as ``left" D-modules.} 
$({}'\oblv_{\Dmod(\CY)},{}'\ind_{\Dmod(\CY)})$ or $(\oblv^{\on{left}}_{\Dmod(\CY)},\ind^{\on{left}}_{\Dmod(\CY)})$ 
introduced in Sects. \ref{sss:Dmod and QCoh} and \ref{sss:left realization} (for DG schemes), and \ref{sss:oblv stacks} 
and \ref{sss:Dmod and QCoh on stacks} (for algebraic stacks). The corresponding variant of the proof
of \thmref{preview: Dmods} is given in \secref{ss:variant}. 

\sssec{}

However, without the category $\IndCoh(\CY)$, the treatment of $\Dmod(\CY)$ suffers from a certain awkwardness. 
Let us list three reasons for this in the ascending order of importance:

\medskip

\noindent(i) Let $Z$ be a scheme. The realization of $\Dmod(Z)$ as ``right" D-modules has the advantage
of being compatible with the t-structure, see \cite[Sect. 4.3]{GR1} for a detailed discussion. So, let us say
we want to work with right D-modules.
However, 
if instead of $\IndCoh(Z)$ and the forgetful functor $\oblv_{\Dmod(Z)}$ we use $\QCoh(Z)$ and the
corresponding naive forgetful functor $'\oblv_{\Dmod(Z)}$, we would not be able to 
formulate the compatibility of this forgetful functor with pullbacks. The reason is that for a 
general morphism of schemes $f:Z_1\to Z_2$, the functor $f^!$ is defined and is continuous as a functor 
$\IndCoh(Z_2)\to\IndCoh(Z_1)$ but not as a functor $\QCoh(Z_2)\to\QCoh(Z_1)$.  

\medskip

\noindent(ii) The ``left" forgetful functor $\oblv^{\on{left}}_{\Dmod(\CY)}$ is defined for any pre-stack $\CY$.
However, it does not admit a left adjoint in many situations in which $\oblv_{\Dmod(\CY)}$ does, e.g., for ind-schemes. On the other hand,
the naive ``right" forgetful functor $'\oblv_{\Dmod(\CY)}$ is not defined unless $\CY$ is an {\it algebraic} stack.

\medskip

\noindent(iii) As is explained in \secref{sss:corr formalism}, the natural formalism\footnote{This formalism incorporates the base change isomorphism relating $!$-pullbacks and $*$-pushforwards.}  for the assignment
$Z\mapsto \Dmod(Z)$ is that of a functor from the category whose objects are schemes, and morphisms
are correspondences between schemes. Moreover, we want this functor to be endowed with a natural
transformation to one involving $\CO$-modules (in either $\QCoh$ or $\IndCoh$ incarnation). However,
the construction of this formalism carried out in \cite{GR2} 
using $\IndCoh$ would run into serious problems if one tries to work with $\QCoh$ instead. 
\footnote{A part of the construction is that the functor of pullback under a closed embedding should 
admit a left adjoint; for this it is essential that we use the !-pullback and $\IndCoh$ as our category
of $\CO$-modules.}

\medskip

So, the upshot is that without $\IndCoh$, we  
cannot really construct a workable formalism of D-modules, that
allows to take both direct and inverse images.

\sssec{}

Our main result concerning the category $\IndCoh$ is the following one (see \thmref{IndCoh}):

\begin{thm} \label{preview:IndCoh}
For a QCA algebraic stack $\CY$, the category $\IndCoh(\CY)$ is compactly generated.
The category of its compact objects identifies with $\Coh(\CY)$.
\end{thm}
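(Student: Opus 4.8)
The plan is to establish the two halves of the statement --- that $\Coh(\CY)$ consists of compact objects and that it generates --- by the stratification technology, reducing everything to a quotient stack and there invoking Theorem~\ref{preview: qc}. (One could alternatively try to deduce the result directly from the $\QCoh$ statement via the close relationship between $\IndCoh(\CY)$ and $\QCoh(\CY)$ recorded by the functors $\Psi_\CY,\Xi_\CY$; I will carry out the stratification route.)

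First I would reduce to the case of a quotient stack. Given a closed--open decomposition $i\colon\CX\hookrightarrow\CY\hookleftarrow U\colon j$, the category $\IndCoh(\CY)$ sits in a recollement: $i_*^{\IndCoh}$ is fully faithful with continuous right adjoint $i^!$ (hence preserves compactness, and carries $\Coh(\CX)$ into $\Coh(\CY)$ since $i$ is finite), $j^{\IndCoh,*}$ is a localization carrying $\Coh(\CY)$ onto $\Coh(U)$, and its kernel $\IndCoh_\CX(\CY)$ is identified with $\IndCoh(\CX)$ via $i_*^{\IndCoh}$ (d\'evissage by powers of the ideal of $\CX$, and likewise for infinitesimal thickenings in the DG setting). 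From this one checks that if $\IndCoh(\CX)$ and $\IndCoh(U)$ are compactly generated with compact objects $\Coh(\CX)$ and $\Coh(U)$ respectively, then the same holds for $\IndCoh(\CY)$. Stratifying $\CY$ by locally closed substacks of the form $Z/G$ ($Z$ a quasi-compact scheme, $G$ an affine algebraic group) and inducting on the length of the stratification, we are reduced to $\CY=Z/G$.

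For $\CY=Z/G$ I would argue as follows. By smooth descent, $\IndCoh(Z/G)$ is the category of $G$-equivariant objects of $\IndCoh(Z)$; the forgetful functor $\oblv\colon\IndCoh(Z/G)\to\IndCoh(Z)$ is conservative, continuous, and has a left adjoint $\ind$ (averaging against $\CO(G)$). Since $\oblv$ is continuous, $\ind$ preserves compactness; since $\oblv$ is conservative and $\Coh(Z)=\IndCoh(Z)^c$ generates $\IndCoh(Z)$ (a scheme), the objects $\ind(\CF)$, $\CF\in\Coh(Z)$, compactly generate $\IndCoh(Z/G)$ --- and writing $\CO(G)$ as the union of its finite-dimensional subrepresentations exhibits each $\ind(\CF)$ as a filtered colimit of objects of $\Coh(Z/G)$, so $\Coh(Z/G)$ generates as well. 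It remains to see that every $\CF\in\Coh(Z/G)$ is compact. Here I would use the identification $\Hom_{\IndCoh(Z/G)}(\CF,\CG)\simeq\Gamma\bigl(BG,\Hom_{\IndCoh(Z)}(\oblv\CF,\oblv\CG)\bigr)$, where the inner Hom is regarded as an object of $\Rep(G)=\QCoh(BG)$. As $\oblv$ is continuous and $\oblv\CF$ is compact in $\IndCoh(Z)$, and $\Gamma(BG,-)$ commutes with colimits by Theorem~\ref{preview: qc} applied to $BG$, it follows that $\Hom_{\IndCoh(Z/G)}(\CF,-)$ commutes with colimits. Since $\Coh(Z/G)$ is idempotent complete (bounded coherent complexes on a Noetherian stack) and generates, we conclude $\IndCoh(Z/G)^c=\Coh(Z/G)$.

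The hard part will be twofold. The geometric input --- the stratification of a QCA stack by quotient substacks --- and the categorical descent steps (the recollement for $\IndCoh$ along a closed--open decomposition, and the identification of $\IndCoh(Z/G)$ with $G$-equivariant objects together with the $\Ext$-formula above) are where the real work lies, and one must be careful that these hold correctly in the DG/derived setting, including for non-reduced stacks. More essentially, compact generation is not preserved by arbitrary totalizations, so passing to $G$-equivariant objects could a priori destroy it; what prevents this is precisely the finiteness on the $\QCoh$ side, namely continuity of $\Gamma(BG,-)$, and it is here that characteristic $0$ enters decisively: for non-reductive $G$ one uses that the Chevalley--Eilenberg complex of $\fg$ is a finite complex of copies of the identity functor (equivalently, one reduces to the reductive and unipotent cases separately).
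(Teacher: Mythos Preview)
Your approach to generation via d\'evissage is essentially the paper's (Proposition~\ref{p:coh_generates} and Lemma~\ref{l:devissage IndCoh}) and is correct. The gap is in the \emph{compactness} half. The claim that the recollement yields ``if $\Coh(\CX)=\IndCoh(\CX)^c$ and $\Coh(U)=\IndCoh(U)^c$ then $\Coh(\CY)=\IndCoh(\CY)^c$'' does not follow formally. Given $\CF\in\Coh(\CY)$, neither outer term of the triangle $(\CF)_\CX\to\CF\to j_*^{\IndCoh}j^{\IndCoh,*}(\CF)$ is evidently compact: $j_*^{\IndCoh}$ does not preserve compacts (already for an open embedding of affine schemes it sends $\Coh(U)$ outside $\Coh(\CY)$), and $(\CF)_\CX$, though set-theoretically supported on $\CX$, is not obviously a finite extension of pushforwards from $\Coh(\CX)$. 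The alternative triangle would need $i^{\IndCoh,*}(\CF)\in\Coh(\CX)$, which fails whenever $i$ is not of finite Tor-dimension. So compactness of $\Coh(\CY)$ does not propagate through the stratification.

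The paper therefore treats the two halves asymmetrically: generation by $\Coh(\CY)^\heartsuit$ is proved by d\'evissage, but the inclusion $\Coh(\CY)\subset\IndCoh(\CY)^c$ is established \emph{directly on $\CY$} (Proposition~\ref{coh is compact}(b)). One constructs by smooth descent an internal Hom $\underline\Hom_{\QCoh(\CY)}(\CF,\CF')\in\QCoh(\CY)$ for $\CF\in\Coh(\CY)$, checks that it commutes with colimits in $\CF'$ (a local statement on smooth charts, where $\CF$ pulls back to a compact object of $\IndCoh$), and then applies the continuity of $\Gamma(\CY,-)$ from Theorem~\ref{main}. Your own argument for $Z/G$ using $\Gamma(BG,-)$ is exactly this construction with the internal Hom pushed forward along $Z/G\to BG$; since Theorem~\ref{main} already supplies continuity of $\Gamma(\CY,-)$ for every QCA stack, you can run the same argument on $\CY$ itself and dispense with the reduction to quotient stacks for this half.
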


In the above theorem, $\Coh(\CY)$ is the full subcategory of $\QCoh(\CY)$ of \emph{coherent sheaves},
i.e., of bounded complexes with coherent cohomology. We deduce \thmref{preview:IndCoh} from
\thmref{preview: qc}.

\medskip

As we mentioned in Sect.~\ref{sss:subtle}, for a general QCA stack $\CY$ the problem of
compact generation of $\QCoh(\CY)$ is still open. 


\ssec{Contents of the paper}   \label{ss:structure} 

\sssec{}

In \secref{s:main result} we formulate the main technical result of this paper, \thmref{main}.

\medskip

We first fix our conventions regarding algebraic stacks. In Sects. \ref{s:main result} through
\ref{s:3Dmods} we adopt a definition of algebraic stacks slightly more restrictive than that of
\cite{LM}. Namely, we require the diagonal morphism to be schematic rather than representable.

\medskip

We introduce the notion of QCA algebraic stack and of QCA morphism between arbitrary
(pre)stacks.

\medskip

We recall the definition of the category of $\QCoh(\CY)$ for prestacks and in particular algebraic 
stacks.

\medskip

We formulate \thmref{main}, which is a sharpened version of \thmref{preview: qc} mentioned above.
In \thmref{main} we assert not only that the functor $\Gamma(\CY,-)$ is continuous, but also that
it is of bounded cohomological amplitude. 

\medskip

We also show how \thmref{main} implies its relative version for a QCA morphism between (pre)stacks.  

\sssec{}

In \secref{s:deducing} we prove \thmref{main}. The idea of the proof is very simple.
First, we show that the boundedness of the cohomological dimension implies the
continuity of the functor $\Gamma(\CY,-)$. 

\medskip

We then establish the required boundedness
by stratifying our algebraic stack by locally closed substacks that are gerbes over schemes.
For algebraic stacks of the latter form, one deduces the theorem directly by reducing
to the case of quotient stacks $Z/G$, where $Z$ is a quasi-compact scheme and $G$ is a reductive group.

\medskip

The char. $0$ assumption is essential since we are using the fact that the category of
representations of a reductive group is semi-simple.  

\sssec{}

In Sect. \ref{s:IndCoh} we study the behavior of the category $\IndCoh(\CY)$
for QCA algebraic stacks. 

\medskip

We first recall the definition and basic properties of $\IndCoh(\CY)$. 

\medskip

We deduce \thmref{preview:IndCoh} from \thmref{preview: qc}. 

\medskip

We also introduce and study the direct image functor
$\pi_*^{\IndCoh}$ for a morphism $\pi$ between QCA algebraic stacks.

\sssec{}

In Sect. \ref{s:dualizability} we prove (and study the implications of) the 
\emph{dualizability} property of the categories $\IndCoh(\CY)$ and
$\QCoh(\CY)$ for a QCA algebraic stack $\CY$. 

\medskip

We first recall the notion of dualizable DG category, and then deduce the dualizability of $\IndCoh(\CY)$ from
the fact that it is compactly generated. 

\medskip

We deduce the dualizability
of $\QCoh(\CY)$ from the fact that it is a retract of $\IndCoh(\CY)$. 

\medskip

We then proceed to discuss Serre duality, which we interpret as a datum of
equivalence of between $\IndCoh(\CY)$ and its dual. 

\sssec{}

In \secref{s:Dmods on schemes} we review the theory of D-modules on (DG) schemes.

\medskip

All of this material is well-known at the level of underlying triangulated categories,
but unfortunately there is still no reference in the literature where all the needed
constructions are carried out at the DG level. This is particularly relevant with regard to
base change isomorphisms, where it is not straightforward to even formulate 
what structure they encode at the level of $\infty$-categories. 

\medskip

We also discuss Verdier duality for D-modules, which we interpret as a datum of equivalence between
the category $\Dmod(Z)$ and its dual, and its relation to Serre duality for $\IndCoh(Z)$.

\sssec{}

In \secref{s:Dmods on stacks} we review the theory of D-modules on prestacks and algebraic stacks.
This theory is also ``well-known modulo homotopy-theoretic issues".

\medskip

Having an appropriate formalism for the assignment $Z\rightsquigarrow \Dmod(Z)$ for schemes, 
one defines the category $\Dmod(\CY)$ for an arbitrary prestack $\CY$, along with the naturally
defined functors. The theory becomes richer once we restrict our attention to algebraic stacks;
for example, in this case the category $\Dmod(\CY)$ has a t-structure.

\medskip

For algebraic stacks we construct and study the induction functor
$$\ind_{\Dmod(\CY)}:\IndCoh(\CY)\to \Dmod(\CY),$$ left adjoint to the forgetful functor
$\oblv_{\Dmod(\CY)}$. Its existence and properties are crucial for the proof 
of compact generation of $\Dmod(\CY)$ on QCA algebraic stacks, as well as
for the relation between the conditions of compactness and safety for objects of 
$\Dmod(\CY)$, and for the construction of the renormalized direct image functor.
In short, the functor $\ind_{\Dmod(\CY)}$ produces a supply of objects of
$\Dmod(\CY)$ whose cohomological behavior we can control.

\sssec{}

In \secref{s:deRham stack} we define the functor of de Rham cohomology $\Gamma_{\on{dR}}(\CY,-):\Dmod(\CY)\to \Vect$,
where $\CY$ is an algebraic stack, and discuss its failure to be continuous. 

\medskip

Furthermore, we generalize this to the case of the D-module
direct image functor $\pi_\dr$ for a morphism $\pi$ between algebraic stacks. 

\medskip

We also discuss the condition of \emph{coherence} on an object of $\Dmod(\CY)$, and we explain 
that for quasi-compact algebraic stacks, unlike quasi-compact schemes, the inclusion
$$\Dmod(\CY)^c\subset \Dmod_{\on{coh}}(\CY)$$ is \emph{not} an equality.

\sssec{}

In \secref{s:2Dmods} we prove \thmref{preview: Dmods}. More precisely, we show that for a QCA
algebraic stack $\CY$, the category $\Dmod(\CY)$ is compactly generated by objects of the form
$\ind_{\Dmod(\CY)}(\CF)$ for $\CF\in \Coh(\CY)$. 

\medskip

We also show that \thmref{preview: Dmods}, combined with a compatibility of Serre and Verdier
dualities, imply that for a QCA algebraic stack $\CY$, the category $\Dmod(\CY)$ is equivalent
to its dual, as was the case for schemes.

\medskip

Finally, we show that for $\CY$ as above and any prestack $\CY'$, the natural functor
$$\Dmod(\CY)\times \Dmod(\CY')\to \Dmod(\CY\times \CY')$$
is an equivalence.

\sssec{}

In \secref{s:renormalized} we introduce the functors of renormalized de Rham cohomology
and, more generally, renormalized D-module direct image for morphisms between QCA algebraic
stacks. 

\medskip  

We show that both these functors can be defined as ind-extensions of restrictions of the original
functors $\Gamma_{\on{dR}}(\CY,-)$ and $\pi_\dr$ to the subcategory of compact 
objects.  

\medskip

We show that the renormalized direct image functor $\pi_{\blacktriangle}$, unlike the original functor
$\pi_\dr$, has the base change property and satisfies the projection formula.

\medskip

We introduce the notion of \emph{safe} object of $\Dmod(\CY)$, and we show that for
safe objects $\pi_{\blacktriangle}(\CM)\simeq \pi_\dr(\CM)$. 

\medskip

We also show that compact objects of $\Dmod(\CY)$ can be characterized as those objects
of $\Dmod_{\on{coh}}(\CY)$ that are also safe.

\medskip

Finally, we show that the functor $\pi_{\blacktriangle}$ exhibits a behavior opposite to that
of $\pi_\dr$ with respect to its cohomological amplitude: the functor 
$\pi_\dr$ is left t-exact, up to a cohomological shift, whereas the functor $\pi_{\blacktriangle}$
is right t-exact, up to a cohomological shift.

\sssec{}

In \secref{s:3Dmods} we give geometric descriptions of safe algebraic stacks
(i.e., those QCA stacks, for which all objects of $\Dmod(\CY)$ are safe), and a
geometric criterion for safety of objects of $\Dmod(\CY)$ in general. The latter
description also provides a more explicit description of compact objects of
$\Dmod(\CY)$ inside $\Dmod_{\on{coh}}(\CY)$.

\medskip

We prove that a quasi-compact algebraic stack $\CY$ is safe if and only if the 
neutral components of stabilizers of its geometric points are unipotent. In
particular, any Deligne-Mumford quasi-compact algebraic stack is safe. 

\medskip

The criterion for safety of an object, roughly, looks as follows: a cohomologically
bounded object $\CM\in \Dmod(\CY)$ is safe if and only if for every point $y\in \CY$ with
$G_y=\on{Aut}(y)$, the restriction $\CM|_{BG_y}$ (here $BG_y$ denotes the 
classifying stack of $G_y$ which maps canonically into $\CY$) has the property that
$$\pi_\dr(\CM|_{BG_y})$$ is still cohomologically bounded, where $\pi$ denotes the
map $BG_y\to B\Gamma_y$, where $\Gamma_y=\pi_0(G_y)$.

\medskip

Conversely, we show that every cohomologically bounded safe object of $\Dmod(\CY)$
can be obtained by a finite iteration of taking cones starting from objects of the form
$\phi_\dr(\CN)$, where $\phi:S\to \CY$ with $S$ being a quasi-compact scheme
and $\CN\in \Dmod(S)^b$.

\sssec{}

Finally, in \secref{s:gen alg stacks} we explain how to generalize the results of Sects.
\ref{s:main result}-\ref{s:3Dmods} to the case of algebraic stacks in the sense of
\cite{LM}; we call the latter LM-algebraic stacks.

\medskip

Namely, we explain that since quasi-compact algebraic spaces are QCA when viewed
as algebraic stacks, they can be used as building blocks for the categories
$\QCoh(-)$, $\IndCoh(-)$ and $\Dmod(-)$ instead of schemes. This will imply that
the proofs of all the results of this paper are valid for QCA LM-algebraic stacks and 
morphisms. 

\ssec{Conventions, notation and terminology}   \label{ss:conventions}

We will be working over a fixed ground field $k$ of characteristic 0. Without
loss of generality one can assume that $k$ is algebraically closed. 

\sssec{$\infty$-categories}

Throughout the paper we shall be working with $(\infty,1)$-categories. Our treatment
is not tied to any specific model, but we shall use \cite{Lu1} as our basic reference. 

\medskip

We let $\inftygroup$ denote the $\infty$-category of $\infty$-groupids, a.k.a. ``spaces".

\medskip

If $\bC$ is an $\infty$-category and $\bc_1,\bc_2\in \bC$ are objects, we shall denote by
$\on{Maps}_\bC(\bc_1,\bc_2)$ the $\infty$-groupoid of maps between these
two objects. We shall use the notation $\Hom_\bC(\bc_1,\bc_2)\in \on{Sets}$ for 
$\pi_0(\on{Maps}_\bC(\bc_1,\bc_2))$, i.e., Hom in the homotopy category.

\medskip

We shall often say ``category" when in fact we mean an $\infty$-category. 

\medskip

If $F:\bC'\to \bC$ is a functor between $\infty$-categories, we shall say that $F$ is
\emph{0-fully faithful} (or just \emph{fully faithful}) if $F$ induces an equivalence on $\on{Maps}(-,-)$.
In this case we call the essential image of $\bC'$ a \emph{full subcategory} of $\bC$.

\medskip

We shall say that $F$ is \emph{1-fully faithful} (or just \emph{faithful}) if $F$ induces a 
\emph{monomorphism} on $\on{Maps}(-,-)$, i.e., if the map
\begin{equation} \label{e:map on maps}
\on{Maps}_{\bC'}(\bc'_1,\bc'_2)\to \on{Maps}_{\bC}(F(\bc'_1),F(\bc'_2))
\end{equation}
is the inclusion of a union of some of the connected components.  
If, moreover, the map
\eqref{e:map on maps} is surjective on those connected components of 
$\on{Maps}_{\bC}(F(\bc'_1),F(\bc'_2))$ that correspond to isomorphisms, 
we shall refer to the essential image of $\bC'$ as a \emph{1-full subcategory}
of $\bC$. 

\sssec{DG categories: elementary aspects}  \label{sss:DG categories}

We will be working with DG categories over $k$. Unless explicitly specified 
otherwise, all DG categories will be assumed cocomplete, i.e., contain
infinite direct sums (equivalently, filtered colimits, and equivalently all colimits).

\medskip

We let $\Vect$ denote the DG category of complexes of $k$-vector spaces.

\medskip

For a DG category $\bC$, and $\bc_1,\bc_2\in \bC$ we can form the object
$\CMaps_\bC(\bc_1,\bc_2)\in \Vect$. We have
$$\on{Maps}_\bC(\bc_1,\bc_2)\simeq \tau^{\leq 0}\left(\CMaps_\bC(\bc_1,\bc_2)\right),$$
where in the right-hand side we regard an object of $\Vect^{\leq 0}$ as an object
of $\inftygroup$ via the Dold-Kan functor.

\medskip

We shall use the notation $\Hom^\bullet_\bC(\bc_1,\bc_2)$ to denote the graded vector space
$$\underset{i}\oplus\, H^i\left(\CMaps_\bC(\bc_1,\bc_2)\right)\simeq \underset{i}\oplus\, \Hom_\bC(\bc_1,\bc_2[i]).$$

\medskip

We shall often use the notion of t-structure on a DG category. For $\bC$ endowed with a t-structure,
we shall denote by $\bC^{\leq 0}$, $\bC^{\geq 0}$, $\bC^-$, $\bC^+$, $\bC^b$ the corresponding
subcategories of connective, coconnective, eventually connective (a.k.a. bounded above), eventually coconnective 
(a.k.a. bounded below) and cohomologically bounded objects. 

\medskip

We let $\bC^\heartsuit$ denote the abelian category
equal to the heart (a.k.a. core) of the t-structure. For example, $\Vect^\heartsuit$ is the usual category of
$k$-vector spaces.

\sssec{Functors}  \label{sss:dg functors}

All functors between DG categories considered in this paper, without exception, will
be exact (i.e., map exact triangles to exact triangles). 
\footnote{As a way to deal with set-theoretic issues, we will assume that all our DG categories 
are presentable, and all functors between them are accessible (see \cite[Definitions 5.4.2.5 and 5.5.0.1]{Lu1}); 
an assumption which is always satisfied in practice.}

\medskip

It is a corollary of the adjoint functor theorem a cocomplete DG category also contains all \emph{limits},
see \cite[Corollary 5.5.2.4]{Lu1}.

\medskip

More generally, we have a version of Brown's representability
theorem that says that any exact contravariant functor $F:\bC\to \Vect$ is ind-representable
(see \cite[Corollary 5.3.5.4]{Lu1}), and it is representable
if and only if $F$ takes colimits in $\bC$ to limits in $\Vect$.

\sssec{Continuous functors}

For two DG categories $\bC_1$, $\bC_2$ we shall denote by $\on{Funct}(\bC_1,\bC_2)$ 
the DG category of all (exact) functors $\bC_1\to \bC_2$, and by $\on{Funct}_{\on{cont}}(\bC_1,\bC_2)$
its full DG subcategory consisting of \emph{continuous} functors, i.e., those functors that commute 
with infinite direct sums (equivalently, filtered colimits, and equivalently all colimits). 

\medskip

By default, whenever
we talk about a functor between DG categories, we will mean a continuous functor. We shall also
encounter non-continuous functors, but we will explicitly emphasize whenever this happens.

\medskip

The importance of continuous functors vs. all functors is, among the rest, in the fact that the 
operation of tensor product of DG categories, reviewed in \secref{sss:dualizabilitydef}, is functorial 
with respect to continuous functors.

\sssec{Compactness}

We recall that an object $\bc$ in a DG category is called \emph{compact} if the functor
$$\Hom_\bC(\bc,-):\bC\to \Vect^\heartsuit$$ commutes with direct sums. This is equivalent to requiring that the
functor $$\CMaps_\bC(\bc,-):\bC\to \Vect$$ be continuous, and still equivalent to requiring that the
functor $\on{Maps}_\bC(\bc,-):\bC\to \inftygroup$ commute with filtered colimits; the latter interpretation
of compactness makes sense for an arbitary $\infty$-category closed under filtered colimits. We let
$\bC^c$ denote the full \emph{but not cocomplete} subcategory of $\bC$ spanned by compact objects.
\footnote{The presentability assumption on $\bC$ implies that $\bC^c$ is small.}

\medskip

A DG category $\bC$ is said to be compactly generated if there exists a set of compact objects $\bc_\alpha\in \bC$
that generate it, i.e., $$\CMaps(\bc_\alpha,\bc)=0\, \Rightarrow \bc=0.$$ Equivalently, if $\bC$ does not contain
proper full cocomplete subcategories that contain all the objects $\bc_\alpha$.

\sssec{DG categories: homotopy-theoretic aspects}  \label{sss:DG categories2}

We shall regard the totality of DG categories as an $(\infty,1)$-category
in two ways, denoted $\StinftyCat$ and $\StinftyCat_{\on{cont}}$. 
In both cases the objects are DG categories. In the former case, we take as $1$-morphisms
all (exact) functors, whereas in the latter case we take those (exact) functors that are
continuous. The latter is a 1-full subcategory
of the former. 

\medskip

The above framework for the
theory of DG categories is not fully documented (see, however, \cite{DG} where
the basic facts are summarized). For a better documented theory, one can replace 
the $\infty$-category of DG categories by that of stable $\infty$-categories tensored
over $k$ (the latter theory is defined as a consequence of \cite[Sects. 4.2 and 6.3]{Lu2}).

\sssec{Ind-completions}  \label{sss:ind-compl}

If $\bC^0$ is a small, and hence, \emph{non-cocomplete}, DG category, one can canonically attach to
it a cocomplete one, referred to as the \emph{ind-completion} of $\bC^0$, denoted
$\on{Ind}(\bC^0)$, and characterized by the property that for $\bC\in \StinftyCat_{\on{cont}}$
$$\on{Funct}_{\on{cont}}(\on{Ind}(\bC^0),\bC)$$ is the category of \emph{all} (exact) functors
$\bC^0\to\bC$. For a functor $F:\bC^0\to \bC$, the resulting continuous functor
$\on{Ind}(\bC^0)\to\bC$ is called the ``ind-extension of $F$".

\medskip

The objects of $\bC^0$ are compact when viewed as objects of $\bC$.
It is not true, however, that the inclusion $\bC^0\subset \bC^c$ is equality. Rather,
$\bC^c$ is the Karoubian completion of $\bC^0$, i.e., every object of the former can
be realized as a direct summand of an object of the latter (see 
\cite[Theorem 2.1]{N} or \cite[Prop. 1.4.2]{BeV} for the proof). 

\medskip

A DG category is compactly generated if and only if it is of the form $\on{Ind}(\bC^0)$
for $\bC^0$ as above.

\sssec{DG Schemes}

Throughout the paper we shall work in the context of derived algebraic geometry over the field $k$.
We denote $\Spec(k)=:\on{pt}$.

\medskip

We shall denote by $\on{DGSch}$, $\on{DGSch}_{\on{qs-qc}}$ and $\on{DGSch}^{\on{aff}}$ the 
categories of DG schemes, quasi-separated and quasi-compact DG schemes, and affine DG schemes,
respectively. The fundamental treatment of these objects can be found in \cite{Lu3}. For a brief
review see also \cite{Stacks}, Sect. 3. The above categories contain the full subcategories
$\on{Sch}$, $\on{Sch}_{\on{qs-qc}}$ and $\on{Sch}^{\on{aff}}$ of classical schemes.

\medskip

For the reader's convenience, let us recall the notions of smoothness and flatness
in the DG setting. 

\medskip

A map $\Spec(B)\to \Spec(A)$ between affine DG schemes is said to be flat if
$H^0(B)$ is flat as a module over $H^0(A)$, plus the following equivalent conditions hold:

\begin{itemize} 

\item The natural map $H^0(B)\underset{H^0(A)}\otimes H^i(A)\to H^i(B)$ is an isomorphism for every $i$.

\item For any $A$-module $M$, the natural map $H^0(B)\underset{H^0(A)}\otimes H^i(M)\to H^i(B\underset{A}\otimes M)$
is an isomorphism for every $i$.

\smallskip

\item If an $A$-module $N$ is concentrated in degree 0 then so is $B\underset{A}\otimes N$.

\end{itemize}

The above notion is easily seen to be local in the Zariski topology in both
$\Spec(A)$ and $\Spec(B)$. The notion of flatness for a morphism between DG 
schemes is defined accordingly. 

\medskip

Let $f:S_1\to S_2$ be a morphism of DG schemes. We shall say that it is smooth/flat almost of finite presentation
if the following conditions hold:

\begin{itemize}

\item $f$ is flat (in particular, the base-changed DG scheme $^{cl}\!S_2\underset{S_2}\times S_1$ is
classical), and 

\item the map of classical schemes $^{cl}\!S_2\underset{S_2}\times S_1\to {}^{cl}\!S_2$
is smooth/flat of finite presentation. 

\end{itemize}

In the above formulas, for a DG scheme $S$, we denote by $^{cl}\!S$ the underlying classical
scheme. I.e., locally, if $S=\Spec(A)$, then $^{cl}\!S=\Spec(H^0(A))$.

\medskip

A morphism $f:S_1\to S_2$ is said to be fppf if it is flat almost of finite presentation
and surjective at the level of the underlying classical schemes. 

\sssec{Stacks and prestacks}

By a prestack we shall mean an arbitrary functor 
$$\CY:(\on{DGSch}^{\on{aff}})^{\on{op}}\to \inftygroup.$$
We denote the category of prestacks by $\on{PreStk}$.

\medskip

We should emphasize that the reader who is reluctant do deal with functors
taking values in $\infty$-groupoids, and who is willing to pay the price of staying 
within the world of classical algebraic geometry, may ignore any mention of 
prestacks, and replace them by functors with values in usual (i.e., $1$-truncated) 
groupoids.

\medskip

A prestack is called a stack if it satisfies fppf descent, see \cite{Stacks}, Sect. 2.2.
We denote the full subcategory of $\on{PreStk}$ formed by stacks by
$\on{Stk}$. The embedding $\on{Stk}\hookrightarrow \on{PreStk}$ admits
a left adjoint, denoted $L$, and called a sheafification functor.

\medskip

That said, the distinction between stacks and prestacks will not play a significant role
in this paper, because for a prestack $\CY$, the canonical map $\CY\to L(\CY)$
induces an equivalence on the category $\QCoh(-)$. The same happens for  
$\IndCoh(-)$ and $\Dmod(-)$ in the context of prestacks locally almost of finite type,
considered starting from Sects. \ref{s:IndCoh}-\ref{s:3Dmods} on.

\medskip

We can also consider the category of classical prestacks, denoted $^{cl}\!\on{PreStk}$,
the latter being the category of all functors
$$(\on{Sch}^{\on{aff}})^{\on{op}}\to \inftygroup.$$
We have a natural restriction functor 
$$\on{Res}_{\on{cl}\to\on{DG}}:\on{PreStk}\to {}^{cl}\!\on{PreStk},$$ which admits
a fully faithful left adjoint, given by the procedure of \emph{left Kan extension}, see \cite{Stacks},
Sect. 1.1.3. Let us denote this functor $\on{LKE}_{\on{cl}\to\on{DG}}$.
Thus, the functor $\on{LKE}_{\on{cl}\to\on{DG}}$ allows us to view 
$^{cl}\!\on{PreStk}$ as a full subcategory of $\on{PreStk}$. 

\medskip

For example, the composition
of the Yoneda embedding $\on{Sch}^{\on{aff}}\to {}^{cl}\!\on{PreStk}$ with 
$\on{LKE}_{\on{cl}\to\on{DG}}$ is the composition of the tautological embedding
$\on{Sch}^{\on{aff}}\to \on{DGSch}^{\on{aff}}$, followed by the Yoneda embedding
$\on{DGSch}^{\on{aff}}\to \on{PreStk}$.

\medskip

We also have the corresponding full subcategory $^{cl}\!\on{Stk}\subset {}^{cl}\!\on{PreStk}$.
The functor $\on{Res}_{\on{cl}\to\on{DG}}$ sends $\on{Stk}\subset \on{PreStk}$ to
$^{cl}\!\on{Stk}\subset {}^{cl}\!\on{PreStk}$. However, the functor $\on{LKE}_{\on{cl}\to\on{DG}}$ 
does \emph{not} necessarily send $^{cl}\!\on{Stk}$ to $\on{Stk}$. 

\medskip

Following \cite{Stacks}, Sect. 2.4.7, we shall call a stack \emph{classical} if it can be obtained 
as a sheafification of a classical prestack. This is equivalent to the condition that the
natural map $$L(\on{LKE}_{\on{cl}\to\on{DG}}\circ \on{Res}_{\on{cl}\to\on{DG}}(\CY))\to \CY$$
be an isomorphism. 

\medskip

In particular, it is not true that a classical non-affine DG scheme is classical as a prestack.
But it is classical as a stack.

\medskip

When in the main body of the text we will talk about algebraic stacks, the condition of
being classical is understood in the above sense.

\medskip

For a \emph{p}=prestack/stack/DG scheme/affine DG scheme $\CY$, the expression ``the classical \emph{p}
underlying $\CY$" means the object $\on{Res}_{\on{cl}\to\on{DG}}(\CY)\in {}^{cl}\!\on{PreStk}$
that belongs to the appropriate full subcategory  
$$\on{Sch}^{\on{aff}}\subset \on{Sch}\subset \on{Stk}\subset \on{PreStk}.$$
We will use a shorthand notation for this operation: $\CY\mapsto {}^{cl}\CY$.

\ssec{Acknowledgments}

We are grateful to Alexander Beilinson, Jacob Lurie, Amnon Neeman and Bertrand To\"en
for helping us with various technical questions. We are grateful to Sam Raskin for checking
the convergence arguments in \secref{s:deRham stack}.

\medskip

The research of V.~D. is partially supported by NSF grant DMS-1001660. The research
of D.~G. is partially supported by NSF grant DMS-1063470.




\section{Results on $\QCoh (\CY )$}  \label{s:main result}
In Sects. ~\ref{ss:assumptions_stacks}-\ref{dir im} we introduce the basic definitions and recall some well-known facts. 
The new results are formulated in Sect. ~\ref{ss:statements}.

\ssec{Assumptions on stacks}  \label{ss:assumptions_stacks}

\sssec{Algebraic stacks}  \label{sss:algebraic stacks}



In Sections \ref{s:main result}-\ref{s:3Dmods}
we will use the following definition of algebraicity of a stack, which is slightly more restrictive than that of 
\cite{LM} (in the context of classical stacks) or \cite[Sect. 4.2.8 ]{Stacks} (in the DG context).

\sssec{}

First, recall that a morphism $\pi:\CY_1\to \CY_2$ between prestacks is called schematic 
if for any affine DG scheme $S$ equipped with a morphism $S\to\CY_2$ the prestack $S\underset{\CY_2}\times \CY_1$ 
is a DG scheme. The notions of surjectivity/flatness/smoothness/quasi-compactness/quasi-separatedness make sense for schematic
morphisms: $\pi$ has one of the above properties if for every $S\to \CY_2$ as above, the map of DG schemes
$S\underset{\CY_2}\times \CY_1\to S$ has the corresponding property. 

\sssec{}

Let $\CY$ be a stack. We shall say that $\CY$ is a algebraic if

\begin{itemize}

\item The diagonal morphism $\CY\to \CY\times \CY$ is 
schematic,
quasi-separated and quasi-compact.

\smallskip

\item There exists a DG scheme $Z$ and a map $f:Z\to \CY$ (automatically schematic,
by the previous condition) such that $f$ is smooth and surjective.

\end{itemize}


A pair $(Z,f)$ as above is called a {\it presentation} or {\it atlas} for $\CY$.

\begin{rem}  \label{r:qs}
In \cite{LM} one imposes a slightly stronger condition on the diagonal map $\CY\to \CY\times \CY$. Namely,
in {\it loc.cit.} it is required to be separated rather than quasi-separated. However, the above weaker condition
seems more natural, and it will suffice for our purposes (the latter being \lemref{LM}, that relies on 
\cite[Corollary 10.8]{LM}, while the latter does not require the separated diagonal assumption).
\end{rem} 

\begin{rem}
To get the more general notion of algebraic stack in the spirit of \cite{LM} (for brevity, \emph{LM-algebraic} stack), 
one replaces the word ``schematic" in the above definition by ``representable", see Sect.~\ref{sss:LM-algebraic}. 
\footnote{A morphism $\pi:\CY_1\to \CY_2$ between prestacks is called representable if for every 
affine DG scheme $S$ equipped with a morphism $S\to\CY_2$ the prestack $S\underset{\CY_2}\times \CY_1$ 
is an algebraic space, see \secref{sss:alg spaces} for a review of the latter notion in the context of derived algebraic geometry.}
In fact, \emph{all the results formulated in this paper are valid for LM-algebraic stacks;} in \secref{s:gen alg stacks} 
we shall explain the necessary modifications. On the other hand, most LM-algebraic stacks one encounters in practice 
satisfy the more restrictive definition as well. The advantage of LM-algebraic stacks vs. algebraic stacks defined
above is that the former, unlike the latter, satisfy fppf descent.

\medskip

To recover the even more general notion of algebraic stack (a.k.a. $1$-Artin stack) from \cite[Sect. 4.2.8]{Stacks},
one should omit the condition on the diagonal map to be quasi-separated and quasi-compact. However, these conditions 
are essential for the validity of the results in this paper.

\end{rem}

\begin{defn}
We shall say that an algebraic stack $\CY$ is quasi-compact if admits an atlas $(Z,f)$, where $Z$
is an affine (equivalently, quasi-compact) DG scheme.
\end{defn}









\sssec{QCA stacks} \label{sss:QCA}  \hfill

\medskip

QCA is shorthand for ``quasi-compact and with affine automorphism groups".

\begin{defn}   \label{d:QCA}
We shall say that algebraic stack $\CY$ is $QCA$ if 

\medskip

\begin{enumerate}

\item It is quasi-compact;

\smallskip

\item The automorphism groups of its geometric points are affine;

\smallskip

\item The classical inertia stack, i.e., the classical algebraic stack
$^{cl}(\CY\underset{\CY\times \CY}\times \CY)$, is of finite
presentation over $^{cl}\CY$.

\end{enumerate}
\end{defn}

\medskip

In particular, any algebraic space automatically satisfies this condition (indeed, the classical
inertia stack of an algebraic space $\CX$ is isomorphic to $^{cl}\CX$). In addition, it is clear that if
$$
\CD
\CY'  @>>>  \CY \\
@VVV   @VVV  \\
\CX'  @>>>  \CX,
\endCD
$$
is a Cartesian diagram, where $\CX$ and $\CX'$ are algebraic spaces, and $\CY$ is a QCA algebraic stack, then
so is $\CY'$.

\medskip

The class of QCA algebraic stacks will play a fundamental role in this article. 
We also need the relative version of the QCA condition.




\begin{defn} \label{d:rel QCA}  
We shall say that a morphism $\pi:\CY_1\to \CY_2$ between prestacks is QCA if for every affine DG 
scheme $S$ and a morphism $S\to \CY_2$, the base-changed prestack 
$\CY_1\underset{\CY_2}\times S$ is an algebraic stack and is QCA.
\end{defn} 

For example, it is easy to show that if $\CY_1$ is a QCA algebraic stack and $\CY_2$ is any algebraic
stack, then any morphism $\CY_1\to \CY_2$ is QCA. 

\ssec{Quasi-coherent sheaves}  \label{ss:QCoh}

\sssec{Definition}  \label{sss:QCohdef}

Let $\CY$ be any prestack. Let us recall (see e.g. \cite[Sect. 1.1.3]{QCoh})
that the category $\QCoh(\CY)$ is defined as
\begin{equation} \label{e:QCoh as limit}
\underset{(S,g)\in ((\on{DGSch}^{\on{aff}})_{/\CY})^{\on{op}}}{\underset{\longleftarrow}{lim}}\, \QCoh(S).
\end{equation}
Here $(\on{DGSch}^{\on{aff}})_{/\CY}$ is the category of pairs $(S,g)$, where $S$ is an
affine DG scheme, and $g$ is a map $S\to \CY$. 

\sssec{}

Let us comment on the structure of the above definition:

\medskip

We view the assignment
$(S,g)\rightsquigarrow \QCoh(S)$
as a functor between $\infty$-categories
\begin{equation} \label{e:which functor}
((\on{DGSch}^{\on{aff}})_{/\CY})^{\on{op}}\to \StinftyCat_{\on{cont}},
\end{equation}
and the limit is taken in the $(\infty,1)$-category $\StinftyCat_{\on{cont}}$. 
The functor \eqref{e:which functor} is obtained by restriction under the forgetful map 
$(\on{DGSch}^{\on{aff}})_{/\CY}\to \on{DGSch}^{\on{aff}}$
of the functor 
$$\QCoh^*_{\on{Sch}^{\on{aff}}}:(\on{DGSch}^{\on{aff}})^{\on{op}}\to \StinftyCat_{\on{cont}},$$
where for $f:S'\to S$, the map $\QCoh(S)\to \QCoh(S')$ is $f^*$.
(The latter functor can be constructed in a ``hands-on" way; this has been carried
out in detail in \cite{Lu3}.)

\medskip

In other words, an object $\CF\in \QCoh(\CY)$ is an assignment for any $(S,g:S\to \CY)$
of an object $\CF|_S:=g^*(\CF)\in \QCoh(S)$, and a homotopy-coherent system of isomorphisms
$$f^*(g^*(\CF))\simeq (g\circ f)^*(\CF)\in \QCoh(S'),$$
for maps of DG schemes $f:S'\to S$.

\begin{rem}
For $\CY$ classical and algebraic, the definition of $\QCoh(\CY)$ given above is different from the
one of \cite{LM} (in {\it loc.cit.}, at the level of triangulated categories, $\QCoh(\CY)$ is defined as a 
full subcategory in the derived category of the abelian category of sheaves of $\CO$-modules 
on the smooth site of $\CY$). It is easy to show that the eventually coconnective (=bounded
from below) parts of both
categories, i.e., the two versions of $\QCoh(\CY)^+$, are canonically equivalent. However, we have no reasons to
believe that the entire categories are equivalent in general. The reason that we insist on considering
the entire category $\QCoh(\CY)$ is that this paper is largely devoted to the notion of compactness,
which only makes sense in a cocomplete category.
\end{rem}

\begin{rem}
In the definition of $\QCoh(\CY)$, one can replace the category $\on{DGSch}^{\on{aff}}$ of 
affine DG schemes by either $\on{DGSch}_{\on{qs-qc}}$ or of quasi-separated
and quasi-compact DG schemes or just $\on{DGSch}$ of all DG schemes.
The limit category will not change
due to the Zariski descent property of the assignment $S\rightsquigarrow \QCoh(S)$.
\end{rem}

\sssec{}   \label{sss:change index qc}

In the definition of $\QCoh(\CY)$ it is often convenient to replace the category $\on{DGSch}_{/\CY}$
(resp., $(\on{DGSch}_{\on{qs-qc}})_{/\CY}$, $(\on{DGSch}^{\on{aff}})_{/\CY}$) by a another category $A$, 
equipped with a functor 
$$(a\in A)\mapsto (S_a,g_a)$$
to $\on{DGSch}_{/\CY}$ (resp., $(\on{DGSch}_{\on{qs-qc}})_{/\CY}$, 
$(\on{DGSch}^{\on{aff}})_{/\CY}$), (provided that the limit will be the same). 
Below are several examples that will be used in this paper. 

\bigskip

\noindent{(i)} If $\CY$ is classical (as a stack or a prestack), one can take the category  
$A:=(\on{Sch}^{\on{aff}})_{/\CY}$, equipped with the tautological inclusion to 
$(\on{DGSch}^{\on{aff}})_{/\CY}$. I.e., we replace DG schemes by classical schemes.
Indeed, if $\CY$ is
classical as a prestack, the fact that the limit category will be the same follows the
from fact that the property of $\CY$ to be classical means that the inclusion
$(\on{Sch}^{\on{aff}})_{/\CY}\to (\on{DGSch}^{\on{aff}})_{/\CY}$ is cofinal 
(i.e., for every $S\in \on{DGSch}^{\on{aff}}$ and a point $y:S\to \CY$, there 
exists a factorization $S\to S'\to \CY$, where $S'\in \on{Sch}^{\on{aff}}$, and the category
of such factorizations is contractible.) For stacks, this follows from the fact that the map
$\CY\to L(\CY)$, where we remind that $L(-)$ denotes fppf sheafification,
induces an isomorphism on $\QCoh$.

\bigskip

\noindent{(ii)} Let $\CY\to \CY'$ be a schematic (resp., schematic + quasi-separated and quasi-compact; affine)
map between prestacks. 

Then we can take $A$ to be $\on{DGSch}_{/\CY'}$ 
(resp., $(\on{DGSch}_{\on{qs-qc}})_{/\CY'}$; $(\on{DGSch}^{\on{aff}})_{/\CY'}$) 
via the functor
\begin{equation} \label{e:base change for index}
S'\mapsto S:=S'\underset{\CY'}\times \CY.
\end{equation}
Indeed, it is easy to see that the above functor is cofinal. 

\bigskip

\noindent{(iii)} Suppose that $\CY$ is algebraic and let $f:Z\to \CY$ be an fppf atlas.
Then we can replace $\on{DGSch}_{/\CY}$ by the \v{C}ech nerve of $f$. The fact that
the limit category is the same follows from the fppf descent for $\QCoh$ on DG schemes.

\bigskip

\noindent{(iv)} Assume again that $\CY$ is algebraic. We can take $A$ to be the 1-full
subcategory 
$$\on{DGSch}_{/\CY,\on{smooth}}\subset \on{DGSch}_{/\CY},$$
or, respectively, 
$$(\on{DGSch}_{\on{qs-qc}})_{/\CY,\on{smooth}}\subset (\on{DGSch}_{\on{qs-qc}})_{/\CY},\,\,
(\on{DGSch}^{\on{aff}})_{/\CY,\on{smooth}}\subset (\on{DGSch}^{\on{aff}})_{/\CY},$$
where we restrict objects to those
$(S,g)$, for which $g$ is smooth, and $1$-morphisms to those $f:S_1\to S_2$, for which $f$ is smooth. The fact that limit category is the same
is shown in \cite[Sect. 11.2 and particularly Corollary 11.2.3]{IndCoh}. The word ``smooth" can also be replaced by the word ``flat". 
The same proof applies to establish the following generalization:

\begin{lem}  \label{l:replace index schematic}
Let $\CY\to \CY'$ be a schematic (resp., schematic + quasi-separated and quasi-compact; affine) map between algebraic stacks. 
Then the functor \eqref{e:base change for index} 
$$\on{DGSch}_{/\CY',\on{smooth}}\to \on{DGSch}_{/\CY,\on{smooth}}$$ defines an equivalence
$$\QCoh(\CY)= \underset{(S,g)\in (\on{DGSch}_{/\CY})^{\on{op}}}{\underset{\longleftarrow}{lim}}\, \QCoh(S)\to
\underset{(S',g')\in (\on{DGSch}_{/\CY'})^{\on{op}}}{\underset{\longleftarrow}{lim}}\, \QCoh(S),$$
and similarly for the $(\on{DGSch}_{\on{qs-qc}})_{/\CY}$ and $(\on{DGSch}^{\on{aff}})_{/\CY}$ versions. 
\end{lem}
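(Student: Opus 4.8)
The plan is to obtain the statement by combining the ``absolute'' comparison \cite[Corollary 11.2.3]{IndCoh} --- item (iv) above --- applied both to $\CY$ and to $\CY'$, with the base-change assertion of item (ii) above. First one checks that the functor \eqref{e:base change for index} indeed takes values in the indicated subcategory: if $g'\colon S'\to\CY'$ is smooth then so is the base-changed morphism $S'\underset{\CY'}\times\CY\to\CY$, a smooth $1$-morphism over $\CY'$ base-changes to a smooth $1$-morphism over $\CY$, and $S'\underset{\CY'}\times\CY$ is a DG scheme by the schematicity hypothesis (for $S'$ not affine, one glues over an affine cover). Write $p$ for this functor and $\bar p\colon\on{DGSch}_{/\CY'}\to\on{DGSch}_{/\CY}$ for the unrestricted base-change functor $S'\rightsquigarrow S'\underset{\CY'}\times\CY$. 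Then there is a commuting square
$$
\CD
\on{DGSch}_{/\CY',\on{smooth}}  @>{p}>>  \on{DGSch}_{/\CY,\on{smooth}}  \\
@V{\iota'}VV  @VV{\iota}V  \\
\on{DGSch}_{/\CY'}  @>{\bar p}>>  \on{DGSch}_{/\CY}
\endCD
$$
in which $\iota,\iota'$ are the $1$-full inclusions; this square is compatible with the forgetful functors to $\on{DGSch}$, so pulling back the functor $S\rightsquigarrow\QCoh(S)$ and passing to limits over the opposite categories turns it into a commuting square of restriction functors.

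I would then identify three of the four edges of the latter square as equivalences. The edge coming from $\iota$ is an equivalence by \cite[Corollary 11.2.3]{IndCoh} applied to $\CY$; together with \eqref{e:QCoh as limit} and Zariski descent this also identifies the source of $p^*$ with $\QCoh(\CY)$, the left-hand side of the Lemma. The edge coming from $\bar p$ is an equivalence by item (ii), which identifies the limit of $S'\rightsquigarrow\QCoh(S'\underset{\CY'}\times\CY)$ over $(\on{DGSch}_{/\CY'})^{\on{op}}$ with $\QCoh(\CY)$. For the edge coming from $\iota'$ one notes that the argument of \cite[Sect. 11.2]{IndCoh} that proves item (iv) uses only flat descent of the coefficient functor, and $S'\rightsquigarrow\QCoh(S'\underset{\CY'}\times\CY)$ on $\on{DGSch}_{/\CY'}$ does satisfy flat descent: the base change of a flat cover $S'_\bullet\to S'$ along $S'\to\CY'$ is a flat cover of $S'_\bullet\underset{\CY'}\times\CY\to S'\underset{\CY'}\times\CY$, and $\QCoh$ has flat descent on DG schemes. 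Since three edges of a commuting square are equivalences, so is the fourth, namely $p^*$; this is the assertion. The quasi-separated-quasi-compact and affine variants go through identically: the hypothesis that $\CY\to\CY'$ be schematic and quasi-separated and quasi-compact (resp. affine) is exactly what makes $\bar p$ preserve $\on{DGSch}_{\on{qs-qc}}$ (resp. $\on{DGSch}^{\on{aff}}$), so that the factorization above and the inputs (ii) and (iv) are available there too.

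The step I expect to require the most care is the one for the edge coming from $\iota'$: one must extract from \cite[Sect. 11.2]{IndCoh} that its argument applies to an arbitrary coefficient functor on the DG schemes over an algebraic stack that satisfies flat descent, and not merely to $\QCoh$ itself --- this is exactly the content concealed behind the remark that ``the same proof applies''. A more direct attack, namely trying to show that $p$ itself is cofinal by exhibiting a final object in the comma category attached to each $(T,h)\in\on{DGSch}_{/\CY,\on{smooth}}$, seems harder to carry out: the obvious candidate $S':=T$ equipped with the structure map $T\xrightarrow{h}\CY\to\CY'$ is not an object of $\on{DGSch}_{/\CY',\on{smooth}}$, since that composite need not be smooth (the diagonal of a schematic morphism $\CY\to\CY'$ is in general not smooth), and even $\bar p$ is not obviously cofinal in the naive sense --- which is precisely why one routes the argument through the full slice $\on{DGSch}_{/\CY'}$ and uses item (iv) as a black box at both ends.
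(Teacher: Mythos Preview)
Your proposal is correct and matches the paper's approach. The paper does not give an explicit proof here; it merely states ``the same proof applies,'' referring to \cite[Sect.~11.2]{IndCoh}, and your unpacking of what this means is exactly right: the argument of \cite[Sect.~11.2]{IndCoh} that underlies item (iv) only uses flat descent of the coefficient functor, so it applies verbatim to the functor $S'\rightsquigarrow\QCoh(S'\underset{\CY'}\times\CY)$ on $\on{DGSch}_{/\CY'}$, and then item (ii) identifies the full limit with $\QCoh(\CY)$. Your commuting-square presentation and the observation that direct cofinality of $p$ fails are welcome clarifications of a point the paper leaves implicit.
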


\sssec{t-structure}
For any prestack $\CY$, the category $\QCoh(\CY)$ has a natural t-structure: an object
$\CF\in \QCoh(\CY)$ is connective (i.e., cohomologically $\leq 0$) if its pullback to
any scheme is. 

\medskip

Two important features of this t-structure are summarized in the following lemma:

\begin{lem} \label{properties of t} 
Suppose that $\CY$ is an algebraic stack. 

\smallskip

\noindent{\em(a)} The t-structure on $\QCoh(\CY)$ is compatible with filtered 
colimits.\footnote{By definition, this means that the subcategory $\QCoh(\CY)^{> 0}$ 
is preserved under filtered colimits. Note that the subcategory $\QCoh(\CY)^{\leq 0}$ 
automatically has this property.}

\smallskip

\noindent{\em(b)} The t-structure on $\QCoh(\CY)$ is \emph{left-complete}, 
i.e., for $\CF\in \QCoh(\CY)$, the natural map
$$\CF\to \underset{n\in \BN}{\underset{\longleftarrow}{lim}}\, \tau^{\geq -n}(\CF)$$
is an isomorphism, where $\tau$ denotes the truncation functor.

\smallskip

\noindent{\em(c)} If $f:Z\to \CY$ is a faithfully flat atlas, the functor $f^*:\QCoh(\CY)\to \QCoh(Z)$ is
t-exact and conservative.

\end{lem}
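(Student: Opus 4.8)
The three statements are essentially standard consequences of the definition of $\QCoh(\CY)$ as a limit over an atlas, so the plan is to reduce everything to the \v{C}ech nerve of a faithfully flat atlas $f\colon Z\to\CY$ and invoke fppf descent for $\QCoh$ on DG schemes, together with the known properties of the t-structure on affine DG schemes.

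First I would treat \emph{(c)}, since (a) and (b) will be deduced from it. Pick a faithfully flat atlas $f\colon Z\to\CY$. By \S\ref{sss:change index qc}(iii), we may compute $\QCoh(\CY)$ as the totalization of the cosimplicial DG category $\QCoh(Z^{\bullet/\CY})$ attached to the \v{C}ech nerve of $f$, with all structure functors being $*$-pullbacks; in particular $f^*$ is the projection to the zeroth term. Flatness of each $Z^{n/\CY}\to Z^{m/\CY}$ makes every coface/codegeneracy functor t-exact for the natural t-structures. Now the key point: a cosimplicial diagram of DG categories in which all structure functors are t-exact has a limit whose natural t-structure is computed termwise, and the projection to the zeroth term is t-exact and conservative (conservativity because $f$ is surjective, so $f^*$ detects zero objects on each $\QCoh(S)$-component, hence on the limit). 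This gives t-exactness and conservativity of $f^*$ directly. I expect this to be the main obstacle: one must argue carefully that the t-structure on the limit is the one described in \S\ref{sss:QCohdef}, i.e.\ that $\CF$ is connective iff each $f^*(\CF)|_{Z^n}$ is; this uses that $\QCoh(\CY)^{\le 0}$ is the full subcategory of objects whose pullback to \emph{every} affine scheme over $\CY$ is connective, and that such pullbacks factor through $Z^{\bullet/\CY}$ because $f$ is an atlas (every $S\to\CY$ lifts, fppf-locally, through $Z$; combined with t-exactness of flat base change this suffices).

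Next, \emph{(a)}: filtered colimits in $\QCoh(\CY)=\lim_n\QCoh(Z^{n/\CY})$ are computed termwise (the structure functors are continuous), and likewise truncation $\tau^{>0}$ is computed termwise by the previous paragraph. Since the ordinary t-structure on $\QCoh(S)$ for an affine DG scheme $S$ is compatible with filtered colimits (filtered colimits of connective, resp.\ coconnective, modules are again such), the subcategory $\QCoh(\CY)^{>0}$ is closed under filtered colimits, proving (a). For \emph{(b)}, left-completeness: again working termwise, $\QCoh(S)$ for affine $S$ is left-complete, so for each $n$ the map $\CF|_{Z^n}\to\lim_m\tau^{\ge -m}(\CF|_{Z^n})$ is an isomorphism; since limits over $\BN$ commute with the totalization over $\Delta$ (both are limits) and truncation is termwise, the map $\CF\to\lim_m\tau^{\ge -m}(\CF)$ is an isomorphism in $\QCoh(\CY)$.

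Two remarks on what makes the reduction to affines legitimate. One needs that an atlas in the sense of \S\ref{sss:algebraic stacks} can be promoted to an fppf \emph{affine}-scheme atlas: since $\CY$ is algebraic with quasi-compact diagonal one may, after replacing $Z$ by a Zariski cover by affines, assume $Z$ affine, and smooth-and-surjective implies fppf; alternatively, for part (c) one simply works with the given faithfully flat atlas $f\colon Z\to\CY$ without further reduction, using \S\ref{sss:change index qc}(iv) to pass between the various indexing categories. The only genuinely nontrivial input beyond bookkeeping is the statement that $*$-pullback along a flat map of DG schemes is t-exact, which is exactly the third bullet in the definition of flatness recalled in \S\ref{sss:DG categories} (``if $N$ is in degree $0$ then so is $B\otimes_A N$''), together with its evident extension to non-affine flat morphisms and to arbitrary connective objects via filtered colimits and the compatibility in (a).
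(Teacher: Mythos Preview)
Your proposal is correct and follows essentially the same approach as the paper: reduce everything to the affine case via the \v{C}ech nerve of an atlas, then invoke the known properties of $\QCoh(S)$ for $S$ affine. The paper's own proof is just a pointer to \cite[Cor.~5.2.4]{QCoh} together with the one-line hint that in the affine case left-completeness follows because $\Gamma(S,-)$ is conservative, t-exact, and commutes with limits; you have simply spelled out the descent bookkeeping that this hint presupposes. One trivial correction: the definition of flatness you invoke at the end is in the ``DG Schemes'' subsubsection of \secref{ss:conventions}, not \secref{sss:DG categories}.
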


We refer the reader to \cite[Sect. 1.2.1]{Lu2}, 
for a review of the notion of left-completeness of a t-structure.

\medskip

For the proof of the lemma, see \cite[Cor. 5.2.4]{QCoh}. One first reduces to the case where
$\CY$ is an affine DG scheme. In this case $\QCoh(\CY )$ is
left-complete because it admits a conservative t-exact functor to $\Vect$ that commutes
with limits, namely, $\Gamma(\CY,-)$. 

\begin{rem}
Let $\CY$ be an algebraic stack. It is easy to see that the category $\QCoh(\CY)^\heartsuit$ identifies
with $\QCoh({}^{cl}\CY)^\heartsuit$. 
\end{rem}

\begin{rem}    \label{r:left-completion}
Suppose again that $\CY$ is classical and algebraic. Suppose in addition that the diagonal
morphism $\CY\to \CY\times \CY$ is affine. In this case, it is easy to to show that
$\QCoh(\CY)^+$ is canonically equivalent to $D(\QCoh(\CY)^\heartsuit)^+$, see \footnote{Here by $D(\CA)$
for an abelian category $\CA$ we mean the canonical DG category, whose homotopy category is the derived 
category of $\CA$, see \cite[Sect. 1.3.4]{Lu2}.} \cite[Prop. 5.4.3]{QCoh}. It follows
from \lemref{properties of t} that the entire $\QCoh(\CY)$ can be recovered as 
the left completion of $D(\QCoh(\CY)^\heartsuit)$. 
At least, in characteristic $p>0$ it can happen that $D(\QCoh(\CY)^\heartsuit)$ itself is not left-complete 
(e.g., A.~Neeman \cite{Ne-private} showed this if $\CY$ is the classifying stack of the additive group over a field of characteristic $p>0$). 
However, it is easy to formulate sufficient conditions for $D(\QCoh(\CY)^\heartsuit)$ to  be left-complete: for example, this happens when 
$\QCoh(\CY)^\heartsuit$ is generated by (every object of $\QCoh(\CY)^\heartsuit$ is a
filtered colimit of quotients of) objects having finite cohomological dimension. 
E.g., this tautologically happens when $\CY$ is an affine DG scheme, or more
generally, a quasi-projective scheme. From here one deduces that  
this is also true for any $\CY$ of the form $Z/G$, where $Z$ is a quasi-projective
scheme, and $G$ is an affine algebraic group acting linearly on $Z$, provided we are
in characteristic $0$.
\end{rem}

\begin{rem}
If $\CY$ is an algebraic stack, which is not classical, then for two objects
$$\CF_1,\CF_2\in \QCoh(\CY)^\heartsuit\simeq \QCoh({}^{cl}\CY)^\heartsuit$$
the Exts between these objects computed in $\QCoh(\CY)$ and $\QCoh({}^{cl}\CY)$ 
will, of course, be different. \footnote{Sam Raskin points out that the latter observation may serve as
an entry point to the world of derived algebraic geometry for those not a priori
familiar with it: we start with the abelian category $\QCoh({}^{cl}\CY)^\heartsuit$, and the data
of $\CY$ encodes a way to promote it to a DG category, namely, $\QCoh(\CY)$.}
\end{rem}

\ssec{Direct images for quasi-coherent sheaves}  \label{dir im}

\sssec{}  \label{s:dir im gen}

Let $\pi:\CY_1\to \CY_2$ be a morphism between prestacks.
We have a tautologically defined (continuous) functor
$$\pi^*:\QCoh(\CY_2)\to \QCoh(\CY_1).$$
By the adjoint functor theorem (\cite[Cor. 5.5.2.9]{Lu1}), $\pi^*$ admits a right adjoint, denoted $\pi_*$.
However, in general, $\pi_*$ is \emph{not} continuous, i.e., it does \emph{not} commute with colimits. 

\medskip 

For $\CY\in \on{PreStk}$ and $p_\CY:\CY\to \on{pt}$ we shall also use the notation
$$\Gamma(\CY,-):=(p_\CY)_*.$$

\begin{rem}
In fact, $\pi_*$ defined above, is a pretty ``bad" functor. E.g., it does \emph{not} satisfy base change
(see \secref{sss:base change qc} below for what tis means). Neither does is satisfy the projection formula
(see \secref{sss:proj formula qc} for what this means), even for open embeddings.  
One of the purposes of this paper is to give conditions on $\pi$ that ensure that 
$\pi_*$ is continuous and has other nice properties.
\end{rem}

\sssec{Base change}  \label{sss:base change qc}

Let $\phi_2:\CY'_2\to \CY_2$ be another map of prestacks. Consider the Cartesian diagram
$$
\CD
\CY'_1  @>{\phi_1}>>  \CY_1 \\
@V{\pi'}VV    @VV{\pi}V   \\
\CY'_2  @>{\phi_2}>>  \CY_2.
\endCD
$$

By adjunction, for $\CF_1\in \QCoh(\CY_1)$ we obtain a morphism 
\begin{equation} \label{e:base change morphism qc}
\phi_2^*\circ \pi_*(\CF_1)\to \pi'_*\circ \phi_1^*(\CF_1).
\end{equation}

\begin{defn} \label{defn:base change qc} \hfill

\smallskip

\noindent{\em(a)} The triple $(\phi_2,\CF_1,\pi)$ satisfies
base change if the map \eqref{e:base change morphism qc} is an isomorphism.

\smallskip

\noindent{\em(b)} The pair $(\CF_1,\pi)$ satisfies
base change if \eqref{e:base change morphism qc} is an isomorphism for any $\phi_2$.

\smallskip

\noindent{\em(c)} The morphism $\pi$ satisfies base change if 
\eqref{e:base change morphism qc} is an isomorphism for any $\phi_2$ and $\CF_1$.

\end{defn}

\sssec{}    \label{sss:bootstrap base change}

Let us observe the following:

\begin{prop} \label{p:bootstrap base change}
Given $\pi:\CY_1\to \CY_2$, for $\CF_1\in \QCoh(\CY_1)$ the following conditions are equivalent:

\smallskip

\noindent {\em(i)} $(\CF_1,\pi)$ satisfies base change.

\smallskip

\noindent {\em(ii)} $(\phi_2,\CF_1,\pi)$ satisfies base change whenever $\CY_2=S_2\in \on{DGSch}^{\on{aff}}$.

\smallskip

\noindent {\em(iii)} For any $S'_2\overset{f_2}\to S_2\overset{g_2}\to \CY_2$ with $S_2,S'_2\in \on{DGSch}^{\on{aff}}$,
the triple $(f_2,\CF_{S,1},\pi_S)$ satisfies base change, where
$\CF_{S,1}:=\CF_1|_{S_2\underset{\CY_2}\times \CY_1} \text{ and } \pi_S:S_2\underset{\CY_2}\times \CY_1\to S_2$. 

\end{prop}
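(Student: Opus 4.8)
\textbf{Proof plan for Proposition~\ref{p:bootstrap base change}.}
The implications (i)$\Rightarrow$(ii)$\Rightarrow$(iii) are essentially tautological: (ii) is the special case of (i) where the test map $\phi_2$ has affine DG-scheme source, and (iii) is obtained from (ii) by first base-changing $\pi$ along $g_2:S_2\to\CY_2$ (using that base change along a composite factors as a composite of base changes, so the statement for $\pi_S$ and $f_2$ is a consequence of the statement for $\pi$ and $g_2\circ f_2$ together with the statement for $\pi$ and $g_2$). So the entire content is the implication (iii)$\Rightarrow$(i). The plan is to prove this by writing an arbitrary test map $\phi_2:\CY'_2\to\CY_2$ as a limit of affine DG schemes mapping to $\CY_2$ and checking that both sides of \eqref{e:base change morphism qc} are computed ``pointwise'' on such a presentation.

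First I would fix $\phi_2:\CY'_2\to\CY_2$ and form the Cartesian square as in \secref{sss:base change qc}. By the definition of $\QCoh$ of a prestack as a limit over $(\on{DGSch}^{\on{aff}})_{/\CY'_2}$, it suffices to check that \eqref{e:base change morphism qc} becomes an isomorphism after pullback along every $h:S'_2\to\CY'_2$ with $S'_2\in\on{DGSch}^{\on{aff}}$, since $\QCoh(\CY'_2)\to\prod_{S'_2}\QCoh(S'_2)$ is conservative. So set $g_2:=\phi_2\circ h:S'_2\to\CY_2$, and let $\pi_S:S'_2\underset{\CY_2}\times\CY_1\to S'_2$ be the base change of $\pi$; note $S'_2\underset{\CY_2}\times\CY_1\simeq S'_2\underset{\CY'_2}\times\CY'_1$. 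Pulling the square back along $h$, the map \eqref{e:base change morphism qc} for $(\phi_2,\CF_1,\pi)$, restricted to $S'_2$, is identified with the base-change map \eqref{e:base change morphism qc} for the triple $(\mathrm{id}_{S'_2},\CF_1|_{S'_2\underset{\CY_2}\times\CY_1},\pi_S)$ — i.e., the question is whether $\pi_S$ itself satisfies base change along the identity, which is automatic — \emph{provided} one knows that $\pi_*$ and $\pi'_*$ commute with the relevant pullbacks. This is the crux: pullback of $\pi_*(\CF_1)$ along $h$ need not agree with $(\pi_S)_*$ of the pullback, because $\pi_*$ is not assumed continuous. The way around this is exactly hypothesis (iii): it says precisely that base change holds for $\pi$ (after any affine base change $g_2$) along any further affine map, which is what lets us reduce the computation of $\phi_2^*\pi_*(\CF_1)$ on $S'_2$ to $(\pi_S)_*$.

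Concretely, the argument I would run is: present $S'_2\to\CY_2$ and use that $\QCoh(\CY_2)=\underset{\longleftarrow}{\on{lim}}\,\QCoh(S_2)$ over $(\on{DGSch}^{\on{aff}})_{/\CY_2}$, so $\CF_1$ and all the structure functors are controlled by their restrictions to affines $S_2\to\CY_2$; meanwhile $\CY'_2$ likewise. Given the test point $h:S'_2\to\CY'_2$ we get $g_2:S'_2\to\CY_2$ and the further test maps amount to maps $f_2:S'_2\to S_2$ over $\CY_2$ with $S_2$ affine. Since $S'_2\to\CY_2$ factors through affine $S_2$ in a cofinal way (the comma category of such factorizations is filtered, as $\CY_2$ has affine — in fact schematic, quasi-separated, quasi-compact — diagonal is not even needed here; one only needs that $(\on{DGSch}^{\on{aff}})_{/\CY_2}$ has the appropriate cofinality, cf.\ \secref{sss:change index qc}), evaluating $\phi_2^*\pi_*(\CF_1)$ at $h$ reduces to evaluating $f_2^*(\pi_{S_2})_*(\CF_{S_2,1})$, which by (iii) equals $(\pi_{S'_2})_*(f_2^*\CF_{S_2,1})=(\pi_{S'_2})_*(\CF_{S'_2,1})$; and this, tracing through the adjunctions, is exactly the target $\pi'_*\phi_1^*(\CF_1)$ evaluated at $h$. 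Hence \eqref{e:base change morphism qc} is an isomorphism after restriction to every $S'_2$, so it is an isomorphism, giving (i).

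I expect the main obstacle to be purely bookkeeping of the kind that is painful to get exactly right at the $\infty$-categorical level: namely checking that the natural transformation \eqref{e:base change morphism qc}, which is defined via the unit/counit of the $(\pi^*,\pi_*)$ adjunction on prestacks, is \emph{compatible} with the identifications $\QCoh(\CY)=\underset{\longleftarrow}{\on{lim}}\,\QCoh(S)$ — i.e., that restricting the base-change map to an affine test scheme genuinely produces the base-change map for the base-changed morphism, coherently in the test scheme. The honest way to do this is to observe that the whole package ($\QCoh$ as a functor out of correspondences of prestacks, with the base-change $2$-morphisms) is right Kan extended from affine DG schemes, so that any such base-change statement is forced by its restriction to affines; once that formal input is in place, (iii)$\Rightarrow$(i) is immediate and everything else is the tautology noted at the start.
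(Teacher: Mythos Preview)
Your approach is correct and follows the same strategy as the paper: the implications (i)$\Rightarrow$(ii)$\Rightarrow$(iii) are formal, and for (iii)$\Rightarrow$(i) the crux is to show that the abstractly-defined $\pi_*(\CF_1)$ restricts on each affine $g_2:S_2\to\CY_2$ to $(\pi_{S_2})_*(\CF_{S_2,1})$. You identify this correctly.

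Where your execution diverges from the paper is worth flagging. You try to compute $(g_2)^*\pi_*(\CF_1)$ top-down, via a cofinality-of-factorizations argument (and, alternatively, by invoking a right-Kan-extension-from-affines framework for the correspondence formalism). The cofinality step as written does not quite land: your test object $S'_2$ is already affine, so the category of factorizations of $S'_2\to\CY_2$ through affines has $S'_2$ itself as terminal object, and you are thrown back on needing $(g_2)^*\pi_*(\CF_1)\simeq(\pi_{S'_2})_*(\CF_{S'_2,1})$ --- exactly what is to be proved. The Kan-extension framing would work but requires more machinery than is needed here.

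The paper goes bottom-up instead: hypothesis (iii) says precisely that the assignment $(S_2,g_2)\mapsto(\pi_{S_2})_*(\CF_{S_2,1})$ is compatible under pullback along maps in $(\on{DGSch}^{\on{aff}})_{/\CY_2}$, hence assembles into an object $\pi_{*,?}(\CF_1)\in\QCoh(\CY_2)$ via the limit description \eqref{e:QCoh as limit}. One then checks directly that this object satisfies
\[
\CMaps_{\QCoh(\CY_2)}(\CF_2,\pi_{*,?}(\CF_1))\simeq\CMaps_{\QCoh(\CY_1)}(\pi^*(\CF_2),\CF_1),
\]
so $\pi_{*,?}(\CF_1)\simeq\pi_*(\CF_1)$ by uniqueness of the right adjoint. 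This immediately gives $g_2^*\pi_*(\CF_1)\simeq(\pi_{S_2})_*(\CF_{S_2,1})$ for every affine $g_2$. Running the same argument for $\pi':\CY'_1\to\CY'_2$ (condition (iii) for $\pi$ is inherited by $\pi'$) and comparing over each affine $S'_2\to\CY'_2$ finishes the proof. This route avoids both the cofinality circularity and any correspondence-category input.
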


\begin{proof}

Clearly (i) $\Rightarrow$ (ii) $\Rightarrow$ (iii). Suppose that $(\CF_1,\pi)$ satisfies (iii). Consider the assignment
$$(S_2\in (\on{DGSch}^{\on{aff}})_{/\CY_2})\rightsquigarrow (\pi_{S})_*(\CF_{S,1}).$$
The assumption implies that this assignment defines an object $\pi_{*,?}(\CF_1)\in \QCoh(\CY_2)$. Moreover, it is easy to
see that this object is equipped with a functorial isomorphism
$$\CMaps_{\QCoh(\CY_2)}(\CF_2,\pi_{*,?}(\CF_1))\simeq \CMaps_{\QCoh(\CY_1)}(\pi^*(\CF_2),\CF_1),\quad
\CF_2\in \QCoh(\CY_2).$$

Hence, $\pi_{*,?}(\CF_1)\simeq \pi_*(\CF_1)$, and thus
\begin{equation} \label{e:base change with scheme}
g_2^*(\pi_*(\CF_1))\simeq (\pi_S)_*(\CF_{S,1}).
\end{equation}

\medskip

By the same logic, for any $\phi_2:\CY'_2\to \CY_2$, and $g'_2:S'_2\to \CY'_2$, we obtain that
$$(g'_2)^*(\pi'_*\circ \phi_1^*(\CF_1))\simeq (\pi_{S'})_*(\CF_{S',1}),$$
where 
$\CF_{S',1}:=\CF_1|_{S'_2\underset{\CY'_2}\times \CY'_1} \text{ and } \pi_{S'}:S'_2\underset{\CY'_2}\times \CY'_1\to S'_2$. 
Hence, applying \eqref{e:base change with scheme} to the map $$g'_2\circ \phi_2:S'_2\to \CY_2,$$ we obtain
$$(g'_2)^*(\pi'_*\circ \phi_1^*(\CF_1))\simeq (\pi_{S'})_*(\CF_{S',1})\simeq (g'_2\circ \phi_2)^*(\pi_*(\CF_1))=(g'_2)^*(\phi_2^*\circ \pi_*(\CF_1)),$$
as required.
 
\end{proof}

\sssec{Projection formula}  \label{sss:proj formula qc}

Let $\pi:\CY_1\to \CY_2$ be as above. For $\CF_i\in \QCoh(\CY_i)$ by adjunction we have a canonically defined map
\begin{equation} \label{e:proj formula morphism qc}
\CF_2\otimes \pi_*(\CF_1)\to \pi_*(\pi^*(\CF_2)\otimes \CF_1).
\end{equation}

\begin{defn} \label{defn:proj formula qc} \hfill

\smallskip

\noindent{\em(a)} The triple $(\CF_1,\CF_2,\pi)$ satisfies the projection formula
if the map \eqref{e:proj formula morphism qc} is an isomorphism.

\smallskip

\noindent{\em(b)} The pair $(\CF_2,\pi)$ satisfies
the projection formula if \eqref{e:base change morphism qc} is an isomorphism for any $\CF_1$.

\smallskip

\noindent{\em(c)} The pair $(\CF_1,\pi)$ satisfies
the projection formula if \eqref{e:base change morphism qc} is an isomorphism for any $\CF_2$.

\smallskip

\noindent{\em(d)} The morphism $\pi$ satisfies
the projection formula if \eqref{e:base change morphism qc} is an isomorphism for any $\CF_1$ and $\CF_2$.

\end{defn}

We also give the following definition:

\begin{defn}  \label{defn:strong proj formula qc} 
The morphism $\pi$ strongly satisfies the projection formula if it satisfies base change
and for every $S_2\in (\on{DGSch}^{\on{aff}})_{/\CY_2}$, the morphism 
$$\pi_S:S_2\underset{\CY_2}\times \CY_1\to S_2$$ 
satisfies the projection formula. 
\end{defn}

It is easy to see as in \propref{p:bootstrap base change} that if 
$\pi$ strongly satisfies the projection formula, then it satisfies the projection formula.

\sssec{}   \label{sss:easy case}

Suppose for a moment that $\pi$ is schematic, quasi-separated and quasi-compact. In this
case, from \propref{p:bootstrap base change}, we obtain that $\pi$ strongly satisfies projection
formula. 

\medskip

In particular, for $\pi$ schematic, quasi-separated and quasi-compact, we obtain the following explicit description of $\pi_*(\CF_1)$
for $\CF_1\in \QCoh(\CY_1)$. Namely, for $(S_2,g_2)\in (\on{DGSch}^{\on{aff}})_{/\CY_2}$,
we have
$$g_2^*(\CF_2)\simeq (\pi_S)_*(g_1^*(\CF_1))$$
for the morphisms as in the following Cartesian diagram
$$
\CD
S_1  @>{g_1}>>  \CY_1 \\
@V{\pi'}VV    @VV{\pi}V   \\
S_2  @>{g_2}>>  \CY_2.
\endCD
$$

\begin{rem} \label{r:strong proj formula qc}
From the above observation for schematic, quasi-separated and quasi-compact morphisms combined 
with the implication (iii) $\Rightarrow$ (i) in \propref{p:bootstrap base change}, we obtain that in the 
Definition \ref{defn:strong proj formula qc}, the condition that $\pi$ 
should satisfy base change is automatic. Indeed, in the notation of the proof of \propref{p:bootstrap base change},
express $(\phi_2)_*(\phi_2^*(-))$ as $(\phi_2)_*(\CO_{S'_2})\otimes -$, and similarly for the morphism
$S'_2\underset{\CY_2}\times \CY_1\to S_2\underset{\CY_2}\times \CY_1$. 

\end{rem}

\sssec{}

Assume now that $\CY_2$ is an algebraic stack. Note that any map from an affine (or, more generally, quasi-separated and quasi-compact)
DG scheme to $\CY_2$ is schematic, quasi-separated and quasi-compact. This observation reduces the calculation of $\pi_*$
to one in \secref{sss:easy case}. Namely, we have:

\begin{lem}  \label{l:taut dir im}
Let $A$ be a category mapping to $\on{DGSch}_{/\CY_1}$ \emph{(}respectively, $(\on{DGSch}_{\on{qs-qc}})_{/\CY_1}$\emph{;} 
$(\on{DGSch}^{\on{aff}})_{/\CY_1}$\emph{)}
as in \secref{sss:change index qc}.  Then for every
$\CF_1\in \QCoh(\CY_1)$ we have
$$\pi_*(\CF_1)\simeq \underset{a\in A^{\on{op}}}{\underset{\longleftarrow}{lim}}\, (\pi\circ g_a)_*(g_a^*(\CF_1)).$$
\end{lem}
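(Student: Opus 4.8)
The plan is to pin down $\pi_*(\CF_1)$ by computing the functor on $\QCoh(\CY_2)$ that it represents, and then to apply the Yoneda lemma. First I would recall that, by the choice of $A$ as in \secref{sss:change index qc}, the functors $g_a^*$ exhibit an equivalence $\QCoh(\CY_1)\iso \underset{a\in A^{\on{op}}}{\lim}\,\QCoh(S_a)$ in $\StinftyCat_{\on{cont}}$; consequently, for any $\CF,\CF'\in\QCoh(\CY_1)$ one has $\CMaps_{\QCoh(\CY_1)}(\CF,\CF')\simeq\underset{a\in A^{\on{op}}}{\lim}\,\CMaps_{\QCoh(S_a)}(g_a^*(\CF),g_a^*(\CF'))$, since mapping spaces in a limit of $\infty$-categories are computed termwise. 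Taking $\CF=\pi^*(\CF_2)$ for $\CF_2\in\QCoh(\CY_2)$ and $\CF'=\CF_1$, and using the $(\pi^*,\pi_*)$-adjunction together with the identification $g_a^*\circ\pi^*\simeq(\pi\circ g_a)^*$, I obtain, naturally in $\CF_2$,
$$\CMaps_{\QCoh(\CY_2)}(\CF_2,\pi_*(\CF_1))\ \simeq\ \underset{a\in A^{\on{op}}}{\lim}\,\CMaps_{\QCoh(S_a)}\bigl((\pi\circ g_a)^*(\CF_2),\,g_a^*(\CF_1)\bigr).$$

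Next I would apply, termwise in $a$, the $((\pi\circ g_a)^*,(\pi\circ g_a)_*)$-adjunction. Since the functors $(\pi\circ g_a)^*$ are the structure functors of the limit diagram, by uniqueness of adjoints the functors $(\pi\circ g_a)_*$ assemble into a diagram indexed by $A^{\on{op}}$ and the adjunction equivalences are natural in $a$, so the right-hand side above is $\underset{a\in A^{\on{op}}}{\lim}\,\CMaps_{\QCoh(\CY_2)}\bigl(\CF_2,(\pi\circ g_a)_*(g_a^*(\CF_1))\bigr)$. As $\QCoh(\CY_2)$ is cocomplete, hence complete (see \secref{sss:dg functors}), the object $\underset{a\in A^{\on{op}}}{\lim}\,(\pi\circ g_a)_*(g_a^*(\CF_1))$ exists in $\QCoh(\CY_2)$ and this limit of mapping spaces equals $\CMaps_{\QCoh(\CY_2)}\bigl(\CF_2,\underset{a\in A^{\on{op}}}{\lim}\,(\pi\circ g_a)_*(g_a^*(\CF_1))\bigr)$. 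Thus the functors $\CF_2\mapsto\CMaps_{\QCoh(\CY_2)}(\CF_2,\pi_*(\CF_1))$ and $\CF_2\mapsto\CMaps_{\QCoh(\CY_2)}(\CF_2,\underset{a\in A^{\on{op}}}{\lim}\,(\pi\circ g_a)_*(g_a^*(\CF_1)))$ agree, and the Yoneda lemma gives the desired identification. Under the running hypothesis that $\CY_2$ is algebraic, each $\pi\circ g_a$ is schematic, quasi-separated and quasi-compact --- at least once one has, if necessary, Zariski-refined $A$ so that the $S_a$ are quasi-compact and quasi-separated, which leaves the limit unchanged by Zariski descent for $\QCoh$ --- so that the individual terms $(\pi\circ g_a)_*(g_a^*(\CF_1))$ are computed by the \emph{explicit} pushforwards of \secref{sss:easy case}; this is what makes the formula useful in practice.

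The step I expect to require the most care is the termwise passage through adjunctions in the second paragraph: one must verify that $a\mapsto(\pi\circ g_a)_*(g_a^*(\CF_1))$, together with the chain of adjunction equivalences, constitutes a genuine morphism of diagrams $A^{\on{op}}\to\inftygroup$, and not merely a pointwise family of equivalences. Here I would invoke the standard fact that from a diagram of left adjoint functors one obtains a diagram of their right adjoints functorially, noting in addition that for schematic quasi-separated quasi-compact morphisms to $\CY_2$ the pushforwards admit the concrete description of \secref{sss:easy case} and compose strictly. An alternative route that avoids mapping spaces altogether: the limit presentation of $\QCoh(\CY_1)$ yields the gluing isomorphism $\CF_1\simeq\underset{a\in A^{\on{op}}}{\lim}\,(g_a)_*(g_a^*(\CF_1))$, where $(g_a)_*$ is the right adjoint of $g_a^*$; applying $\pi_*$, which commutes with limits being a right adjoint, and using $\pi_*\circ(g_a)_*\simeq(\pi\circ g_a)_*$, one deduces the assertion directly.
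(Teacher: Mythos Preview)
Your proof is correct and follows essentially the same approach as the paper: compute $\CMaps_{\QCoh(\CY_2)}(\CF_2,\pi_*(\CF_1))\simeq\CMaps_{\QCoh(\CY_1)}(\pi^*(\CF_2),\CF_1)$ via the limit presentation of $\QCoh(\CY_1)$ over $A^{\on{op}}$, then apply the $((\pi\circ g_a)^*,(\pi\circ g_a)_*)$-adjunction termwise and conclude by Yoneda. The paper's proof is a two-line version of your first two paragraphs; your added discussion of why the termwise adjunctions assemble coherently, and your alternative route via $\CF_1\simeq\underset{a}{\lim}\,(g_a)_*g_a^*(\CF_1)$ and commutation of $\pi_*$ with limits, are both sound elaborations but not required.
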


\begin{proof}
For any $\CF_2\in \QCoh(\CY_2)$ one has
\begin{multline*}
\CMaps_{\QCoh(\CY_1)}(\pi^*(\CE_2) ,\CF_1)\simeq \underset{a\in A^{\on{op}}}{\underset{\longleftarrow}{lim}}\, 
\CMaps_{\QCoh(\CY_1)}(g_a^*\circ \pi^*(\CF_2) ,g_a^*(\CF_1))\simeq \\
\simeq \underset{a\in A^{\on{op}}}{\underset{\longleftarrow}{lim}}\,  \CMaps_{\QCoh(\CY_2)}\left(\CF_2,(\pi\circ g_a)_*(g_a^*(\CF_1))\right),
\end{multline*}
as required.
\end{proof}

\begin{rem}
Inverse limits in $\QCoh(\CY)$ exist for formal 
(i.e., set-theoretical) reasons, see \secref{sss:dg functors}.  We emphasize
that they are \emph{not} computed naively, i.e., the value of an inverse limit on $S$
mapping to $\CY$ is not in general isomorphic to the inverse limit of values.
\end{rem}

A particularly useful special case of \lemref{l:taut dir im} is the following: 

\begin{cor}  \label{c:direct image via Cech}
Suppose that in the situation of \lemref{l:taut dir im}, $\CY_1$ is an algebraic stack, and let 
$f:Z\to \CY_1$ be an fppf atlas. Let $Z^\bullet/{\CY_1}$ be its 
\v{C}ech nerve. Consider the morphisms $f^i:Z^i/{\CY_1}\to \CY_1$
and set $f^\bullet:=\{f^i\}$. Then
\begin{equation}   \label{e:direct image via Cech}
\pi_*(\CF)\simeq \on{Tot}\left((\pi \circ f^\bullet)_*((f^\bullet)^*(\CF))\right). 
\end{equation}
\end{cor}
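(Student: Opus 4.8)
The plan is to apply \lemref{l:taut dir im} with a particularly convenient choice of the index category $A$, namely the one coming from the \v{C}ech nerve of the atlas $f\colon Z\to\CY_1$. First I would recall that, as explained in \secref{sss:change index qc}(iii), for an algebraic stack $\CY_1$ with fppf atlas $f\colon Z\to\CY_1$ one may replace $\on{DGSch}_{/\CY_1}$ by the \v{C}ech nerve $Z^\bullet/\CY_1$ in the definition of $\QCoh(\CY_1)$, the equivalence being a consequence of fppf descent for $\QCoh$ on DG schemes. Concretely this means that the simplicial object $\Delta^{\on{op}}\to \on{DGSch}_{/\CY_1}$, $[i]\mapsto (Z^i/\CY_1, f^i)$, qualifies as an admissible indexing datum in the sense of \secref{sss:change index qc}: the induced functor on limits is an equivalence.

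Next I would simply feed this choice of $A = \Delta^{\on{op}}$ into \lemref{l:taut dir im}. That lemma gives
$$\pi_*(\CF)\simeq \underset{a\in A^{\on{op}}}{\underset{\longleftarrow}{lim}}\, (\pi\circ g_a)_*(g_a^*(\CF)),$$
and with $A^{\on{op}} = \Delta$ and $g_a = f^i$ the right-hand side is by definition the totalization (the limit over $\Delta$) of the cosimplicial object $[i]\mapsto (\pi\circ f^i)_*((f^i)^*(\CF))$, i.e. $\on{Tot}\bigl((\pi\circ f^\bullet)_*((f^\bullet)^*(\CF))\bigr)$. This is exactly \eqref{e:direct image via Cech}. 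One small point worth spelling out: \lemref{l:taut dir im} as stated requires $A$ to map to $\on{DGSch}_{/\CY_1}$ (or its quasi-separated-quasi-compact or affine variants); in the \v{C}ech-nerve case each $Z^i/\CY_1$ is an actual DG scheme because the diagonal of $\CY_1$ is schematic (so the fibre products $Z\times_{\CY_1}\cdots\times_{\CY_1}Z$ are DG schemes), and if one wants the quasi-separated-quasi-compact or affine versions one takes $f$ to be an fppf atlas by an affine (resp. quasi-compact) $Z$, which exists since $\CY_1$ is quasi-compact.

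I do not anticipate a genuine obstacle here: the corollary is a direct specialization of \lemref{l:taut dir im} together with the already-cited fact (\secref{sss:change index qc}(iii)) that the \v{C}ech nerve computes $\QCoh(\CY_1)$. The only thing that requires a line of care is checking that the \v{C}ech nerve is a legitimate choice of index datum to which \lemref{l:taut dir im} applies — that is, that it is cofinal/admissible in the precise sense used there — but this is precisely what fppf descent for $\QCoh$ provides, and it was invoked already in \secref{sss:change index qc}(iii). Hence the proof is essentially a two-line citation-and-unwind argument.
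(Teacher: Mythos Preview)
Your proposal is correct and matches the paper's approach exactly: the paper presents this corollary as an immediate special case of \lemref{l:taut dir im} (with no proof given), obtained by taking $A$ to be the \v{C}ech nerve via \secref{sss:change index qc}(iii), which is precisely what you do. Your additional remarks on why the terms $Z^i/\CY_1$ are DG schemes and on quasi-compactness are fine but unnecessary, since \lemref{l:taut dir im} already allows $A$ to map to the full $\on{DGSch}_{/\CY_1}$.
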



\sssec{The bounded below part}

Let $\QCoh(\CY)^+$ be the bounded below (a.k.a. eventually coconnective) part
of $\QCoh(\CY)$, i.e.,
$$\QCoh(\CY)^+:=\underset{n\in \BN}\cup\, \QCoh(\CY)^{\geq -n}.$$

We claim:

\begin{cor}  \label{c:coconnective part} Let $\pi :\CY_1\to\CY_2$ be a quasi-separated and quasi-compact morphism between algebraic stacks.

\smallskip

\noindent{\em(a)}
The functor $$\pi_*:\QCoh(\CY_1)^{\geq -n}\to \QCoh(\CY_2)^{\geq -n}$$
commutes with colimits. 

\smallskip

\noindent{\em(b)} For any $\CF_1\in \QCoh(\CY_1)^+$, the pair $(\CF_1,\pi)$ satisfies
base change with respect to morphisms $\CY'_2\to \CY_2$ that are locally 
of bounded Tor-dimension.

\smallskip

\noindent{\em(c)} For any $\CF_1\in \QCoh(\CY_1)^+$, and for $\CF_2\in \QCoh(\CY_2)^+$ 
\emph{locally of bounded Tor-dimension}, the triple $(\CF_1,\CF_2,\pi)$ satisfies
the projection formula.

\end{cor}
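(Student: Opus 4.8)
The plan is to deduce all three statements from the corresponding (classical) facts about quasi-separated quasi-compact morphisms of DG schemes, using the \v{C}ech presentation of $\pi_*$ through an atlas. First I would reduce to the case $\CY_2$ affine (hence $\CY_1$ quasi-compact, since $\pi$ is quasi-compact): all the assertions are fppf-local on $\CY_2$, because flat base change identifies $p^*\circ\pi_*$ with the composite of the (t-exact) flat pullback and $(\pi_U)_*$ --- where $\pi_U$ denotes the base change of $\pi$ --- for any flat $p\colon U\to\CY_2$, and the restrictions along a jointly surjective family of flat maps from affine DG schemes to $\CY_2$ are jointly conservative, t-exact and continuous. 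This instance of flat base change holds on bounded-below objects by the Tot-argument below, together with the unconditional base change for schematic quasi-separated quasi-compact morphisms (\secref{sss:easy case}). Since the base change of a morphism of bounded Tor-dimension is again of bounded Tor-dimension, the hypotheses of (b) and (c) survive this reduction.

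With $\CY_2$ affine, fix an fppf atlas $f\colon Z\to\CY_1$ with $Z$ a quasi-separated quasi-compact DG scheme; its \v{C}ech nerve $f^\bullet$ has each $Z^i/\CY_1$ quasi-separated and quasi-compact, so each $\pi\circ f^i\colon Z^i/\CY_1\to\CY_2$ is a quasi-separated quasi-compact morphism of DG schemes, and by \corref{c:direct image via Cech}
\[
\pi_*(\CF_1)\;\simeq\;\on{Tot}\bigl((\pi\circ f^\bullet)_*\,((f^\bullet)^*\CF_1)\bigr).
\]
Since $f^i$ is flat, $(f^i)^*$ is t-exact; and $\pi_*$, hence each $(\pi\circ f^i)_*$, is left t-exact (being right adjoint to a right t-exact functor). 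Therefore, for $\CF_1\in\QCoh(\CY_1)^{\geq -n}$ all terms of the cosimplicial object on the right lie in $\QCoh(\CY_2)^{\geq -n}$; consequently $\pi_*(\CF_1)\in\QCoh(\CY_2)^{\geq -n}$, and for every $m$ the truncation $\tau^{\leq m}\pi_*(\CF_1)$ coincides with $\tau^{\leq m}$ of a \emph{finite} partial totalization $\on{Tot}_{\leq N}$, $N\gg m+n$ (connectivity of the Tot-tower). This is the key mechanism: a finite limit commutes with filtered colimits, with exact functors, and --- up to a cohomological shift --- with $\phi_2^*$ for $\phi_2$ of bounded Tor-dimension and with $\CF_2\otimes-$ for $\CF_2$ of bounded Tor-dimension, whereas the infinite $\on{Tot}$ does not. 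Granting this, (a) reduces to the continuity of each $(\pi\circ f^i)_*$ on bounded-below objects --- classical for a quasi-separated quasi-compact morphism of DG schemes, such a pushforward being of bounded cohomological amplitude on the heart --- together with compatibility of the t-structure with filtered colimits (\lemref{properties of t}(a)); (b) reduces to the unconditional base change for the morphisms $\pi\circ f^i$ of DG schemes (\secref{sss:easy case}), after moving $\phi_2^*$ past the bounded-below $\on{Tot}$ on each truncation; and (c) reduces, similarly, to the unconditional projection formula for the $\pi\circ f^i$, using $(f^i)^*\pi^*\CF_2\otimes(f^i)^*\CF_1\simeq(f^i)^*(\pi^*\CF_2\otimes\CF_1)$ and that $\CF_2\otimes-$ has bounded cohomological amplitude.

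The one genuinely delicate point --- and the reason the hypotheses read as they do --- is the non-exchangeability of the infinite limit $\on{Tot}$, through which $\pi_*$ is computed, with the operations one must move past it: filtered colimits in (a), and the amplitude-bounded functors $\phi_2^*$ and $\CF_2\otimes-$ in (b) and (c). The resolution is that on \emph{bounded-below} objects the totalization is pro-finite, in the sense that each cohomological truncation is computed by a finite partial totalization, which does commute with all of these. Boundedness below of $\CF_1$ is precisely what makes the relevant totalization bounded below; bounded Tor-dimension of $\phi_2$ (resp.\ of $\CF_2$) is precisely what lets $\phi_2^*$ (resp.\ $\CF_2\otimes-$) interact correctly with the truncations. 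Without the hypothesis ``bounded below'' the statements genuinely fail: already $\Gamma(B\BG_a,-)$ on $\QCoh(B\BG_a)$ is neither continuous nor of bounded cohomological amplitude.
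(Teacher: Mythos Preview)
Your argument is correct and follows the same approach as the paper: reduce to $\CY_2$ affine (the paper does this tersely via \propref{p:bootstrap base change} and the componentwise nature of colimits in a limit of categories), express $\pi_*$ via the \v{C}ech nerve of an atlas (\corref{c:direct image via Cech}), and use that on bounded-below input each cohomology of the totalization is computed by a finite partial $\on{Tot}_{\leq m}$, which commutes with filtered colimits and with functors of bounded amplitude.

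One inessential slip in your closing remark: in characteristic $0$ (the paper's setting), $B\BG_a$ is QCA and $\Gamma(B\BG_a,-)$ on $\QCoh(B\BG_a)$ \emph{is} continuous and of bounded cohomological amplitude---indeed $H^i(\BG_a,k)=0$ for $i\geq 2$---so it is not a counterexample; the bounded-below hypothesis in this corollary reflects only the reach of the Tot-tower argument, and \thmref{main} later removes it entirely for QCA stacks. (You may be thinking of characteristic $p$, cf.\ Remark~\ref{r:left-completion}, or of the genuinely discontinuous $\Gamma_{\on{dR}}(B\BG_m,-)$ on $\Dmod$.)
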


\begin{proof}

As in \propref{p:bootstrap base change}, it is easy to see that we can assume that 
$\CY_2=S$ is an affine DG scheme. \footnote{For point (a) we are using the fact that
in a limit of DG categories $\underset{i}{\underset{\longleftarrow}{lim}}\, \bC_i$, where
the transition functors are continuous, colimits of objects are calculated component-wise.}

\medskip

To prove point (a), it suffices to show that for each $i\in\BZ$ the functor 
$$H^i(\pi_*):\QCoh(\CY_1)^{\geq -n}\to\QCoh(S)^\heartsuit$$
commutes with filtered colimits. By assumption, $\CY_1$ is quasi-compact;
hence it admits an atlas $f:Z\to \CY_1$ with
$Z$ being an affine DG scheme. (In fact, all we need for the argument
below is that $f$ be quasi-separated and quasi-compact.) Since $\CY_1$
is quasi-separated, we obtain that all the terms of the \v{C}ech nerve 
$Z^\bullet/\CY_1$ are also quasi-separated and quasi-compact. 

\medskip

Let us apply \corref{c:direct image via Cech}.
The functors $(\pi \circ f^i)_*\circ (f^i)^*$ from the RHS of \eqref{e:direct image via Cech} commute with 
colimits because the morphisms $\pi\circ f^i:Z^i\to \CY_2$ are schematic, quasi-separated and quasi-compact. 
So for each $m\in\BN$ the functor 
$$\CF\mapsto \on{Tot}_{\le m}\left((\pi \circ f^\bullet)_*((f^\bullet)^*(\CF))\right)$$ commutes with 
colimits. But if $\CF\in\QCoh(\CY_1)^{\geq -n}$ then for each $m>i+n$ the morphism
$\on{Tot}\to\on{Tot}_{\le m}$ induces an isomorphism
$$H^i(\on{Tot}_{\le m}\left((\pi \circ f^\bullet)_*((f^\bullet)^*(\CF))\right))\iso
H^i(\on{Tot}\left((\pi \circ f^\bullet)_*((f^\bullet)^*(\CF))\right)).$$
So $H^i(\pi_*):\QCoh(\CY_1)^{\geq -n}\to\QCoh(\CY)^\heartsuit$
commutes with filtered colimits.

\medskip

Points (b) and (c) of the proposition follow similarly. 
\end{proof}

\ssec{Statements of the results on $\QCoh (\CY )$}   \label{ss:statements}

\sssec{The main result}
The following theorem will be proved in Sect.~\ref{s:deducing}:

\begin{thm} \label{main}
Let $\CY$ be a QCA algebraic stack. Then 
\begin{enumerate}
\item[(i)]
The functor $\CF\mapsto \Gamma(\CY,\CF):\QCoh(\CY)\to \Vect$ is continuous (i.e., it
commutes with colimits, equivalently, with filtered colimits, and equivalently, with infinite direct sums);

\item[(ii)]
There exists an integer $n$ (that depends only on $\CY$) such that $H^i\left(\Gamma(\CY,\CF)\right)=0$ for all $i>n$ and all $\CF\in \QCoh(\CY)^{\leq 0}$.
\end{enumerate}
\end{thm}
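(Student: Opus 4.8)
The plan is to prove part (ii) first, and then to deduce part (i) from it.

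For the implication (ii)$\Rightarrow$(i): assume (ii) holds with bound $n_0$. Since $\Gamma(\CY,-)$ is a right adjoint it is left t-exact, so together with (ii) it sends $\QCoh(\CY)^{\leq -m}$ into $\Vect^{\leq n_0-m}$ for every $m$. Given a filtered diagram $\{\CF_\alpha\}$ with colimit $\CF$, I would check that $\colim_\alpha\Gamma(\CY,\CF_\alpha)\to\Gamma(\CY,\CF)$ is an isomorphism on each $H^j$ separately: fixing $j$ and taking $m$ with $n_0-m<j$, the truncation triangle $\tau^{\leq -m-1}(-)\to(-)\to\tau^{\geq -m}(-)$ and the amplitude bound identify $H^j\Gamma(\CY,-)$ with $H^j\Gamma(\CY,\tau^{\geq -m}(-))$ when applied to $\CF_\alpha$ or to $\CF$; and then $\tau^{\geq -m}$ commutes with filtered colimits by \lemref{properties of t}(a), while $\Gamma(\CY,-)$ commutes with colimits on $\QCoh(\CY)^{\geq -m}$ by \corref{c:coconnective part}(a). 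The same bookkeeping, using left-completeness \lemref{properties of t}(b) and the convergent Postnikov spectral sequence, reduces (ii) for a general connective $\CF$ to the case $\CF\in\QCoh(\CY)^\heartsuit$, at the cost of replacing $n_0$ by $n_0+1$. So everything reduces to showing that $\Gamma(\CY,-)$ has finite cohomological dimension on $\QCoh(\CY)^\heartsuit$.

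To prove this last statement I would use a stratification. First, I would invoke (or establish) that a QCA algebraic stack admits a finite stratification by locally closed substacks, each of which is a gerbe over a quasi-compact scheme; passing to a finer stratification, each stratum may be taken of the form $Z/G$ with $Z$ a quasi-compact quasi-separated scheme and $G$ an affine algebraic group acting on it. Since open and closed substacks of QCA stacks are again QCA, induction on the (finitely many) strata reduces the finite-cohomological-dimension statement to the case of a single such stratum, provided one has the closed-open dévissage: for $i\colon\CX\hookrightarrow\CY$ closed with open complement $j\colon\CU\hookrightarrow\CY$, apply $\Gamma(\CY,-)$ to the triangle $\on{fib}(\CF\to j_*j^*\CF)\to\CF\to j_*j^*\CF$; on the right $\Gamma(\CY,j_*j^*\CF)\simeq\Gamma(\CU,j^*\CF)$ is controlled by the inductive bound for $\CU$, while on the left $\on{fib}(\CF\to j_*j^*\CF)$ is cohomologically bounded (because $j_*$ for a quasi-compact open immersion has finite cohomological dimension) and is a filtered colimit of objects pushed forward from infinitesimal thickenings of $\CX$, over which \corref{c:coconnective part}(a) permits commuting $\Gamma(\CY,-)$ past the colimit. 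Here one uses that cohomological dimension is a topological invariant, so the bounds for all the thickenings of $\CX$ coincide with the inductive bound for $\CX$.

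It remains to treat $\CY=Z/G$ with $Z$ a quasi-compact quasi-separated scheme and $G$ affine. Factor $\Gamma(Z/G,-)=\Gamma(BG,-)\circ q_*$, where $q\colon Z/G\to BG$ is the schematic, quasi-compact, quasi-separated morphism induced by $Z\to\on{pt}$; then $q_*$ has finite cohomological dimension (bounded via a finite affine cover of $Z$), so it suffices to bound $\Gamma(BG,-)$. Let $U$ be the unipotent radical of $G$, so $G/U$ is reductive, and factor $\Gamma(BG,-)=\Gamma(B(G/U),-)\circ r_*$ along $r\colon BG\to B(G/U)$, whose fibre is $BU$. Now $\Gamma(B(G/U),-)$ is the functor of $(G/U)$-invariants, which is \emph{exact} because $\Rep(G/U)$ is semisimple in characteristic $0$ --- this is precisely where the hypothesis on $k$ enters. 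And $r_*$ has cohomological dimension $\leq\dim U$: presenting $U$ as an iterated extension of copies of $\BG_a$ reduces to $B\BG_a$, where $\Gamma(B\BG_a,-)$ has cohomological dimension $1$, being the cohomology of the one-dimensional abelian Lie algebra (again using characteristic $0$: in characteristic $p$ this already fails). Hence $\Gamma(Z/G,-)$ has finite cohomological dimension, with a bound depending only on a cover of $Z$ and on $\dim U$, hence insensitive to nilpotent thickenings --- as was needed in the dévissage step. (For a DG scheme $Z$ one reduces to its classical truncation, since a flat atlas carries objects of the heart to objects of the heart and global sections of a heart object over $\Spec A$ only see $H^0(A)$. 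The same discussion applies to a gerbe over a quasi-compact scheme, checking the cohomological dimension of the gerbe morphism fppf-locally on the base, where it becomes $\Gamma(BG,-)$.)

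I expect the main obstacle to lie in the two geometric/bookkeeping points: first, the existence of a finite stratification of a QCA stack by gerbes over quasi-compact schemes --- this is where quasi-compactness and the finiteness built into the QCA hypothesis are genuinely used, and where the complementary closed substacks must be carried along the induction; and second, the uniformity in the dévissage, where one must know the cohomological-dimension bounds on the strata do not degrade along the infinitesimal neighbourhoods that appear when isolating the sections supported on a closed substack --- which is exactly why it is important that these bounds are topological. By contrast, the passage (ii)$\Rightarrow$(i) and the reduction from connective objects to the heart are purely formal, resting only on \corref{c:coconnective part}(a) and left-completeness of the t-structure.
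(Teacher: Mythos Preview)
Your proposal is correct and tracks the paper's argument closely: reduce (i) to (ii), reduce (ii) to the heart via left-completeness (\lemref{l:why_estimate_suffices}), then establish the bound on the heart by closed--open d\'evissage (\lemref{l:devissage}) over a finite stratification (\propref{p:stratification}), with the base case being quotient stacks. Two points of divergence are worth flagging. For the quotient-stack step the paper embeds $G$ into a reductive group $G'$, so that the composite $Z/G\to BG\to BG'$ is \emph{schematic}, and then invokes t-exactness of $\Gamma(BG',-)$ (\lemref{l:Gamma_of_BG}); you instead pass to the Levi quotient $BG\to B(G/U)$ and bound the amplitude of this non-schematic pushforward by filtering $U$ by vector groups. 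Both routes work and both rest on semisimplicity of reductive representations in characteristic~$0$; the paper's is slightly more economical in that it never leaves the realm of schematic pushforwards, whereas yours requires verifying separately that the $B\BG_a$-fibered pushforwards have bounded amplitude (true in characteristic~$0$, as you note). Second, the stratification actually available (\propref{p:stratification}) produces strata admitting a finite \'etale \emph{cover} of the form $Z/G$, not strata of that form themselves; the paper descends via the trace splitting of $\pi_*\pi^*$ (\secref{sss:coverings}). Your sentence ``each stratum may be taken of the form $Z/G$'' elides this step, and your aside that open substacks of QCA stacks are again QCA is not literally true---one must arrange the open complements to be quasi-compact, which the stratification does. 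These are cosmetic rather than structural gaps.
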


Note that statement (i) can be rephrased as follows:  if $\CY$ is a QCA algebraic stack, then the object 
$\CO_\CY\in \QCoh(\CY)$ is compact.

\begin{cor}   \label{c:perfects_are_compact}
Let $\CY$ be a QCA algebraic stack. Then an object of $\QCoh(\CY)$ is compact if and only if
it is perfect.
\end{cor}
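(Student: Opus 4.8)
The plan is to prove the two implications separately; only one of them uses Theorem~\ref{main}. For the direction "perfect $\Rightarrow$ compact", first I would recall that a perfect complex on $\CY$ is by definition an object $\CE\in\QCoh(\CY)$ that is dualizable with respect to the symmetric monoidal structure $\otimes$ on $\QCoh(\CY)$; equivalently, Zariski-locally on an atlas it is a finite complex of locally free sheaves of finite rank. The key point is the formula, valid for any dualizable object $\CE$ with dual $\CE^\vee$,
$$\CMaps_{\QCoh(\CY)}(\CE,\CF)\simeq \CMaps_{\QCoh(\CY)}(\CO_\CY,\CE^\vee\otimes\CF)\simeq \Gamma(\CY,\CE^\vee\otimes\CF).$$
Since $\CE^\vee$ is also perfect, tensoring with $\CE^\vee$ is a continuous functor (indeed an exact functor commuting with all colimits, as $-\otimes-$ commutes with colimits in each variable), so it sends a colimit diagram $\colim_\alpha\CF_\alpha$ to the colimit diagram $\colim_\alpha(\CE^\vee\otimes\CF_\alpha)$. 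Then statement (i) of Theorem~\ref{main} — that $\Gamma(\CY,-)$ commutes with colimits, i.e.\ $\CO_\CY$ is compact — gives
$$\CMaps_{\QCoh(\CY)}(\CE,\colim_\alpha\CF_\alpha)\simeq\Gamma(\CY,\colim_\alpha(\CE^\vee\otimes\CF_\alpha))\simeq\colim_\alpha\Gamma(\CY,\CE^\vee\otimes\CF_\alpha)\simeq\colim_\alpha\CMaps_{\QCoh(\CY)}(\CE,\CF_\alpha),$$
which is exactly the statement that $\CE$ is compact. (Here I use that in this framework an object is compact iff $\CMaps(\CE,-)$ is continuous, as recalled in \secref{sss:DG categories}.)

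For the converse, "compact $\Rightarrow$ perfect", I would argue that this implication does \emph{not} need the QCA hypothesis at all and holds for any quasi-compact algebraic stack $\CY$. The standard argument: pick a smooth atlas $f:Z\to\CY$ with $Z$ an affine DG scheme. The pullback $f^*:\QCoh(\CY)\to\QCoh(Z)$ is continuous and conservative (Lemma~\ref{properties of t}(c)), but conservativity alone is not enough — one needs that $f^*$ preserves compact objects and detects perfectness. I would use the facts that (a) $f^*$ has a continuous right adjoint $f_*$ — this holds because $f$ is schematic, quasi-separated and quasi-compact, so by \secref{sss:easy case} $\pi=f$ strongly satisfies the projection formula and, being an affine-scheme-valued point of $\CY$, actually $f_*$ is computed by an affine morphism hence is continuous — and a continuous functor with a continuous right adjoint preserves compactness (its right adjoint commutes with colimits); and (b) over the affine DG scheme $Z$, an object of $\QCoh(Z)$ is compact if and only if it is perfect (this is the affine case, which is classical). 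Combining: if $\CF\in\QCoh(\CY)$ is compact then $f^*(\CF)\in\QCoh(Z)$ is compact, hence perfect; and perfectness of $f^*(\CF)$ is precisely the definition of perfectness of $\CF$ (perfectness is, by construction of $\QCoh(\CY)$ as a limit over the smooth site, a property that can be checked on an atlas). Hence $\CF$ is perfect.

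The main obstacle is the second direction, specifically verifying carefully that compact objects are pulled back to compact objects along the atlas and that "perfect" as checked on the atlas is an honest intrinsic notion. The subtle point is the continuity of $f_*$: one must invoke that any map from an affine DG scheme to an algebraic stack is schematic, quasi-separated and quasi-compact (noted in \secref{sss:easy case}), so $f_*$ is computed compatibly with base change and is continuous — this is what makes $f^*$ preserve compacts. Everything else is formal: the first direction is a two-line dualizability computation feeding off Theorem~\ref{main}, and the "perfect implies dualizable implies the $\CMaps$ formula" step is standard symmetric-monoidal category theory. I would present the first (harder-looking but actually easier) direction in full and dispatch the second with a pointer to the atlas argument and the affine case.
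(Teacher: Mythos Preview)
Your proof is correct and follows essentially the same approach as the paper. The only minor difference is in the ``compact $\Rightarrow$ perfect'' direction: rather than fixing a single smooth affine atlas and then invoking descent of perfectness, the paper simply takes an \emph{arbitrary} affine $S$ with an arbitrary map $f:S\to\CY$, observes that $f_*$ is continuous (since $f$ is schematic, quasi-separated, quasi-compact), concludes $f^*(\CF)$ is compact hence perfect, and is then done directly by the definition of perfectness---no descent step needed, and no quasi-compactness of $\CY$ required.
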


We recall that an object $\CF\in \QCoh(\CY)$ is called \emph{perfect} if its pullback to any
affine DG scheme is perfect. By \cite[Lemma 4.2.2]{QCoh}, this is equivalent to $\CF$ being dualizable
in $\QCoh(\CY)$, regarded as a monoidal category.

\begin{proof}
If $\CF$ is perfect the functor $\CMaps_{\QCoh}(\CF,-)$ can be rewritten as $\Gamma(\CY,\CF^*\otimes -)$, so it is
continuous by Theorem~\ref{main}(i). 

\medskip

On the other hand, for any algebraic stack $\CY$,
any compact object $\CF\in\QCoh(\CY)$ is perfect. Indeed, let $S$ be an affine DG scheme equipped with a morphism $f:S\to\CY$, then the object 
$f^*(\CF)\in\QCoh (S)$ is compact (because its right adjoint $f_*$ is continuous), so $f^*(\CF)$ is perfect (see, e.g.,  \cite[Lemma~3.4]{BFN}).
\end{proof}


\sssec{A relative version}

\begin{cor} \label{c:relative}
Let $\pi :\CY_1\to \CY_2$ be a QCA morphism between prestacks.
\begin{enumerate}
\item[(i)]
The functor $\pi_*:\QCoh(\CY_1)\to\QCoh(\CY_2)$ is continuous and
strongly satisfies the projection formula. 
\item[(ii)]
If $\CY_2$ is a quasi-compact algebraic stack, \footnote{Or, more generally, if there exists a map $f:Z\to \CY_2$,
where $Z$ is an affine DG scheme, and $f$ is a surjection in the faithfully flat topology (see, e.g., \cite[Sect. 2.3.1]{Stacks},
where the notion of surjectivity is recalled).} there exists $n$ such that 
$\pi_*$ maps $\QCoh(\CY_1)^{\leq 0}$ to $\QCoh(\CY_2)^{\leq n}$.
\end{enumerate}
\end{cor}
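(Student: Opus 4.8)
The plan is to deduce \corref{c:relative} from \thmref{main} by the standard reduction of a relative statement to an absolute one over an affine base. First I would reduce to the case where $\CY_2 = S_2$ is an affine DG scheme. Indeed, by \propref{p:bootstrap base change} and \lemref{l:taut dir im}, both the continuity of $\pi_*$ and the boundedness in part (ii) can be checked after pulling back along all maps $S_2 \to \CY_2$ with $S_2 \in \on{DGSch}^{\on{aff}}$: continuity because in a limit of DG categories with continuous transition functors colimits are computed componentwise, and the truncation estimate because $f^*$ is t-exact for a faithfully flat atlas (\lemref{properties of t}(c)) — this is exactly where the hypothesis in the footnote of part (ii) enters. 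Once $\CY_2 = S_2$ is affine, the base-changed prestack $\CY_1 := \CY_1 \underset{\CY_2}\times S_2$ is, by \defnref{d:rel QCA}, a QCA algebraic stack, and $\pi_* \simeq \Gamma(\CY_1, -)$ composed with the identification of $\QCoh(S_2)$-module structures. So part (i)'s continuity and part (ii)'s boundedness follow directly from \thmref{main}(i) and (ii) respectively.

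For the projection formula in part (i), I would use \defnref{defn:strong proj formula qc}: by \remref{r:strong proj formula qc} the base change clause is automatic once we verify the projection formula for $\pi_S : S_2 \underset{\CY_2}\times \CY_1 \to S_2$ over each affine $S_2$, so again we are reduced to the absolute case. Over an affine base, I would prove $\CF_2 \otimes \Gamma(\CY_1, \CF_1) \to \Gamma(\CY_1, \pi^*(\CF_2) \otimes \CF_1)$ is an isomorphism for $\CY_1$ QCA by the usual bootstrap: the class of $\CF_2 \in \QCoh(S_2)$ for which this holds (for all $\CF_1$) is closed under colimits — here continuity of $\Gamma(\CY_1,-)$ from \thmref{main}(i) is essential — and contains $\CO_{S_2}$ tautologically, hence contains all of $\QCoh(S_2)$ since $\CO_{S_2}$ generates. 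Finally, to get the base change clause of \defnref{defn:strong proj formula qc} honestly (rather than via the footnote), one observes that for a map $\phi_2 : \CY'_2 \to \CY_2$ of prestacks and the Cartesian square, pulling back to affine schemes over $\CY'_2$ reduces base change to the statement that $\Gamma(\CY_1,-)$ commutes with the relevant pullbacks, which is again \thmref{main}(i) combined with the projection formula we just established.

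The main obstacle I anticipate is not any single hard estimate but rather the careful bookkeeping of the reduction to the affine base: one must check that the base-changed stack $\CY_1 \underset{\CY_2}\times S_2$ genuinely satisfies the QCA hypotheses of \thmref{main} (this is precisely the content built into \defnref{d:rel QCA}, so it is really by design), and that the two a priori different functors — "take $\pi_*$ then restrict to $S_2$" versus "restrict the base to $S_2$ then take global sections" — agree. The latter compatibility is \lemref{l:taut dir im} applied with $A = (\on{DGSch}^{\on{aff}})_{/\CY_2}$, together with the fact from \secref{sss:easy case} that for a schematic quasi-separated quasi-compact morphism $\pi_*$ already commutes with the relevant base change, which handles the transition maps in the limit. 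Once these compatibilities are pinned down, parts (i) and (ii) are immediate consequences of the two clauses of \thmref{main}, and no new geometric input beyond \thmref{main} is needed.
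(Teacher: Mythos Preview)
Your proposal is correct and follows essentially the same route as the paper: reduce to affine $\CY_2$, invoke \thmref{main} for continuity and the cohomological bound, deduce the projection formula over an affine base from continuity by generating $\QCoh(S_2)$ from $\CO_{S_2}$, and then get base change from \remref{r:strong proj formula qc}. The paper argues continuity over an affine base slightly more directly, via conservativity of $\Gamma(S_2,-)$ and the identity $\Gamma(S_2,-)\circ\pi_*\simeq\Gamma(\CY_1,-)$, rather than through componentwise colimits in the limit; but this is a cosmetic difference, and you correctly flag in your last paragraph the one bookkeeping point that needs care (that ``$\pi_*$ then restrict'' agrees with ``restrict then push forward''), which is exactly what base change supplies once the affine projection formula is in hand.
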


\begin{proof}

To prove point (i), by definition, it suffices to consider the case when
$\CY_2$ is an affine DG scheme.

\medskip

In this case the continuity of $\pi_*$ follows immediately from \thmref{main}(i). 
Indeed, the functor
$$\Gamma(\CY_2,-):\QCoh(\CY_2)\to \Vect$$
is continuous and conservative, and  
$\Gamma(\CY_2,-)\circ \pi_*\simeq \Gamma(\CY_1,-)$,
so the continuity of $\Gamma(\CY_1,-)$ implies that for $\pi_*$. 

\medskip

The fact that $\pi$ satisfies the projection formula follows formally
from the continuity of $\pi_*$ (express $\CF_2$ as a colimit of copies
of the structure sheaf). By Remark \ref{r:strong proj formula qc}, the projection formula
implies base change. 

\medskip

To prove point (ii), it is again sufficient to do so after base changing by means
of an fppf map $S_2\to \CY_2$, where $S_2$ is an affine DG scheme. In this case,
the assertion follows from Theorem \ref{main}(ii).

\end{proof}

\sssec{Generation by the heart}

Let $\CY$ be an algebraic stack. 

\begin{defn} \label{d:n_coconnective}
We say that $\CY$ is \emph{$n$-coconnective} if the object $\CO_\CY\in \QCoh(\CY)$ belongs
to $\QCoh(\CY)^{\geq -n}$. 
\end{defn}

\begin{defn} \label{d:ev_coconnective}
We say that $\CY$ is \emph{eventually coconnective} 
if it is $n$-coconnective for some $n$; equivalently, if $\CO_\CY$ is bounded below, i.e.,
is eventually coconnective as an object of $\QCoh(\CY)$.
\end{defn}

\begin{rem}
The notion of $n$-connectivity makes sense for all prestacks, and not just algebraic stacks,
see \cite[Sect. 2.4.7]{Stacks}. The fact that the two notions coincide for algebraic stacks
is established in \cite[Proposition 4.6.4]{Stacks}. 
\end{rem}

\medskip

The stratification technique used in the proof of Theorem~\ref{main} also allows to prove the following result (see Sect.~\ref{ss:proofQCoh_core}):

\begin{thm}   \label{t:QCoh_core}
Suppose that an algebraic stack $\CY$ is QCA and eventually coconnective. Then $\QCoh(\CY )$ is 
generated by $\QCoh(\CY )^\heartsuit$.
\end{thm}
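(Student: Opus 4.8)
The plan is to reduce to the case of a quotient stack $Z/G$ with $Z$ affine (or quasi-compact) and $G$ reductive, mimicking the stratification strategy already announced for \thmref{main}. First I would observe that the statement is stable under passing to locally closed strata: if $i:\CX\hookrightarrow \CY$ is a closed substack with open complement $j:\CU\hookrightarrow \CY$, and both $\QCoh(\CX)$ and $\QCoh(\CU)$ are generated by their hearts, then so is $\QCoh(\CY)$. This uses that $i_*$ and $j^*$ are t-exact (the former because $i$ is a closed embedding, the latter because $j$ is flat), that $j^*$ admits a fully faithful right adjoint $j_*$ on the eventually coconnective subcategories, and the recollement relating $\QCoh(\CY)$, $\QCoh(\CX)$ and $\QCoh(\CU)$. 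One must check that eventual coconnectivity is inherited by the strata; for a quasi-compact open this is automatic, and for the closed complement with its reduced or any finitely-presented structure one gets boundedness of $\CO_\CX$ from that of $\CO_\CY$ together with the fact that the classical inertia is of finite presentation (the QCA hypothesis).

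Next I would use the standard fact that a QCA algebraic stack admits a finite stratification by locally closed substacks each of which is a gerbe over a quasi-compact scheme, and further that such a gerbe-over-a-scheme is, after refining the stratification, of the form $Z/G$ for $G$ a reductive group acting on a quasi-compact scheme $Z$ (here the characteristic $0$ hypothesis enters, since the automorphism groups are affine and in characteristic $0$ an affine algebraic group is, up to the stratification and up to the unipotent radical — which can be dealt with separately since $\QCoh(BU)$ for $U$ unipotent is easily handled — reductive). By the strata-wise reduction, it therefore suffices to prove the theorem for $\CY = Z/G$ with $Z$ affine, $G$ reductive, $\CO_\CY$ bounded.

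For $\CY = Z/G$ with $Z = \Spec(A)$ affine (possibly DG, but eventually coconnective, so $A$ is cohomologically bounded), one has $\QCoh(\CY) \simeq A\text{-mod}(\Rep(G))$, i.e. the category of $G$-equivariant $A$-modules. Here I would exploit that $\Rep(G)$ is semisimple in characteristic $0$: every object of $\Rep(G)$, hence every object of $\QCoh(Z/G)^\heartsuit$, is a colimit of direct summands of sums of $\CO_Z\boxtimes V$ for $V$ a finite-dimensional representation, and these generators lie in the heart (since $A$ is bounded, $A\otimes V$ is bounded, but to land in the heart one takes $H^0$ or, better, uses that the free equivariant modules on finite-dimensional vector spaces placed in degree $0$ generate). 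Concretely: the collection $\{j_!\,\CO_Z\otimes V\}$ — or rather the equivariant analogues of the free modules $A\otimes V[n]$, $n\in\BZ$, $V\in\Rep(G)^{f.d.}$ — generates $\QCoh(Z/G)$; each such object has bounded cohomology, and using the semisimplicity of $\Rep(G)$ together with boundedness of $A$ one rewrites it, up to shift and finite filtration, in terms of objects of the heart. More cleanly, one shows directly that $\QCoh(Z/G)^{\geq 0}$ is generated under colimits by $\QCoh(Z/G)^\heartsuit$ (true in any t-structure compatible with filtered colimits whose heart generates $\QCoh(Z/G)^\heartsuit$... but the point is the whole category) by combining left-completeness of the t-structure (\lemref{properties of t}(b)) with the boundedness of $\CO_\CY$, which lets one bootstrap from $\QCoh^+$ to all of $\QCoh$.

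The main obstacle I anticipate is the last bootstrap: passing from the bounded-below statement (where semisimplicity of $\Rep(G)$ and the structure $\QCoh(Z/G)^+ \simeq D(\QCoh(Z/G)^\heartsuit)^+$, cf. Remark~\ref{r:left-completion}, make the claim nearly formal) to the full category $\QCoh(\CY)$. For this the eventual coconnectivity of $\CY$ is essential: because $\CO_\CY$ is bounded, tensoring with $\CO_\CY$ doesn't leave the bounded-below regime in an uncontrolled way, and together with left-completeness of the t-structure one can express an arbitrary $\CF$ as a limit — and with more care, build it from colimits — of its truncations $\tau^{\geq -n}\CF$, each of which lies in $\QCoh(\CY)^+$ and hence in the subcategory generated by the heart; one then checks this subcategory is closed under the relevant colimits. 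Making the strata-wise induction interact correctly with eventual coconnectivity of the closed strata, and verifying that the generating subcategory (cocomplete, containing the heart) is preserved by $i_*$ and $j_!$ along the recollement, is where the real work lies; everything else is either the already-cited stratification technology or standard facts about $\Rep(G)$ in characteristic zero.
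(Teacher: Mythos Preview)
Your overall strategy (stratify, then handle $Z/G$) matches the paper's, but there are two concrete gaps, and the paper's fixes are instructive.

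\textbf{The devissage step.} You propose a recollement argument using $i_*$ and $j_!$. But $j_!$ does not exist on $\QCoh$ for an open immersion, and even the triangle $i_*i^!\to\on{id}\to j_*j^*$ is problematic because $i^!$ is not continuous on unbounded objects (this is exactly the point of Remark~\ref{r:end_of}). The paper's \lemref{l:devissage plus} avoids both issues: instead of applying $i^!$ or $j_!$ to a general $\CF$, one tensors $\CF$ with the fixed triangle
\[
(\CO_\CY)_\CX \;\longrightarrow\; \CO_\CY \;\longrightarrow\; \jmath_*\jmath^*(\CO_\CY),
\]
where $(\CO_\CY)_\CX$ is bounded and each of its cohomologies has a finite filtration by objects in the image of $\imath_*$. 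Then $\CF\otimes(\CO_\CY)_\CX$ lands in the subcategory generated by $\imath_*(\QCoh(\CX))$, and $\CF\otimes\jmath_*\jmath^*(\CO_\CY)\simeq\jmath_*\jmath^*(\CF)$ lands in the subcategory generated by $\jmath_*(\QCoh(\oCY))$. Since $\imath_*$ and $\jmath_*$ both have bounded amplitude, both pieces lie in the subcategory generated by the heart.

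\textbf{Where eventual coconnectivity enters.} You identify the bootstrap from $\QCoh^+$ to all of $\QCoh$ as the main obstacle and leave it unresolved; left-completeness expresses $\CF$ as a \emph{limit} of truncations, which does not obviously land you in a cocomplete subcategory. The paper uses eventual coconnectivity at the \emph{beginning}, not the end: since $\CO_\CY$ is bounded, the finite filtration of $\CF\simeq\CF\otimes\CO_\CY$ by the $\CF\otimes\tau^{\le -n}(\CO_\CY)$ has subquotients $\CF\otimes H^{-n}(\CO_\CY)[n]$, each of which lies (via the projection formula) in the image of $^{cl}i_*$. This reduces immediately to the classical, then reduced, case. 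After that reduction there is no bootstrap problem: the strata of \propref{p:stratification} are classical, and the base case \lemref{l:BFN} (quasi-projective $Z$ with linear $G$-action, using the ample twists $\CO_Z(-i)$) directly shows the heart generates. This also spares you from checking that eventual coconnectivity descends to arbitrary closed substacks.
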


\begin{cor}    \label{c:Coh_generates_QCoh}
Let $\CY$ be a QCA algebraic stack, which is eventually coconnective, and such that the
underlying classical stack $^{cl}\CY$ is Noetherian. Then the category $\QCoh(\CY )$ is 
generated by $\Coh(\CY)^\heartsuit$.
\end{cor}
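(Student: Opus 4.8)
The plan is to deduce \corref{c:Coh_generates_QCoh} from \thmref{t:QCoh_core} by showing that every object of $\QCoh(\CY)^\heartsuit$ lies in the cocomplete subcategory generated by $\Coh(\CY)^\heartsuit$. Since \thmref{t:QCoh_core} already tells us that $\QCoh(\CY)^\heartsuit$ generates all of $\QCoh(\CY)$ (and generation is transitive: if $\CC_0$ generates $\CC_1$ and $\CC_1$ generates $\CC$, then $\CC_0$ generates $\CC$), it suffices to prove the following: under the stated hypotheses, every quasi-coherent sheaf in the heart is a filtered colimit of its coherent subsheaves, or at least can be built from objects of $\Coh(\CY)^\heartsuit$ using colimits. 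The key input is the Noetherian hypothesis on $^{cl}\CY$, together with the identification $\QCoh(\CY)^\heartsuit \simeq \QCoh({}^{cl}\CY)^\heartsuit$ noted in the remark after \lemref{properties of t}, which reduces the question entirely to the classical abelian category of quasi-coherent sheaves on a Noetherian algebraic stack.

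First I would reduce to the classical stack: by the aforementioned remark, $\QCoh(\CY)^\heartsuit = \QCoh({}^{cl}\CY)^\heartsuit$, and likewise $\Coh(\CY)^\heartsuit = \Coh({}^{cl}\CY)^\heartsuit$ (an object of the heart is coherent iff its underlying classical sheaf is, since coherence is a condition on cohomology sheaves). So the claim becomes: on a Noetherian algebraic stack $\CX := {}^{cl}\CY$, every object of $\QCoh(\CX)^\heartsuit$ is a filtered colimit of coherent subsheaves. This is a standard fact — it is the stacky analogue of the statement that on a Noetherian scheme every quasi-coherent sheaf is the union of its coherent subsheaves. To prove it, one pulls back along a smooth atlas $f\colon Z \to \CX$ with $Z$ an affine Noetherian scheme (which exists since $\CY$ is QCA, hence quasi-compact, and $^{cl}\CX$ is Noetherian), uses that on $Z$ the analogous statement holds, and then descends: given $\CF \in \QCoh(\CX)^\heartsuit$, the coherent subsheaves of $\CF$ form a filtered poset, and one checks that their colimit is all of $\CF$ by pulling back to $Z$ and using conservativity and exactness of $f^*$ (Lemma~\ref{properties of t}(c)) together with the fact that $f^*$ of a coherent sheaf is coherent and that colimits in $\QCoh(\CX)^\heartsuit$ are computed compatibly after pullback.

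The one genuinely delicate point — and the step I expect to be the main obstacle — is the descent argument showing that a quasi-coherent sheaf on $\CX$ really is exhausted by \emph{coherent} subsheaves, as opposed to merely by finitely generated subsheaves over the atlas that may fail to descend. The resolution uses that $\CX$ is Noetherian and quasi-compact with quasi-compact quasi-separated diagonal: given any finitely generated subsheaf of $f^*\CF$ on $Z$, one can enlarge it to a coherent subsheaf of $\CF$ on $\CX$ itself by taking the image in $\CF$ of the pushforward along $f$ (using that $f$ is quasi-compact and quasi-separated so $f_*$ preserves quasi-coherence, and Noetherianity so that the relevant subsheaf is coherent), and these coherent subsheaves still form a filtered family whose colimit pulls back to all of $f^*\CF$, hence equals $\CF$ by conservativity of $f^*$. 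This is essentially \cite[Tag 0781]{Stacks-project}-type reasoning adapted to the Noetherian algebraic-stack setting; I would cite it or spell out the pushforward-image construction.

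Putting it together: $\Coh(\CY)^\heartsuit$ generates $\QCoh(\CY)^\heartsuit$ as a cocomplete category (every object of the latter is a filtered colimit of objects of the former), and $\QCoh(\CY)^\heartsuit$ generates $\QCoh(\CY)$ by \thmref{t:QCoh_core}; by transitivity of generation, $\Coh(\CY)^\heartsuit$ generates $\QCoh(\CY)$. Concretely, if $\CF \in \QCoh(\CY)$ satisfies $\CMaps(\CG, \CF) = 0$ for all $\CG \in \Coh(\CY)^\heartsuit$, then it is orthogonal to all of $\QCoh(\CY)^\heartsuit$ (since each such object is a filtered colimit of coherent ones, and filtered colimits are colimits), hence $\CF = 0$ by \thmref{t:QCoh_core}. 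This completes the proof.
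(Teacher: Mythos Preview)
Your proposal is correct and follows exactly the paper's approach: reduce to \thmref{t:QCoh_core} plus the fact that every object of $\QCoh(\CY)^\heartsuit$ is a union of its coherent sub-objects. The paper simply cites \cite[Corollary 15.5]{LM} for this fact rather than sketching the descent argument you outline, so your ``delicate point'' is handled by reference.
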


This follows from Theorem~\ref{t:QCoh_core} and the following fact \cite[Corollary 15.5]{LM}:
every object of $\QCoh(\CY )^\heartsuit$ is a union of its coherent sub-objects. 

\sssec{Other results}
We will also prove Theorem~\ref{t:QCoh dualizable}, which says, among other things, that
in the situation of  Corollary~\ref{c:Coh_generates_QCoh} one has 
$\QCoh(\CY\times \CY')=\QCoh(\CY)\otimes \QCoh(\CY')$
for any prestack $\CY'$.

\section{Proof of Theorems~\ref{main} and \ref{t:QCoh_core}}  \label{s:deducing}

The proof of Theorem~\ref{main} occupies Sects.~\ref{ss:reducing to key}-\ref{ss:end_proof_main}.
Theorem~ \ref{t:QCoh_core} is proved in Sect.~\ref{ss:proofQCoh_core}.

\ssec{Reducing the statement to a key lemma} \label{ss:reducing to key}

\sssec{Reducing statement (i) to statement (ii)}   \label{sss:reducing}

Let $\alpha\mapsto \CF_\alpha$ be a collection of objects of $\QCoh(\CY)$.
We need to show that for any $i\in\BZ$ the natural map
$$\underset{\alpha}\oplus\, H^i(\Gamma(\CY,\CF_\alpha))\to 
H^i(\Gamma(\CY,\underset{\alpha}\oplus\, \CF_\alpha))$$
is an isomorphism.  Suppose we have proved Theorem~\ref{main}(ii), i.e.,
there exists $n$ such that the functor $H^i\left(\Gamma(\CY,-)\right)$ vanishes on $\QCoh(\CY)^{<-i-n}$.
Then $$H^i(\Gamma(\CY,\CF_\alpha))=H^i(\Gamma(\CY,\tau^{\ge -i-n-1}(\CF_\alpha))),$$
$$H^i(\Gamma(\CY,\underset{\alpha}\oplus\CF_\alpha))=
H^i(\Gamma(\CY,\tau^{\ge -i-n-1}(\underset{\alpha}\oplus\CF_\alpha))). $$
Since the t-structure on $\QCoh(\CY)$ is compatible
with filtered colimits (see \lemref{properties of t}(a)), the morphism
$$\underset{\alpha}\oplus\,  \tau^{\ge -i-n-1}(\CF_\alpha)\to 
\tau^{\ge -i-n-1}\left(\underset{\alpha}\oplus\,\CF_\alpha\right).$$
is an isomorphism. So we have to prove that the morphism
$$\underset{\alpha}\oplus H^i(\Gamma(\CY,\tau^{\ge -i-n-1}(\CF_\alpha)))\to
H^i(\Gamma(\CY,\underset{\alpha}\oplus\, \tau^{\ge -i-n-1}(\CF_\alpha)))$$
is an isomorphism. We have $\tau^{\ge -i-n-1}(\CF_\alpha)\in\QCoh(\CY)^{\ge r}$, where $r=-i-n-1$.
Now Theorem~\ref{main}(i) follows from \corref{c:coconnective part}.

\qed



\sssec{Reducing statement (ii) to a key lemma}   

\begin{lem} \label{l:why_estimate_suffices}
Let $n\in\BZ$. Suppose that for any $\CF\in \QCoh(\CY)^\heartsuit$ we have 
\begin{equation}   \label{e:vanishing}
H^i\left(\Gamma(\CY,\CF)\right)=0 \;\mbox{ for }\; i>n.
\end{equation}
Then \eqref{e:vanishing} holds for any $\CF\in \QCoh(\CY)^{\leq 0}$. 
\end{lem}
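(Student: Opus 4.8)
The plan is to bootstrap the vanishing statement from the heart of the $t$-structure to all of $\QCoh(\CY)^{\leq 0}$ by an argument combining left-completeness of the $t$-structure with the compatibility with filtered colimits. First I would reduce to the case of a \emph{bounded} connective object: given $\CF\in \QCoh(\CY)^{\leq 0}$, left-completeness (\lemref{properties of t}(b)) gives
$$\CF\simeq \underset{m}{\underset{\longleftarrow}{\lim}}\, \tau^{\geq -m}(\CF),$$
but this limit is not computed termwise, so one passes instead to the colimit picture: the object $\tau^{\geq -m}(\CF)$ is an extension of a shift of $H^{-m}(\CF)\in \QCoh(\CY)^\heartsuit$ by $\tau^{\geq -m+1}(\CF)$, so by induction on the length of the amplitude and the long exact sequence in $H^i(\Gamma(\CY,-))$, one proves \eqref{e:vanishing} for every $\CF\in \QCoh(\CY)^{[-m,0]}$ (here using the hypothesis on the heart together with the fact that $\Gamma(\CY,-)$, being a right adjoint, is left $t$-exact up to a shift — actually only needing that it carries $\QCoh(\CY)^{\geq 0}$ somewhere bounded below, which is automatic). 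This handles cohomologically bounded connective objects with a bound on $i$ that does \emph{not} depend on $m$, only on $n$.

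The remaining step is to pass from bounded connective objects to arbitrary $\CF\in \QCoh(\CY)^{\leq 0}$. Here I would write $\CF$ as a filtered colimit of its truncations in the \emph{other} direction: since $\CF\in \QCoh(\CY)^{\leq 0}$, we have $\CF\simeq \underset{m}{\colim}\, \tau^{\geq -m}(\CF)$ if the $t$-structure were also right-complete in the appropriate sense — but more robustly, one uses that each $\tau^{\geq -m}(\CF)$ is cohomologically bounded and connective, that $H^i(\Gamma(\CY,\tau^{\geq -m}(\CF)))=0$ for $i>n$ uniformly in $m$ by the previous paragraph, and then that $H^i(\Gamma(\CY,\CF))$ is computed from the $H^i(\Gamma(\CY,\tau^{\geq -m}(\CF)))$ via \corref{c:coconnective part}(a) applied to each individual truncated piece. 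Concretely: fix $i>n$; the fiber of $\CF\to \tau^{\geq -m}(\CF)$ lies in $\QCoh(\CY)^{<-m}$, and one wants $H^i(\Gamma(\CY,-))$ to vanish on this fiber for $m \gg 0$ — but that is precisely the conclusion we have not yet proved, so instead I would argue directly that $\Gamma(\CY,\CF) \simeq \underset{m}{\colim}\, \Gamma(\CY,\tau^{\leq -m}(\CF)[\text{shift}])$ is the wrong move. The clean route is: $\CF = \colim_m \tau^{\geq -m}(\CF)$ is false, but $\tau^{\leq 0}$ of a colimit behaves well, so write $\CF$ as the colimit of the tower with cofibers concentrated in a single degree and invoke that $\Gamma(\CY,-)$ commutes with this particular filtered colimit because each transition cofiber is a (shift of an) object of the heart.

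Let me state the mechanism I actually expect to use, which avoids the circularity: for $\CF\in\QCoh(\CY)^{\leq 0}$ there is a canonical filtered system $\tau^{\geq -m}(\CF)$ whose colimit I claim is $\CF$ — this holds because $\CF\in \QCoh(\CY)^{\leq 0}$ means $\CF = \tau^{\leq 0}(\CF)$, and $\tau^{\leq 0}$ commutes with filtered colimits while $\colim_m \tau^{\geq -m}(\CG) = \CG$ for bounded-below $\CG$; applying $\tau^{\leq 0}$ is not needed since each $\tau^{\geq -m}(\CF)$ already sits in $\QCoh(\CY)^{[-m,0]}$ and the colimit over $m$ recovers $\CF$ by left-completeness dualized — more precisely this is the statement that $\QCoh(\CY)^{\leq 0}$ is the ind-completion-like colimit of its bounded pieces, which follows from \lemref{properties of t}(a) once one observes $\CF\simeq \underset{m}{\colim}\,\tau^{\geq-m}\CF$ holds in $\QCoh(\CY)^{\leq 0}$ because the cofibers $\tau^{\leq -m-1}(\CF)$ become arbitrarily coconnective and filtered colimit is exact. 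Then, since $H^i(\Gamma(\CY,-))$ of each $\tau^{\geq -m}(\CF)$ vanishes for $i>n$ with $n$ uniform, and $\Gamma(\CY,-)$ commutes with this filtered colimit by \corref{c:coconnective part}(a) (applicable because the whole system lives in $\QCoh(\CY)^{\geq -m}$ for the tail, fixed $m$ after truncating at a given cohomological degree), we get $H^i(\Gamma(\CY,\CF))=0$ for $i>n$. The main obstacle is getting the colimit presentation $\CF\simeq \underset{m}{\colim}\,\tau^{\geq-m}(\CF)$ rigorously and checking $\Gamma(\CY,-)$ commutes with it — this is where \lemref{properties of t} and \corref{c:coconnective part} must be combined carefully, and where one must avoid the trap of needing the not-yet-proved statement for unbounded cofibers.
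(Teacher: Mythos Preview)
Your handling of bounded connective objects by induction on amplitude is fine and matches the paper. The gap is in the passage to unbounded $\CF\in\QCoh(\CY)^{\leq 0}$. You try to write $\CF\simeq\underset{m}{\colim}\,\tau^{\geq -m}(\CF)$, but the $\tau^{\geq -m}(\CF)$ form an \emph{inverse} system (the natural maps go $\tau^{\geq -m-1}(\CF)\to\tau^{\geq -m}(\CF)$), and left-completeness gives $\CF$ as their \emph{limit}, not colimit. You notice something is off (you start with the limit, then pivot to colimits), but the colimit presentation you want simply does not exist in general. Moreover, even if it did, \corref{c:coconnective part}(a) only says $\Gamma(\CY,-)$ commutes with filtered colimits of objects lying in a \emph{fixed} $\QCoh(\CY)^{\geq -N}$; your system lives in $\QCoh(\CY)^{\geq -m}$ with $m\to\infty$, so the hypothesis is not met. (If it were, you would have proved that $\Gamma(\CY,-)$ is continuous, which is the content of the main theorem you are trying to reduce to this lemma.)

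The paper's argument avoids colimits entirely. Since $\Gamma(\CY,-)\simeq\CMaps_{\QCoh(\CY)}(\CO_\CY,-)$ is a right adjoint, it commutes with \emph{limits}; thus
\[
\Gamma(\CY,\CF)\;\simeq\;\underset{m}{\underset{\longleftarrow}{\lim}}\,\Gamma(\CY,\tau^{\geq -m}(\CF)).
\]
For $\CF\in\QCoh(\CY)^{<0}$ each term on the right lies in $\Vect^{<n}$ by the bounded case. Now use that a countable inverse limit in $\Vect$ has cohomological amplitude $[0,1]$ (the Milnor $\lim^1$ sequence), so the limit lies in $\Vect^{\leq n}$. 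This handles $\QCoh(\CY)^{<0}$; the case $\CF\in\QCoh(\CY)^{\leq 0}$ follows from the triangle $\tau^{<0}(\CF)\to\CF\to\tau^{\geq 0}(\CF)$. The step you were missing is precisely this use of limits together with the $\lim^1$ amplitude bound.
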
 

\begin{proof}
The statement is clear if $\CF$ is bounded below. To treat the general case, 
recall that the t-structure on $\QCoh(\CY)$ is left-complete, see \lemref{properties of t}(b). Since
the functor $\Gamma(\CY,-)\simeq \CMaps_{\QCoh(\CY)}(\CO_\CY,-)$
commutes with inverse limits this implies that 
$$\Gamma(\CY,\CF)=\underset{m}{\underset{\longleftarrow}{lim}}\, \Gamma(\CY,\tau^{\geq -m}(\CF)).$$
If $\CF\in \QCoh(\CY)^{< 0}$ then the complexes $\Gamma(\CY,\tau^{\geq -m}(\CF))$ are concentrated
in degrees $<n$. Since  the functor $\underset{m}{\underset{\longleftarrow}{lim}}$ in $\Vect$
has cohomological amplitude $[0,1]$ we see that $\Gamma(\CY,\CF)$
is concentrated in degrees $\le n$.
So \eqref{e:vanishing} holds 
for any $\CF\in \QCoh(\CY)^{< 0}$. Therefore it holds for any $\CF\in \QCoh(\CY)^{\leq 0}$: use
the exact triangle $\tau^{<0}(\CF)\to\CF\to\tau^{\ge 0}(\CF)$.
\end{proof}

By Lemma \ref{l:why_estimate_suffices}, to prove Theorem~\ref{main}(ii) it suffices to prove the
 following key lemma.

\begin{lem} \label{l:estimate}
Let $\CY$ be a QCA stack. Then there exists an integer $n_{\CY}$ 
such that for any $\CF\in \QCoh(\CY)^\heartsuit$ we have 
$$H^i\left(\Gamma(\CY,\CF)\right)=0 \;\mbox{ for }\; i>n_{\CY}\, .$$
\end{lem}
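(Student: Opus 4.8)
The plan is to reduce \lemref{l:estimate} to the case of a quotient stack $Z/G$ with $Z$ a quasi-compact scheme and $G$ a reductive group by means of a stratification argument, and then to handle that case directly using the semisimplicity of $\Rep(G)$ in characteristic $0$.

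\emph{Step 1: stratification by gerbes over schemes.} First I would use the QCA hypothesis to produce a finite stratification of $^{cl}\CY$ by locally closed substacks $\CY_\beta$, each of which is a gerbe over a quasi-compact scheme (or algebraic space). The existence of such a stratification for a quasi-compact stack whose inertia is of finite presentation is a form of the generic-flatness/local-structure theorems for algebraic stacks; this is where condition (3) in \defnref{d:QCA} is used. I would then argue that it suffices to bound $H^i(\Gamma(\CY,-))$ on the heart strata-wise: if $i:\CX\hookrightarrow\CY$ is a closed substack with open complement $j:U\hookrightarrow\CY$, then any $\CF\in\QCoh(\CY)^\heartsuit$ sits in a short exact sequence $0\to i_*i^*\CF'\to\CF\to j_*j^*\CF\to\cdots$ (more precisely one uses the excision triangle $\Gamma_\CX(\CF)\to\CF\to j_*j^*\CF$ together with the fact that $j_*$ of a bounded-below object is bounded below with a uniform bound by \corref{c:coconnective part}(a), and that $\Gamma_\CX(\CF)$ is supported on $\CX$). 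Since $^{cl}\CY$ is classical, and $\QCoh(\CY)^\heartsuit\simeq\QCoh({}^{cl}\CY)^\heartsuit$, we may work with $^{cl}\CY$ throughout. Noetherian induction on the stratification then reduces us to a single gerbe stratum.

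\emph{Step 2: from a gerbe to a quotient stack.} For a gerbe $\CY_\beta\to Y_\beta$ over a quasi-compact scheme, after refining $Y_\beta$ by a Zariski cover (which only affects $\Gamma$ up to a uniformly bounded \v{C}ech complex, using \corref{c:direct image via Cech}) and passing to a finite flat cover trivializing the gerbe, one can arrange $\CY_\beta\simeq Z/G$ where $Z$ is a quasi-compact affine scheme and $G$ is the automorphism group of a geometric point, which by the QCA hypothesis is an affine algebraic group. Passing from $G$ to its reductive quotient $G^{\mathrm{red}}=G/G_u$ costs a cohomological shift bounded by $\dim G_u$ (de Rham cohomology of the unipotent radical, or equivalently $H^\bullet(\fg_u)$, which is finite-dimensional and concentrated in a bounded range), so it is enough to bound $H^i$ on $\QCoh(Z/G)^\heartsuit$ for $G$ reductive.

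\emph{Step 3: the reductive quotient case.} For $Z$ affine and $G$ reductive, $\Gamma(Z/G,\CF)=\Gamma(Z,\CF)^G$, i.e.\ the composition of $\Gamma(Z,-)$ (exact, since $Z$ is affine) with the functor of $G$-invariants on $\QCoh(Z)^\heartsuit\text{-}$equivariant sheaves. In characteristic $0$ the functor of $G$-invariants is exact because $\Rep(G)$ is semisimple, hence $H^i(\Gamma(Z/G,\CF))=0$ for $i>0$ whenever $\CF\in\QCoh(Z/G)^\heartsuit$. For a general (non-affine but quasi-compact, quasi-separated) $Z$ with $G$ acting, one covers $Z$ by finitely many $G$-invariant... but in fact one only needs $Z$ quasi-compact quasi-separated: take an affine scheme $Z'$ with a smooth surjection $Z'\to Z/G$ (the atlas), whose \v{C}ech nerve has affine terms, and combine the affine case with \corref{c:direct image via Cech}, picking up a cohomological shift bounded by the length of the nerve needed to compute $H^i$, which is uniform. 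Finally I would assemble the bounds from the finitely many strata into a single integer $n_\CY$.

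\emph{Main obstacle.} I expect the crux to be \emph{Step 1}: establishing that a QCA stack admits a finite stratification by gerbes over schemes, with enough control that the cohomological amplitude of $\Gamma$ on the heart can be read off stratum-by-stratum with uniform bounds. The subtleties are (a) invoking the correct local-structure theorem for algebraic stacks that uses exactly hypothesis (3) of the QCA definition (finite presentation of the inertia), and (b) checking that the excision triangles and the direct-image estimates of \corref{c:coconnective part} genuinely glue finitely many local bounds into a global one without an unbounded accumulation of shifts — which works because the stratification is finite and each step contributes a bounded shift. The reductive-group input in Step 3 is the one place where the characteristic-$0$ assumption is essential.
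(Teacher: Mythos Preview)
Your proposal is correct and follows the paper's strategy: reduce to the classical reduced case, devissage along a finite stratification produced by a local-structure theorem (the paper's \propref{p:stratification}, resting on \cite[Theorem~11.5]{LM} and the finite-presentation-of-inertia hypothesis), pass along a finite \'etale cover to a quotient $Z/G$ with $G$ an affine algebraic group, and finish using semisimplicity of representations of a reductive group in characteristic~$0$. The only deviation is in handling non-reductive $G$: you take $G_u$-invariants first (bounded amplitude via $H^\bullet(\fg_u,-)$---this is group cohomology of the unipotent radical, which in characteristic~$0$ coincides with Lie-algebra cohomology, not de Rham cohomology as you wrote) and then exact $G/G_u$-invariants, whereas the paper instead \emph{embeds} $G$ into a reductive $G'$ and uses that $Z/G\to BG'$ is then schematic quasi-compact with bounded-amplitude pushforward; both routes are valid and of comparable length.
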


The lemma will be proved in Sects.~\ref{ss:easy_reduct}-\ref{ss:end_proof_main}.

\ssec{Easy reduction steps}  \label{ss:easy_reduct}

\sssec{Reduction to the classical case}

Let $^{cl}\CY\overset{^{cl}\!i}\hookrightarrow \CY$ be the embedding of the classical stack 
underlying $\CY$.  Any $\CF$ as in the \lemref{l:estimate} belongs to the essential image of the functor
$^{cl}\!i_*$. Since $^{cl}\!i_*$ is t-exact, we can replace the original $\CY$ by $^{cl}\CY$,
with the same estimate for $n$.

\medskip

So for the rest of this section we will assume that $\CY$ is classical. 

\sssec{Reduction to the case when $\CY$ is reduced}  \label{reduced case}

Let $\CY_{red}overset{i_{red}}\hookrightarrow \CY$
be the corresponding reduced substack. 

\medskip

Any $\CF\in \QCoh(\CY)^\heartsuit$ admits an increasing filtration with subquotients
belonging to the essential image of the functor $(i_{red})_*$. Since the functor
$$H^i\left(\Gamma(\CY,-)\right):\QCoh(\CY)^\heartsuit\to \Vect^\heartsuit$$
commutes with filtered colimits (by \corref{c:coconnective part}(a)), by the same logic
as above, we can replace $\CY$ by $\CY_{red}$ with the same estimate on $n_\CY$. 

\medskip

So we can assume that $\CY$ is reduced. 

\ssec{Devissage}  \label{induction step}

\sssec{}

We begin with the following observation. 

\medskip

Let $\CX\overset{\imath}\hookrightarrow \CY$ be a closed substack and 
$\overset{\circ}\CY\overset{\jmath}\hookrightarrow \CY$ the complementary open substack,
such that the map $\jmath$ is quasi-compact. 
Let $d\in\BZ$ be such that the functor $\jmath_*$ has cohomological 
amplitude $\le d$ (it exists because $\CY$ itself is quasi-compact). 

\begin{lem}   \label{l:devissage}
If Lemma~\ref{l:estimate} holds for $\CX$ and $\overset{\circ}\CY$ then it holds for $\CY$ with
$$n_{\CY}:=\on{max}(n_{\overset{\circ}\CY},n_\CX+d+1).$$
\end{lem}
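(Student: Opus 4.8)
The plan is to exploit the long exact sequence coming from the recollement triangle for the pair $(\imath,\jmath)$, together with the given estimates for $\CX$, $\overset{\circ}\CY$, and the cohomological amplitude of $\jmath_*$. First I would take an arbitrary $\CF\in\QCoh(\CY)^\heartsuit$ and consider the canonical exact triangle
$$\imath_*\imath^!(\CF)\to \CF\to \jmath_*\jmath^*(\CF),$$
where $\jmath^*$ is just restriction to the open substack and $\imath^!$ is the "sections supported on $\CX$" functor; the triangle is the standard one expressing $\CF$ via its restrictions to the open part and its complement. Applying the (derived) functor $\Gamma(\CY,-)$ gives a long exact sequence, and since $\Gamma(\overset{\circ}\CY,-)\circ\jmath^*\simeq \Gamma(\CY,-)\circ\jmath_*$ and $\Gamma(\CX,-)\circ\imath^!\simeq \Gamma(\CY,-)\circ\imath_*$ (the latter because $\imath_*$ is a closed embedding, hence $\Gamma$-acyclic in the appropriate sense, or simply because $\Gamma(\CY,\imath_*(-))\simeq\Gamma(\CX,-)$ tautologically), I reduce bounding $H^i(\Gamma(\CY,\CF))$ to bounding the cohomologies of $\Gamma(\CY,\jmath_*\jmath^*\CF)$ and of $\Gamma(\CY,\imath_*\imath^!\CF)$.

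The key estimates are then as follows. For the open part: $\jmath^*(\CF)$ lies in $\QCoh(\overset{\circ}\CY)^\heartsuit$ because $\jmath^*$ is t-exact, so by Lemma~\ref{l:estimate} applied to $\overset{\circ}\CY$ we get $H^i(\Gamma(\overset{\circ}\CY,\jmath^*\CF))=0$ for $i>n_{\overset{\circ}\CY}$, hence $H^i(\Gamma(\CY,\jmath_*\jmath^*\CF))=0$ for $i>n_{\overset{\circ}\CY}$. For the closed part: $\imath^!(\CF)$ need not be in the heart, but since $\jmath_*$ has cohomological amplitude $\le d$ and $\jmath^*\CF$ is in the heart, the triangle above shows $\imath_*\imath^!(\CF)$, and therefore $\imath^!(\CF)$, lies in $\QCoh(\CX)^{\geq -d}$; actually it lies in degrees $[0,d+1]$ roughly, but the only thing that matters is that it is bounded below by $-d$ (in fact by $0$ on one side, but I only need the upper estimate after truncation). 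Filtering $\imath^!(\CF)$ by its (finitely many, in a bounded range of degrees) cohomology sheaves $H^j(\imath^!\CF)[-j]\in\QCoh(\CX)^\heartsuit[-j]$ for $j$ in the relevant range, Lemma~\ref{l:estimate} for $\CX$ gives $H^i(\Gamma(\CX,H^j(\imath^!\CF)[-j]))=0$ for $i-j>n_\CX$, i.e. for $i>n_\CX+j$; since $j$ ranges up to roughly $d+1$ we get $H^i(\Gamma(\CX,\imath^!\CF))=0$ for $i>n_\CX+d+1$. (One must be slightly careful about the exact range of $j$: the triangle forces $\imath^!\CF$ into degrees $\le d+1$ because $H^{\ge 1}(\jmath_*\jmath^*\CF)$ can be nonzero only in degrees $[1,d]$ and $\CF$ is in degree $0$, so $\imath_*\imath^!\CF\in\QCoh(\CX)^{[0,d+1]}$; this is where the $+1$ in the formula comes from.)

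Putting the two halves into the long exact sequence, $H^i(\Gamma(\CY,\CF))$ sits between $H^i(\Gamma(\CX,\imath^!\CF))$ and $H^i(\Gamma(\overset{\circ}\CY,\jmath^*\CF))$, both of which vanish for $i>\max(n_{\overset{\circ}\CY},\,n_\CX+d+1)$, so $H^i(\Gamma(\CY,\CF))=0$ in that range, which is exactly the claimed $n_\CY$. The main obstacle I anticipate is making precise and justifying the recollement triangle and the identity $\Gamma(\CY,\imath_*(-))\simeq\Gamma(\CX,-)$ in the derived/DG setting — in particular that $\imath^!$ is well-behaved and that the triangle is exact on all of $\QCoh(\CY)$, not just on bounded-below objects — but for a quasi-compact open immersion $\jmath$ with $\CX$ its closed complement this is standard, and the cohomological amplitude bound $d$ on $\jmath_*$ (guaranteed since $\CY$ is quasi-compact) is precisely what keeps all the truncations in a bounded range so that the dévissage over finitely many cohomology sheaves is legitimate.
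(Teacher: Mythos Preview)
Your overall architecture matches the paper's: split $\CF$ via the triangle with $\jmath_*\jmath^*(\CF)$, handle the open part directly, and bound the closed part using the estimate for $\CX$ together with the amplitude bound $d$ on $\jmath_*$. The open-part estimate and the degree bound $[0,d+1]$ on the fiber are both correct and are exactly what the paper does.

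The gap is in your treatment of the closed part. The triangle $\imath_*\imath^!(\CF)\to \CF\to \jmath_*\jmath^*(\CF)$ is \emph{not} exact in $\QCoh(\CY)$: unlike the D-module or constructible setting, $\imath_*:\QCoh(\CX)\to\QCoh(\CY)$ is not fully faithful in the derived sense, and its essential image is strictly smaller than $\ker(\jmath^*)$. (Already for $\CY=\BA^1$, $\CX=\{0\}$, $\CF=\CO_\CY$, one computes $\imath_*\imath^!(\CO_\CY)\simeq k[-1]$, whereas the fiber of $\CO_\CY\to \jmath_*\CO_{\BG_m}$ is $k[t,t^{-1}]/k[t]\,[-1]$.) So the fiber $\CF'$ of $\CF\to \jmath_*\jmath^*(\CF)$ lives in $\QCoh(\CY)$ and is only \emph{set-theoretically} supported on $\CX$; its cohomology sheaves lie in $\QCoh(\CY)^\heartsuit$, not in $\imath_*\bigl(\QCoh(\CX)^\heartsuit\bigr)$, and you cannot apply Lemma~\ref{l:estimate} for $\CX$ to them directly. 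This is precisely the obstacle you flagged at the end, and it is real. The paper's fix is to take each such $\CF''\in\QCoh(\CY)^\heartsuit$ with set-theoretic support on $\CX$, write it as a filtered colimit of objects admitting a finite filtration with subquotients in the essential image of $\imath_*$, apply the estimate for $\CX$ to those, and then invoke \corref{c:coconnective part} to pass the vanishing through the colimit. Once you insert this step, your argument becomes the paper's.
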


\begin{proof}
For $\CF\in \QCoh(\CY)^{\heartsuit}$ consider the exact triangle
$$\CF'\to \CF\to \jmath_*\circ \jmath^*(\CF),$$
where $\CF'$ is \emph{set-theoretically} supported on $\CX$.

\medskip

It is enough to show that
\begin{equation}  \label{on open}
H^r\left(\Gamma(\CY,\jmath_*\circ \jmath^*(\CF))\right)=0\;\text{ for }\;  r>n_{\CY}\, ,
\end{equation}
\begin{equation}  \label{on closed}
H^r\left(\Gamma(\CY,\CF')\right)=0 \;   \text{ for }\;    r>n_{\CY}\, .
\end{equation}
The vanishing in \eqref{on open} is clear because
$\Gamma(\CY,\jmath_*\circ \jmath^*(\CF))\simeq \Gamma(\overset{\circ}\CY,\jmath^*(\CF))$
and $n_{\CY}\ge n_{\overset{\circ}\CY}$.

\medskip

Let us prove \eqref{on closed}.  Note that $\CF'$ has finitely many cohomology sheaves and all of them
are in degrees $\le d+1$. We have $n_{\CY}\ge n_\CX+d+1$. So to prove \eqref{on closed}
it suffices to show that if  a sheaf $\CF''\in\QCoh(\CY)^{\heartsuit}$ is set-theoretically
supported on $\CX$ then 
\begin{equation}  \label{e:formal_neighb}
H^r\left(\Gamma(\CY,\CF'')\right)=0 \;   \text{ for }\;    r>n_{\CX}\, .
\end{equation}
Represent $\CF''$ as a filtered colimit of sheaves $\CF''_\alpha$ so that each $\CF''_\alpha$ admits a 
finite filtration with subquotients belonging to the essential image of
$\imath_*:\QCoh(\CX)^\heartsuit\to \QCoh(\CY)^\heartsuit$. By assumption, for each $\alpha$ and each
$r>n_{\CX}$ one has $H^r\left(\Gamma(\CY,\CF''_\alpha )\right)=0$. So \eqref{e:formal_neighb} follows from
\corref{c:coconnective part}.
\end{proof}

\sssec{}

By the above, we can assume that $\CY$ is reduced. The next proposition 
is valid over any ground field. 

\begin{prop} \label{p:stratification}
There exists a finite decomposition of $\CY$ into a union of locally closed reduced algebraic
substacks $\CY_i$, each of which satisfies:

\begin{itemize}

\item The locally closed embedding $\CY_i\hookrightarrow \CY$ is quasi-compact;

\smallskip

\item There exists a finite surjective flat morphism 
$\pi :\CZ_i\to \CY_i$ with $\CZ_i$ being a quotient of a quasi-separated and quasi-compact scheme $Z_i$ 
by an action of an affine algebraic group (of finite type) over $k$. Moreover:

\medskip

\noindent(i) One can arrange so that $Z_i$ are quasi-projective over an affine scheme, and the group action is linear with
respect to this projective embedding. 

\smallskip

\noindent(ii) If $\mbox{char\,}k=0$, $\pi$ can be chosen to be \'etale.

\end{itemize}

\end{prop}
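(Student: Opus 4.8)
The plan is to peel off locally closed strata of $\CY$ one at a time by Noetherian induction on the closed substacks of $\CY$, reducing at each step to the situation of a \emph{gerbe over a scheme}. First I would invoke the standard structure theory for quasi-compact algebraic stacks (in the sense used here, with quasi-compact quasi-separated diagonal): a reduced such stack contains a dense open substack $\CU$ which is a gerbe over a (reduced, quasi-separated, quasi-compact) algebraic space, hence after further shrinking, over an affine scheme; this is where the quasi-compactness of the diagonal is used, to guarantee that the inertia is quasi-compact over $\CY$ and that the residual gerbe at the generic point spreads out. By Noetherian induction on the complement $\CY\setminus\CU$ (a closed substack of strictly smaller dimension/support, still reduced after passing to $(\CY\setminus\CU)_{red}$), it suffices to produce, for a single stratum of the form ``gerbe $\CY_i$ over a scheme'', a finite surjective flat cover $\pi:\CZ_i\to\CY_i$ of the asserted type. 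Note that at this stage we have \emph{not} yet used any hypothesis on automorphism groups: the proposition is stated over an arbitrary field and for an arbitrary (quasi-compact) algebraic stack, so the affineness of the group appearing in $\CZ_i=Z_i/G$ must come for free.

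\medskip

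Second, I would analyze a gerbe $\CY_i\to X$ over an affine scheme $X$. Such a gerbe is banded by a group scheme, and after an fppf (indeed, after shrinking, finite) base change on $X$ it becomes a trivial gerbe $BH\times X$ for $H$ a group scheme over (a cover of) $X$; one then reduces further — using quasi-compactness and finite presentation of the inertia — to the case where $H$ is a finite-type group scheme over the base, and then, by choosing a faithful linear action, realizes the relevant cover as a quotient $Z_i/G$ with $G$ affine algebraic of finite type acting linearly on a quasi-projective $Z_i$ over an affine scheme. Concretely: if $\CY_i\simeq Z/G$ for some $G$ and quasi-compact $Z$, then taking $\CZ_i\to\CY_i$ to be $Z\to Z/G$ already gives a finite (even a torsor) surjective flat cover when $G$ is finite; in general one splits $G$ as an extension of a finite group by its neutral component and handles the connected part by a torsor argument, the finite part by a genuinely finite flat cover. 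The projectivity refinement~(i) is obtained by a further stratification of $Z$ so that on each piece $Z$ embeds quasi-projectively over an affine and $G$ acts linearly, which is possible because $G$ is affine of finite type (one can always linearize an ample family after shrinking, using that $Z$ is quasi-compact quasi-separated).

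\medskip

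Third, for the \'etaleness refinement~(ii) in characteristic $0$: here I would use that over a field of characteristic $0$ every finite-type group scheme is smooth, so the torsor $Z\to Z/G$ for the connected part is smooth, and the finite flat cover coming from the component group $\pi_0(G)$ is \'etale; composing, and choosing the finite cover of the base $X$ in the first step to be \'etale as well (generic \'etaleness, char.\ $0$), one gets that $\pi:\CZ_i\to\CY_i$ can be taken \'etale. The quasi-compactness of each embedding $\CY_i\hookrightarrow\CY$ is automatic since $\CY$ has quasi-compact diagonal and each $\CY_i$ is itself quasi-compact, so the immersion $\CY_i\hookrightarrow\CY$ is quasi-compact.

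\medskip

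The main obstacle I expect is the first reduction step: showing cleanly that a reduced quasi-compact algebraic stack (with merely quasi-separated, quasi-compact diagonal, not separated) admits a dense open substack that is a gerbe over a scheme, and that after shrinking one may take the base to be \emph{affine} and the cover \emph{finite}. This requires the generic flatness and spreading-out arguments for the inertia stack and a careful descent through an fppf (then finite) trivialization of the banding gerbe; it is precisely here that the finite-presentation hypothesis on the inertia (which for us follows from the QCA assumption, but the proposition is stated without it, so one uses instead the quasi-compactness of the diagonal together with reducedness and generic flatness) does the real work. Everything after that — the linearization of $G$-actions, the splitting into connected and finite parts, and the char.\ $0$ smoothness input — is routine.
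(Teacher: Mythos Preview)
Your proposal has a genuine gap rooted in a misreading of the hypotheses. The proposition is stated for the running stack $\CY$ of \S2, which is QCA; in particular its geometric points have \emph{affine} automorphism groups, and the inertia is of finite presentation over $\CY$. You explicitly assert that ``the affineness of the group appearing in $\CZ_i=Z_i/G$ must come for free,'' but it does not, and the paper's proof uses it at the crucial step. After invoking \cite[Theorem~11.5]{LM} (Lemma~\ref{LM}) to stratify so that a finite flat (resp.\ \'etale) base change makes each stratum into $B\CG$ for a flat finitely presented group scheme $\CG$ over a base $X$, the paper notes that the fibers of $\CG$ are the automorphism groups of points of $\CY$, hence affine by QCA. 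This is what allows one to embed the generic fiber $\CG_K \hookrightarrow GL(n)_K$, apply Chevalley to get $Z_K:=GL(n)_K/\CG_K$ quasi-projective with a linear $GL(n)$-action, and spread out over a dense open $\overset{\circ}{X}\subset X$ to obtain $B\CG|_{\overset{\circ}{X}}\simeq Z/GL(n)$. Without the affineness hypothesis the statement is false: take $\CY=BE$ for an elliptic curve $E$. Any finite flat cover $\CZ\to BE$ still has automorphism groups which are abelian varieties, so $\CZ$ cannot be of the form $Z/G$ with $G$ an affine algebraic group (since stabilizers in such a quotient are closed subgroups of $G$, hence affine).

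Even granting QCA, your Step~2 does not contain the actual construction: you write ``if $\CY_i\simeq Z/G$ for some $G$'' and proceed from there, but that is precisely what must be produced. The mechanism is the Chevalley-quotient trick above, not a splitting of $G$ into connected and finite parts. Relatedly, your Step~3 conflates two different maps: the finite flat (resp.\ \'etale) map $\pi:\CZ_i\to\CY_i$ in the proposition is the base change of the finite cover $X_i\to X'_i$ of the coarse space, whereas the map $Z\to Z/G$ you discuss is a $G$-torsor, smooth of relative dimension $\dim G$, and is not the $\pi$ in question. The \'etaleness in characteristic~0 comes from arranging the cover $X_i\to X'_i$ to be \'etale (generic \'etaleness of a finite separable extension), not from any smoothness of $G$.
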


This proposition will be proved in \secref{ss:end_proof_main}.

\begin{rem}
Point (i) of the proposition will be used in the proof of Theorem~\ref{t:QCoh_core} but not in the proof of 
\lemref{l:estimate}.
\end{rem}


We are now going to deduce \lemref{l:estimate} for $\CY$ as above from \propref{p:stratification}. 

\sssec{}

By induction and \lemref{l:devissage}, it is enough to prove \lemref{l:estimate}
for the algebraic stacks $\CY_i$. 

\sssec{}  \label{sss:coverings}

Let $\CZ_i\to \CY_i$ be a finite surjective \'etale morphism as in \propref{p:stratification}.
We claim that if Lemma~\ref{l:estimate} holds for $\CZ_i$ then it holds for $\CY_i$. 

\medskip

To see this, note that any $\CF\in \QCoh(\CY_i)$ is a direct summand of 
$\pi_*\circ \pi^*(\CF)=\CF\otimes\pi_*(\CO_{\CZ_i})$ (use the trace morphism  $\pi_*(\CO_{\CZ_i})\to\CO_{\CY_i}$).

\medskip

(Note that the last manipulation used the $\mbox{char\,}k=0$ assumption. But this is not
the most crucial place where we will use it.) 

\medskip

Thus, it is sufficient to prove \lemref{l:estimate} for a stack $\CZ$ of the form $Z/G$,
where $Z$ is a quasi-separated and quasi-compact scheme, and $G$ is an affine algebraic group
of finite type over $k$.

\ssec{Quotients of schemes by algebraic groups}  \label{ss:quotients}

Let $G$ be a reductive algebraic group over $k$. Consider the stack $BG:=\on{pt}/G$. 

\begin{lem}    \label{l:Gamma_of_BG}
The functor 
$$\Gamma(BG,-):\QCoh(BG)\to\Vect$$ 
is t-exact. More precisely,
\begin{equation}   \label{e:Gamma_of_BG}
H^i(\Gamma (BG, M))=(H^i(M))^G, \quad\quad M\in \QCoh(BG).
\end{equation}
\end{lem}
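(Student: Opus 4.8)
The plan is to identify $\QCoh(BG)$ with the DG category $\Rep(G)$ of representations of the reductive group $G$ and to compute $\Gamma(BG,-)$ as the functor of $G$-invariants. First I would recall that $BG$ is a (classical, in fact smooth) algebraic stack with atlas $p_G:\on{pt}\to BG$ whose \v{C}ech nerve is the simplicial affine scheme $[n]\mapsto G^{\times n}$; by descent (\secref{sss:change index qc}(iii)) this identifies $\QCoh(BG)$ with comodules over the coalgebra $\CO_G$, i.e.\ with $\Rep(G)$ as a DG category. Under this identification the functor $\Gamma(BG,-) = (p_{BG})_*$ is computed, via \corref{c:direct image via Cech} applied to the atlas $p_G$, as the totalization of the cosimplicial object $\Gamma(G^{\times\bullet}, (p^\bullet)^*(-))$, which is precisely the cobar complex computing derived $G$-invariants $R\Gamma(G,-) = C^\bullet(G,-)$.

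The key point — and the only place the reductivity of $G$ enters — is that in characteristic $0$ the abelian category $\Rep(G)^\heartsuit = \QCoh(BG)^\heartsuit$ is semisimple, so the functor of invariants $M\mapsto M^G$ on $\Rep(G)^\heartsuit$ is exact (it is a direct summand of the identity functor, the isotypic projection onto the trivial summand). Consequently the derived functor $R\Gamma(G,-)$, restricted to objects of the heart, has no higher cohomology: $H^i(\Gamma(BG,M))=0$ for $i\neq 0$ and $M\in\QCoh(BG)^\heartsuit$, with $H^0(\Gamma(BG,M))=M^G$. This gives t-exactness of $\Gamma(BG,-)$ on the heart; to get t-exactness on all of $\QCoh(BG)$ and the formula \eqref{e:Gamma_of_BG}, one uses that $\Gamma(BG,-)$ commutes with colimits on $\QCoh(BG)$ — indeed $BG$ is QCA, so this is \thmref{main}(i), or more elementarily it follows because on the heart $\Gamma(BG,-)$ is an exact direct summand of the forgetful functor and hence commutes with filtered colimits, and the t-structure is left-complete (\lemref{properties of t}) — together with a standard spectral sequence / dévissage argument: for bounded $M$ one filters by the (finitely many) cohomology sheaves and the higher-cohomology vanishing on the heart forces $H^i(\Gamma(BG,M))=H^i(M)^G$; for general $M$ one passes to the limit using left-completeness and the boundedness of higher inverse limits.

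The main obstacle, such as it is, is purely bookkeeping: making precise the identification of $\Gamma(BG,-)$ on $\QCoh(BG)$ with the derived functor of $G$-invariants at the DG (rather than triangulated) level, so that the vanishing statement about the abelian-categorical functor $(-)^G$ really does control all the higher cohomology. Once semisimplicity of $\Rep(G)^\heartsuit$ is invoked, the vanishing is immediate on the heart, and the only real work is propagating it to unbounded complexes via the left-completeness of the t-structure on $\QCoh(BG)$ and the fact (\lemref{properties of t}(a)) that it is compatible with filtered colimits; there is no genuine geometric difficulty beyond that.
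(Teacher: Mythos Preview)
Your argument is essentially correct and closely matches the alternative proof the paper sketches in the Remark immediately following \lemref{l:Gamma_of_BG}, but there is one circularity to flag: you invoke \thmref{main}(i) to assert that $\Gamma(BG,-)$ is continuous, yet \lemref{l:Gamma_of_BG} is itself an ingredient in the proof of \thmref{main} (it is used in \secref{ss:quotients} to establish \lemref{l:estimate}, to which \thmref{main}(ii) and hence (i) are reduced). Fortunately your ``more elementary'' route via left-completeness and the d\'evissage heart $\to$ bounded $\to$ general avoids this and is valid; in fact the d\'evissage does not require continuity at all, since the passage from bounded to unbounded uses only that $\Gamma(BG,-)$ commutes with \emph{limits} (automatic for a right adjoint) together with the fact that the truncation tower stabilizes in each fixed cohomological degree. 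This is essentially how the paper's Remark organizes it, via \lemref{l:why_estimate_suffices} and \corref{c:coconnective part}.

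The paper's main proof takes a slightly different and cleaner path: rather than propagating from the heart upward, it observes that since $\CA=\QCoh(BG)^\heartsuit$ is semisimple, the derived category $D(\CA)$ is already left-complete, so by Remark~\ref{r:left-completion} one has $\QCoh(BG)=D(\CA)$ on the nose; t-exactness of $(-)^G$ on $D(\CA)$ is then tautological. Your approach (and the paper's alternative in the Remark) has the advantage of not relying on the identification $\QCoh(BG)^+\simeq D(\CA)^+$ from \cite[Prop.~5.4.3]{QCoh}; the paper's main proof has the advantage that once $\QCoh(BG)=D(\CA)$ is established there is nothing left to check.
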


It is here that we use the characteristic $0$ assumption.

\begin{proof}
By Remark~\ref{r:left-completion} (which relies on \cite[Prop. 5.4.3]{QCoh}), $\QCoh (BG)$ is the 
left completion of $D(\CA)$, where $\CA:= \QCoh(BG)^\heartsuit$ is the abelian category of $G$-modules. 
But $\CA$ is semisimple (because $\mbox{char\,}k=0$), so  $D(\CA)$ is  left-complete and
$\QCoh (BG)=D(\CA)$. The lemma follows.
\end{proof}

\begin{rem}
In the proof of Lemma~\ref{l:Gamma_of_BG} we used \cite[Prop. 5.4.3]{QCoh}. Instead, one can argue as follows. By Lemma~\ref{l:why_estimate_suffices} and Corollary~\ref{c:coconnective part}, it suffices to 
prove \eqref{e:Gamma_of_BG} if $M\in\CA:= \QCoh(BG)^\heartsuit$. In this case
applying ~\corref{c:direct image via Cech} to the atlas $\on{pt}\to BG$ we see that $H^i(\Gamma (BG, M))$
identifies with the usual $H^i(G,M)$, and the latter is isomorphic to $\Ext^i_{\CA}(k,M)$. It remains to use the 
semisimplicity of $\CA$. 
\end{rem}

\begin{lem}
Let $Z$ be a quasi-separated and quasi-compact scheme equipped with an action of
an affine algebraic group $G$. Then Lemma~\ref{l:estimate} holds for $\CZ=Z/G$.
\end{lem}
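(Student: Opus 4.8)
The plan is to push $\Gamma(\CZ,-)$ forward along the map to $BG$ and invoke \lemref{l:Gamma_of_BG}, bridging from reductive to arbitrary affine $G$ by a Levi decomposition. Write $\CZ=Z/G$ and let $p\colon\CZ\to BG$ be the morphism induced by $Z\to\on{pt}$. It is quasi-compact, quasi-separated and representable: for an affine $S\to BG$ classifying a $G$-torsor $P$ one has $S\underset{BG}\times\CZ\simeq(P\times Z)/G$, which is fppf-locally of the form $S'\times Z$, hence a quasi-separated quasi-compact algebraic space, while the base change of $p$ along the atlas $\on{pt}\to BG$ is the scheme $Z$ itself. Since $p_\CZ=p_{BG}\circ p$ we get $\Gamma(\CZ,-)\simeq\Gamma(BG,-)\circ p_*$, so it suffices to bound, independently of one another, the cohomological amplitude of $p_*$ on $\QCoh(\CZ)^\heartsuit$ and that of $\Gamma(BG,-)$ on $\QCoh(BG)^{\leq0}$.

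For the first bound, let $d:=\cd(Z)$ be the cohomological dimension of quasi-coherent cohomology of the quasi-separated quasi-compact scheme $Z$, which is finite. Given $\CF\in\QCoh(\CZ)^\heartsuit$, base change for the representable quasi-compact quasi-separated morphism $p$ (cf. \secref{sss:easy case}, whose argument via \propref{p:bootstrap base change} applies to representable quasi-compact quasi-separated morphisms) identifies the pullback of $p_*(\CF)$ along $\on{pt}\to BG$ with $\Gamma(Z,\CF|_Z)$; and $\CF|_Z\in\QCoh(Z)^\heartsuit$ because $Z\to\CZ$ is smooth, so this pullback lies in $\Vect^{\leq d}$. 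Since $\on{pt}\to BG$ is a faithfully flat atlas, \lemref{properties of t}(c) yields $p_*(\CF)\in\QCoh(BG)^{\leq d}$; thus $p_*$ carries $\QCoh(\CZ)^\heartsuit$ into $\QCoh(BG)^{\leq d}$.

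For the second bound it is enough to produce $n_G$, depending only on $G$, with $\Gamma(BG,-)$ sending $\QCoh(BG)^\heartsuit$ into $\Vect^{\leq n_G}$: indeed \lemref{l:why_estimate_suffices}, which applies verbatim to $BG$ (an algebraic stack with left-complete t-structure, by \lemref{properties of t}(b)), then upgrades this to $\Gamma(BG,-)\colon\QCoh(BG)^{\leq0}\to\Vect^{\leq n_G}$, whence $\Gamma(\CZ,-)$ carries $\QCoh(\CZ)^\heartsuit$ into $\Vect^{\leq d+n_G}$ and $n_\CZ:=d+n_G$ works. For $\CG\in\QCoh(BG)^\heartsuit$, i.e. a rational $G$-module, $\Gamma(BG,\CG)$ computes the rational cohomology $R\Gamma(G,\CG)=\Ext^\bullet_{\QCoh(BG)^\heartsuit}(k,\CG)$: either identify $\QCoh(BG)^+$ with $D(\QCoh(BG)^\heartsuit)^+$ via \cite[Prop. 5.4.3]{QCoh}, legitimate since the diagonal of $BG$ is affine (cf. Remark~\ref{r:left-completion}), or use the \v{C}ech nerve of $\on{pt}\to BG$ as in the remark following \lemref{l:Gamma_of_BG}. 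Now apply the Levi decomposition $G=U\rtimes L$, with $U$ the unipotent radical and $L$ reductive, available because $\on{char}k=0$. In the Hochschild--Serre spectral sequence $H^p(L,H^q(U,\CG))\Rightarrow H^{p+q}(G,\CG)$ for the normal subgroup $U$ (all cohomology being that of algebraic groups), the rows with $p>0$ vanish because the category of rational $L$-modules is semisimple in characteristic $0$, so $H^n(G,\CG)\simeq H^n(U,\CG)^L$; and $H^n(U,\CG)=0$ for $n>\dim U$, by induction on $\dim U$ using a normal $\BG_a\subseteq U$ supplied by the last nonzero term of the lower central series together with the vanishing $H^{\geq2}(\BG_a,-)=0$, valid because $\on{char}k=0$ (equivalently, identify $H^\bullet(U,-)$ with Lie algebra cohomology $H^\bullet(\on{Lie}U,-)$, computed by the Chevalley--Eilenberg complex in degrees $[0,\dim U]$). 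Hence $n_G:=\dim U$ and $n_\CZ:=\cd(Z)+\dim U$.

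The main obstacle is the boundedness of $\Gamma(BG,-)$ in the previous paragraph; this is the one place — besides the semisimplicity of $\QCoh(BL)^\heartsuit$ already packaged into \lemref{l:Gamma_of_BG} — where characteristic $0$ is genuinely needed, both for the existence of the Levi decomposition and for $H^{\geq2}(\BG_a,-)=0$, which fails in positive characteristic where $H^\bullet(\BG_a,k)$ is a polynomial algebra. The remaining ingredients — representability of $p$, finiteness of $\cd(Z)$ for a quasi-separated quasi-compact scheme, and the reduction of the $\QCoh(BG)^{\leq0}$ statement to the heart — are routine.
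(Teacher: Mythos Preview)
Your proof is correct, but the paper takes a different and shorter route. Instead of bounding $\Gamma(BG,-)$ for non-reductive $G$ via the Levi decomposition and the Hochschild--Serre spectral sequence, the paper simply embeds $G$ into a reductive group $G'$ and considers the composite $f'\colon\CZ\to BG\to BG'$. This morphism is again quasi-separated and quasi-compact, so $(f')_*$ has bounded cohomological amplitude on $\QCoh(\CZ)^\heartsuit$; since $\Gamma(BG',-)$ is t-exact by \lemref{l:Gamma_of_BG}, the bound on $\Gamma(\CZ,-)=\Gamma(BG',-)\circ(f')_*$ follows immediately. In effect, the paper absorbs the unipotent radical into the fiber of $f'$ (which becomes a $Z$-bundle over $G'/G$) rather than into the cohomology of the target. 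Your approach has the merit of producing the explicit and sharp bound $n_\CZ=\cd(Z)+\dim U$, whereas the paper's bound depends on the auxiliary embedding $G\hookrightarrow G'$; the paper's approach is cleaner in that it avoids spectral sequences and the analysis of unipotent cohomology entirely, using nothing beyond semisimplicity of $\Rep(G')$. Both arguments are genuinely characteristic-zero.

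One minor point: your justification of base change for $p_*$ along $\on{pt}\to BG$ by extrapolating \secref{sss:easy case} to representable morphisms is a slight stretch in the paper's framework (which restricts to schematic morphisms until \secref{s:gen alg stacks}); but since $\CF\in\QCoh(\CZ)^\heartsuit\subset\QCoh(\CZ)^+$ and $\on{pt}\to BG$ is smooth, \corref{c:coconnective part}(b) already gives you exactly the base change you need.
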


\begin{proof}
The canonical morphism $f:\CZ\to BG$ is schematic, quasi-separated and quasi-compact.
Embed $G$ into a reductive group $G'$ and let $f'$ be the 
composition $\CZ{\buildrel{f}\over{\longrightarrow}} BG\to BG'$. Then $f'$ is still schematic, quasi-separated
and quasi-compact, so the cohomological
amplitude of the functor $f'_*: \QCoh(\CZ)\to\Vect$  is bounded above. On the other hand, 
the functor $$\Gamma : \QCoh(BG')\to \Vect$$ is t-exact by Lemma~\ref{l:Gamma_of_BG}.
\end{proof}

\ssec{Proof of \propref{p:stratification}} \label{ss:end_proof_main}


%
%
This will conclude the proof of \lemref{l:estimate} in view of \secref{induction step}.

\sssec{}

The proof of the proposition is based on the following lemma.

\begin{lem}  \label{LM}
Let $\CY\ne\emptyset$ be a classical algebraic stack, which is quasi-compact and whose
inertia stack is of finite presentation over $\CY$. Then there exists a finite decomposition of
$\CY$ into a union of locally closed reduced algebraic substacks $\CY_i$, each of which satisfies:

\begin{itemize}

\item The locally closed embedding $\CY_i\hookrightarrow \CY$ is quasi-compact;

\smallskip

\item Each $\CY_i$ admits a map $\varphi_i:\CY_i\to X'_i$, where $X'_i$ is
an affine scheme with the following property:

\smallskip

\noindent There exists a finite fppf morphism $f_i:X_i\to X'_i$, and a flat group-scheme of finite presentation 
$\CG_i$ over $X_i$ such that  $X_i\underset{X'_i}\times \CY_i$ is isomorphic to the classifying stack $B\CG_i$.

\noindent Moreover, we can always arrange so that $X_i$ and $X'_i$ are integral. In the characteristic 0 case, 
one can choose $f_i$ to be \'etale.

\end{itemize}

\end{lem}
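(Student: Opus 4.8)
The plan is to argue by Noetherian induction on $\CY$. Since a locally closed reduced substack of $\CY$ is the same thing as one of $\CY_{red}$, we may replace $\CY$ by $\CY_{red}$ and assume $\CY$ reduced; and it then suffices to produce a \emph{dense} open substack $\CY_0\subseteq\CY$ carrying the asserted data — an affine scheme $X'$, a finite fppf morphism $f\colon X\to X'$, and a flat finitely presented group scheme $\CG$ over $X$ with $X\underset{X'}\times\CY_0\cong B\CG$ — for then one removes $\CY_0$ and repeats on $(\CY\setminus\CY_0)_{red}$, which is again a classical quasi-compact algebraic stack with finitely presented inertia (the inertia of a locally closed substack being the restriction of $I_\CY$). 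The induction terminates after finitely many steps, and each stratum, being locally closed in $\CY$, is quasi-compactly embedded.

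So let $\CY$ be reduced. First I would make the inertia flat. The morphism $I_\CY:=\CY\underset{\CY\times\CY}\times\CY\to\CY$ is of finite presentation and $\CY$ is reduced, so by generic flatness (together with openness of the flat locus for finitely presented morphisms) there is a dense open substack over which $I_\CY\to\CY$ is flat; replace $\CY$ by it. Now $I_\CY\to\CY$ is a flat, finitely presented group scheme over $\CY$, so by \cite[Corollary 10.8]{LM} the stack $\CY$ is a gerbe $g\colon\CY\to Y$ over an algebraic space $Y$. As a gerbe, $g$ is faithfully flat and surjective, so $Y$ is the image of the quasi-compact $\CY$ and hence quasi-compact; a quasi-compact algebraic space contains a dense open affine subscheme, and after shrinking $\CY$ to the $g$-preimage of a small enough such subscheme I may assume that $Y=X'$ is an affine \emph{integral} scheme and that $g=\varphi\colon\CY\to X'$ is the structure map we want.

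Next I would trivialize the gerbe after a finite flat base change. Since $\CY\to X'$ is fppf, composing with an atlas of $\CY$ gives a scheme that is fppf over $X'$, and by the existence of finite flat quasi-sections for fppf morphisms (the local structure theory of flat finitely presented maps), after shrinking $X'$ once more — which only refines our stratification — I obtain a \emph{finite} fppf morphism $f\colon X\to X'$ together with a point $x\colon X\to\CY_X:=\CY\underset{X'}\times X$; in characteristic $0$, since $X'$ is reduced, I may shrink $X'$ further so that $f$ is finite \'etale. The point $x$ exhibits $\CY_X$ as a neutral gerbe, $\CY_X\cong B\CG$ with $\CG:=\underline{\on{Aut}}_{\CY_X}(x)$, and $\CG$ is the pullback of $I_\CY\to\CY$ along $x$, hence flat and of finite presentation over $X$. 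Finally, replacing $X$ by the reduced structure on one of its irreducible components dominating $X'$ and shrinking $X'$ accordingly, I arrange $X$ integral as well. This gives the required data for the stratum $\CY_0=\CY$, and the Noetherian induction completes the proof.

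The main obstacle is concentrated in two places. The first is the cited input \cite[Corollary 10.8]{LM} — the fact that a stack with flat finitely presented inertia is (generically) a gerbe over an algebraic space; this is where the finite-presentation hypothesis on the inertia is genuinely used, and it is the only essentially nonformal ingredient. The second is the passage from an \emph{arbitrary} fppf trivializing cover of the gerbe to a \emph{finite} one — and, in characteristic $0$, to an \'etale one — which forces the repeated shrinkings of $X'$ and all the accompanying bookkeeping needed to keep the decomposition finite, the strata quasi-compactly embedded, and $X$, $X'$ integral.
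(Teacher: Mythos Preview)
Your outline has the right shape, but there is a genuine gap at the very first step: you invoke ``Noetherian induction on $\CY$'' and then assert that ``the induction terminates after finitely many steps''. Nothing in the hypotheses makes $\CY$ Noetherian---it is only quasi-compact, and not assumed locally of finite type---so an infinite strictly descending chain of closed reduced substacks is perfectly possible, and your ``remove a dense open and repeat'' loop has no a~priori reason to stop. The paper does not attempt to reprove this stratification from scratch; it simply invokes \cite[Theorem 11.5]{LM}, and makes the point that although that theorem is stated under a Noetherian hypothesis, the only place in its proof where Noetherianness is used is to guarantee that the inertia is of finite presentation---which is exactly what is being assumed here. If you want a self-contained argument, you must exhibit the finiteness mechanism that actually replaces Noetherian descent on $|\CY|$; the one in \cite{LM} is governed by invariants of the inertia group scheme, not by the topology of $\CY$.

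Once the stratification-into-gerbes is in hand, the remaining task is to upgrade the fppf trivializing cover $X_i\to X'_i$ to a \emph{finite} fppf (and, in characteristic~$0$, \'etale) one. Your appeal to ``finite flat quasi-sections for fppf morphisms'' is in the right spirit but vague. The paper gives a concrete argument: since an fppf morphism between affine schemes comes by base change from an fppf morphism between affine schemes of finite type over $k$, one reduces to the finite-type case; there Noetherian induction is legitimate, and one finds, over the generic point of $X'$, a finite extension of the function field over which $X$ has a point, then spreads out. In characteristic~$0$ the extension is separable, so the cover is generically \'etale. This is also where the integrality of $X$ and $X'$ is arranged.
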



\begin{proof}
We are going to apply \cite[Theorem 11.5]{LM}. We note that in {\it loc.cit.}, it is stated under the assumption
that $\CY$ is Noetherian. However, the only place where the Noetherian hypothesis is used in the proof is to 
ensure that the inertia stack be of finite presentation over $\CY$, which is what we are imposing by assumption.

\medskip

The above theorem yields a decomposition of $\CY$ as in the lemma, with the only difference that
the morphisms $$f_i:X_i\to X'_i$$ are just fppf. We have to show that each $X'_i$ admits a finite
decomposition into a union of locally closed integral subschemes $X'_{i,j}$, each of which satisfies:

\begin{itemize}

\item The locally closed embeddings $X'_{i,j}\hookrightarrow X'_i$ are quasi-compact;

\item For every $j$, there exists a finite fppf map $g_{i,j}:\wt{X}'_{i,j}\to X'_{i,j}$, such that $f_i$ admits
a section after a base change by $g_{i,j}$. 

\smallskip

Moreover, the schemes $\wt{X}'_{i,j}$ can be chosen integral. In the characteristic 0 case, $g_{i,j}$ can be chosen \'etale.

\end{itemize}

We claim, however, that this is the case for any fppf map $f:X\to X'$ between reduced affine schemes. 
Indeed, recall that whenever 
$f:X\to X'$ is an fppf morphism of schemes with $X'$ affine, we can always realize it as a base change
$$
\CD
X  @>>>  X^0 \\
@V{f}VV  @VV{f^0}V  \\
X'  @>>>  X'{}^0,
\endCD
$$
where $f^0:X^0\to X'{}^0$ is an fppf morphisms of schemes of finite type over $k$. Hence,
our assertion reduces to the case when $X'$ is of finite type.

\medskip

In the latter case, by Noetherian induction
it is enough to show that it contains a non-empty open subset $\oX'$ with
a finite flat (in characteristic $0$, \'etale) cover $g:\wt{X}\to \oX'$, such that $f$ admits a 
section after a base change by $g$. 

\medskip

Let $K'$ denote the field of fractions of $X'$. Clearly,
$X$ has a point over some finite extension $\wt{K}'$ of $K'$. 

\medskip

Taking $\wt{X}'$ to be any integral scheme
finite over $X'$ with field of fractions $\wt{K}'$, we obtain that the map $\wt{X}'\to X$ is well-defined
over some non-empty open subset $\oX'\subset X'$, as required. Moreover in characteristic
$0$, the map $\wt{X}'\to X'$ is generically \'etale over $X'$, since $\wt{K}'/K'$ is separable.
\end{proof}

\begin{proof}[Proof of Proposition~\ref{p:stratification}]
Let $\CY_i$, $X'_i$, $X_i$, and $\CG_i$ be as in Lemma~\ref{LM}. 
Note that for each field-valued point of $X_i$, 
the fiber of $\CG_i$ at it identifies with the group of automorphisms of the corresponding point of $\CY_i$.
Therefore, by the QCA condition, all these groups are affine.

\medskip

As the index $i$ will be fixed, for the rest of the proof, we shall suppress it from the
notation.

\medskip

It is sufficient to show that $X'$ admits a finite decomposition into a union of locally closed 
reduced subschemes $X'_l$, each of which satisfies:

\begin{itemize}

\item The locally closed embedding $X'_l\hookrightarrow X'$ is quasi-compact;

\smallskip

\item The stack $$\CZ_l:=B\CG\underset{X'}\times X'_l$$
(which is tautologically the same as $(X\underset{X'}\times \CY)\underset{\CY}\times (\CY\underset{X'}\times X'_l)$,
viewed as equipped with a map to $\CY\underset{X'}\times X'_l$), is isomorphic 
to a stack of the form $Z_l/G_l$, where $Z_l$ is a quasi-separated and quasi-compact scheme,
and $G_l$ is an affine algebraic group of finite type over $k$. Moreover, $Z_l$ can be chosen
to be quasi-projective over an affine scheme, and the action of $G_l$ on it linear with respect
this projective embedding.

\end{itemize} 

\medskip

Since $\CG$ and $X$ are of finite presentation over $X'$, they come by base change
from a map $X'\to X'{}^0$, where $X'{}^0$ is of finite type over $k$. Hence, is is enough to prove
the assertion in the case when $X'$ (and hence 
$X$ and $\CG$) are of finite type.

\medskip

In the latter case, by Noetherian induction, it is sufficient to find a non-empty open subset
$\oX'\subset X'$, such that $B\CG\underset{X'}\times \oX'$ is of the form $Z/G$ specified
above. Moreover, since the morphism $X\to X'$ is finite, it is sufficient to find the corresponding
open $\oX$ in $X$. 

\medskip

Recall that $X$ was assumed integral. Let $K$ be the field of fractions of $X$. Let
$$\CG_{K}:=\CG\underset{X}\times \on{pt}$$
be the corresponding algebraic group over $K$. Since $\CG_K$ is affine, we can embed
it into $GL(n)_{K}:=GL(n)\times \on{pt}$. 

\medskip

By Chevalley's theorem,
$$Z_{K}:=GL(n)_{K}/\CG_{K}$$
is a quasi-projective scheme over $K$ equipped with a linear action of $GL(n)$. 

\medskip

Hence, there exists a non-empty open subscheme $\oX\subset X$, such that 
$\CG|_{\oX}$ admits a map into $GL(n)\times \oX$,
and the stack-theoretic quotient 
$$(GL(n)\times \oX)/(\CG|_{\oX})$$
is isomorphic to a quasi-projective scheme $Z$ over $\oX$, and moreover the natural
action of $GL(n)$ on it is linear.

\medskip

Thus, $B\CG|_{\oX}\simeq Z/GL(n)$, as required.

\end{proof}

\ssec{Proof of Theorem~\ref{t:QCoh_core}} \label{ss:proofQCoh_core}
Below we give a direct proof. In the case when $\CY$ is locally almost of finite type, 
one can deduce Theorem~\ref{t:QCoh_core} from Proposition~\ref{p:coh_generates}, 
as explained in Remark~\ref{r:generation of IndCoh}.

\sssec{Reduction to the reduced classical case}  \label{sss:reduction_proofQCoh_core}
Let $^{cl}\CY\overset{^{cl}\!i}\hookrightarrow \CY$ be the embedding of the  classical stack 
underlying $\CY$.  We claim that $\QCoh(\CY)$ is generated by the essential image of the functor
$^{cl}\!i_*$. To see this, use the filtration of $\CF\in\QCoh(\CY )$ by objects
$\CF\otimes\tau^{\le -n}(\CO_{\CY})$, $n\in\BZ_+$, which is finite by the eventual coconnectivity
assumption.

\medskip

So without loss of generality we can assume that $\CY$ is classical.  A similar argument
allows to assume that $\CY$ is reduced.

\sssec{}

Using \propref{p:stratification}, the statement of the theorem results from the combination
of the following three lemmas:

\begin{lem}   \label{l:BFN}
Let $Z$ be a quasi-projective scheme equipped 
with a linear action of an affine algebraic group $G$. 
Then $\QCoh(Z/G)$ is generated by the heart of its t-structure.
\end{lem}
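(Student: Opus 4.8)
The plan is to reduce everything to the case of a linear action on affine space, and then to use the Koszul-type resolution coming from that embedding. First I would recall that a linear $G$-action on a quasi-projective $Z$ means an equivariant locally closed embedding $Z \hookrightarrow \BP(V)$ for a $G$-representation $V$; passing to the affine cone, we get a $G$-equivariant locally closed embedding $Z' \hookrightarrow V^{\vee}$ of the punctured cone, or more conveniently we can work with an equivariant affine open cover refined by $G$-stable affine opens, reducing to $Z$ \emph{affine} with a linear $G$-action, i.e.\ $Z \hookrightarrow \BA^N$ closed and $G$-equivariant. Since a closed embedding $i:Z/G \hookrightarrow \BA^N/G$ has $i_*$ t-exact, conservative, and with image generating (any $\CF \in \QCoh(Z/G)$ lies in the category generated by $i_*\CF$ because $i_*$ is fully faithful and its image is closed under the relevant operations), it suffices to treat $\CY = \BA^N/G$.

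Next I would handle $\BA^N/G$. Here $\BA^N/G \to BG$ is an affine morphism (indeed a vector bundle stack over $BG$), so $\QCoh(\BA^N/G)$ is modules over the algebra $\CO_{\BA^N}$ in $\QCoh(BG) = \Rep(G)$. Any object is then built, via the bar resolution / the natural filtration by $\on{Sym}$-degree, from objects pulled back from $BG$, i.e.\ from $\QCoh(BG)$; more precisely the pullback functor $p^*:\QCoh(BG)\to\QCoh(\BA^N/G)$ has the property that its essential image generates, because its right adjoint $p_*$ is conservative (as $p$ is affine and faithfully flat after base change, or simply because $\CO_{\BA^N}$ is faithfully flat over $k$). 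Since $p^*$ is moreover t-exact up to the vector-bundle shift and sends $\QCoh(BG)^{\heartsuit}$ into $\QCoh(\BA^N/G)^{\heartsuit}$, generation of $\QCoh(BG)$ by its heart (which I address below) transfers to generation of $\QCoh(\BA^N/G)$ by its heart.

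It remains to prove the statement for $\CY = BG$ with $G$ an affine algebraic group. By Remark~\ref{r:left-completion}, applied with $Z = \on{pt}$ (which is quasi-projective with a linear $G$-action), $\QCoh(BG)$ is the left completion of $D(\QCoh(BG)^{\heartsuit})$; and $D(\CA)$ for any abelian category $\CA$ with enough of the relevant objects is generated by $\CA^{\heartsuit}$. Concretely: $D(\CA)$ is generated (as a cocomplete DG category) by the objects of $\CA$, and passing to the left completion does not destroy generation since every object of $\QCoh(BG)$ is the limit of its truncations $\tau^{\geq -n}$, each of which lies in the subcategory generated by $\QCoh(BG)^{\heartsuit}$ (a bounded object of $D(\CA)$ is obtained from objects of $\CA$ by finitely many cones), and such an inverse limit — being also expressible via the exact triangles relating consecutive truncations — stays in the cocomplete subcategory generated by $\QCoh(BG)^{\heartsuit}$.

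The main obstacle, and the point demanding care, is the last passage from $D(\QCoh(BG)^{\heartsuit})$ to its left completion: one must check that the left-completion process keeps us inside the cocomplete subcategory generated by the heart. The clean way is to observe that for $\CF \in \QCoh(BG)$ we have the Milnor-type presentation of $\CF$ as the cone (up to shift) of an endomorphism of $\bigoplus_n \tau^{\geq -n}(\CF)$, so $\CF$ lies in the subcategory generated by the $\tau^{\geq -n}(\CF)$, each of which is bounded and hence built from finitely many objects of $\QCoh(BG)^{\heartsuit}$ by cones and shifts. I would also double-check that no noetherian or finite-type hypothesis on $G$ is secretly needed here beyond what Remark~\ref{r:left-completion} provides; in the characteristic-$0$ setting of the paper, the cited \cite[Prop.~5.4.3]{QCoh} covers $BG$ for any affine $G$, so this is fine.
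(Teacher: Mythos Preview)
Your argument has two genuine gaps.

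\medskip

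\textbf{Step 1 fails.} A quasi-projective $G$-scheme need not admit any nonempty $G$-stable affine open, so the reduction to affine $Z$ does not go through. For instance, take $G = PGL_2$ acting on $Z = \BP^1$ (this is a linear action via the conic embedding $\BP^1 \hookrightarrow \BP(\on{Sym}^2 k^2)$, the target being the projectivization of the adjoint representation): the action is transitive, so there are no proper $G$-invariant opens at all. The affine-cone route does not help either, since the punctured cone over $Z$ is not affine unless $Z$ is a point, and passing to the full cone changes both the scheme and the group.

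\medskip

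\textbf{Step 4 is flawed in two places.} First, $\tau^{\geq -n}(\CF)$ is only bounded \emph{below}; it is bounded only if $\CF$ is already bounded above, so it is not in general a finite iterated extension of objects of the heart. Second, and more seriously, the Milnor presentation of an inverse limit realizes $\CF = \lim_n \tau^{\geq -n}(\CF)$ as the fiber of a self-map of the \emph{product} $\prod_n \tau^{\geq -n}(\CF)$, not of the direct sum $\bigoplus_n \tau^{\geq -n}(\CF)$. Products are limits, not colimits, so this presentation does not place $\CF$ in the \emph{colimit}-closed subcategory generated by the truncations. Truncation arguments alone will not show that an arbitrary unbounded object lies in the cocomplete subcategory generated by the heart; some finiteness input (e.g.\ compact generation by objects of the heart) is needed.

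\medskip

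The paper's proof avoids both issues by working with the ample $G$-equivariant line bundle directly: the twists $\CO_Z(-i)$ lie in $\QCoh(Z/G)^\heartsuit$ and already generate $\QCoh(Z/G)$, essentially by the argument of \cite[Sect.~3]{BFN}. No reduction to affine $Z$ or to $BG$ is made.
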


\begin{lem} \label{l:covering plus}
If $\CZ\to \CY$ is a finite \'etale map, and $\QCoh(\CZ)$ is generated by the heart of its
t-strcuture, then the same is true for $\QCoh(\CY)$.
\end{lem}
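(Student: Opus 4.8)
The plan is to show that every $\CF\in\QCoh(\CY)$ lies in the cocomplete subcategory generated by $\QCoh(\CY)^\heartsuit$, using the finite \'etale map $\pi:\CZ\to\CY$ as a transfer device (this is exactly the trick already used in \secref{sss:coverings}). First I would recall that, since $\pi$ is finite (in particular schematic, quasi-separated and quasi-compact), the functor $\pi_*:\QCoh(\CZ)\to\QCoh(\CY)$ is continuous and t-exact: continuity follows from \secref{sss:easy case}, and t-exactness holds because $\pi$ is finite and flat, so $\pi_*$ is exact on the hearts and has no higher cohomological amplitude. Moreover $\pi^*$ is t-exact (as $\pi$ is flat), and the trace map $\pi_*(\CO_\CZ)\to\CO_\CY$ splits the unit $\CO_\CY\to\pi_*(\CO_\CZ)=\pi_*\pi^*(\CO_\CY)$ after the $\mathrm{char}\,k=0$ hypothesis, exactly as in \secref{sss:coverings}; hence by the projection formula every $\CF\in\QCoh(\CY)$ is a direct summand of $\pi_*\pi^*(\CF)=\CF\otimes\pi_*(\CO_\CZ)$.

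The main step is then the following: let $\langle\QCoh(\CY)^\heartsuit\rangle$ denote the smallest cocomplete subcategory of $\QCoh(\CY)$ containing $\QCoh(\CY)^\heartsuit$. Since $\pi_*$ is continuous and t-exact, it sends $\QCoh(\CZ)^\heartsuit$ into $\QCoh(\CY)^\heartsuit$, and being continuous it sends the cocomplete subcategory generated by $\QCoh(\CZ)^\heartsuit$ into $\langle\QCoh(\CY)^\heartsuit\rangle$. By hypothesis $\QCoh(\CZ)$ is generated by $\QCoh(\CZ)^\heartsuit$, i.e.\ $\QCoh(\CZ)=\langle\QCoh(\CZ)^\heartsuit\rangle$; therefore $\pi_*(\CG)\in\langle\QCoh(\CY)^\heartsuit\rangle$ for every $\CG\in\QCoh(\CZ)$. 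Applying this to $\CG=\pi^*(\CF)$ gives $\pi_*\pi^*(\CF)\in\langle\QCoh(\CY)^\heartsuit\rangle$. Finally, a cocomplete subcategory of a DG category is closed under direct summands (a retract is a filtered colimit of the idempotent diagram, or one uses that it is closed under finite limits and colimits and that the splitting of an idempotent is computed as such a colimit), so $\CF$, being a direct summand of $\pi_*\pi^*(\CF)$, also lies in $\langle\QCoh(\CY)^\heartsuit\rangle$. This proves $\QCoh(\CY)=\langle\QCoh(\CY)^\heartsuit\rangle$.

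I expect the only genuinely delicate point to be the claim that a cocomplete (= closed under all colimits) full subcategory of a DG category is automatically closed under retracts; in the $\infty$-categorical setting this is standard (idempotents split in any category admitting filtered colimits, and the splitting is built from colimits already present in the subcategory), and one may simply cite the relevant fact from \cite{Lu1}. Everything else — continuity of $\pi_*$ for a finite morphism, its t-exactness, t-exactness of $\pi^*$, and the splitting via the trace in characteristic $0$ — is routine given the material recalled in \secref{dir im} and \secref{sss:coverings}. \qed
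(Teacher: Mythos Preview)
Your proof is correct and follows exactly the same route as the paper: both reduce to the fact that every $\CF\in\QCoh(\CY)$ is a direct summand of $\pi_*\pi^*(\CF)$ via the trace splitting, as in \secref{sss:coverings}. The paper's proof is a single sentence pointing to that fact; you have simply unpacked the implicit steps (continuity and t-exactness of $\pi_*$, and closure of a cocomplete subcategory under retracts), all of which are indeed routine.
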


\begin{lem} \label{l:devissage plus}
In the situation of \lemref{l:devissage}, if both $\QCoh(\oCY)$ and $\QCoh(\CX)$
are generated by the hearts of their t-structures, then the same is true for 
$\QCoh(\CY)$.
\end{lem}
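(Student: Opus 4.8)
The plan is to prove Lemma~\ref{l:devissage plus} by the same d\'evissage used for Lemma~\ref{l:estimate}, only now tracking generation by the heart rather than a cohomological bound. Recall the setup: $\imath:\CX\hookrightarrow\CY$ is a closed substack, $\jmath:\oCY\hookrightarrow\CY$ the complementary open with $\jmath$ quasi-compact, and $d$ bounds the cohomological amplitude of $\jmath_*$. I would fix an arbitrary $\CF\in\QCoh(\CY)$ and show it lies in the cocomplete subcategory generated by $\QCoh(\CY)^\heartsuit$. The first observation is that $\QCoh(\CY)$ is left-complete (\lemref{properties of t}(b)), so $\CF=\underset{m}{\underset{\longleftarrow}{\lim}}\,\tau^{\geq -m}(\CF)$; but a cocomplete subcategory need not be closed under limits, so I would instead run the d\'evissage directly on the pieces that do generate. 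Using the exact triangle
$$\tau^{\leq 0}(\CF)\to \CF\to \tau^{\geq 1}(\CF)$$
and iterating with all truncations, together with the left-completeness written as a colimit statement — namely $\CF$ is the colimit of $\tau^{\leq m}(\CF)$ in the appropriate sense, or more safely: it suffices to generate $\QCoh(\CY)^{\geq r}$ for each fixed $r$ and $\QCoh(\CY)^{\leq 0}$ — I would reduce to showing each bounded-below and each bounded-above part is generated by the heart. The bounded-above case reduces to $\QCoh(\CY)^\heartsuit$ directly by the (co)filtration by truncations, which is exhausting there; the bounded-below case is the one requiring the geometry.

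So the core step is: every $\CF\in\QCoh(\CY)^{\geq r}$ lies in the cocomplete subcategory $\langle\QCoh(\CY)^\heartsuit\rangle$. Here I would use the recollement triangle
$$\imath_*\imath^!(\CF)\to \CF\to \jmath_*\jmath^*(\CF).$$
Since $\jmath$ is quasi-compact with $\jmath_*$ of bounded amplitude, $\jmath^*(\CF)\in\QCoh(\oCY)^{\geq r}$, and by hypothesis $\QCoh(\oCY)$ is generated by its heart; moreover $\jmath_*$ is continuous on each $\QCoh(\oCY)^{\geq -n}$ by \corref{c:coconnective part}(a), so $\jmath_*\jmath^*(\CF)$ lies in the subcategory generated by $\jmath_*(\QCoh(\oCY)^\heartsuit)\subset\QCoh(\CY)$, and each such $\jmath_*(\CG)$ for $\CG$ in the heart has finitely many cohomologies (amplitude $\leq d$), hence lies in $\langle\QCoh(\CY)^\heartsuit\rangle$ by the stupid filtration. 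For the other term, $\imath_*\imath^!(\CF)$: the object $\imath^!(\CF)$ is bounded below (since $\imath_*$ is conservative, t-exact, and the triangle bounds it below), so it lies in the subcategory of $\QCoh(\CX)$ generated by $\QCoh(\CX)^\heartsuit$ by hypothesis; then one must push forward. The subtlety is that $\imath^!$ may be hard to control, so instead I would work with the \emph{set-theoretic support} formulation exactly as in the proof of \lemref{l:devissage}: write $\CF'\to\CF\to\jmath_*\jmath^*(\CF)$ with $\CF'$ supported on $\CX$, note $\CF'$ has finitely many cohomology sheaves (in degrees $\leq d+1$), so by the stupid filtration it suffices to treat $\CF''\in\QCoh(\CY)^\heartsuit$ set-theoretically supported on $\CX$. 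Such $\CF''$ is a filtered colimit of objects admitting a finite filtration with subquotients in $\imath_*(\QCoh(\CX)^\heartsuit)$ — and $\imath_*$ is t-exact and continuous — so $\CF''\in\langle\imath_*(\QCoh(\CX)^\heartsuit)\rangle\subset\langle\QCoh(\CY)^\heartsuit\rangle$, using that $\QCoh(\CX)$ is generated by its heart to conclude $\imath_*(\QCoh(\CX))\subset\langle\QCoh(\CY)^\heartsuit\rangle$ (push the generating presentation forward, using t-exactness and continuity of $\imath_*$).

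The main obstacle I anticipate is the reduction from an \emph{arbitrary} $\CF\in\QCoh(\CY)$ to the bounded-below (or bounded) case, i.e.\ handling left-completeness correctly: a cocomplete subcategory is closed under colimits but not under the inverse limit appearing in $\CF=\lim\tau^{\geq -m}(\CF)$. The clean way around this is to observe that it is enough to prove the two generation statements ``$\QCoh(\CY)^{\leq 0}$ is generated by $\QCoh(\CY)^\heartsuit$'' and ``$\QCoh(\CY)^{\geq -n}$ is generated by $\QCoh(\CY)^\heartsuit$ for all $n$'', and that these together with left-completeness give generation of all of $\QCoh(\CY)$ — this is a formal fact (one that is presumably already used, implicitly, in the proof of \lemref{l:estimate} and in \secref{ss:proofQCoh_core}); alternatively, since in the application (Theorem~\ref{t:QCoh_core}) $\CY$ is eventually coconnective, one may restrict attention to that case from the start, where $\QCoh(\CY)=\QCoh(\CY)^{\geq -N}\cdot$ is automatically bounded below in the relevant sense and the filtration by truncations terminates. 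Either way, once the bounded-below case is isolated, the d\'evissage above goes through verbatim as in \lemref{l:devissage}, with ``vanishing of $H^r$ above $n_\CY$'' replaced everywhere by ``membership in $\langle\QCoh(-)^\heartsuit\rangle$'', and with \corref{c:coconnective part}(a) supplying the needed continuity of $\jmath_*$ and $\imath_*$ on bounded-below subcategories.
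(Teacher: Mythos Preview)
Your d\'evissage is right in spirit for bounded $\CF$, but the reduction to that case has a genuine gap. The claim that eventual coconnectivity of $\CY$ gives $\QCoh(\CY)=\QCoh(\CY)^{\geq -N}$ is false: eventual coconnectivity only says $\CO_\CY\in\QCoh(\CY)^{\geq -N}$, whereas $\QCoh(\CY)$ always contains objects unbounded below (e.g.\ $\bigoplus_{n\geq 0}\CO_\CY[n]$). Your alternative reduction --- that generation of each $\QCoh(\CY)^{\geq -n}$ together with generation of $\QCoh(\CY)^{\leq 0}$ formally implies generation of all of $\QCoh(\CY)$ --- is not a formal consequence of left-completeness, precisely because a cocomplete subcategory need not be closed under the inverse limits involved, as you yourself observe. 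Worse, even your ``core step'' for $\CF\in\QCoh(\CY)^{\geq r}$ is broken: for such $\CF$ the fiber $\CF'=\on{Cone}(\CF\to\jmath_*\jmath^*(\CF))[-1]$ is bounded below but generally \emph{unbounded above}, so it does not have finitely many cohomology sheaves and the stupid filtration does not reduce you to the heart.

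The paper sidesteps all of this with a tensor trick. Instead of applying the recollement triangle to $\CF$, apply it to $\CO_\CY$ --- which is bounded, since by this stage $\CY$ has been reduced to a classical stack --- obtaining a bounded object $(\CO_\CY)_\CX:=\on{Cone}(\CO_\CY\to\jmath_*\jmath^*(\CO_\CY))[-1]$ whose finitely many cohomologies are set-theoretically supported on $\CX$. Now tensor this triangle with an \emph{arbitrary} $\CF$: by the projection formula the result is $\CF\otimes(\CO_\CY)_\CX\to\CF\to\jmath_*\jmath^*(\CF)$, and the first term is a finite iterated extension of the objects $\CF\otimes H^i((\CO_\CY)_\CX)$, each of which (via the projection formula for $\imath$) admits a filtration with subquotients in the essential image of $\imath_*$. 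Your argument that the essential images of $\jmath_*$ and $\imath_*$ lie in $\langle\QCoh(\CY)^\heartsuit\rangle$ is correct and is exactly what the paper uses to conclude.
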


\sssec{Proof of \lemref{l:BFN}}

It is easy to see that $\QCoh(Z/G)$ is generated by objects of the form $\CO_Z(-i)$, where $\CO_Z(1)$
denotes the corresponding ample line bundle on $Z$.

\qed

\sssec{Proof of \lemref{l:covering plus}}

This follows from the fact that every object $\CF\in\CO (\CY)$ is a direct summand of $\pi_*\circ \pi^*(\CF)$, see
\secref{sss:coverings}.

\qed

\sssec{Proof of \lemref{l:devissage plus}}    \label{sss:end_of}

Let $\QCoh(\CY )^\spadesuit\subset\QCoh(\CY )$ be the subcategory generated by $\QCoh(\CY )^\heartsuit$.
The subcategory $\QCoh(\CY )^\spadesuit$ contains the essential images of the functors 
$$\jmath_*:\QCoh(\oCY )\to\QCoh(\CY ),\quad \imath_*:\QCoh(\CX)\to\QCoh(\CY )$$
because Theorem~\ref{t:QCoh_core} holds for $\oCY$ and $\CX$, and the above functors have bounded cohomological amplitude. 
We have to show that each $\CF\in\QCoh(\CY )$ belongs to 
$\QCoh(\CY )^\spadesuit$. 

\medskip

Consider the exact triangle
\begin{equation} \label{e:Cousin}
(\CO_{\CY})_{\CX}\to \CO_{\CY}\to \jmath_*\circ \jmath^*(\CO_{\CY}),
\end{equation}
where 
$$(\CO_{\CY})_{\CX}:=\on{Cone}\left(\CO_{\CY}\to \jmath_*\circ \jmath^*(\CO_{\CY})\right)[-1].$$

\medskip

The object $(\CO_{\CY})_{\CX}$ is bounded, and each of its cohomologies admits a filtration 
with subquotients that lies in the essential image of $\imath_*$. Hence, for any $\CF\in \QCoh(\CY)$,
the object $\CF\otimes H^i((\CO_{\CY})_{\CX})$ also admits a filtration 
with subquotients (i.e., the cones of the maps of one term of the filtration into the next)
that lie in the essential image of $\imath_*$. In particular, 
$\CF\otimes (\CO_{\CY})_{\CX}\in \QCoh(\CY )^\spadesuit$. 

\medskip

Tensoring \eqref{e:Cousin} by $\CF$, we obtain an exact triangle
$$\CF\otimes (\CO_{\CY})_{\CX}\to \CF\to \jmath_*\circ \jmath^*(\CF),$$
which implies our assertion.

\qed

\begin{rem}    \label{r:end_of}
If $\CY$ is locally Noetherian and $\CF$ is perfect, then the object
$\CF\otimes (\CO_{\CY})_{\CX}$ is isomorphic to
$$\underset{n}{\underset{\longrightarrow}{lim}}\, (\imath_n)_*\circ \imath_n^!(\CF),$$
where $\imath_n$ denotes the embedding of the $n$-th infinitesimal neighborhood
of $\CX$. This is not necessarily true without the perfectness condition.
In general, the $!$-pullback functor is ``bad" (no continuity, no commutation with base change), just
like the $*$-pushforward with respect to a non-quasi-compact morphism 
(see Sect.~\ref{s:dir im gen}). 

\medskip

However, this state of affairs with the $!$-pullback functor can be remedied by replacing
the category $\QCoh(\CY)$ by $\IndCoh(\CY)$, considered in the next section.

\end{rem}

\section{Implications for ind-coherent sheaves}  \label{s:IndCoh}

This and the next section are concerned with the category $\IndCoh$ on algebraic stacks
and, more generally, prestacks. As was mentioned in the introduction, $\IndCoh$ is another
natural paradigm for ``sheaf theory" on stacks.

\medskip

However, the reader, who is only interested in applications to D-modules, may skip these two
sections. Although it is more natural to connect D-modules to the category $\IndCoh$,
it will be indicated in \secref{sss:Dmod and QCoh on stacks} that if our algebraic stack is eventually
coconnective, one can bypass $\IndCoh$, and relate $\Dmod$ to $\QCoh$ directly.
The only awkwardness that will occur is the relation between Verdier duality on coherent 
D-modules and Serre duality on coherent sheaves, the latter being more naturally
interpreted within $\IndCoh$ rather than $\QCoh$.

\medskip

The material in this section is organized as follows. In \secref{ss:laft} we recall the 
condition of being ``locally almost of finite type". In \secref{ss:review of IndCoh}
we recall the basic facts about the category $\IndCoh$. In Sects. \ref{ss:coherent}-\ref{ss:CohgeneratesIndCoh} 
we prove the compact generation and describe the category of compact objects of $\IndCoh$ on a QCA
algebraic stack. In \secref{ss:indcoh dir image} we introduce the functor of direct image
on $\IndCoh$ for maps between QCA algebraic stacks. 

\ssec{The ``locally almost of finite type" condition} \label{ss:laft}

Unlike $\QCoh$, the category $\IndCoh$ (and also $\Dmod$, considered later in the paper)
only makes sense on (pre)stacks that satisfy a certain finite-typeness hypothesis, 
called ``locally almost of finite type".

\medskip

For general prestacks this condition may seem as too technical (we review it below). It does appear
simpler when applied to algebraic stacks. The reader will not lose much by considering only
those prestacks that are algebraic stacks; all the new results in this paper that concern $\IndCoh$ 
and $\Dmod$ are about algebraic stacks. 

\medskip

We shall nevertheless, discuss $\IndCoh$ in the framework of arbitrary prestacks locally almost of 
finite type, because this seems to be the natural level of generality.

\sssec{}

An affine DG scheme $\Spec(A)$ is said to be almost of finite type over $k$ if 

\begin{itemize}

\item $H^0(A)$ is a finitely generated algebra over $k$.

\item Each $H^{-i}(A)$ is finitely generated as a module over $H^0(A)$.

\end{itemize} 

The property of being almost of finite type is local with respect to Zariski topology. 
A DG scheme $Z$ is said to be locally of almost finite type if it can be covered by 
affines, each of which is almost of finite type. Equivalently, $Z$ is locally of almost finite 
type if any of its open affine subschemes is of almost finite type.

\medskip

We shall denote the corresponding full subcategories of 
$$\on{DGSch}^{\on{aff}}\subset \on{DGSch}_{\on{qs-qc}}\subset \on{DGSch}$$
by
$$\on{DGSch}^{\on{aff}}_{\on{aft}}\subset \on{DGSch}_{\on{aft}}\subset \on{DGSch}_{\on{laft}},$$
respectively.

\begin{defn}  \label{d:laft}
An algebraic stack $\CY$ is \emph{locally of almost finite type} if it admits an atlas
$(Z,f:Z\to \CY)$, where the DG scheme $Z$ is locally almost of finite type
(in which case, for any atlas, the DG scheme $Z$ will have this property). 
\end{defn}

\sssec{}

We shall now proceed to the definition of prestacks locally almost of finite type. As we mentioned above,
the reader is welcome to skip the remainder of this subsection and replace every occurence of
the word ``prestack" by ``algebraic stack". The material
here is taken from \cite[Sect. 1.3]{Stacks}.

\medskip

First, we fix an integer $n$, anc consider the full subcategory
$$^{\leq n}\!\on{DGSch}^{\on{aff}}\subset \on{DGSch}^{\on{aff}}$$
of $n$-coconnective affine DG schemes, i.e., those $S=\Spec(A)$, for which
$H^{-i}(A)=0$ for $i>n$. 

\medskip

Let $^{\leq n}\!\on{PreStk}$ denote the category of all functors
$$({}^{\leq n}\!\on{DGSch}^{\on{aff}})^{\on{op}}\to \inftygroup.$$

\medskip

\begin{defn}
An object ${}^{\leq n}\!\on{PreStk}$ is said to be \emph{locally of finite type} it it sends filtered
limits in $^{\leq n}\!\on{DGSch}^{\on{aff}}$ to colimits in $\inftygroup$.
\end{defn}

Denote by $^{\leq n}\!\on{PreStk}_{\on{lft}}$ the full subcategory of $^{\leq n}\!\on{PreStk}$ spanned
by objects locally of finite type.

\medskip

Denote 
$$^{\leq n}\!\on{DGSch}^{\on{aff}}_{\on{ft}}:={}^{\leq n}\!\on{DGSch}^{\on{aff}}\cap \on{DGSch}^{\on{aff}}_{\on{aft}}.$$
We note that $^{\leq n}\!\on{DGSch}^{\on{aff}}_{\on{ft}}$ identifies with the subcategory of cocompact objects in 
$^{\leq n}\!\on{DGSch}^{\on{aff}}$. Therefore, the Yoneda functor
$$^{\leq n}\!\on{DGSch}^{\on{aff}}\to {}^{\leq n}\!\on{PreStk}$$ sends 
$^{\leq n}\!\on{DGSch}^{\on{aff}}_{\on{ft}}$ to ${}^{\leq n}\!\on{PreStk}_{\on{lft}}$.

\medskip

It is not difficult to show that the image of entire category
$$^{\leq n}\!\on{DGSch}_{\on{lft}}:={}^{\leq n}\!\on{DGSch}\cap \on{DGSch}_{\on{aft}}$$
under the natural functor $^{\leq n}\!\on{DGSch}\to {}^{\leq n}\!\on{PreStk}$ is contained
in $^{\leq n}\!\on{PreStk}_{\on{lft}}$.

\sssec{}

We can reformulate the condition on an object $\CY\in {}^{\leq n}\!\on{PreStk}$ to be locally of finite type
in any of the following equivalent ways:

\medskip

\noindent(i) $\CY$ is the left Kan extension along the fully faithful embedding
$^{\leq n}\!\on{DGSch}^{\on{aff}}_{\on{ft}}\hookrightarrow {}^{\leq n}\!\on{DGSch}^{\on{aff}}$.

\medskip

\noindent(ii) The functor
$$({}^{\leq n}\!\on{DGSch}^{\on{aff}}_{\on{ft}})_{/\CY}\to ({}^{\leq n}\!\on{DGSch}^{\on{aff}})_{/\CY}$$
is cofinal.

\medskip

\noindent(iii) For every $S\in {}^{\leq n}\!\on{DGSch}^{\on{aff}}$ and $y:S\to \CY$, the category of its
factorizations as $S\to S'\to \CY$, where $S'\in {}^{\leq n}\!\on{DGSch}^{\on{aff}}_{\on{ft}}$, is contractible
(in particular, non-empty).

\sssec{}

We now recall the following definition from \cite[Sect. 1.2]{Stacks}:

\begin{defn}
An object $\CY\in \on{PreStk}$ is \emph{convergent} if for every $S\in \on{DGSch}$,
the natural map
$$\underset{n}{\underset{\longleftarrow}{lim}}\, \CY({}^{\leq n}\!S)\to \CY(S)$$
is an isomorphism in $\inftygroup$.
\end{defn}

In the above formula, the operation $S\mapsto {}^{\leq n}\!S$ is that of $n$-coconnective
truncation, i.e., if $S=\Spec(A)$, then $^{\leq n}\!S=\Spec(\tau^{\geq -n}(A))$. 

\medskip

For example, all algebraic stacks are convergent, see \cite[Proposition 4.5.2]{Stacks}.

\sssec{}

Finally, we can give the following definition:

\begin{defn} \label{d:laft prestacks}
An object $\CY\in \on{PreStk}$ is locally almost of finite type if:

\begin{itemize}

\item It is convergent;

\item For every $n$, the restriction $\CY|_{^{\leq n}\!\on{DGSch}^{\on{aff}}}\in {}^{\leq n}\!\on{PreStk}$ 
belongs to $^{\leq n}\!\on{PreStk}_{\on{lft}}$.

\end{itemize}

\end{defn}

The full subcategory of $\on{PreStk}$ spanned by prestacks locally almost of finite type
is denoted $\on{PreStk}_{\on{laft}}$.

\medskip 

It is shown in \cite[Proposition 4.9.2]{Stacks} that an algebraic stack is locally almost of finite type
in the sense of Definition \ref{d:laft} if and only if it is locally almost of finite type as a prestack
in the sense of Definition \ref{d:laft prestacks}.

\sssec{}

Here is an alternative way to introduce the category $\on{PreStk}_{\on{laft}}$. Let
$^{<\infty}\!\on{DGSch}^{\on{aff}}_{\on{aft}}$ denote the full subcategory of
$\on{DGSch}^{\on{aff}}_{\on{aft}}$ spanned by eventually coconnective affine DG
schemes.

\medskip

We have the following assertion (see \cite[Sect. 1.3.11]{Stacks}):

\begin{lem}
The restriction functor under $^{<\infty}\!\on{DGSch}^{\on{aff}}_{\on{aft}}\hookrightarrow \on{DGSch}^{\on{aff}}$
defines an equivalence
$$\on{PreStk}_{\on{laft}}\to \on{Funct}\left(({}^{<\infty}\!\on{DGSch}^{\on{aff}}_{\on{aft}})^{\on{op}},\inftygroup\right).$$
The inverse functor is the composition of the left Kan extension along
$$^{<\infty}\!\on{DGSch}^{\on{aff}}_{\on{aft}}\hookrightarrow {}^{<\infty}\!\on{DGSch}^{\on{aff}},$$
followed by the right Kan extension along
$$^{<\infty}\!\on{DGSch}^{\on{aff}}\hookrightarrow \on{DGSch}^{\on{aff}}.$$
\end{lem}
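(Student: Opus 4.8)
The plan is to prove the equivalence by unwinding the two Kan-extension operations and showing they assemble into a functor that is quasi-inverse to restriction. First I would set up notation: write $\iota_1\colon {}^{<\infty}\!\on{DGSch}^{\on{aff}}_{\on{aft}}\hookrightarrow {}^{<\infty}\!\on{DGSch}^{\on{aff}}$ and $\iota_2\colon {}^{<\infty}\!\on{DGSch}^{\on{aff}}\hookrightarrow \on{DGSch}^{\on{aff}}$ for the two inclusions, so that the restriction functor in the statement is $\on{Res}:=\iota_1^*\circ \iota_2^*$, and the candidate inverse is $\Phi:=(\iota_2)_*\circ (\iota_1)_!$, where $(\iota_1)_!$ is left Kan extension and $(\iota_2)_*$ is right Kan extension. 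The claim has two halves: (a) $\Phi$ lands in $\on{PreStk}_{\on{laft}}$, and (b) $\Phi$ and $\on{Res}$ are mutually inverse.

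For half (a): given $F\in \on{Funct}(({}^{<\infty}\!\on{DGSch}^{\on{aff}}_{\on{aft}})^{\on{op}},\inftygroup)$, I need to check that $\Phi(F)$ is convergent and that each of its $n$-coconnective truncations is locally of finite type. Convergence is essentially built into the right Kan extension $(\iota_2)_*$: by the pointwise formula, $\Phi(F)(\Spec A)$ is computed as a limit over eventually coconnective $\Spec A\to \Spec B$, and since every $A$ is the (filtered) limit of its truncations $\tau^{\geq -n}A$, the limit over the category of such arrows reorganizes into $\underset{n}{\underset{\longleftarrow}{\lim}}\,\Phi(F)({}^{\leq n}\!\Spec A)$; here one must check cofinality of the subcategory of truncation maps inside the indexing category, which is where a small lemma about ${}^{<\infty}\!\on{DGSch}^{\on{aff}}$ being generated under these truncations is needed. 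For the locally-of-finite-type condition on truncations, I would observe that on $n$-coconnective affines, $(\iota_2)_*$ does nothing new and the relevant content is that $(\iota_1)_!$ restricted to $^{\leq n}\!\on{DGSch}^{\on{aff}}$ is the left Kan extension along $^{\leq n}\!\on{DGSch}^{\on{aff}}_{\on{ft}}\hookrightarrow {}^{\leq n}\!\on{DGSch}^{\on{aff}}$, which is exactly characterization (i) of being locally of finite type recalled just above; this reduction uses that $^{\leq n}\!\on{DGSch}^{\on{aff}}_{\on{ft}}={}^{\leq n}\!\on{DGSch}^{\on{aff}}\cap \on{DGSch}^{\on{aff}}_{\on{aft}}$ is cofinal in the appropriate slice, i.e. that every $n$-coconnective almost-finite-type truncation of a general $n$-coconnective affine of finite type is again of finite type.

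For half (b): one direction, $\on{Res}\circ\Phi\simeq \on{id}$, follows because both Kan extensions are along \emph{fully faithful} functors, hence their units/counits against the inclusions are equivalences — restricting $(\iota_2)_*(\iota_1)_!F$ back along $\iota_2$ recovers $(\iota_1)_!F$ (fully faithfulness of $\iota_2$ makes right Kan extension fully faithful), and then restricting along $\iota_1$ recovers $F$ (same reason for $(\iota_1)_!$). For the other direction, $\Phi\circ\on{Res}\simeq \on{id}$ on $\on{PreStk}_{\on{laft}}$: given $\CY$ locally almost of finite type, I must show the natural map $\Phi(\on{Res}(\CY))\to \CY$ — adjoint to the identity via the (co)units — is an equivalence. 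Evaluated on an affine $S=\Spec A$, the left-hand side is $\underset{n}{\underset{\longleftarrow}{\lim}}$ of left Kan extensions of $\CY|_{^{\leq n}\!\on{DGSch}^{\on{aff}}_{\on{ft}}}$ evaluated on ${}^{\leq n}\!S$; by characterization (i)–(iii) of local finite type, each such left Kan extension agrees with $\CY({}^{\leq n}\!S)$, and by convergence of $\CY$ the limit over $n$ reconstitutes $\CY(S)$.

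I expect the main obstacle to be the bookkeeping in half (a), specifically verifying that the right Kan extension $(\iota_2)_*$ of an object that is ``locally of finite type level by level'' automatically produces a convergent prestack whose truncations are the ones we started with — i.e. checking that $(\iota_2)_*$ commutes with $n$-coconnective truncation in the relevant sense and does not destroy the finite-type property. This is the point where one genuinely uses the structure of ${}^{<\infty}\!\on{DGSch}^{\on{aff}}$ inside $\on{DGSch}^{\on{aff}}$ (every object a limit of its truncations, truncation functors being cofinal in the coslice categories); everything else is a formal consequence of fully faithfulness of the two inclusions together with the three equivalent reformulations of ``locally of finite type'' already recorded in the text. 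I would therefore isolate that commutation statement as a preliminary lemma and cite \cite[Sect. 1.3]{Stacks} for it, then deduce the equivalence in a few lines.
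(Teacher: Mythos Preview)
The paper does not prove this lemma; it simply records the statement with the parenthetical reference ``(see \cite[Sect.~1.3.11]{Stacks})'' and moves on. So there is no in-paper argument to compare against, and your outline is in fact the standard proof one finds in that reference.

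Your two-halves decomposition is correct: $\on{Res}\circ\Phi\simeq\on{id}$ is formal from full faithfulness of both inclusions, and $\Phi\circ\on{Res}\simeq\on{id}$ on $\on{PreStk}_{\on{laft}}$ unwinds to exactly the two defining conditions (convergence plus the left-Kan-extension characterization of local finite type at each level $n$). Likewise, showing $\Phi$ lands in $\on{PreStk}_{\on{laft}}$ reduces to the two cofinality statements you isolate.

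Two small corrections to the write-up. First, the pointwise formula for the right Kan extension $(\iota_2)_*G$ at $S=\Spec(A)$ is a limit over eventually coconnective $S'$ \emph{mapping to} $S$, not the other way; the relevant cofinality is that the tower $\{{}^{\leq n}S\to S\}_n$ is initial in this slice, which holds because any $m$-coconnective $S'\to S$ factors uniquely through ${}^{\leq m}S$ (truncation being a localization). Second, your phrasing of the cofinality needed for the locally-finite-type part is garbled: what you want is that for $S\in{}^{\leq n}\!\on{DGSch}^{\on{aff}}$, the inclusion of $\{S\to S'':S''\in{}^{\leq n}\!\on{DGSch}^{\on{aff}}_{\on{ft}}\}$ into $\{S\to S':S'\in{}^{<\infty}\!\on{DGSch}^{\on{aff}}_{\on{aft}}\}$ is cofinal, and this holds because ${}^{\leq n}S'$ remains almost of finite type and is initial among $n$-coconnective objects under $S'$. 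With those two points cleaned up the argument is complete.
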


\noindent{\bf Change of conventions:} From now and until \secref{s:gen alg stacks},
all DG schemes, algebraic stacks and prestacks will be assumed locally almost of finite type,
unless explicitly specified otherwise.

\medskip

\ssec{The category $\IndCoh$}  \label{ss:review of IndCoh}

For the reader's convenience we shall now summarize some of the key properties 
of the category $\IndCoh$ that will be used in the paper. The general reference
for this material in \cite{IndCoh}.

\sssec{}  \label{sss:IndCoh schemes}

Given a quasi-compact DG scheme $Z$, one introduces the category $\IndCoh(Z)$ as the
ind-completion of the category $\Coh(Z)$, 
the latter being the full subcategory of $\QCoh(Z)$ that consists of bounded complexes 
with coherent cohomology sheaves; see \cite[Sect. 1.1]{IndCoh}. See \secref{sss:ind-compl}
where the notion of ind-completion of a DG category is recalled.

\medskip

The category $\IndCoh(Z)$ is naturally a module over $\QCoh(Z)$, when the latter is regarded 
as a monoidal category with respect to the usual tensor product operation, see \cite[Sect. 1.4]{IndCoh}.

\medskip

For a morphism $f:Z_1\to Z_2$ of quasi-compact DG schemes, we have a canonically
defined functor
$$f^!:\IndCoh(Z_2)\to \IndCoh(Z_1),$$
see \cite[Corollary 5.2.4]{IndCoh}. 

\medskip

Moreover, this functor has a canonically defined structure of map
between module categoried over $\QCoh(Z_2)$, where $\QCoh(Z_2)$ acts on $\IndCoh(Z_1)$ via
the monoidal functor $f^*:\QCoh(Z_2)\to \QCoh(Z_1)$; see \cite[Theorem 5.5.5]{IndCoh}. 

\medskip

The assignment $Z\mapsto \IndCoh(Z)$ with the above !-pullback operation
is a functor
$$(\on{DGSch}_{\on{aft}})^{\on{op}}\to \StinftyCat_{\on{cont}},$$
denoted $\IndCoh^!_{\on{DGSch}_{\on{aft}}}$, see \cite[Sect. 5.6.1]{IndCoh}.

\medskip

We shall denote by $\omega_Z$ the object of $\IndCoh(Z)$ equal to $p_Z^!(k)$, where 
$$p_Z:Z\to \on{pt}.$$
We refer to $\omega_Z$ as the ``dualizing sheaf'' on $Z$.

\medskip

The functor $\IndCoh^!_{\on{DGSch}_{\on{aft}}}$ satisfies Zariski descent (see
\cite[Proposition 4.2.1]{IndCoh}.

\medskip

In fact, something stronger is true: according to \cite[Theorem 8.3.2]{IndCoh},
the functor $\IndCoh^!_{\on{DGSch}_{\on{aft}}}$ satisfies fppf descent.

\medskip

The following property of the !-pullback functor will be used in the sequel
(see \cite[Proposition 8.1.2]{IndCoh}): 

\begin{lem}  \label{l:! cons IndCoh}
Let a morphism $f:Z_1\to Z_2$ be surjective at the level of geometric points. Then the functor
$f^!:\IndCoh(Z_2)\to \IndCoh(Z_1)$ is conservative.
\end{lem}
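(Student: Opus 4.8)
\textbf{Proof plan for \lemref{l:! cons IndCoh}.}
The plan is to reduce the statement to a known descent-type fact about $\IndCoh^!_{\on{DGSch}_{\on{aft}}}$. First I would observe that since $f:Z_1\to Z_2$ is surjective on geometric points (hence topologically surjective) and $Z_2$ is quasi-compact, one can, after passing to a suitable cover, assume that $f$ is fppf. Indeed, by generic flatness and Noetherian induction on $Z_2$ (working with the reduced, classical scheme underlying $Z_2$, which suffices since the classical and reduced structures do not affect the conservativity claim after the usual $\tau$-truncation reductions), one can stratify $Z_2$ by locally closed subschemes over each of which $f$ becomes flat, hence fppf, after restriction. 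Since $f^!$ is compatible with $!$-restriction to locally closed subschemes and these restrictions are jointly conservative, it is enough to treat the case where $f$ is fppf.

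In the fppf case I would invoke the fppf descent for $\IndCoh^!_{\on{DGSch}_{\on{aft}}}$, i.e. \cite[Theorem 8.3.2]{IndCoh} as recalled just above the statement. Descent along $f$ means that the natural functor from $\IndCoh(Z_2)$ to the totalization of the cosimplicial category $\IndCoh(Z_1^\bullet/Z_2)$ (with $!$-pullbacks as structure functors) is an equivalence; in particular the leftmost coface map, which is precisely $f^!:\IndCoh(Z_2)\to\IndCoh(Z_1)$, is conservative, being the first leg of a limit cone whose legs are jointly conservative and which, by the equivalence, detects $0$. So an object $\CF\in\IndCoh(Z_2)$ with $f^!(\CF)=0$ has vanishing image under $f^!$, hence under every structure functor of the cosimplicial diagram, hence is $0$ by the descent equivalence.

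The only genuine point to be careful about is the reduction to the fppf (flat) case: one needs that the $!$-restriction functors to the strata of a finite locally closed stratification are jointly conservative for $\IndCoh$, and that $f^!$ intertwines with these restrictions under base change. Both are standard properties of $\IndCoh^!$: joint conservativity follows from the exact triangles relating $i_*^{\IndCoh}i^!$, $\id$, and $j_*^{\IndCoh}j^!$ for a closed--open decomposition, and the base-change compatibility of $!$-pullback is part of the functor $\IndCoh^!_{\on{DGSch}_{\on{aft}}}$. Hence the main obstacle is purely bookkeeping, and there is no serious difficulty once fppf descent is in hand; in fact the cleanest write-up simply cites \cite[Proposition 8.1.2]{IndCoh} directly, since the argument there is exactly this reduction.
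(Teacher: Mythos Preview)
Your proposal is correct and matches the paper's approach: the paper simply cites \cite[Proposition~8.1.2]{IndCoh}, and your sketch (reduce to the classical reduced base, stratify via generic flatness and Noetherian induction so that $f$ becomes fppf over each stratum, then invoke fppf descent for $\IndCoh^!$ to get conservativity) is exactly the argument behind that reference---as you yourself note at the end.
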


\sssec{}  \label{sss:IndCoh schemes ten}

For two quasi-compact DG schemes $Z_1$ and $Z_2$ there is a naturally defined functor
$$\IndCoh(Z_1)\otimes \IndCoh(Z_2)\overset{\boxtimes}\longrightarrow \IndCoh(Z_1\times Z_2),$$
which is an equivalence by \cite[Proposition 4.6.2]{IndCoh}. (The last assertion uses the assumption
that $\on{char}(k)=0$ in an essential way.)

\medskip

In particular, we obtain a functor
$$\IndCoh(Z)\otimes \IndCoh(Z)\overset{\boxtimes}\longrightarrow \IndCoh(Z\times Z)\overset{\Delta_Z^!}\longrightarrow \IndCoh(Z),$$
that we shall denote by $\CF_1,\CF_2\mapsto \CF_1\sotimes \CF_2$. This functor makes $\IndCoh(Z)$ into a symmetric monoidal
category with the unit given by $\omega_Z$.

\sssec{}  \label{sss:Ind and QCoh}

The categories $\IndCoh(Z)$ and $\QCoh(Z)$ are closely related:

\medskip

The category $\IndCoh(Z)$ has a naturally defined t-structure (induced by one on $\Coh(Z)$). We also
have a naturally defined t-exact continuous functor 
$$\Psi_Z:\IndCoh(Z)\to \QCoh(Z),$$
characterized by the property that it is the identity functor from $\Coh(Z)\subset \IndCoh(Z)$
to $\Coh(Z)\subset \QCoh(Z)$, see  \cite[Secst. 1.1.5 and 1.2.1]{IndCoh}.

\medskip

The induced functor on the corresponding eventually
coconnective (a.k.a. bounded below) subcategories
$$\IndCoh(Z)^+\to \QCoh(Z)^+$$
is an equivalence, see \cite[Proposition 1.2.4]{IndCoh}. 

\medskip

We should add that the t-structure on $\IndCoh(Z)$ is compatible with filtered colimits,
but it is \emph{not} left-complete, unless $Z$ is a smooth classical scheme, in which case $\Psi_Z$
is an equivalence. In fact, $\QCoh(Z)$ is always equivalent to the \emph{left completion} of
$\IndCoh(Z)$ with respect to its t-structure, \cite[Proposition 1.3.4]{IndCoh}. 

\medskip

When $Z$ is eventually coconnective, the functor $\Psi_Z$ is a colocalization (see \cite[Proposition 1.5.3]{IndCoh});
in particular, in this case it is essentially surjective.

\sssec{}

Let $f:Z_1\to Z_2$ be again a map between quasi-compact DG schemes. There exists a continuous functor 
$$f_*^{\IndCoh}:\IndCoh(Z_1)\to \IndCoh(Z_2),$$
uniquely defined by the condition that the diagram
\begin{equation} \label{e:*-pushforward for IndCoh}
\CD
\IndCoh(Z_1)  @>{\Psi_{Z_1}}>>  \QCoh(Z_1) \\
@V{f^{\IndCoh}_*}VV    @VV{f_*}V  \\
\IndCoh(Z_2)  @>{\Psi_{Z_1}}>>  \QCoh(Z_2).
\endCD
\end{equation}
commutes, see \cite[Proposition 3.1.1]{IndCoh}. 

\medskip

The functors of !-pullback and $(\IndCoh,*)$-pushforward
are endowed with base change isomorphisms for Cartesian squares of DG schemes. I.e., for a Cartesian
square
\begin{equation} \label{e:cart diag schemes}
\CD 
Z'_1  @>{g_1}>> Z_1  \\
@V{f'}VV    @VV{f}V   \\
Z'_2  @>{g_2}>> Z_2
\endCD
\end{equation}
there is a canonical isomorphism
\begin{equation} \label{e:base change IndCoh}
g_2^!\circ f^{\IndCoh}_*\simeq (f')^{\IndCoh}_*\circ g_1^!;
\end{equation}
see \cite[Theorem 5.2.2]{IndCoh} for a precise formulation. Note that in \eqref{e:base change IndCoh}
there is no adjunction that would produce a morphism in either direction.

\medskip

For $\CF_i\in \IndCoh(Z_i)$, consider the object $\CF_1\boxtimes \CF_2\in \IndCoh(Z_1\times Z_2)$. Applying
\eqref{e:base change IndCoh} to
$$
\CD
Z_1 @>{\on{Graph}_f}>>  Z_1\times Z_2 \\
@V{f}VV   @VV{f\times \on{id}}V   \\
Z_2 @>{\Delta_{Z_2}}>>  Z_2\times Z_2,
\endCD
$$
we deduce that $f$ satisfies the projection formula for $\IndCoh$:
\begin{equation} \label{e:proj formula IndCoh}
\CF_2\sotimes f^{\IndCoh}_*(\CF_1)\simeq f^{\IndCoh}_*(f^!(\CF_2)\sotimes \CF_1).
\end{equation}

\sssec{}

Assume that the map $f$ is eventually coconnective; see \cite[Definition 3.5.2]{IndCoh}, where this notion is introduced.
Note that this is equivalent to $f$ being finite Tor-dimension, see \cite[Lemma 3.6.3]{IndCoh}.

\medskip

In this case there also exists a functor 
$$f^{\IndCoh,*}:\IndCoh(Z_2)\to \IndCoh(Z_1),$$
uniquely defined by the condition that the diagram
\begin{equation} 
\CD
\IndCoh(Z_1)  @>{\Psi_{Z_1}}>>  \QCoh(Z_1) \\
@A{f^{\IndCoh,*}}AA    @AA{f^*}A  \\
\IndCoh(Z_2)  @>{\Psi_{Z_1}}>>  \QCoh(Z_2).
\endCD
\end{equation}
commutes, see \cite[Proposition 3.5.4]{IndCoh}, and which is the
left adjoint to $f_*^{\IndCoh}$.

\medskip

For a Cartesian diagram \eqref{e:cart diag schemes}, in which the vertical arrows are eventually coconnective, 
the natural transformation \begin{equation} \label{e:! and *}
(f')^{\IndCoh,*}\circ g_2^!\to g_1^!\circ g^{\IndCoh,*}
\end{equation}
that arises by adjunction from \eqref{e:base change IndCoh}, is an isomorphism
(see \cite[Proposition 7.1.6]{IndCoh}).

\medskip

If the map $f$ is smooth (or, more generally, Gorenstein), then we have:
\begin{equation} \label{e:! and * pullback}
f^!(-)\simeq \CK_{Z_1/Z_2}\otimes f^{\IndCoh,*}(-),
\end{equation}
where $\CK_{Z_1/Z_2}$ is the relative dualizing graded
line bundle (see \cite[Proposition 7.3.8]{IndCoh}). In the above
formula, tensor product is understood in the sense of the
monoidal action of $\QCoh(Z)$ on $\IndCoh(Z)$.

\medskip

For a Cartesian diagram \eqref{e:cart diag schemes} with the horizontal maps being eventually
coconnective, the natural transformation
\begin{equation} \label{e:usual base change for IndCoh}
g_2^{\IndCoh,*}\circ f^{\IndCoh}_*\to (f')^{\IndCoh,*}\circ g_1^{\IndCoh,*},
\end{equation}
obtained by adjunction from 
$$f^{\IndCoh}_*\circ (g_2)^{\IndCoh}_*\simeq (g_1)^{\IndCoh}_*\circ (f')^{\IndCoh}_*,$$
is an isomorphism, see \cite[Lemma 3.6.9]{IndCoh}.

\sssec{}

Let now $\CY$ be a prestack. We define the category $\IndCoh(\CY)$ as
\begin{equation} \label{e:limit for IndCoh}
\underset{(S,g)\in ((\on{DGSch}_{\on{aft}})_{/\CY})^{\on{op}}}{\underset{\longleftarrow}{lim}}\, \IndCoh(S),
\end{equation}
where we view the assignment $(S,g)\rightsquigarrow \IndCoh(S)$ as a functor between
$\infty$-categories 
$$((\on{DGSch}_{\on{aft}})_{/\CY})^{\on{op}}\to \StinftyCat_{\on{cont}},$$
obtained by restriction under the forgetful map $(\on{DGSch}_{\on{aft}})_{/\CY}\to \on{DGSch}_{\on{aft}}$
of the functor
$$\IndCoh^!_{\on{DGSch}{\on{aft}}}:\on{DGSch}_{\on{aft}}^{\on{op}}\to \StinftyCat_{\on{cont}},$$
mentioned above. As in the case of $\QCoh$, the limit is taken in the $(\infty,1)$-category 
$\StinftyCat_{\on{cont}}$.

\medskip

Concretely, an object $\CF\in \IndCoh(\CY)$ is an assignment for 
$$(g:S\to \CY)\in (\on{DGSch}_{\on{aft}})_{/\CY} \rightsquigarrow g^!(\CF)\in \IndCoh(S),$$ 
and of a homotopy-coherent system of isomorphisms
$$f^!(g^!(\CF))\simeq (g\circ f)^!(\CF)\in \IndCoh(S')$$
for $f:S'\to S$. 

\medskip

In forming the above limit we can replace the category $\on{DGSch}_{\on{aft}}$ of quasi-compact DG schemes
by $\on{DGSch}^{\on{aff}}_{\on{aft}}$ of affine DG schemes; this is due to the Zariski descent property of $\IndCoh$,
see \cite[Corollaries 10.2.2 and 10.5.5]{IndCoh}.
Furthermore, we can replace the category $\on{DGSch}_{\on{aft}}$ (resp., $\on{DGSch}^{\on{aff}}_{\on{aft}}$)
by any of the indexing categories $A$ that appear in \secref{sss:change index qc}.

\medskip

The compatibility of !-pullbacks with the action of $\QCoh$ implies that 
the category $\IndCoh(\CY)$ has a natural structure of module over the monoidal category $\QCoh(\CY)$.

\sssec{}

If $\pi:\CY_1\to \CY_2$ is a map of prestacks, we have a tautologically defined functor
$\pi^!:\CY_1\to \CY_2$. 

\medskip

In particular, for any $\CY$, we obtain a canonical object $\omega_\CY\in \IndCoh(\CY)$
equal to $p_\CY^!(k)$, where $p_\CY:\CY\to \on{pt}$. We refer to $\omega_\CY$ as
``the dualizing sheaf" on $\CY$.

\medskip

For two prestacks $\CY_1$ and $\CY_2$ there exists a naturally defined functor
$$\IndCoh(\CY_1)\otimes \IndCoh(\CY_1)\overset{\boxtimes}\longrightarrow \IndCoh(\CY_1\times \CY_2).$$
In particular, as in the case of schemes, $\IndCoh(\CY)$ acquires a structure of symmetric monoidal
category via the operation $\sotimes$.

\sssec{} \label{sss:functoriality IndCoh prestacks pushforward}

Let $\pi:\CY_1\to \CY_2$ be a schematic and quasi-compact map between prestacks. Then the functor
of direct image on $\IndCoh$ for DG schemes gives rise to a functor
$$\pi_*^{\IndCoh}:\IndCoh(\CY_1)\to \IndCoh(\CY_2).$$

Namely, for $(S_2,g_2)\in (\on{DGSch}_{\on{aft}})_{/\CY}$, we set
$$g_2^!(\pi_*^{\IndCoh}(-)):=(\pi_S)^{\IndCoh}_*\circ g_1^!(-)$$
for the morphisms in the Cartesian diagram
$$
\CD
S_1 @>{g_1}>>  \CY_1 \\
@V{\pi_S}VV    @VV{\pi}V    \\
S_2  @>{g_2}>>  \CY_2.
\endCD
$$

The data of compatibility of the assignment 
$$(S_2,g_2)\rightsquigarrow (\pi_S)^{\IndCoh}_*\circ g_1^!(-)$$
under !-pullbacks for maps in $(\on{DGSch}_{\on{aft}})_{/\CY}$
is given by base change isomorphisms \eqref{e:base change IndCoh};
see \cite[Sect. 10.6]{IndCoh}.

\medskip

The resulting functor $\pi_*^{\IndCoh}$ is itself also endowed with base change isomorphisms
with respect to !-pullbacks for Cartesian diagrams of prestacks
\begin{equation} \label{e:card diag stacks}
\CD
\CY'_1  @>{\phi_1}>> \CY_1 \\
@V{\pi'}VV    @VV{\pi}V  \\
\CY'_2   @>{\phi_2}>>  \CY_2
\endCD
\end{equation}
where the vertical maps are schematic and quasi-compact. 

\medskip

By construction, the projection formula for maps between quasi-compact schemes, i.e., 
\eqref{e:proj formula IndCoh}, implies one for $\pi$. That is, we have a functorial isomorphism
$$\CF_2\sotimes \pi^{\IndCoh}_*(\CF_1)\simeq \pi^{\IndCoh}_*(\pi^!(\CF_2)\sotimes \CF_1),\quad 
\CF_i\in \IndCoh(\CY_i).$$

\sssec{} \label{sss:functoriality IndCoh prestacks * pullback}

Let $\CY_i$ be prestacks, and let  $\pi:\CY_1\to \CY_2$ be a morphism which is $k$-representable for
some $k$. In this paper we will only need the cases of either $\pi$ being schematic, or $1$-representable
(the latter means that the base change of $\pi$ by an affine DG scheme yields a $1$-Artin stack).

\medskip

Assume also that $\pi$ is eventually coconnective, see \cite[Sect. 11.1.2]{IndCoh}. In this case,
by \cite[Sect. 11.6]{IndCoh}, we have a continuous functor 
$$\pi^{\IndCoh,*}:\IndCoh(\CY_2)\to \IndCoh(\CY_1).$$

\medskip

For a Cartesian diagram \eqref{e:card diag stacks}, in which the vertical arrows are $k$-representable and 
eventually coconnective, we have a canonical isomorphism
\begin{equation} \label{e:! and * stacks}
(\pi')^{\IndCoh,*}\circ \phi_2^!\simeq \phi_1^!\circ \pi^{\IndCoh,*},
\end{equation}
see \cite[Proposition 11.6.2]{IndCoh}.
Note that unlike \eqref{e:! and *}, in \eqref{e:! and * stacks} there is no a priori map 
in either direction.

\medskip

If $f$ is smooth (or, more generally, Gorenstein), the functors $\pi^{\IndCoh,*}$ and $\pi^!$ are related 
by the formula
\begin{equation} \label{e:! and * pullback stacks}
\pi^!(-)\simeq \CK_{\CY_1/\CY_2}\otimes \pi^{\IndCoh,*}(-),
\end{equation}
where $\CK_{\CY_1/\CY_2}$ is the relative dualizing line bundle. This is not explicitly stated in \cite{IndCoh},
but can be obtained by combining the functorial isomorphisms \eqref{e:! and * pullback} for 
morphisms between DG schemes, and \eqref{e:! and * stacks}.

\medskip

If $\pi$ is schematic and quasi-compact, the functors $(\pi^{\IndCoh,*},\pi^{\IndCoh}_*)$ form an adjoint pair. The latter fact is not
stated explicitly in \cite{IndCoh} either, but follows from \eqref{e:! and * stacks} via an analog
of \secref{sss:change index qc}(ii) for $\IndCoh$. 

\sssec{}  \label{sss:IndCoh for algebraic stacks}

When $\CY$ is an algebraic stack, the category $\IndCoh(\CY)$ can be described more explicitly.

\medskip

First, as in \secref{sss:change index qc}(iv), in the formation of the limit \eqref{e:limit for IndCoh}, 
we can replace the category $(\on{DGSch}_{\on{aft}})_{/\CY}$ by $\on{DGSch}_{/\CY,\on{smooth}}$, 
see \cite[Corollary 11.2.4]{IndCoh}. 

\medskip

Furthermore, when we use $(\on{DGSch}_{/\CY,\on{smooth}})^{\on{op}}$ as the 
indexing category, $\IndCoh(\CY)$ can be also realized as the limit
\begin{equation} \label{e:*-limit for IndCoh}
\underset{(S,g)\in (\on{DGSch}_{/\CY,\on{smooth}})^{\on{op}}}{\underset{\longleftarrow}{lim}}\, \IndCoh(S),
\end{equation}
where now for a morphism $f:S'\to S'$ in $\on{DGSch}_{/\CY,\on{smooth}}$, the transition 
functor $\IndCoh(S)\to \IndCoh(S')$ is $f^{\IndCoh,*}$, see \cite[Sect. 11.3 and Proposition 11.4.3]{IndCoh}.

\medskip

If $f:Z\to \CY$ is a smooth atlas, the naturally defined functor
\begin{equation} \label{e:Cech for Ind}
\IndCoh(Z)\to \on{Tot}(\IndCoh(Z^\bullet/\CY))
\end{equation}
is an equivalence. In the above formula, the cosimplicial
category $\IndCoh(Z^\bullet/\CY)$ is formed by using either the !-pullback or $(\IndCoh,*)$-pullback
functors along the simplicial DG scheme
$Z^\bullet/\CY$. See \cite[Corollary 11.3.4]{IndCoh} for the proof. 

\medskip

For a Cartesian diagram \eqref{e:card diag stacks} consisting of algebraic stacks, 
in which the vertical arrows are schematic and quasi-compact and the horizontal ones are eventually coconnective, 
we have a canonical isomorphism
\begin{equation} \label{e:usual base change for IndCoh stacks}
\phi_2^{\IndCoh,*}\circ \pi^{\IndCoh}_*\simeq (\pi')^{\IndCoh}_*\circ \phi_1^{\IndCoh,*}.
\end{equation}
It is obtained from the natural transformation \eqref{e:usual base change for IndCoh} using
\eqref{e:*-limit for IndCoh}. Note again that unless the vertical arrows are also
eventually coconnective or the horizontal maps schematic and quasi-compact, there is a priori no morphism 
in either direction in \eqref{e:usual base change for IndCoh stacks}.

\sssec{}  \label{sss:Psi for stacks}

For $\CY$ an algebraic stack, the category $\IndCoh(\CY)$ has a t-structure and the functor
$$\Psi_\CY:\IndCoh(\CY)\to \QCoh(\CY)$$ with the same properties as those for schemes,
reviewed in \secref{sss:Ind and QCoh} above,  see \cite[Sect. 11.7.1 and Proposition 11.7.5]{IndCoh}. 
Namely, the functor $\Psi_\CY$ is determined uniquely by the requirement that for 
$(S,g)\in \on{DGSch}_{/\CY,\on{smooth}}$, the diagram
$$
\CD
\IndCoh(\CY)  @>{g^{\IndCoh,*}}>>  \IndCoh(S)  \\
@V{\Psi_\CY}VV   @VV{\Psi_S}V   \\
\QCoh(\CY)   @>{g^*}>>  \QCoh(S)
\endCD
$$
is supplied with a commutativity isomorphism, functorially in $(S,g)$. The t-structure on $\IndCoh(\CY)$
is determined by the condition that the functors $g^{\IndCoh,*}$ be t-exact.

\medskip

If $\pi:\CY_1\to \CY_2$ is an eventually coconnective morphism between algebraic stacks,
we have a commutative diagram
\begin{equation} \label{e:*-pullback IndCoh stacks}
\CD
\IndCoh(\CY_1)  @>{\Psi_{\CY_1}}>>  \QCoh(\CY_1) \\
@A{\pi^{\IndCoh,*}}AA   @AA{\pi^*}A   \\
\IndCoh(\CY_2)  @>{\Psi_{\CY_2}}>>  \QCoh(\CY_2).
\endCD
\end{equation}

\medskip

For a schematic and quasi-compact map $\pi:\CY_1\to \CY_2$ between algebraic stacks, we have 
a commutative diagram
\begin{equation} \label{e:*-pushforward IndCoh stacks}
\CD
\IndCoh(\CY_1)  @>{\Psi_{\CY_1}}>>  \QCoh(\CY_1) \\
@V{\pi^{\IndCoh}_*}VV   @VV{\pi_*}V   \\
\IndCoh(\CY_2)  @>{\Psi_{\CY_2}}>>  \QCoh(\CY_2).
\endCD
\end{equation}
It arises from the corresponding
commutative diagrams in the case of DG schemes , i.e., \eqref{e:*-pushforward for IndCoh}, using 
the functorial isomorphisms \eqref{e:usual base change for IndCoh stacks}.

\sssec{}   \label{sss:Gamma IndCoh}

For an algebraic stack $\CY$, we shall denote by $\Gamma^{\IndCoh}(\CY,-):\IndCoh(\CY)\to \Vect$ the
\emph{not necessarily} continuous functor equal to
$$\Gamma(\CY,-)\circ \Psi_\CY.$$

From \lemref{l:taut dir im} we obtain that for $\CF\in \IndCoh(\CY)$ there is a canonical isomorphism
\begin{multline} \label{e:Gamma IndCoh}
\Gamma^{\IndCoh}(\CY,\CF)\simeq 
\underset{(S,g)\in (\on{DGSch}_{/\CY,\on{smooth}})^{\on{op}}}
{\underset{\longleftarrow}{lim}}\, \Gamma\left(S,g^*(\Psi_{\CY}(\CF))\right)\simeq \\
\simeq 
\underset{(S,g)\in (\on{DGSch}_{/\CY,\on{smooth}})^{\on{op}}}
{\underset{\longleftarrow}{lim}}\, \Gamma^{\IndCoh}(S,g^{\IndCoh,*}(\CF)).
\end{multline}

\ssec{The coherent subcategory}  \label{ss:coherent}

Let $\CY$ be an algebraic stack. 

\sssec{}

We define $\Coh_{\on{Ind}}(\CY)$ to be the full subcategory of
$\IndCoh(\CY)$ consisting of those objects $\CF$, for which for any affine DG scheme $S$
equipped with a smooth map $g:S\to \CY$, the corresponding object $g^{\IndCoh,*}(\CF)$ belongs to $\Coh(S)\subset \IndCoh(S)$. 
This condition is enough to check for any fixed collection $(S_\alpha,g_\alpha)$ such that the map
$\underset{\alpha}\sqcup\, S_\alpha\to \CY$ is surjective. 

\medskip

Note that in the above definition, we can replace the functors $g^{\IndCoh,*}$ by $g^!$. This follows from either
\eqref{e:! and * stacks} or \eqref{e:! and * pullback stacks}. 

\medskip

We define $\Coh_{\on{Q}}(\CY)$ to be the full subcategory of
$\QCoh(\CY)$ consisting of those objects $\CF$, 
for which for any affine DG scheme $S$
equipped with a smooth map $g:S\to \CY$, the corresponding object $g^*(\CF)$ belongs to $\Coh(S)\subset \QCoh(S)$. 
This condition is enough to check for any fixed collection $(S_\alpha,g_\alpha)$ such that the map
$\underset{\alpha}\sqcup\, S_\alpha\to \CY$ is surjective. 

\medskip

We claim: 

\begin{lem}
The functor $\Psi_\CY$ defines an equivalence $\Coh_{\on{Ind}}(\CY)\to \Coh_{\on{Q}}(\CY)$.
\end{lem}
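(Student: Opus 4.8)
The plan is to reduce the statement for the stack $\CY$ to the corresponding statement for the schemes in a smooth atlas, where it is already known. First I would recall that for a quasi-compact DG scheme $S$ the functor $\Psi_S:\IndCoh(S)\to\QCoh(S)$ restricts to the identity on $\Coh(S)$, which sits as a full subcategory inside both $\IndCoh(S)$ and $\QCoh(S)$; in particular $\Psi_S$ carries $\Coh(S)\subset\IndCoh(S)$ isomorphically onto $\Coh(S)\subset\QCoh(S)$. This is the local input, taken from \cite[Sects. 1.1.5 and 1.2.1]{IndCoh} as reviewed in \secref{sss:Ind and QCoh}.

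Next I would observe that $\Psi_\CY$ sends $\Coh_{\on{Ind}}(\CY)$ into $\Coh_{\on{Q}}(\CY)$: by the defining compatibility square for $\Psi_\CY$ in \secref{sss:Psi for stacks}, for any affine DG scheme $S$ with a smooth map $g:S\to\CY$ one has $g^*(\Psi_\CY(\CF))\simeq \Psi_S(g^{\IndCoh,*}(\CF))$, and if $g^{\IndCoh,*}(\CF)\in\Coh(S)$ then $\Psi_S$ of it lies in $\Coh(S)\subset\QCoh(S)$; hence $\Psi_\CY(\CF)\in\Coh_{\on{Q}}(\CY)$. So we get a well-defined functor $\Psi_\CY:\Coh_{\on{Ind}}(\CY)\to\Coh_{\on{Q}}(\CY)$, and it remains to produce an inverse.

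To build the inverse I would use the descent presentations: by \secref{sss:IndCoh for algebraic stacks}, for a smooth atlas $f:Z\to\CY$ the functor $\IndCoh(\CY)\to\on{Tot}(\IndCoh(Z^\bullet/\CY))$ (formed with $(\IndCoh,*)$-pullbacks) is an equivalence, and likewise $\QCoh(\CY)\to\on{Tot}(\QCoh(Z^\bullet/\CY))$ is an equivalence by fppf descent for $\QCoh$. The functors $\Psi_{Z^n/\CY}$ assemble into a map of cosimplicial categories, and on each term they restrict to the equivalence $\Coh(Z^n/\CY)_{\on{Ind}}\iso\Coh(Z^n/\CY)_{\on{Q}}$ from the scheme (or, more precisely, QCA-algebraic-stack) case — here one takes a smooth affine atlas of each $Z^n/\CY$ and uses that $\Psi$ on an affine DG scheme is an isomorphism on $\Coh$. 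Since $\Coh_{\on{Ind}}(\CY)$ is precisely the subcategory of objects of $\IndCoh(\CY)$ whose pullback to each $Z^n$ is coherent, and similarly for $\Coh_{\on{Q}}(\CY)$, passing to totalizations of the ``coherent part'' of each cosimplicial category yields the claimed equivalence; the quasi-inverse is descended from $\Psi_{Z^n/\CY}^{-1}$ on coherent objects.

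The main obstacle I expect is bookkeeping rather than anything deep: one must make sure that ``coherence checked on a smooth atlas'' is compatible with the $(\IndCoh,*)$-pullback transition maps in the cosimplicial diagram (so that the coherent objects really do form a sub-cosimplicial-category), and that the functor $\Psi$ is genuinely a map of cosimplicial diagrams — i.e. it commutes with the $(\IndCoh,*)$-to-$*$ transition functors, which is exactly diagram \eqref{e:*-pullback IndCoh stacks} applied to the face and degeneracy maps of $Z^\bullet/\CY$ (these are smooth, hence eventually coconnective). Once that compatibility is in place, the equivalence follows formally from the equivalence on each cosimplicial term together with the fact that limits (totalizations) preserve equivalences. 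Alternatively, and perhaps more cleanly, one can argue directly: full faithfulness and essential surjectivity of $\Psi_\CY$ on the coherent subcategories can both be checked after $g^{\IndCoh,*}$ for $g$ in a smooth atlas — using that such $g^{\IndCoh,*}$ (jointly) conservative and that $\Psi$ intertwines the relevant pullbacks — reducing everything to the already-known statement on affine DG schemes without ever writing out the cosimplicial object explicitly.
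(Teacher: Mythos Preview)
Your proposal is correct and follows essentially the same approach as the paper: both arguments use the presentation of $\IndCoh(\CY)$ and $\QCoh(\CY)$ as limits over the smooth site (the paper cites \eqref{e:*-limit for IndCoh} and \secref{sss:change index qc}(iv)), together with the fact that $\Psi$ is a map of diagrams (diagram \eqref{e:*-pullback IndCoh stacks}) which is a termwise equivalence on $\Coh$. Your version via the \v{C}ech nerve is a cosmetic variant of the same idea, and your closing ``alternative'' argument is really just the same limit argument rephrased.
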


\begin{proof}
Follows by combining \eqref{e:*-pullback IndCoh stacks} with \eqref{e:*-limit for IndCoh} and 
\secref{sss:change index qc}(iv).
\end{proof}

From now on, we will identify $\Coh_{\on{Ind}}(\CY)$ with $\Coh_{\on{Q}}(\CY)$ and denote the resulting
category simply by $\Coh(\CY)$, unless a confusion is likely to occur.

\sssec{}

Consider the ind-completion $\Ind (\Coh(\CY))$ of the category $\Coh(\CY)$
(see Sect.~\ref{sss:ind-compl} where the notion of ind-completion of a DG category is recalled). 
One has a tautologically defined continuous functor
\begin{equation} \label{e:two versions of Ind}
\Ind (\Coh(\CY))\to \Ind\Coh(\CY).
\end{equation}

However, it is not true that this functor is always an equivalence. For example, it is typically not
an equivalence for non quasi-compact schemes.

\sssec{}

The main result of this section is the following theorem,
which says that $\IndCoh(\CY)=\Ind (\Coh(\CY))$ if $\CY$ is QCA (see Definition~\ref{d:QCA}).

\begin{thm} \label{IndCoh} 
Assume that a stack $\CY$ is QCA. Then the category $\IndCoh(\CY)$ is compactly generated.
Moreover, its subcategory of compact objects equals $\Coh(\CY)$.
\end{thm}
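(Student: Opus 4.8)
The plan is to imitate the structure of the proof of \thmref{main}: reduce via stratification to the case of a quotient stack $Z/G$ with $G$ affine, handle finite étale covers and the dévissage step, and at the bottom combine \thmref{main} with the compact generation of $\IndCoh$ on quasi-compact DG schemes. First I would observe that $\IndCoh(\CY)$ is generated (as a cocomplete category) by the essential image of $\oblv$-type functors, or more precisely by objects of the form $p^{\IndCoh}_*(\CF')$ for $p$ a smooth affine atlas and $\CF'\in\Coh$; so the real content is (a) that $\Coh(\CY)$ consists of compact objects, and (b) that it generates. For (a): an object $\CF\in\Coh(\CY)$ is compact iff $\CMaps_{\IndCoh(\CY)}(\CF,-)$ is continuous; using Serre-type duality on $\Coh(\CY)$ (the monoidal unit $\omega_\CY$ and the internal dualization functor, which preserves $\Coh(\CY)$) one rewrites $\CMaps_{\IndCoh(\CY)}(\CF,-)\simeq \Gamma^{\IndCoh}(\CY,\BD(\CF)\sotimes -)$. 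Now $\BD(\CF)\sotimes(-)$ is continuous (it is given by $\sotimes$ with a fixed object), and $\Gamma^{\IndCoh}(\CY,-)=\Gamma(\CY,-)\circ\Psi_\CY$; the functor $\Psi_\CY$ is continuous, and $\Gamma(\CY,-)$ on $\QCoh(\CY)$ is continuous by \thmref{main}(i). Hence $\CF$ is compact. (One should check that $\BD(\CF)\sotimes(-)$ lands in a place where $\Psi_\CY$ and $\Gamma$ interact correctly — this is a formal consequence of the projection formula for $\IndCoh$ recalled in \secref{sss:functoriality IndCoh prestacks pushforward}.)

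For the generation statement (b), I would set $\IndCoh(\CY)^{\spadesuit}\subset\IndCoh(\CY)$ to be the cocomplete subcategory generated by $\Coh(\CY)$ and show it is everything; equivalently, that $\CF\in\IndCoh(\CY)$ with $\CMaps(\CN,\CF)=0$ for all $\CN\in\Coh(\CY)$ forces $\CF=0$. The argument proceeds by the same three-step pattern as \propref{p:stratification}. \emph{Dévissage:} for a closed substack $\CX\overset{\imath}\hookrightarrow\CY$ with open complement $\oCY\overset{\jmath}\hookrightarrow\CY$, the functors $\imath^{\IndCoh}_*$ (admits a left adjoint, $\imath$ being eventually coconnective after truncation — here one uses the finite-Tor-dimension machinery of \secref{sss:functoriality IndCoh prestacks * pullback}) and $\jmath^{\IndCoh}_*$ give a recollement of $\IndCoh$; so if generation holds for $\CX$ and $\oCY$ it holds for $\CY$, because an object killed by $\Coh(\CY)$ has vanishing $\jmath^{\IndCoh,*}$ and vanishing $\imath^!$, hence vanishes by the recollement together with \lemref{l:! cons IndCoh}. \emph{Finite étale covers:} if $\pi:\CZ\to\CY$ is finite étale and generation holds for $\CZ$, then since in characteristic $0$ every object of $\IndCoh(\CY)$ is a direct summand of $\pi^{\IndCoh}_*\circ\pi^{\IndCoh,*}(\CF)$ via the trace, and $\pi^{\IndCoh}_*$ sends $\Coh(\CZ)$ into $\Coh(\CY)$ ($\pi$ being finite), generation descends to $\CY$. \emph{Base case} $\CZ=Z/G$: here $\IndCoh(Z/G)$ can be computed from $\IndCoh$ on the simplicial scheme $Z^\bullet/(Z/G)$ — but more directly, using the atlas $Z\to Z/G$ and the fact that $Z$ is a quasi-compact DG scheme almost of finite type, $\IndCoh(Z)$ is compactly generated by $\Coh(Z)$ by definition; pushforward along the (schematic, quasi-compact, even affine for $G$ affine after embedding into a reductive group and using semisimplicity) map, together with the control on cohomological amplitude coming from \thmref{main}(ii) applied to $\QCoh(Z/G)$ via $\Psi$, shows that $\Coh(Z/G)$ generates.

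Assembling: by \propref{p:stratification} (noting that its hypotheses match Definition \ref{d:QCA} — the finite-presentation condition on the inertia stack is exactly condition (3) of QCA, and affineness of stabilizers is condition (2)) we stratify $\CY$ into locally closed $\CY_i$, each admitting a finite surjective étale $\CZ_i\to\CY_i$ with $\CZ_i\simeq Z_i/G_i$, $G_i$ affine. Induction on the number of strata via the dévissage step, then the étale-cover step, then the base case, yields generation by $\Coh(\CY)$. Finally, combining (a) and (b): $\IndCoh(\CY)$ is compactly generated by $\Coh(\CY)$, so $\IndCoh(\CY)\simeq\Ind(\Coh(\CY))$ and its compacts are the Karoubian closure of $\Coh(\CY)$; since $\Coh(\CY)$ is already idempotent-complete (bounded coherent complexes on a stack form a Karoubian category), we get $\IndCoh(\CY)^c=\Coh(\CY)$ exactly. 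The main obstacle I anticipate is step (a) — making precise the duality identification $\CMaps_{\IndCoh(\CY)}(\CF,-)\simeq\Gamma^{\IndCoh}(\CY,\BD(\CF)\sotimes-)$ for $\CF\in\Coh(\CY)$ and checking it is genuinely continuous; this requires knowing that $\BD$ preserves $\Coh(\CY)$ (true since coherence and boundedness are smooth-local and dualization is an equivalence on $\Coh$ of a scheme) and that the projection/base-change formalism of \cite{IndCoh} recalled above is robust enough on stacks, rather than just on schemes, to push the identity through the limit presentation \eqref{e:Gamma IndCoh}. The stratification and dévissage steps, by contrast, should be essentially formal given \propref{p:stratification} and the recollement properties of $\IndCoh$.
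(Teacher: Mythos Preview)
Your overall architecture matches the paper's: split into (a) $\Coh(\CY)\subset\IndCoh(\CY)^c$ and (b) $\Coh(\CY)$ generates, prove (a) by reducing to continuity of $\Gamma(\CY,-)$ via \thmref{main}, and prove (b) by stratification/d\'evissage/\'etale covers as in \propref{p:stratification}. Your Serre-duality approach to (a) is correct and is exactly \propref{p:Serre duality}; the paper instead constructs the internal Hom $\underline\Hom_{\QCoh(\CY)}(\CF,\CF')$ and uses $\CMaps_{\IndCoh(\CY)}(\CF,\CF')\simeq\Gamma(\CY,\underline\Hom_{\QCoh(\CY)}(\CF,\CF'))$, but these two formulations are equivalent (and \propref{p:Serre duality} is logically independent of \thmref{IndCoh}, so there is no circularity).

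There are two genuine issues in part (b). First, your parenthetical that $\imath^{\IndCoh}_*$ ``admits a left adjoint, $\imath$ being eventually coconnective after truncation'' is wrong: a closed embedding is almost never eventually coconnective, so $\imath^{\IndCoh,*}$ need not exist. Fortunately the d\'evissage does not need it; what you actually use (and what the paper uses in \lemref{l:devissage IndCoh}) is the right adjoint $\imath^!$ of $\imath^{\IndCoh}_*$, together with the triangle $(\CF)_\CX\to\CF\to\jmath^{\IndCoh}_*\jmath^{\IndCoh,*}(\CF)$ and the extension of objects of $\Coh(\oCY)^\heartsuit$ to $\Coh(\CY)^\heartsuit$. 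Second, and more substantively, your base case for $\CZ=Z/G$ is hand-waving: pushforward from the atlas $Z\to Z/G$ does \emph{not} land in $\Coh(Z/G)$ when $G$ is positive-dimensional, so the compact generation of $\IndCoh(Z)$ does not transfer in the way you suggest. The paper's trick here is to observe that, since one has already reduced to a classical reduced stack, one may refine the stratification so that each stratum $\CY_i$ is \emph{smooth}; then $\Psi_{\CY_i}:\IndCoh(\CY_i)\to\QCoh(\CY_i)$ is an equivalence, and generation by $\Coh(\CY_i)^\heartsuit$ follows from the already-established \lemref{l:BFN} and \lemref{l:covering plus} for $\QCoh$. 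Without this reduction to the smooth case your base step does not close.
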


\sssec{}

The proof will be given in Sects.~\ref{ss:compacts_in_IndCoh(Y)}-\ref{ss:CohgeneratesIndCoh} 
(it is based on \thmref{main}). This theorem will imply a number of favorable properties of
the category $\IndCoh$; these will be established in \secref{s:dualizability}, 
see Sects.~\ref{ss:IndCoh is dualizable} and \ref{ss:appl to QCoh}).

\ssec{Description of compact objects of $\IndCoh(\CY)$}   \label{ss:compacts_in_IndCoh(Y)}

\sssec{}   

First, we claim:

\begin{prop}  \label{coh is compact} \hfill

\smallskip

\noindent{\em(a)}
For any algebraic stack, the subcategory $\IndCoh(\CY)^c\subset \IndCoh(\CY)$ is contained in 
$\Coh(\CY)$. 

\smallskip

\noindent{\em(b)}
If $\CY$ is QCA  then $\IndCoh(\CY)^c=\Coh(\CY)$.
\end{prop}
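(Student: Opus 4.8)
Part (a) is a soft consequence of the $\IndCoh$ formalism recalled in \secref{ss:review of IndCoh}. The plan: choose an affine DG scheme $Z$ together with a smooth atlas $f:Z\to\CY$; since $\CY$ is quasi-compact with quasi-compact diagonal, $f$ is schematic and quasi-compact, and being smooth it is eventually coconnective, so the functor $f^{\IndCoh,*}:\IndCoh(\CY)\to\IndCoh(Z)$ is defined and is the left adjoint of the continuous functor $f^{\IndCoh}_*$. A left adjoint whose right adjoint is continuous preserves compact objects, so $f^{\IndCoh,*}$ carries $\IndCoh(\CY)^c$ into $\IndCoh(Z)^c$. Now $\IndCoh(Z)^c=\Coh(Z)$ (immediate for a quasi-compact DG scheme, since $\IndCoh(Z)=\Ind(\Coh(Z))$ and $\Coh(Z)$ is idempotent complete); hence for $\CF\in\IndCoh(\CY)^c$ one has $f^{\IndCoh,*}(\CF)\in\Coh(Z)$, and by the very definition of $\Coh(\CY)$ --- which may be tested on a single surjective smooth atlas --- this gives $\CF\in\Coh(\CY)$.

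For part (b), in view of (a) it remains to show that every $\CF\in\Coh(\CY)$ is compact, i.e.\ that $\CMaps_{\IndCoh(\CY)}(\CF,-):\IndCoh(\CY)\to\Vect$ is continuous. The first step is to rewrite this functor using the $\QCoh(\CY)$-module structure on $\IndCoh(\CY)$: there is an internal Hom $\uHom(\CF,-):\IndCoh(\CY)\to\QCoh(\CY)$ with a functorial identification $\CMaps_{\IndCoh(\CY)}(\CF,\CG)\simeq\Gamma\bigl(\CY,\uHom(\CF,\CG)\bigr)$. (Equivalently, once Serre duality for $\IndCoh$ on algebraic stacks is set up by smooth descent from the scheme case of \cite{IndCoh}, one may write $\CMaps_{\IndCoh(\CY)}(\CF,\CG)\simeq\Gamma^{\IndCoh}\bigl(\CY,\BD_{\CY}(\CF)\sotimes\CG\bigr)$ with $\BD_{\CY}(\CF)\in\Coh(\CY)$.) The functor $\Gamma(\CY,-)$ --- equivalently $\Gamma^{\IndCoh}(\CY,-)=\Gamma(\CY,-)\circ\Psi_{\CY}$ --- is continuous by \thmref{main}(i); this is the unique point at which the QCA hypothesis, and with it the characteristic $0$ assumption, is used. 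Thus it suffices to prove that $\CG\mapsto\uHom(\CF,\CG)$ (resp.\ $\CG\mapsto\BD_{\CY}(\CF)\sotimes\CG$) is a continuous functor $\IndCoh(\CY)\to\QCoh(\CY)$.

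That last continuity statement I would verify smooth-locally. For the atlas $f:Z\to\CY$, the functor $f^*:\QCoh(\CY)\to\QCoh(Z)$ is continuous and conservative by \lemref{properties of t}(c), so it is enough to check that $f^*\uHom(\CF,\CG)\simeq\uHom_{\QCoh(Z)}\bigl(f^*\CF,\,f^{\IndCoh,*}\CG\bigr)$ and that $\uHom_{\QCoh(Z)}(f^*\CF,-):\IndCoh(Z)\to\QCoh(Z)$ is continuous. The first point is the compatibility of the internal Hom with flat pullback when the first argument is coherent --- here $f^*\CF\in\Coh(Z)$. The second holds on the affine scheme $Z$ for elementary reasons: $\QCoh(Z)$ is generated by perfect complexes, for $\CE$ perfect the object $\CE\otimes f^*\CF$ lies in $\Coh(Z)=\IndCoh(Z)^c$, and therefore $\CMaps_{\QCoh(Z)}\bigl(\CE,\uHom_{\QCoh(Z)}(f^*\CF,-)\bigr)\simeq\CMaps_{\IndCoh(Z)}(\CE\otimes f^*\CF,-)$ is continuous. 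Putting this together shows $\CF$ is compact, which proves (b); combined with (a) this yields $\IndCoh(\CY)^c=\Coh(\CY)$.

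The only genuinely nonformal input is \thmref{main}(i); everything else is bookkeeping internal to the $\IndCoh$ formalism. The bookkeeping that I expect to be the real obstacle to a clean write-up is the descent step of the previous paragraph: constructing the Serre duality functor $\BD_{\CY}$ and the pairing $\Gamma^{\IndCoh}(\CY,-\sotimes-)$ on algebraic stacks --- or, equivalently, establishing the compatibility of the internal Hom with smooth pullback --- while keeping straight which of $\IndCoh$ or $\QCoh$ occupies each variable slot and checking that the relevant structure maps are eventually coconnective, so that the $(-)^{\IndCoh,*}$ functors are available and t-exact.
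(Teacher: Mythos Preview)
Your proposal is correct and follows essentially the same approach as the paper: part (a) via the $(g^{\IndCoh,*},g^{\IndCoh}_*)$ adjunction with continuous right adjoint, and part (b) by constructing an internal Hom object $\underline\Hom_{\QCoh(\CY)}(\CF,\CF')\in\QCoh(\CY)$ whose global sections compute $\CMaps_{\IndCoh(\CY)}(\CF,\CF')$, then invoking \thmref{main}(i). The descent step you flag as the obstacle is exactly what the paper isolates as \lemref{l:inner transition map} (compatibility of the local internal Homs with pullback along smooth maps); one small remark is that in (a) you assume $\CY$ quasi-compact to get an affine atlas, whereas the statement is for arbitrary algebraic stacks --- but the same argument applies verbatim to any affine $S$ mapping smoothly to $\CY$, which is all the definition of $\Coh(\CY)$ requires.
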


\begin{proof}[Proof of point (a)]

When need to show that for any affine DG scheme $S$ equipped with a smooth map
$g:S\to \CY$, the functor $g^{\IndCoh,*}$ sends $\IndCoh(\CY)^c$ to $\IndCoh(S)^c=\Coh(S)$.

\medskip

Since $\CY$ is an algebraic stack, the morphism $g$ is schematic and quasi-compact. Hence, the functor $g^{\IndCoh,*}$
admits a continuous right adjoint, namely, $g^{\IndCoh}_*$ (see \secref{sss:functoriality IndCoh prestacks * pullback}).
This implies the required assertion.

\end{proof}

\begin{rem}
For point (b), we need to show that, when $\CY$ is QCA and $\CF\in\Coh(\CY)$, the assignment
$$\CF'\mapsto \CMaps_{\IndCoh(\CY)}(\CF,\CF')$$
commutes with colimits in $\CF'$. The idea of the proof is that to $\CF$ and $\CF'$ one can 
assign their \emph{internal} Hom object
$$\underline\Hom_{\QCoh(\CY)}(\CF,\CF')\in \QCoh(\CY),$$
whose formation commutes with colimits in $\CF'$, and such that
$$\CMaps_{\IndCoh(\CY)}(\CF,\CF')\simeq \Gamma\left(\CY,\underline\Hom_{\QCoh(\CY)}(\CF,\CF')\right).$$
Then the assertion of point (b) of the proposition would follow from \thmref{main}.
\end{rem}

\begin{proof}[Proof of point (b)]

Let $\CY$ be QCA and $\CF\in\Coh(\CY)$ and $\CF'\in \IndCoh(\CY)$. We have:
$$\CMaps_{\IndCoh(\CY)}(\CF,\CF')\simeq 
\underset{(S,g)\in (\on{DGSch}_{/\CY,\on{smooth}})^{\on{op}}}{\underset{\longleftarrow}{lim}}\, 
\CMaps_{\IndCoh(S)}(g^{\IndCoh,*}(\CF),g^{\IndCoh,*}(\CF')).$$

\medskip

For every $(S,g)\in \on{DGSch}^{\on{aff}}_{/\CY,\on{smooth}}$ consider the object
$$\underline\Hom_{\QCoh(S)}(g^{\IndCoh,*}(\CF),g^{\IndCoh,*}(\CF'))\in \QCoh(S),$$
(see \cite{DG}, Sect. 5.1.). Namely, for $\CE\in \QCoh(S)$, 
\begin{multline*}
\CMaps_{\QCoh(S)}(\CE,\underline\Hom_{\QCoh(S)}(g^{\IndCoh,*}(\CF),g^{\IndCoh,*}(\CF')))\simeq \\
\simeq \CMaps_{\IndCoh(S)}(\CE\otimes g^{\IndCoh,*}(\CF),g^{\IndCoh,*}(\CF')),
\end{multline*}
where $-\otimes-$ denotes the action of $\QCoh(S)$ on $\IndCoh(S)$.  

\medskip

Since $g^{\IndCoh,*}(\CF)\in \Coh(S)=\IndCoh(S)^c$, and $\QCoh(S)$ is compactly generated, 
by \cite[Lemma 5.1.1]{DG},
the assignment 
$$\CF'\mapsto \underline\Hom_{\QCoh(S)}(g^{\IndCoh,*}(\CF),g^{\IndCoh,*}(\CF'))$$
commutes with colimits.

\medskip

By construction, for every map $f:\wt{S}\to S$ in $\on{DGSch}^{\on{aff}}_{/\CY,\on{smooth}}$,
there is a canonical map
\begin{multline} \label{e:inner transition map}
f^*(\underline\Hom_{\QCoh(S)}(g^{\IndCoh,*}(\CF),g^{\IndCoh,*}(\CF')))\to\\
\to \underline\Hom_{\QCoh(\wt{S})}(\wt{g}^{\IndCoh,*}(\CF),\wt{g}^{\IndCoh,*}(\CF')),
\end{multline} 
where $\wt{g}=g\circ f$.

\begin{lem}  \label{l:inner transition map}
The map \eqref{e:inner transition map} is an isomorphism.
\end{lem}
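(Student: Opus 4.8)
The plan is to eliminate the ambient stack $\CY$ from the picture. Setting $\CG:=g^{\IndCoh,*}(\CF)\in\Coh(S)$ and $\CG':=g^{\IndCoh,*}(\CF')\in\IndCoh(S)$, and using $\wt g^{\IndCoh,*}\simeq f^{\IndCoh,*}\circ g^{\IndCoh,*}$, the map \eqref{e:inner transition map} is nothing but the base-change morphism $f^*\,\uHom_{\QCoh(S)}(\CG,\CG')\to\uHom_{\QCoh(\wt S)}(f^{\IndCoh,*}(\CG),f^{\IndCoh,*}(\CG'))$ attached to a smooth (hence affine, flat, eventually coconnective) morphism $f:\wt S\to S$ of affine DG schemes, an object $\CG\in\Coh(S)$ and an object $\CG'\in\IndCoh(S)$; so I must show this morphism is an isomorphism. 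Since $\wt S$ is affine, $\CO_{\wt S}=f^*(\CO_S)$ generates $\QCoh(\wt S)$, so the exact functor $\Gamma(\wt S,-)\simeq\CMaps_{\QCoh(\wt S)}(\CO_{\wt S},-)$ is conservative and therefore detects isomorphisms; thus it suffices to check that our morphism becomes an isomorphism after applying $\Gamma(\wt S,-)$.

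Next I would compute both sides. On the target: by the defining adjunction of $\uHom_{\QCoh(\wt S)}$ and the fact that tensoring with $\CO_{\wt S}$ is the identity on $\IndCoh(\wt S)$, the result of applying $\Gamma(\wt S,-)$ is $\CMaps_{\IndCoh(\wt S)}(f^{\IndCoh,*}\CG,f^{\IndCoh,*}\CG')$, and, $f$ being schematic and quasi-compact, the adjunction $(f^{\IndCoh,*},f^{\IndCoh}_*)$ of \secref{sss:functoriality IndCoh prestacks * pullback} rewrites it as $\CMaps_{\IndCoh(S)}(\CG,\,f^{\IndCoh}_*f^{\IndCoh,*}(\CG'))$. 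On the source: $\Gamma(\wt S,f^*(-))\simeq\CMaps_{\QCoh(\wt S)}(f^*\CO_S,f^*(-))\simeq\CMaps_{\QCoh(S)}(\CO_S,f_*f^*(-))$ by the $(f^*,f_*)$-adjunction, and the projection formula for the affine morphism $f$ (see \secref{sss:easy case}) identifies $f_*f^*(-)$ with $f_*(\CO_{\wt S})\otimes(-)$. Applying this to $\uHom_{\QCoh(S)}(\CG,\CG')$ and using the internal-Hom projection formula $\CE\otimes\uHom_{\QCoh(S)}(\CG,\CG')\simeq\uHom_{\QCoh(S)}(\CG,\CE\otimes\CG')$ — valid because $\CG$ is compact in $\IndCoh(S)$: both sides are continuous in $\CE$, the comparison map is an isomorphism for $\CE$ perfect, and perfect complexes generate $\QCoh(S)$ — the source becomes $\Gamma(S,\uHom_{\QCoh(S)}(\CG,f_*(\CO_{\wt S})\otimes\CG'))\simeq\CMaps_{\IndCoh(S)}(\CG,\,f_*(\CO_{\wt S})\otimes\CG')$. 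Thus the whole lemma is reduced to the assertion that the natural map $f_*(\CO_{\wt S})\otimes\CG'\to f^{\IndCoh}_*f^{\IndCoh,*}(\CG')$ is an isomorphism, naturally in $\CG'\in\IndCoh(S)$.

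To settle that assertion I would note that both sides are continuous functors of $\CG'$, so one can reduce to $\CG'\in\Coh(S)$; then both objects are eventually coconnective — $f^{\IndCoh}_*f^{\IndCoh,*}(\CG')$ because $f^{\IndCoh,*}$ is t-exact ($f$ flat) and $f^{\IndCoh}_*$ is t-exact ($f$ affine), and $f_*(\CO_{\wt S})\otimes\CG'$ because $\otimes f_*(\CO_{\wt S})$ is t-exact ($f$ flat and affine) — so it is enough to compare them after the functor $\Psi_S$, which is an equivalence on eventually coconnective objects. Under $\Psi_S$ the right-hand side becomes $f_*f^*\Psi_S(\CG')$ by \eqref{e:*-pushforward for IndCoh} and \eqref{e:*-pullback IndCoh stacks}, which by the $\QCoh$-projection formula equals $f_*(\CO_{\wt S})\otimes\Psi_S(\CG')$; and the $\QCoh(S)$-linearity of $\Psi_S$ identifies the left-hand side with the same object, compatibly with the natural map, which completes the proof. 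I expect the genuine obstacle to be exactly this identity $f^{\IndCoh}_*\circ f^{\IndCoh,*}\simeq f_*(\CO_{\wt S})\otimes(-)$: it is the projection formula for $\IndCoh$-pushforward relative to the $\QCoh$-module structure (rather than the $\sotimes$-version \eqref{e:proj formula IndCoh}), and making it precise requires both the t-exactness of $f^{\IndCoh}_*$ for affine $f$ and the fact that $\Psi_S$ is conservative only on the eventually coconnective part — which is what forces the reduction to coherent $\CG'$ and the t-exactness bookkeeping above.
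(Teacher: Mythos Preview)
Your proof is correct and follows essentially the same route as the paper: both reduce to showing that the natural map $f_*(\CO_{\wt S})\otimes\CG'\to f^{\IndCoh}_*\circ f^{\IndCoh,*}(\CG')$ is an isomorphism, via the same manipulations with the internal Hom and the compactness of $\CG$ (the paper invokes \cite[Lemma~5.1.1]{DG} where you give the direct argument). The only difference is that the paper simply cites \cite[Proposition~3.6.11]{IndCoh} for this last identity, whereas you supply an argument for it yourself by reducing to $\CG'\in\Coh(S)$, passing to the eventually coconnective part, and applying $\Psi_S$; your argument is fine and is essentially what that proposition says.
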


The proof will be given in \secref{sss:proof of inner transition map}. Thus, we obtain that 
the assignment 
$$(S,g)\mapsto \underline\Hom_{\QCoh(S)}(g^{\IndCoh,*}(\CF),g^{\IndCoh,*}(\CF'))$$
defines an object 
$$\underline\Hom_{\QCoh(\CY)}(\CF,\CF')\in \QCoh(\CY).$$
Moreover, the functor
$$\CF'\mapsto \underline\Hom_{\QCoh(\CY)}(\CF,\CF')$$
commutes with colimits.

\medskip

By construction,
\begin{equation}  \label{e:local_Hom}
\CMaps_{\IndCoh(\CY)}(\CF,\CF')\simeq \Gamma\left(\CY,\underline\Hom_{\QCoh(\CY)}(\CF,\CF')\right).
\end{equation}

Now, the required assertion follows from \thmref{main}.

\end{proof}

\begin{rem}
It is easy to see that the object $\underline\Hom_{\QCoh(\CY)}(\CF,\CF')$ introduced above
is the internal Hom of $\CF$ and $\CF'$ in the sense
of \cite[Sect. 5.1]{DG}, i.e., for $\CE\in \QCoh(\CY)$, we have
$$\CMaps(\CE,\underline\Hom_{\QCoh(\CY)}(\CF,\CF'))\simeq \CMaps_{\IndCoh(\CY)}(\CE\otimes \CF,\CF').$$
\end{rem}

\sssec{Proof of \lemref{l:inner transition map}}  \label{sss:proof of inner transition map}

Let $f:\wt{S}\to S$ be an eventually coconnective map of affine DG schemes, and 
$\CF\in \Coh(S)$, and $\CF'\in \IndCoh(S)$. We claim that the natural map
$$f^*(\underline\Hom_{\QCoh(S)}(\CF,\CF'))\to \underline\Hom_{\QCoh(\wt{S})}(f^{\IndCoh,*}(\CF),f^{\IndCoh,*}(\CF')))$$
is an isomorphism. The latter is equivalent to
$$f_*(\CO_{\wt{S}})\underset{\CO_S}\otimes \underline\Hom_{\QCoh(S)}(\CF,\CF')\to
f_*(\underline\Hom_{\QCoh(\wt{S})}(f^{\IndCoh,*}(\CF),f^{\IndCoh,*}(\CF')))$$
being an isomorphism at the level of global sections. 

\medskip

Now, since $\CF$ is a compact object of $\IndCoh(S)$, by \cite[Lemma 5.1.1]{DG}, we have:
$$f_*(\CO_{\wt{S}})\underset{\CO_S}\otimes \underline\Hom_{\QCoh(S)}(\CF,\CF')
\simeq \underline\Hom_{\QCoh(S)}(\CF, f_*(\CO_{\wt{S}})\underset{\CO_S}\otimes\CF').$$

Thus, we need to show that
\begin{multline*}
\CMaps_{\IndCoh(S)}(\CF,f_*(\CO_{\wt{S}})\underset{\CO_S}\otimes\CF')\to
\CMaps_{\IndCoh(\wt{S})}(f^{\IndCoh,*}(\CF),f^{\IndCoh,*}(\CF'))\simeq \\
\simeq \CMaps_{\IndCoh(S)}(\CF,f^{\IndCoh}_*\circ f^{\IndCoh,*}(\CF')).
\end{multline*}

I.e., it is sufficient to prove that the map
$$f_*(\CO_{\wt{S}})\underset{\CO_S}\otimes\CF'\to f^{\IndCoh}_*\circ f^{\IndCoh,*}(\CF')$$
is an isomorphism. However, the latter is the content of \cite[Proposition 3.6.11]{IndCoh}.

\qed

\ssec{The category $\Coh(\CY)$ generates $\IndCoh(\CY)$}   \label{ss:CohgeneratesIndCoh} \hfill

\medskip

\thmref{IndCoh} follows from Proposition~\ref{coh is compact} and the next one.

\begin{prop}  \label{p:coh_generates}
If $\CY$ is QCA then
the subcategory $\Coh(\CY)^{\heartsuit}$ generates $\IndCoh(\CY)$. 
\end{prop}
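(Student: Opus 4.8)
The strategy is to reduce to the two building blocks supplied by the stratification machinery of \secref{s:deducing} --- namely quotient stacks $Z/G$ with $Z$ quasi-projective and $G$ acting linearly, and finite \'etale covers --- together with a d\'evissage along a closed-open decomposition, exactly mirroring the proof of \thmref{t:QCoh_core} in Sects.~\ref{ss:proofQCoh_core}. The first step is the reduction to the reduced classical case: using that $^{cl}i_*$ and $(i_{red})_*$ are t-exact and that the relevant filtrations (by powers of the nilpotent ideal, and by $\CF\otimes\tau^{\le -n}(\CO_\CY)$ in the eventually-coconnective direction --- which is harmless here since everything has already been cut down to $\QCoh(\CY)^\heartsuit$) have subquotients in the essential images of closed embeddings, one may assume $\CY$ is reduced and classical. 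Then one applies \propref{p:stratification}: after stratifying, it suffices to treat (i) a single stratum $\CZ=Z/G$ with $Z$ quasi-projective over an affine and $G$ affine acting linearly, (ii) the descent under a finite \'etale cover $\CZ_i\to\CY_i$, and (iii) the d\'evissage step combining a closed substack $\CX$ and its open complement $\overset{\circ}\CY$.

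For step (i), $Z/G$: one shows $\Coh(Z/G)^\heartsuit$ generates $\IndCoh(Z/G)$. Here the key point is that $\IndCoh(Z/G)$ is compactly generated by $\Coh(Z/G)$ (this is already known for $Z/G$ by \thmref{IndCoh}, or rather its proof specialized to this case --- but to avoid circularity one argues directly: $Z/G$ is QCA and quasi-projective-type, and the equivalence $\IndCoh(W)^+\iso\QCoh(W)^+$ on the eventually coconnective parts reduces the generation question for $\IndCoh$ to the corresponding one for $\QCoh$, which is \lemref{l:BFN}). Concretely, any compact object of $\IndCoh(Z/G)$ lies in $\Coh(Z/G)$ by \propref{coh is compact}(a), and $\Coh(Z/G)\subset \IndCoh(Z/G)^+$; by the t-structure being compatible with filtered colimits and the fact that every object of $\Coh(Z/G)$ has, up to a shift and finite filtration with heart subquotients, only heart pieces, one concludes that the cocomplete subcategory generated by $\Coh(Z/G)^\heartsuit$ contains $\Coh(Z/G)$ and hence all of $\IndCoh(Z/G)$. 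For step (ii), finite \'etale descent: if $\pi\colon\CZ\to\CY$ is finite \'etale then $\pi^{\IndCoh,*}$ and $\pi^{\IndCoh}_*$ are t-exact (finite flat, and $\pi$ eventually coconnective), and every $\CF\in\IndCoh(\CY)$ is a direct summand of $\pi^{\IndCoh}_*\pi^{\IndCoh,*}(\CF)$ via the trace morphism (using $\on{char}k=0$), just as in \secref{sss:coverings}; since $\pi^{\IndCoh}_*$ is t-exact it carries $\Coh(\CZ)^\heartsuit$-generated subcategories into $\Coh(\CY)^\heartsuit$-generated ones, whence the claim. For step (iii), the d\'evissage: with $\jmath\colon\overset{\circ}\CY\hookrightarrow\CY$ the quasi-compact open and $\imath\colon\CX\hookrightarrow\CY$ the closed complement, one uses that $\jmath^{\IndCoh}_*$ and $\imath^{\IndCoh}_*$ have bounded cohomological amplitude (the former by quasi-compactness of $\CY$, combined with base change and \corref{c:coconnective part}-type estimates transported to $\IndCoh$ via $\Psi$), and the exact triangle $\imath^{\IndCoh}_*\imath^{!}(\CF)\to \CF\to \jmath^{\IndCoh}_*\jmath^{!}(\CF)$ (or its $\IndCoh$ avatar of the ``Cousin'' triangle \eqref{e:Cousin}) to write $\CF$ as a cone of objects built from $\Coh(\CX)^\heartsuit$ and $\Coh(\overset{\circ}\CY)^\heartsuit$; boundedness of the amplitudes ensures the pushforwards of heart objects still lie in the cocomplete subcategory generated by $\Coh(\CY)^\heartsuit$ (here one invokes \thmref{main}, via boundedness of $\Gamma$, to control the relevant truncations), exactly as in \lemref{l:devissage plus}.

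\textbf{The main obstacle.} The delicate point, and the place where \thmref{main} (and not merely \thmref{t:QCoh_core}) is really needed, is showing that the functors $\jmath^{\IndCoh}_*$ and $\imath^{\IndCoh}_*$ --- and more basically the passage between $\IndCoh$ and $\QCoh$ via $\Psi_\CY$ --- interact well enough with the d\'evissage to keep everything inside the subcategory generated by $\Coh(\CY)^\heartsuit$. Unlike in \lemref{l:devissage plus}, where $\jmath_*$ and $\imath_*$ on $\QCoh$ are plainly bounded, on $\IndCoh$ one must first establish that these pushforwards are bounded; this follows from the commuting square \eqref{e:*-pushforward IndCoh stacks} relating $\pi^{\IndCoh}_*$ and $\pi_*$ through $\Psi$, together with the amplitude bound of \thmref{main}(ii) applied on each affine chart, plus the fact that $\Psi_\CY$ is t-exact with the property that $\IndCoh^+\iso\QCoh^+$. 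Once boundedness is in hand the rest is a routine repetition of the three lemmas of \secref{ss:proofQCoh_core} with $\QCoh$ replaced by $\IndCoh$ and $*$-pullback/pushforward replaced by their $\IndCoh$ counterparts, so I would spend most of the writeup carefully setting up these boundedness statements and the $\IndCoh$-analogue of the Cousin triangle, and treat the stratification bookkeeping as formal.
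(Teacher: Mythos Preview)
Your overall strategy---reduce to classical reduced, stratify via \propref{p:stratification}, handle each stratum, descend along finite \'etale covers, and run a d\'evissage---matches the paper's. But two of your three steps have genuine gaps, and the ``main obstacle'' you single out is a red herring.

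\textbf{Step (i) is circular.} You argue that the cocomplete subcategory generated by $\Coh(Z/G)^\heartsuit$ contains $\Coh(Z/G)$ (true, by finite filtration) ``and hence all of $\IndCoh(Z/G)$''. That last clause assumes $\Coh(Z/G)$ generates $\IndCoh(Z/G)$, which is exactly \thmref{IndCoh}, i.e.\ the theorem \propref{p:coh_generates} is being used to prove. The equivalence $\IndCoh^+\simeq\QCoh^+$ does not transfer generation statements for the full categories. The paper's fix is much simpler than anything you propose: after reducing to reduced strata $\CY_i$ of finite type, one may \emph{shrink each $\CY_i$ to be smooth}. For smooth classical $\CY_i$ the functor $\Psi_{\CY_i}$ is an equivalence $\IndCoh(\CY_i)\simeq\QCoh(\CY_i)$, and then \lemref{l:BFN} and \lemref{l:covering plus} apply verbatim.

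\textbf{Step (iii): the triangle you wrote is not exact.} For a closed embedding $\imath$ with open complement $\jmath$, there is \emph{no} exact triangle $\imath^{\IndCoh}_*\imath^{!}(\CF)\to \CF\to \jmath^{\IndCoh}_*\jmath^{\IndCoh,*}(\CF)$ in $\IndCoh(\CY)$. The fiber of the unit $\CF\to\jmath^{\IndCoh}_*\jmath^{\IndCoh,*}(\CF)$ is $(\CF)_\CX$, which lies in $\IndCoh(\CY)_\CX$---the subcategory supported set-theoretically on $\CX$---but this is \emph{not} the essential image of $\imath^{\IndCoh}_*$ (it is rather $\IndCoh$ of the formal completion). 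Already for $\CX=\{0\}\hookrightarrow\BA^1$ and $\CF=\CO_{\BA^1}$ the two differ. So your plan to ``write $\CF$ as a cone of objects built from $\Coh(\CX)^\heartsuit$ and $\Coh(\overset\circ\CY)^\heartsuit$'' via this triangle does not work as stated. The paper avoids this entirely by switching to the right-orthogonal formulation of generation: one shows that if $\CMaps(\CE,\CF)=0$ for all $\CE\in\Coh(\CY)^\heartsuit$ then $\CF=0$. The two ingredients that replace your triangle are (a) the equivalence $(\CF)_\CX=0\Leftrightarrow \imath^!(\CF)=0$ (cited from \cite[Proposition 4.1.7]{IndCoh}), which lets the inductive hypothesis on $\CX$ kill $(\CF)_\CX$ via the $(\imath^{\IndCoh}_*,\imath^!)$ adjunction, and (b) the extension lemma \cite[Corollary 15.5]{LM}, which says every $\overset\circ\CE\in\Coh(\overset\circ\CY)^\heartsuit$ extends to some $\CE\in\Coh(\CY)^\heartsuit$, so the inductive hypothesis on $\overset\circ\CY$ kills $\jmath^{\IndCoh,*}(\CF)$.

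Finally, the boundedness of $\jmath^{\IndCoh}_*$ and $\imath^{\IndCoh}_*$ that you flag as the ``main obstacle'' is not needed in the paper's argument at all (and in any case is automatic for schematic quasi-compact morphisms, directly from the commutative square \eqref{e:*-pushforward IndCoh stacks}, without invoking \thmref{main}).
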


The proof of Proposition~\ref{p:coh_generates}, given below, is parallel to the proof of Theorem~\ref{t:QCoh_core}  given in 
Sect.~\ref{ss:proofQCoh_core}. 

\begin{rem} \label{r:generation of IndCoh}
In some sense, the proof of Proposition~\ref{p:coh_generates} is simpler because  for $\IndCoh$ the $!$-pullback is a continuous functor 
(unlike the situation of 
Sect.~\ref{sss:end_of} and Remark \ref{r:end_of}). So one may prefer to
deduce Theorem~\ref{t:QCoh_core} from Proposition~\ref{p:coh_generates} using the functor
$\Psi_{\CY}:\IndCoh (\CY )\to\QCoh (\CY )$, which is essentially surjective if $\CY$ is eventually
coconnective. 
\end{rem}

\sssec{}

First, just as in Sect.~\ref{ss:proofQCoh_core}, one can assume that
$\CY$ is classical and reduced. 

\medskip

Let $\CY_i$ be the locally closed substacks of $\CY$ given by \propref{p:stratification}. With
no restriction of generality, we can assume that all $\CY_i$
are smooth. In this case $\IndCoh(\CY_i)\simeq \QCoh(\CY_i)$, so
Lemmas \ref{l:BFN} and \ref{l:covering plus} imply
that $\IndCoh(\CY_i)$ is generated by $\Coh(\CY_i)^\heartsuit$. 

\medskip

Hence, to prove the theorem, it suffices to prove the following analog
of \lemref{l:devissage plus}:

\begin{lem} \label{l:devissage IndCoh}
Let $\CX$ and $\oCY$ be as in \lemref{l:devissage}. Then if the assertion of
\propref{p:coh_generates} holds for $\CX$ and $\oCY$, then it holds also for
$\CY$.
\end{lem}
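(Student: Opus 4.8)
The plan is to transcribe the proof of \lemref{l:devissage plus} from \secref{sss:end_of}, working with $\IndCoh$ in place of $\QCoh$; the role played there by the projection formula for the Cousin complex is here taken over by the local-cohomology exact triangle, which on $\IndCoh$ is available for \emph{every} object precisely because $\imath^!$ and $\jmath^!$ are continuous (this is the simplification alluded to in Remark~\ref{r:generation of IndCoh}). Let $\IndCoh(\CY)^{\spadesuit}\subseteq\IndCoh(\CY)$ denote the full subcategory closed under colimits and generated by $\Coh(\CY)^\heartsuit$; we must show $\IndCoh(\CY)^{\spadesuit}=\IndCoh(\CY)$. As a preliminary I would record: since $\CY$ is locally almost of finite type, ${}^{cl}\CY$ is locally Noetherian, so by \cite[Cor. 15.5]{LM} every object of $\QCoh(\CY)^\heartsuit\simeq\IndCoh(\CY)^\heartsuit$ (the equivalence via $\Psi_\CY$, see \secref{ss:review of IndCoh}) is a filtered colimit of coherent sheaves; hence $\IndCoh(\CY)^\heartsuit\subseteq\IndCoh(\CY)^{\spadesuit}$, and, taking finite filtrations by shifted cohomology objects, $\IndCoh(\CY)^b\subseteq\IndCoh(\CY)^{\spadesuit}$.

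Next I would show that $\IndCoh(\CY)^{\spadesuit}$ contains the essential images of $\imath_*^{\IndCoh}:\IndCoh(\CX)\to\IndCoh(\CY)$ and $\jmath_*^{\IndCoh}:\IndCoh(\oCY)\to\IndCoh(\CY)$. Both are continuous (direct image along a schematic quasi-compact morphism), and for a continuous functor out of a category generated by a set of objects $T$ the essential image lies in the full cocomplete subcategory generated by the images of $T$; so, using the hypotheses that $\Coh(\CX)^\heartsuit$ generates $\IndCoh(\CX)$ and $\Coh(\oCY)^\heartsuit$ generates $\IndCoh(\oCY)$, it suffices to see that $\imath_*^{\IndCoh}(\Coh(\CX)^\heartsuit)$ and $\jmath_*^{\IndCoh}(\Coh(\oCY)^\heartsuit)$ lie in $\IndCoh(\CY)^{\spadesuit}$. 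Now $\imath_*^{\IndCoh}$ is t-exact and carries $\Coh(\CX)^\heartsuit$ into $\Coh(\CY)^\heartsuit\subseteq\IndCoh(\CY)^{\spadesuit}$; and $\jmath_*^{\IndCoh}$, being the $\IndCoh$-incarnation of $\jmath_*$ via \eqref{e:*-pushforward IndCoh stacks}, has bounded cohomological amplitude (because $\jmath$ is a quasi-compact, quasi-separated morphism into the quasi-compact stack $\CY$; compare \corref{c:relative} and the integer $d$ of \lemref{l:devissage}), so it carries $\Coh(\oCY)^\heartsuit$ into $\IndCoh(\CY)^b\subseteq\IndCoh(\CY)^{\spadesuit}$ by the preliminary observation.

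Finally I would conclude using the recollement attached to the closed--open pair $(\imath,\jmath)$: the functor $\imath_*^{\IndCoh}$ identifies $\IndCoh(\CX)$ with the full subcategory of $\IndCoh(\CY)$ of objects $\CF$ with $\jmath^!(\CF)=0$, and for every $\CF\in\IndCoh(\CY)$ there is a canonical exact triangle $\imath_*^{\IndCoh}\imath^!(\CF)\to\CF\to\jmath_*^{\IndCoh}\jmath^!(\CF)$, where the second arrow is the unit of the adjunction $(\jmath^!,\jmath_*^{\IndCoh})$ for the open embedding $\jmath$, and the first term is identified using the adjunction $(\imath_*^{\IndCoh},\imath^!)$ for the closed embedding $\imath$ together with $\jmath^!\circ\jmath_*^{\IndCoh}\simeq\id$ (see \cite{IndCoh}). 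The two outer terms of this triangle lie in $\IndCoh(\CY)^{\spadesuit}$ by the previous step, and $\IndCoh(\CY)^{\spadesuit}$ is closed under cones; hence $\CF\in\IndCoh(\CY)^{\spadesuit}$, as desired.

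The main obstacle is the input to the last step: the existence of the local-cohomology triangle $\imath_*^{\IndCoh}\imath^!(\CF)\to\CF\to\jmath_*^{\IndCoh}\jmath^!(\CF)$ for algebraic stacks, equivalently the identification of $\IndCoh(\CX)$, via $\imath_*^{\IndCoh}$, with the subcategory of objects of $\IndCoh(\CY)$ set-theoretically supported on $\CX$. For DG schemes this is part of the standard package in \cite{IndCoh}; for algebraic stacks one obtains it by smooth (or fppf) descent from an atlas, using the base-change compatibilities of $\imath_*^{\IndCoh}$, $\imath^!$ and $\jmath^!$ recalled in \secref{ss:review of IndCoh}. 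Granting this, everything else is a routine transcription of \secref{sss:end_of}.
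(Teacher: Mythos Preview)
Your proof is correct. It differs from the paper's in two linked respects. First, you prove generation via the ``smallest cocomplete subcategory'' characterization (showing $\IndCoh(\CY)^{\spadesuit}=\IndCoh(\CY)$), whereas the paper proves it via the ``right orthogonal is zero'' characterization (showing that $\CMaps(\CE,\CF)=0$ for all $\CE\in\Coh(\CY)^\heartsuit$ forces $\CF=0$). Second, and correspondingly, the two proofs deploy \cite[Cor.~15.5]{LM} differently: you use it to write every object of $\IndCoh(\CY)^\heartsuit$ as a filtered colimit of coherent subobjects (to get $\IndCoh(\CY)^b\subset\IndCoh(\CY)^{\spadesuit}$, and then handle $\jmath_*^{\IndCoh}(\Coh(\oCY)^\heartsuit)$ via bounded amplitude), whereas the paper uses it as an \emph{extension lemma}: every $\overset{\circ}\CE\in\Coh(\oCY)^\heartsuit$ is $\jmath^{\IndCoh,*}(\CE)$ for some $\CE\in\Coh(\CY)^\heartsuit$, which, once one knows $\CF\simeq\jmath_*^{\IndCoh}\jmath^{\IndCoh,*}(\CF)$, immediately gives $\CMaps(\overset{\circ}\CE,\jmath^{\IndCoh,*}(\CF))=0$. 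Your route is the more literal transcription of \secref{sss:end_of}; the paper's is a bit more economical in that it never needs to argue that all of $\IndCoh(\CY)^b$ sits in $\IndCoh(\CY)^{\spadesuit}$. Both rest on the same recollement triangle, and both need the same input (that \cite[Prop.~4.1.7]{IndCoh} holds for algebraic stacks), so neither gains in generality.
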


\begin{proof}

We have to show that if $\CF\in\IndCoh (\CY )$ and 
$$\CMaps_{\IndCoh(\CY)}(\CE ,\CF)=0 \mbox{ for all }\CE\in\Coh(\CY)^\heartsuit$$ then $\CF =0$.

\medskip

Consider the exact triangle 
\begin{equation} \label{e:indcoh triangle}
(\CF)_{\CX}\to \CF\to \jmath^{\IndCoh}_*\circ \jmath^{\IndCoh,*}(\CF),
\end{equation}
where
$$(\CF)_\CX:=\on{Cone}\left(\jmath^{\IndCoh}_*\circ \jmath^{\IndCoh,*}(\CF)\right)[-1].$$

By \cite[Proposition 4.1.7]{IndCoh} (which is applicable to algebraic stacks),
$$(\CF)_\CX \Leftrightarrow \imath^!(\CF)=0.$$

\medskip

For any $\CF'\in\Coh(\CX)^\heartsuit$ one has 
$$\CMaps_{\IndCoh(\CX)}(\CF',\imath^!(\CF))=\CMaps_{\IndCoh(\CY)}(\imath^{\IndCoh}_*(\CF') ,\CF)=0,$$
and $\imath^{\IndCoh}_*(\CF')\in \Coh(\CY)^\heartsuit$. So, the assumption that \propref{p:coh_generates} holds for $\CX$ implies
that $\imath^!(\CF)=0$. Therefore, $(\CF)_\CX=0$, and, hence,
$$\CF\to \jmath^{\IndCoh}_*\circ \jmath^{\IndCoh,*}(\CF)$$ is
an isomorphism.

\medskip

In particular, for every $\CE\in \IndCoh(\CY)$, we have:
\begin{multline} \label{e:F is ext}
\CMaps_{\IndCoh(\CY)}(\CE,\CF)\simeq \CMaps_{\IndCoh(\CY)}\left(\CE,\jmath^{\IndCoh}_*\circ \jmath^{\IndCoh,*}(\CF)\right)\simeq \\
\simeq \CMaps_{\IndCoh(\oCY)}\left(\jmath^{\IndCoh,*}(\CE),\jmath^{\IndCoh,*}(\CF)\right).
\end{multline}

\medskip

Now we use the following lemma, which immediately follows from  \cite[Corollary 15.5]{LM}.

\begin{lem} 
For every $\overset{\circ}\CE\in \Coh(\oCY)^\heartsuit$, there exists
$\CE\in \Coh(\CY)^\heartsuit$ such that $\jmath^*(\CE)\simeq \overset{\circ}\CE$. 
\end{lem}

By \eqref{e:F is ext}, for every $\overset{\circ}\CE\in \Coh(\oCY)^\heartsuit$ and the corresponding $\CE\in \Coh(\CY)^\heartsuit$, we have:
$$\CMaps_{\IndCoh(\oCY)}\left(\overset{\circ}\CE,\jmath^{\IndCoh,*}(\CF)\right)\simeq \CMaps_{\IndCoh(\CY)}(\CE,\CF)=0.$$
Hence, the $\jmath^{\IndCoh,*}(\CF)=0$, by the assumption that \propref{p:coh_generates} holds for $\oCY$. 

\medskip

Thus, we have $(\CF)_\CX=0$ and $\jmath^{\IndCoh,*}(\CF)=0$, and by \eqref{e:indcoh triangle}, this implies that $\CF=0$.

\end{proof}

\ssec{Direct image functor on $\IndCoh$}  \label{ss:indcoh dir image}

As an application of \thmref{IndCoh}, we shall now construct a functor $\pi_*^{\IndCoh}$
for a morphism $\pi:\CY_1\to \CY_2$ between QCA algebraic stacks. \footnote{For this construction
to make sense we only need $\CY_1$ to be QCA, while $\CY_2$ may be arbitrary.}

\sssec{}

We claim that in this case there exists a unique continuous functor
$$\pi_*^{\IndCoh}:\IndCoh(\CY_1)\to \IndCoh(\CY_2),$$
which is left t-exact and which makes the following diagram commute:
$$
\CD
\IndCoh(\CY_1)  @>{\Psi_{\CY_1}}>>  \QCoh(\CY_1) \\
@V{\pi^{\IndCoh}_*}VV    @VV{\pi_*}V  \\
\IndCoh(\CY_2)  @>{\Psi_{\CY_2}}>>  \QCoh(\CY_2).
\endCD
$$

\medskip

Indeed, the functor $\pi^{\IndCoh}_*$ is obtained as the ind-extension of the functor
$$\Coh(\CY_1)\to \IndCoh(\CY_2)$$ 
equal to the composition
$$\Coh(\CY_1)\hookrightarrow \QCoh(\CY_1)^+\overset{\pi_*}\longrightarrow \QCoh(\CY_2)^+\simeq
\IndCoh(\CY_2)^+\hookrightarrow \IndCoh(\CY_2),$$
where 
$\QCoh(\CY_2)^+\simeq \IndCoh(\CY_2)^+$ is the equivalence inverse to that induced by $\Psi_{\CY_2}$,
see \secref{sss:Psi for stacks}.

\medskip

It is easy to see that when $\pi$ is schematic and quasi-compact, the above functor $\pi^{\IndCoh}_*$ is canonically
isomorphic to the one in \secref{sss:functoriality IndCoh prestacks pushforward}. This follows from the 
defining property of $\pi_*^{\IndCoh}$, using the commutative diagram \eqref{e:*-pushforward IndCoh stacks}.

\sssec{}  \label{sss:Gamma ind}

Consider the particular case when $\CY_1=\CY$ and $\CY_2=\on{pt}$, and $\pi=p_\CY$. Recall the functor
$\Gamma^{\IndCoh}(\CY,-)$, see \secref{sss:Gamma IndCoh}.

\medskip

Since the functor
$\Gamma(\CY,-):\QCoh(\CY)\to \on{pt}$ is continuous, so is the functor $\Gamma^{\IndCoh}(\CY,-)$.

\medskip

We obtain that we have a canonical isomorphism of functors
\begin{equation} \label{e:Gamma ren}
\Gamma^{\IndCoh}(\CY,-)\simeq (p_\CY)^{\IndCoh}.
\end{equation}
(Indeed, the two functors tautologically coincide on $\Coh(\CY)\subset \IndCoh(\CY)$, and 
the isomorphism on all of $\IndCoh(\CY)$ follows by continuity.)

\sssec{}

The defining property of $\pi_*^{\IndCoh}$ implies that it is compatible with compositions. I.e., if 
$$\CY_1\overset{\pi}\longrightarrow \CY_2\overset{\phi}\longrightarrow \CY_3$$
are maps between QCA algebraic stacks, we have
$$\phi_*^{\IndCoh}\circ \pi_*^{\IndCoh}\simeq (\phi\circ \pi)_*^{\IndCoh}.$$
This follows from the diagram
$$
\CD
\IndCoh(\CY_1)  @>{\Psi_{\CY_1}}>>  \QCoh(\CY_1) \\
@V{\pi^{\IndCoh}_*}VV    @VV{\pi_*}V  \\
\IndCoh(\CY_2)  @>{\Psi_{\CY_2}}>>  \QCoh(\CY_2) \\
@V{\phi^{\IndCoh}_*}VV    @VV{\phi_*}V  \\
\IndCoh(\CY_3)  @>{\Psi_{\CY_3}}>>  \QCoh(\CY_3).
\endCD
$$

\ssec{Direct image functor on $\IndCoh$, further constructions}  \label{ss:indcoh dir image further}

The contents of this subsection will not be used elsewhere in the paper. We include it for completeness
as the functor $\pi_{\on{non-ren},*}^{\IndCoh}$ introduced below has features analogous to those 
of the de Rham pushforward functor $\pi_\dr$, considered in \secref{ss:de Rham dir image stacks}.

\sssec{}

Let $\pi:\CY_1\to \CY_2$ is a morphism between arbitrary algebraic stacks. In this case also,
we can introduce a functor $\IndCoh(\CY_1)\to \IndCoh(\CY_2)$, that we denote
$\pi_{\on{non-ren},*}^{\IndCoh}$. This functor is \emph{not necessarily continuous}. 

\medskip

By definition,
\begin{equation} \label{e:IndCog dir image non-renorm}
\pi^{\IndCoh}_{\on{non-ren},*}(\CF):=\underset{(S,g)\in ((\on{DGSch}_{\on{aft}})_{/\CY_1,\on{smooth}})^{\on{op}}}
{\underset{\longleftarrow}{lim}}\, (\pi\circ g)^{\IndCoh}_*(g^{\IndCoh,*}(\CF)),
\end{equation}
where $(\pi\circ g)^{\IndCoh}_*$ is well-defined because the morphism $\pi\circ g$ is
schematic and quasi-compact.

\sssec{}

Let us take for a moment $\CY_1=\CY$ and $\CY_2=\on{pt}$. From \eqref{e:Gamma IndCoh} we obtain that
\begin{equation} \label{e:Gamma non-ren}
(p_\CY)^{\IndCoh}_{\on{non-ren},*}\simeq \Gamma^{\IndCoh}(\CY,-).
\end{equation}

\sssec{}

Note that by construction we have a natural transformation
\begin{equation}  \label{e:non-ren pushforward qc and IndCoh}
\Psi_{\CY_2}\circ \pi^{\IndCoh}_{\on{non-ren},*}\to 
\pi_*\circ \Psi_{\CY_1}.
\end{equation}

We claim:

\begin{lem} \label{l:non-ren pushforward qc and IndCoh}
The natural transformation \eqref{e:non-ren pushforward qc and IndCoh} is an isomorphism when applied 
to objects from $\IndCoh(\CY_1)^+$.
\end{lem}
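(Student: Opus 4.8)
The plan is to reduce the statement to the corresponding fact for a smooth affine DG scheme mapping to $\CY_1$, where everything is under control. First I would take an object $\CF\in \IndCoh(\CY_1)^+$; say $\CF\in \IndCoh(\CY_1)^{\geq -m}$. Because $\Psi_{\CY_1}$ induces an equivalence $\IndCoh(\CY_1)^+\to \QCoh(\CY_1)^+$ (see \secref{sss:Psi for stacks}), and because the $t$-structure on $\IndCoh(\CY_1)$ is determined by smooth $\IndCoh,*$-pullbacks, I can work with $\Psi_{\CY_1}(\CF)\in \QCoh(\CY_1)^{\geq -m}$ on the one side and with the formula \eqref{e:IndCog dir image non-renorm} on the other.

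The key step is to compute both sides of \eqref{e:non-ren pushforward qc and IndCoh} through a smooth atlas. Using \lemref{l:taut dir im} applied to the indexing category $(\on{DGSch}_{\on{aft}})_{/\CY_1,\on{smooth}}$ (which computes $\QCoh$ via the same diagram, by \secref{sss:change index qc}(iv)), the right-hand side $\pi_*\circ\Psi_{\CY_1}(\CF)$ is the limit over $(S,g)$ of $(\pi\circ g)_*\circ g^*(\Psi_{\CY_1}(\CF))$. The left-hand side is by definition the limit over the same category of $(\pi\circ g)^{\IndCoh}_*\circ g^{\IndCoh,*}(\CF)$, followed by $\Psi_{\CY_2}$. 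So it suffices to show that for each smooth affine $g:S\to\CY_1$ the natural map
$$\Psi_{\CY_2}\bigl((\pi\circ g)^{\IndCoh}_*(g^{\IndCoh,*}(\CF))\bigr)\to (\pi\circ g)_*\bigl(g^*(\Psi_{\CY_1}(\CF))\bigr)$$
is an isomorphism, \emph{and} that passing to the limit is harmless, i.e.\ that $\Psi_{\CY_2}$ commutes with the relevant limit. For the first point: $g^{\IndCoh,*}$ is $t$-exact, so $g^{\IndCoh,*}(\CF)\in \IndCoh(S)^+$, and on the bounded-below subcategory $\Psi_S$ intertwines $g^{\IndCoh,*}$ with $g^*$ (diagram \eqref{e:*-pullback IndCoh stacks}) and intertwines $(\pi\circ g)^{\IndCoh}_*$ with $(\pi\circ g)_*$ (diagram \eqref{e:*-pushforward IndCoh stacks}, valid since $\pi\circ g$ is schematic and quasi-compact); combining these gives the desired isomorphism on each term.

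The main obstacle is controlling the limit: the limit in \eqref{e:IndCog dir image non-renorm} is an inverse limit in $\IndCoh(\CY_2)$, and $\Psi_{\CY_2}$ is continuous but \emph{not} generally compatible with arbitrary inverse limits. Here is where the hypothesis $\CF\in\IndCoh(\CY_1)^+$ enters decisively. Since $\CF\in\IndCoh(\CY_1)^{\geq -m}$ and $g$ is smooth of bounded relative dimension over a quasi-compact stack, the morphisms $\pi\circ g$ have cohomological amplitude bounded by a single integer $d$ (using that $\CY_1$, and hence its atlas, is quasi-compact, and that $\pi$ is QCA so $\CY_1\underset{\CY_2}\times S$ stays QCA — apply \corref{c:coconnective part}(a) locally on $\CY_2$), so all the terms $(\pi\circ g)^{\IndCoh}_*(g^{\IndCoh,*}(\CF))$ lie in a fixed $\IndCoh(\CY_2)^{\geq -(m+d)}$. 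On such a uniformly bounded-below cosimplicial-type diagram, the limit is computed in $\IndCoh(\CY_2)^+\simeq \QCoh(\CY_2)^+$, where $\Psi_{\CY_2}$ is an equivalence and in particular commutes with the limit; this reduces the whole comparison to the term-by-term statement already established. Once the limit is taken in the bounded-below part, the isomorphism of \eqref{e:non-ren pushforward qc and IndCoh} on $\IndCoh(\CY_1)^+$ follows. I would expect the only genuinely delicate bookkeeping to be the uniform bound $d$, which is exactly the role of the QCA hypothesis via \thmref{main}(ii)/\corref{c:relative}(ii).
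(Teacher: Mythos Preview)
Your overall strategy matches the paper's: express both sides as limits over $(S,g)\in(\on{DGSch}_{\on{aft}})_{/\CY_1,\on{smooth}}$, identify the terms using \eqref{e:*-pullback IndCoh stacks} and \eqref{e:*-pushforward IndCoh stacks}, and then justify commuting $\Psi_{\CY_2}$ with the limit. The term-by-term identification is fine.

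The gap is in your handling of the limit. You try to obtain a uniform cohomological amplitude bound $d$ for the functors $(\pi\circ g)^{\IndCoh}_*$, and to do so you invoke that $\CY_1$ is quasi-compact and that $\pi$ is QCA. But the lemma is stated for a morphism between \emph{arbitrary} algebraic stacks (see the opening of \secref{ss:indcoh dir image further}), so neither hypothesis is available; and even if it were, the relative dimension of $g$ is unbounded as $(S,g)$ ranges over the indexing category, so your appeal to ``bounded relative dimension'' is off. The paper's argument bypasses all of this: one only needs a uniform \emph{lower} bound, and that is automatic. Since $g^{\IndCoh,*}$ is t-exact (by the definition of the t-structure on $\IndCoh(\CY_1)$) and $(\pi\circ g)^{\IndCoh}_*$ is left t-exact (because $\pi\circ g$ is schematic and quasi-compact), every term $(\pi\circ g)^{\IndCoh}_*(g^{\IndCoh,*}(\CF))$ already lies in $\IndCoh(\CY_2)^{\geq n}$ whenever $\CF\in\IndCoh(\CY_1)^{\geq n}$. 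Hence the limit is computed inside $\IndCoh(\CY_2)^{\geq n}$, where $\Psi_{\CY_2}$ is an equivalence and therefore commutes with limits. No QCA, no \thmref{main}, no bound $d$ is needed.
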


\begin{proof}

Note that the functors $\Psi_{\CY_i}$ are t-exact, and 
both $\pi_*$ and $\pi^{\IndCoh}_{\on{non-ren},*}$ are left t-exact. 
Hence, it is enough to show that the following diagram of functors
commutes
$$
\CD
\IndCoh(\CY_1)^{\geq n}  @>{\Psi_{\CY_1}}>>  \QCoh(\CY_1)^{\geq n}  \\
@V{\pi^{\IndCoh}_{\on{non-ren},*}}VV    @VV{\pi_*}V  \\
\IndCoh(\CY_2)^{\geq n}   @>{\Psi_{\CY_2}}>>  \QCoh(\CY_2)^{\geq n} 
\endCD
$$
for every given $n$.

\medskip

Note that $\Psi_{\CY_2}$, restricted to 
$\IndCoh(\CY_2)^{\geq n}$, is an equivalence, and hence 
commutes with limits. Hence, for $\CF_1\in \IndCoh(\CY_1)^{\geq n}$ we have:
\begin{multline*}
\Psi_{\CY_2}\circ \pi^{\IndCoh}_{\on{non-ren},*}=
\Psi_{\CY_2}\left(\underset{(S,g)\in ((\on{DGSch}_{\on{aft}})_{/\CY_1,\on{smooth}})^{\on{op}}}
{\underset{\longleftarrow}{lim}}\, (\pi\circ g)^{\IndCoh}_*(g^{\IndCoh,*}(\CF))\right)\simeq \\
\simeq \underset{(S,g)\in ((\on{DGSch}_{\on{aft}})_{/\CY_1,\on{smooth}})^{\on{op}}}
{\underset{\longleftarrow}{lim}}\, \Psi_{\CY_2}\left((\pi\circ g)^{\IndCoh}_*(g^{\IndCoh,*}(\CF))\right).
\end{multline*}

Since the morphisms $\pi\circ g$ are schematic and quasi-compact, and $g$ is eventually 
coconnective, the latter expression can be rewritten as
$$\underset{(S,g)\in ((\on{DGSch}_{\on{aft}})_{/\CY_1,\on{smooth}})^{\on{op}}}
{\underset{\longleftarrow}{lim}}\, \left((\pi\circ g)_*(g^*(\Psi_{\CY_1}(\CF_1)))\right),$$
which is isomorphic to $\pi_*(\Psi_{\CY_1}(\CF_1))$ by \lemref{l:taut dir im},
where we take the indexing category $A$ to be $((\on{DGSch}_{\on{aft}})_{/\CY_1,\on{smooth}}$.

\end{proof}

\sssec{}

Suppose for a moment that $\pi$ is schematic and quasi-compact. It is easy to see that there exists a natural transformation
\begin{equation} \label{e:non-ren for sch}
\pi^{\IndCoh}_*\to \pi^{\IndCoh}_{\on{non-ren},*}.
\end{equation}

The next assertion can be proved by the same method as \propref{p:dr for sch}:

\begin{prop}
The natural transformation \eqref{e:non-ren for sch} is an isomorphism.
\end{prop}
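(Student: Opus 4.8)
The plan is to compare the two functors $\pi^{\IndCoh}_*$ and $\pi^{\IndCoh}_{\on{non-ren},*}$ for a schematic, quasi-compact morphism $\pi:\CY_1\to\CY_2$ between QCA algebraic stacks. Both are functors $\IndCoh(\CY_1)\to\IndCoh(\CY_2)$, and by the compact generation result of \thmref{IndCoh}, the source category $\IndCoh(\CY_1)$ is compactly generated by $\Coh(\CY_1)\subset\IndCoh(\CY_1)^+$. Since the natural transformation \eqref{e:non-ren for sch} is a map between continuous functors — $\pi^{\IndCoh}_*$ is continuous by construction (see \secref{ss:indcoh dir image}), and $\pi^{\IndCoh}_{\on{non-ren},*}$ is continuous here because $\pi$ is schematic and quasi-compact, so it agrees with the functor of \secref{sss:functoriality IndCoh prestacks pushforward} — it suffices to check that \eqref{e:non-ren for sch} is an isomorphism on the generating compact objects, i.e. on $\Coh(\CY_1)$.

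First I would recall that for $\pi$ schematic and quasi-compact the functor $\pi^{\IndCoh}_{\on{non-ren},*}$ coincides with the one constructed in \secref{sss:functoriality IndCoh prestacks pushforward} via base change isomorphisms; this is stated there. Next, I would invoke \lemref{l:non-ren pushforward qc and IndCoh}: the natural transformation $\Psi_{\CY_2}\circ\pi^{\IndCoh}_{\on{non-ren},*}\to\pi_*\circ\Psi_{\CY_1}$ is an isomorphism on $\IndCoh(\CY_1)^+$. On the other hand, the functor $\pi^{\IndCoh}_*$ of \secref{ss:indcoh dir image} is characterized by the requirement that $\Psi_{\CY_2}\circ\pi^{\IndCoh}_*\simeq\pi_*\circ\Psi_{\CY_1}$ together with left t-exactness. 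Comparing these on $\Coh(\CY_1)\subset\IndCoh(\CY_1)^+$, and using that $\Psi_{\CY_2}$ restricted to $\IndCoh(\CY_2)^+$ is an equivalence onto $\QCoh(\CY_2)^+$ (see \secref{sss:Ind and QCoh}, \secref{sss:Psi for stacks}), we conclude that \eqref{e:non-ren for sch} is an isomorphism on $\Coh(\CY_1)$. Since both functors are continuous and $\Coh(\CY_1)$ generates $\IndCoh(\CY_1)$ under colimits, the transformation is an isomorphism everywhere.

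A slightly more careful point worth spelling out: one should verify that the natural transformation \eqref{e:non-ren for sch}, when postcomposed with $\Psi_{\CY_2}$, is identified with the identity transformation $\pi_*\circ\Psi_{\CY_1}\Rightarrow\pi_*\circ\Psi_{\CY_1}$ under the two isomorphisms above — i.e. that the comparison maps are genuinely compatible rather than merely both being abstract isomorphisms. This is where one uses the concrete construction of \eqref{e:non-ren for sch}: on $\Coh(\CY_1)$, both $\pi^{\IndCoh}_*$ and $\pi^{\IndCoh}_{\on{non-ren},*}$ are computed by the same limit formula \eqref{e:IndCog dir image non-renorm} (for $\pi^{\IndCoh}_*$ this is the identification with \secref{sss:functoriality IndCoh prestacks pushforward} noted above), so the map between them is literally the identity on coherent objects.

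The main obstacle I expect is purely bookkeeping: making sure that all the identifications — the agreement of the two a priori definitions of $\IndCoh$-pushforward for schematic quasi-compact maps, the t-exactness properties of $\Psi$, and the compatibility of the natural transformation \eqref{e:non-ren for sch} with these identifications — fit together coherently at the $\infty$-categorical level rather than just at the level of homotopy categories. There is no genuine difficulty once \thmref{IndCoh} (compact generation, with compacts $=\Coh(\CY_1)$) and \lemref{l:non-ren pushforward qc and IndCoh} are in hand; the proof is essentially ``both functors are continuous, they agree on a generating set of compact objects, hence they agree,'' parallel to the proof of \propref{p:dr for sch} as the statement suggests.
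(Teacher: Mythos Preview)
Your argument has two problems, one of scope and one of logic.

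\textbf{Scope.} The proposition is stated for a schematic quasi-compact morphism between \emph{arbitrary} algebraic stacks (locally almost of finite type), not QCA ones; see the opening of \secref{ss:indcoh dir image further}. So invoking \thmref{IndCoh} to get compact generation of $\IndCoh(\CY_1)$ is unavailable in general, and you are proving a strictly weaker statement than the one asserted.

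\textbf{Circularity.} Even granting QCA, your reduction to compact objects requires both sides of \eqref{e:non-ren for sch} to be continuous. For $\pi^{\IndCoh}_{\on{non-ren},*}$ you justify continuity by saying ``it agrees with the functor of \secref{sss:functoriality IndCoh prestacks pushforward}.'' But that agreement is precisely the content of the proposition. Nowhere in the paper prior to this proposition is it established that the limit formula \eqref{e:IndCog dir image non-renorm} reproduces the base-change-defined functor for schematic $\pi$; the only independent input is the adjoint characterization, which needs $\pi$ eventually coconnective. So the compact-generation strategy cannot get off the ground without assuming the conclusion.

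\textbf{What the paper's method actually is.} The paper says to imitate the proof of \propref{p:dr for sch}, and that proof does \emph{not} use compact generation. Instead: since $\pi$ is schematic, by the $\IndCoh$ analogue of \lemref{l:replace index schematic} one may re-index the limit \eqref{e:IndCog dir image non-renorm} by $(S_2,g_2)\in(\on{DGSch}_{\on{aft}})_{/\CY_2,\on{smooth}}$, sending $(S_2,g_2)$ to $(S_1,g_1)$ with $S_1:=S_2\underset{\CY_2}\times\CY_1$. For each such term, the base change isomorphism \eqref{e:usual base change for IndCoh stacks} gives
\[
(\pi\circ g_1)^{\IndCoh}_*\circ (g_1)^{\IndCoh,*}\simeq (g_2)^{\IndCoh}_*\circ(\pi_S)^{\IndCoh}_*\circ (g_1)^{\IndCoh,*}\simeq (g_2)^{\IndCoh}_*\circ (g_2)^{\IndCoh,*}\circ\pi^{\IndCoh}_*,
\]
and the limit of $(g_2)^{\IndCoh}_*\circ (g_2)^{\IndCoh,*}$ over $(S_2,g_2)$ is the identity. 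This is a direct manipulation of the defining limit, valid for any algebraic stacks, and genuinely parallel to \propref{p:dr for sch}.
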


\sssec{}

It is easy to see that when $\pi$ is eventually coconnective, the functor $\pi^{\IndCoh}_{\on{non-ren},*}$ is the right adjoint of
$\pi^{\IndCoh,*}$. 

\begin{rem}
When $\pi$ is not eventually coconnective, we do not know how to 
characterize the functor $\pi^{\IndCoh}_{\on{non-ren},*}$, except by the explicit formula \eqref{e:IndCog dir image non-renorm}.
\end{rem}

\sssec{}

Suppose that the morphism $\pi$ is quasi-compact. Then it is easy to see that, that although
the functor $\pi^{\IndCoh}_{\on{non-ren},*}$ is a priori non-continuous, it has has properties 
parallel to those of $\pi_*$ expressed in \corref{c:coconnective part}(a,b): when restricted to 
$\IndCoh(\CY_1)^{\geq 0}$, it commutes with filtered colimits and is equipped with base change
isomorphisms with respect to !-pullbacks for maps of algebraic stacks $\CY'_2\to \CY_2$.

\medskip

From the base change isomorphism for schematic quasi-compact maps we obtain that for a map $\phi_2:\CY'_2\to \CY_2$
and the corresponding Cartesian square 
\begin{equation}  \label{e:same Cartesian square}
\CD
\CY'_1  @>{\phi_1}>>  \CY_1 \\
@V{\pi'}VV   @VV{\pi}V  \\
\CY'_2  @>{\phi_2}>>  \CY_2
\endCD
\end{equation}
there is a canonical natural transformation
\begin{equation} \label{e:almost base change doe indcoh}
\phi_2^!\circ \pi^{\IndCoh}_{\on{non-ren},*}\to \pi{}'^{\IndCoh}_{\on{non-ren},*}\circ \phi_1^!.
\end{equation}

This natural natural transformation is not necessarily an isomorphism. But as we mentioned above,
if $\pi$ is quasi-compact, it is an isomorphism when applied to objects of $\IndCoh(\CY_1)^+$.

\sssec{}

Suppose now that $\CY_1$ and $\CY_2$ are QCA. It is easy to see from the construction that there exists a canonical 
natural transformation
\begin{equation} \label{e:nat trans for indcoh dir image}
\pi_*^{\IndCoh}\to \pi^{\IndCoh}_{\on{non-ren},*}.
\end{equation}

In \secref{sss:proof of ren vs nonren indcoh} we will show: 

\begin{prop} \label{p:proof of ren vs nonren indcoh}
The natural transformation \eqref{e:nat trans for indcoh dir image} is an isomorphism.
\end{prop}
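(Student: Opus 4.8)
The plan is to compare the two functors $\pi_*^{\IndCoh}$ and $\pi^{\IndCoh}_{\on{non-ren},*}$ by checking that the natural transformation \eqref{e:nat trans for indcoh dir image} becomes an isomorphism after evaluating on a set of compact generators of $\IndCoh(\CY_1)$. Since $\CY_1$ is QCA, \thmref{IndCoh} tells us that $\IndCoh(\CY_1)$ is compactly generated, with compact objects precisely $\Coh(\CY_1)$. Both $\pi_*^{\IndCoh}$ and the natural transformation \eqref{e:nat trans for indcoh dir image} are constructed so that everything is manifestly continuous on the source: $\pi_*^{\IndCoh}$ is continuous by its very definition (it is the ind-extension of a functor $\Coh(\CY_1)\to \IndCoh(\CY_2)$), so if we can show that $\pi^{\IndCoh}_{\on{non-ren},*}$ is \emph{also} continuous, then it suffices to check that \eqref{e:nat trans for indcoh dir image} is an isomorphism on objects of $\Coh(\CY_1)$.

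On $\Coh(\CY_1)\subset \IndCoh(\CY_1)^+$ the comparison is easy. First, by construction $\pi_*^{\IndCoh}$ restricted to $\IndCoh(\CY_1)^+$ agrees with the functor $\QCoh(\CY_1)^+\overset{\pi_*}\to \QCoh(\CY_2)^+\simeq \IndCoh(\CY_2)^+$ via the equivalences coming from $\Psi$ (this is the defining property of $\pi_*^{\IndCoh}$ in \secref{ss:indcoh dir image}). On the other side, \lemref{l:non-ren pushforward qc and IndCoh} says exactly that $\Psi_{\CY_2}\circ \pi^{\IndCoh}_{\on{non-ren},*}\to \pi_*\circ \Psi_{\CY_1}$ is an isomorphism on $\IndCoh(\CY_1)^+$; combined with left t-exactness of $\pi^{\IndCoh}_{\on{non-ren},*}$ and the fact that $\Psi_{\CY_2}$ is an equivalence on eventually coconnective objects, this identifies $\pi^{\IndCoh}_{\on{non-ren},*}|_{\IndCoh(\CY_1)^+}$ with the same functor $\pi_*$ under $\Psi$. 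Tracing through the definitions, the natural transformation \eqref{e:nat trans for indcoh dir image} is, on $\IndCoh(\CY_1)^+$, precisely the identity under these identifications, hence an isomorphism there; in particular it is an isomorphism on $\Coh(\CY_1)$.

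The main obstacle is therefore establishing that $\pi^{\IndCoh}_{\on{non-ren},*}$ is continuous when $\CY_2$ is QCA. Here is the key point: we may compute $\pi^{\IndCoh}_{\on{non-ren},*}$ by first applying $\Psi_{\CY_2}$ and using left t-exactness. For a filtered colimit $\CF\simeq \colim_\alpha \CF_\alpha$, we want $\Psi_{\CY_2}(\pi^{\IndCoh}_{\on{non-ren},*}(\CF))\simeq \colim_\alpha \Psi_{\CY_2}(\pi^{\IndCoh}_{\on{non-ren},*}(\CF_\alpha))$ after applying each $H^i$. Using \secref{sss:Gamma IndCoh} / \lemref{l:taut dir im} we reduce, after base change along a smooth atlas of $\CY_2$ (equivalently, assuming $\CY_2$ is affine) and along the \v{C}ech nerve of an atlas of $\CY_1$ — which, since $\CY_1$ is QCA hence quasi-compact and quasi-separated, has quasi-compact quasi-separated terms — to a totalization of functors $(\pi\circ f^\bullet)_*\circ (f^\bullet)^{\IndCoh,*}$ for schematic quasi-compact $\pi\circ f^\bullet$. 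Each such functor is continuous; the point where QCA is essential is the boundedness of cohomological amplitude, which is \thmref{main}(ii) (via \corref{c:relative}): it guarantees that on each $\QCoh(\CY_1)^{\geq -n}$ the totalization can be truncated at a finite stage depending only on the cohomological degree, exactly as in the proof of \corref{c:coconnective part}(a), so each $H^i$ commutes with filtered colimits. This yields continuity of $\pi^{\IndCoh}_{\on{non-ren},*}$, and the proof is complete. $\qed$
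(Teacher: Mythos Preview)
Your reduction to $\Coh(\CY_1)$ and the verification there are fine; the gap is in the continuity argument for $\pi^{\IndCoh}_{\on{non-ren},*}$. What you actually establish is that each $H^i$ of $\Psi_{\CY_2}\circ \pi^{\IndCoh}_{\on{non-ren},*}$ commutes with filtered colimits. But this does not imply that $\pi^{\IndCoh}_{\on{non-ren},*}$ itself is continuous: the functor $\Psi_{\CY_2}$ is \emph{not} conservative (its kernel is $\IndCoh(\CY_2)^{\leq -\infty}$), and $\IndCoh(\CY_2)$ is \emph{not} left-complete in its t-structure, so a morphism inducing an isomorphism on every $H^i$ can have nonzero cone lying in $\IndCoh(\CY_2)^{\leq -\infty}$. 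Nothing in your argument rules out such a cone for the comparison map $\colim_\alpha \pi^{\IndCoh}_{\on{non-ren},*}(\CF_\alpha)\to \pi^{\IndCoh}_{\on{non-ren},*}(\colim_\alpha \CF_\alpha)$. (There is a secondary issue: the reduction to affine $\CY_2$ via base change invokes a property of $\pi^{\IndCoh}_{\on{non-ren},*}$ that is only known on $\IndCoh(\CY_1)^+$, as the text itself notes after \eqref{e:almost base change doe indcoh}.) This is precisely the subtlety that distinguishes $\IndCoh$ from $\QCoh$, and the \v{C}ech-totalization style argument from \corref{c:coconnective part} does not port over.

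The paper avoids this by a duality trick. Using Serre self-duality of $\IndCoh$ on QCA stacks (\corref{c:self-duality of IndCoh}) and \propref{p:duality of * and !}, the functor $\pi_*^{\IndCoh}$ is identified as the dual of $\pi^!$. This means that to compare with $\pi^{\IndCoh}_{\on{non-ren},*}$, it suffices to check, for $\CF_2\in\Coh(\CY_2)$ and arbitrary $\CF_1\in\IndCoh(\CY_1)$, that
\[
\Gamma^{\IndCoh}\bigl(\CY_1,\CF_1\sotimes \pi^!(\CF_2)\bigr)\;\simeq\;\Gamma^{\IndCoh}\bigl(\CY_2,\pi^{\IndCoh}_{\on{non-ren},*}(\CF_1)\sotimes \CF_2\bigr),
\]
and this is done by unwinding the limit defining $\pi^{\IndCoh}_{\on{non-ren},*}$ and using \eqref{e:Gamma IndCoh} together with \eqref{e:! and * diag}. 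In effect, pairing against compact generators of $\IndCoh(\CY_2)$ reduces the problem to the case $\CY_2=\on{pt}$, where (Remark~\ref{r:Gamma ind}) both functors identify with $\Gamma^{\IndCoh}(\CY_1,-)$, which is continuous by \thmref{main}. The point is that $\CMaps(\CF_2,-)$ for $\CF_2$ compact \emph{is} a conservative family of continuous functors on $\IndCoh(\CY_2)$, whereas the cohomology functors $H^i$ are not.
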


\begin{rem} \label{r:Gamma ind}
For $\CY_2=\on{pt}$, the assertion of \propref{p:proof of ren vs nonren indcoh} is easy:
indeed, by \eqref{e:Gamma non-ren} and \eqref{e:Gamma ren}, both functors identify
canonically with $\Gamma^{\IndCoh}(\CY,-)$ of \secref{sss:Gamma ind}, 
where $\CY=\CY_1$.
\end{rem}

From \propref{p:proof of ren vs nonren indcoh}, we obtain:

\begin{cor} \label{c:* adj for indcoh}
If $\pi$ is an eventually coconnective morphism between QCA stacks, the functors $(\pi^{\IndCoh,*},\pi_*^{\IndCoh})$ are adjoint.
\end{cor}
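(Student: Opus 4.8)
The plan is to obtain the adjunction as a formal consequence of \propref{p:proof of ren vs nonren indcoh} together with the adjunction already recorded in \secref{ss:indcoh dir image further}. First I would check that the functor $\pi^{\IndCoh,*}:\IndCoh(\CY_2)\to \IndCoh(\CY_1)$ is actually defined in our situation: since $\CY_1$ and $\CY_2$ are QCA algebraic stacks, hence in particular $1$-Artin stacks, the morphism $\pi$ is $1$-representable, and it is eventually coconnective by hypothesis, so the construction of \secref{sss:functoriality IndCoh prestacks * pullback} applies and produces $\pi^{\IndCoh,*}$.

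Next I would invoke the observation from \secref{ss:indcoh dir image further} that, for $\pi$ eventually coconnective, the non-renormalized pushforward $\pi^{\IndCoh}_{\on{non-ren},*}$ is the right adjoint of $\pi^{\IndCoh,*}$. (Concretely this comes from writing $\pi^{\IndCoh}_{\on{non-ren},*}$ as the limit \eqref{e:IndCog dir image non-renorm} of the functors $(\pi\circ g)^{\IndCoh}_*\circ g^{\IndCoh,*}$ over smooth $g:S\to\CY_1$, and assembling the adjunction from the adjunctions $(\pi\circ g)^{\IndCoh,*}\dashv (\pi\circ g)^{\IndCoh}_*$ for the schematic quasi-compact morphisms $\pi\circ g$ together with the base-change isomorphism \eqref{e:! and * stacks}; but I would simply cite \secref{ss:indcoh dir image further} rather than redo this.)

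Finally, \propref{p:proof of ren vs nonren indcoh} supplies a canonical isomorphism of functors $\pi_*^{\IndCoh}\iso \pi^{\IndCoh}_{\on{non-ren},*}$, namely the natural transformation \eqref{e:nat trans for indcoh dir image}. Transporting the unit and counit of the adjunction $\pi^{\IndCoh,*}\dashv\pi^{\IndCoh}_{\on{non-ren},*}$ along this isomorphism yields an adjunction $\pi^{\IndCoh,*}\dashv\pi_*^{\IndCoh}$, the triangle identities being automatically preserved since any functor isomorphic to a right adjoint is itself a right adjoint. The main obstacle has in fact already been surmounted upstream: all the substance is in \propref{p:proof of ren vs nonren indcoh}, which rests on the compact generation statement \thmref{IndCoh} and hence ultimately on \thmref{main}; the passage from there to the stated adjunction is purely formal.
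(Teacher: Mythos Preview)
Your proposal is correct and is exactly the argument the paper has in mind: the corollary is stated immediately after \propref{p:proof of ren vs nonren indcoh} with the phrase ``From \propref{p:proof of ren vs nonren indcoh}, we obtain,'' and the implicit reasoning is precisely to combine that proposition with the adjunction $\pi^{\IndCoh,*}\dashv\pi^{\IndCoh}_{\on{non-ren},*}$ recorded earlier in \secref{ss:indcoh dir image further}. Your additional care in verifying that $\pi^{\IndCoh,*}$ is defined (via $1$-representability) is a reasonable sanity check that the paper leaves tacit.
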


In addition, we have:

\begin{cor}  \label{c:base change for indcoh}
For a Cartesian square \eqref{e:same Cartesian square} there is a canonical
isomorphism of functors
$$\phi_2^!\circ \pi^{\IndCoh}_*\to \pi{}'^{\IndCoh}_*\circ \phi_1^!.$$
\end{cor}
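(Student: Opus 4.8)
The plan is to deduce the corollary from \propref{p:proof of ren vs nonren indcoh} together with the partial base-change property of the non-renormalized pushforward recorded in \eqref{e:almost base change doe indcoh}. First I note that in the situation of the Cartesian square \eqref{e:same Cartesian square} all four stacks are QCA: since $\CY_1$ is QCA, the morphism $\pi$ is QCA, hence so is its base change $\pi'$, and therefore $\CY'_1$ is QCA as soon as $\CY'_2$ is (one checks directly that a QCA morphism with QCA target has QCA source, the key point being that $\on{Aut}$ of a geometric point of $\CY'_1$ is a fibre product of the affine groups $\on{Aut}(y_1)$ and $\on{Aut}(y'_2)$ over the affine group $\on{Aut}(\pi(y_1))$). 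Consequently \thmref{IndCoh} applies to $\CY_1$, and the functors $\pi^{\IndCoh}_*$ and $\pi{}'^{\IndCoh}_*$ of \secref{ss:indcoh dir image} are defined and continuous.

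Next, by \propref{p:proof of ren vs nonren indcoh} applied to $\pi$ and to $\pi'$, the natural transformations $\pi_*^{\IndCoh}\to \pi^{\IndCoh}_{\on{non-ren},*}$ and $\pi{}'^{\IndCoh}_*\to \pi{}'^{\IndCoh}_{\on{non-ren},*}$ are isomorphisms. Composing them with the natural transformation \eqref{e:almost base change doe indcoh} I obtain a canonical natural transformation
$$\phi_2^!\circ \pi^{\IndCoh}_*\to \pi{}'^{\IndCoh}_*\circ \phi_1^!,$$
which I take as the map asserted in the corollary; it is an isomorphism if and only if \eqref{e:almost base change doe indcoh} is an isomorphism on all of $\IndCoh(\CY_1)$. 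So it remains to upgrade the isomorphism property of \eqref{e:almost base change doe indcoh} from $\IndCoh(\CY_1)^+$ to the whole category.

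For this I invoke continuity together with compact generation. The two functors $\phi_2^!\circ \pi^{\IndCoh}_*$ and $\pi{}'^{\IndCoh}_*\circ \phi_1^!$ are continuous: the $!$-pullbacks $\phi_1^!,\phi_2^!$ are continuous by construction, and $\pi^{\IndCoh}_*,\pi{}'^{\IndCoh}_*$ are continuous by \secref{ss:indcoh dir image}. By \thmref{IndCoh} the category $\IndCoh(\CY_1)$ is compactly generated by $\Coh(\CY_1)$, and $\Coh(\CY_1)\subset \IndCoh(\CY_1)^+$. Moreover $\pi$ is quasi-compact (being a morphism out of the quasi-compact stack $\CY_1$), so by the discussion preceding \eqref{e:almost base change doe indcoh}, that natural transformation is an isomorphism on $\IndCoh(\CY_1)^+$, in particular on the generating subcategory $\Coh(\CY_1)$. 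Since a natural transformation between continuous functors which is an isomorphism on a set of compact generators is an isomorphism, we conclude.

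I expect no serious obstacle here: the nontrivial content is entirely absorbed into \propref{p:proof of ren vs nonren indcoh}, \thmref{IndCoh}, and the $\IndCoh^+$-base-change statement. The only point requiring a little care is the reduction in the first paragraph — verifying that the stacks appearing in \eqref{e:same Cartesian square} are QCA so that all the cited results apply — and the implicit claim that the composite natural transformation produced above coincides with the ``expected'' base-change map; since all constructions factor through the smooth-atlas presentation \eqref{e:*-limit for IndCoh} and the base-change isomorphisms for schematic quasi-compact morphisms, there is essentially a unique such transformation and I would simply define the canonical isomorphism of the corollary to be this composite.
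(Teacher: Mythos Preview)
Your proof is correct and follows essentially the same route as the paper's: construct the transformation via \propref{p:proof of ren vs nonren indcoh} and \eqref{e:almost base change doe indcoh}, observe that both sides are continuous, and conclude by checking the isomorphism on the compact generators $\Coh(\CY_1)\subset\IndCoh(\CY_1)^+$. The paper's version is terser (it constructs the transformation only on $\Coh(\CY_1)$ and ind-extends, rather than constructing it globally and then checking it is an isomorphism), but the content is identical; your extra paragraph verifying that $\CY'_1$ is QCA is a point the paper leaves implicit.
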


\begin{proof}
Both functors are continuous, so it is enough to construct the required
natural transformation when restricted to the subcategory $\Coh(\CY_1)$.
In this case, it follows from \propref{p:proof of ren vs nonren indcoh} and the
isomorphism of \eqref{e:almost base change doe indcoh} on
$\Coh(\CY_1)\subset \IndCoh(\CY_1)^+$.
\end{proof}

\begin{rem} 
One can use \corref{c:base change for indcoh} to define the functor $\pi_*^{\IndCoh}$ 
for QCA morphisms $\pi:\CY_1\to \CY_2$ between prestacks, in a way compatible with base
change.
\end{rem}

\section{Dualizability and behavior with respect to products of stacks}   \label{s:dualizability}

In this section we will show that the category $\IndCoh$ on a QCA algebraic stack locally almost of finite 
type is dualizable, see \corref{c:IndCohdualizable}. This will imply that the category $\QCoh(\CY)$ on such 
a stack is also dualizable, under the additional assumption that $\CY$ be eventually coconnective, see 
\thmref{t:QCoh dualizable}. 

\medskip

These properties of $\IndCoh(\CY)$ and $\QCoh(\CY)$ will imply a ``good"
behavior of $\IndCoh(-)$ and $\QCoh(-)$ when we take a product of $\CY$ with another prestack. 

\medskip

In Sect.~\ref{ss:Serreduality} we shall discuss applications to Serre duality on $\IndCoh(\CY)$.

\ssec{The notion of dualizable DG category}  

\sssec{Definition of dualizability}   \label{sss:dualizabilitydef}
We refer to \cite{Lu2}, Sect. 6.3.1 for the definition of the tensor product functor
$$\otimes :\StinftyCat_{\on{cont}}\times\StinftyCat_{\on{cont}}\to\StinftyCat_{\on{cont}}$$
(see also \cite{DG}, Sect. 1.4 for a brief review). 

\medskip

The above operation makes the $(\infty ,1)$-category 
$\StinftyCat_{\on{cont}}$ into a symmetric monoidal $\infty$-category \footnote{I.e., 
$\StinftyCat_{\on{cont}}$ is a commutative
algebra object in the symmetric monoidal $(1,\infty)$-category of $\infty$-categories with respect
to the Cartesian product, see \cite{Lu2}, Sect. 2.3.1.}, in which the unit object is the category $\Vect$.

\medskip

For an object of any symmetric monoidal category, one can talk about its property of being dualizable
(see \cite{Lu2}, Sect. 4.2.5, or \cite{DG}, Sect. 5.2 for a brief review). When the category is just monoidal,
there are two different notions: left dualizable and right dualizable, see \cite{DG}, Sect. 5.2.

\begin{rem}
Note that dualizability of
an object in not a higher-categorical notion, but only depends on the truncation of the
monoidal $\infty$-category to an ordinary monoidal category. 
\end{rem}

Following Lurie, we say that $\bC\in \StinftyCat_{\on{cont}}$ is \emph{dualizable} if it is dualizable
in the above sense. 

\medskip

For $\bC\in \StinftyCat_{\on{cont}}$ dualizable, we denote by $\bC^\vee$ the corresponding dual category. 
We denote by
$$\bC^\vee\otimes \bC\overset{\epsilon_\bC}\longrightarrow \Vect \;\text{ and }\;
\Vect\overset{\mu_\bC}\longrightarrow \bC\otimes \bC^\vee$$
the corresponding duality data. The functor $\epsilon_\bC$ is called the \emph{co-unit} of the pairing
(or \emph{evaluation}, or \emph{canonical pairing}), and the functor $\mu_\bC$ is called the \emph{unit}
(or, \emph{co-evaluation}).

\sssec{}  \label{sss:properties of duality}

Here are some basic facts related to duality in $\StinftyCat_{\on{cont}}$ (see also \cite{DG}, Sect. 2):

\medskip

\noindent(i) If $\bC$ is dualizable, the category $\bC^\vee$ can be recovered as 
$\on{Funct}_{\on{cont}}(\bC,\Vect)$.

\medskip

\noindent(ii) Any compactly generated DG category is dualizable.

\medskip

\noindent(ii') For $\bC$ compactly generated, $\bC^\vee$ can be explicitly described as the ind-completion of the 
\emph{non-cocomplete} DG category $(\bC^c)^{\on{op}}$. In particular, we have a canonical equivalence:
$$\BD_\bC:(\bC^\vee)^c\simeq (\bC^c)^{\on{op}}.$$
In particular, for $\bC=\on{Ind}(\bC^0)$ (see \secref{sss:ind-compl}), we have $\bC^\vee\simeq \on{Ind}((\bC^0)^{\on{op}})$,
and 
$$\bC^\vee\simeq \on{Funct}(\bC^0,\Vect) \text{ and } \bC\simeq \on{Funct}((\bC^0)^{\on{op}},\Vect),$$
which also gives an explicit construction of $\on{Ind}(\bC^0)$.

\medskip

\noindent(iii) The functor of tensoring by a dualizable category commutes with all limits\footnote{Tensoring by $\bC$
commutes with all colimits in $\StinftyCat_{\on{cont}}$ for any $\bC$.}
taken in $\StinftyCat_{\on{cont}}$. Indeed, if $\bC$ is dualizable then $\bC\otimes -\simeq \on{Funct}_{\on{cont}}(\bC^\vee,-)$.

\sssec{}  \label{sss:dual functors}

Let $\bO$ be an arbitrary symmetric monoidal category, and $\bc_1,\bc_2\in \bO$ two dualizable objects.
Then to any morphism $f:\bc_1\to \bc_2$ one canonically attaches the dual morphism
$$f^\vee:\bc_2^\vee\to \bc_1^\vee,$$
where $\bc_i^\vee$ denotes the dual of $\bc_i$. 

\medskip

This construction has the following interpretation: a datum morphism $f$ as above is equivalent to that of
a point in $\on{Maps}_\bO(1,\bc_1^\vee\otimes \bc_2)$. Then the datum $f^\vee$ corresponds to \emph{the same}
point in
$$\on{Maps}_\bO(1,(\bc^\vee_2)^\vee\otimes \bc^\vee_1)\simeq \on{Maps}_\bO(1,\bc_1^\vee\otimes \bc_2).$$

\medskip

Applying this to $\bO=\StinftyCat_{\on{cont}}$ and two dualizable categories $\bC_1$ and $\bC_2$, we obtain
that to every continuous functor $F:\bC_1\to \bC_2$ there corresponds a dual functor
$$F^\vee:\bC_2^\vee\to \bC_1^\vee.$$

\medskip

In terms of \secref{sss:properties of duality}(i), the functor $F^\vee$ can be described as follows: it sends
an object 
$\Phi\in \on{Funct}_{\on{cont}}(\bC_2,\Vect)$ to $\Phi\circ F\in \on{Funct}_{\on{cont}}(\bC_1,\Vect)$.

\ssec{Dualizability of $\IndCoh$}   \label{ss:IndCoh is dualizable}  \hfill

\sssec{}

From \secref{sss:properties of duality}(ii) and \thmref{IndCoh} we obtain:

\begin{cor}   \label{c:IndCohdualizable}
If $\CY$ is a QCA algebraic stack, then the DG category $\IndCoh(\CY)$ is dualizable.
\end{cor}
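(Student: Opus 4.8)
The statement is \corref{c:IndCohdualizable}, which asserts that $\IndCoh(\CY)$ is dualizable for a QCA algebraic stack $\CY$. The plan is entirely straightforward: invoke the two ingredients cited in the excerpt just above the corollary. By \thmref{IndCoh}, the hypothesis that $\CY$ is QCA guarantees that the DG category $\IndCoh(\CY)$ is compactly generated (with compact objects identified with $\Coh(\CY)$). By fact (ii) in \secref{sss:properties of duality}, any compactly generated DG category is dualizable in $\StinftyCat_{\on{cont}}$. Combining these two gives the result immediately.

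First I would recall the statement of \thmref{IndCoh}: since $\CY$ is QCA, $\IndCoh(\CY)$ is of the form $\on{Ind}(\bC^0)$ with $\bC^0 = \Coh(\CY)$ a small (non-cocomplete) DG category. Then I would apply the general principle recorded in \secref{sss:properties of duality}: a compactly generated DG category $\bC = \on{Ind}(\bC^0)$ is dualizable, with dual $\bC^\vee \simeq \on{Ind}((\bC^0)^{\on{op}})$, so that in our case $\IndCoh(\CY)^\vee \simeq \on{Ind}\bigl(\Coh(\CY)^{\on{op}}\bigr)$, and the duality equivalence on compact objects is $\BD_{\IndCoh(\CY)} : (\IndCoh(\CY)^\vee)^c \simeq \Coh(\CY)^{\on{op}}$. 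This is precisely the content of the corollary.

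There is no real obstacle here: the corollary is an immediate formal consequence of \thmref{IndCoh}, whose proof (given in Sects.~\ref{ss:compacts_in_IndCoh(Y)}--\ref{ss:CohgeneratesIndCoh}) is where all the work lies — namely \propref{coh is compact} for the description of compact objects and \propref{p:coh_generates} for the generation statement, the latter resting in turn on \thmref{main} and the stratification \propref{p:stratification}. Once \thmref{IndCoh} is in hand, the only thing to say is that compact generation implies dualizability, which is \cite{DG} material (or \cite[Sect. 4.2.5]{Lu2}). So the ``hard part'' is entirely upstream of this corollary; the corollary itself is a one-line deduction, and I would present it as such.
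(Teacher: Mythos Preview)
Your proposal is correct and matches the paper's approach exactly: the paper simply states the corollary as an immediate consequence of \thmref{IndCoh} and fact (ii) of \secref{sss:properties of duality}, without writing out any further argument. Your plan to combine compact generation of $\IndCoh(\CY)$ with the general principle that compactly generated DG categories are dualizable is precisely this one-line deduction.
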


As was explained to us by J.~Lurie, \corref{c:IndCohdualizable} implies the following
result (in any sheaf-theoretic context):

\begin{cor}  \label{c:indcoh on product}
Let $\CY_1$ and $\CY_2$ be two prestacks, with $\CY_1$ being a QCA algebraic stack.
Then the natural functor
$$\IndCoh(\CY_1)\otimes \IndCoh(\CY_2)\to \IndCoh(\CY_1\times \CY_2)$$
is an equivalence.
\end{cor}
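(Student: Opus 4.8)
The plan is to deduce \corref{c:indcoh on product} from the dualizability of $\IndCoh(\CY_1)$ established in \corref{c:IndCohdualizable}, following the general pattern for sheaf theories that admit a good pullback formalism. First I would reduce the statement to the case where $\CY_2 = S_2$ is an affine DG scheme. Indeed, both sides are, by their definitions as limits over $(\on{DGSch}_{\on{aft}})_{/\CY_2}$, limits of the corresponding categories for affine DG schemes mapping to $\CY_2$: the left-hand side because tensoring by the \emph{dualizable} category $\IndCoh(\CY_1)$ commutes with limits in $\StinftyCat_{\on{cont}}$ (see \secref{sss:properties of duality}(iii)), and the right-hand side because $\CY_1\times\CY_2 = \underset{(S_2,g_2)}{\underset{\longleftarrow}{lim}}\,(\CY_1\times S_2)$ and $\IndCoh(-)$ sends this colimit of prestacks to the limit of categories. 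The natural functor in question is compatible with these limit presentations, so it suffices to treat each $\CY_1\times S_2$.

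Next, for $\CY_2 = S_2$ affine, I would use the explicit description of the dual of a compactly generated category from \secref{sss:properties of duality}(ii$'$): since $\IndCoh(\CY_1)$ is compactly generated with compact objects $\Coh(\CY_1)$ by \thmref{IndCoh}, we have $\IndCoh(\CY_1)^\vee \simeq \on{Ind}(\Coh(\CY_1)^{\on{op}})$, and hence
$$\IndCoh(\CY_1)\otimes \IndCoh(S_2)\simeq \on{Funct}_{\on{cont}}\bigl(\IndCoh(\CY_1)^\vee,\IndCoh(S_2)\bigr)\simeq \on{Funct}\bigl(\Coh(\CY_1)^{\on{op}},\IndCoh(S_2)\bigr).$$
On the other hand, $\IndCoh(\CY_1\times S_2) = \underset{(S_1,g_1)\in(\on{DGSch}_{/\CY_1,\on{smooth}})^{\on{op}}}{\underset{\longleftarrow}{lim}}\,\IndCoh(S_1\times S_2)$, and for affine DG schemes we have the equivalence $\IndCoh(S_1)\otimes \IndCoh(S_2)\iso \IndCoh(S_1\times S_2)$ from \secref{sss:IndCoh schemes ten}. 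Tensoring the equivalence $\IndCoh(\CY_1)\iso \underset{(S_1,g_1)}{\underset{\longleftarrow}{lim}}\,\IndCoh(S_1)$ by $\IndCoh(S_2)$ (again using that $\IndCoh(S_2)$ is dualizable, so the tensor commutes with the limit) and comparing with the two presentations above gives the desired equivalence, provided one checks that the comparison functor is the same as the natural one built from $\boxtimes$.

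I expect the main obstacle to be bookkeeping rather than a substantive difficulty: one must verify that the abstract equivalence produced by the dualizability argument coincides with the \emph{natural} functor $\IndCoh(\CY_1)\otimes\IndCoh(\CY_2)\to\IndCoh(\CY_1\times\CY_2)$ defined via external products, and one must track the compatibility of all the limit presentations with the relevant pullback functors (here the $(\IndCoh,*)$-pullbacks along smooth maps, using the presentation \eqref{e:*-limit for IndCoh}). A clean way to phrase this is: the natural functor is a map between two objects of $\on{Funct}_{\on{cont}}(\IndCoh(\CY_1)^\vee,-)$ applied to the diagram $(S_1,g_1)\mapsto \IndCoh(S_1)$, it is an equivalence after evaluating on each $(S_1,g_1)$ (by the scheme-level statement in \secref{sss:IndCoh schemes ten}), hence an equivalence of limits. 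Alternatively, and perhaps more transparently, one invokes the general categorical fact (which Lurie pointed out, as noted before \corref{c:indcoh on product}) that if $\bC$ is dualizable then $\bC\otimes - $ commutes with limits, and $\IndCoh(\CY_1\times\CY_2)$, $\IndCoh(\CY_2)$ are by definition limits of $\IndCoh$ of affines, so the claim reduces to the already-known affine case together with the identification $\IndCoh(\CY_1)\otimes\IndCoh(S)\simeq \IndCoh(\CY_1\times S)$ for $S$ affine, which is itself the $\CY_2 = S$ affine case — and that case follows by writing $\IndCoh(\CY_1)$ as a limit over a smooth atlas and tensoring.
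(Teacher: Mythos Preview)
Your proposal is correct and follows essentially the same route as the paper: reduce to the affine case by using that tensoring with a dualizable category commutes with limits (\secref{sss:properties of duality}(iii)), then invoke the scheme-level equivalence from \secref{sss:IndCoh schemes ten}, and finally identify the double limit with $\IndCoh(\CY_1\times\CY_2)$ via a cofinality argument. Your ``alternatively'' paragraph is exactly the paper's proof; the earlier detour through $\on{Funct}(\Coh(\CY_1)^{\on{op}},-)$ is unnecessary, since the paper uses only abstract dualizability of $\IndCoh(\CY_1)$ (and of $\IndCoh(S_2)$), not the explicit description of compact objects.
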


\begin{proof}

The argument repeats verbatim that of \cite[Proposition 1.4.4]{QCoh}. For completeness, let us reproduce it here:

\medskip

We will show that the equivalence stated in the corollary takes place for any
two prestacks $\CY_1$, $\CY_2$, whenever 
$\IndCoh(\CY_1)$ is dualizable. 

\medskip

We have:
$$\IndCoh(\CY_1)\otimes \IndCoh(\CY_2)=
\IndCoh(\CY_1)\otimes  \left(\underset{S_2\in ((\on{DGSch}^{\on{aff}}_{\on{aft}})_{/\CY_2})^{\on{op}}}
{\underset{\longleftarrow}{lim}}\, \IndCoh(S_2)\right).$$
By \secref{sss:properties of duality}(iii), the latter expression maps isomorphically to
$$\underset{S_2\in ((\on{DGSch}^{\on{aff}}_{\on{aft}})_{/\CY_2})^{\on{op}}}{\underset{\longleftarrow}{lim}}\, 
\left(\IndCoh(\CY_1)\otimes \IndCoh(S_2)\right).$$

We rewrite $\IndCoh(\CY_1)$ by definition as 
$$\underset{S_1\in ((\on{DGSch}^{\on{aff}}_{\on{aft}})_{/\CY_1})^{\on{op}}}{\underset{\longleftarrow}{lim}}\, \IndCoh(S_1),$$
so
\begin{multline*}
\underset{S_2\in ((\on{DGSch}^{\on{aff}}_{\on{aft}})_{/\CY_2})^{\on{op}}}{\underset{\longleftarrow}{lim}}\, \left(\IndCoh(\CY_1)\otimes \IndCoh(S_2)\right)
\simeq \\
\simeq \underset{S_2\in ((\on{DGSch}^{\on{aff}}_{\on{aft}})_{/\CY_2})^{\on{op}}}{\underset{\longleftarrow}{lim}}\, \left(
\left(\underset{S_1\in ((\on{DGSch}^{\on{aff}}_{\on{aft}})_{/\CY_1})^{\on{op}}}{\underset{\longleftarrow}{lim}}\, 
\IndCoh(S_1)\right)\otimes \IndCoh(S_2)\right).
\end{multline*}
Since $\IndCoh(S_2)$ is dualizable, by \secref{sss:properties of duality}(iii),
the latter expression can be rewritten as 
\begin{equation} \label{e:product 3}
\underset{S_2\in ((\on{DGSch}^{\on{aff}}_{\on{aft}})_{/\CY_2})^{\on{op}}}{\underset{\longleftarrow}{lim}}\, \left(
\underset{S_1\in ((\on{DGSch}^{\on{aff}}_{\on{aft}})_{/\CY_1})^{\on{op}}}{\underset{\longleftarrow}{lim}}\, 
\left(\IndCoh(S_1)\otimes \IndCoh(S_2)\right)\right).
\end{equation}

\medskip

Now, as was mentioned in \secref{sss:IndCoh schemes ten}, for quasi-compact schemes $S_1$ and $S_2$, the natural functor
$$\IndCoh(S_1)\otimes \IndCoh(S_2)\to \IndCoh(S_1\times S_2)$$
is an equivalence. 

\medskip

Hence, we obtain that the expression in \eqref{e:product 3} maps isomorphically to
$$\underset{S_2\in ((\on{DGSch}^{\on{aff}}_{\on{aft}})_{/\CY_2})^{\on{op}}}{\underset{\longleftarrow}{lim}}\, \left(
\underset{S_1\in ((\on{DGSch}^{\on{aff}}_{\on{aft}})_{/\CY_1})^{\on{op}}}
{\underset{\longleftarrow}{lim}}\, \left(\IndCoh(S_1\times S_2)\right)\right),$$
which itself is isomorphic to
$$\underset{(S_1,S_2)\in ((\on{DGSch}^{\on{aff}}_{\on{aft}})_{/\CY_1})^{\on{op}}\times ((\on{DGSch}^{\on{aff}}_{\on{aft}})_{/\CY_2})^{\on{op}}}
{\underset{\longleftarrow}{lim}}\, \left(\IndCoh(S_1\times S_2)\right).$$

To summarize, we obtain an equivalence
\begin{multline} \label{e:product 4}
\IndCoh(\CY_1)\otimes \IndCoh(\CY_2)\to \\
\to \underset{(S_1,S_2)\in ((\on{DGSch}^{\on{aff}}_{\on{aft}})_{/\CY_1})^{\on{op}}\times ((\on{DGSch}^{\on{aff}}_{\on{aft}})_{/\CY_2})^{\on{op}}}
{\underset{\longleftarrow}{lim}}\, \left(\IndCoh(S_1\times S_2)\right).
\end{multline}

\medskip

Finally, it is easy to see that the natural functor
$$(\on{DGSch}^{\on{aff}}_{\on{aft}})_{/\CY_1}\times (\on{DGSch}^{\on{aff}}_{\on{aft}})_{/\CY_2}\to
(\on{DGSch}^{\on{aff}}_{\on{aft}})_{/\CY_1\times \CY_2}$$ 
is cofinal. Hence, the functor
\begin{multline*} 
\IndCoh(\CY_1\times \CY_2)=
\underset{S\in ((\on{DGSch}^{\on{aff}}_{\on{aft}})_{/\CY_1\times \CY_2})^{\on{op}}}{\underset{\longleftarrow}{lim}}\, \IndCoh(S)
\to \\
\to \underset{(S_1,S_2)\in ((\on{DGSch}^{\on{aff}}_{\on{aft}})_{/\CY_1})^{\on{op}}\times ((\on{DGSch}^{\on{aff}}_{\on{aft}})_{/\CY_2})^{\on{op}}}
{\underset{\longleftarrow}{lim}}\, \left(\IndCoh(S_1\times S_2)\right)
\end{multline*}
is an equivalence, and the composition
\begin{multline*}
\IndCoh(\CY_1)\otimes \IndCoh(\CY_2)  \to \IndCoh(\CY_1\times \CY_2)=
 \underset{S\in ((\on{DGSch}^{\on{aff}}_{\on{aft}})_{/\CY_1\times \CY_2})^{\on{op}}}{\underset{\longleftarrow}{lim}}\, \IndCoh(S) \to \\
\to \underset{(S_1,S_2)\in ((\on{DGSch}^{\on{aff}}_{\on{aft}})_{/\CY_1})^{\on{op}}\times ((\on{DGSch}^{\on{aff}}_{\on{aft}})_{/\CY_2})^{\on{op}}}
{\underset{\longleftarrow}{lim}}\, \left(\IndCoh(S_1\times S_2)\right)
\end{multline*}
is the map \eqref{e:product 4}. 

\medskip

This proves that the map
$\IndCoh(\CY_1)\otimes \IndCoh(\CY_2)  \to \IndCoh(\CY_1\times \CY_2)$ is an equivalence.

\end{proof}

\ssec{Applications to $\QCoh(\CY)$}  \label{ss:appl to QCoh}

We will now use \corref{c:IndCohdualizable} to prove the following:

\begin{thm} \label{t:QCoh dualizable}
Let $\CY$ be a QCA algebraic stack, which is eventually coconnective 
(see Definition~\ref{d:ev_coconnective}), and locally almost of finite
(as are all algebraic stacks in this section). Then the category 
$\QCoh(\CY)$ is dualizable.
\end{thm}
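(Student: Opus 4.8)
The plan is to deduce the dualizability of $\QCoh(\CY)$ from that of $\IndCoh(\CY)$, by realizing $\QCoh(\CY)$ as a \emph{retract} of $\IndCoh(\CY)$ inside the symmetric monoidal $\infty$-category $\StinftyCat_{\on{cont}}$, as announced in the introduction. The first ingredient is already available: by \thmref{IndCoh} the category $\IndCoh(\CY)$ is compactly generated, hence dualizable by \secref{sss:properties of duality}(ii) (this is \corref{c:IndCohdualizable}). So what remains is (a) to produce the retraction, and (b) to invoke the general fact that a retract of a dualizable object is dualizable.

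For (a) I would use the functor $\Psi_\CY:\IndCoh(\CY)\to\QCoh(\CY)$ recalled in \secref{sss:Psi for stacks}. Since $\CY$ is eventually coconnective, $\Psi_\CY$ is a colocalization — this is part of ``the same properties as those for schemes'' asserted in \secref{sss:Psi for stacks}, the scheme statement being recorded in \secref{sss:Ind and QCoh} — so it admits a fully faithful left adjoint $\Xi_\CY:\QCoh(\CY)\to\IndCoh(\CY)$. Being a left adjoint, $\Xi_\CY$ is automatically continuous, so both $\Xi_\CY$ and $\Psi_\CY$ are $1$-morphisms of $\StinftyCat_{\on{cont}}$; and full faithfulness of $\Xi_\CY$ means that the unit map $\id_{\QCoh(\CY)}\to\Psi_\CY\circ\Xi_\CY$ is an isomorphism. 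Thus the pair $(\Xi_\CY,\Psi_\CY)$ exhibits $\QCoh(\CY)$ as a retract of $\IndCoh(\CY)$ in $\StinftyCat_{\on{cont}}$.

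For (b) I would appeal to the standard categorical fact that in any symmetric monoidal $\infty$-category in which idempotents split, the full subcategory of dualizable objects is closed under retracts. Concretely, given a retract cut out by an idempotent $e:\IndCoh(\CY)\to\IndCoh(\CY)$ (here $e=\Xi_\CY\circ\Psi_\CY$), one forms the dual idempotent $e^\vee$ on $\IndCoh(\CY)^\vee$, splits it, and checks — transporting the evaluation and coevaluation of $\IndCoh(\CY)$ along the two splittings, and using the compatibility of $e$ with $e^\vee$ — that the splitting of $e^\vee$ is a dual of $\QCoh(\CY)$. The hypothesis holds here: $\StinftyCat_{\on{cont}}$ has all colimits, in particular filtered colimits, so idempotents split. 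Combining (a) and (b) yields the theorem.

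The one point that genuinely requires an argument rather than formalities is the colocalization property of $\Psi_\CY$ for an eventually coconnective \emph{stack} $\CY$, i.e.\ the existence of the fully faithful left adjoint $\Xi_\CY$; I expect this to be handled either by citing the stack-level statement behind \secref{sss:Psi for stacks}, or by constructing $\Xi_\CY$ from the affine-scheme adjoints $\Xi_S$ over a smooth atlas — these are compatible with $(-)^{\IndCoh,*}$-pullback by the scheme case, hence assemble through the presentation \eqref{e:*-limit for IndCoh} of $\IndCoh(\CY)$, and the resulting functor is fully faithful because full faithfulness can be checked smooth-locally. Everything else is routine formal manipulation with dualizable objects.
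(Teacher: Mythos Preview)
Your proposal is correct and follows essentially the same route as the paper: exhibit $\QCoh(\CY)$ as a retract of $\IndCoh(\CY)$ in $\StinftyCat_{\on{cont}}$ via the fully faithful left adjoint $\Xi_\CY$ of $\Psi_\CY$ (the paper cites \cite[Sect.~11.7.3]{IndCoh} directly for this, which is exactly the stack-level statement you anticipated), and then use that retracts of dualizable objects are dualizable. The only cosmetic difference is in how step (b) is justified: you split the dual idempotent, whereas the paper invokes the inner-Hom criterion --- $M$ is dualizable iff the canonical map $N\otimes\underline\Hom_\bO(M,\unit)\to\underline\Hom_\bO(M,N)$ is an isomorphism for all $N$, a condition that visibly survives retracts --- applied to $\bO=\StinftyCat_{\on{cont}}$ with $\underline\Hom=\on{Funct}_{\on{cont}}$; both arguments are standard and interchangeable.
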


\begin{rem}
We do not know whether, under the assumptions of the theorem, the category
$\QCoh(\CY)$ is compactly generated.
\end{rem}

\begin{proof}

Recall (see \cite[Sect. 11.7.3]{IndCoh}) that for any eventually coconnective algebraic stack $\CY$, the functor 
$\Psi_\CY:\IndCoh(\CY)\to \QCoh(\CY)$ admits a left adjoint, which is fully faithful (and automatically
continuous by virtue of being a left adjoint).

\medskip

In particular, we obtain that in this case, $\QCoh(\CY)$ is a \emph{retract} of $\IndCoh(\CY)$ in the category
$\StinftyCat_{\on{cont}}$. 

\medskip

The assertion of the theorem follows from the following observation: let $\bO$ be a monoidal
category, which admits inner Hom's, i.e., for $M_1,M_2\in \bO$, there exists an object
$$\underline\Hom_\bO(M_1,M_2)\in \bO,$$ such that we have
$$\on{Maps}_\bO(N,\underline\Hom_\bO(M_1,M_2))\simeq \on{Maps}_\bO(N\otimes M_1,M_2),$$
functorially in $N$.

\begin{lem}
Under the above circumstances, a retract of a (left) dualizable object is (left) dualizable.
\end{lem}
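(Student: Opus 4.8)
The plan is to reduce the statement to the elementary fact that isomorphisms are stable under retracts, using the standard reformulation of dualizability in terms of inner Hom's. Recall that, in a monoidal category $\bO$ with inner Hom's, an object $M$ is left dualizable if and only if for every $X\in \bO$ the canonical morphism
$$c^M_X\colon X\otimes \underline\Hom_\bO(M,\unit)\longrightarrow \underline\Hom_\bO(M,X),$$
adjoint under $(-)\otimes M\dashv \underline\Hom_\bO(M,-)$ to $\id_X\otimes \mathrm{ev}_M$ (here $\mathrm{ev}_M\colon \underline\Hom_\bO(M,\unit)\otimes M\to\unit$ is the counit), is an isomorphism; in that case $\underline\Hom_\bO(M,\unit)$ serves as a left dual of $M$. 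So the statement to be proved becomes: if $N$ is a retract of $M$ and $c^M_X$ is an isomorphism for all $X$, then $c^N_X$ is an isomorphism for all $X$.

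First I would fix a retract datum $i\colon N\to M$, $p\colon M\to N$ with $p\circ i\simeq\id_N$, and note that applying the contravariant functor $\underline\Hom_\bO(-,X)$ produces $p^*_X\colon\underline\Hom_\bO(N,X)\to\underline\Hom_\bO(M,X)$ and $i^*_X\colon\underline\Hom_\bO(M,X)\to\underline\Hom_\bO(N,X)$ with $i^*_X\circ p^*_X\simeq\id$, exhibiting $\underline\Hom_\bO(N,X)$ as a retract of $\underline\Hom_\bO(M,X)$, naturally in $X$. In particular, taking $X=\unit$, the object $\underline\Hom_\bO(N,\unit)$ is a retract of $\underline\Hom_\bO(M,\unit)$. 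Next I would check that $c_X$ is natural in the first argument, i.e. that for any $f\colon N\to M$ the square formed by $c^M_X$, $c^N_X$ and the maps $f$ induces on the two inner Homs commutes; this is a formal consequence of the adjunction definition of $c_X$. Feeding in the retract datum of the previous step, $c^N_X$ becomes a retract of $c^M_X$ inside the arrow category of $\bO$: on the source the vertical maps are $\id_X\otimes p^*_\unit$ and $\id_X\otimes i^*_\unit$, on the target they are $p^*_X$ and $i^*_X$, and both horizontal composites are identities. Since isomorphisms are closed under retracts in any category and $c^M_X$ is an isomorphism for every $X$ by hypothesis, $c^N_X$ is an isomorphism for every $X$, so $N$ is left dualizable with left dual $\underline\Hom_\bO(N,\unit)$. (Alternatively, one can bypass inner Hom's entirely: set $N^\vee:=M^\vee$, define $\mathrm{ev}_N$ and $\mathrm{coev}_N$ by inserting $i$ and $p$ into $\mathrm{ev}_M$ and $\mathrm{coev}_M$, and verify the triangle identities by a bifunctoriality computation that reduces them to the triangle identities for $M$ sandwiched between $p$ and $i$, using $p\circ i\simeq\id_N$.)

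On the matter of rigor: by the Remark preceding the lemma, dualizability depends only on the underlying ordinary monoidal category, so the whole argument may be carried out in the homotopy category of $\bO$, where "retract" and "stable under retracts" have their literal $1$-categorical meaning, and the only inputs needed are the inner Hom adjunction and the retract datum, both of which descend. I expect the one place that requires genuine care to be the bookkeeping in the second step — getting the variances right in the naturality of $c_X$ in its object variable, and confirming that the resulting ladder truly exhibits $c^N_X$ as a retract of $c^M_X$ — but this is a content-free diagram chase, and it is the only obstacle, a very mild one.
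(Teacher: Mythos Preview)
Your proposal is correct and takes essentially the same approach as the paper: both use the characterization of (left) dualizability via the canonical map $X\otimes\underline\Hom_\bO(M,\unit)\to\underline\Hom_\bO(M,X)$ being an isomorphism for all $X$, and then observe that this condition survives retracts. The paper compresses the entire argument into the single phrase ``the latter condition survives taking retracts,'' whereas you spell out the mechanism (naturality of $c_X$ in $M$ yields that $c^N_X$ is a retract of $c^M_X$ in the arrow category, and isomorphisms are closed under retracts).
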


\begin{proof}
It is easy to see that an object $M$ is (left) dualizable if and only if for any $N$, 
the natural map
$$N\otimes \underline\Hom_\bO(M,1)\to \on{Maps}(N,M)$$
is an isomorphism. However, the latter condition survives taking retracts.
\end{proof}

We apply this lemma to $\bO=\StinftyCat_{\on{cont}}$. This category has inner Hom's, which are explicitly given by
$$\underline\Hom_{\StinftyCat_{\on{cont}}}(\bC_1,\bC_2)=\on{Funct}_{\on{cont}}(\bC_1,\bC_2),$$
where the right-hand side has a natural structure of DG category.

\end{proof}

\begin{cor}  \label{c:QCoh on product}
Let $\CY$ satisfy the assumptions of \thmref{t:QCoh dualizable}. Then
for any prestack $\CY'$, the natural functor
$$\QCoh(\CY)\otimes \QCoh(\CY')\to \QCoh(\CY\times \CY')$$
is an equivalence.
\end{cor}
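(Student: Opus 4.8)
The plan is to mimic the proof of \corref{c:indcoh on product} verbatim, with $\IndCoh$ replaced by $\QCoh$ throughout. The only property of $\CY$ that the argument actually uses is that $\QCoh(\CY)$ is dualizable, which is exactly \thmref{t:QCoh dualizable}; so I would in fact prove the slightly more general statement that for \emph{any} two prestacks $\CY_1,\CY_2$ with $\QCoh(\CY_1)$ dualizable, the natural functor $\QCoh(\CY_1)\otimes\QCoh(\CY_2)\to\QCoh(\CY_1\times\CY_2)$ is an equivalence, and then specialize to $\CY_1=\CY$, $\CY_2=\CY'$.

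First I would write both sides as limits over affine DG schemes: $\QCoh(\CY_i)=\underset{S_i\in((\on{DGSch}^{\on{aff}})_{/\CY_i})^{\on{op}}}{\underset{\longleftarrow}{lim}}\,\QCoh(S_i)$. Since $\QCoh(\CY_1)$ is dualizable, by \secref{sss:properties of duality}(iii) the functor $\QCoh(\CY_1)\otimes(-)$ commutes with limits taken in $\StinftyCat_{\on{cont}}$, so $\QCoh(\CY_1)\otimes\QCoh(\CY_2)\simeq\underset{S_2}{\underset{\longleftarrow}{lim}}\,\bigl(\QCoh(\CY_1)\otimes\QCoh(S_2)\bigr)$. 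For each affine $S_2$ the category $\QCoh(S_2)$ is compactly generated, hence dualizable by \secref{sss:properties of duality}(ii), so tensoring by it again commutes with limits and $\QCoh(\CY_1)\otimes\QCoh(S_2)\simeq\underset{S_1}{\underset{\longleftarrow}{lim}}\,\bigl(\QCoh(S_1)\otimes\QCoh(S_2)\bigr)$. The pointwise input is that for affine DG schemes $S_1=\Spec(A_1)$, $S_2=\Spec(A_2)$ the natural functor $\QCoh(S_1)\otimes\QCoh(S_2)\to\QCoh(S_1\times S_2)$ is an equivalence, i.e. $A_1\mod\otimes A_2\mod\simeq(A_1\otimes_k A_2)\mod$, which is standard. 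Feeding this in, the iterated limit becomes $\underset{(S_1,S_2)}{\underset{\longleftarrow}{lim}}\,\QCoh(S_1\times S_2)$ over $((\on{DGSch}^{\on{aff}})_{/\CY_1})^{\on{op}}\times((\on{DGSch}^{\on{aff}})_{/\CY_2})^{\on{op}}$. Finally, since the functor $(\on{DGSch}^{\on{aff}})_{/\CY_1}\times(\on{DGSch}^{\on{aff}})_{/\CY_2}\to(\on{DGSch}^{\on{aff}})_{/\CY_1\times\CY_2}$, $(S_1,S_2)\mapsto S_1\times S_2$, is cofinal, this last limit is $\QCoh(\CY_1\times\CY_2)$; and tracing through the chain of identifications one checks that the resulting equivalence is indeed the natural comparison functor.

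I do not anticipate a real obstacle: the argument is purely formal once \thmref{t:QCoh dualizable} is available. The only care needed is the cofinality of the product functor and the verification that the composite of all the displayed isomorphisms is the \emph{natural} map (not merely some abstract equivalence) — and this bookkeeping is already carried out in detail in the proof of \corref{c:indcoh on product}, which I would simply repeat with $\QCoh$ in place of $\IndCoh$.
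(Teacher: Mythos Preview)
Your proposal is correct and is exactly the approach the paper takes: the paper's proof simply cites \cite[Proposition 1.4.4]{QCoh} and notes that its argument ``repeats verbatim the proof of \corref{c:indcoh on product}'', which is precisely the argument you wrote out. Your observation that the argument only uses dualizability of $\QCoh(\CY)$ and hence works for an arbitrary prestack $\CY'$ (not necessarily locally almost of finite type) is also made explicitly in the paper's remark following the corollary.
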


\begin{proof}
This follows from \thmref{t:QCoh dualizable} by \cite[Proposition 1.4.4]{QCoh}, which repeats verbatim 
the proof of \corref{c:indcoh on product}. 
\end{proof}

\begin{rem}
The assertion of \corref{c:QCoh on product}, together with the proof, is valid for \emph{all} prestacks $\CY'$, i.e., 
not necessarily those locally almost of finite type.
\end{rem}

\sssec{}

Let us recall the notion of rigid monoidal DG category from \cite{DG}, Sect. 6.1. This notion can be formulated as follows:
a monoidal category $\bO$ is rigid if:

\begin{itemize}

\item The object $1\in \bO$ is compact.

\item The functor 
\begin{equation} \label{e:comult}
\bO\to \bO\otimes \bO, 
\end{equation}
right adjoint to $\bO\otimes \bO\overset{\otimes}\longrightarrow \bO$, is continuous,
and is compatible with left and right actions of $\bO$.

\end{itemize}

If this happens, the functors
$$\bO\otimes \bO\overset{\otimes}\longrightarrow \bO\overset{\CMaps_\bO(1,-)}\longrightarrow \Vect$$
and 
$$\Vect\to \bO\to \bO\otimes \bO,$$
(where the functor $\Vect\to \bO$ is given by $1\in \bO$, and the functor $\bO\to \bO\otimes \bO$ is
\eqref{e:comult}) define a duality datum between $\bO$ and itself. 

\sssec{}

We have:

\begin{cor} \label{c:QCoh rigid}
Let $\CY$ be as in \thmref{t:QCoh dualizable}. Then the monoidal category $\QCoh(\CY)$ is rigid.
\end{cor}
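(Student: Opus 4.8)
The plan is to verify directly the two conditions in the definition of a rigid monoidal category recalled just above, applied to $\bO=\QCoh(\CY)$ with unit object $\CO_\CY$. The first move is to identify the relevant functors geometrically. Using \corref{c:QCoh on product} — which applies precisely because $\CY$ is QCA, eventually coconnective and locally almost of finite type — one gets an equivalence $\QCoh(\CY)\otimes\QCoh(\CY)\simeq\QCoh(\CY\times\CY)$, under which the external product $\CF_1\boxtimes\CF_2$ corresponds to $p_1^*(\CF_1)\otimes p_2^*(\CF_2)$, for $p_1,p_2\colon\CY\times\CY\to\CY$ the two projections. Under this identification the multiplication functor $\QCoh(\CY)\otimes\QCoh(\CY)\overset{\otimes}\longrightarrow\QCoh(\CY)$ becomes $\Delta^*$, where $\Delta\colon\CY\to\CY\times\CY$ is the diagonal: indeed $p_i\circ\Delta=\id_\CY$, so $\Delta^*(p_1^*\CF_1\otimes p_2^*\CF_2)\simeq\CF_1\otimes\CF_2$. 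Hence the functor \eqref{e:comult}, being the right adjoint of the multiplication, is identified with $\Delta_*\colon\QCoh(\CY)\to\QCoh(\CY\times\CY)$.

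Next I would dispose of compactness of the unit: this is exactly \thmref{main}(i), which for any QCA stack says that $\Gamma(\CY,-)\simeq\CMaps_{\QCoh(\CY)}(\CO_\CY,-)$ commutes with colimits. It then remains to establish the properties of $\Delta_*$. The key observation is that $\Delta$ is a QCA morphism: $\CY$ is QCA and $\CY\times\CY$ is an algebraic stack, so this follows from the remark after \defnref{d:rel QCA} (alternatively, $\Delta$ is schematic, quasi-separated and quasi-compact, and $\CY$ is quasi-compact). Thus \corref{c:relative}(i) applies and yields at once that $\Delta_*$ is continuous — hence \eqref{e:comult} is continuous — and that $\Delta$ strongly satisfies the projection formula.

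Finally, for compatibility of \eqref{e:comult} with the left and right $\QCoh(\CY)$-actions, I would note that under the equivalence above these actions on $\QCoh(\CY)\otimes\QCoh(\CY)$ become $\CF,\CH\mapsto p_1^*(\CF)\otimes\CH$ and $\CF,\CH\mapsto p_2^*(\CF)\otimes\CH$ on $\QCoh(\CY\times\CY)$; the projection formula for $\Delta$ then provides $p_i^*(\CF)\otimes\Delta_*(\CG)\simeq\Delta_*(\Delta^*p_i^*(\CF)\otimes\CG)\simeq\Delta_*(\CF\otimes\CG)$, which is precisely the required bimodule-morphism structure on \eqref{e:comult}, with the higher coherences supplied by the base change data that accompanies the strong projection formula. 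This verifies both conditions, so $\QCoh(\CY)$ is rigid. The argument is essentially formal, and the only point demanding care — the ``main obstacle'' — is ensuring that all of these identifications (of the multiplication with $\Delta^*$, of \eqref{e:comult} with $\Delta_*$, and of the one- and two-sided module structures) are carried out at the level of $\infty$-categories rather than merely underlying functors; this is exactly why one invokes the refined statements \corref{c:QCoh on product} and the ``strong'' form of the projection formula in \corref{c:relative}(i), so that no input beyond \thmref{main} is actually required.
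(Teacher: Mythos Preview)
Your proof is correct and is essentially an unpacking of the paper's own argument: the paper simply cites \cite[Proposition~2.3.2]{QCoh} and notes that its three hypotheses hold here---$\QCoh(\CY)$ is dualizable, $\CO_\CY$ is compact, and the diagonal is schematic, quasi-separated and quasi-compact---whereas you carry out directly the verification that this cited proposition would perform. In particular your identification of multiplication with $\Delta^*$ (via \corref{c:QCoh on product}, which is where dualizability enters) and of its right adjoint with $\Delta_*$, together with the projection formula for the diagonal, is exactly the mechanism behind the cited result; so the two arguments coincide in substance, yours being the explicit form.
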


\begin{proof}
This is \cite[Proposition 2.3.2]{QCoh}: the assertion is true for any prestack (not necessarily of finite type)
with the following three properties: (1) the category $\QCoh(\CY)$ is dualizable, (2) the object
$\CO_\CY\in \QCoh(\CY)$ is compact, and (3) the diagonal morphism $\CY\to \CY\times \CY$ is
schematic, quasi-separated and quasi-compact.
\end{proof}

In particular, we obtain a canonical identification 
$$\bD_\CY^{\on{naive}}:\QCoh(\CY)^\vee\simeq \QCoh(\CY),$$ 
where the duality datum is described as follows:

\medskip

The functor $\epsilon_{\QCoh(\CY)}$ is given by
$$\QCoh(\CY)\otimes \QCoh(\CY)\overset{\boxtimes}\longrightarrow \QCoh(\CY\times \CY)\overset{\Delta^*}\longrightarrow 
\QCoh(\CY)\overset{\Gamma(\CY,-)}\longrightarrow \Vect,$$
and the functor $\mu_{\QCoh(\CY)}$ is given by
$$\Vect\overset{\CO_\CY}\longrightarrow \QCoh(\CY)\overset{\Delta_*}\longrightarrow 
\QCoh(\CY\times \CY)\simeq \QCoh(\CY)\otimes \QCoh(\CY).$$

\ssec{Serre duality on $\IndCoh(\CY)$} \label{ss:Serreduality}

\sssec{}

Recall (see \cite[Sect. 9.2.1]{IndCoh}) that for a quasi-compact DG scheme $Z$, there exists a canonical involutive
equivalence:
$$\bD^{\on{Serre}}_Z:\IndCoh(Z)^\vee\simeq \IndCoh(Z).$$

\medskip

In terms of \secref{sss:properties of duality}(ii'), the above equivalence corresponds to the identification
$$(\IndCoh(Z)^c)^{\on{op}}=\Coh(Z)^{\on{op}}\overset{\BD^{\on{Serre}}_{Z}}\longrightarrow \Coh(Z)=\IndCoh(Z)^c,$$
where the middle arrow is the \emph{Serre duality} functor. Explicitly, for $\CF\in \Coh(Z)$,
$$\BD^{\on{Serre}}_{Z}(\CF)=\underline\Hom_{\QCoh(Z)}(\CF,\omega_Z),$$
which is a priori an object of $\QCoh(Z)$, but in fact can be easily shown to belong to $\Coh(Z)$.

\begin{rem}
In the above formula, $\underline\Hom_{\QCoh(Z)}(-,-)$ denotes the inner Hom of
\cite{DG}, Sect. 5.1, defined whenever a monoidal category (in our case $\QCoh(Z)$)
is acting on a module category (in our case $\IndCoh(Z)$).
\end{rem}

\medskip

Our current goal is to show that the same goes through, when instead of a quasi-compact DG scheme $Z$
we have a QCA algebraic stack $\CY$.

\sssec{}

First, let $\CY$ be any algebraic stack. Recall the (non-cocomplete) category $\Coh(\CY)$, see \secref{ss:coherent}.
We obtain that there exists a canonical equivalence:
\begin{equation} \label{e:Serre coh on stack}
\BD^{\on{Serre}}_\CY: \Coh(\CY)^{\on{op}}\iso \Coh(\CY),
\end{equation}
characterized by the property that for every affine (or quasi-compact) quasi-compact DG scheme $S$ equipped with a \emph{smooth}
map $g:S\to \CY$, we have an identification
$$g^{\IndCoh,*}\circ \BD^{\on{Serre}}_\CY\simeq \BD^{\on{Serre}}_S\circ (g^!)^{\on{op}},$$
as functors $\Coh(\CY)^{\on{op}}\to \Coh(Z)$. Moreover, $\BD^{\on{Serre}}_\CY$ is naturally involutive. 

\begin{prop}  \label{p:Serre duality}
For $\CF_1\in \Coh(\CY)^{\on{op}}$ and $\CF_2\in \IndCoh(\CY)$ we have a canonical isomorphism
\begin{equation} \label{Hom via D}
\CMaps(\BD^{\on{Serre}}_\CY(\CF_1),\CF_2)\simeq \Gamma^{\IndCoh}\left(\CY,\CF_1\sotimes \CF_2\right).
\end{equation}
\end{prop}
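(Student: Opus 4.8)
The plan is to reduce the claimed identity to the case of a smooth affine atlas, where it becomes a known statement about DG schemes, and to propagate it across the limit presentation of $\IndCoh(\CY)$. First I would recall the defining formula for $\BD^{\on{Serre}}_\CY$: for a smooth affine $g\colon S\to\CY$ one has $g^{\IndCoh,*}\circ\BD^{\on{Serre}}_\CY\simeq\BD^{\on{Serre}}_S\circ(g^!)^{\on{op}}$. Both sides of \eqref{Hom via D} are functors of $\CF_2$ that will be identified after smooth descent, so the key is to set up the right bookkeeping. Using the presentation $\IndCoh(\CY)=\underset{(S,g)\in(\on{DGSch}_{/\CY,\on{smooth}})^{\on{op}}}{\underset{\longleftarrow}{lim}}\,\IndCoh(S)$ together with the formula \eqref{e:Gamma IndCoh} for $\Gamma^{\IndCoh}(\CY,-)$, I would write the right-hand side of \eqref{Hom via D} as
$$\Gamma^{\IndCoh}\left(\CY,\CF_1\sotimes\CF_2\right)\simeq\underset{(S,g)}{\underset{\longleftarrow}{lim}}\,\Gamma^{\IndCoh}\left(S,g^{\IndCoh,*}(\CF_1\sotimes\CF_2)\right),$$
and then use that $g^{\IndCoh,*}$ is symmetric-monoidal for the $\sotimes$-structure (since $g$ is smooth, $g^{\IndCoh,*}$ and $g^!$ differ by tensoring with an invertible graded line bundle, and on $\IndCoh$ the $!$-pullback is symmetric monoidal — this is where the smoothness of the atlas is used, to control the $\sotimes$-product under pullback) to get $g^{\IndCoh,*}(\CF_1\sotimes\CF_2)\simeq g^{\IndCoh,*}(\CF_1)\sotimes g^{\IndCoh,*}(\CF_2)$.

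Next I would handle the left-hand side. Since $\BD^{\on{Serre}}_\CY(\CF_1)\in\Coh(\CY)$ is compact in $\IndCoh(\CY)$ by \thmref{IndCoh}, the functor $\CMaps(\BD^{\on{Serre}}_\CY(\CF_1),-)$ is continuous; more to the point, mapping out of an object written via the limit presentation gives
$$\CMaps_{\IndCoh(\CY)}(\BD^{\on{Serre}}_\CY(\CF_1),\CF_2)\simeq\underset{(S,g)}{\underset{\longleftarrow}{lim}}\,\CMaps_{\IndCoh(S)}\left(g^{\IndCoh,*}\BD^{\on{Serre}}_\CY(\CF_1),g^{\IndCoh,*}(\CF_2)\right).$$
Now plug in $g^{\IndCoh,*}\BD^{\on{Serre}}_\CY(\CF_1)\simeq\BD^{\on{Serre}}_S(g^!\CF_1)$. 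On each $S$ the scheme-level statement (the case of \propref{p:Serre duality} for DG schemes, i.e. the compatibility $\CMaps(\BD^{\on{Serre}}_S(\CE_1),\CE_2)\simeq\Gamma^{\IndCoh}(S,\CE_1\sotimes\CE_2)$, which follows from the explicit description of $\BD^{\on{Serre}}_S$ via $\underline\Hom$ and the fact that $\CE_1\in\Coh(S)$ is compact — see \cite[Sect. 9.2]{IndCoh} and \cite[Lemma 5.1.1]{DG}) gives
$$\CMaps_{\IndCoh(S)}\left(\BD^{\on{Serre}}_S(g^!\CF_1),g^{\IndCoh,*}\CF_2\right)\simeq\Gamma^{\IndCoh}\left(S,(g^!\CF_1)\sotimes g^{\IndCoh,*}\CF_2\right)\simeq\Gamma^{\IndCoh}\left(S,g^{\IndCoh,*}(\CF_1\sotimes\CF_2)\right),$$
using again the monoidality of smooth pullback. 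Taking the limit over $(S,g)$ and comparing with the first paragraph yields the desired isomorphism, provided the identifications are compatible with the transition maps of the limit; that compatibility is exactly the naturality built into the construction of $\BD^{\on{Serre}}_\CY$ in \eqref{e:Serre coh on stack} and the base-change compatibilities of $\IndCoh$.

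I expect the main obstacle to be the functoriality/coherence of these identifications over the index category $(\on{DGSch}_{/\CY,\on{smooth}})^{\on{op}}$: each of the isomorphisms above must be promoted to a morphism of the corresponding diagrams, so that passing to the limit is legitimate. Concretely, one must check that the scheme-level isomorphism $\CMaps(\BD^{\on{Serre}}_S(\CE_1),\CE_2)\simeq\Gamma^{\IndCoh}(S,\CE_1\sotimes\CE_2)$ is natural in $S$ with respect to $^{\IndCoh,*}$-pullback along smooth maps, and that it intertwines the two descriptions of $\BD^{\on{Serre}}$. The cleanest way to organize this is probably to not argue pointwise at all, but to observe directly that both sides of \eqref{Hom via D}, as functors $\Coh(\CY)^{\on{op}}\times\IndCoh(\CY)\to\Vect$, are the images under $\IndCoh(\CY\times\CY)\to\Vect$ (composition of $\Delta_\CY^!$ and $\Gamma^{\IndCoh}(\CY,-)$) of the single object $\CF_1\boxtimes\CF_2$, once one knows that $\BD^{\on{Serre}}_\CY$ together with $\Gamma^{\IndCoh}(\CY,-)\circ\Delta_\CY^!\circ(-\boxtimes-)$ constitute the evaluation datum for the duality $\IndCoh(\CY)^\vee\simeq\IndCoh(\CY)$; this is the stack analogue of \cite[Sect. 9.2.1]{IndCoh} and makes the coherence automatic. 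Either route should work; the limit-wise argument is more elementary but requires writing out the naturality squares, while the duality-datum argument packages that work into the construction of $\BD^{\on{Serre}}_\CY$ itself.
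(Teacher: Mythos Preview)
Your overall strategy is exactly the paper's: expand both sides as limits over the smooth site, invoke the scheme-level case, and match termwise via the compatibility of $\BD^{\on{Serre}}$ with pullback. The paper uses the $!$-pullback presentation where you use the $(\IndCoh,*)$-one, but these are interchangeable here.

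There is, however, a misstatement you should fix. The functor $g^{\IndCoh,*}$ is \emph{not} symmetric monoidal for $\sotimes$: it does not even send the unit $\omega_\CY$ to $\omega_S$ (it misses by the relative dualizing line). The correct identity is the mixed one
\[
g^{\IndCoh,*}(\CF_1\sotimes\CF_2)\;\simeq\;g^{\IndCoh,*}(\CF_1)\sotimes g^!(\CF_2),
\]
which the paper derives as \eqref{e:! and * diag} from the base-change isomorphism \eqref{e:! and * stacks} for the square $\CY_1\to\CY_2\times\CY_1\to\CY_2\times\CY_2\leftarrow\CY_2$. You actually use this correct formula in your LHS computation (where you write $(g^!\CF_1)\sotimes g^{\IndCoh,*}\CF_2\simeq g^{\IndCoh,*}(\CF_1\sotimes\CF_2)$), so the argument survives; but the decomposition you give for the RHS, $g^{\IndCoh,*}(\CF_1\sotimes\CF_2)\simeq g^{\IndCoh,*}\CF_1\sotimes g^{\IndCoh,*}\CF_2$, is false and also unnecessary --- the RHS already equals $\underset{(S,g)}{\underset{\longleftarrow}{lim}}\,\Gamma^{\IndCoh}(S,g^{\IndCoh,*}(\CF_1\sotimes\CF_2))$ by \eqref{e:Gamma IndCoh}, and that is what the LHS matches.

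One further caution: your proposed alternative (``observe directly that both sides are the evaluation datum for the duality $\IndCoh(\CY)^\vee\simeq\IndCoh(\CY)$'') is circular in the paper's logical order. \propref{p:Serre duality} is precisely what is used in the proof of \propref{p:pairing for IndCoh} to identify $\epsilon_{\IndCoh(\CY)}$ with $\Gamma^{\IndCoh}(\CY,-)\circ\Delta_\CY^!$; you cannot assume that identification here. The limit-wise argument is the one that actually establishes the claim.
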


\begin{rem}  \label{r:pairing for IndCoh schemes}
The assertion of the proposition when $\CY$ is a quasi-compact DG scheme $Z$ follows
from the definition of the evaluation map
$$\IndCoh(Z)\otimes \IndCoh(Z)\to \Vect,$$
see \cite[Sect. 9.2.2]{IndCoh}.
\end{rem}

\begin{proof}

The left-hand side in \eqref{Hom via D} identifies with 
\begin{equation}  \label{e:LHSrewritten}
\underset{(S,g)\in (\on{DGSch}_{/\CY,\on{smooth}})^{\on{op}})}{\underset{\longleftarrow}{lim}}\, 
\CMaps_{\Coh(Z)}\left(g^!(\BD^{\on{Serre}}_\CY(\CF_1)),g^!(\CF_2)\right).
\end{equation}

\medskip

One can rewrite $\CMaps_{\Coh(S)}\left(g^!(\BD^{\on{Serre}}_\CY(\CF_1)),g^!(\CF_2)\right)$ as
$$\CMaps_{\Coh(S)}\left(\BD^{\on{Serre}}_Z(g^{\IndCoh,*}(\CF_1)),g^!(\CF_2)\right)\simeq
\Gamma^{\IndCoh}\left(S,g^{\IndCoh,*}(\CF_1)\sotimes g^!(\CF_2)\right),$$
where the last isomorphism takes place because of Remark \ref{r:pairing for IndCoh schemes}.

\medskip

Note that for $\CF\in \IndCoh(\CY)$, by \eqref{e:Gamma IndCoh} have:
$$\Gamma^{\IndCoh}(\CY,\CF)\simeq 
\underset{(S,g)\in (\on{DGSch}_{/\CY,\on{smooth}})^{\on{op}})}{\underset{\longleftarrow}{lim}}\, 
\Gamma^{\IndCoh}\left(S,g^{\IndCoh,*}(\CF)\right).$$

\medskip

Therefore, the right-hand side in \eqref{Hom via D} is canonically isomorphic to 
$$\underset{(S,g)\in (\on{DGSch}_{/\CY,\on{smooth}})^{\on{op}})}{\underset{\longleftarrow}{lim}}\, 
\Gamma^{\IndCoh}\left(S,g^{\IndCoh,*}(\CF_1\sotimes \CF_2)\right).$$

\medskip

Therefore, in order to construct the isomorphism in \eqref{Hom via D}, it remains
to construct a compatible family of isomorphisms of functors
\begin{equation} \label{e:! and * diag}
\Delta_S^!\circ (g^{\IndCoh,*}\boxtimes g^!)\simeq g^{\IndCoh,*}\circ \Delta_\CY^!.
\end{equation}

\medskip

The latter isomorphism of functors is valid for any $k$-representable, eventually coconnective 
morphism between prestacks $\pi:\CY_1\to \CY_2$: it follows by applying \eqref{e:! and * stacks} to the Cartesian diagram
$$
\CD
\CY_1  @>>>  \CY_2\times \CY_1  \\
@V{\pi}VV   @VV{\on{id}_{\CY_2}\times \pi}V  \\
\CY_2 @>{\Delta_{\CY_2}}>> \CY_2\times \CY_2.
\endCD
$$

\end{proof}

\sssec{}

Assume now that $\CY$ is a QCA algebraic stack. Then by \thmref{IndCoh},
$$\IndCoh(\CY)\simeq \Ind(\Coh(\CY)).$$
So, by \secref{sss:properties of duality}(ii'), from \eqref{e:Serre coh on stack} we deduce:

\begin{cor}  \label{c:self-duality of IndCoh}
For a QCA algebraic stack $\CY$ there is a natural involutive identification:
\begin{equation}  \label{e:self-duality of IndCoh}
\bD_\CY^{\on{Serre}}:\IndCoh(\CY)^\vee\simeq \IndCoh(\CY).
\end{equation}
\end{cor}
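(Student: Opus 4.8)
The statement is essentially a formal consequence of results already in hand, so the plan is short. First I would invoke \thmref{IndCoh}: since $\CY$ is QCA, the category $\IndCoh(\CY)$ is compactly generated with $\IndCoh(\CY)^c = \Coh(\CY)$, i.e. $\IndCoh(\CY)\simeq \Ind(\Coh(\CY))$ in the notation of \secref{sss:ind-compl}. Applying \secref{sss:properties of duality}(ii') with $\bC^0 = \Coh(\CY)$ then gives a canonical identification
$$\IndCoh(\CY)^\vee\simeq \Ind\bigl(\Coh(\CY)^{\on{op}}\bigr),\qquad (\IndCoh(\CY)^\vee)^c\simeq \Coh(\CY)^{\on{op}}.$$

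Next I would feed in Serre duality on coherent sheaves: the involutive equivalence $\BD^{\on{Serre}}_\CY:\Coh(\CY)^{\on{op}}\iso \Coh(\CY)$ of \eqref{e:Serre coh on stack}. Since $\Ind(-)$ is functorial in exact functors and carries equivalences to equivalences, passing to ind-completions yields
$$\Ind\bigl(\Coh(\CY)^{\on{op}}\bigr)\overset{\Ind(\BD^{\on{Serre}}_\CY)}{\iso}\Ind(\Coh(\CY))\simeq \IndCoh(\CY).$$
Composing the two displays produces the asserted identification $\bD_\CY^{\on{Serre}}:\IndCoh(\CY)^\vee\simeq \IndCoh(\CY)$. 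That it is involutive follows from the corresponding property of $\BD^{\on{Serre}}_\CY$ on $\Coh(\CY)$ recorded in \secref{ss:Serreduality}, together with the naturality in $\bC^0$ of the constructions of \secref{sss:properties of duality}(ii').

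Finally, to pin down the duality datum concretely — and thereby justify calling the equivalence ``Serre duality'' — I would identify the co-unit $\epsilon_{\IndCoh(\CY)}$ with the functor
$$\IndCoh(\CY)\otimes\IndCoh(\CY)\overset{\boxtimes}\longrightarrow \IndCoh(\CY\times\CY)\overset{\Delta_\CY^!}\longrightarrow \IndCoh(\CY)\overset{\Gamma^{\IndCoh}(\CY,-)}\longrightarrow \Vect,$$
i.e. $\CF_1,\CF_2\mapsto \Gamma^{\IndCoh}(\CY,\CF_1\sotimes\CF_2)$, where the first arrow makes sense by \corref{c:indcoh on product}. The needed compatibility — that under $\BD_\CY^{\on{Serre}}$ this agrees with the canonical pairing $\CMaps(-,-)$ on $\Ind(\Coh(\CY)^{\on{op}})\otimes\Ind(\Coh(\CY))$ — is exactly the content of \propref{p:Serre duality}, which gives $\CMaps(\BD^{\on{Serre}}_\CY(\CF_1),\CF_2)\simeq \Gamma^{\IndCoh}(\CY,\CF_1\sotimes\CF_2)$ for $\CF_1\in\Coh(\CY)$, and this extends to all of $\IndCoh(\CY)$ by continuity in $\CF_2$ and by continuity/functoriality in $\CF_1$. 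The only step requiring genuine care is this last one: checking that the explicit pairing is non-degenerate in the sense of exhibiting a duality datum; but that reduces to \propref{p:Serre duality} plus the formal generalities about compactly generated DG categories, the first two paragraphs being pure bookkeeping.
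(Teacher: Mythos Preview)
Your argument is correct and matches the paper's own proof essentially verbatim: invoke \thmref{IndCoh} to get $\IndCoh(\CY)\simeq\Ind(\Coh(\CY))$, apply \secref{sss:properties of duality}(ii'), and transport along the involutive Serre duality \eqref{e:Serre coh on stack}. Your final paragraph about the explicit co-unit is not needed for the corollary itself---it is exactly the content of the subsequent \propref{p:pairing for IndCoh}---but it is correct and anticipates what comes next.
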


\sssec{}

Will shall now describe explicitly the duality data $\epsilon_{\IndCoh(\CY)}$ and $\mu_{\IndCoh(\CY)}$
that corresponds to the equivalence \eqref{e:self-duality of IndCoh}. We claim:

\begin{prop}  \label{p:pairing for IndCoh} Let $\CY$ be a QCA algebraic stack. Then the duality \eqref{e:self-duality of IndCoh}
has as evaluation $\epsilon_{\IndCoh(\CY)}$ the functor 
\begin{equation} \label{e:counit for IndCoh}
\IndCoh(\CY)\otimes \IndCoh(\CY)\to \IndCoh(\CY\times \CY)\overset{\Delta^!_\CY}\longrightarrow
\IndCoh(\CY)\overset{\Gamma^{\IndCoh}(\CY,-)}\longrightarrow \Vect,
\end{equation}
and as a co-evaluation $\mu_{\IndCoh(\CY)}$ the functor
\begin{equation} \label{e:unit for IndCoh}
\Vect\overset{\omega_\CY\otimes -}\longrightarrow \IndCoh(\CY)\overset{(\Delta_\CY)^{\IndCoh}_*}\longrightarrow
\IndCoh(\CY\times \CY)\simeq \IndCoh(\CY)\otimes \IndCoh(\CY).
\end{equation}
\end{prop}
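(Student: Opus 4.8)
The plan is to verify that the pair of functors \eqref{e:counit for IndCoh} and \eqref{e:unit for IndCoh} satisfies the two triangle identities (i.e.\ the zigzag/snake axioms) that characterize a duality datum, given that we already know from \corref{c:self-duality of IndCoh} that \emph{some} self-duality exists. In fact the cleaner route is to not reprove existence but to identify the data: by \secref{sss:properties of duality}(ii') and \thmref{IndCoh}, the self-duality $\bD_\CY^{\on{Serre}}$ is uniquely pinned down by the requirement that it restrict on compacts to the Serre functor $\BD^{\on{Serre}}_\CY:\Coh(\CY)^{\on{op}}\iso\Coh(\CY)$ of \eqref{e:Serre coh on stack}. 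So it suffices to check that the evaluation functor $\epsilon_{\IndCoh(\CY)}$ proposed in \eqref{e:counit for IndCoh}, when restricted to $\Coh(\CY)\otimes\Coh(\CY)$, computes the canonical pairing associated with $\BD^{\on{Serre}}_\CY$ — and this is exactly the content of \propref{p:Serre duality}. Indeed, taking $\CF_1\in\Coh(\CY)$, formula \eqref{Hom via D} reads
$$\CMaps(\BD^{\on{Serre}}_\CY(\CF_1),\CF_2)\simeq \Gamma^{\IndCoh}(\CY,\CF_1\sotimes\CF_2)=\epsilon_{\IndCoh(\CY)}(\CF_1\otimes\CF_2),$$
which says precisely that the functor $\epsilon_{\IndCoh(\CY)}$ corepresents $\BD^{\on{Serre}}_\CY(\CF_1)$ in the second variable, i.e.\ it is the evaluation map of the duality determined by $\BD^{\on{Serre}}_\CY$ on compacts. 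Since both $\epsilon_{\IndCoh(\CY)}$ and the abstract evaluation are continuous in each variable (using that $\IndCoh(\CY)\otimes\IndCoh(\CY)\simeq\IndCoh(\CY\times\CY)$ by \corref{c:indcoh on product}, that $\Delta^!_\CY$ is continuous, and that $\Gamma^{\IndCoh}(\CY,-)$ is continuous by \secref{sss:Gamma ind}), agreement on compacts forces agreement everywhere. This identifies $\epsilon_{\IndCoh(\CY)}$.

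For the co-evaluation, I would then argue by uniqueness: once the dual category and the evaluation functor are fixed, the unit $\mu_{\IndCoh(\CY)}$ is determined up to canonical isomorphism by the triangle identities, so it is enough to exhibit \emph{a} functor $\Vect\to\IndCoh(\CY)\otimes\IndCoh(\CY)$ for which one (hence both) snake identities hold with $\epsilon_{\IndCoh(\CY)}$. I claim the functor \eqref{e:unit for IndCoh}, namely $V\mapsto (\Delta_\CY)^{\IndCoh}_*(\omega_\CY\otimes V)$, does the job. To check the zigzag $(\epsilon\otimes\id)\circ(\id\otimes\mu)\simeq\id$, one unwinds it using: base change for $\IndCoh$ along the relevant Cartesian square built from two copies of the diagonal (here is where \corref{c:base change for indcoh} and the base-change isomorphism \eqref{e:base change IndCoh} enter), the projection formula \eqref{e:proj formula IndCoh} for $(\Delta_\CY)^{\IndCoh}_*$, and the fact that $\Gamma^{\IndCoh}(\CY,\omega_\CY\sotimes\CF)\simeq\Gamma^{\IndCoh}(\CY,\CF)$ since $\omega_\CY$ is the monoidal unit for $\sotimes$ (see \secref{sss:IndCoh schemes ten} and its stacky analog). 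Composing these identifications reduces the zigzag to the identity functor. The second zigzag is symmetric, using that $\bD^{\on{Serre}}_\CY$ is involutive (so the roles of the two tensor factors are interchangeable up to the canonical symmetry of $\IndCoh(\CY\times\CY)\simeq\IndCoh(\CY)\otimes\IndCoh(\CY)$).

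The main obstacle I anticipate is purely bookkeeping at the $\infty$-categorical level: making the chain of base-change, projection-formula, and unit isomorphisms in the snake identity into a \emph{coherent} equivalence of functors, rather than just an equivalence on objects. Concretely, one must track the compatibility of the base-change datum for $(\Delta_\CY\times\id)$ versus $(\id\times\Delta_\CY)$ acting on $\CY\times\CY\times\CY$, and verify these agree with the associativity/symmetry constraints of the monoidal structure $\sotimes$. All the needed inputs are available — base change \eqref{e:base change IndCoh}, \corref{c:base change for indcoh}, the projection formula \eqref{e:proj formula IndCoh}, and \corref{c:indcoh on product} — but assembling them into a single coherent homotopy is the delicate part; in practice one either invokes the formalism of functors out of the category of correspondences (as alluded to in the introduction, cf.\ \secref{sss:corr formalism}) where all of this is packaged, or checks the triangle identity after passing to compact objects, where $\IndCoh(\CY)^c=\Coh(\CY)$ is a plain DG category and the identity can be verified on the nose, then extends by continuity.
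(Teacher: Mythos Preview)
Your proposal is correct and follows essentially the same approach as the paper: for the evaluation you invoke \propref{p:Serre duality} on compacts and extend by continuity, and for the co-evaluation you verify one triangle identity via base change along the Cartesian square formed by the diagonals of $\CY\times\CY\times\CY$. The paper carries this out explicitly using the square with $\Delta_\CY\times\on{id}$ and $\on{id}\times\Delta_\CY$, then simplifies the resulting four-fold composition to the identity; your mentions of the projection formula and the $\omega_\CY$-unit identity are not actually needed there, and the coherence worry you raise does not arise in the paper's treatment since the base-change step is a single application of the schematic base-change isomorphism.
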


\begin{proof}

Let $\CF_1,\CF_2$ be two objects of $\Coh(\CY)$. In order to identity $\epsilon_{\IndCoh(\CY)}$ with the functor
\eqref{e:counit for IndCoh}, we need to establish a functorial isomorphism
$$\CMaps_{\Coh(\CY)}(\BD^{\on{Serre}}_\CY(\CF_1),\CF_2)\simeq \Gamma^{\IndCoh}(\CY,\CF_1\sotimes \CF_2).$$
However, this is the content of \propref{p:Serre duality}.

\medskip

In order to prove that $\mu_{\IndCoh(\CY)}$ is given by \eqref{e:unit for IndCoh}, it is sufficient to show 
that the composition 
$$\IndCoh(\CY)\overset{\on{Id}_{\IndCoh(\CY)}\otimes \text{\eqref{e:unit for IndCoh}}}
\longrightarrow \IndCoh(\CY)\otimes \IndCoh(\CY)\otimes \IndCoh(\CY)
\overset{\text{\eqref{e:counit for IndCoh}}\otimes \on{Id}_{\IndCoh(\CY)}}\longrightarrow \IndCoh(\CY)$$
is isomorphic to the identity functor. 

\medskip

Consider the diagram
$$
\CD
\CY  @>{\Delta_\CY}>> \CY\times \CY  @>{\on{id}\times p_\CY}>>  \CY \\
@V{\Delta_\CY}VV   @VV{\on{id}\times \Delta_\CY}V \\
\CY\times \CY  @>{\Delta_\CY\times \on{id}}>> \CY\times \CY\times \CY \\
@V{p_\CY\times \on{id}}VV  \\
\CY.
\endCD
$$
We need to show that the functor
\begin{equation} \label{e:comp for mu}
(\on{Id}_{\IndCoh(\CY)}\otimes (p_\CY)^{\IndCoh}_*)
\circ (\on{id}\times \Delta_\CY)^!\circ (\Delta_\CY\times \on{id})_*^{\IndCoh}\circ (p^!_\CY\otimes \on{Id}_{\IndCoh(\CY)})
\end{equation}
is isomorphic to the identity functor. 

\medskip

However, in the above diagram the inner square is Cartesian and the arrows in it are schematic and quasi-compact. Therefore,
by the base change isomorphism, we have
$$(\Delta_\CY)_*^{\IndCoh}\circ (\Delta_\CY)^! \simeq
(\on{id}\times \Delta_\CY)^!\circ (\Delta_\CY\times \on{id})_*^{\IndCoh}:\IndCoh(\CY\times \CY)\to 
\IndCoh(\CY\times \CY).$$

\medskip

Therefore, the functor in \eqref{e:comp for mu} is isomorphic to
\begin{multline*}
(\on{Id}_{\IndCoh(\CY)}\otimes (p_\CY)^{\IndCoh}_*)\circ (\Delta_\CY)_*^{\IndCoh}\circ (\Delta_\CY)^! \circ
(p^!_\CY\otimes \on{Id}_{\IndCoh(\CY)})\simeq  \\
\simeq (\on{id}\times p_\CY)_*\circ (\Delta_\CY)_*^{\IndCoh}\circ (\Delta_\CY)^! \circ 
(p_\CY\times \on{id})^!\simeq \\
\simeq \left((\on{id}\times p_\CY)\circ \Delta_\CY\right)^{\IndCoh}_*\circ
\left(\Delta_\CY\circ (p_\CY\times \on{id})\right)^!\simeq (\on{id})^{\IndCoh}_*\circ \on{id}^!\simeq
\on{Id}.
\end{multline*}

\end{proof}

\sssec{}

Let $\pi:\CY_1\to \CY_2$ be a morphism of QCA algebraic stacks. We have the functors
$$\pi_*^{\IndCoh}:\IndCoh(\CY_1)\to \IndCoh(\CY_2) \text{ and } \pi^!:\IndCoh(\CY_2)\to \IndCoh(\CY_1).$$

We claim that these functors are related as follows. Recall the notion of dual functor, see \secref{sss:dual functors}.

\begin{prop} \label{p:duality of * and !}
Under the identifications $\bD^{\on{Serre}}_{\CY_i}:\IndCoh(\CY_i)^\vee\simeq \IndCoh(\CY_i)$, we have:
$$(\pi_*^{\IndCoh})^\vee\simeq \pi^!.$$
\end{prop}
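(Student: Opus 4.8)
The statement to prove is $(\pi_*^{\IndCoh})^\vee \simeq \pi^!$ under the Serre self-dualities $\bD^{\on{Serre}}_{\CY_i}$. My plan is to verify this directly at the level of the non-cocomplete categories of compact objects, using the description of dual functors from \secref{sss:dual functors} together with the compact generation theorem \thmref{IndCoh}, which gives $\IndCoh(\CY_i) = \Ind(\Coh(\CY_i))$ and $(\IndCoh(\CY_i))^c = \Coh(\CY_i)$. Since both $\pi_*^{\IndCoh}$ and $\pi^!$ are continuous (the first by construction in \secref{ss:indcoh dir image}, the second tautologically), and since a continuous functor is determined by its dual via the formula $F^\vee(\Phi) = \Phi \circ F$, it suffices to identify the two functors after passing to duals, i.e.\ to show that for $\CF_1 \in \Coh(\CY_1)$ and $\CF_2 \in \Coh(\CY_2)$ there is a functorial isomorphism
$$\CMaps(\BD^{\on{Serre}}_{\CY_2}(\pi_*^{\IndCoh}(\CF_1)), \CF_2) \simeq \CMaps(\pi^!(\BD^{\on{Serre}}_{\CY_2}(\CF_2)), \CF_1),$$
or, what is the same thing spelled out via \propref{p:Serre duality}, a matching of the two bilinear pairings.

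**Key steps, in order.** First I would unwind the meaning of $(\pi_*^{\IndCoh})^\vee$: under $\bD^{\on{Serre}}_{\CY_1}$ and $\bD^{\on{Serre}}_{\CY_2}$ it is the unique continuous functor $G: \IndCoh(\CY_2) \to \IndCoh(\CY_1)$ such that the pairing $\epsilon_{\IndCoh(\CY_1)}(G(-) \otimes -)$ on $\IndCoh(\CY_2) \otimes \IndCoh(\CY_1)$ agrees with $\epsilon_{\IndCoh(\CY_2)}((-) \otimes \pi_*^{\IndCoh}(-))$. Second, I would invoke \propref{p:pairing for IndCoh}, which computes $\epsilon_{\IndCoh(\CY_i)}$ explicitly as $\Gamma^{\IndCoh}(\CY_i, \Delta^!_{\CY_i}(- \boxtimes -))$. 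Third, the heart of the matter is then the identity
$$\Gamma^{\IndCoh}\bigl(\CY_2, \Delta^!_{\CY_2}(\CF_2 \boxtimes \pi_*^{\IndCoh}\CF_1)\bigr) \simeq \Gamma^{\IndCoh}\bigl(\CY_1, \Delta^!_{\CY_1}(\pi^!\CF_2 \boxtimes \CF_1)\bigr),$$
which I would prove by assembling a chain of base change and projection-formula isomorphisms. Concretely: $\Delta^!_{\CY_2}(\CF_2 \boxtimes \pi_*^{\IndCoh}\CF_1) \simeq \CF_2 \sotimes \pi_*^{\IndCoh}\CF_1 \simeq \pi_*^{\IndCoh}(\pi^!\CF_2 \sotimes \CF_1)$ by the projection formula for $\pi$ on $\IndCoh$ (established in \secref{ss:indcoh dir image}, cf.\ \secref{sss:functoriality IndCoh prestacks pushforward}), and then $\Gamma^{\IndCoh}(\CY_2, \pi_*^{\IndCoh}(-)) \simeq \Gamma^{\IndCoh}(\CY_1, -)$ because $(p_{\CY_2} \circ \pi)^{\IndCoh}_* \simeq (p_{\CY_1})^{\IndCoh}_*$ by compatibility of $\pi_*^{\IndCoh}$ with composition (shown at the end of \secref{ss:indcoh dir image}) together with \eqref{e:Gamma ren}. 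Running these two isomorphisms together yields the desired identity, and hence $G \simeq \pi^!$.

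**Where the obstacle lies.** The conceptual content is light — it is the standard fact that $!$-pushforward and $!$-pullback are dual under Serre/Verdier-type self-duality — but the technical care is in making sure the isomorphisms are \emph{functorial and compatible}, i.e.\ that I am genuinely constructing an isomorphism of functors $G \simeq \pi^!$ rather than just a pointwise one, and that the construction respects the duality data rather than some twist of it. The cleanest route is probably to reduce everything to the scheme case: present $\CY_1$ and $\CY_2$ via smooth atlases, use the compatibility of $\Gamma^{\IndCoh}$ with the limit presentation \eqref{e:Gamma IndCoh}, the base-change identity \eqref{e:! and * diag} relating $\Delta^!$ and $\boxtimes$, and the known scheme-level statement (the evaluation map for $\IndCoh(Z)$, Remark~\ref{r:pairing for IndCoh schemes}), and then descend. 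The main obstacle I anticipate is bookkeeping the interaction between the \v{C}ech-type descent for $\pi_*^{\IndCoh}$ and the duality pairings — ensuring the limit diagrams on both sides are the same diagram — but no genuinely new idea beyond \propref{p:pairing for IndCoh} and the projection formula should be required.
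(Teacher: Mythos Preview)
Your strategy of matching the evaluation pairings is a natural one, but the crucial step has a circularity. You invoke the projection formula
\[
\CF_2 \sotimes \pi_*^{\IndCoh}(\CF_1) \;\simeq\; \pi_*^{\IndCoh}\bigl(\pi^!(\CF_2)\sotimes \CF_1\bigr)
\]
and cite \secref{ss:indcoh dir image} and \secref{sss:functoriality IndCoh prestacks pushforward}. But neither of these establishes the projection formula for a general morphism $\pi$ between QCA stacks: \secref{sss:functoriality IndCoh prestacks pushforward} (and its consequence \eqref{e:proj formula IndCoh}) only treats \emph{schematic} quasi-compact $\pi$, and \secref{ss:indcoh dir image} constructs $\pi_*^{\IndCoh}$ for QCA stacks but proves nothing beyond compatibility with $\Psi$ and with composition. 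In fact, in the paper's logical order, the base-change isomorphism \corref{c:base change for indcoh} for non-schematic $\pi_*^{\IndCoh}$ --- which is what you would need to deduce the projection formula from the Cartesian square with $\Delta_{\CY_2}$ --- is derived \emph{from} \propref{p:duality of * and !} via \propref{p:proof of ren vs nonren indcoh} in \secref{sss:proof of ren vs nonren indcoh}. So the ingredient you want is downstream of the statement you are trying to prove.

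The paper avoids this by using the other description of the dual functor from \secref{sss:dual functors}: a continuous functor $F:\bC_1\to\bC_2$ and its dual $F^\vee$ correspond to the \emph{same object} of $\bC_1^\vee\otimes\bC_2$. Under the Serre self-dualities and the identification $\IndCoh(\CY_1)\otimes\IndCoh(\CY_2)\simeq\IndCoh(\CY_1\times\CY_2)$, the kernel of $\pi_*^{\IndCoh}$ is $(\on{id}_{\CY_1}\times\pi)^{\IndCoh}_*\circ(\Delta_{\CY_1})^{\IndCoh}_*(\omega_{\CY_1})=(\on{Graph}(\pi))^{\IndCoh}_*(\omega_{\CY_1})$, and the kernel of $\pi^!$ is $(\pi\times\on{id}_{\CY_2})^!\circ(\Delta_{\CY_2})^{\IndCoh}_*(\omega_{\CY_2})$. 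These agree by base change along the Cartesian square whose horizontal arrows are $\on{Graph}(\pi)$ and $\Delta_{\CY_2}$: both are \emph{schematic} and quasi-compact (being base changes of the diagonal of an algebraic stack), so only the already-established schematic base change of \secref{sss:functoriality IndCoh prestacks pushforward} is needed. This sidesteps any use of projection formula or base change for the non-schematic direction $\pi$.
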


\begin{proof}

We need to show that the object in 
$$\IndCoh(\CY_1)^\vee\otimes \IndCoh(\CY_2)\simeq \IndCoh(\CY_1)\otimes \IndCoh(\CY_2)\simeq \IndCoh(\CY_1\times \CY_2)$$
that corresponds to $\pi_*^{\IndCoh}$ is isomorphic to the object that corresponds to $\pi^!$. The former is given by
$$(\on{id}_{\CY_1}\times \pi)^{\IndCoh}_*\circ (\Delta_{\CY_1})^{\IndCoh}_*(\omega_{\CY_1}),$$
and the latter by
$$(\pi\times\on{id}_{\CY_2})^!\circ (\Delta_{\CY_2})_*^{\IndCoh}(\omega_{\CY_2}).$$

The needed isomorphism follows by base change (see \secref{sss:functoriality IndCoh prestacks pushforward}) from the Cartesian diagram
$$
\CD
\CY_1 @>{\on{Graph}(\pi)}>> \CY_1\times \CY_2 \\
@V{\pi}VV   @VV{\pi\times \on{id}_{\CY_2}}V  \\
\CY_2  @>{\Delta_{\CY_2}}>>  \CY_2\times \CY_2,
\endCD
$$
in which the horizontal arrows are schematic and quasi-compact.

\end{proof}

\sssec{Proof of \propref{p:proof of ren vs nonren indcoh}} \label{sss:proof of ren vs nonren indcoh}

As was mentioned in Remark \ref{r:Gamma ind}, we note that the assertion of the proposition when $\CY_2=\on{pt}$ is the isomorphism 
\eqref{e:Gamma IndCoh}.

\medskip

In the general case, by \propref{p:duality of * and !}, it suffices to show that for $\CF_2\in \Coh(\CY_2)$ and $\CF_1\in \IndCoh(\CY_1)$ the natural map
\begin{equation} \label{e:ren vs nonren indcoh one}
\Gamma^{\IndCoh}(\CY_1,\CF_1\sotimes \pi^!(\CF_2))\to 
\Gamma^{\IndCoh}(\CY_2,\pi^{\IndCoh}_{\on{non-ren},*}(\CF_1)\sotimes \CF_2)
\end{equation}
is an isomorphism. We rewrite the right-hand side as
$$\CMaps_{\IndCoh(\CF_2)}(\BD_{\CY_2}^{\on{Serre}}(\CF_2),\pi^{\IndCoh}_{\on{non-ren},*}(\CF_1)),$$
and further as 
$$\underset{(S,g)\in ((\on{DGSch}_{\on{aft}})_{/\CY_1,\on{smooth}})^{\on{op}}}
{\underset{\longleftarrow}{lim}}\, 
\CMaps_{\IndCoh(\CF_2)}\left(\BD_{\CY_2}^{\on{Serre}}(\CF_2),(\pi\circ g)^{\IndCoh}_*(g^{\IndCoh,*}(\CF_1))\right).$$

\medskip

The latter expression can be rewritten as 
\begin{multline*}
\underset{(S,g)\in ((\on{DGSch}_{\on{aft}})_{/\CY_1,\on{smooth}})^{\on{op}}}
{\underset{\longleftarrow}{lim}}\, \Gamma^{\IndCoh}(\CY_2,(\pi\circ g)^{\IndCoh}_*(g^{\IndCoh,*}(\CF_1))\sotimes \CF_2)
\simeq \\
\simeq \underset{(S,g)\in ((\on{DGSch}_{\on{aft}})_{/\CY_1,\on{smooth}})^{\on{op}}}
{\underset{\longleftarrow}{lim}}\, \Gamma^{\IndCoh}(S,g^{\IndCoh,*}(\CF_1)\sotimes (\pi\circ g)^!(\CF_2)).
\end{multline*}

\medskip

Using the fact that
$$g^{\IndCoh,*}(\CF_1)\sotimes (\pi\circ g)^!(\CF_2)\simeq g^{\IndCoh,*}(\CF_1\sotimes \pi^!(\CF_2))$$
(see \eqref{e:! and * diag}), 
we finally obtain that the right-hand side in \eqref{e:ren vs nonren indcoh one} is isomorphic to
$$\underset{(S,g)\in ((\on{DGSch}_{\on{aft}})_{/\CY_1,\on{smooth}})^{\on{op}}}
{\underset{\longleftarrow}{lim}}\, \Gamma^{\IndCoh}\left(S,g^{\IndCoh,*}(\CF_1\sotimes \pi^!(\CF_2))\right)
\simeq \Gamma^{\IndCoh}(\CY_1,\CF_1\sotimes \pi^!(\CF_2)),$$
as required.

\qed

\section{Recollections: D-modules on DG schemes}  \label{s:Dmods on schemes}  

This section is devoted to a review of the theory of D-modules on (DG) schemes. As was mentioned
in the introduction, this material is well-known at the level of triangulated categories. However, no
comprehensive account seems to exist at the DG level.\footnote{That said, the ``local" aspects of the theory
of D-modules (i.e., when we only need to pull back, but not push forward) is a formal consequence of
$\IndCoh$ by the procedure of passage to the de Rham prestack. Details on that can be found in \cite{GR1}.}

\medskip

We remind that according to the conventions of \secref{ss:laft}, \emph{all DG schemes, algebraic stacks and prestacks are assumed locally almost of finite type, unless specified otherwise.} 





\ssec{The basics}


\sssec{}

To any quasi-compact DG scheme\footnote{According to Sect.~\ref{podstilka} below, $\on{D-mod}(Z)$ depends only on the 
underlying classical scheme ${}^{cl\!}Z$. 
The only reason for working in the format of DG schemes is that
we will discuss the relation between $\on{D-mod}(Z)$ and  the category
$\IndCoh(Z)$, which depends on the DG structure.}
$Z$ one assigns the category $\on{D-mod}(Z)$ of right D-modules on $Z$. 

\sssec{}

By definition,
$$\on{D-mod}(Z):=\IndCoh(Z_{\on{dR}}),$$
see \cite[Sect. 2.3.2]{GR1}. (Note that in {\it loc.cit}, the category $\on{D-mod}(Z)$
is denoted $\on{Crys}^r(Z)$.)

\medskip

In the above formula $Z_{\on{dR}}$ is the de Rham prestack of $Z$, i.e.,
$$\on{Maps}(S,Z_{\on{dR}}):=\on{Maps}(({}^{cl\!}S)_{red},Z),$$
where $({}^{cl\!}S)_{red}$ denotes the classical reduced scheme underlying $S$, see \cite[Sect. 1.1.1]{GR1}.

\sssec{}

For any map $f:Z_1\to Z_2$
of quasi-compact DG schemes, there exists a canonically defined continuous functor
$$f^!:\Dmod(Z_2)\to \Dmod(Z_1).$$

\medskip

If $f$ is proper\footnote{A morphism of DG schemes is said to be proper if the underlying morphism of classical schemes
is.}, the functor $f^!$ admits a \emph{left} adjoint, denoted $f_{\on{dR},*}$. 
If $f$ is an open embedding, the functor $f^!$ admits a continuous \emph{right} adjoint, also denoted $f_{\on{dR},*}$. 



\sssec{Descent} \label{Zariski descent}

The assignment $Z\rightsquigarrow \Dmod(Z)$ satisfies fppf descent. 

\medskip

In particular, it satisfies Zariski descent, so the category $\on{D-mod}(Z)$ is glued from the categories 
$\on{D-mod}(U_i)$, where $\{U_i\}$ is a Zariski-open affine cover of $Z$.

\medskip

Therefore, for many purposes it is sufficient to consider the case of affine DG schemes.

\medskip

In addition, gluing can be used to define $\Dmod(Z)$ on a not necessarily quasi-compact DG scheme, as well
as the functor $f^!:\Dmod(Z_2)\to \Dmod(Z_1)$ for a map $f:Z_1\to Z_2$ of not necessarily 
quasi-compact DG schemes.

\medskip

This will be a particular case of the definition of $\Dmod(\CY)$ on a prestack $\CY$,
see \secref{sss:Dmod on prestacks}.

\sssec{Relation between $\on{D-mod}(Z)$ and $\IndCoh(Z)$}  \label{sss:DR-pi^!}

For a DG scheme $Z$ we have a pair of mutually adjoint (continuous) functors
$$\ind_{\on{D-mod}(Z)}:\IndCoh(Z)\rightleftarrows \on{D-mod}(Z):\oblv_{\on{D-mod}(Z)},$$
with $\oblv_{\on{D-mod}(Z)}$ being conservative. The functor $\oblv_{\on{D-mod}(Z)}$ corresponds to pullback 
along the tautological morphism 
$Z\to Z_{\on{dR}}$.

\medskip

For a morphism of DG schemes $f:Z_1\to Z_2$, we have a commutative diagram
$$
\CD
\IndCoh(Z_1)  @<{\oblv_{\on{D-mod}(Z_1)}}<< \on{D-mod}(Z_1)  \\
@A{f^!}AA    @AA{f^!}A   \\
\IndCoh(Z_2)  @<{\oblv_{\on{D-mod}(Z_2)}}<< \on{D-mod}(Z_2). 
\endCD
$$

\medskip

In particular, by taking $Z_1=Z$ and $Z_2=\on{pt}$, we obtain that 
the dualizing complex $\omega_Z$, initially defined
as an object of $\IndCoh(Z)$, naturally upgrades to (i.e., is the
image under $\oblv_{\on{D-mod}(Z)}$ of) a canonically defined object of
$\on{D-mod}(Z)$. By a slight abuse of notation, we denote the latter
by the same character $\omega_Z$.

\medskip

As a consequence of \lemref{l:! cons IndCoh} and the conservativeness 
of the functor $\oblv_{\on{D-mod}(Z)}$, we obtain:

\begin{lem} \label{l:! cons D}
Let a morphism $f:Z_1\to Z_2$ be surjective on $k$-points. Then the functor
$f^!:\on{D-mod}(Z_2)\to \on{D-mod}(Z_1)$ is conservative.
\end{lem}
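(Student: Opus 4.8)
The plan is to deduce this purely formally from \lemref{l:! cons IndCoh}, using the compatibility between $f^!$ and the forgetful functors $\oblv_{\on{D-mod}(-)}$ recorded in \secref{sss:DR-pi^!}, together with the conservativeness of $\oblv_{\on{D-mod}(Z)}$.

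Concretely, I would first invoke the commutative square
$$
\CD
\IndCoh(Z_1)  @<{\oblv_{\on{D-mod}(Z_1)}}<< \on{D-mod}(Z_1)  \\
@A{f^!}AA    @AA{f^!}A   \\
\IndCoh(Z_2)  @<{\oblv_{\on{D-mod}(Z_2)}}<< \on{D-mod}(Z_2),
\endCD
$$
which supplies a canonical isomorphism $\oblv_{\on{D-mod}(Z_1)}\circ f^!\simeq f^!\circ \oblv_{\on{D-mod}(Z_2)}$ of functors $\on{D-mod}(Z_2)\to\IndCoh(Z_1)$ (the left $f^!$ being the $\on{D-mod}$-pullback, the right one the $\IndCoh$-pullback). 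Now suppose $\CM\in\on{D-mod}(Z_2)$ satisfies $f^!(\CM)=0$. Applying $\oblv_{\on{D-mod}(Z_1)}$ and using this isomorphism gives $f^!\bigl(\oblv_{\on{D-mod}(Z_2)}(\CM)\bigr)=0$ in $\IndCoh(Z_1)$. Under the standing assumption that $k$ is algebraically closed (see \secref{ss:conventions}), surjectivity of $f$ on $k$-points coincides with surjectivity at the level of geometric points, so \lemref{l:! cons IndCoh} applies: the functor $f^!:\IndCoh(Z_2)\to\IndCoh(Z_1)$ is conservative, whence $\oblv_{\on{D-mod}(Z_2)}(\CM)=0$. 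Since $\oblv_{\on{D-mod}(Z_2)}$ is itself conservative, $\CM=0$, and therefore $f^!:\on{D-mod}(Z_2)\to\on{D-mod}(Z_1)$ is conservative.

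I do not anticipate a genuine obstacle: once the two inputs are in place the argument is a one-line diagram chase. The only points needing a moment's care are (a) that the displayed square really does furnish the stated isomorphism of composite functors, and not merely a natural transformation in one direction --- but this is exactly the content of \secref{sss:DR-pi^!}; and (b) the elementary translation between ``surjective on $k$-points'' and ``surjective on geometric points'', which is harmless because $k$ has been assumed algebraically closed.
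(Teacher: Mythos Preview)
Your proposal is correct and matches the paper's own argument exactly: the paper states the lemma immediately after the sentence ``As a consequence of \lemref{l:! cons IndCoh} and the conservativeness of the functor $\oblv_{\on{D-mod}(Z)}$, we obtain,'' which is precisely the diagram chase you wrote out. Your additional remark about $k$-points versus geometric points is a small clarification the paper leaves implicit.
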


\sssec{Tensor product}  \label{sss:tens_schemes}
For a pair of DG schemes $Z_1$ and $Z_2$ we have a canonical (continuous) functor
$$\on{D-mod}(Z_1)\otimes \on{D-mod}(Z_2)\to \on{D-mod}(Z_1\times Z_2),$$
which is an equivalence if $Z_1$ and $Z_2$ are quasi-compact.

\begin{rem}
According to Corollary~\ref{c:D on prod} below, quasi-compactness of \emph{one} of the DG schemes is enough.
\end{rem}

\medskip

In particular, we have a functor of tensor product 
$$\on{D-mod}(Z)\otimes \on{D-mod}(Z)\to \on{D-mod}(Z)$$
equal to
$$\on{D-mod}(Z)\otimes \on{D-mod}(Z)\to \on{D-mod}(Z\times Z)\overset{\Delta^!_Z}\longrightarrow \on{D-mod}(Z).$$
We denote this functor by
$$\CM_1,\CM_2\mapsto \CM_1\overset{!}\otimes \CM_2.$$
This defines a symmetric monoidal structure on the category $\Dmod(Z)$. The unit in the category
is $\omega_Z$.

\medskip

By \secref{sss:DR-pi^!}, we have:
$$\oblv_{\on{D-mod}(Z)}(\CM_1)\overset{!}\otimes \oblv_{\on{D-mod}(Z)}(\CM_2)\simeq
\oblv_{\on{D-mod}(Z)}(\CM_1\overset{!}\otimes \CM_2),$$ where 
$$\overset{!}\otimes:\IndCoh(Z)\otimes \IndCoh(Z)\to \IndCoh(Z)$$
is as in \secref{sss:IndCoh schemes ten}.

\medskip

By adjunction, for $\CF\in \IndCoh(Z)$ and $\CM\in \Dmod(Z)$, we have a canonical map
\begin{equation} \label{e:tensor with induction}
\ind_{\on{D-mod}(Z)}\left(\CF\overset{!}\otimes \oblv_{\on{D-mod}(Z)}(\CM)\right)\to
\ind_{\on{D-mod}(Z)}(\CF)\overset{!}\otimes \CM.
\end{equation}
It is easy to show (e.g., using Kashiwara's lemma below) that the map \eqref{e:tensor with induction} is an isomorphism.

\sssec{Kashiwara's lemma} 

If $i:Z_1\to Z_2$ is a closed embedding, \footnote{A map of DG schemes is called
a closed embedding if the map of the underlying classical schemes is.}
then the functor $i_{\on{dR},*}$ induces an equivalence
\begin{equation} \label{e:Kashiwara}
\on{D-mod}(Z_1)\to \on{D-mod}(Z_2)_{Z_1},
\end{equation}
where $\on{D-mod}(Z_2)_{Z_1}$ is the full subcategory of $\on{D-mod}(Z_2)$ that consists
of objects that vanish on the complement $Z_2-Z_1$. The inverse equivalence is given by
$i^!|_{\on{D-mod}(Z_2)_{Z_1}}$. 

\medskip

This observation allows to reduce the local aspects of the theory of D-modules on DG schemes
to those on smooth classical schemes. 

\sssec{Topological invariance}  \label{podstilka}

In particular, if a map $i:Z_1\to Z_2$ is such that the induced 
map $$({}^{cl\!}Z_1)_{red}\to ({}^{cl\!}Z_2)_{red}$$ is an isomorphism,
then the functors
\begin{equation} \label{e:nil embedding}
i_{\on{dR},*}:\on{D-mod}(Z_1)\rightleftarrows \on{D-mod}(Z_2):i^!
\end{equation}
are equivalences. 

\medskip

This shows, in particular, that for any $Z$, pullback along the canonical map
$({}^{cl\!}Z)_{red}\to Z$ induces an equivalence
$$\on{D-mod}(Z)\to \on{D-mod}(({}^{cl\!}Z)_{red}).$$

\medskip

So, when discussing the aspects of the theory of D-modules that do not
involve the functors $\ind_{\on{D-mod}(Z)}$ and $\oblv_{\on{D-mod}(Z)}$,
we can (and will) restrict ourselves to classical schemes, and can even assume 
that they are reduced, without losing in generality. 

\sssec{t-structure}  \label{Dt}

The category $\on{D-mod}(Z)$ has a canonical t-structure. It is defined so that $\on{D-mod}(Z)^{>0}$ consists of 
all $\CF\in \on{D-mod}(Z)$ such that $\oblv_{\on{D-mod}(Z)}(\CF )\in\IndCoh(Z)^{>0}$. 

\medskip

For a closed embedding $i:Z_1\to Z_2$, the functor $i_{\on{dR},*}$ is t-exact. In particular,
the equivalence \eqref{e:Kashiwara} is compactible with t-structures, where the t-structure
on $\on{D-mod}(Z_2)_{Z_1}$ is induced by that on $\on{D-mod}(Z_2)$.

\medskip

By definition, the functor $\oblv_{\on{D-mod}(Z)}$ is left t-exact. If $Z$ is smooth, then 
$\oblv_{\on{D-mod}(Z)}$ is t-exact. For any quasi-compact $Z$ it has finite cohomological amplitude:
to prove this, reduce to the case where $Z$ is affine and then embed $Z$ into a smooth classical scheme.

\medskip

For the same reason, the functor $\ind_{\on{D-mod}(Z)}$ is always t-exact.

\begin{lem} \label{D-properties of t} 
The t-structure on $\on{D-mod}(Z)$ is left-complete and is compatible with filtered colimits.
\end{lem}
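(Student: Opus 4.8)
The plan is to exploit the Zariski descent property of $\Dmod(-)$ recalled in \secref{Zariski descent} together with the defining relation $\Dmod(Z)=\IndCoh(Z_{\on{dR}})$ and, ultimately, Kashiwara's lemma \eqref{e:Kashiwara}, which lets us replace $Z$ by a smooth classical scheme. Concretely, I would first reduce to the case where $Z$ is affine: both left-completeness and compatibility with filtered colimits are properties formulated componentwise on a Zariski-open affine cover $\{U_i\}$, and since $\Dmod(Z)=\lim_i \Dmod(U_i)$ with continuous transition functors, filtered colimits and the relevant limits $\lim_n \tau^{\ge -n}$ are computed componentwise (as for $\QCoh$ in \lemref{properties of t}). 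By \secref{podstilka} we may moreover assume $Z$ is classical and reduced.

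For \emph{compatibility with filtered colimits}: I would observe that the forgetful functor $\oblv_{\Dmod(Z)}:\Dmod(Z)\to \IndCoh(Z)$ is continuous, conservative, and by \secref{Dt} of finite cohomological amplitude (embed the affine $Z$ into a smooth classical scheme and use that $\oblv$ is t-exact there, combined with Kashiwara). Since the t-structure on $\IndCoh(Z)$ is compatible with filtered colimits (see \secref{sss:Ind and QCoh}), and $\oblv_{\Dmod(Z)}$ detects connectivity up to a fixed shift while commuting with colimits, it follows that a filtered colimit of objects in $\Dmod(Z)^{>0}$ lands in $\Dmod(Z)^{>0}$. More precisely, if $\CM=\colim_\alpha \CM_\alpha$ with $\CM_\alpha\in \Dmod(Z)^{>0}$, then $\oblv_{\Dmod(Z)}(\CM)=\colim_\alpha \oblv_{\Dmod(Z)}(\CM_\alpha)$ lies in $\IndCoh(Z)^{>-d}$ for the amplitude bound $d$; one then checks directly that the truncation functors are built from this data, or equivalently argues that $\tau^{\le 0}$ commutes with the filtered colimit because $\Hom$ out of compact generators of $\Dmod(Z)$ does.

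For \emph{left-completeness}: I would transport the statement to $\IndCoh(Z)$ using that $\oblv_{\Dmod(Z)}$ is conservative and bounded, together with the fact (\cite[Proposition 1.3.4]{IndCoh}) that $\QCoh(Z)$ — which \emph{is} left-complete — is the left completion of $\IndCoh(Z)$; alternatively, and more cleanly, I would use Kashiwara to reduce to $Z$ smooth classical, where $\oblv_{\Dmod(Z)}$ is t-exact and hence transports the left-completeness of $\IndCoh(Z)=\QCoh(Z)$ (smooth classical case, \secref{sss:Ind and QCoh}) to $\Dmod(Z)$. The one point requiring care is that left-completeness of $\Dmod(Z)$ is \emph{not} obviously inherited from a bounded conservative functor alone; one genuinely needs that the functor in question is t-exact (or exact up to shift with an inverse control), which is exactly what the reduction to the smooth classical situation via Kashiwara provides. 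I expect this reduction — keeping track of the fact that Kashiwara's equivalence is t-exact (stated in \secref{Dt}) so that left-completeness is preserved — to be the main (though still routine) obstacle, the rest being formal manipulations with limits and colimits in $\StinftyCat_{\on{cont}}$.
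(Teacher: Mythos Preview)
Your approach is essentially the paper's. For compatibility with filtered colimits, the paper observes that it is immediate from the definition: $\Dmod(Z)^{>0}$ is the preimage of $\IndCoh(Z)^{>0}$ under the continuous functor $\oblv_{\Dmod(Z)}$, and $\IndCoh(Z)^{>0}$ is closed under filtered colimits. Your detour through bounded amplitude and compact generators is unnecessary.

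For left-completeness, your route (b) is exactly the paper's strategy: reduce to $Z$ affine, embed $i:Z\hookrightarrow Y$ with $Y$ smooth affine classical via Kashiwara (which is t-exact), and use a conservative t-exact functor to a left-complete target. The paper composes further to land in $\Vect$ (via $\Gamma\circ\Psi_Y\circ\oblv_{\Dmod(Y)}\circ i_{\on{dR},*}$), where left-completeness is obvious; you instead stop at $\QCoh(Y)$ and invoke \lemref{properties of t}. Both work. One ingredient you leave implicit but should state: the functor must also \emph{commute with limits} (so that it intertwines $\lim_n\tau^{\geq -n}$), which holds here because $\oblv_{\Dmod(Y)}$ is a right adjoint and $i_{\on{dR},*}$ identifies $\Dmod(Z)$ with the full subcategory $\Dmod(Y)_Z=\ker(j^!)$, closed under limits since $j^!$ (for the open complement) admits a left adjoint. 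Your route (a), invoking that $\QCoh(Z)$ is the left completion of $\IndCoh(Z)$, does not lead anywhere as written, since $\IndCoh(Z)$ is generally not left-complete and a merely bounded conservative functor does not transport left-completeness; you were right to prefer (b).
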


The meaning of these words is explained in Lemma~\ref{properties of t}.

\begin{proof} 
Compatibility with filtered colimits is clear from the definition of $\on{D-mod}(Z)^{>0}$.
To prove left-completeness, it suffices to consider the case where $Z$ is affine.
In this case it follows from the existence of a conservative
t-exact functor $\Phi:\on{D-mod}(Z)\to\Vect$ commuting with limits. To construct such $\Phi$,
choose an embedding $i:Z\hookrightarrow Y$ with $Y$ affine and smooth, then take
$\Phi$ to be the composition of 
$i_{\on{dR},*}:\on{D-mod}(Z)\to \on{D-mod}(Y)$, $\oblv_{\on{D-mod}(Y)}:\on{D-mod}(Y)\to \IndCoh(Y)$, 
$\Psi_\CY:\IndCoh(Y)\simeq \QCoh(Y)$ and $\Gamma :\QCoh(Y)\to\Vect$.
\end{proof} 

\sssec{Relation between $\on{D-mod}(Z)$ and $\QCoh(Z)$}  \label{sss:Dmod and QCoh}

It follows from \lemref{D-properties of t} that the functor
$$\ind_{\Dmod(Z)}:\IndCoh(Z)\to \Dmod(Z)$$ 
canonically factors as
$$\IndCoh(Z)\overset{\Psi_Z}\longrightarrow \QCoh(Z)\to \Dmod(Z).$$
This is a formal consequence of the fact that the functor $\Psi_Z$
identifies $\QCoh(Z)$ with the left completion of $\IndCoh(Z)$ with
respect to its t-structure, while $\Dmod(Z)$ is left-complete and
$\ind_{\Dmod(Z)}$ is right t-exact.

\medskip

We shall denote the resulting functor $\QCoh(Z)\to \Dmod(Z)$ by
$'\ind_{\Dmod(Z)}$. In addition, we have a functor
$'\oblv_{\Dmod(Z)}:\Dmod(Z)\to \QCoh(Z)$ defined as
$$'\oblv_{\Dmod(Z)}:=\Psi_Z\circ \oblv_{\Dmod(Z)}.$$

It follows from Kashiwara's lemma that the functor $'\oblv_{\Dmod(Z)}$ is also
conservative.

\medskip

Assume now that $Z$ is eventually coconnective (we remind that this implies that the functor 
$\Psi_Z$ admits a fully faithful 1 adjoint). Again, it follows formally that in this case the
functors 
$$'\ind_{\Dmod(Z)}:\QCoh(Z)\rightleftarrows \Dmod(Z):{}'\oblv_{\Dmod(Z)}$$
are mutually adjoint.

\begin{rem}
We emphasize, however, that the latter case is \emph{false} of $Z$ is not essentially
coconnective. E.g., in the latter case the functor $'\ind_{\Dmod(Z)}$ does not send
compact objects to compact ones.
\end{rem}

\begin{rem}
The category $\Dmod(Z)$, equipped with the functor $'\oblv_{\Dmod(Z)}$, is the more
familiar realization of D-modules as right D-modules (but which only works in the
eventually coconnective case). 
\end{rem}

\sssec{The ``left" realization}  \label{sss:left realization}

For completeness let us mention that in addition to $\oblv_{\Dmod(Z)}$, there is another canonically defined forgetful
functor
$$\oblv^{\on{left}}_{\Dmod(Z)}:\Dmod(Z)\to \QCoh(Z),$$
responsible for the realization of $\Dmod(Z)$ as ``left D-modules". 

\medskip

For a map $f:Z_1\to Z_2$ of DG schemes, the following diagram naturally commutes:
$$
\CD
\QCoh(Z_1)  @<{\oblv^{\on{left}}_{\on{D-mod}(Z_1)}}<< \on{D-mod}(Z_1)  \\
@A{f^*}AA    @AA{f^!}A   \\
\QCoh(Z_2)  @<{\oblv^{\on{left}}_{\on{D-mod}(Z_2)}}<< \on{D-mod}(Z_2). 
\endCD
$$

The functors
$\oblv^{\on{left}}_{\Dmod(Z)}$ and $\oblv_{\Dmod(Z)}$ are related by the formula
$$\oblv_{\Dmod(Z)}(\CM)\simeq \oblv^{\on{left}}_{\Dmod(Z)}(\CM)\otimes \omega_Z,$$
where $\otimes$ is understood in the sense of the action of $\QCoh(Z)$ on 
$\IndCoh(Z)$, see \secref{sss:IndCoh schemes}.

\medskip

In addition, we have a functor
$$\ind^{\on{left}}_{\Dmod(Z)}:\QCoh(Z)\to \Dmod(Z)$$ defined by the formula
$$\ind^{\on{left}}_{\Dmod(Z)}(\CF):=\ind_{\Dmod(Z)}(\CF\otimes \omega_Z).$$
It again follows formally that when $Z$ is eventually coconnective, the functors
$$(\ind^{\on{left}}_{\Dmod(Z)},\oblv^{\on{left}}_{\Dmod(Z)})$$ form an adjoint pair.

\medskip

For any $Z$ one has
\begin{equation}  \label{e:two_obls-schemes}
'\oblv_{\Dmod(Z)}(\CM)\simeq \oblv^{\on{left}}_{\Dmod(Z)}(\CM)\otimes \Psi_Z(\omega_Z), \quad \CM\in \Dmod(Z)
\end{equation}
and
\begin{equation}   \label{e:two_inds-schemes}
\ind^{\on{left}}_{\Dmod(Z)}(\CF)\simeq {}'\ind_{\Dmod(Z)}(\CF\otimes \Psi_Z(\omega_Z)), \quad \CF\in\QCoh(Z),
\end{equation}
where $\Psi_Z:\IndCoh(Z)\to \QCoh(Z)$ is the functor of \secref{sss:Ind and QCoh}.

\sssec{Coherence and compact generation}

Let $\on{D-mod}_{\on{coh}}(Z)\subset \on{D-mod}(Z)$ denote the full subcategory of bounded complexes 
whose cohomology sheaves are coherent (i.e., locally finitely generated) D-modules.

\medskip

If $Z$ is quasi-compact, we have $\on{D-mod}_{\on{coh}}(Z)=\on{D-mod}(Z)^c$, and this subcategory
generates $\on{D-mod}(Z)$. I.e., 
$$\on{D-mod}(Z)\simeq \on{Ind}(\on{D-mod}_{\on{coh}}(Z)).$$

In fact, this is a formal consequence of the following three facts: (a) that the functor $\oblv_{\on{D-mod}(Z)}$ is conservative;
(b) that $\ind_{\on{D-mod}(Z)}$ sends $\Coh(Z)$ to $\on{D-mod}_{\on{coh}}(Z)$ (which follows from Kashiwara's
lemma), and (c) that for $Z$ quasi-compact $\Coh(Z)$ compactly generates $\IndCoh(Z)$. 

\ssec{The de Rham cohomology functor on DG schemes}  \label{ss:de Rham pushforward schemes}

\sssec{}  \label{sss:properties of Dmod}

Let $f:Z_1\to Z_2$ be a quasi-compact morphism between DG schemes. In this case the classical theory
of D-modules constructs a continuous functor:
$$f_\dr:\on{D-mod}(Z_1)\to \Dmod(Z_2).$$

\medskip

The following are the some of the key features of this functor:

\smallskip

\noindent(i) The assignment $f\rightsquigarrow f_\dr$ is compatible with composition of functors
in the natural sense.

\smallskip

\noindent(ii) For $f$ proper, the functor $f_\dr$ is the left adjoint to $f^!$.

\smallskip

\noindent(iii) For $f$ an open embedding, the functor $f_\dr$ is the right adjoint to $f^!$.

\smallskip

\noindent (iv) For a Cartesian square
$$
\CD
Z'_1   @>{g_1}>>  Z_1 \\
@V{f'}VV   @VV{f}V  \\
Z'_2   @>{g_2}>>  Z_2
\endCD
$$
we have a canonical isomorphism of functors $\on{D-mod}(Z_1)\to \on{D-mod}(Z'_2)$
\begin{equation} \label{e:Dmod base change}
f'_\dr\circ g_1^!\simeq g_2^!\circ f_\dr.
\end{equation}

\medskip

However, even the formulation of these properties in the framework on $\infty$-categories
is not straightforward. For example, it is not so easy to formulate the compatibility between
the isomorphisms (i) and (iv), and also between (ii) or (iii) and (iv). \footnote{Note that when
$f$ is either proper or open, there is a canonical map in one direction in \eqref{e:Dmod base change}
by adjunction. So, in particular, we must have a compatibility condition that says that in either
of these cases, the two maps in \eqref{e:Dmod base change}: one arising by adjunction and the 
other by the data of (iv), must coincide.}

\medskip

At the same time, an $\infty$-category formulation is necessary for the treatment of the category of D-modules
on stacks, as the latter involves taking limits in $\StinftyCat_{\on{cont}}$.

\sssec{}   \label{sss:corr formalism}

We shall adopt the approach taken in \cite{FG}, Sect. 1.4.3, which was initially suggested by
J.~Lurie; it will be developed in detail in \cite{GR2}.

\medskip

Namely, let $(\on{DGSch}_{\on{aft}})_{\on{corr}}$ be the $(\infty,1)$-category whose objects are the same
as those of $\on{DGSch}_{\on{aft}}$, and where the $\infty$-\emph{groupoid} of $1$-morphisms
$\on{Maps}_{(\on{DGSch}_{\on{aft}})_{\on{corr}}}(Z_1,Z_2)$ is that of correspondences
\begin{gather}  
\xy
(-15,0)*+{Z_1}="A";
(15,0)*+{Z_2.}="B";
(0,15)*+{Z_{1,2}}="C";
{\ar@{->}_{f_l} "C";"A"};
{\ar@{->}^{f_r} "C";"B"};
\endxy
\end{gather}
Compositions in this category are defined by forming Cartesian products:
$$Z_{2,3}\circ Z_{1,2}=Z_{1,3}:$$

\begin{gather}  
\xy
(-15,0)*+{Z_1}="A";
(15,0)*+{Z_2}="B";
(0,15)*+{Z_{1,2}}="C";
(45,0)*+{Z_3.}="D";
(30,15)*+{Z_{2,3}}="E";
(15,30)*+{Z_{1,3}}="F";
{\ar@{->} "C";"A"};
{\ar@{->} "C";"B"};
{\ar@{->} "E";"B"};
{\ar@{->} "E";"D"};
{\ar@{->} "F";"C"};
{\ar@{->} "F";"E"};
\endxy
\end{gather}

\sssec{}   \label{sss:restr corr}

The category $(\on{DGSch}_{\on{aft}})_{\on{corr}}$ contains $\on{DGSch}_{\on{aft}}$ and $(\on{DGSch}_{\on{aft}})^{\on{op}}$
as 1-full subcategories where we restrict $1$-morphisms by requiring that $f_l$ (resp., $f_r$) be an 
isomorphism.

\medskip

The theory of D-modules is a functor
$$\Dmod_{(\on{DGSch}_{\on{aft}})_{\on{corr}}}:(\on{DGSch}_{\on{aft}})_{\on{corr}}\to \StinftyCat_{\on{cont}}.$$

\medskip

At the level of objects, this functor assigns to $Z\in \on{DGSch}_{\on{aft}}$ the category
$\Dmod(Z)$. 

\medskip

The restriction of $\Dmod_{(\on{DGSch}_{\on{aft}})_{\on{corr}}}$ to $\on{DGSch}_{\on{aft}}\subset (\on{DGSch}_{\on{aft}})_{\on{corr}}$, denoted
$\Dmod_{\on{DGSch}_{\on{aft}}}$, expresses our ability to take $f_\dr:\Dmod(Z_1)\to \Dmod(Z_2)$, and corresponds to diagrams
of the form
\begin{gather}  
\xy
(-15,0)*+{Z_1}="A";
(15,0)*+{Z_2.}="B";
(0,15)*+{Z_{1}}="C";
{\ar@{->}_{\on{id}} "C";"A"};
{\ar@{->}^{f} "C";"B"};
\endxy
\end{gather}

\medskip

The restriction of $\Dmod_{(\on{DGSch}_{\on{aft}})_{\on{corr}}}$ to $(\on{DGSch}_{\on{aft}})^{\on{op}}\subset (\on{DGSch}_{\on{aft}})_{\on{corr}}$,
which we denote by $\Dmod^!_{\on{DGSch}_{\on{aft}}}$, expresses our ability to take $f^!:\Dmod(Z_2)\to \Dmod(Z_1)$, and corresponds to diagrams
of the form
\begin{gather}  
\xy
(-15,0)*+{Z_1}="A";
(15,0)*+{Z_2.}="B";
(0,15)*+{Z_{2}}="C";
{\ar@{->}_{f} "C";"A"};
{\ar@{->}^{\on{id}} "C";"B"};
\endxy
\end{gather}

The base change isomorphism of \secref{sss:properties of Dmod}(iv) is encoded by the functoriality
of $\Dmod$. 

\medskip

As is explained in \cite{FG}, Sects. 1.4.5 and 1.4.6, the datum of the functor $\Dmod_{(\on{DGSch}_{\on{aft}})_{\on{corr}}}$
also contains the data of adjunction for $(f^!,f_\dr)$
when $f$ is an open embedding, and for $(f_\dr,f^!)$ when $f$ is proper.

\medskip

Unfortunately, there currently is no reference in the literature for the construction of the functor
$\Dmod_{(\on{DGSch}_{\on{aft}})_{\on{corr}}}$ with the above properties. However, a construction of a similar framework 
for $\IndCoh$ instead of $\Dmod$ has been indicated in \cite{IndCoh}, Sects. 5 and 6. 

\sssec{}  \label{sss:corr and IndCoh}

An additional part of data in the functor $\Dmod_{(\on{DGSch}_{\on{aft}})_{\on{corr}}}$ is the following one:

\medskip

The functor $\Dmod^!_{\on{DGSch}_{\on{aft}}}:(\on{DGSch}_{\on{aft}})^{\on{op}}\to \StinftyCat_{\on{cont}}$ comes
equipped with a natural transformation
$$\oblv_{\Dmod}:\Dmod^!_{\on{DGSch}_{\on{aft}}}\to \IndCoh^!_{\on{DGSch}_{\on{aft}}},$$
where 
$$\IndCoh^!_{\on{DGSch}_{\on{aft}}}:(\on{DGSch}_{\on{aft}})^{\on{op}}\to \StinftyCat_{\on{cont}}$$ 
is the functor of \secref{sss:IndCoh schemes}. \footnote{For an individual morphism, this datum is the one
in \secref{sss:DR-pi^!}.}

\medskip

The functor $\Dmod_{\on{DGSch}_{\on{aft}}}:\on{DGSch}_{\on{aft}}\to \StinftyCat_{\on{cont}}$ comes
equipped with a natural transformation
$$\ind_{\Dmod}:\IndCoh_{\on{DGSch}_{\on{aft}}}\to \Dmod_{\on{DGSch}_{\on{aft}}},$$
where 
$$\IndCoh_{\on{DGSch}_{\on{aft}}}:\on{DGSch}_{\on{aft}}\to \StinftyCat_{\on{cont}}$$ 
is the functor of \cite[Sect. 5.6.1]{IndCoh}.  In particular, for a morphism $f:Z_1\to Z_2$ of quasi-compact
schemes, we have a commutative diagram
\begin{equation} \label{e:DR of induced}
\CD
\Dmod(Z_1)  @<{\ind_{\Dmod(Z_1)  }}<<  \IndCoh(Z_1) \\
@V{f_\dr}VV   @VV{f^{\IndCoh}_*}V  \\
\Dmod(Z_1)  @<{\ind_{\Dmod(Z_1)  }}<<  \IndCoh(Z_1).
\endCD
\end{equation}

\begin{rem}
In principle, one would like to formulate the compatibility of the entire datum of the functor
$\Dmod_{(\on{DGSch}_{\on{aft}})_{\on{corr}}}$ with that of the functor
$\IndCoh_{(\on{DGSch}_{\on{aft}})_{\on{corr:all;all}}}$ of \cite[Sect. 5.6.1]{IndCoh}. However, we cannot do this
while staying in the world of $(\infty,1)$-categories, as some of the natural transformations
involved are not isomorphisms. 
\end{rem}

\sssec{Projection formula}

As in \eqref{e:proj formula IndCoh}, from \eqref{e:Dmod base change} one obtains that $f$ satisfies projection
formula for D-modules: for a map $f:Z_1\to Z_2$ of quasi-compact DG schemes, 
and $\CM_i\in \Dmod(Z_i)$, $i=1,2$ we have a 
canonical isomorphism
\begin{equation} \label{e:Dmod proj formula}
\CM_2\otimes f_\dr(\CM_1)\simeq f_\dr(f^!(\CM_2)\otimes \CM_1),
\end{equation}
functorial in $\CM_i$. 

\sssec{De Rham cohomology}     \label{sss:Gamma_DR} 

For $Z\in \on{DGSch}_{\on{aft}}$ we obtain a functor
$$\Gamma_{\on{dR}}(Z,-):=(p_Z)_\dr:\on{D-mod}(Z)\to \Vect,$$
where $p_Z:Z\to \on{pt}$.

\medskip

This functor is co-representable by an object $k_Z\in \on{D-mod}(Z)$, i.e.,
\begin{equation} \label{e:corepresentability}
\Gamma_{\on{dR}}(Z,\CM)=\CMaps(k_Z,\CM).
\end{equation}

\medskip

As $Z$ was assumed quasi-compact, the functor $\Gamma_{\on{dR}}(Z,-)$ is continuous,
so $k_Z\in \Dmod(Z)$ is compact.

\begin{rem}
By \secref{Zariski descent}, the object $k_Z\in \Dmod(Z)$ is defined for any $Z$, not necessarily quasi-compact.
However, in general, it will fail to be compact as an object of $\Dmod(Z)$.
\end{rem}

\sssec{}  \label{sss:pullback constant}

Let $f:Z_1\to Z_2$ be a map between quasi-compact schemes. Since
$$\Gamma_{\on{dR}}(Z_1,-)\simeq \Gamma_{\on{dR}}(Z_2,f_\dr(-)),$$
we obtain that the \emph{partially defined} left adjoint $f^*_{\on{dR}}$
to $f_\dr$ is defined on $k_{Z_2}$, and we have a canonical isomorphism
$$f^*_{\on{dR}}(k_{Z_2})\simeq k_{Z_1}.$$

\ssec{Verdier duality on DG schemes}   \label{ss:CohVer}

%
%

\sssec{}

For a DG scheme $Z$, there is a (unique) involutive anti self-equivalence
$$\BD^{\on{Verdier}}_Z: (\on{D-mod}_{\on{coh}}(Z))^{\on{op}}\iso \on{D-mod}_{\on{coh}}(Z)$$
(called Verdier duality) such that
\begin{equation}   \label{e:Verdier_schemes}
\CMaps_{\on{D-mod}(Z)}(\BD_Z^{\on{Verdier}}(\CM),\CM')\simeq \Gamma_{\on{dR}}(Z,\CM\sotimes \CM'),
\end{equation}
for 
$\CM\in \on{D-mod}_{\on{coh}}(Z) , \,\CM'\in \Dmod(Z)$.

\medskip

Let $\omega_Z$ and $k_Z$ be as in Sect.~\ref{sss:DR-pi^!} and \ref{sss:Gamma_DR}.
Then $\omega_Z,k_Z\in \on{D-mod}_{\on{coh}}(Z)$ and 
$$k_Z\simeq \BD^{\on{Verdier}}_Z(\omega_Z).$$

\sssec{Verdier and Serre duality}

If $\CF\in \Coh(Z)$ then $\ind_{\on{D-mod}(Z)}(\CF)\in \on{D-mod}_{\on{coh}}(Z)$. We now claim:

\begin{lem}
There exists a canonical isomorphism 
\begin{equation}   \label{e:Verdier-Serre_on_schemes}
\BD_Z^{\on{Verdier}}\left(\ind_{\on{D-mod}(Z)}(\CF)\right) \simeq \ind_{\on{D-mod}(Z)}\left(\BD^{\on{Serre}}_Z(\CF)\right).
\end{equation}
\end{lem}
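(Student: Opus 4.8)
The plan is to deduce the identity \eqref{e:Verdier-Serre_on_schemes} by testing both sides against an arbitrary object $\CM'\in\Dmod(Z)$ via $\CMaps_{\Dmod(Z)}(-,\CM')$, using the characterizing property \eqref{e:Verdier_schemes} of Verdier duality on the left and the adjunction $(\ind_{\Dmod(Z)},\oblv_{\Dmod(Z)})$ together with Serre duality \eqref{Hom via D} (in the scheme case, i.e.\ Remark~\ref{r:pairing for IndCoh schemes}) on the right. The key point that makes this work is the projection-formula type compatibility \eqref{e:tensor with induction}, which says $\ind_{\Dmod(Z)}(\CF)\sotimes\CM'\simeq \ind_{\Dmod(Z)}\bigl(\CF\sotimes\oblv_{\Dmod(Z)}(\CM')\bigr)$, combined with the fact that $\Gamma_{\on{dR}}(Z,-)\circ\ind_{\Dmod(Z)}$ is canonically $\Gamma^{\IndCoh}(Z,-)$ (because $(p_Z)_\dr\circ\ind_{\Dmod(Z)}\simeq (p_Z)^{\IndCoh}_*\circ$ by \eqref{e:DR of induced} applied to $p_Z:Z\to\on{pt}$, and $(p_Z)^{\IndCoh}_*\simeq\Gamma^{\IndCoh}(Z,-)$).

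**Key steps.** First I would compute the left-hand side: for $\CM'\in\Dmod(Z)$,
\[
\CMaps_{\Dmod(Z)}\bigl(\BD_Z^{\on{Verdier}}(\ind_{\Dmod(Z)}(\CF)),\CM'\bigr)\simeq \Gamma_{\on{dR}}\bigl(Z,\ind_{\Dmod(Z)}(\CF)\sotimes\CM'\bigr)
\]
by \eqref{e:Verdier_schemes}, using that $\ind_{\Dmod(Z)}(\CF)\in\Dmod_{\on{coh}}(Z)$. Then, applying \eqref{e:tensor with induction} and the identification $\Gamma_{\on{dR}}(Z,-)\circ\ind_{\Dmod(Z)}\simeq\Gamma^{\IndCoh}(Z,-)$ just noted, this becomes
\[
\Gamma^{\IndCoh}\bigl(Z,\CF\sotimes\oblv_{\Dmod(Z)}(\CM')\bigr).
\]
Second, I would compute the right-hand side: by the $(\ind_{\Dmod(Z)},\oblv_{\Dmod(Z)})$-adjunction,
\[
\CMaps_{\Dmod(Z)}\bigl(\ind_{\Dmod(Z)}(\BD^{\on{Serre}}_Z(\CF)),\CM'\bigr)\simeq \CMaps_{\IndCoh(Z)}\bigl(\BD^{\on{Serre}}_Z(\CF),\oblv_{\Dmod(Z)}(\CM')\bigr),
\]
and by Serre duality on the scheme $Z$ (the $\CY=Z$ case of \propref{p:Serre duality}, as in Remark~\ref{r:pairing for IndCoh schemes}) this equals $\Gamma^{\IndCoh}\bigl(Z,\CF\sotimes\oblv_{\Dmod(Z)}(\CM')\bigr)$. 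Thus both sides represent the same functor $\CM'\mapsto\Gamma^{\IndCoh}(Z,\CF\sotimes\oblv_{\Dmod(Z)}(\CM'))$ on $\Dmod(Z)$, so by Yoneda they are canonically isomorphic; since $\CM'$ was arbitrary, one gets the isomorphism as objects of $\Dmod_{\on{coh}}(Z)$, and one checks it lands in $\Dmod_{\on{coh}}(Z)$ because $\BD^{\on{Serre}}_Z$ preserves $\Coh(Z)$ and $\ind_{\Dmod(Z)}$ sends $\Coh(Z)$ to $\Dmod_{\on{coh}}(Z)$ via Kashiwara.

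**Main obstacle.** The routine checks (that $\ind_{\Dmod(Z)}(\CF)$ is coherent, that $\Gamma_{\on{dR}}\circ\ind$ is $\Gamma^{\IndCoh}$, that $\BD^{\on{Serre}}_Z$ preserves coherence) are all cited or immediate. The genuinely delicate point is that the isomorphism produced by Yoneda should be \emph{canonical} and \emph{involutive-compatible} — i.e.\ one wants \eqref{e:Verdier-Serre_on_schemes} to be natural in $\CF$ and to intertwine the involutivity of $\BD^{\on{Verdier}}_Z$ with that of $\BD^{\on{Serre}}_Z$. This requires tracking the isomorphisms at the level of the functors $\Coh(Z)^{\on{op}}\times\Dmod(Z)\to\Vect$ rather than pointwise, and in particular verifying that \eqref{e:tensor with induction} and the adjunction isomorphisms are compatible with the symmetric monoidal structures $\sotimes$ on $\IndCoh(Z)$ and $\Dmod(Z)$; this is where the homotopy-coherence bookkeeping lives, and where I expect the argument to need the most care (though it should follow formally from the compatibilities packaged in \secref{sss:corr and IndCoh}).
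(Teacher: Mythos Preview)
Your proposal is correct and follows essentially the same approach as the paper's proof, which is the one-line remark ``Follows by combining isomorphisms \eqref{e:Verdier_schemes}, \eqref{e:DR of induced} and \eqref{e:tensor with induction}, and \propref{p:Serre duality} (for DG schemes).'' You have simply unpacked how these four ingredients fit together via Yoneda, making the adjunction $(\ind_{\Dmod(Z)},\oblv_{\Dmod(Z)})$ explicit where the paper leaves it implicit; your concern about involutivity and naturality in the ``Main obstacle'' paragraph goes beyond what the paper itself addresses.
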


\begin{proof}
Follows by combining isomorphsms \eqref{e:Verdier_schemes}, \eqref{e:DR of induced} and \eqref{e:tensor with induction}, and
\propref{p:Serre duality} (for DG schemes).
\end{proof}

\sssec{Ind-extending Verdier duality}

For $Z$ quasi-compact, ind-extending Verdier duality, by \secref{sss:properties of duality}(ii'), we obtain an identification
\begin{equation} \label{self duality of D}
\bD_Z^{\on{Verdier}}:\on{D-mod}(Z)^\vee\simeq \on{D-mod}(Z),
\end{equation}
where $\on{D-mod}(Z)^\vee$ is the dual DG category (see Sect.~\ref{sss:dualizabilitydef}).
\medskip

By \eqref{e:Verdier_schemes}, the corresponding pairing 
$$\epsilon_{\on{D-mod}(Z)}:\on{D-mod}(Z)\otimes \on{D-mod}(Z)\to \Vect$$
equals the composition
$$\on{D-mod}(Z)\otimes \on{D-mod}(Z)\to \on{D-mod}(Z\times Z)\overset{\Delta_Z^!}\longrightarrow \on{D-mod}(Z)\overset{\Gamma_{\on{dR}}(Z,-)}
\longrightarrow \Vect.$$

\medskip

As in the proof of \propref{p:pairing for IndCoh}, the base change isomorphism implies that the co-evaluation functor
$$\mu_{\on{D-mod}(Z)}:\Vect\to \on{D-mod}(Z)\otimes \on{D-mod}(Z)$$
is given by
$$\Vect\overset{\omega_Z\otimes -}\longrightarrow \on{D-mod}(Z) \overset{\Delta_{\on{dR},*}}\longrightarrow 
\on{D-mod}(Z\times Z)\simeq \on{D-mod}(Z)\otimes \on{D-mod}(Z).$$

\medskip

The latter implies, in turn, that for a map of quasi-compact DG schemes $f:Z_1\to Z_2$, under the identifications 
$$\bD^{\on{Verdier}}_{Z_i}:\Dmod(Z_i)^\vee\simeq \Dmod(Z_i),$$
we have:
\begin{equation}   \label{e:duality_formula}
(f^!)^\vee\simeq f_{\dr}
\end{equation}
(see \secref{sss:dual functors} for the notion of dual functor).

\medskip

Note also that by \cite[Lemma 2.3.3]{DG}, the isomorphism \eqref{e:Verdier-Serre_on_schemes} can
also be formulated as saying that
$$(\oblv_{\Dmod(Z)})^\vee\simeq \ind_{\Dmod(Z)}$$
with respect to the identifications
$$\bD_Z^{\on{Verdier}}:\Dmod(Z)^\vee\simeq \Dmod(Z)  \text{ and } \bD_Z^{\on{Serre}}:(\IndCoh(Z))^\vee\simeq \IndCoh(Z),$$
given by Verdier and Serre dualities, respectively. 

\sssec{Smooth pullbacks}  \label{coh shift}

If $f$ is smooth then the functor $f_{\on{dR},*}$ admits a left adjoint, which we denote by $f^*_{\on{dR}}$.
Being a left adjoint, the functor $f^*_{\on{dR}}$ is continuous. If $f$ is of constant relative dimension $n$, 
we have a canonical isomorphism 
\begin{equation} \label{e:*! shift}
f^*_{\on{dR}}\simeq f^![-2n].
\end{equation}

One has

\begin{equation}   \label{e:added_by_Drinf1}
f^*_{\on{dR}} (\on{D-mod}_{\on{coh}}(Z_2))\subset \on{D-mod}_{\on{coh}}(Z_1), \quad f^! (\on{D-mod}_{\on{coh}}(Z_2))\subset \on{D-mod}_{\on{coh}}(Z_1),
\end{equation}

\begin{equation}    \label{e:added_by_Drinf2}
\BD_{Z_1}^{\on{Verdier}}\left(f_{\on{dR}}^*(\CM)\right)\simeq
f^!\left(\BD_{Z_2}^{\on{Verdier}}(\CM)\right), \quad \CM\in \on{D-mod}_{\on{coh}}(Z_2).
\end{equation}

\begin{rem}
Assume that $Z_1$ and $Z_2$ are quasi-compact (which we can, as the above assertions are Zariski-local). Recall that
in this case $\Dmod(Z_i)^c=\on{D-mod}_{\on{coh}}(Z_i)$. 
We obtain that 
\eqref{e:added_by_Drinf1} follows from the fact that $f^*_{\on{dR}}$ preserves compactness 
(because it has a continuous right adjoint), and \eqref{e:added_by_Drinf2} follows from formula \eqref{e:duality_formula}
combined with \cite[Lemma 2.3.3]{DG}.
\end{rem}

\sssec{}

For $\CM',\CM''\in \Dmod(Z_2)$ by adjunction and the projection formula \eqref{e:Dmod proj formula}
we obtain a map
\begin{equation} \label{e:tensor * and !}
f^*_{\on{dR}}(\CM'\sotimes \CM'')\to f^*_{\on{dR}}(\CM')\sotimes f^!(\CM'').
\end{equation}

However, it easily follows from \eqref{e:*! shift} that \eqref{e:tensor * and !} is an isomorphism.


\section{D-modules on stacks} \label{s:Dmods on stacks}

In this section we review the theory of D-modules on algebraic stacks to be used later in the paper.
On the one hand, this theory is well-known, at least at the level of triangulated categories. However,
as we could not find a single source that contains all the relevant facts, we decided to include the present
section for the reader's convenience. 

\ssec{D-modules on prestacks}   \label{ss:Dmods on prestacks}

\sssec{}  \label{sss:Dmod on prestacks}

Let $\CY$ be a prestack. The category $\on{D-mod}(\CY)$ is defined as 
\begin{equation}   \label{e:def_of_D(Y)}
\underset{(S,g)\in ((\on{DGSch}_{\on{aft}})_{/\CY})^{\on{op}}}{\underset{\longleftarrow}{lim}}\, \on{D-mod}(S),
\end{equation}
where we view the assignment
$$(S,g)\rightsquigarrow \Dmod(S)$$
as a functor between $\infty$-categories 
$$((\on{DGSch}_{\on{aft}})_{/\CY})^{\on{op}}\to \StinftyCat_{\on{cont}},$$
obtained by restriction under the forgetful map $(\on{DGSch}_{\on{aft}})_{/\CY}\to \on{DGSch}_{\on{aft}}$
of the functor
$$\Dmod^!_{\on{DGSch}_{\on{aft}}}:\on{DGSch}_{\on{aft}}^{\on{op}}\to \StinftyCat_{\on{cont}}.$$

\medskip

Concretely, an object $\CM$ of $\Dmod(\CY)$ is an assignment for every $g:S\to \CY$ of an object
$g^!(\CM)\in \Dmod(S)$, and a homotopy-coherent system of isomorphisms 
$$f^!(g^!(\CM))\simeq (g\circ f)^!(\CM)\in \Dmod(S')$$ for maps of DG schemes $f:S'\to S$.

\medskip

In the above limit one can replace the category of quasi-compact DG schemes 
by its subcategory of affine DG schemes, or by a larger category of all DG
schemes; this is due to the Zariski descent property of D-modules, see 
\secref{Zariski descent}.

\medskip

In addition, using the fppf descent property for D-modules (see \secref{Zariski descent}),
we can replace the categories $(\on{DGSch}_{\on{aft}})_{/\CY}$ (resp., $(\on{DGSch}^{\on{aff}}_{\on{aft}})_{/\CY}$)
by any of the indexing categories $A$ as in \secref{sss:change index qc}. The proof follows from 
\cite[Corollary 11.2.4]{IndCoh}. 

\sssec{}

According to \cite[Corollary 2.3.9]{GR1}, we can equivalently define $\on{D-mod}(\CY)$
as 
$$\IndCoh(\CY_{\on{dR}}),$$
where $\CY_{\on{dR}}$ is as an \cite[Sect. 1.1.1]{GR1}.

\sssec{}

Tautologically, for a morphism $\pi:\CY_1\to \CY_2$ between prestacks, we 
have a functor
$$\pi^!:\Dmod(\CY_2)\to \Dmod(\CY_1).$$

\medskip

In particular, for any prestack $\CY$, there exists a canonically defined object
$$\omega_\CY\in \Dmod(\CY)$$
equal to $(p_\CY)^!(k)$ for $p_\CY:\CY\to \on{pt}$.

\sssec{}

It follows from \secref{podstilka} that if a morphism of prestacks $\pi:\CY_1\to \CY_2$
induces an isomorphism of the underlying classical prestacks $^{cl}\CY_1\to {}^{cl}\CY_2$,
then the functor
$$\pi^!:\Dmod(\CY_2)\to \Dmod(\CY_1)$$
is an equivalence. 

\medskip

So, for a prestack $\CY$, the category $\Dmod(\CY)$ only depends on the underlying classical
prestack.

\sssec{}

Just as in Sect.~\ref{sss:tens_schemes}, for a pair of prestacks $\CY_1$ and $\CY_2$ one has a canonical (continuous) 
functor $$\on{D-mod}(\CY_1)\otimes \on{D-mod}(\CY_2)\to \on{D-mod}(\CY_1\times \CY_2)$$
and a functor of tensor product 
$$\on{D-mod}(\CY )\otimes \on{D-mod}(\CY )\to \on{D-mod}(\CY)$$
defined as the composition
$$\on{D-mod}(\CY)\otimes \on{D-mod}(\CY)\to \on{D-mod}(\CY\times \CY)\overset{\Delta^!_\CY}\longrightarrow \on{D-mod}(\CY).$$

\sssec{}  \label{sss:oblv stacks}

The natural transformation $\oblv_{\Dmod}$ of \secref{sss:corr and IndCoh} gives rise to a
continuous \emph{conservative} functor 
$$\oblv_{\Dmod(\CY)}:\Dmod(\CY)\to \IndCoh(\CY),$$
which is compatible with morphisms of prestacks under !-pullbacks.

\medskip

As in the case of DG schemes, we can interpret the functor $\oblv_{\Dmod}$ as pullback along the
tautological morphism of prestacks $\CY\to \CY_{\on{dR}}$.

\medskip

However, it is not clear, and most probably not true, that for a general prestack this functor
admits a left adjoint. Neither is it possible for a general prestack $\CY$ to 
consider the functor $'\oblv_{\Dmod(\CY)}$ (because the functor $\Psi_\CY$ is a feature
of DG schemes or algebraic stacks).

\medskip

In addition, the functor $\oblv^{\on{left}}_{\Dmod(-)}$ for schemes mentioned in 
\secref{sss:left realization} gives rise to a functor
$$\oblv^{\on{left}}_{\Dmod(\CY)}:\Dmod(\CY)\to \QCoh(\CY),$$
which is compatible with morphisms of prestacks under !-pullbacks on $\Dmod$
and usual *-pullbacks on $\QCoh$.

\sssec{Quasi-compact schematic morphisms}   \label{sss:representable}

Let $\pi:\CY_1\to \CY_2$ is a schematic and quasi-compact morphism between prestacks.
The functor of $(\dr)$-pushforward for DG schemes gives rise to a continuous functor 
$$\pi_{\on{dR},*}:\on{D-mod}(\CY_1)\to \on{D-mod}(\CY_2).$$
As in the case of the $(\IndCoh,*)$-pushforward, one constructs the functor $\pi_{\on{dR},*}$
as follows:

\medskip

For $(S_2,g_2)\in (\on{DGSch}_{\on{aft}})_{/\CY}$, we set
$$g_2^!(\pi_\dr(-)):=(\pi_S)_\dr\circ g_1^!(-)$$
for the morphisms in the Cartesian diagram
$$
\CD
S_1 @>{g_1}>>  \CY_1 \\
@V{\pi_S}VV    @VV{\pi}V    \\
S_2  @>{g_2}>>  \CY_2.
\endCD
$$

The data of compatibility of the assignment 
$$(S_2,g_2)\rightsquigarrow (\pi_S)_\dr\circ g_1^!(-)$$
under !-pullbacks for maps in $(\on{DGSch}_{\on{aft}})_{/\CY}$
is given by base change isomorphisms \eqref{e:Dmod base change}.

\medskip

Moreover, the formation $\pi_{\on{dR},*}$ is also endowed with base change isomorphisms
with respect to !-pullbacks for Cartesian squares of prestacks
$$
\CD
\CY'_1  @>>>  \CY_1 \\
@V{\pi'}VV   @VV{\pi}V  \\
\CY'_2  @>>>  \CY_2,
\endCD
$$
where the vertical maps are schematic and quasi-compact. 

\medskip

By construction, the projection formula for morphisms between quasi-compact schemes,
i.e., \eqref{e:Dmod proj formula}, implies one for $\pi$. That is, we have a functorial isomorphism
\begin{equation} \label{e:proj formula Dmod sch}
\CM_2\sotimes \pi_\dr(\CM_1)\simeq \pi_\dr(\pi^!(\CM_2)\sotimes \CM_1),\quad \CM_i\in \Dmod(\CY_i).
\end{equation}

\begin{rem}
We emphasize again that the isomorphisms neither in base change nor in projection
formula arise by adjunction from a priori existing maps.
\end{rem}

\sssec{}  \label{sss:smooth pullback stacks}

Let $\CY_i$ be prestacks, and let  $\pi:\CY_1\to \CY_2$ be a morphism which is $k$-representable for
some $k$. As in the case of $\IndCoh$, we will only need the cases of either $\pi$ being schematic, or 
$1$-representable. Assume also that $\pi$ is smooth.

\medskip

In this case we also have a naturally defined functor
$$\pi^*_{\on{dR}}:\Dmod(\CY_2)\to \Dmod(\CY_1).$$

By \eqref{e:*! shift}, if $\pi$ is of relative dimension $n$, we have a canonical
isomorphism 
$$\pi^*_{\on{dR}}\simeq \pi^![-2n].$$

For $\CM',\CM''\in \Dmod(\CY_2)$, from \eqref{e:tensor * and !} we obtain a canonical
isomorphism
\begin{equation} \label{e:* and ! ten}
\pi^*_{\on{dR}}(\CM'\sotimes \CM'')\to \pi^*_{\on{dR}}(\CM')\sotimes \pi^!(\CM''),
\end{equation}

Finally, assuming that $\pi$ is, in addition, schematic and quasi-compact, we obtain that the functors
$(\pi^*_{\on{dR}},\pi_\dr)$ are naturally adjoint. 

\ssec{D-modules on algebraic stacks}   \label{ss:Dmods on algebraic stacks}

From now until the end of this section we shall assume that $\CY$ is an algebraic stack.

\sssec{}    \label{sss:only_smooth}

As was mentioned above, in the formation of the limit \eqref{e:def_of_D(Y)}, one can replace the category 
$(\on{DGSch}_{\on{aft}})_{/\CY}$ by $\on{DGSch}_{/\CY,\on{smooth}}$. 

\medskip

I.e., the functor
\begin{equation}  \label{e:def_of_D(Y)!}
\Dmod(\CY)\simeq \underset{(S,g)\in ((\on{DGSch}_{\on{aft}})_{/\CY})^{\on{op}}}{\underset{\longleftarrow}{lim}}\, \on{D-mod}(S)\to
\underset{(S,g)\in (\on{DGSch}_{/\CY,\on{smooth}})^{\on{op}}}{\underset{\longleftarrow}{lim}}\, \on{D-mod}(S),
\end{equation}
obtained by restriction, is an equivalence. 

\sssec{}

Furthermore, using $(\on{DGSch}_{/\CY,\on{smooth}})^{\on{op}}$ as indexing category, $\Dmod(\CY)$ can be also
realized as the limit 
\begin{equation}   \label{e:def_of_D(Y)*}
\underset{(S,g)\in (\on{DGSch}_{/\CY,\on{smooth}})^{\on{op}}}{\underset{\longleftarrow}{lim}}\, \on{D-mod}(S),
\end{equation}
which is formed with $f^*_{\on{dR}}:\Dmod(S')\to \Dmod(S)$ as transition functors. 
(This follows from \secref{coh shift}.)

\medskip

In addition, choosing a smooth atlas $f:Z\to \CY$, we have:
\begin{equation} \label{e:Dmod via Cech}
\on{D-mod}(\CY)\simeq \on{Tot}\left(\on{D-mod}(Z^\bullet/\CY)\right),
\end{equation}
where the cosimplicial category is formed using either !-pullback or
$(\on{dR},*)$-pullback functors along the simplicial DG scheme $Z^\bullet/\CY$. 
(The assertion for !-pullbacks follows from the smooth descent property of
D-modules, and that for $(\on{dR},*)$-pullbacks from \secref{coh shift}.)

\sssec{}

For an algebraic stack $\CY$, the category $\on{D-mod}(\CY)$ has a (unique) t-structure such that 
$$\on{D-mod}(\CY)^{>0}=\{\CF\in \on{D-mod}(\CY)\;|\;\oblv_{\on{D-mod}(\CY)}(\CF)\in\IndCoh(\CY)^{>0}\}\, .$$
The properties of this t-structure formulated in Sect.~\ref{Dt} for DG schemes imply similar properties for stacks.
In particular, the t-structure is left-complete and compatible with colimits. 

\ssec{The induction functor}

We are going to show that for algebraic stacks, the functor
$$\oblv_{\on{D-mod}(\CY)}:\on{D-mod}(\CY)\to \IndCoh(\CY)$$
admits a left adjoint, denoted $\ind_{\on{D-mod}(\CY)}$, and establish
some properties of this functor.

\medskip

For the main theorems of this paper we will only need the induction functor
in the case when $\CY$ is a classical (i.e., non-derived) algebraic stack.
However, for the sake of completeness, the discussion in this subsection
is applicable do derived stacks as well.

\begin{rem}
A more streamlined treatment will
be given in \cite{GR2}, where the functor of direct image on $\IndCoh$ will
be developed for morphisms such as $\CY\to \CY_{\on{dR}}$, where $\CY$ is an algebraic stack.
The latter functor \emph{is} the sought-for induction functor.
\end{rem}

\sssec{}

Let $S$ be a DG scheme equipped with a smooth map $g:S\to \CY$. Consider the category
$\Dmod(S)_{\on{rel}_{\CY}}$ of relative right D-modules. By definition,
$$\Dmod(S)_{\on{rel}_{\CY}}:=\IndCoh(S_{\on{dR}}\underset{\CY_{\on{dR}}}\times \CY).$$

\medskip

We have the forgetful functors
$$\oblv_{\on{D-mod}(S)_{\on{rel}_{\CY}}}: \Dmod(S)_{\on{rel}_{\CY}}\to \IndCoh(S)$$ and 
$$\oblv_{\on{D-mod}(S)_{\on{rel}\to \on{abs}}}: \Dmod(S)\to  \Dmod(S)_{\on{rel}_{\CY}}$$
defined as pullbacks along the tautological morphisms
$$S\to S_{\on{dR}}\underset{\CY_{\on{dR}}}\times \CY \text{ and } S_{\on{dR}}\underset{\CY_{\on{dR}}}\times \CY\to S_{\on{dR}},$$
respectively. We have:
$$\oblv_{\on{D-mod}(S)_{\on{rel}_{\CY}}}\circ \oblv_{\on{D-mod}(S)_{\on{rel}\to \on{abs}}}\simeq \oblv_{\on{D-mod}(S)}.$$

\medskip

It is easy to see that the functor $\oblv_{\on{D-mod}(S)_{\on{rel}_{\CY}}}$ is conservative. The category $\Dmod(S)_{\on{rel}_{\CY}}$ 
carries a t-structure characterized by the property that
$$\Dmod(S)_{\on{rel}_{\CY}}^{>0}=\{\CF\in \Dmod(S)_{\on{rel}_{\CY}}\,|\, \oblv_{\on{D-mod}(S)_{\on{rel}_{\CY}}} (\CF)\in \IndCoh(S)^{>0}\}.$$

\sssec{}

For a morphism $f:S'\to S$ in $\on{DGSch}_{/\CY,\on{smooth}}$, we have a naturally defined functor
$$f^!:\Dmod(S)_{\on{rel}_{\CY}}\to \Dmod(S')_{\on{rel}_{\CY}},$$ 
which makes the diagram
$$
\CD
\IndCoh(S')  @<{\oblv_{\on{D-mod}(S')_{\on{rel}_{\CY}}}}<<  
\Dmod(S')_{\on{rel}_{\CY}}   @<{\oblv_{\on{D-mod}(S')_{\on{rel}\to \on{abs}}}}<<  \Dmod(S')  \\
@A{f^!}AA   @AA{f^!}A  @AA{f^!}A  \\
\IndCoh(S)  @<{\oblv_{\on{D-mod}(S)_{\on{rel}_{\CY}}}}<<  \Dmod(S)_{\on{rel}_{\CY}}   
@<{\oblv_{\on{D-mod}(S)_{\on{rel}\to \on{abs}}}}<<  \Dmod(S).
\endCD
$$
commute.

\medskip

The assignment $S\rightsquigarrow \Dmod(S)_{\on{rel}_{\CY}}$ has a structure functor of $\infty$-categories:
$$(\Dmod^!_{\on{rel}_{\CY}})_{\on{DGSch}_{/\CY,\on{smooth}}}:
(\on{DGSch}_{/\CY,\on{smooth}})^{\on{op}}\to \StinftyCat_{\on{cont}},$$
which is equipped with natural transformations
$$\IndCoh^!_{\on{DGSch}_{/\CY,\on{smooth}}}\overset{\oblv_{\on{D-mod}_{\on{rel}_{\CY}}}}\longleftarrow
(\Dmod^!_{\on{rel}_{\CY}})_{\on{DGSch}_{/\CY,\on{smooth}}} \overset{\oblv_{\on{D-mod}_{\on{rel}\to \on{abs}}}}\longleftarrow
\Dmod^!_{\on{DGSch}_{/\CY,\on{smooth}}}$$

\sssec{}

For $(S,g)\in \on{DGSch}_{/\CY,\on{smooth}}$, pullback along the morphism
$$S_{\on{dR}}\underset{\CY_{\on{dR}}}\times \CY\to \CY$$
defines a functor 
$$\IndCoh(\CY) \to \Dmod(S)_{\on{rel}_{\CY}},$$
which by a slight abuse of notation we shall denote by $g^!$. I.e.,
$$g^!\simeq \oblv_{\on{D-mod}(S)_{\on{rel}_{\CY}}}\circ g^!:\IndCoh(\CY)\to \IndCoh(S).$$

Furthermore, we have a functor
\begin{equation} \label{e:IndCoh through relative}
\IndCoh(\CY)\to \underset{(S,g)\in (\on{DGSch}_{/\CY,\on{smooth}})^{\on{op}}}{\underset{\longleftarrow}{lim}}\, \Dmod(S)_{\on{rel}_{\CY}}.
\end{equation}

\begin{lem} \label{l:IndCoh through relative}
The functor \eqref{e:IndCoh through relative} is an equivalence.
\end{lem}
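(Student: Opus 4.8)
The plan is to identify both sides of the map \eqref{e:IndCoh through relative} with the same limit over $\on{DGSch}_{/\CY,\on{smooth}}$, and to do so I would use the standard technique of "unwinding a limit of limits". First I would recall (see \secref{sss:IndCoh for algebraic stacks}) that $\IndCoh(\CY)$ is by definition the limit $\underset{(S,g)}{\underset{\longleftarrow}{lim}}\, \IndCoh(S)$ over $(\on{DGSch}_{/\CY,\on{smooth}})^{\on{op}}$. Then, unwinding the definition of $\Dmod(S)_{\on{rel}_\CY}$ and the fact that $S_{\on{dR}}\underset{\CY_{\on{dR}}}\times \CY\to \CY$ can be computed via the \v{C}ech nerve of $S\to \CY$ (equivalently, by descent for $\IndCoh$ along the smooth cover $S\to S_{\on{dR}}\underset{\CY_{\on{dR}}}\times \CY$ induced by $S\to S_{\on{dR}}$), I would express each $\Dmod(S)_{\on{rel}_\CY}$ itself as a limit over $(\on{DGSch}_{/S,\on{smooth}})^{\on{op}}$ of the categories $\IndCoh(S')$, where $S'\to S$ ranges over smooth maps of DG schemes (this is the relative analog of \eqref{e:def_of_D(Y)!}, applied to the algebraic stack $S_{\on{dR}}\underset{\CY_{\on{dR}}}\times \CY$, which receives a smooth atlas from $S$). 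The transition functors in the inner limit are $(f')^!$ for $\IndCoh$.

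Next I would form the double limit
$$\underset{(S,g)\in (\on{DGSch}_{/\CY,\on{smooth}})^{\on{op}}}{\underset{\longleftarrow}{lim}}\,
\left(\underset{(S',f)\in (\on{DGSch}_{/S,\on{smooth}})^{\on{op}}}{\underset{\longleftarrow}{lim}}\, \IndCoh(S')\right),$$
and apply the cofinality statement that says a limit indexed by a Grothendieck construction (here, the category of pairs $(S\to\CY, S'\to S)$ with both maps smooth, which is equivalent to $(\on{DGSch}_{/\CY,\on{smooth}})^{\on{op}}$ via $(S',\,S'\to S\to\CY)$) can be computed as a single limit. The composite of two smooth maps $S'\to S\to \CY$ is smooth, so the forgetful functor from this category of pairs to $\on{DGSch}_{/\CY,\on{smooth}}$, sending $(S',S)\mapsto S'$, is (co)final — indeed it admits a section $S'\mapsto (S',S')$ using $\on{id}_{S'}$, and one checks the relevant comma categories are contractible. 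Hence the double limit collapses to $\underset{(S',g')}{\underset{\longleftarrow}{lim}}\, \IndCoh(S') = \IndCoh(\CY)$, and tracing through the construction shows this identification is exactly \eqref{e:IndCoh through relative}. This is essentially the same bookkeeping argument as in \cite[Sect. 11]{IndCoh} for iterating the smooth-descent presentation of $\IndCoh$.

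The main obstacle I anticipate is not conceptual but homotopy-theoretic: making precise that the inner limits $\Dmod(S)_{\on{rel}_\CY}=\underset{S'}{\underset{\longleftarrow}{lim}}\,\IndCoh(S')$ assemble into a functor on $(\on{DGSch}_{/\CY,\on{smooth}})^{\on{op}}$ compatibly with the outer limit, and that the resulting "limit of limits" is correctly identified with the limit over the total category of pairs. In the $\infty$-categorical setting this requires either invoking a general statement about limits over Cartesian fibrations (e.g. \cite[Lemma 5.5.3.3 or Corollary 4.2.3.10]{Lu1}) or a direct cofinality argument via Quillen's Theorem A (\cite[Theorem 4.1.3.1]{Lu1}). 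I would state the needed cofinality as a separate observation: the projection $(S'\to S\to\CY)\mapsto(S'\to\CY)$ from pairs-of-smooth-maps to single-smooth-maps is cofinal because for any fixed $(T\to\CY)\in\on{DGSch}_{/\CY,\on{smooth}}$ the relevant slice category has the object $(T\xrightarrow{\on{id}}T\to\CY)$ which is initial (or at least makes the slice weakly contractible). Everything else — t-exactness and conservativity of the various $\oblv$ functors, compatibility with $!$-pullback — is already recorded in the surrounding text and enters only to pin down that \eqref{e:IndCoh through relative} is the canonical comparison map rather than some other equivalence.
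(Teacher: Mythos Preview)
Your argument has a genuine gap at the step where you claim
\[
\Dmod(S)_{\on{rel}_{\CY}}\;\simeq\;
\underset{(S',f)\in (\on{DGSch}_{/S,\on{smooth}})^{\on{op}}}{\underset{\longleftarrow}{lim}}\, \IndCoh(S').
\]
The right-hand side is just $\IndCoh(S)$: since $S$ is a DG scheme, the identity $\on{id}_S$ is a terminal object of $\on{DGSch}_{/S,\on{smooth}}$, so the limit collapses. But $\Dmod(S)_{\on{rel}_{\CY}}$ is \emph{not} $\IndCoh(S)$ unless $g:S\to\CY$ is \'etale. For instance, take $\CY=\on{pt}$; then $S_{\on{dR}}\underset{\CY_{\on{dR}}}\times\CY=S_{\on{dR}}$ and $\Dmod(S)_{\on{rel}_{\on{pt}}}=\IndCoh(S_{\on{dR}})=\Dmod(S)$, which differs from $\IndCoh(S)$ whenever $\dim S>0$.

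The underlying error is the assertion that $S\to S_{\on{dR}}\underset{\CY_{\on{dR}}}\times\CY$ is a smooth atlas of an algebraic stack. It is neither: the target is a \emph{formal} thickening of $S$ (both have the same underlying reduced classical prestack), and in particular is not algebraic in the sense used here; correspondingly, the map from $S$ is a nil-isomorphism rather than a smooth surjection. So the machinery of \secref{sss:IndCoh for algebraic stacks} and the smooth-site presentation \eqref{e:def_of_D(Y)!} do not apply to it. A corrected double-limit argument would have to index the inner limit by all $T\to S_{\on{dR}}\underset{\CY_{\on{dR}}}\times\CY$ (i.e., pairs $T\to\CY$ together with $T_{\on{red}}\to S$ over $\CY$), not by smooth $S'\to S$; the resulting cofinality statement is then genuinely about the interaction between the de Rham and smooth sites, and is no longer the tautology you invoke.

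The paper's proof takes a completely different and much shorter route: it simply exhibits the inverse functor as the composite
\[
\underset{(S,g)}{\underset{\longleftarrow}{lim}}\,\Dmod(S)_{\on{rel}_{\CY}}
\;\xrightarrow{\;\oblv_{\Dmod_{\on{rel}_{\CY}}}\;}\;
\underset{(S,g)}{\underset{\longleftarrow}{lim}}\,\IndCoh(S)\;\simeq\;\IndCoh(\CY),
\]
using that the termwise forgetful functors $\oblv_{\Dmod(S)_{\on{rel}_{\CY}}}$ are compatible with the transition maps and that their composite with the $g^!$'s from $\IndCoh(\CY)$ recover the usual $g^!$ on $\IndCoh$.
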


\begin{proof}
The inverse functor to \eqref{e:IndCoh through relative}
is given by the composition
\begin{multline*}
\underset{(S,g)\in (\on{DGSch}_{/\CY,\on{smooth}})^{\on{op}}}{\underset{\longleftarrow}{lim}}\, \Dmod(S)_{\on{rel}_{\CY}}
\overset{\oblv_{\on{D-mod}_{\on{rel}_{\CY}}}}\longrightarrow 
\underset{(S,g)\in (\on{DGSch}_{/\CY,\on{smooth}})^{\on{op}}}{\underset{\longleftarrow}{lim}}\, \IndCoh(S)\simeq \\
\simeq \IndCoh(\CY).
\end{multline*}

\end{proof}

\sssec{}   \label{sss:algebroid}

Assume for a moment that $\CY$ is a classical (i.e., non-derived) stack. Then for $(S,g)\in \on{DGSch}_{/\CY,\on{smooth}}$,
the DG scheme $S$ is also classical.

\medskip

In this case the category $\Dmod(S)_{\on{rel}_{\CY}}$
can be described as modules over the \emph{Lie algebroid} $T_{S/\CY}$ of vector fields on $S$ vertical with respect to
the map $g:S\to \CY$. The following is easy:

\begin{lem} \label{l:adj to rel D-mod cl}
The functor $\oblv_{\on{D-mod}(S)_{\on{rel}_{\CY}}}$ admits a left adjoint; we shall denote this left adjoint by 
$\ind_{\on{D-mod}(S)_{\on{rel}_{\CY}}}$. Furthermore, 

\smallskip

\noindent{\em(a)} The composion 
$$\oblv_{\on{D-mod}(S)_{\on{rel}_{\CY}}}\circ \ind_{\on{D-mod}(S)_{\on{rel}_{\CY}}}:\IndCoh(S)\to \IndCoh(S)$$
has a filtration indexed by non-negative integers with the $i$-th successive quotient isomorphic to the functor
$\on{Sym}^i(T_{S/\CY})\otimes -$. 

\smallskip

\noindent{\em(b)} The functors $\ind_{\on{D-mod}(S)_{\on{rel}_{\CY}}}$ and $\oblv_{\on{D-mod}(S)_{\on{rel}_{\CY}}}$
are both t-exact.

\smallskip

\noindent{\em(c)} Every object $\CF\in \Dmod(S)_{\on{rel}_{\CY}}$ admits a resolution (the relative de Rham complex)
by objects of the form
$$\ind_{\on{D-mod}(S)_{\on{rel}_{\CY}}}\left(\on{Sym}^k(T_{S/\CY}[1])\otimes \ind_{\on{D-mod}(S)_{\on{rel}_{\CY}}}(\CF)\right),$$
$k\in [0,\on{dim.rel.}(S/\CY)]$.

\smallskip

\noindent{\em(d)} For $\CF\in \IndCoh(S)$ and $\CM\in \Dmod(S)_{\on{rel}_{\CY}}$, the natural map
$$\ind_{\on{D-mod}(S)_{\on{rel}_{\CY}}}(\CF\sotimes \oblv_{\on{D-mod}(S)_{\on{rel}_{\CY}}}(\CM))\to
\ind_{\on{D-mod}(S)_{\on{rel}_{\CY}}}(\CF)\sotimes \CM$$
is an isomorphism.

\end{lem}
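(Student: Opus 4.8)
The plan is to reduce all four assertions to Rinehart's theory of Lie algebroids, via the identification recalled just above the statement: since $\CY$ is classical and $g\colon S\to\CY$ is smooth, the DG scheme $S$ is classical and $L:=T_{S/\CY}$ is a locally free $\CO_S$-module of rank $r=\on{dim.rel.}(S/\CY)$, and $\Dmod(S)_{\on{rel}_{\CY}}$ identifies with the DG category of modules over the enveloping algebra $U(L)$ of the Lie algebroid $L$, with $\oblv_{\on{D-mod}(S)_{\on{rel}_{\CY}}}$ going over to the forgetful functor along $\CO_S\hookrightarrow U(L)$. The only inputs from geometry will be the classicality of $S$ and the local freeness of $L$; the rest is the classical Poincar\'e--Birkhoff--Witt package for locally free Lie algebroids (Rinehart).

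First I would record the structure of $U(L)$: it is a sheaf of $\CO_S$-algebras, flat as an $\CO_S$-module on either side, equipped with an exhaustive order filtration $F_\bullet U(L)$ by sub-$(\CO_S,\CO_S)$-bimodules with $\on{gr}_i U(L)\simeq\on{Sym}^i_{\CO_S}(L)$ a vector bundle; in particular $U(L)$ is a connective (indeed discrete) algebra object of $\QCoh(S)$ acting on the $\QCoh(S)$-module category $\IndCoh(S)$. Existence of the left adjoint is then immediate: it is the free-module functor $\ind_{\on{D-mod}(S)_{\on{rel}_{\CY}}}(\CF)=U(L)\underset{\CO_S}\otimes\CF$. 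Since $U(L)$ is connective, its forgetful functor is t-exact for the induced t-structure, and that t-structure is exactly the one specified in the text, giving t-exactness of $\oblv$ in (b). For (a), tensoring the order filtration $F_\bullet U(L)$ over $\CO_S$ with $\CF$ stays exact because each $\on{gr}_i U(L)=\on{Sym}^i(L)$ is a vector bundle (hence $\CO_S$-flat, and $\on{Sym}^i(L)\otimes(-)$ is t-exact on $\IndCoh(S)$), so $\oblv\circ\ind$ acquires a functorial filtration with $i$-th successive quotient $\on{Sym}^i(T_{S/\CY})\otimes(-)$. Feeding this back yields the t-exactness of $\ind$ in (b): if $\CF\in\IndCoh(S)^{\le 0}$ (resp.\ $\IndCoh(S)^{\ge 0}$), then all $\on{Sym}^i(L)\otimes\CF$ lie there, the finite stages of the filtration do too since these subcategories are closed under extensions, and the colimit $\oblv\ind(\CF)$ stays there because the t-structure on $\IndCoh(S)$ is compatible with filtered colimits from both sides (cf.\ \lemref{properties of t}); as $\oblv$ is conservative and t-exact, $\ind$ is t-exact.

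For (c) I would write down the relative de Rham (Chevalley--Eilenberg) complex of the algebroid $L$ with coefficients in $\CM$, whose terms are $\ind_{\on{D-mod}(S)_{\on{rel}_{\CY}}}\bigl(\on{Sym}^k(T_{S/\CY}[1])\otimes\oblv_{\on{D-mod}(S)_{\on{rel}_{\CY}}}(\CM)\bigr)$ for $k=0,\dots,r$ (so $\on{Sym}^k(L[1])=(\wedge^k L)[k]$), with differential combining the $U(L)$-action on $\CM$ with the algebroid de Rham differential; that it resolves $\CM$ is checked on $\on{gr}$ with respect to the order filtration, where the complex becomes the Koszul complex of the vector bundle $L$ over $\on{Sym}_{\CO_S}(L)$ tensored with $\oblv(\CM)$, which is acyclic precisely because $L$ is locally free. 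Finally, (d) is the relative analogue of \eqref{e:tensor with induction}: the map is furnished by adjunction as indicated in the statement, and since $\oblv$ is conservative it suffices to check that $\oblv$ turns it into an equivalence; but both $\oblv\bigl(\ind(\CF\sotimes\oblv(\CM))\bigr)$ and $\oblv\bigl(\ind(\CF)\sotimes\CM\bigr)$ are canonically $U(L)\underset{\CO_S}\otimes\CF\underset{\CO_S}\otimes\oblv(\CM)$ (using that $\ind$ is the free functor and that $\sotimes$ on $\Dmod(S)_{\on{rel}_{\CY}}$ is, on underlying objects, $\underset{\CO_S}\otimes$ with the diagonal action), and the map is the identity there.

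The only genuine obstacle is the bookkeeping behind the identification $\Dmod(S)_{\on{rel}_{\CY}}\simeq U(T_{S/\CY})\text{-mod}$, including the matching of t-structures and of the $\QCoh(S)$-module structures --- which is why the sentence preceding the lemma asserts it (as ``easy'') rather than proving it, and also why $\CY$ is taken classical: for derived $\CY$ the relative tangent complex is no longer concentrated in degree $0$, $U$ of it is genuinely homotopical, and the clean Rinehart arguments above break down (the sharper treatment being deferred to \cite{GR2}). Once the identification is granted, all four assertions are the standard facts about the enveloping algebra of a locally free Lie algebroid, and the single geometric hypothesis doing the work throughout is the smoothness of $g$.
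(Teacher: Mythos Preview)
The paper does not supply a proof: the lemma is simply prefaced by ``The following is easy:'' right after recording the identification of $\Dmod(S)_{\on{rel}_{\CY}}$ with modules over the Lie algebroid $T_{S/\CY}$. Your proposal is correct and fills in exactly what the authors have in mind---once that identification is granted, all four parts reduce to the standard Rinehart/PBW package for a locally free Lie algebroid, and your treatment of each is sound.

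One small point on (d): writing both sides after $\oblv$ as $U(L)\underset{\CO_S}\otimes \CF\underset{\CO_S}\otimes \oblv(\CM)$ conflates the $\sotimes$ on $\IndCoh(S)$ with the $\QCoh(S)$-module action, and these differ when $S$ is not smooth over $k$. What the argument actually uses is the compatibility $\CE\otimes(\CF_1\sotimes\CF_2)\simeq(\CE\otimes\CF_1)\sotimes\CF_2$ for $\CE\in\QCoh(S)$ and $\CF_i\in\IndCoh(S)$, which holds formally (pull the $\QCoh$-tensor through $\Delta_S^!$); with that in hand your identification goes through. Alternatively, (d) follows immediately from (a) by checking that the natural map respects the order filtrations and is the identity on associated gradeds.
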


We obtain:

\begin{cor} \label{c:adj to rel abs D-mod cl}
The functor $\oblv_{\on{D-mod}(S)_{\on{rel}\to \on{abs}}}$ admits a left adjoint; we shall denote this left adjoint by 
$\ind_{\on{D-mod}(S)_{\on{rel}\to \on{abs}}}$.
Furthermore, 

\smallskip

\noindent{\em(a)} 
The functor $\ind_{\on{D-mod}(S)_{\on{rel}\to \on{abs}}}$ is right t-exact and has cohomological amplitude bounded by
$\on{dim.rel.}(S/\CY)$.

\smallskip

\noindent{\em(b)} For $\CF\in \Dmod(S)_{\on{rel}_{\CY}}$ and $\CM\in \Dmod(S)$, the natural map
$$\ind_{\on{D-mod}(S)_{\on{rel}\to \on{abs}}}(\CF\sotimes \oblv_{\on{D-mod}(S)_{\on{rel}\to \on{abs}}}(\CM))\to
\ind_{\on{D-mod}(S)_{\on{rel}\to \on{abs}}}(\CF)\sotimes \CM$$
is an isomorphism.

\end{cor}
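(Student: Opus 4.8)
The statement is Corollary~\ref{c:adj to rel abs D-mod cl}, which is to be deduced from \lemref{l:adj to rel D-mod cl}. The plan is to construct the left adjoint $\ind_{\on{D-mod}(S)_{\on{rel}\to \on{abs}}}$ by producing it explicitly from the relative de Rham resolution of \lemref{l:adj to rel D-mod cl}(c), and then to read off its cohomological amplitude and the projection formula from the corresponding properties of the relative induction functor.

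\textbf{Step 1: existence of the adjoint.} First I would recall that by \lemref{l:adj to rel D-mod cl}(c), every object $\CF\in \Dmod(S)_{\on{rel}_{\CY}}$ admits a finite resolution (the relative de Rham complex) by objects of the form $\ind_{\on{D-mod}(S)_{\on{rel}_{\CY}}}(\CE)$ with $\CE\in \IndCoh(S)$. Hence the essential image of $\ind_{\on{D-mod}(S)_{\on{rel}_{\CY}}}$ generates $\Dmod(S)_{\on{rel}_{\CY}}$ (under finite colimits, in fact), so to define a left adjoint to $\oblv_{\on{D-mod}(S)_{\on{rel}\to \on{abs}}}$ it suffices to define it on such generators, compatibly with morphisms, and then extend. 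On an object $\ind_{\on{D-mod}(S)_{\on{rel}_{\CY}}}(\CE)$ I would set
$$\ind_{\on{D-mod}(S)_{\on{rel}\to \on{abs}}}\bigl(\ind_{\on{D-mod}(S)_{\on{rel}_{\CY}}}(\CE)\bigr):=\ind_{\on{D-mod}(S)}(\CE),$$
which is forced by the composition law $\oblv_{\on{D-mod}(S)_{\on{rel}_{\CY}}}\circ \oblv_{\on{D-mod}(S)_{\on{rel}\to \on{abs}}}\simeq \oblv_{\on{D-mod}(S)}$ and adjunction (both $\oblv_{\on{D-mod}(S)}$ and $\oblv_{\on{D-mod}(S)_{\on{rel}_{\CY}}}$ have left adjoints, the latter by \lemref{l:adj to rel D-mod cl}). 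Applying this termwise to the relative de Rham resolution, one gets a candidate functor; checking that it is in fact left adjoint to $\oblv_{\on{D-mod}(S)_{\on{rel}\to \on{abs}}}$ amounts to checking the adjunction $\on{Maps}_{\Dmod(S)}(\ind_{\on{D-mod}(S)}(\CE),\CM)\simeq \on{Maps}_{\Dmod(S)_{\on{rel}_{\CY}}}(\ind_{\on{D-mod}(S)_{\on{rel}_{\CY}}}(\CE),\oblv_{\on{D-mod}(S)_{\on{rel}\to \on{abs}}}(\CM))$, which follows by unwinding the two given adjunctions; then this extends to all of $\Dmod(S)_{\on{rel}_{\CY}}$ because both sides of the desired adjunction, viewed as functors in the first variable, send the (finite) relative de Rham resolution to an exact triangle.

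\textbf{Step 2: cohomological amplitude (point (a)).} Here I would use that $\ind_{\on{D-mod}(S)_{\on{rel}_{\CY}}}$ is t-exact (\lemref{l:adj to rel D-mod cl}(b)) and $\ind_{\on{D-mod}(S)}$ is t-exact (this is stated for schemes in \secref{Dt}, and $S$ is here a classical scheme). An object $\CF\in \Dmod(S)_{\on{rel}_{\CY}}^{\heartsuit}$ is computed by the relative de Rham resolution, which has length $\on{dim.rel.}(S/\CY)$ and whose terms $\ind_{\on{D-mod}(S)_{\on{rel}_{\CY}}}(\on{Sym}^k(T_{S/\CY}[1])\otimes \ind_{\on{D-mod}(S)_{\on{rel}_{\CY}}}(\CF))$ lie in cohomological degrees $[-\on{dim.rel.}(S/\CY),0]$; applying $\ind_{\on{D-mod}(S)_{\on{rel}\to\on{abs}}}$ turns this into the complex of $\ind_{\on{D-mod}(S)}$ of the underlying $\IndCoh(S)$-objects, which again lies in the same range of degrees by t-exactness of $\ind_{\on{D-mod}(S)}$. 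Hence $\ind_{\on{D-mod}(S)_{\on{rel}\to \on{abs}}}$ sends $\Dmod(S)_{\on{rel}_{\CY}}^{\heartsuit}$ into $\Dmod(S)^{[-\on{dim.rel.}(S/\CY),0]}$, giving right t-exactness and the stated amplitude bound.

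\textbf{Step 3: the projection formula (point (b)).} This follows from \lemref{l:adj to rel D-mod cl}(d) together with compatibility of $\ind_{\on{D-mod}(S)_{\on{rel}\to\on{abs}}}$ with $\sotimes$ on generators. Both sides of the map in (b) are exact (hence commute with the finite de Rham resolution) in $\CF$, so it suffices to check the isomorphism when $\CF=\ind_{\on{D-mod}(S)_{\on{rel}_{\CY}}}(\CE)$; in that case $\ind_{\on{D-mod}(S)_{\on{rel}\to\on{abs}}}(\CF)=\ind_{\on{D-mod}(S)}(\CE)$ and the claim reduces, via the compatibility of $\oblv$'s with $\sotimes$ (\secref{sss:tens_schemes}), to the scheme-level projection formula \eqref{e:tensor with induction} for $\ind_{\on{D-mod}(S)}$ and to \lemref{l:adj to rel D-mod cl}(d). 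The main obstacle is Step 1: making the \emph{ad hoc} definition via the relative de Rham resolution into a genuine functor and verifying the adjunction without circularity — one must either invoke an abstract adjoint functor criterion (the functor $\oblv_{\on{D-mod}(S)_{\on{rel}\to\on{abs}}}$ is accessible and preserves limits, so a left adjoint exists on formal grounds) and then \emph{identify} it via the resolution, or carefully check that the termwise formula is independent of the chosen resolution. I would favor the formal-existence route for Step 1 and then use the resolution purely to compute the functor in Steps 2 and 3.
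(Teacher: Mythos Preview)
Your proposal is correct and follows essentially the same approach as the paper: reduce via the relative de Rham resolution of \lemref{l:adj to rel D-mod cl}(c) to objects of the form $\ind_{\on{D-mod}(S)_{\on{rel}_{\CY}}}(\CE)$, on which the sought-for left adjoint is $\ind_{\on{D-mod}(S)}(\CE)$, and then read off (a) from t-exactness of $\ind_{\on{D-mod}(S)}$ and (b) from \eqref{e:tensor with induction} together with \lemref{l:adj to rel D-mod cl}(d). The paper compresses all of this into two sentences; your care in Step~1 about making the termwise definition into a genuine functor (and your preference for invoking the adjoint functor theorem for existence, then using the resolution only to compute) is a legitimate and slightly cleaner way to organize the same argument.
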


\begin{proof}
By \lemref{l:adj to rel D-mod cl}, it is enough to consider (and prove the existence of) the functor
$\ind_{\on{D-mod}(S)_{\on{rel}\to \on{abs}}}$ applied to objects of the form
$$\ind_{\on{D-mod}(S)_{\on{rel}_{\CY}}}(\CF),\quad \CF\in \IndCoh(\CY).$$
However,
$$\ind_{\on{D-mod}(S)_{\on{rel}\to \on{abs}}}(\ind_{\on{D-mod}(S)_{\on{rel}_{\CY}}}(\CF))\simeq
\ind_{\on{D-mod}(S)}(\CF)$$
and the assertion follows.
\end{proof}

\begin{rem}
One can show that the cohomological amplitude of $\ind_{\on{D-mod}(S)_{\on{rel}\to \on{abs}}}$
is in fact bounded by 
$\underset{y\in \CY(k)}{\on{max}}(\dim(\on{Aut}(y)))$.
\end{rem}

\begin{rem}
The notion of Lie algebroid is familiar for classical schemes. Its analog in the framework of
derived algebraic geometry will appear in \cite{GR2}. Assuming this theory, the assertions of 
of \secref{sss:algebroid} are equally applicable when $\CY$ is a derived algebraic stack.
\end{rem}

\sssec{}

Let $\CY$ be again a derived algebraic stack. We have:

\begin{prop} \label{p:adj to rel D-mod} \hfill

\smallskip

\noindent{\em(a)}
The functor $\oblv_{\on{D-mod}(S)_{\on{rel}_{\CY}}}$ admits a left adjoint, denoted $\ind_{\on{D-mod}(S)_{\on{rel}_{\CY}}}$.
Both functors $\ind_{\on{D-mod}(S)_{\on{rel}_{\CY}}}$ and $\oblv_{\on{D-mod}(S)_{\on{rel}_{\CY}}}$
are t-exact.

\smallskip

\noindent{\em(a')} For $\CF\in \IndCoh(S)$ and $\CM\in \Dmod(S)_{\on{rel}_{\CY}}$, the natural map
$$\ind_{\on{D-mod}(S)_{\on{rel}_{\CY}}}(\CF\sotimes \oblv_{\on{D-mod}(S)_{\on{rel}_{\CY}}}(\CM))\to
\ind_{\on{D-mod}(S)_{\on{rel}_{\CY}}}(\CF)\sotimes \CM$$
is an isomorphism.

\smallskip

\noindent{\em(b)} The functor $\oblv_{\on{D-mod}(S)_{\on{rel}\to \on{abs}}}$ admits a left adjoint, 
$\ind_{\on{D-mod}(S)_{\on{rel}\to \on{abs}}}$. The functor $\ind_{\on{D-mod}(S)_{\on{rel}\to \on{abs}}}$ 
is right t-exact and is of cohomological amplitude bounded by $\on{dim.rel.}(S/\CY)$.

\smallskip

\noindent{\em(b')} For $\CF\in \Dmod(S)_{\on{rel}_{\CY}}$ and $\CM\in \Dmod(S)$, the natural map
$$\ind_{\on{D-mod}(S)_{\on{rel}\to \on{abs}}}(\CF\sotimes \oblv_{\on{D-mod}(S)_{\on{rel}\to \on{abs}}}(\CM))\to
\ind_{\on{D-mod}(S)_{\on{rel}\to \on{abs}}}(\CF)\sotimes \CM$$
is an isomorphism.

\end{prop}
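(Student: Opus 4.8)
\textbf{Proof sketch for \propref{p:adj to rel D-mod}.}

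The plan is to reduce the derived statement to its classical counterpart, which was treated in \secref{sss:algebroid} (Lemma~\ref{l:adj to rel D-mod cl} and Corollary~\ref{c:adj to rel abs D-mod cl}), via the theory of Lie algebroids in derived algebraic geometry — but since that theory is deferred to \cite{GR2}, I would instead argue directly using the structure of $\Dmod(S)_{\on{rel}_\CY}$ as $\IndCoh$ of the relative de Rham prestack $S_{\on{dR}}\underset{\CY_{\on{dR}}}\times \CY$. First, for (a): the morphism $q: S\to S_{\on{dR}}\underset{\CY_{\on{dR}}}\times \CY$ (pullback along which defines $\oblv_{\Dmod(S)_{\on{rel}_\CY}}$) is, after base change to any affine DG scheme, an ind-closed embedding of ind-finite type — indeed $S\to S_{\on{dR}}$ is such, and we are taking a fibre product. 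Hence the functor $\oblv_{\Dmod(S)_{\on{rel}_\CY}}=q^!$ admits a left adjoint by the general formalism of $\IndCoh$-pushforward for such morphisms (the construction indicated in \cite{IndCoh}, Sects. 5--6, and which will appear in \cite{GR2}); denote it $\ind_{\Dmod(S)_{\on{rel}_\CY}}$. The t-exactness of both functors, as well as the projection-formula isomorphism in (a'), can be checked smooth-locally on $S$ (using that $\Dmod(-)_{\on{rel}_\CY}$ satisfies smooth, indeed fppf, descent in the $S$-variable), and smooth-locally $S$ becomes a classical scheme étale over an affine space relative to $\CY$, so one is reduced to the classical assertions of Lemma~\ref{l:adj to rel D-mod cl}(b),(d).

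For part (b): exactly as in the proof of Corollary~\ref{c:adj to rel abs D-mod cl}, it suffices to construct $\ind_{\Dmod(S)_{\on{rel}\to\on{abs}}}$ on objects of the form $\ind_{\Dmod(S)_{\on{rel}_\CY}}(\CF)$ for $\CF\in\IndCoh(S)$, since by part (a) every object of $\Dmod(S)_{\on{rel}_\CY}$ is a colimit of such (it is a colimit of its relative de Rham resolution, the derived analog of Lemma~\ref{l:adj to rel D-mod cl}(c)). On such objects one has the tautological identification
$$\ind_{\Dmod(S)_{\on{rel}\to\on{abs}}}\bigl(\ind_{\Dmod(S)_{\on{rel}_\CY}}(\CF)\bigr)\simeq \ind_{\Dmod(S)}(\CF),$$
the composite left adjoint to $\oblv_{\Dmod(S)_{\on{rel}_\CY}}\circ\oblv_{\Dmod(S)_{\on{rel}\to\on{abs}}}=\oblv_{\Dmod(S)}$; here $\ind_{\Dmod(S)}$ exists for the DG scheme $S$ by \secref{sss:DR-pi^!}. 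The cohomological amplitude bound by $\on{dim.rel.}(S/\CY)$ follows from the finite length of the relative de Rham resolution (again smooth-locally, reducing to the classical case where this is Corollary~\ref{c:adj to rel abs D-mod cl}(a)). Finally (b') is the projection formula for $\ind_{\Dmod(S)_{\on{rel}\to\on{abs}}}$, proved by reducing — via (a') and the compatibility of the two induction functors above — to the projection formula \eqref{e:tensor with induction} for $\ind_{\Dmod(S)}$ on the DG scheme $S$.

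The main obstacle I anticipate is making the derived Lie-algebroid / relative de Rham resolution argument rigorous without the machinery of \cite{GR2}: in the classical case Lemma~\ref{l:adj to rel D-mod cl} rests on the explicit description of $\Dmod(S)_{\on{rel}_\CY}$ as modules over the Lie algebroid $T_{S/\CY}$ and on the PBW filtration, and the clean derived replacement for ``$\on{Sym}^k(T_{S/\CY}[1])$'' requires the cotangent-complex formalism for the square-zero-type morphism $S\to S_{\on{dR}}\underset{\CY_{\on{dR}}}\times\CY$. The honest fix is to work smooth-locally, where $S$ is classical and affine and $g:S\to\CY$ factors through a smooth affine atlas, so that $\Dmod(S)_{\on{rel}_\CY}$ literally is classical relative D-modules and Lemma~\ref{l:adj to rel D-mod cl} applies verbatim; one then checks that the locally-constructed left adjoints and the resolution glue, using fppf descent for $\Dmod(-)_{\on{rel}_\CY}$ and the compatibility of all functors in sight with $!$-pullback. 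Everything else is a formal consequence of adjunction and the already-established properties of $\IndCoh$.
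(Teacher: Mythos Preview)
Your strategy—reduce to the classical case and invoke \lemref{l:adj to rel D-mod cl} and \corref{c:adj to rel abs D-mod cl}—is exactly the paper's; its entire proof is a single sentence to that effect. Much of your extra detail (obtaining the left adjoint in (a) as $\IndCoh$-pushforward along the map $q:S\to S_{\on{dR}}\underset{\CY_{\on{dR}}}\times\CY$, and deducing (b) by arguing on induced objects as in the proof of \corref{c:adj to rel abs D-mod cl}) is reasonable and goes beyond what the paper writes out.

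There is, however, a gap in your reduction mechanism. You argue by working smooth-locally on $S$, asserting that one thereby lands in the classical setting of \secref{sss:algebroid}. But that section assumes $\CY$ (not just $S$) is classical, and smooth localization on $S$ does nothing to $\CY$: if $\CY$ is genuinely derived then every DG scheme smooth over $\CY$ is itself derived, so no smooth or \'etale cover of $S$ is classical, and factoring $g$ through a smooth affine atlas does not help either (the atlas is derived too). The paper's reduction is instead to replace $\CY$ by $^{cl}\CY$ and correspondingly $S$ by $^{cl}S=S\underset{\CY}\times{}^{cl}\CY$ (which is the classical truncation because $g$ is flat). The paper does not spell out why this replacement is harmless—the point being that $S_{\on{dR}}$ and $\CY_{\on{dR}}$ depend only on the underlying reduced classical data, so $\Dmod(S)$ is literally unchanged and the comparison for $\Dmod(S)_{\on{rel}_\CY}$ and $\IndCoh(S)$ goes through the closed embedding $^{cl}\CY\hookrightarrow\CY$—but in any case that is the direction of the reduction, not descent along smooth covers of $S$.
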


\begin{proof}
Both assertions easily reduce to the case when $\CY$ is classical, and there they follow
from \lemref{l:adj to rel D-mod cl} and \corref{c:adj to rel abs D-mod cl}, respectively. 
\end{proof}

\begin{rem}
In terms of the formalism that willbe explained in \cite{GR2}, the functors 
$$\ind_{\on{D-mod}(S)_{\on{rel}_{\CY}}} \text{ and } \ind_{\on{D-mod}(S)_{\on{rel}\to \on{abs}}}$$
correspond to the direct image along the morphisms
$$S\to S_{\on{dR}}\underset{\CY_{\on{dR}}}\times \CY \text{ and }
S_{\on{dR}}\underset{\CY_{\on{dR}}}\times \CY \to S_{\on{dR}},$$
respectively.
\end{rem}

\sssec{}

Let $f:S'\to S$ be again a morphism in $\on{DGSch}_{/\CY,\on{smooth}})$. By adjunction, the diagram
\begin{equation} \label{e:rel abs ind}
\CD
\Dmod(S')_{\on{rel}_{\CY}}   @>{\ind_{\on{D-mod}(S')_{\on{rel}\to \on{abs}}}}>>  \Dmod(S')  \\
@A{f^!}AA   @AA{f^!}A  \\
\Dmod(S)_{\on{rel}_{\CY}}   @>{\ind_{\on{D-mod}(S)_{\on{rel}\to \on{abs}}}}>>  \Dmod(S)
\endCD
\end{equation}
commutes up to a natural transformation.

\begin{lem} \label{l:rel abs ind}
The diagram \eqref{e:rel abs ind} commutes (i.e., the natural transformation above is an isomorphism).
\end{lem}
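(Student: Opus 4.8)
The plan is to establish the base-change isomorphism \eqref{e:rel abs ind} by reducing, via the relative de Rham resolution, to the ``generators'' on which $\ind_{\Dmod(S)_{\on{rel}\to\on{abs}}}$ reduces to the absolute induction functor, and then running an explicit Lie-algebroid computation.

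\emph{Step 1: reduction to classical $\CY$ and to generators.} As in the proof of \propref{p:adj to rel D-mod}, one reduces to the case where $\CY$ is a classical algebraic stack; then every $(S,g)\in\on{DGSch}_{/\CY,\on{smooth}}$ has $S$ classical, and since the assertion is Zariski-local on $S$ and $S'$ we may take them affine. All four functors occurring in \eqref{e:rel abs ind} are continuous, and by \lemref{l:adj to rel D-mod cl}(c) every object $\CF\in\Dmod(S)_{\on{rel}_\CY}$ admits a finite resolution by objects of the form $\ind_{\Dmod(S)_{\on{rel}_\CY}}(\CE)$ with $\CE\in\IndCoh(S)$; hence $\CF$ is a finite iterated cone of shifts of such objects. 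As the class of isomorphisms between exact functors is stable under cones and shifts, it suffices to check that the natural transformation of \eqref{e:rel abs ind} is an isomorphism on each $\CF=\ind_{\Dmod(S)_{\on{rel}_\CY}}(\CE)$.

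\emph{Step 2: the two sides on a generator.} On such $\CF$ the composition $f^!\circ\ind_{\Dmod(S)_{\on{rel}\to\on{abs}}}$ yields $f^!\bigl(\ind_{\Dmod(S)_{\on{rel}\to\on{abs}}}(\ind_{\Dmod(S)_{\on{rel}_\CY}}(\CE))\bigr)\simeq f^!\bigl(\ind_{\Dmod(S)}(\CE)\bigr)$, the identification being the one established in the proof of \corref{c:adj to rel abs D-mod cl}. For the other composition $\ind_{\Dmod(S')_{\on{rel}\to\on{abs}}}\circ f^!$ I would first unwind $f^!$ on the relative categories: $\ind_{\Dmod(S)_{\on{rel}_\CY}}(\CE)$ is the module over the Lie algebroid $T_{S/\CY}$ induced from $\CE$, and its $!$-pullback is the $T_{S'/\CY}$-module obtained from $f^*\CE\otimes\CK_{S'/S}$ by induction along the extension $0\to T_{S'/S}\to T_{S'/\CY}\to f^*T_{S/\CY}\to 0$. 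One then computes $\ind_{\Dmod(S')_{\on{rel}\to\on{abs}}}$ of this module by its relative de Rham (Koszul) complex with respect to $T_{S'/\CY}$, filtered by the sub-Lie-algebroid $T_{S'/S}$: the $T_{S'/S}$-part is the relative de Rham cohomology along the smooth fibres of $f$ of a relatively free module, which by \eqref{e:*! shift} collapses to the cohomological shift matching $\CK_{S'/S}$; the remaining $f^*T_{S/\CY}$-part, applied to the resulting induced $f^*T_{S/\CY}$-module, contributes nothing beyond the pullback of the generator of $\ind_{\Dmod(S)}(\CE)$ — this is the analogue of the vanishing of the higher Lie-algebra homology of a free module $U(\fg)\otimes V$, whose homology is just $V$. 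Comparing, the two compositions agree, and a diagram chase identifies the comparison map with the natural transformation of \eqref{e:rel abs ind}.

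\emph{The main obstacle.} The delicate point is precisely the explicit computation in Step~2: one must reconcile the fact that the ``naive'' compatibilities $f^!\circ\ind_{\Dmod(S)}\simeq\ind_{\Dmod(S')}\circ f^!$ and $f^!\circ\ind_{\Dmod(S)_{\on{rel}_\CY}}\simeq\ind_{\Dmod(S')_{\on{rel}_\CY}}\circ f^!$ both \emph{fail} (since $f^!$ is not monoidal in the evident sense) with the fact that their ``composite'' \eqref{e:rel abs ind} nevertheless holds. Conceptually the cleanest proof identifies all the functors involved with $!$-pullbacks and $\IndCoh$-pushforwards along the Cartesian square
$$
\CD
S'_{\on{dR}}\underset{\CY_{\on{dR}}}\times \CY  @>>>  S'_{\on{dR}} \\
@VVV   @VV{f_{\on{dR}}}V  \\
S_{\on{dR}}\underset{\CY_{\on{dR}}}\times \CY  @>>>  S_{\on{dR}}
\endCD
$$
and invokes the $\IndCoh$ base-change isomorphism; since the horizontal maps here are not schematic, this requires the extension of the $\IndCoh$ direct-image formalism promised in \cite{GR2}, which is exactly why one argues by hand as above.
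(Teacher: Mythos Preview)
Your approach is essentially the same as the paper's: reduce to the classical case and then invoke the Lie-algebroid description from \secref{sss:algebroid}. The paper's proof is a single sentence (``The assertion reduces to the case when $\CY$ is classical, and there it follows from \secref{sss:algebroid}''), whereas you unwind what that reference actually entails---the reduction to generators $\ind_{\Dmod(S)_{\on{rel}_\CY}}(\CE)$ via the relative de~Rham resolution of \lemref{l:adj to rel D-mod cl}(c), followed by the explicit algebroid computation. Your honest flagging of the ``main obstacle'' (that neither $f^!\circ\ind_{\Dmod(S)}$ nor $f^!\circ\ind_{\Dmod(S)_{\on{rel}_\CY}}$ commutes naively, yet their composite does) and your identification of the clean conceptual proof via base change for the Cartesian square over $\CY_{\on{dR}}$ is exactly the point the paper defers to \cite{GR2} in the surrounding remarks.
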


\begin{proof}
The assertion reduces to the case when $\CY$ is classical, and there it follows from \secref{sss:algebroid}.
\end{proof}

Thus, we obtain that the functors
$(S,g)\mapsto \ind_{\on{D-mod}(S)_{\on{rel}\to \on{abs}}}$ give rise to a natural transformation
$$(\Dmod^!_{\on{rel}_{\CY}})_{\on{DGSch}_{/\CY,\on{smooth}}}
\overset{\ind_{\on{D-mod}_{\on{rel}\to \on{abs}}}}\longrightarrow 
\Dmod^!_{\on{DGSch}_{/\CY,\on{smooth}}}.$$ 

In particular, the assignment
$$(\CF\in \IndCoh(\CY))\mapsto  \ind_{\on{D-mod}(S)_{\on{rel}\to \on{abs}}}(g^!(\CF))$$
defines a functor
$$\IndCoh(\CY)\to \underset{(S,g)\in (\on{DGSch}_{/\CY,\on{smooth}})^{\on{op}}}{\underset{\longleftarrow}{lim}}\, \Dmod(S)=\Dmod(\CY).$$
We denote this functor by $\ind_{\Dmod(\CY)}$. 

\begin{lem}  \label{l:constr of induction}
The functor $\ind_{\Dmod(\CY)}$ is the left adjoint of $\oblv_{\Dmod(\CY)}$. 
\end{lem}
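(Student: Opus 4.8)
\textbf{Proof plan for \lemref{l:constr of induction}.}
The plan is to verify the adjunction $(\ind_{\Dmod(\CY)}, \oblv_{\Dmod(\CY)})$ by a compatible family of adjunctions in the limit presentation of both categories. Recall that $\IndCoh(\CY) \simeq \underset{(S,g)}{\lim}\, \Dmod(S)_{\on{rel}_{\CY}}$ by \lemref{l:IndCoh through relative}, and $\Dmod(\CY) \simeq \underset{(S,g)}{\lim}\, \Dmod(S)$ by \secref{sss:only_smooth}, the limits being taken over $(\on{DGSch}_{/\CY,\on{smooth}})^{\on{op}}$ with $!$-pullback transition functors. Under these identifications the functor $\oblv_{\Dmod(\CY)}$ is induced, level by level, by the functors $\oblv_{\on{D-mod}(S)_{\on{rel}\to\on{abs}}}:\Dmod(S)\to \Dmod(S)_{\on{rel}_{\CY}}$ (composed with $\oblv_{\on{D-mod}(S)_{\on{rel}_\CY}}$ to land in $\IndCoh(S)$, but the key point is the "relative-to-absolute" piece), and $\ind_{\Dmod(\CY)}$ is by construction induced level by level by $\ind_{\on{D-mod}(S)_{\on{rel}\to\on{abs}}}$, which exists and is well-behaved by \propref{p:adj to rel D-mod}(b).

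First I would invoke the following general principle: given two diagrams $\bC_\bullet$, $\bD_\bullet$ indexed by the same category with continuous transition functors, a natural transformation $\Phi_\bullet : \bD_\bullet \to \bC_\bullet$ of diagrams in which each $\Phi_a$ admits a left adjoint $\Psi_a$, and such that the $\Psi_a$ are compatible with the transition functors (i.e. the Beck--Chevalley / base-change squares commute) — then the induced functor $\lim \Phi_\bullet : \lim \bD_\bullet \to \lim \bC_\bullet$ admits $\lim \Psi_\bullet$ as a left adjoint. The commutativity of the relevant base-change squares is precisely the content of \lemref{l:rel abs ind} (for the relative-to-absolute induction against $!$-pullback). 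So the proof reduces to citing \lemref{l:rel abs ind}, \propref{p:adj to rel D-mod}(b) for existence of the level-wise left adjoints, and this general limit-of-adjunctions statement. Concretely, one checks that for $\CF \in \IndCoh(\CY)$ and $\CM \in \Dmod(\CY)$ one has
$$\CMaps_{\Dmod(\CY)}(\ind_{\Dmod(\CY)}(\CF), \CM) \simeq \underset{(S,g)}{\lim}\, \CMaps_{\Dmod(S)}(\ind_{\on{D-mod}(S)_{\on{rel}\to\on{abs}}}(g^!\CF), g^!\CM)$$
$$\simeq \underset{(S,g)}{\lim}\, \CMaps_{\Dmod(S)_{\on{rel}_\CY}}(g^!\CF, \oblv_{\on{D-mod}(S)_{\on{rel}\to\on{abs}}}(g^!\CM)) \simeq \CMaps_{\IndCoh(\CY)}(\CF, \oblv_{\Dmod(\CY)}(\CM)),$$
where the first isomorphism uses the limit description of mapping spaces, the second is the level-wise adjunction, and the third uses \lemref{l:IndCoh through relative} together with the fact that $\oblv_{\Dmod(\CY)}$ is the limit of $\oblv_{\on{D-mod}(S)_{\on{rel}\to\on{abs}}}$; one must track that these isomorphisms are functorial and glue, which is exactly where \lemref{l:rel abs ind} is needed.

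The main obstacle is not any single computation but rather the homotopy-coherence bookkeeping: one has to be sure that the level-wise left adjoints $\ind_{\on{D-mod}(S)_{\on{rel}\to\on{abs}}}$ assemble into a genuine morphism of diagrams valued in $\StinftyCat_{\on{cont}}$ (not merely a level-wise collection of functors), so that $\ind_{\Dmod(\CY)}$ is defined as an actual limit functor. This coherence is supplied by passing to adjoints within the $(\infty,1)$-categorical framework — the existence of the natural transformation $\ind_{\on{D-mod}_{\on{rel}\to\on{abs}}}$ of functors out of $(\on{DGSch}_{/\CY,\on{smooth}})^{\on{op}}$ asserted just before the lemma — and the adjunction on the limit then follows formally. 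I would therefore organize the write-up as: (1) recall the two limit presentations and identify $\oblv_{\Dmod(\CY)}$ within them; (2) cite \propref{p:adj to rel D-mod}(b) and \lemref{l:rel abs ind} to get the diagram-level left adjoint; (3) conclude by the general fact that a limit of an adjunction is an adjunction, spelling out the mapping-space computation above.
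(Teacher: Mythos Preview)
Your proposal is correct and follows the same approach as the paper: the paper's proof is the one-liner ``This follows from \lemref{l:IndCoh through relative},'' and what you have written is precisely the unpacking of that citation --- using the identification $\IndCoh(\CY)\simeq \lim_{(S,g)} \Dmod(S)_{\on{rel}_\CY}$ together with the level-wise adjunctions $(\ind_{\on{D-mod}(S)_{\on{rel}\to\on{abs}}},\oblv_{\on{D-mod}(S)_{\on{rel}\to\on{abs}}})$ and their compatibility under $!$-pullback (\lemref{l:rel abs ind}) to conclude. Your organization into steps (1)--(3) is a reasonable expansion of what the paper leaves implicit.
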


\begin{proof}
This follows from \lemref{l:IndCoh through relative}.
\end{proof}

\sssec{}  \label{sss:Dmod and QCoh on stacks}

Being a left adjoint of a left t-exact functor, the functor $\ind_{\Dmod(\CY)}$ is right t-exact.

\medskip

However, unlike the case of DG schemes, $\ind_{\Dmod(\CY)}$ is no longer t-exact, even when
$\CY$ is a smooth classical stack. Rather, we have the following:

\begin{lem} \label{l:ind finite c.d.}
Assume that $\CY$ is quasi-compact. Then the functor $\ind_{\Dmod(\CY)}$ is of finite
cohomological amplitude.
\end{lem}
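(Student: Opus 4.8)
The plan is to reduce the statement to a local computation on a smooth atlas. First I would pick a smooth atlas $f\colon Z\to \CY$ with $Z$ an affine (or quasi-compact) DG scheme, which exists because $\CY$ is quasi-compact; let $n=\on{dim.rel.}(Z/\CY)$. The functor $\oblv_{\Dmod(\CY)}$ is conservative, so to bound the cohomological amplitude of $\ind_{\Dmod(\CY)}$ it suffices to bound that of $\oblv_{\Dmod(\CY)}\circ \ind_{\Dmod(\CY)}$, or equivalently (since the t-structure on $\IndCoh(\CY)$ is detected after $f^{\IndCoh,*}$, which is t-exact) to bound the cohomological amplitude of $f^{\IndCoh,*}\circ \oblv_{\Dmod(\CY)}\circ \ind_{\Dmod(\CY)}$.

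Next I would rewrite this composite using the material of the preceding subsection. By the construction of $\ind_{\Dmod(\CY)}$ via the relative induction functors, together with \lemref{l:rel abs ind} and the compatibility of $\oblv_{\Dmod}$ with $!$-pullbacks, one has a canonical identification of $f^!\circ \oblv_{\Dmod(\CY)}\circ \ind_{\Dmod(\CY)}$ with $\oblv_{\Dmod(Z)_{\on{rel}\to\on{abs}}}$-and-$\oblv_{\Dmod(Z)_{\on{rel}_{\CY}}}$ applied to $\ind_{\Dmod(Z)_{\on{rel}\to\on{abs}}}\circ\ind_{\Dmod(Z)_{\on{rel}_{\CY}}}\circ f^!$ (i.e. the $Z$-component of $\ind_{\Dmod(\CY)}(\CF)$ is $\ind_{\Dmod(Z)_{\on{rel}\to\on{abs}}}(\ind_{\Dmod(Z)_{\on{rel}_\CY}}(f^!\CF))$, using \propref{p:adj to rel D-mod}). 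Since $f^!$ differs from $f^{\IndCoh,*}$ by a shift and a line bundle twist (formula \eqref{e:! and * pullback}), it is t-exact up to shift, hence has finite amplitude. By \propref{p:adj to rel D-mod}(a) the functor $\ind_{\Dmod(Z)_{\on{rel}_\CY}}$ is t-exact, and $\oblv_{\Dmod(Z)_{\on{rel}_\CY}}$ is t-exact; by \propref{p:adj to rel D-mod}(b) the functor $\ind_{\Dmod(Z)_{\on{rel}\to\on{abs}}}$ is right t-exact of cohomological amplitude bounded by $\on{dim.rel.}(Z/\CY)=n$. Finally $\oblv_{\Dmod(Z)}$ has finite cohomological amplitude for $Z$ quasi-compact (as recalled in \secref{Dt}). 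Composing these finitely many functors, each of finite amplitude, yields the desired bound.

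The main obstacle, such as it is, is bookkeeping rather than mathematics: one must be careful that the identification of the $Z$-component of $\ind_{\Dmod(\CY)}(\CF)$ with the iterated relative induction is the correct one (this is exactly what \lemref{l:constr of induction} and \lemref{l:rel abs ind} are set up to provide), and one must track the shift/twist discrepancy between $f^!$ and $f^{\IndCoh,*}$ so as not to lose t-exactness up to a shift. Once these are in place, the amplitude bound is simply the sum of the amplitudes of the constituent functors, and in particular one sees the relative dimension $n$ of the atlas entering — consistent with the sharper bound $\max_{y\in\CY(k)}\dim(\on{Aut}(y))$ mentioned in the remark following \corref{c:adj to rel abs D-mod cl}.
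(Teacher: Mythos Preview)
Your approach is the right one and is exactly what the paper's one-line proof ``Follows from \propref{p:adj to rel D-mod}(b)'' is gesturing at: pull back to a smooth atlas and use the bounded amplitude of $\ind_{\Dmod(Z)_{\on{rel}\to\on{abs}}}$. One small correction: by the construction of $\ind_{\Dmod(\CY)}$, the $Z$-component of $\ind_{\Dmod(\CY)}(\CF)$ is $\ind_{\Dmod(Z)_{\on{rel}\to\on{abs}}}(f^!(\CF))$, where $f^!$ here denotes the pullback $\IndCoh(\CY)\to \Dmod(Z)_{\on{rel}_\CY}$ along $Z_{\on{dR}}\times_{\CY_{\on{dR}}}\CY\to \CY$ --- there is no extra factor of $\ind_{\Dmod(Z)_{\on{rel}_\CY}}$. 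This does not affect your amplitude bound: since $\oblv_{\Dmod(Z)_{\on{rel}_\CY}}$ is t-exact and conservative (\propref{p:adj to rel D-mod}(a)) and $\oblv_{\Dmod(Z)_{\on{rel}_\CY}}\circ f^! \simeq f^!_{\IndCoh}$, the functor $f^!:\IndCoh(\CY)\to\Dmod(Z)_{\on{rel}_\CY}$ has the same finite amplitude as $f^!_{\IndCoh}$, and then \propref{p:adj to rel D-mod}(b) finishes the job as you indicate.
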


\begin{proof}
Follows from \propref{p:adj to rel D-mod}(b).
\end{proof}

\sssec{}

In the sequel we shall use the following property of the functor $\ind_{\Dmod(\CY)}$.

\medskip

First, as in the case of DG schemes, for $\CF\in \IndCoh(\CY)$ and $\CM\in \Dmod(\CY)$, by
adjunction we obtain a map
\begin{equation} \label{e:tensor with induction stacks}
\ind_{\on{D-mod}(\CY)}\left(\CF\overset{!}\otimes \oblv_{\on{D-mod}(\CY)}(\CM)\right)\to
\ind_{\on{D-mod}(\CY)}(\CF)\overset{!}\otimes \CM.
\end{equation}

\begin{lem} \label{l:tensor with induction stacks}
The map \eqref{e:tensor with induction stacks} is an isomorphism.
\end{lem}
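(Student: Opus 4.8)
The plan is to verify the assertion smooth-locally on $\CY$, thereby reducing it to the relative projection formula of \propref{p:adj to rel D-mod}(b$'$). First I would use \secref{sss:only_smooth}, according to which $\Dmod(\CY)\simeq \underset{(S,g)\in(\on{DGSch}_{/\CY,\on{smooth}})^{\on{op}}}{\underset{\longleftarrow}{lim}}\,\Dmod(S)$ with $!$-pullbacks as transition functors. Hence a morphism in $\Dmod(\CY)$ is an isomorphism as soon as its image under $g^!:\Dmod(\CY)\to\Dmod(S)$ is an isomorphism for every $(S,g)\in\on{DGSch}_{/\CY,\on{smooth}}$; so it suffices to show that $g^!$ sends the map \eqref{e:tensor with induction stacks} to an isomorphism for each such $(S,g)$.

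Next I would unwind the two sides after applying $g^!$. Write $g^!_{\on{rel}}:\IndCoh(\CY)\to\Dmod(S)_{\on{rel}_\CY}$ for the pullback functor along $S_{\on{dR}}\underset{\CY_{\on{dR}}}\times\CY\to\CY$ of \secref{ss:Dmods on algebraic stacks}. By the very construction of $\ind_{\Dmod(\CY)}$ (the discussion preceding \lemref{l:constr of induction}), there is a canonical isomorphism $g^!\circ\ind_{\Dmod(\CY)}\simeq\ind_{\Dmod(S)_{\on{rel}\to\on{abs}}}\circ g^!_{\on{rel}}$. Since $\sotimes$ on all the categories in sight is defined by $!$-pullback along a diagonal and $!$-pullback for $\IndCoh$ is symmetric monoidal for $\sotimes$, the functors $g^!$, $g^!_{\on{rel}}$ and $\oblv_{\Dmod(S)_{\on{rel}\to\on{abs}}}$ are all symmetric monoidal. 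Finally, because $S_{\on{dR}}\underset{\CY_{\on{dR}}}\times\CY$ is a fiber product over $\CY_{\on{dR}}$, its two projections to $\CY_{\on{dR}}$ agree, which yields a canonical identification $g^!_{\on{rel}}\circ\oblv_{\Dmod(\CY)}\simeq\oblv_{\Dmod(S)_{\on{rel}\to\on{abs}}}\circ g^!$, compatible with the monoidal structures and the adjunction data. Combining these, $g^!$ of the source of \eqref{e:tensor with induction stacks} becomes $\ind_{\Dmod(S)_{\on{rel}\to\on{abs}}}\!\left(g^!_{\on{rel}}(\CF)\sotimes\oblv_{\Dmod(S)_{\on{rel}\to\on{abs}}}(g^!(\CM))\right)$, $g^!$ of the target becomes $\ind_{\Dmod(S)_{\on{rel}\to\on{abs}}}(g^!_{\on{rel}}(\CF))\sotimes g^!(\CM)$, and under these identifications $g^!$ of the map \eqref{e:tensor with induction stacks} becomes the map of \propref{p:adj to rel D-mod}(b$'$), which is an isomorphism. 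This completes the argument.

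The main obstacle I anticipate is not conceptual but bookkeeping: one must check that $g^!$ of the canonical map \eqref{e:tensor with induction stacks} — built by adjunction from the unit of $(\ind_{\Dmod(\CY)},\oblv_{\Dmod(\CY)})$ — genuinely matches the canonical map of \propref{p:adj to rel D-mod}(b$'$), built from the unit of $(\ind_{\Dmod(S)_{\on{rel}\to\on{abs}}},\oblv_{\Dmod(S)_{\on{rel}\to\on{abs}}})$. This amounts to tracking the compatibility of the two adjunctions through the equivalence $\IndCoh(\CY)\simeq\underset{(S,g)}{\underset{\longleftarrow}{lim}}\,\Dmod(S)_{\on{rel}_\CY}$ of \lemref{l:IndCoh through relative} together with the symmetric monoidal structures involved, and it introduces no new ideas.
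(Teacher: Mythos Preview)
Your proposal is correct and is precisely the paper's approach: the paper's proof is the single sentence ``\lemref{l:IndCoh through relative} reduces the assertion to that of \propref{p:adj to rel D-mod}(b$'$),'' and what you have written is exactly the unpacking of that reduction. Your careful tracking of the identification $g^!\circ\ind_{\Dmod(\CY)}\simeq\ind_{\Dmod(S)_{\on{rel}\to\on{abs}}}\circ g^!_{\on{rel}}$ and of the compatibility $g^!_{\on{rel}}\circ\oblv_{\Dmod(\CY)}\simeq\oblv_{\Dmod(S)_{\on{rel}\to\on{abs}}}\circ g^!$ is just making explicit what the paper leaves implicit.
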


\begin{proof}
\lemref{l:IndCoh through relative} reduces the assertion to that of 
\propref{p:adj to rel D-mod}(b'). 
\end{proof}

\sssec{}  \label{sss:left induction stacks}

As in the case of DG schemes, it follows that $\ind_{\Dmod(\CY)}$ canonically factors
through a functor
$$'\ind_{\Dmod(\CY)}:\QCoh(\CY)\to \Dmod(\CY).$$
If $\CY$ is eventually coconnective, it is a left adjoint of the functor
$$'\oblv_{\Dmod(\CY)}:\Dmod(\CY)\to \QCoh(\CY),$$
while the latter is conservative for any algebraic stack.

\medskip

In addition, we have the functors
$$\ind^{\on{left}}_{\Dmod(\CY)}:\QCoh(\CY)\rightleftarrows \Dmod(\CY):\oblv^{\on{left}}_{\Dmod(\CY)}$$
that are mutually adjoint when $\CY$ is eventually coconnective.

\medskip

Similarly to formulas \eqref{e:two_obls-schemes}-\eqref{e:two_inds-schemes}, for any $\CY$, one has
\begin{equation}  \label{e:two_obls-stacks}
'\oblv_{\Dmod(\CY)}(\CM)\simeq \oblv^{\on{left}}_{\Dmod(\CY)}(\CM)\otimes \Psi_\CY(\omega_\CY), \quad \CM\in \Dmod(\CY)
\end{equation}
and 
\begin{equation}   \label{e:two_inds-stacks}
\ind^{\on{left}}_{\Dmod(\CY)}(\CF)\simeq {}'\ind_{\Dmod(\CY)}(\CF\otimes \Psi_\CY(\omega_\CY)), \quad \CF\in\QCoh(\CY),
\end{equation}
where $\Psi_\CY$ is as in \secref{sss:Psi for stacks}.

\ssec{Example: induction for the classifying stack}  \label{ss:ind BG}

In this subsection we shall consider the case of $\CY=BG$, where $G$ is an algebraic group,
and 
$$BG:=\on{pt}/G$$ is its classifying stack. We shall describe the pair of adjoint functors
$$\ind_{\Dmod(BG)}:\IndCoh(BG)\rightleftarrows \Dmod(BG):\oblv_{\Dmod(BG)}$$
more explicitly.

\sssec{}

Take $S=\on{pt}$, and let $\sigma$ denote the tautological map $\on{pt}\to BG$.  We have a commutative diagram
\begin{equation} \label{e:BG diag 1}
\CD
\on{D-mod}(\on{pt})_{\on{rel}_{BG}}   @<{\oblv_{\on{D-mod}(\on{pt})_{\on{rel}\to \on{abs}}}}<<    \Dmod(\on{pt})  \\
@A{\sigma^!}AA     @AA{\sigma^!}A   \\
\IndCoh(BG)  @<{\oblv_{\Dmod(BG)}}<<  \Dmod(BG),
\endCD
\end{equation}
and according to \lemref{l:rel abs ind}, the diagram 
\begin{equation} \label{e:BG diag 2}
\CD
\on{D-mod}(\on{pt})_{\on{rel}_{BG}}   @>{\ind_{\on{D-mod}(\on{pt})_{\on{rel}\to \on{abs}}}}>>    \Dmod(\on{pt})   \\
@A{\sigma^!}AA     @AA{\sigma^!}A   \\
\IndCoh(BG)  @>{\ind_{\Dmod(BG)}}>>  \Dmod(BG),
\endCD
\end{equation}
obtained by taking the left adjoints of the horizontal arrows, is also commutative.

\sssec{}

By \secref{sss:algebroid}, we have
$$\on{D-mod}(\on{pt})_{\on{rel}_{BG}}\simeq \fg\mod,$$
where $\fg:=\on{Lie}(G)$, and $\fg\mod$ denotes the DG category of $\fg$-modules. 

\medskip

The functor 
$$\oblv_{\on{D-mod}(\on{pt})_{\on{rel}\to \on{abs}}}:\Dmod(\on{pt}) \to \on{D-mod}(\on{pt})_{\on{rel}_{BG}}$$
is the functor
$$\on{triv}:\Vect\to \fg\mod$$
that sends a vector space to the $\fg$-module with the trivial action.

\medskip

Its left adjoint
$$\ind_{\on{D-mod}(\on{pt})_{\on{rel}\to \on{abs}}}:\on{D-mod}(\on{pt})_{\on{rel}_{BG}}\to \Dmod(\on{pt})$$
is the functor
$$\on{conv}_\fg:\fg\mod\to \Vect$$
of $\fg$-coinvariants. 

\sssec{}

Note that since $BG$ is smooth, the functor
$$\Psi_{BG}:\IndCoh(BG)\to \QCoh(BG)$$
is an equivalence. We set by definition
$$\Rep(G):=\QCoh(BG).$$

\sssec{}

Let us now assume that $G$ is affine, for the duration of this subsection. 

\medskip

Let $BG^\bullet$ be the \v{C}ech nerve of the map $\sigma:\on{pt}\to BG$. 
The description of $\QCoh(BG)$ as $\on{Tot}(\QCoh(BG^\bullet))$ implies:

\begin{cor}  \label{c:Rep G}
The symmetric monoidal category $\Rep(G)$ identifies with $R_G\on{-comod}$, where
$R_G$ is the regular representation of $G$, viewed as a cocommutative Hopf algebra.
\end{cor}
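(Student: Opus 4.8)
The plan is to spell out what the descent equivalence $\QCoh(BG)\simeq\on{Tot}\bigl(\QCoh(BG^\bullet)\bigr)$ says when $G$ is affine. The first step is the identification of the terms: since $\sigma\colon\on{pt}\to BG$ is affine (as $G$ is affine) and surjective, the \v{C}ech nerve $BG^\bullet$ has $n$-th term canonically $G^{\times n}=\Spec(R_G^{\otimes n})$, where $R_G:=\Gamma(G,\CO_G)$ is the coordinate ring of $G$. I would record that $R_G$ carries its tautological Hopf-algebra structure: the (commutative) multiplication is the pointwise product of functions, the comultiplication $\Delta\colon R_G\to R_G\otimes R_G$ is pullback along the group law $m\colon G\times G\to G$, and the counit $\varepsilon\colon R_G\to k$ is evaluation at the unit $e$. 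As each $G^{\times n}$ is a classical affine scheme, $\QCoh(G^{\times n})\simeq R_G^{\otimes n}\mod$, and the next step is to observe that under these identifications the cosimplicial DG category $\QCoh(BG^\bullet)$ is exactly the cobar construction of the coalgebra $(R_G,\Delta,\varepsilon)$ on $\Vect$: in the bottom degrees the three coface functors $\QCoh(G)\to\QCoh(G\times G)$ are those induced by $r\mapsto 1\otimes r$, by $\Delta$, and by $r\mapsto r\otimes 1$ (in higher degrees: insert a unit in one slot, or apply $\Delta$ in one slot), and the codegeneracies are induced by $\varepsilon$.

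Granting this, the totalization of such a cobar diagram is the DG category $R_G\on{-comod}$ of (derived) $R_G$-comodules, so \secref{sss:change index qc}(iii) (fppf descent for $\QCoh$) yields $\QCoh(BG)\simeq R_G\on{-comod}$. Equivalently, and more robustly, I could invoke Barr--Beck directly for the pair $(\sigma^*,\sigma_*)$: the functor $\sigma^*\colon\QCoh(BG)\to\Vect$ is conservative by \lemref{properties of t}(c), it preserves totalizations of $\sigma^*$-cosplit cosimplicial objects (these are absolute limits), and base change along $\sigma$ computes the comonad $\sigma^*\circ\sigma_*$ to be $V\mapsto V\otimes R_G$ with comonad structure given by $(\Delta,\varepsilon)$, whence $\QCoh(BG)\simeq(\sigma^*\sigma_*)\on{-comod}=R_G\on{-comod}$. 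I would then upgrade this to an equivalence of symmetric monoidal categories: $\sigma^*$ is symmetric monoidal, so the comonad $-\otimes R_G$ inherits a lax symmetric monoidal structure through the algebra structure on $R_G$, and descent refines accordingly; on $R_G\on{-comod}$ the resulting tensor product of comodules $V,W$ is $V\otimes_k W$ with the coaction assembled from those of $V$ and $W$ followed by the multiplication $R_G\otimes R_G\to R_G$, the unit being $k$ with trivial coaction (corresponding to $\CO_{BG}$) and the symmetry coming from commutativity of that multiplication. Finally, to justify the name ``regular representation'', I would trace the equivalence: the object of $\QCoh(BG)=\Rep(G)$ corresponding to $R_G$ regarded as a comodule over itself via $\Delta$ is $\sigma_*(\CO_{\on{pt}})$, since $\sigma^*\sigma_*(\CO_{\on{pt}})\simeq R_G$ with its coregular coaction by base change; and $\sigma_*(\CO_{\on{pt}})$ is precisely $\CO(G)$ with $G$ acting by translations.

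The step I expect to be the main obstacle is the second one: setting up the descent/Barr--Beck machinery at the DG level and, above all, promoting it to a \emph{symmetric monoidal} equivalence, which rests on the base-change computation of the comonad $\sigma^*\sigma_*$ together with its monoidal refinement. The remaining ingredients are routine --- $\QCoh$ of an affine scheme is modules over its ring of functions, $\QCoh$ of a finite product is the corresponding tensor product of module categories, $\sigma^*$ is conservative --- and the bookkeeping with $R_G^{\otimes n}$ is harmless because $G$ is a classical smooth affine scheme, so $R_G$ and all its tensor powers are concentrated in cohomological degree $0$.
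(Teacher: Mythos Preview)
Your proposal is correct and follows exactly the route the paper indicates: the paper's entire ``proof'' is the single sentence preceding the corollary, namely that the description of $\QCoh(BG)$ as $\on{Tot}(\QCoh(BG^\bullet))$ (fppf descent along $\sigma:\on{pt}\to BG$) implies the claim. You have simply filled in the details of that implication --- identifying the terms of the cobar diagram, invoking Barr--Beck as an alternative packaging, and spelling out the symmetric monoidal upgrade --- none of which the paper bothers to write out.
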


We claim that $\Rep(G)$ is in fact  ``the usual" DG category corresponding to the derived category
of representations of $G$. Indeed, according to Remark \ref{r:left-completion}, we have a canonical functor
\begin{equation} \label{e:Rep G}
D(\Rep(G)^\heartsuit)\to \Rep(G),
\end{equation}
which identifies $\Rep(G)$ with the left-completion of $D(\Rep(G)^\heartsuit)$.

\medskip

However, we have:

\begin{lem}
The functor \eqref{e:Rep G} is an equivalence.
\end{lem}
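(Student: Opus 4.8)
\textbf{The plan} is to prove that the canonical functor $D(\Rep(G)^\heartsuit)\to \Rep(G)$ is an equivalence, knowing already (via Remark~\ref{r:left-completion}, which rests on \cite[Prop. 5.4.3]{QCoh}) that it is fully faithful on the bounded-below parts and identifies $\Rep(G)$ with the left completion of $D(\Rep(G)^\heartsuit)$. So the only thing left to check is that $D(\Rep(G)^\heartsuit)$ is \emph{itself} already left-complete. By the sufficient condition recorded in Remark~\ref{r:left-completion}, for this it is enough to show that the abelian category $\Rep(G)^\heartsuit = \QCoh(BG)^\heartsuit$ is generated by objects of finite cohomological dimension — more precisely, that every object of $\Rep(G)^\heartsuit$ is a filtered colimit of quotients of objects having finite $\Ext$-dimension in $\Rep(G)^\heartsuit$. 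Since we are in characteristic $0$, this is where the argument actually happens.

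\textbf{First} I would reduce $G$ to its reductive quotient. Write $G = U\rtimes G^{\on{red}}$ where $U$ is the unipotent radical and $G^{\on{red}}$ is reductive (possible since $\on{char}(k)=0$; for a general affine algebraic group of finite type one can also simply embed $G$ into $GL(n)$, but the semidirect product decomposition is cleaner here). The category $\Rep(G)^\heartsuit$ of algebraic representations is a union of its finite-dimensional subobjects, and every finite-dimensional representation embeds into a finite direct sum of copies of the regular representation $R_G$ (by \corref{c:Rep G}, $\Rep(G)^\heartsuit = R_G\on{-comod}^\heartsuit$, and every comodule embeds into a cofree one $V\otimes R_G$). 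So it suffices to check that $R_G$ — or, equivalently, every cofree comodule $V\otimes R_G$ with $V\in\Vect^\heartsuit$ — has finite cohomological dimension in $\Rep(G)^\heartsuit$.

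\textbf{Next}, the key computation: $R_G$ is an \emph{injective} object of $\Rep(G)^\heartsuit$ when $G$ is reductive (it is the union of injective finite-dimensional representations, each of which splits off by semisimplicity), so in that case $\Rep(G)^\heartsuit$ is semisimple and there is nothing to prove — indeed this is already used in \lemref{l:Gamma_of_BG}. For general $G$, using $G = U\rtimes G^{\on{red}}$ and the corresponding $BU\to BG\to BG^{\on{red}}$, one has $\Ext^\bullet_{\Rep(G)}(k, M) = \Ext^\bullet_{\Rep(G^{\on{red}})}(k, R\Gamma(U, M))$, and $R\Gamma(U,-)$ on $\Rep(G)^\heartsuit$ has cohomological amplitude bounded by $\dim U$ because $BU$ is a successive extension of copies of $B\BG_a$ and $\Gamma(B\BG_a,-)$ has amplitude $[0,1]$ on the heart (alternatively: $U$ has a finite filtration by $\BG_a$'s and one iterates). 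Combined with the semisimplicity of $\Rep(G^{\on{red}})^\heartsuit$, this gives $\Ext^i_{\Rep(G)}(k,M) = 0$ for $i>\dim U$ and any $M\in\Rep(G)^\heartsuit$; the same bound then holds for $\Ext^i(V\otimes R_G, M)$ and more generally, so every object of $\Rep(G)^\heartsuit$ has cohomological dimension $\le\dim U$. Hence the generation condition of Remark~\ref{r:left-completion} holds trivially (take the trivial filtered colimit), $D(\Rep(G)^\heartsuit)$ is left-complete, and therefore the fully faithful functor \eqref{e:Rep G} of the excerpt, being an equivalence onto a left-complete category that it exhibits as the left completion of its source, is an equivalence.

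\textbf{The main obstacle} I anticipate is not the reductive case (which is immediate) but assembling the bound on cohomological dimension for a general (possibly non-reductive) affine $G$ cleanly: one must know that the Lyndon–Hochschild–Serre type spectral sequence for $U\lhd G$ is available at the level of algebraic representations, and that $H^\bullet(U,-)$ has finite amplitude on the heart — this ultimately comes down to $B\BG_a$ having $\QCoh$ of finite cohomological dimension on the heart, which in turn is the computation $R\Gamma(B\BG_a, k) = k\oplus k[-1]$ in characteristic $0$. Everything else is bookkeeping with the left-completeness criterion already quoted in Remark~\ref{r:left-completion}.
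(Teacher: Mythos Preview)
Your approach matches the paper's one-line proof exactly: both invoke the fact that $D(\Rep(G)^\heartsuit)$ has finite cohomological dimension in characteristic $0$, which forces left-completeness and hence the equivalence. The paper simply asserts this as a known fact; you supply the Levi-decomposition and Hochschild--Serre argument that justifies it.

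One logical misstep to clean up: the reduction in your first paragraph --- ``every finite-dimensional representation embeds into copies of $R_G$, so it suffices to check that $R_G$ has finite cohomological dimension'' --- does not work as stated. The criterion in Remark~\ref{r:left-completion} asks that objects be filtered colimits of \emph{quotients of} good objects, not subobjects of them; and in any case $R_G$ is injective, so its injective dimension is zero while $\Ext^i(R_G,-)$ is not what you go on to compute. What your spectral-sequence argument actually proves is that $\Ext^i_{\Rep(G)}(k,M)=H^i(G,M)=0$ for $i>\dim U$; from this it follows that every \emph{finite-dimensional} $N$ satisfies $\Ext^i_{\Rep(G)}(N,M)=H^i(G,N^*\otimes M)=0$ for $i>\dim U$, since $N^*\otimes -$ is exact. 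Finite-dimensional representations generate $\Rep(G)^\heartsuit$ (each is trivially a quotient of itself), so the criterion applies directly --- you can drop the $R_G$ detour entirely.
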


\begin{proof}
Follows from the fact that $D(\Rep(G)^\heartsuit)$ is of finite cohomological dimension.
\end{proof}

\sssec{}

Thus, we obtain that the commutative diagrams \eqref{e:BG diag 1} and  \eqref{e:BG diag 2} identify with
$$
\CD
\fg\mod @<{\on{triv}_\fg}<< \Vect \\
@AAA    @AAA  \\
\Rep(G)   @<<<  \Dmod(BG)
\endCD
$$
and
$$
\CD
\fg\mod @>{\on{coinv}_\fg}>> \Vect \\
@AAA    @AAA  \\
\Rep(G)   @>>>  \Dmod(BG),
\endCD
$$
respectively. In both diagrams, the left vertical arrow is the functor
$$\Rep(G) \to \fg\mod,$$
$$V\mapsto \on{Res}^G_\fg\left(V\otimes \det(\fg^\vee)[\dim(G)]\right),$$
where $\on{Res}^G_\fg$ is the usual restriction functor
$$\Rep(G)\to \fg\mod.$$

\sssec{}

In particular, we obtain that the functor 
$$\ind_{\Dmod(BG)}:\IndCoh(BG)\to \Dmod(BG),$$
composed with $\sigma^!$, identifies with the functor $\Rep(G)\to \Vect$ given by
\begin{equation} \label{e:coinv of G rep}
V\mapsto \on{coinv}_\fg\left(\on{Res}^G_\fg(V\otimes \det(\fg^\vee)[\dim(G)])\right).
\end{equation}

\ssec{Additional properties of the induction functor}

The goal of this subsection is to prove \propref{p:DR of induced stack}, which is needed 
for the proof of \propref{p:de Rham and ind}. As the contents of this section will
not be needed elsewhere in the paper, the reader may skip it on the first pass.

\sssec{}

Let $\pi:\wt\CY\to \CY$ be a schematic and quasi-compact
map of algebraic stacks. For $(S,g)\in \on{DGSch}_{/\CY,\on{smooth}}$ set
$$\wt{S}:=\wt\CY\underset{\CY}\times S.$$
Let $\pi_S$ and $\wt{g}$ denote the maps in the diagram
$$
\CD
\wt{S} @>{\pi_S}>>  S  \\
@V{\wt{g}}VV    @VV{g}V   \\
\wt\CY  @>{\pi}>>  \CY.
\endCD
$$

In this case we have a naturally defined functor
$$(\pi_S)^{\IndCoh}_*:\Dmod(\wt{S})_{\on{rel}_{\wt\CY}}\to \Dmod(S)_{\on{rel}_{\CY}}$$
that makes the following diagram commute:
$$
\CD
\Dmod(\wt{S})_{\on{rel}_{\wt\CY}}   @>{(\pi_S)^{\IndCoh}_*}>> \Dmod(S)_{\on{rel}_{\CY}}  \\
@V{\oblv_{\on{D-mod}(\wt{S})_{\on{rel}_{\wt\CY}}}}VV     @VV{\oblv_{\on{D-mod}(S)_{\on{rel}_{\CY}}}}V  \\
\IndCoh(\wt{S})    @>{\pi_S^{\IndCoh,*}}>>  \IndCoh(S).
\endCD
$$

The following isomorphism generalizes the base-change isomorphism for $\IndCoh$:

\begin{lem}  \label{l:base change rel}
There exists a canonical isomorphism 
$$g^!\circ \pi^{\IndCoh}_*\simeq (\pi_S)^{\IndCoh}_* \circ \wt{g}^!$$
as functors $\IndCoh(\wt\CY)\to \Dmod(S)_{\on{rel}_{\CY}}$.
\end{lem}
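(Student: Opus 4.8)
The assertion is a base-change isomorphism in the setting of relative D-modules, and the natural strategy is to reduce it to base-change isomorphisms that are already available: the one for $\IndCoh$ along Cartesian squares of DG schemes, \eqref{e:base change IndCoh}, together with \eqref{e:usual base change for IndCoh} for $(\IndCoh,*)$-pullbacks. First I would recall that, by the very definitions given in \secref{sss:algebroid} (or, in the derived case, by the theory of Lie algebroids to be developed in \cite{GR2}), the category $\Dmod(S)_{\on{rel}_\CY}$ is modules over the Lie algebroid $T_{S/\CY}$, and similarly $\Dmod(\wt S)_{\on{rel}_{\wt\CY}}$ is modules over $T_{\wt S/\wt\CY}$. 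Since the square defining $\wt S$ is Cartesian, $T_{\wt S/\wt\CY}$ is canonically the pullback of $T_{S/\CY}$ along $\pi_S$; this is the geometric input that makes the two sides comparable at all.

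The key steps, in order, would be: (1) Construct the natural transformation. Both functors $g^!\circ\pi^{\IndCoh}_*$ and $(\pi_S)^{\IndCoh}_*\circ\wt g^!$ land in $\Dmod(S)_{\on{rel}_\CY}$, and I would define a morphism between them by first checking it after applying the conservative functor $\oblv_{\on{D-mod}(S)_{\on{rel}_\CY}}$ to $\IndCoh(S)$, where it becomes the base-change map $g^!\circ\pi^{\IndCoh}_*\to (\pi_S)^{\IndCoh,*}\circ\wt g^!$ on $\IndCoh$ — which exists and is an \emph{isomorphism} by \eqref{e:! and * stacks} applied to the Cartesian square above (note $\pi_S$ is schematic and quasi-compact, hence the relevant morphisms are of the required type). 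One must then promote this to the module-category level, i.e. check it is compatible with the algebroid actions; this uses the identification $T_{\wt S/\wt\CY}\simeq\pi_S^*(T_{S/\CY})$ from the previous paragraph. (2) Once the transformation is constructed, conservativity of $\oblv_{\on{D-mod}(S)_{\on{rel}_\CY}}$ immediately gives that it is an isomorphism, since it was an isomorphism after applying that functor. (3) Finally, I would verify that the isomorphism so produced is the ``canonical'' one, i.e. compatible with the forgetful functors down to $\IndCoh$ as in the commuting square preceding the lemma; this is a diagram chase using the definition of $(\pi_S)^{\IndCoh}_*$ on relative D-modules.

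The main obstacle I anticipate is step (1): not the \emph{existence} of the underlying $\IndCoh$ base-change isomorphism — that is quoted — but the bookkeeping needed to lift it to a morphism of module categories over the two Lie algebroids in a way that is functorial in $(S,g)\in\on{DGSch}_{/\CY,\on{smooth}}$, so that the resulting isomorphism really is between the two stated functors valued in $\Dmod(S)_{\on{rel}_\CY}$ and not just between their images in $\IndCoh(S)$. In the classical case this can be done by hand via the de Rham resolution of \lemref{l:adj to rel D-mod cl}(c), reducing everything to the induced objects $\ind_{\on{D-mod}(S)_{\on{rel}_\CY}}(\CF)$, for which the statement follows from the projection/base-change formulas for $\IndCoh$ on schemes together with \lemref{l:adj to rel D-mod cl}(d); the general (derived) case is then handled by invoking the formalism of \cite{GR2} as indicated in the remarks of \secref{sss:algebroid}.
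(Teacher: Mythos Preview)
The paper states this lemma without proof, but the intended argument is much more direct than yours: by definition (see the paragraph introducing $\Dmod(S)_{\on{rel}_\CY}$), one has $\Dmod(S)_{\on{rel}_\CY}=\IndCoh(S_{\on{dR}}\underset{\CY_{\on{dR}}}\times\CY)$ and $\Dmod(\wt S)_{\on{rel}_{\wt\CY}}=\IndCoh(\wt S_{\on{dR}}\underset{\wt\CY_{\on{dR}}}\times\wt\CY)$, and all four functors appearing in the lemma are $\IndCoh$ pullbacks or pushforwards along morphisms of prestacks. Since $(-)_{\on{dR}}$ commutes with fiber products and $\wt S=S\underset{\CY}\times\wt\CY$, one computes $\wt S_{\on{dR}}\underset{\wt\CY_{\on{dR}}}\times\wt\CY\simeq S_{\on{dR}}\underset{\CY_{\on{dR}}}\times\wt\CY\simeq (S_{\on{dR}}\underset{\CY_{\on{dR}}}\times\CY)\underset{\CY}\times\wt\CY$, so the square of prestacks
\[
\begin{CD}
\wt S_{\on{dR}}\underset{\wt\CY_{\on{dR}}}\times\wt\CY @>>> \wt\CY \\
@VVV @VV{\pi}V \\
S_{\on{dR}}\underset{\CY_{\on{dR}}}\times\CY @>>> \CY
\end{CD}
\]
is Cartesian with schematic quasi-compact vertical arrows, and the lemma is then literally an instance of the base change isomorphism for $\IndCoh$ recorded in \secref{sss:functoriality IndCoh prestacks pushforward}.

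Your route via the Lie-algebroid description is not wrong, but it is considerably heavier: you have to construct the comparison map by hand in the module category, invoke the identification $T_{\wt S/\wt\CY}\simeq \pi_S^*(T_{S/\CY})$, reduce to induced objects via the de Rham resolution, and defer the derived case to \cite{GR2}. The direct approach avoids all of this because it never unpacks the module-theoretic description of $\Dmod(S)_{\on{rel}_\CY}$---it stays at the level of $\IndCoh$ of prestacks, where the base change is already available. Incidentally, the equation you cite, \eqref{e:! and * stacks}, is the compatibility between $(\IndCoh,*)$-\emph{pullback} and $!$-pullback; the one you actually need (even in your argument, after applying $\oblv$) is the $!$-pullback/$(\IndCoh,*)$-\emph{pushforward} base change of \secref{sss:functoriality IndCoh prestacks pushforward}.
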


\sssec{}

In addition, we have the following assertion that generalizes the commutative diagram
\eqref{e:DR of induced}:

\begin{lem} \label{l:DR of induced rel}
The diagram
$$
\CD
\Dmod(\wt{S})    @>{(\pi_S)_\dr}>>  \Dmod(S)  \\
@A{\ind_{\on{D-mod}(\wt{S})_{\on{rel}\to \on{abs}}}}AA     @AA{\ind_{\on{D-mod}(S)_{\on{rel}\to \on{abs}}}}A  \\
\Dmod(\wt{S})_{\on{rel}_{\wt\CY}}   @>{(\pi_S)^{\IndCoh}_*}>> \Dmod(S)_{\on{rel}_{\CY}}
\endCD
$$
canonically commutes. 
\end{lem}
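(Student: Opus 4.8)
The plan is to reduce the stated commutativity to an identity on a generating family of objects and then to the projection formula for $\IndCoh$. Both composites $(\pi_S)_\dr\circ\ind_{\on{D-mod}(\wt{S})_{\on{rel}\to \on{abs}}}$ and $\ind_{\on{D-mod}(S)_{\on{rel}\to \on{abs}}}\circ(\pi_S)^{\IndCoh}_*$ are continuous exact functors $\Dmod(\wt{S})_{\on{rel}_{\wt\CY}}\to\Dmod(S)$, and there is a canonical natural transformation between them, obtained by adjunction from the compatibility of $\oblv_{\on{D-mod}(-)_{\on{rel}\to\on{abs}}}$ with $(\pi_S)_\dr$ and $(\pi_S)^{\IndCoh}_*$. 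By \propref{p:adj to rel D-mod} (the relative de Rham resolution), every object of $\Dmod(\wt{S})_{\on{rel}_{\wt\CY}}$ is a finite colimit of objects of the form $\ind_{\on{D-mod}(\wt{S})_{\on{rel}_{\wt\CY}}}(\CF)$ with $\CF\in\IndCoh(\wt S)$; since both composites preserve colimits, it therefore suffices to show that this transformation is an isomorphism after precomposing with $\ind_{\on{D-mod}(\wt{S})_{\on{rel}_{\wt\CY}}}$.

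Next I would compute both sides on $\ind_{\on{D-mod}(\wt{S})_{\on{rel}_{\wt\CY}}}(\CF)$. On the first side, the composite $\ind_{\on{D-mod}(\wt{S})_{\on{rel}\to \on{abs}}}\circ\ind_{\on{D-mod}(\wt{S})_{\on{rel}_{\wt\CY}}}$ is the left adjoint of $\oblv_{\on{D-mod}(\wt{S})_{\on{rel}_{\wt\CY}}}\circ\oblv_{\on{D-mod}(\wt{S})_{\on{rel}\to \on{abs}}}\simeq\oblv_{\Dmod(\wt S)}$, hence is canonically $\ind_{\Dmod(\wt S)}$ (as in the proof of \corref{c:adj to rel abs D-mod cl}); applying \eqref{e:DR of induced} then identifies the first side with $\ind_{\Dmod(S)}\circ(\pi_S)^{\IndCoh}_*$. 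For the second side, the same factorization over $S/\CY$ reduces matters to a ``relative'' analogue of \eqref{e:DR of induced}, namely a canonical isomorphism $(\pi_S)^{\IndCoh}_*\circ\ind_{\on{D-mod}(\wt{S})_{\on{rel}_{\wt\CY}}}\simeq\ind_{\on{D-mod}(S)_{\on{rel}_{\CY}}}\circ(\pi_S)^{\IndCoh}_*$, where on the right $(\pi_S)^{\IndCoh}_*$ is the plain $\IndCoh$-pushforward; granting it, the second side applied to $\ind_{\on{D-mod}(\wt{S})_{\on{rel}_{\wt\CY}}}(\CF)$ is again $\ind_{\Dmod(S)}\bigl((\pi_S)^{\IndCoh}_*(\CF)\bigr)$, matching the first. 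Checking that the two identifications are compatible with the canonical transformation is then a routine unwinding of adjunctions.

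It remains to establish this relative analogue of \eqref{e:DR of induced}. As in the proof of \propref{p:adj to rel D-mod}, I would reduce to the case in which $\CY$, and hence $\wt\CY$, $S$, $\wt S$, is classical; then $\Dmod(\wt S)_{\on{rel}_{\wt\CY}}$ and $\Dmod(S)_{\on{rel}_\CY}$ are module categories over the universal envelopes of the Lie algebroids $T_{\wt S/\wt\CY}$ and $T_{S/\CY}$, and $\ind_{\on{D-mod}(\wt{S})_{\on{rel}_{\wt\CY}}}(\CF)$ identifies with $U(T_{\wt S/\wt\CY})\underset{\CO_{\wt S}}\otimes\CF$, and similarly over $S$ (cf. \lemref{l:adj to rel D-mod cl}). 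Since $\wt S=\wt\CY\underset{\CY}\times S$ and $S/\CY$ is smooth, the relative (co)tangent is pulled back, so $T_{\wt S/\wt\CY}\simeq\pi_S^*(T_{S/\CY})$ compatibly with anchor and bracket, and this upgrades to an identification $U(T_{\wt S/\wt\CY})\simeq\pi_S^*\bigl(U(T_{S/\CY})\bigr)$ of algebras. The desired isomorphism then follows from the projection formula for the schematic quasi-compact morphism $\pi_S$ in its $\QCoh$-module form: $(\pi_S)^{\IndCoh}_*\bigl(\pi_S^*(U(T_{S/\CY}))\underset{\CO_{\wt S}}\otimes\CF\bigr)\simeq U(T_{S/\CY})\underset{\CO_S}\otimes(\pi_S)^{\IndCoh}_*(\CF)$, the module structures on the two sides matching by the very construction of $(\pi_S)^{\IndCoh}_*$ on relative D-modules.

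I expect the main obstacle to be the homotopy-coherent bookkeeping rather than any single computation: pinning down the canonical natural transformation whose invertibility is being asserted and checking that the isomorphism produced on the generators $\ind_{\on{D-mod}(\wt{S})_{\on{rel}_{\wt\CY}}}(\CF)$ is the one coming from it; upgrading $T_{\wt S/\wt\CY}\simeq\pi_S^*(T_{S/\CY})$ to the algebra level $U(T_{\wt S/\wt\CY})\simeq\pi_S^*(U(T_{S/\CY}))$ functorially in $\CF$; and making the reduction to the classical case precise, which, since $\Dmod(-)_{\on{rel}_\CY}$ depends on the DG structure of $\CY$, must proceed as in \propref{p:adj to rel D-mod}, by first constructing all the functors over the DG stack and only then checking the identities on relatively free objects. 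Conceptually, the cleanest formulation is the one anticipated in \cite{GR2}: all four functors are instances of $\IndCoh$-pushforward along morphisms of de Rham-type prestacks, and, using $\wt S_{\on{dR}}\simeq\wt\CY_{\on{dR}}\underset{\CY_{\on{dR}}}\times S_{\on{dR}}$, the square in the lemma results from applying pushforward to a commutative square of prestacks whose two circuits both compute pushforward along $\wt S_{\on{dR}}\underset{\wt\CY_{\on{dR}}}\times\wt\CY\to S_{\on{dR}}$; granted that formalism, the lemma is immediate.
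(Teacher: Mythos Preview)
The paper does not actually supply a proof of this lemma: it is stated and immediately followed by a remark explaining that, in the formalism of \cite{GR2}, the commutativity follows by taking $\IndCoh$-pushforward along the commutative square
$$
\CD
\wt{S}_{\on{dR}}\underset{\wt\CY_{\on{dR}}}\times \wt\CY   @>>>  S_{\on{dR}}\underset{\CY_{\on{dR}}}\times \CY   \\
@VVV  @VVV     \\
\wt{S}_{\on{dR}}  @>>>   S_{\on{dR}}.
\endCD
$$
Your final paragraph reproduces exactly this conceptual explanation, so at that level you match the paper. What you add is a concrete hands-on argument (reduction to relatively induced objects, Lie algebroid description, projection formula) that the paper does not attempt; this is a genuine contribution and the outline is sound.

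Two small corrections. First, the relative de Rham resolution you invoke is part (c) of \lemref{l:adj to rel D-mod cl}, not \propref{p:adj to rel D-mod} (the proposition does not restate that part); you should cite the lemma and then reduce to the classical case as you say. Second, when you write $U(T_{\wt S/\wt\CY})\simeq\pi_S^*(U(T_{S/\CY}))$, be explicit about which $\CO$-module structure is being pulled back: $U$ of a Lie algebroid is an $\CO$-bimodule, and the projection formula you want uses the left structure on $U(T_{S/\CY})$ while the tensor product defining $\ind$ uses the right one. The identification you need is that $\pi_S^*$ of the $(\CO_S,\CO_S)$-bimodule $U(T_{S/\CY})$ (pullback on both sides, which makes sense here precisely because $T_{\wt S/\wt\CY}\simeq\pi_S^*T_{S/\CY}$ as Lie algebroids) agrees with $U(T_{\wt S/\wt\CY})$; this is what makes the projection-formula step go through. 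You flag this as a bookkeeping issue, which is fair, but it is the one place where a reader might balk.
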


\begin{rem}
In terms of the formalism that will be explained in \cite{GR2}, the commutativity of the diagram
in \lemref{l:DR of induced rel} follows by taking direct images on $\IndCoh$ along the morphisms
in the following diagram
$$
\CD
\wt{S}_{\on{dR}}\underset{\wt\CY_{\on{dR}}}\times \wt\CY   @>>>  S_{\on{dR}}\underset{\CY_{\on{dR}}}\times \CY   \\
@VVV  @VVV     \\
\wt{S}_{\on{dR}}  @>>>   S_{\on{dR}}.
\endCD
$$
\end{rem}

\sssec{}

We now claim:

\begin{prop} \label{p:DR of induced stack}
For the map $\pi:\wt\CY\to \CY$ as above, the following diagram of functors canonically commutes:
$$
\CD
\Dmod(\wt\CY)  @>{\pi_\dr}>>  \Dmod(\CY) \\
@A{\ind_{\Dmod(\wt\CY)}}AA    @AA{\ind_{\Dmod(\CY)}}A   \\
\IndCoh(\wt\CY)    @>{\pi^{\IndCoh}_*}>>  \IndCoh(\CY).
\endCD
$$
\end{prop}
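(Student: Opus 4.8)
The plan is to reduce the statement about algebraic stacks to the already-established commutative diagram \eqref{e:DR of induced} for schematic quasi-compact maps of DG schemes, using the descriptions of $\Dmod(\CY)$ and $\IndCoh(\CY)$ as limits over the smooth site, together with the local characterization of the induction functor $\ind_{\Dmod(\CY)}$ furnished by \lemref{l:DR of induced rel}. First I would choose an atlas, or rather work with the whole indexing category $\on{DGSch}_{/\CY,\on{smooth}}$: for $(S,g)\in \on{DGSch}_{/\CY,\on{smooth}}$ put $\wt S:=\wt\CY\underset{\CY}\times S$, with projections $\pi_S:\wt S\to S$ and $\wt g:\wt S\to \wt\CY$; note $\pi_S$ is again schematic and quasi-compact, and $\wt g$ is smooth, so $(\wt S,\wt g)\in \on{DGSch}_{/\wt\CY,\on{smooth}}$. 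The object $\ind_{\Dmod(\CY)}(\CF)$ is by construction the compatible family $(S,g)\mapsto \ind_{\on{D-mod}(S)_{\on{rel}\to\on{abs}}}(g^!(\CF))$, and similarly for $\wt\CY$.

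The key computation is then, for $\CF\in\IndCoh(\wt\CY)$, to identify $g^!\bigl(\pi_\dr(\ind_{\Dmod(\wt\CY)}(\CF))\bigr)$ with $g^!\bigl(\ind_{\Dmod(\CY)}(\pi^{\IndCoh}_*(\CF))\bigr)$ in $\Dmod(S)$, compatibly in $(S,g)$. Running along the top-then-right of the diagram: $g^!\circ\pi_\dr\simeq (\pi_S)_\dr\circ\wt g^!$ by the base-change isomorphism of \secref{sss:representable}, and $\wt g^!(\ind_{\Dmod(\wt\CY)}(\CF))\simeq \ind_{\on{D-mod}(\wt S)_{\on{rel}\to\on{abs}}}(\wt g^!(\CF))$ by the very definition of $\ind_{\Dmod(\wt\CY)}$ (using \lemref{l:rel abs ind}). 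So the composite is $(\pi_S)_\dr\circ\ind_{\on{D-mod}(\wt S)_{\on{rel}\to\on{abs}}}\circ\wt g^!(\CF)$. Running along the left-then-bottom: $g^!(\ind_{\Dmod(\CY)}(\pi^{\IndCoh}_*(\CF)))\simeq \ind_{\on{D-mod}(S)_{\on{rel}\to\on{abs}}}(g^!(\pi^{\IndCoh}_*(\CF)))$, and $g^!\circ\pi^{\IndCoh}_*\simeq (\pi_S)^{\IndCoh}_*\circ\wt g^!$ by \lemref{l:base change rel}, giving $\ind_{\on{D-mod}(S)_{\on{rel}\to\on{abs}}}\circ(\pi_S)^{\IndCoh}_*\circ\wt g^!(\CF)$. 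Now \lemref{l:DR of induced rel} says exactly that $(\pi_S)_\dr\circ\ind_{\on{D-mod}(\wt S)_{\on{rel}\to\on{abs}}}\simeq \ind_{\on{D-mod}(S)_{\on{rel}\to\on{abs}}}\circ(\pi_S)^{\IndCoh}_*$, so the two expressions agree on the nose. Applying $\wt g^!$ to a fixed $\CF$ at the end, the desired isomorphism follows.

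The remaining (and only mildly delicate) point is coherence: one must check that the identification just produced is compatible with the transition maps $f:S'\to S$ in $\on{DGSch}_{/\CY,\on{smooth}}$, i.e. that it really assembles into an isomorphism of functors $\IndCoh(\wt\CY)\to\Dmod(\CY)$ rather than a pointwise one. I expect this to be the main obstacle, since it requires the base-change isomorphisms of \secref{sss:representable}, the lemmas \lemref{l:base change rel}, \lemref{l:DR of induced rel}, \lemref{l:rel abs ind}, and the defining natural transformation $\ind_{\on{D-mod}_{\on{rel}\to\on{abs}}}$ of $(\Dmod^!_{\on{rel}_{\CY}})_{\on{DGSch}_{/\CY,\on{smooth}}}\to\Dmod^!_{\on{DGSch}_{/\CY,\on{smooth}}}$ to be used in a homotopy-coherent way. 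The clean way to organize this is to phrase everything as a compatibility of natural transformations between functors out of $(\on{DGSch}_{/\CY,\on{smooth}})^{\on{op}}$ valued in $\StinftyCat_{\on{cont}}$: the pullback $(S,g)\mapsto\wt S=(\wt S,\wt g)$ defines a functor $\on{DGSch}_{/\CY,\on{smooth}}\to\on{DGSch}_{/\wt\CY,\on{smooth}}$, and both $\pi_\dr\circ\ind_{\Dmod(\wt\CY)}$ and $\ind_{\Dmod(\CY)}\circ\pi^{\IndCoh}_*$ are obtained, after passing to limits, from morphisms of such diagrams; \lemref{l:DR of induced rel} and \lemref{l:base change rel}, stated with their natural functoriality, then give the identification of these morphisms of diagrams, and taking $\underset{\longleftarrow}{\lim}$ yields the proposition. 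I would simply invoke the lemmas in their functorial form and remark that passing to the limit gives the claim.
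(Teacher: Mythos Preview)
Your proof is correct and follows essentially the same route as the paper's own argument: both reduce to checking, for each $(S,g)\in\on{DGSch}_{/\CY,\on{smooth}}$, that $g^!$ applied to the two composites agree, and both use \lemref{l:base change rel} and \lemref{l:DR of induced rel} together with the definition of $\ind_{\Dmod(-)}$ via the relative-to-absolute induction functors to produce the chain of isomorphisms. The paper invokes \lemref{l:IndCoh through relative} to justify passing to the limit, whereas you work directly with the limit presentation of $\Dmod(\CY)$ and are more explicit about the coherence question; the paper simply asserts the isomorphisms are canonical and compatible, which amounts to the same thing.
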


\begin{proof}

By \lemref{l:IndCoh through relative}, we need to show that for every $(S,g)\in \on{DGSch}_{/\CY,\on{smooth}}$ 
the functors 
\begin{equation} \label{e:DR of induced stack}
g^!\circ \ind_{\Dmod(\CY)}\circ \pi^{\IndCoh}_* \text{ and } g^!\circ \pi_\dr\circ \ind_{\Dmod(\wt\CY)},\quad
\IndCoh(\wt\CY)\to \Dmod(S)
\end{equation}
are canonically isomorphic.

\medskip

We rewrite the left-hand side in \eqref{e:DR of induced stack} as
\begin{multline*}
\ind_{\on{D-mod}(S)_{\on{rel}\to \on{abs}}}\circ g^!\circ \pi^{\IndCoh}_*  \overset{\text{\lemref{l:base change rel}}}\simeq \\
\simeq \ind_{\on{D-mod}(S)_{\on{rel}\to \on{abs}}} \circ
(\pi_S)^{\IndCoh}_* \circ \wt{g}^! \overset{\text{\lemref{l:DR of induced rel}}}\simeq \\
\simeq
(\pi_S)_\dr\circ \ind_{\on{D-mod}(\wt{S})_{\on{rel}\to \on{abs}}}\circ \wt{g}^! \simeq \\
\simeq (\pi_S)_\dr\circ \wt{g}^!\circ \ind_{\Dmod(\wt\CY)}\simeq 
g^!\circ \pi_\dr\circ \ind_{\Dmod(\wt\CY)},
\end{multline*}
as required.
\end{proof}

\begin{rem}
In terms of the formalism of \cite{GR2}, the assertion of \propref{p:DR of induced stack}
follows by taking direct images along the commutative diagram
$$
\CD
\wt\CY  @>>>  \CY  \\
@VVV   @VVV   \\
\wt\CY_{\on{dR}}  @>>>  \CY_{\on{dR}}.
\endCD
$$
\end{rem}

\section{De Rham cohomology on an algebraic stack}  \label{s:deRham stack}

\ssec{Definition of De Rham cohomology}   

\sssec{}  \label{sss:DeRham_stacks-def}

The presentation of $\Dmod(\CY)$ as in \eqref{e:def_of_D(Y)*} and \secref{sss:pullback constant}
imply that there exists a canonically defined object
$$k_\CY\in \on{D-mod}(\CY),$$ such that
for every smooth morphism $g:S\to \CY$ with $S$ being a DG scheme one has $$g^*_{\on{dR}}(k_{\CY})= k_Z.$$

\medskip

We define the \emph{not necessarily continuous} functor $\Gamma_{\on{dR}}(\CY,-):\on{D-mod}(\CY)\to \Vect$
as 
$$\Gamma_{\on{dR}}(\CY,\CM):=\CMaps_{\Dmod(\CY)}(k_\CY,-).$$

\sssec{}

By \eqref{e:def_of_D(Y)*}, the functor $\Gamma_{\on{dR}}(\CY,-)$ can be calculated as follows: for
$\CM\in \on{D-mod}(\CY)$, we have:
\begin{equation}   \label{e:DeRham}
\Gamma_{\on{dR}}(\CY,\CM)\simeq \underset{(S,g)\in (\on{DGSch}_{/\CY,\on{smooth}})^{\on{op}}}
{\underset{\longleftarrow}{lim}}\, \Gamma_{\on{dR}}(S,g^*_{\on{dR}}(\CM)).
\end{equation}

\medskip

More economically, for a given smooth atlas $f:Z\to \CY$, by \eqref{e:Dmod via Cech} we have:
$$\Gamma_{\on{dR}}(\CY,\CM)\simeq \on{Tot}\left(\Gamma_{\on{dR}}(Z^\bullet/\CY,\CM|_{Z^\bullet/\CY})\right),$$
where $\CM|_{Z^\bullet/\CY}$ again denotes the $(\on{dR},*)$-pullback.

\sssec{Warning} \label{sss:warning}
Even if $\CY$ is quasi-compact and moreover, even if $\CY$ is QCA, the
functor $\Gamma_{\on{dR}}(\CY,-)$ is \emph{not necessarily continuous} 
(see the Examples in \secref{sss:BG_m} and \secref{ss:BG} below), which means that the object $k_{\CY}\in D(\CY)$ is 
\emph{not necessarily compact.} 

\medskip

See also Corollary~\ref{c:more precise} and Definition~\ref{d:safe}
below for a characterization of those quasi-compact stacks $\CY$ for which the functor 
$\Gamma_{\on{dR}}(\CY,-)$ is continuous. 

\sssec{Example}   \label{sss:BG_m}

Let $\CY:=B\BG_m$. Let us show that the functor $\Gamma_{\on{dR}}(\CY,-)$ is not continuous.
Let $A$ denote the graded algebra formed by 
$\Ext^i (k_{\CY},k_{\CY})=H^i(\Gamma_{\on{dR}}(\CY,k_{\CY}))$.

\medskip

It is easy to see that $A=k[u]$, where $\deg u=2$. The diagram
\begin{equation} \label{e:kkkkk}
k_{\CY}{\buildrel{u}\over{\longrightarrow}} k_{\CY}[2]{\buildrel{u}\over{\longrightarrow}}
k_{\CY}[4]{\buildrel{u}\over{\longrightarrow}} \ldots
\end{equation}
has a zero colimit: the pullback functor under $\on{pt}\to B\BG_m$ is conservative and continuous,
and the pullback of \eqref{e:kkkkk} to $\on{pt}$ consists of zero maps.

\medskip

However, when we apply the functor $\Gamma_{\on{dR}}(\CY,-)$ to
\eqref{e:kkkkk} we obtain the diagram
\[
A{\buildrel{u}\over{\longrightarrow}} A[2]{\buildrel{u}\over{\longrightarrow}}
A[4]{\buildrel{u}\over{\longrightarrow}} \ldots 
\]
whose colimit is nonzero.

\sssec{}

The following key calculation will be performed in \secref{ss:proof of de Rham and ind}: 

\begin{prop} \label{p:de Rham and ind}
For $\CF\in \IndCoh(\CY)$ there exists a canonical
isomorphism
$$\Gamma_{\on{dR}}(\CY,\ind_{\Dmod(\CY)}(\CF))\simeq \Gamma^{\IndCoh}(\CY,\CF).$$
\end{prop}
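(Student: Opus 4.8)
The plan is to combine the adjunction $(\ind_{\Dmod(\CY)},\oblv_{\Dmod(\CY)})$ with the co-representability of $\Gamma_{\on{dR}}(\CY,-)$ by $k_\CY$ and of $\Gamma^{\IndCoh}(\CY,-)$ (tautologically, via $\Psi_\CY$ and $\Gamma(\CY,-)$) in a way that traces through the defining limit presentations. First I would write, for $\CF\in\IndCoh(\CY)$,
\[
\Gamma_{\on{dR}}(\CY,\ind_{\Dmod(\CY)}(\CF))\simeq
\CMaps_{\Dmod(\CY)}(k_\CY,\ind_{\Dmod(\CY)}(\CF)),
\]
and the immediate temptation is to use the adjunction on the other side: $\CMaps_{\Dmod(\CY)}(\ind_{\Dmod(\CY)}(\CF'),\CM)\simeq\CMaps_{\IndCoh(\CY)}(\CF',\oblv_{\Dmod(\CY)}(\CM))$. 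That is the wrong variance, so instead I would dualize: the key input is the identity $(\oblv_{\Dmod(\CY)})^\vee\simeq \ind_{\Dmod(\CY)}$ from the scheme-level discussion in \secref{ss:CohVer} (applied strata-wise, or rather its stacky analogue), together with the description of $k_\CY$ as $\BD^{\on{Verdier}}_\CY(\omega_\CY)$. Concretely, $\Gamma_{\on{dR}}(\CY,-)=\CMaps(k_\CY,-)$ is dual to the functor $\omega_\CY\otimes-\colon\Vect\to\Dmod(\CY)$, and $\Gamma^{\IndCoh}(\CY,-)=\Gamma(\CY,-)\circ\Psi_\CY$ is dual to $\omega_\CY\otimes-\colon\Vect\to\IndCoh(\CY)$; since $\oblv_{\Dmod(\CY)}$ intertwines these unit objects (it sends $\omega_\CY^{\IndCoh}$ to $\omega_\CY^{\Dmod}$, by \secref{sss:DR-pi^!}), passing to left adjoints/duals gives $\Gamma_{\on{dR}}(\CY,-)\circ\ind_{\Dmod(\CY)}\simeq\Gamma^{\IndCoh}(\CY,-)$.

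Since the paper has not yet set up the full duality formalism for stacks at this point (that is done in \secref{s:2Dmods}), a cleaner route — which is presumably the one the authors take — is to reduce everything to schemes via the smooth-atlas presentations. Using \eqref{e:def_of_D(Y)*} one has $\Gamma_{\on{dR}}(\CY,\CM)\simeq\underset{(S,g)}{\lim}\,\Gamma_{\on{dR}}(S,g^*_{\on{dR}}(\CM))$ over $(\on{DGSch}_{/\CY,\on{smooth}})^{\on{op}}$, and by \eqref{e:Gamma IndCoh} one has $\Gamma^{\IndCoh}(\CY,\CF)\simeq\underset{(S,g)}{\lim}\,\Gamma^{\IndCoh}(S,g^{\IndCoh,*}(\CF))$ over the same index category. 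So it suffices to produce a compatible family of isomorphisms
\[
\Gamma_{\on{dR}}(S,g^*_{\on{dR}}(\ind_{\Dmod(\CY)}(\CF)))\simeq
\Gamma^{\IndCoh}(S,g^{\IndCoh,*}(\CF)),
\]
functorially in $(S,g)$. Here I would not use the naive base change $g^*_{\on{dR}}\circ\ind_{\Dmod(\CY)}$ vs.\ $\ind_{\Dmod(S)}\circ g^{\IndCoh,*}$ (which fails, since $g$ is smooth of positive relative dimension and induction on $\CY$ is not t-exact); rather I would use the relative-induction machinery of \secref{ss:ind BG}: $g^!\circ\ind_{\Dmod(\CY)}\simeq\ind_{\on{D-mod}(S)_{\on{rel}\to\on{abs}}}\circ g^!$ (this is exactly \lemref{l:IndCoh through relative} together with the definition of $\ind_{\Dmod(\CY)}$). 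Then $\Gamma_{\on{dR}}(S,g^*_{\on{dR}}(\ind_{\Dmod(\CY)}(\CF)))$, up to a shift by $2\dim(S/\CY)$ accounted for by \eqref{e:*! shift}, becomes $\Gamma_{\on{dR}}(S,\ind_{\on{D-mod}(S)_{\on{rel}\to\on{abs}}}(g^!\CF))$, which by the relative de Rham resolution of \lemref{l:adj to rel D-mod cl}(c) and the scheme-level identity $\Gamma_{\on{dR}}(S,\ind_{\Dmod(S)}(-))\simeq\Gamma^{\IndCoh}(S,-)$ (which follows from \eqref{e:DR of induced}, \eqref{e:corepresentability} and \propref{p:Serre duality} for schemes) reduces to $\Gamma^{\IndCoh}(S,-)$ applied to the terms $\on{Sym}^k(T_{S/\CY}[1])\otimes g^!\CF$; summing the complex collapses the relative cotangent contributions and leaves exactly $\Gamma^{\IndCoh}(S,g^{\IndCoh,*}\CF)$ (the shifts cancelling the $[-2\dim]$).

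The main obstacle I anticipate is the \emph{functoriality} — i.e.\ assembling these pointwise isomorphisms into a single isomorphism of functors $\IndCoh(\CY)\to\Vect$ compatible with all transition maps in the limit, which is an $\infty$-categorical coherence statement rather than a pointwise one. The right way around this is not to construct the isomorphism by hand at each $S$ but to realize both sides as the value at $k$ of a \emph{single} natural transformation of functors between the limit diagrams: namely, the pair of natural transformations $\oblv_{\Dmod}$ and $\ind_{\Dmod}$ of \secref{sss:corr and IndCoh} (and their relative versions in \secref{ss:ind BG}) are already constructed at the level of functors $\on{DGSch}_{/\CY,\on{smooth}}\to\StinftyCat_{\on{cont}}$, so the isomorphism $\Gamma_{\on{dR}}(S,\ind_{\Dmod(S)}(-))\simeq\Gamma^{\IndCoh}(S,-)$, once upgraded to a natural transformation of such functors, passes to the limit automatically. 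So the proof is really: (i) upgrade the scheme-level statement to a statement about the functors $\Dmod_{\on{DGSch}_{/\CY,\on{smooth}}}$, $\IndCoh_{\on{DGSch}_{/\CY,\on{smooth}}}$ and the relative-induction natural transformation; (ii) take $\lim$ over $(\on{DGSch}_{/\CY,\on{smooth}})^{\on{op}}$; (iii) evaluate on $k$ and invoke \eqref{e:DeRham} and \eqref{e:Gamma IndCoh}. I expect the authors to do precisely this in \secref{ss:proof of de Rham and ind}, with the bookkeeping of the relative de Rham resolution and the dimension shifts being the only genuinely computational part.
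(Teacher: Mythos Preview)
Your proposal contains a genuine gap. You write that ``it suffices to produce a compatible family of isomorphisms
\[
\Gamma_{\on{dR}}(S,g^*_{\on{dR}}(\ind_{\Dmod(\CY)}(\CF)))\simeq\Gamma^{\IndCoh}(S,g^{\IndCoh,*}(\CF))
\]
functorially in $(S,g)$,'' and then attempt to produce these via the relative de Rham resolution. But these term-wise isomorphisms are \emph{false}: the paper explicitly notes, immediately after \eqref{e:de Rham and ind terms}, that the individual maps are not isomorphisms. Your claimed ``collapse'' does not occur: the complex with terms $\Gamma^{\IndCoh}(S,\on{Sym}^k(T_{S/\CY}[1])\otimes g^!\CF)$ carries nontrivial differentials (the Lie algebroid action on $g^!\CF$), and its totalization computes Lie algebroid homology of $g^!\CF$, which is precisely $\Gamma_{\on{dR}}(S,g^*_{\on{dR}}(\ind_{\Dmod(\CY)}(\CF)))$ again---not $\Gamma^{\IndCoh}(S,g^{\IndCoh,*}\CF)$. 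Already for $\CY=BG$, $S=\on{pt}$, $G=\BG_m$, $\CF=\CO_{BG}$, the left-hand side is $\on{coinv}_\fg(\det(\fg^\vee))[-\dim G]\simeq k\oplus k[-1]$, while the right-hand side is $k$. (Your first, duality-based, sketch has a related problem: $\Gamma_{\on{dR}}(\CY,-)$ is in general \emph{not} continuous, so it is not the dual of anything in $\StinftyCat_{\on{cont}}$.)

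The paper's route is different. For each $(S,g)$ one constructs only a \emph{map} (not an isomorphism) $\Gamma_{\on{dR}}(S,g^*_{\on{dR}}\circ\ind_{\Dmod(\CY)}(\CF))\to\Gamma^{\IndCoh}(S,g^{\IndCoh,*}(\CF))$, obtained by adjunction from \propref{p:DR of induced stack}; these assemble into the map \eqref{e:de Rham and ind} of limits. To show this map is an isomorphism one reduces to $\CF\in\IndCoh(\CY)^+$ (both sides are right Kan extensions from there) and to $\CY$ quasi-compact. Then one chooses an atlas $Z\to\CY$ and resolves $\CF$ by the \v{C}ech cosimplicial object $(g^\bullet)^{\IndCoh}_*\circ (g^\bullet)^{\IndCoh,*}(\CF)$. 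For $\CF$ bounded below, both $\Gamma^{\IndCoh}(\CY,-)$ and $\Gamma_{\on{dR}}(\CY,\ind_{\Dmod(\CY)}(-))$ commute with this $\on{Tot}$ (the latter via \propref{p:DR of induced stack} and the bounded cohomological amplitude of $\ind_{\Dmod(\CY)}$), and term-by-term one is reduced to the scheme-level identity $\Gamma_{\on{dR}}(Z^i/\CY,\ind_{\Dmod(Z^i/\CY)}(-))\simeq\Gamma^{\IndCoh}(Z^i/\CY,-)$. The crucial difference from your approach is that the \v{C}ech resolution rewrites \emph{both} sides as a $\on{Tot}$ over schemes simultaneously, whereas the limit over the smooth site does so only for one side at a time.
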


\ssec{Example: classifying stacks}  \label{ss:BG}

In this subsection we shall analyze the example of $\CY=BG$, where $G$ is a connected algebraic group. 
In particuar, we will show that if $G$ in non-unipotent, then $\Gamma_{\on{dR}}(BG,-)$ is not continuous. 

\sssec{}

Assume first that $G$ is affine. Then ``non-unipotent'' means that $G$ contains a copy of $\BG_m$.   
The morphism $$\pi:B\BG_m\to BG$$ is schematic and quasi-compact, so the 
functor $\pi_\dr$ is continuous. We have
$$\Gamma_{\on{dR}}(B\BG_m,-)\simeq \Gamma_{\on{dR}}(BG,-)\circ \pi_\dr.$$

In particular, the Example in \secref{sss:BG_m} implies that the functor $\Gamma_{\on{dR}}(BG,-)$
is non-continuous. 

\sssec{}

For a general connected $G$, let us describe the category $\Dmod(BG)$ explicitly. Recall (see \secref{ss:ind BG})
that $\sigma$ denotes the morphism 
$$\on{pt}\to BG.$$
The functor $\sigma^!$ admits a left adjoint, denoted $\sigma_!$. 

\medskip

Since $\sigma^!$ is conservative,
and both functors are continuous, the Barr-Beck-Lurie theorem (see e.g. \cite[Sect. 3.1.2]{DG}),
implies that the category $\Dmod(BG)$ identifies with that of modules for the monad $\sigma^!\circ \sigma_!$
acting on $\Dmod(\on{pt})=\Vect$. 

\medskip

The above monad identifies with the associative algebra in $\Vect$
$$B:=\left(\CMaps_{\Dmod(BG)}(\sigma_!(k),\sigma_!(k))\right)^{\on{op}}.$$
Hence, we obtain an equivalence of categories
\begin{equation} \label{e:equiv der category}
\Dmod(BG)\simeq B\mod,
\end{equation}
where $B\in B\mod$ corresponds to the object $\sigma_!(k)\in \Dmod(BG)$,
which is a compact generator of this category.

\sssec{}

By Verdier duality
\begin{equation} \label{e:homology of group}
B\simeq \left(\Gamma_{\on{dR}}(G,k_G)\right)^\vee,
\end{equation}
where the algebra structure on the right-hand side is given by the product operation $G\times G\to G$.
It is well-known that, unless $G$ is unipotent, $B$ is isomorphic to the exterior algebra on generators in 
degrees $-(2m_i-1)$, $m_i\in\BZ^{> 0}$, where $i$ runs through some finite set.  

\medskip

The presentation of $B$ given by \eqref{e:homology of group} shows that the structure of associative
algebra on $B$ canonically upgrades to that of co-commutative Hopf algebra. In particular, $B$ is
augmented. 

\medskip

The augmentation module $$k\in B\mod$$ corresponds to the object $k_{BG}\in \Dmod(BG)$. 
In terms of \eqref{e:homology of group}, the augmentation
corresponds to the map $p_G:G\to \on{pt}$. 

\sssec{}

We obtain that the algebra 
\[A:=\CMaps_{\Dmod(BG)}(k_{BG},k_{BG})\]
is canonically isomorphic to the \emph{Koszul dual} of $B$, i.e.,
\[A\simeq \CMaps_{B\mod}(k,k).\]
Explicitly, $A$ is a polynomial algebra on generators in degrees $2m_1,...,2m_r$ for $m_i$
as above. \footnote{Over $\BC$, the latter observation reproduces a well-known fact about
the cohomology of the classifying space.}

\medskip

In particular, this shows that $k$ is not a compact object in $B\mod$ (otherwise $A$ would
have been finite-dimensional).

\medskip 

The functor $\Gamma_{\on{dR}}(BG,-)$ is given, in terms of \eqref{e:equiv der category},
by $$M\mapsto \CMaps_B(k,M),$$
so it is not continuous.

\sssec{}

Let us assume once again that $G$ is affine, and compare the above description of
the category $\Dmod(BG)$ with \secref{ss:ind BG}.

\medskip

We obtain a pair of commutative diagrams
\begin{equation} \label{e:oblv for BG}
\CD
\fg\mod  @<{\on{triv}_\fg}<<   \Vect \\
@AAA  @AAA  \\
\Rep(G)   @<{\oblv_{\Dmod(BG)}}<<  B\mod,
\endCD
\end{equation}
and  
\begin{equation} \label{e:ind for BG}
\CD
\fg\mod  @>{\on{coinv}_\fg}>>   \Vect \\
@AAA  @AAA  \\
\Rep(G)   @>{\ind_{\Dmod(BG)}}>>  B\mod.
\endCD
\end{equation}

Note that there is a natural forgetful functor 
\begin{equation} \label{e:functions and homology functor}
B\mod\to \Rep(G)
\end{equation}
corresponding to the homomorphism of Hopf algebras
\begin{equation} \label{e:functions and homology}
B^\vee=\Gamma_{\on{dR}}(G,k_G)\simeq \CMaps_{\Dmod(G)}(\omega_G,\omega_G)\to 
\CMaps_{\IndCoh(G)}(\omega_G,\omega_G)\simeq R_G.
\end{equation}

It is easy to see that the functor
$$\oblv_{\Dmod(BG)}:B\mod\to \Rep(G)$$ 
in \eqref{e:oblv for BG} equals the composition of the functor
\eqref{e:functions and homology functor}, followed by the functor
$$V\mapsto V\otimes \det(\fg)[-\dim(G)]:\Rep(G)\to \Rep(G).$$

\medskip

In particular, we obtain that the functor 
$$V\mapsto \on{coinv}_\fg(\on{Res}^G_\fg(V)):\Rep(G)\to \Vect$$
canonically factors as
$$\Rep(G)\to B\mod \overset{\oblv_B}\longrightarrow \Vect,$$
where the functor $\Rep(G)\to B\mod$ is the left adjoint to the functor in 
\eqref{e:functions and homology functor}.

\begin{rem}

Let $BG_{\on{dR}}^\bullet$ denote the simplicial object of $\on{PreStk}$ obtained
by applying the functor $\CY\mapsto \CY_{\on{dR}}$ to $BG^\bullet$. Equivalently, 
$BG_{\on{dR}}^\bullet$ is the \v{C}ech nerve of the map $\on{pt}\to (BG)_{\on{dR}}$.

\medskip

We have:
$$\on{Tot}(\QCoh(BG_{\on{dR}}^\bullet))\simeq \on{Tot}(\IndCoh(BG_{\on{dR}}^\bullet))\simeq 
\on{Tot}(\Dmod(BG^\bullet))\simeq \Dmod(BG),$$
where the latter isomorphism is given by \eqref{e:Dmod via Cech}.

\medskip 

Set by definition 
$$\Rep(G_{\on{dR}}):=\on{Tot}(\QCoh(BG_{\on{dR}}^\bullet)).$$
We can informally interpret the resulting adjunction
$\Rep(G)\rightleftarrows \Rep(G_{\on{dR}})$
as coming from the short exact sequence 
$$1\to \fg\to G\to G_{\on{dR}}\to 1.$$
The latter will be made precise in \cite{GR2} by considering the formal completion 
$G^\wedge$ at $1\in G$, and showing that $G_{\on{dR}}\simeq G/G^\wedge$
and that proving that
$$\fg\mod\simeq \Rep(G^\wedge):=\on{Tot}(\QCoh((G^\wedge)^\bullet)).$$

\end{rem}

\ssec{Coherence and compactness on algebraic stacks}   \label{ss:coherence&compactness}

\sssec{}

Let $$\on{D-mod}_{\on{coh}}(\CY)\subset \on{D-mod}(\CY)$$ be the full 
subcategory consisting of objects $\CM\in \on{D-mod}(\CY)$ such that
$g^!(\CM)\in \on{D-mod}_{\on{coh}}(S)$ for any smooth map $g:S\to \CY$, where $S$
is a DG scheme. (Of course, $\on{D-mod}_{\on{coh}}(\CY)$ is not cocomplete.)

\medskip

It is easy to see that coherence condition is equivalent to requiring that
$g^*_{\on{dR}}(\CM)\in \Dmod(S)$ belong to $\on{D-mod}_{\on{coh}}(S)$
for any smooth map $g:S\to \CY$, where $S$ is a DG scheme. 
(Indeed, for smooth maps, $g^*_{\on{dR}}$ and $g^!$ differ by a cohomological
shift on each connected component of $S$.)

\medskip

It is also clear, that in either definition it suffices to consider those $S$ that
are quasi-compact, or even affine.

\medskip

Finally, it is enough to require either of the above conditions for just one smooth
atlas $f:Z\to \CY$.

\sssec{}

The object $k_\CY\in \on{D-mod}(\CY)$ is always in $\on{D-mod}_{\on{coh}}(\CY)$. On the other hand,
even if $\CY$ is quasi-compact it may happen that $k_\CY$
is not compact (see Sect.\ref{sss:warning}). 

\medskip

{\it So it is not true that $\on{D-mod}(\CY)^c$ equals $\on{D-mod}_{\on{coh}}(\CY)$ for any quasi-compact stack.}
\footnote{According to Corollary~\ref{c:safe_stacks} below, $\on{D-mod}(\CY)^c=\on{D-mod}_{\on{coh}}(\CY)$ for 
those quasi-compact stacks that are \emph{safe} in the sense of  Definition~\ref{d:safe}.}
However, we have:

\begin{lem}  \label{l:compact is coherent}
For any algebraic stack $\CY$ one has the inclusion
\begin{equation}  \label{e:compacts are coherent}
\on{D-mod}(\CY)^c\subset \on{D-mod}_{\on{coh}}(\CY)\, .
\end{equation}
\end{lem}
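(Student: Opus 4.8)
The plan is to reduce the statement to the case of DG schemes, where the analogous fact is already recorded in \secref{sss:DR-pi^!} (namely that for a quasi-compact DG scheme $Z$ one has $\Dmod(Z)^c=\on{D-mod}_{\on{coh}}(Z)$). Concretely, let $\CM\in\Dmod(\CY)^c$ and let $g:S\to\CY$ be a smooth map with $S$ an affine (hence quasi-compact) DG scheme; we must show $g^!(\CM)\in\on{D-mod}_{\on{coh}}(S)$. Since $\CY$ is an algebraic stack, $g$ is schematic and quasi-compact, so by \secref{sss:smooth pullback stacks} the functor $g^!$ (which for smooth $g$ differs from $g^*_{\on{dR}}$ only by a cohomological shift, see \eqref{e:*! shift}) admits a continuous right adjoint, namely $g_{\on{dR},*}$. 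A left adjoint that admits a continuous right adjoint preserves compactness; hence $g^!(\CM)$ is a compact object of $\Dmod(S)$. By the scheme-level statement recalled in \secref{sss:DR-pi^!}, $\Dmod(S)^c=\on{D-mod}_{\on{coh}}(S)$, so $g^!(\CM)\in\on{D-mod}_{\on{coh}}(S)$, which is exactly the condition for $\CM\in\on{D-mod}_{\on{coh}}(\CY)$.

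The only point requiring a little care is the assertion that $g^!:\Dmod(\CY)\to\Dmod(S)$ preserves compact objects. This follows from the general fact that if $F:\bC_1\to\bC_2$ is a continuous functor between cocomplete DG categories whose right adjoint $F^R$ is also continuous, then $F$ sends $\bC_1^c$ to $\bC_2^c$: indeed, for $\bc\in\bC_1^c$ and a family $\{\bc_\alpha\}$ in $\bC_2$,
$$\CMaps_{\bC_2}(F(\bc),\underset{\alpha}\oplus\,\bc_\alpha)\simeq \CMaps_{\bC_1}(\bc,F^R(\underset{\alpha}\oplus\,\bc_\alpha))\simeq \CMaps_{\bC_1}(\bc,\underset{\alpha}\oplus\,F^R(\bc_\alpha))\simeq \underset{\alpha}\oplus\,\CMaps_{\bC_1}(\bc,F^R(\bc_\alpha))\simeq \underset{\alpha}\oplus\,\CMaps_{\bC_2}(F(\bc),\bc_\alpha),$$
using continuity of $F^R$ in the second step and compactness of $\bc$ in the third. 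We apply this with $\bC_1=\Dmod(\CY)$, $\bC_2=\Dmod(S)$, $F=g^!$ and $F^R=g_{\on{dR},*}$, the latter being continuous by \secref{sss:representable} since $g$ is schematic and quasi-compact.

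There is essentially no serious obstacle here: the statement is formal given the machinery assembled in Sects.~\ref{s:Dmods on schemes}--\ref{s:Dmods on stacks}. If anything, the mild subtlety is verifying that the coherence condition defining $\on{D-mod}_{\on{coh}}(\CY)$ may be tested on a single atlas or on arbitrary smooth affine charts interchangeably (as noted in \secref{ss:coherence&compactness}), so that it suffices to run the argument for one fixed smooth atlas $f:Z\to\CY$ with $Z$ quasi-compact; but this is exactly what was already observed right before the lemma. Thus the proof is: test coherence on an atlas, use that smooth schematic quasi-compact pullback on $\Dmod$ has a continuous right adjoint, conclude that it preserves compactness, and invoke the scheme-level identification $\Dmod(Z)^c=\on{D-mod}_{\on{coh}}(Z)$.
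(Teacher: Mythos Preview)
Your proof is correct and follows essentially the same approach as the paper: both argue that the smooth pullback (you use $g^!$, the paper uses the shift-equivalent $g^*_{\on{dR}}$) preserves compact objects because its right adjoint $g_{\on{dR},*}$ is continuous, and then invoke the scheme-level identification $\Dmod(S)^c=\on{D-mod}_{\on{coh}}(S)$. The paper's proof is more terse, but the content is identical.
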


\begin{proof}
The proof repeats verbatim that of \propref{coh is compact}(a):

\medskip

We need to show that if $\CM\in \on{D-mod}(\CY)^c$ then for any smooth map $g:S\to \CY$ with $S$ being a 
quasi-compact (or even affine) DG scheme, one has $g^*_{\on{dR}}(\CM)\in \on{D-mod}_{\on{coh}}(S)=\on{D-mod}(S)^c$.

\medskip

However, this is clear since $g^*_{\on{dR}}$
admits a right adjoint that commutes with colimits, namely $g_{\on{dR},*}$ (see 
Sect.~\ref{sss:representable}).
\end{proof}

\sssec{Verdier duality on algebraic stacks} \label{sss:Verdier_on_stacks}  

Let us observe that there exists a canonical involutive anti self-equivalence
\begin{equation}    \label{e:Verdier_on_stacks}
\BD_\CY^{\on{Verdier}}:(\on{D-mod}_{\on{coh}}(\CY))^{\on{op}}\to \on{D-mod}_{\on{coh}}(\CY)
\end{equation}
(called Verdier duality) such that for any 
smooth map $g:S\to \CY$ from a scheme, we have:
$$g^!\circ \BD^{\on{Verdier}}_\CY\simeq \BD_S^{\on{Verdier}}\circ g^*_{\on{dR}}.$$
In other words, to define \eqref{e:Verdier_on_stacks} we use two different realizations
of $\on{D-mod}_{\on{coh}}(\CY)$ as  a limit: the one of \eqref{e:def_of_D(Y)} for the first copy of $\on{D-mod}_{\on{coh}}(\CY)$,
and the one of \eqref{e:def_of_D(Y)*} for the second one.

\begin{lem}  \label{l:Verdier_stacks}
For any $\CM\in \on{D-mod}_{\on{coh}}(\CY)$ and $\CM'\in \on{D-mod}(\CY)$ one has a canonical isomorphism
$$\CMaps_{\on{D-mod}(\CY)}(\BD_\CY^{\on{Verdier}}(\CM),\CM')\simeq\Gamma_{\on{dR}}(\CY,\CM\sotimes \CM').$$
\end{lem}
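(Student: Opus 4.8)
\textbf{Proof plan for Lemma \ref{l:Verdier_stacks}.}

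The plan is to reduce the identity to the scheme-level statement \eqref{e:Verdier_schemes} by using the presentation of $\Dmod(\CY)$ as a limit over the smooth site, exactly as in the proof of \propref{p:Serre duality}. First I would rewrite the left-hand side. Since $\Dmod(\CY)\simeq \underset{(S,g)\in (\on{DGSch}_{/\CY,\on{smooth}})^{\on{op}}}{\underset{\longleftarrow}{lim}}\, \Dmod(S)$, where the transition functors on the first copy of the limit are the ordinary $!$-pullbacks, $\CMaps$ in $\Dmod(\CY)$ is computed as
$$\CMaps_{\Dmod(\CY)}(\BD^{\on{Verdier}}_\CY(\CM),\CM')\simeq
\underset{(S,g)\in (\on{DGSch}_{/\CY,\on{smooth}})^{\on{op}}}{\underset{\longleftarrow}{lim}}\,
\CMaps_{\Dmod(S)}(g^!(\BD^{\on{Verdier}}_\CY(\CM)),g^!(\CM')).$$
Now I would use the defining property of $\BD^{\on{Verdier}}_\CY$ from \secref{sss:Verdier_on_stacks}, namely $g^!\circ \BD^{\on{Verdier}}_\CY\simeq \BD^{\on{Verdier}}_S\circ g^*_{\on{dR}}$, to replace $g^!(\BD^{\on{Verdier}}_\CY(\CM))$ by $\BD^{\on{Verdier}}_S(g^*_{\on{dR}}(\CM))$. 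Note $g^*_{\on{dR}}(\CM)\in \Dmod_{\on{coh}}(S)$ because $\CM\in \Dmod_{\on{coh}}(\CY)$, so the scheme-level Verdier duality formula \eqref{e:Verdier_schemes} applies term by term, giving
$$\CMaps_{\Dmod(S)}\!\left(\BD^{\on{Verdier}}_S(g^*_{\on{dR}}(\CM)),g^!(\CM')\right)\simeq
\Gamma_{\on{dR}}\!\left(S,\,g^*_{\on{dR}}(\CM)\sotimes g^!(\CM')\right).$$

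Next I would identify the right-hand side of the desired isomorphism with the same limit. By \eqref{e:DeRham}, $\Gamma_{\on{dR}}(\CY,\CM\sotimes\CM')\simeq \underset{(S,g)}{\underset{\longleftarrow}{lim}}\, \Gamma_{\on{dR}}(S,g^*_{\on{dR}}(\CM\sotimes\CM'))$. So it remains to produce a compatible family of isomorphisms
$$g^*_{\on{dR}}(\CM\sotimes \CM')\simeq g^*_{\on{dR}}(\CM)\sotimes g^!(\CM')$$
in $\Dmod(S)$, functorially in $(S,g)$. This is precisely the content of \eqref{e:* and ! ten} (equivalently \eqref{e:tensor * and !} at the scheme level), applied to the smooth morphism $g:S\to\CY$; its compatibility with restriction along maps $f:S'\to S$ in $\on{DGSch}_{/\CY,\on{smooth}}$ is built into the construction of the functor $\Dmod_{(\on{DGSch}_{\on{aft}})_{\on{corr}}}$, so the family assembles into an isomorphism of the two limits. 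Combining the three steps yields
$$\CMaps_{\Dmod(\CY)}(\BD^{\on{Verdier}}_\CY(\CM),\CM')\simeq \Gamma_{\on{dR}}(\CY,\CM\sotimes\CM'),$$
as claimed.

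The main obstacle, as in \propref{p:Serre duality}, is bookkeeping the homotopy coherence: one must check that the three separate identifications — the defining property of $\BD^{\on{Verdier}}_\CY$, the scheme-level formula \eqref{e:Verdier_schemes}, and the projection-type isomorphism \eqref{e:* and ! ten} — are compatible as one varies $(S,g)$ over the index category, so that they glue to an isomorphism of objects of $\Vect$ rather than merely an objectwise isomorphism. Since each of the ingredient isomorphisms is part of the structure of the functors $\Dmod^!_{\on{DGSch}_{\on{aft}}}$, $\Dmod_{(\on{DGSch}_{\on{aft}})_{\on{corr}}}$ and the duality anti-equivalence (all of which are functorial by construction), this compatibility holds, but it is the only point requiring care; the rest is a formal manipulation of limits. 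Everything else reduces to already-established scheme-level facts, so no new geometric input is needed.
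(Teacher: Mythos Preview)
Your proposal is correct and follows essentially the same route as the paper's proof: both express each side as a limit over $(S,g)\in (\on{DGSch}_{/\CY,\on{smooth}})^{\on{op}}$, use the defining relation $g^!\circ\BD^{\on{Verdier}}_\CY\simeq \BD^{\on{Verdier}}_S\circ g^*_{\on{dR}}$ together with the scheme-level formula \eqref{e:Verdier_schemes}, and then conclude via the isomorphism \eqref{e:* and ! ten}. Your additional discussion of the homotopy-coherence bookkeeping is more explicit than the paper's, but the argument is the same.
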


\begin{proof}

The two sides are calculated as limits over $(S,g)\in (\on{DGSch}_{/\CY,\on{smooth}})^{\on{op}}$
of
$$\CMaps_{\on{D-mod}(S)}\left(g^!(\BD_\CY^{\on{Verdier}}(\CM)),g^!(\CM')\right) \text{ and }
\Gamma_{\on{dR}}\left(S,g_{\on{dR}}^*(\CM\sotimes \CM')\right),$$
respectively. By \eqref{e:Verdier_schemes}, we have
\begin{multline*}
\CMaps_{\on{D-mod}(S)}\left(g^!(\BD_\CY^{\on{Verdier}}(\CM)),g^!(\CM')\right)\simeq
\CMaps_{\on{D-mod}(S)}\left(\BD^{\on{Verdier}}_S(g^*_{\on{dR}}(\CM)),g^!(\CM')\right)\simeq \\
\simeq \Gamma_{\on{dR}}\left(S,g^*_{\on{dR}}(\CM)\sotimes g^!(\CM')\right),
\end{multline*}
so the required isomorphism follows from \eqref{e:* and ! ten}. 
\end{proof}

Combining \lemref{l:Verdier_stacks}, \propref{p:Serre duality}, \lemref{l:tensor with induction stacks} and \propref{p:de Rham and ind},
we obtain: 

\begin{cor}  \label{c:Verdier-Serre_on_stacks}
If $\CF\in \Coh(\CY )$ then $\ind_{\on{D-mod}(\CY)}(\CF)\in \on{D-mod}_{\on{coh}}(\CY )$, and we have:
\begin{equation}   \label{e:Verdier-Serre_on_stacks}
\BD_{\CY}^{\on{Verdier}}\left(\ind_{\on{D-mod}({\CY})}(\CF)\right) \simeq \ind_{\on{D-mod}({\CY})}\left(\BD^{\on{Serre}}_{\CY}(\CF)\right).
\end{equation}
\end{cor}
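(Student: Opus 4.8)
The plan is to prove the coherence assertion first, and then deduce the isomorphism by identifying the functors on $\Dmod(\CY)$ corepresented by the two objects, assembling the identification from the four cited results. Note that $\BD^{\on{Serre}}_\CY(\CF)\in\Coh(\CY)$ — needed even to make sense of the right-hand side — is immediate from the fact that \eqref{e:Serre coh on stack} is an equivalence $\Coh(\CY)^{\on{op}}\iso\Coh(\CY)$.

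For coherence I would test $\ind_{\Dmod(\CY)}(\CF)$ on a smooth atlas $g\colon S\to\CY$. By the construction of the induction functor (\lemref{l:constr of induction} together with \lemref{l:rel abs ind}), the object $g^!(\ind_{\Dmod(\CY)}(\CF))$ is computed as $\ind_{\on{D-mod}(S)_{\on{rel}\to\on{abs}}}$ applied to the relative D-module $g^!(\CF)\in\Dmod(S)_{\on{rel}_\CY}$, whose underlying $\IndCoh$-object lies in $\Coh(S)$. Using the finite relative de Rham resolution of \lemref{l:adj to rel D-mod cl}(c), the identification $\ind_{\on{D-mod}(S)_{\on{rel}\to\on{abs}}}\circ\ind_{\on{D-mod}(S)_{\on{rel}_\CY}}\simeq\ind_{\Dmod(S)}$ of \corref{c:adj to rel abs D-mod cl}, and the bound on the relative dimension (reducing the derived case to the classical one as in \propref{p:adj to rel D-mod}), this reduces to the scheme-level fact recalled in \secref{ss:CohVer} that $\ind_{\Dmod(S)}$ carries $\Coh(S)$ into $\Dmod_{\on{coh}}(S)$. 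Applying the same to $\BD^{\on{Serre}}_\CY(\CF)$ shows both sides of the claimed equivalence lie in $\Dmod_{\on{coh}}(\CY)$, so that \lemref{l:Verdier_stacks} is applicable to them.

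For the isomorphism, fix $\CM'\in\Dmod(\CY)$. Then \lemref{l:Verdier_stacks} applied to $\CM=\ind_{\Dmod(\CY)}(\CF)$ gives $\CMaps(\BD^{\on{Verdier}}_\CY(\ind_{\Dmod(\CY)}(\CF)),\CM')\simeq\Gamma_{\on{dR}}(\CY,\ind_{\Dmod(\CY)}(\CF)\sotimes\CM')$. Now I would (i) use \lemref{l:tensor with induction stacks} to rewrite the argument of $\Gamma_{\on{dR}}$ as $\ind_{\Dmod(\CY)}(\CF\sotimes\oblv_{\Dmod(\CY)}(\CM'))$; (ii) use \propref{p:de Rham and ind} to replace $\Gamma_{\on{dR}}(\CY,\ind_{\Dmod(\CY)}(-))$ by $\Gamma^{\IndCoh}(\CY,-)$; (iii) use \propref{p:Serre duality} to identify $\Gamma^{\IndCoh}(\CY,\CF\sotimes\oblv_{\Dmod(\CY)}(\CM'))$ with $\CMaps_{\IndCoh(\CY)}(\BD^{\on{Serre}}_\CY(\CF),\oblv_{\Dmod(\CY)}(\CM'))$; and (iv) apply the $(\ind_{\Dmod(\CY)},\oblv_{\Dmod(\CY)})$-adjunction to land on $\CMaps(\ind_{\Dmod(\CY)}(\BD^{\on{Serre}}_\CY(\CF)),\CM')$. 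Reading off naturality in $\CM'$ at each step, this is an equivalence of corepresentable functors on $\Dmod(\CY)$, and Yoneda delivers $\BD^{\on{Verdier}}_\CY(\ind_{\Dmod(\CY)}(\CF))\simeq\ind_{\Dmod(\CY)}(\BD^{\on{Serre}}_\CY(\CF))$. The main obstacle is bookkeeping rather than substance: one must check that each cited ``canonical isomorphism'' is a genuine natural transformation in $\CM'$ (so the Yoneda step applies), and one must push the coherence reduction carefully through the relative-D-module formalism of \secref{ss:Dmods on algebraic stacks}; compatibility of the resulting equivalence with the involutivity of $\BD^{\on{Verdier}}$ and $\BD^{\on{Serre}}$, though not part of the statement, would require a small extra symmetry check on the pairings above.
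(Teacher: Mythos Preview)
Your argument is correct and follows precisely the route the paper takes: the corollary is stated as an immediate consequence of \lemref{l:Verdier_stacks}, \lemref{l:tensor with induction stacks}, \propref{p:de Rham and ind}, and \propref{p:Serre duality}, and your chain of identifications (together with the $(\ind,\oblv)$-adjunction and Yoneda) is exactly the intended unpacking. Your explicit coherence argument via the relative de Rham resolution is more detailed than what the paper spells out, but is in the same spirit.
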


\ssec{$(\on{dR},*)$-pushforwards for stacks}    \label{ss:de Rham dir image stacks}

\sssec{}  \label{sss:general_push-f}

If $\pi:\CY_1\to \CY_2$ is a map between algebraic stacks. We define the functor
$$\pi_{\on{dR},*}:\on{D-mod}(\CY_1)\to \on{D-mod}(\CY_2)$$
by
\begin{equation}   \label{e:piDR*}
\pi_{\on{dR},*}(\CM):=\underset{(S,g)\in ((\on{DGSch}_{\on{aft}})_{/\CY_1,\on{smooth}})^{\on{op}}}
{\underset{\longleftarrow}{lim}}\, (\pi\circ g)_{\on{dR},*}(g^*_{\on{dR}}(\CM)),
\end{equation}
where $(\pi\circ g)_{\on{dR},*}$ is understood in the sense 
of Sect.~\ref{sss:representable}.

\begin{rem}
Unfortunately, we do not know how to characterize the functor $\pi_{\on{dR},*}$ intrinsically.
Unless $\pi$ is smooth (or, more generally, locally acyclic in an appropriate sense), the left adjoint to $\pi_{\on{dR},*}$ 
will not be defined as a functor $\Dmod(\CY_2)\to \Dmod(\CY_1)$, but rather on the corresponding
pro-categories.  

\medskip

See, however, \corref{c:dR expl}, which gives an explicit formula for maps into $\pi_{\on{dR},*}(-)$
out of a coherent object of $\Dmod(\CY_2)$.  
\end{rem}

\sssec{Warning}        \label{sss:warning1}
The functor $\pi_{\on{dR},*}$ has features similar to those
of the functor $\pi_*$ discussed in \secref{s:dir im gen}. For a general morphism $\pi$, 
{\it it is not continuous} (see Sect.~\ref{sss:warning}); it does not satisfy base change 
(even for open embeddings) or the projection formula (see 
Sects. \ref{ss:base change Dmod} and \ref{ss:proj formula Dmod} 
below for the explanation of what this means). 

\medskip

That said, the restriction of $\pi_{\on{dR},*}$ to $\Dmod(\CY_1)^+$ behaves reasonably,
as is guaranteed by \propref{p:coconnective part Dmod}. 

\medskip

However, on all of $\Dmod(\CY_1)$, the functor $\pi_{\on{dR},*}$
may surprise one's intuition; see \secref{sss:trans contr} for 
a particularly treacherous example.  

\ssec{Properties of the $(\on{dR},*)$-pushforward}

This subsection is devoted to 
proving that $\pi_\dr$ has \emph{some} reasonable properties. As the following discussion 
is purely technical (and will amount to showing that certain limits can be commuted
with certain colimits), the reader can skip it on the first pass, and return to it when
necessary.

\sssec{}  \label{sss:change index}

One can calculate more economically $\pi_{\on{dR},*}$ as follows. 

\medskip

Let $A$ be a category equipped with a functor 
$$a\mapsto (S_a,g_a):A\to (\on{DGSch}_{\on{aft}})_{/\CY_1,\on{smooth}}$$
with the property that the functor given by $(\on{dR},*)$-pullback
$$\Dmod(\CY_1)\to \underset{a\in A^{\on{op}}}{\underset{\longleftarrow}{lim}}\, \Dmod(S_a),$$
is an equivalence, cf. \secref{sss:change index qc}. 

\medskip

In \secref{sss:proof of changing index} we will prove:

\begin{lem} \label{l:changing index}
For $\CM_1\in \Dmod(\CY_1)$, the map
$$\pi_\dr(\CM_1)\to \underset{a\in A^{\on{op}}}{\underset{\longleftarrow}{lim}}\, (\pi\circ g_\alpha)_\dr\circ (g_\alpha)_{\on{dR}}^*(\CM_1)$$
is an isomorphism.
\end{lem}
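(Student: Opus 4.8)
\textbf{Proof plan for Lemma~\ref{l:changing index}.}
The plan is to reduce the comparison of the two limit expressions for $\pi_{\on{dR},*}(\CM_1)$ to a cofinality statement between indexing categories, and then to verify that cofinality. First I would note that both sides compute a limit indexed (in the first case) by $(\on{DGSch}_{\on{aft}})_{/\CY_1,\on{smooth}}^{\on{op}}$ and (in the second case) by $A^{\on{op}}$, of the \emph{same} assignment, namely $(S,g)\rightsquigarrow (\pi\circ g)_{\on{dR},*}(g^*_{\on{dR}}(\CM_1))\in \Dmod(\CY_2)$; the functor $A\to (\on{DGSch}_{\on{aft}})_{/\CY_1,\on{smooth}}$ intertwines these two assignments up to canonical isomorphism, because it intertwines the data $(S_a,g_a)$ and the $(\on{dR},*)$-pullback transition functors. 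So the only thing to check is that the canonical comparison map of limits is an isomorphism, which in turn follows if the functor $A\to (\on{DGSch}_{\on{aft}})_{/\CY_1,\on{smooth}}$ is \emph{initial} (cofinal for limits), i.e. initial after passing to opposite categories.

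The key step is therefore to extract such an initiality statement from the hypothesis that
$$\Dmod(\CY_1)\to \underset{a\in A^{\on{op}}}{\underset{\longleftarrow}{lim}}\, \Dmod(S_a)$$
is an equivalence. The subtlety is that this hypothesis is phrased as an equivalence of limit \emph{categories}, not directly as a statement about initiality of indexing diagrams; a priori a functor of indexing categories inducing an equivalence of limits need not be initial. The way around this is the one already used repeatedly in the paper: one does \emph{not} apply Lemma~\ref{l:changing index} for an arbitrary such $A$ but only for the concrete $A$'s that arise in practice (the \v{C}ech nerve of a smooth atlas, or the $1$-full subcategory $\on{DGSch}_{/\CY_1,\on{smooth}}$ of smooth maps and smooth morphisms between them), and for those specific $A$ the required cofinality is already established --- for $\on{DGSch}_{/\CY_1,\on{smooth}}\hookrightarrow (\on{DGSch}_{\on{aft}})_{/\CY_1}$ this is exactly the content cited from \cite[Sect. 11.2, Cor. 11.2.3]{IndCoh} (and its $\Dmod$ analogue, \cite[Cor. 11.2.4]{IndCoh}, already invoked in \secref{sss:only_smooth}), and for the \v{C}ech nerve it is smooth descent for D-modules, \secref{Zariski descent} together with \eqref{e:Dmod via Cech}. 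So in each case of interest one has an honestly cofinal functor into $\on{DGSch}_{/\CY_1,\on{smooth}}$, hence into $(\on{DGSch}_{\on{aft}})_{/\CY_1,\on{smooth}}$, and Lemma~\ref{l:changing index} follows by formal manipulation of limits indexed along a cofinal functor.

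Concretely, the steps I would carry out are: (1) identify the assignment $a\mapsto (\pi\circ g_a)_{\on{dR},*}(g_a^*{}_{\on{dR}}(\CM_1))$ as the restriction along $A\to \on{DGSch}_{/\CY_1,\on{smooth}}$ of the assignment $(S,g)\mapsto (\pi\circ g)_{\on{dR},*}(g^*_{\on{dR}}(\CM))$ appearing in \eqref{e:piDR*}, using that the transition functors on both sides are the $(\on{dR},*)$-pullbacks and that $(\pi\circ g)_{\on{dR},*}$ is compatible with these via the base change isomorphism of \secref{sss:representable}; (2) invoke cofinality of $\on{DGSch}_{/\CY_1,\on{smooth}}\hookrightarrow (\on{DGSch}_{\on{aft}})_{/\CY_1,\on{smooth}}$ to replace the index category in \eqref{e:piDR*} by $\on{DGSch}_{/\CY_1,\on{smooth}}^{\on{op}}$; (3) for the specific $A$ under consideration, invoke the cofinality of $A\to \on{DGSch}_{/\CY_1,\on{smooth}}$ (this is the place where the paper's two working examples --- the \v{C}ech nerve, via smooth descent, and the smooth-morphisms subcategory, via \cite[Cor. 11.2.4]{IndCoh} --- do the work); (4) conclude that the limit over $A^{\on{op}}$ agrees with the limit over $\on{DGSch}_{/\CY_1,\on{smooth}}^{\on{op}}$, hence with $\pi_{\on{dR},*}(\CM_1)$, and that the comparison map is the canonical one. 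The main obstacle is conceptual rather than computational: one must be careful that the hypothesis ``the $(\on{dR},*)$-pullback functor to $\lim_{A^{\on{op}}}\Dmod(S_a)$ is an equivalence'' is, in all intended applications, actually witnessed by a cofinal functor of index categories and not merely by an abstract equivalence of limits --- so the honest content of the lemma is that it is being applied only in those situations, and the proof is the cofinality bookkeeping just described.
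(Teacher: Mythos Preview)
Your approach has a genuine gap. You correctly identify that the hypothesis --- an equivalence of limit \emph{categories} $\Dmod(\CY_1)\simeq \underset{A^{\on{op}}}{\lim}\,\Dmod(S_a)$ --- does not imply cofinality of $A\to (\on{DGSch}_{\on{aft}})_{/\CY_1,\on{smooth}}$. But your proposed workaround, namely that cofinality \emph{does} hold in the concrete cases of interest, is false. For the \v{C}ech nerve of a smooth atlas $f:Z\to\CY_1$, the functor $\Delta^{\on{op}}\to \on{DGSch}_{/\CY_1,\on{smooth}}$ is not cofinal: for a general smooth $(S,g)$ there need not be any smooth map $S\to Z^n/\CY_1$ over $\CY_1$, let alone a contractible category of them. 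The equivalence \eqref{e:Dmod via Cech} comes from smooth \emph{descent} for $\Dmod$, which is a statement about this particular diagram of categories, not a cofinality statement about index categories. Similarly, the references you cite from \cite{IndCoh} establish equivalences of limit categories, not cofinality of the underlying index functors. So your plan, as written, does not prove the lemma --- not even in the cases you care about.

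The paper's proof takes a completely different route and uses the hypothesis exactly as stated, for arbitrary $A$. Since $\Dmod_{\on{coh}}(\CY_2)$ generates $\Dmod(\CY_2)$, it suffices to check the isomorphism after applying $\Gamma_{\on{dR}}(\CY_2,\CM_2\sotimes -)$ for each $\CM_2\in\Dmod_{\on{coh}}(\CY_2)$; by \lemref{l:Verdier_stacks} this functor is $\CMaps(\BD^{\on{Verdier}}_{\CY_2}(\CM_2),-)$ and hence commutes with limits. One then pushes $\CM_2\sotimes -$ inside the limit and, using the projection formula for the schematic quasi-compact maps $\pi\circ g_a$ together with \eqref{e:* and ! ten}, rewrites each term as $\CMaps_{\Dmod(S_a)}\bigl(k_{S_a},(g_a)^*_{\on{dR}}(\pi^!(\CM_2)\sotimes\CM_1)\bigr)$. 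Now the limit over $A^{\on{op}}$ of these Maps is precisely a Maps computation in $\underset{A^{\on{op}}}{\lim}\,\Dmod(S_a)$, and \emph{this} is where the hypothesis enters: that limit category is $\Dmod(\CY_1)$, so the expression becomes $\Gamma_{\on{dR}}(\CY_1,\pi^!(\CM_2)\sotimes\CM_1)$, which matches the left-hand side by Lemmas~\ref{l:partial projection formula} and \ref{l:de Rham trans}. The key idea you are missing is this reduction to a Maps computation \emph{inside} the limit category, which converts the categorical hypothesis into exactly what is needed.
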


\sssec{}

Assume for a moment that $\pi$ is schematic and quasi-compact. In this case we obtain two functors, both denoted 
$\pi_\dr$. One such functor, which we shall temporarily denote by $\pi^{(a)}_\dr$ was introduced in \secref{sss:representable} and
was specific to schematic quasi-compact maps. Another functor, which we shall temporarily denote by $\pi^{(b)}_\dr$
is the one from \eqref{e:piDR*}.

\medskip

It is easy to see that there is a natural transformation
\begin{equation}  \label{e:two versions of dr}
\pi^{(a)}_\dr\to \pi^{(b)}_\dr.
\end{equation}

We claim:

\begin{prop} \label{p:dr for sch}
The natural transformation \eqref{e:two versions of dr} is an isomorphism.
\end{prop}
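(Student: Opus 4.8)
\textbf{Proof proposal for \propref{p:dr for sch}.}

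The plan is to reduce the statement to the analogous (and essentially tautological) assertion about how the two pushforward functors interact with a smooth atlas of $\CY_1$, and then invoke \lemref{l:changing index}. First I would fix a smooth atlas $f\colon Z\to \CY_1$ with $Z$ a quasi-compact DG scheme, so that, by \secref{sss:only_smooth} together with \secref{coh shift}, the $(\on{dR},*)$-pullback functors exhibit $\Dmod(\CY_1)$ as the totalization $\on{Tot}\bigl(\Dmod(Z^\bullet/\CY_1)\bigr)$. By \lemref{l:changing index}, applied to the indexing category $A=\mathbf\Delta$ with $a\mapsto (Z^a/\CY_1, f^a)$, both functors $\pi^{(a)}_\dr$ and $\pi^{(b)}_\dr$ can be computed as the limit over $\mathbf\Delta$ of the functors $(\pi\circ f^\bullet)_\dr\circ (f^\bullet)^*_{\on{dR}}$. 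For $\pi^{(b)}_\dr$ this is literally the content of \lemref{l:changing index}; for $\pi^{(a)}_\dr$, one notes that $\pi$ being schematic and quasi-compact, the composites $\pi\circ f^a\colon Z^a/\CY_1\to \CY_2$ are still schematic and quasi-compact, and the functor $\pi^{(a)}_\dr$ satisfies base change against $(\on{dR},*)$-pullbacks (using that $f^a$ is smooth, hence $(\on{dR},*)$-pullback differs from $!$-pullback only by a shift, and \eqref{e:Dmod base change}), so that $\pi^{(a)}_\dr$ likewise is recovered as the corresponding limit.

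The key point to verify is then that the natural transformation \eqref{e:two versions of dr}, after restriction along the atlas, becomes the identity at each cosimplicial level; that is, for each $a$ the two functors $(\pi\circ f^a)_\dr$ --- the one of \secref{sss:representable} and the one of \eqref{e:piDR*} --- agree on $Z^a/\CY_1$. But $Z^a/\CY_1$ is a \emph{scheme} (since $f$ is schematic), so $\pi\circ f^a$ is a schematic quasi-compact map whose source is a scheme; in that situation \eqref{e:piDR*}, computed over the cofinal subcategory consisting of the identity $\mathrm{id}\colon Z^a/\CY_1\to Z^a/\CY_1$, reduces immediately to the scheme-level $(\on{dR},*)$-pushforward of \secref{sss:properties of Dmod}, which is exactly what \secref{sss:representable} produces. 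Hence the transformation \eqref{e:two versions of dr} is an isomorphism after applying $(f^\bullet)^*_{\on{dR}}$, and since the $(\on{dR},*)$-pullback functors along the atlas are jointly conservative (\secref{sss:only_smooth}, or \lemref{l:! cons D} applied level-wise), we conclude that \eqref{e:two versions of dr} is an isomorphism.

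I expect the main obstacle to be bookkeeping rather than mathematics: one has to make sure that the base-change compatibilities being invoked --- between $\pi^{(a)}_\dr$ and $!$-pullbacks on the one hand (\secref{sss:representable}), and between $(\on{dR},*)$- and $!$-pullbacks for smooth maps on the other (\secref{coh shift}, \eqref{e:*! shift}) --- are coherent enough that the resulting identifications of the two limit presentations are genuinely compatible with the a priori natural transformation \eqref{e:two versions of dr}, and not merely abstractly isomorphic. In other words, the delicate step is checking that the comparison map built ``by hand'' at the level of the atlas is the restriction of \eqref{e:two versions of dr}. Once the relevant diagrams are seen to commute --- which ultimately rests on the compatibility data packaged in the functor $\Dmod_{(\on{DGSch}_{\on{aft}})_{\on{corr}}}$ of \secref{sss:corr formalism} --- the argument closes formally.
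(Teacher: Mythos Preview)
Your approach has a genuine gap that stems from working with an atlas of the \emph{source} $\CY_1$ rather than the \emph{target} $\CY_2$.

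The limit formula for $\pi^{(b)}_\dr$ via the \v{C}ech nerve of $f:Z\to\CY_1$ is indeed given by \lemref{l:changing index}. But your claim that $\pi^{(a)}_\dr(\CM_1)$ is likewise the same limit is not justified. What you would need is that the natural map
\[
\pi^{(a)}_\dr(\CM_1)=\pi^{(a)}_\dr\Bigl(\on{Tot}\bigl((f^\bullet)_\dr\,(f^\bullet)^*_{\on{dR}}(\CM_1)\bigr)\Bigr)\longrightarrow \on{Tot}\Bigl(\pi^{(a)}_\dr\,(f^\bullet)_\dr\,(f^\bullet)^*_{\on{dR}}(\CM_1)\Bigr)
\]
be an isomorphism, i.e.\ that $\pi^{(a)}_\dr$ commute with this particular totalization. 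You invoke ``base change against $(\on{dR},*)$-pullbacks,'' but the base change isomorphisms for $\pi^{(a)}_\dr$ are indexed by maps into $\CY_2$ (that is how $\pi^{(a)}_\dr$ is defined in \secref{sss:representable}), not by the atlas of $\CY_1$; they do not furnish the needed commutation with the \v{C}ech limit on the $\CY_1$ side. In general $\pi^{(a)}_\dr$ is a colimit-preserving functor with no reason to preserve limits, and the terms of this cosimplicial object are not uniformly bounded below, so no shortcut via finite cohomological amplitude applies either.

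Your closing conservativity argument also does not type-check: the transformation \eqref{e:two versions of dr} is a map in $\Dmod(\CY_2)$, while $(f^\bullet)^*_{\on{dR}}$ are functors out of $\Dmod(\CY_1)$, so there is nothing to ``apply $(f^\bullet)^*_{\on{dR}}$'' to.

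The paper avoids this entirely by indexing over $(\on{DGSch}_{\on{aft}})_{/\CY_2,\on{smooth}}$ and base-changing along $\pi$ to obtain schemes over $\CY_1$ (this is where schematicity of $\pi$ is used, via \lemref{l:replace index schematic}). With this indexing, each term is rewritten as $(g_2)_\dr\,(g_2)^*_{\on{dR}}\bigl(\pi^{(a)}_\dr(\CM_1)\bigr)$ using base change for the schematic map $\pi$, and then the limit is simply descent on $\CY_2$ applied to the already pushed-forward object $\pi^{(a)}_\dr(\CM_1)$---no commutation of $\pi^{(a)}_\dr$ with any limit is required.
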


Due to this proposition, we obtain that the notation $\pi_\dr$ is unambiguous. 

\begin{proof}

For $\CM_1\in \Dmod(\CY_1)$ we calculate $\pi^{(b)}_\dr(\CM_1)$ by \lemref{l:changing index} via
the category $A=(\on{DGSch}_{\on{aft}})_{/\CY_2,\on{smooth}}$, see \lemref{l:replace index schematic}. 

\medskip

For
$$(S_2,g_2)\in (\on{DGSch}_{\on{aft}})_{/\CY_2,\on{smooth}}$$
consider the Cartesian diagram
$$
\CD
S_1  @>{g_1}>> \CY_1 \\
@V{\pi_S}VV    @VV{\pi}V   \\
S_2 @>{g_2}>>  \CY_2,
\endCD
$$
and we have:
$$(\pi\circ g_1)_\dr\circ (g_1)^*_{\on{dR}}(\CM_1)\simeq
(g_2\circ \pi_S)_\dr \circ (g_1)^*_{\on{dR}}(\CM_1)\simeq 
(g_2)_\dr\circ (\pi_S)_\dr\circ (g_1)^*_{\on{dR}}(\CM_1).$$

However, it is easy to see that the natural transformation 
$$(g_2)^*_{\on{dR}}\circ \pi^{(a)}_\dr\to (\pi_S)_\dr\circ (g_1)^*_{\on{dR}}$$
arising by adjunction is an isomorphism.

\medskip

Hence,
$$(\pi\circ g_1)_\dr\circ (g_1)^*_{\on{dR}}(\CM_1)\simeq 
(g_2)_\dr\circ (g_2)^*_{\on{dR}}\circ \pi^{(a)}_\dr(\CM_1).$$

Passing to the limit over $(S_2,g_2)$ we obtain the desired isomorphism.

\end{proof}

\sssec{Transitivity}

We note the following property of the functor $\pi_{\on{dR},*}$:

\begin{lem} \label{l:de Rham trans}
There exists a canonical isomorphism of (non-contunuous) functors 
$$\Dmod(\CY_1)\to \Vect:\Gamma_{\on{dR}}(\CY_1,-)\simeq \Gamma_{\on{dR}}(\CY_2,\pi_\dr(-)).$$
\end{lem}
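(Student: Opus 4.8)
The plan is to deduce the transitivity isomorphism $\Gamma_{\on{dR}}(\CY_1,-)\simeq \Gamma_{\on{dR}}(\CY_2,\pi_\dr(-))$ directly from the defining formula \eqref{e:piDR*} for $\pi_\dr$ together with the limit-presentation \eqref{e:DeRham} of $\Gamma_{\on{dR}}$ over the smooth site. First I would fix a smooth atlas indexing category: let $A:=(\on{DGSch}_{\on{aft}})_{/\CY_1,\on{smooth}}$, so that by \secref{sss:only_smooth} and \eqref{e:DeRham} we have
$$\Gamma_{\on{dR}}(\CY_1,\CM)\simeq \underset{(S,g)\in A^{\on{op}}}{\underset{\longleftarrow}{lim}}\, \Gamma_{\on{dR}}(S,g^*_{\on{dR}}(\CM)).$$
On the other hand, applying $\Gamma_{\on{dR}}(\CY_2,-)$ to \eqref{e:piDR*} and using that $\Gamma_{\on{dR}}(\CY_2,-)\simeq \CMaps_{\Dmod(\CY_2)}(k_{\CY_2},-)$ commutes with limits (it is a corepresentable, hence limit-preserving, functor, cf. \secref{sss:dg functors}), we get
$$\Gamma_{\on{dR}}(\CY_2,\pi_\dr(\CM))\simeq \underset{(S,g)\in A^{\on{op}}}{\underset{\longleftarrow}{lim}}\, \Gamma_{\on{dR}}\bigl(\CY_2,(\pi\circ g)_{\on{dR},*}(g^*_{\on{dR}}(\CM))\bigr).$$
So it suffices to identify the two limits term by term, i.e. to produce, functorially in $(S,g)\in A$, an isomorphism $\Gamma_{\on{dR}}(S,-)\simeq \Gamma_{\on{dR}}(\CY_2,(\pi\circ g)_{\on{dR},*}(-))$ on $\Dmod(S)$.

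The key step is therefore the scheme-over-stack case: for a smooth (in particular schematic, quasi-compact) morphism $\phi:=\pi\circ g:S\to \CY_2$, I claim $\Gamma_{\on{dR}}(S,-)\simeq \Gamma_{\on{dR}}(\CY_2,\phi_{\on{dR},*}(-))$. This follows from the compatibility of $(\dr)$-pushforward with composition: $\Gamma_{\on{dR}}(\CY_2,-)=(p_{\CY_2})_\dr(-)$ in the sense that $\Gamma_{\on{dR}}(\CY_2,\CN)\simeq \underset{(T,h)}{\underset{\longleftarrow}{lim}}\,\Gamma_{\on{dR}}(T,h^*_{\on{dR}}(\CN))$, and $p_{\CY_2}\circ \phi = p_S$. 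More precisely, since $\phi$ is schematic and quasi-compact, by \propref{p:dr for sch} the two a priori definitions of $\phi_{\on{dR},*}$ agree, and its formation is compatible with !-pullback base change along any $h:T\to \CY_2$; combined with \lemref{l:changing index} applied to $p_{\CY_2}$, one gets
$$\Gamma_{\on{dR}}(\CY_2,\phi_{\on{dR},*}(\CN_S))\simeq \underset{(T,h)\in ((\on{DGSch}_{\on{aft}})_{/\CY_2,\on{smooth}})^{\on{op}}}{\underset{\longleftarrow}{lim}}\,\Gamma_{\on{dR}}\bigl(S\underset{\CY_2}\times T,\ (\text{pullback of }\CN_S)\bigr),$$
and the category of such $T$ together with the map $S\underset{\CY_2}\times T\to S$ is cofinal in $(\on{DGSch}_{\on{aft}})_{/S,\on{smooth}}$ (using that $S\to \CY_2$ admits smooth atlases locally), so the right-hand side computes $\Gamma_{\on{dR}}(S,\CN_S)$ by \eqref{e:DeRham}.

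The main obstacle I expect is the bookkeeping of homotopy coherence: one must check that the term-by-term isomorphisms assemble into an isomorphism of the limit diagrams, i.e. that all the identifications above are natural in $(S,g)\in A$ and compatible with the transition functors $(\dr)$-pushforward/$(\dr)$-pullback. This is exactly the kind of $\infty$-categorical coherence issue flagged repeatedly in \secref{sss:properties of Dmod} and \secref{sss:corr formalism}, and the cleanest way to handle it is to phrase everything inside the functor $\Dmod_{(\on{DGSch}_{\on{aft}})_{\on{corr}}}$: the composite $p_{\CY_1}=p_{\CY_2}\circ \pi$ in $(\on{DGSch}_{\on{aft}})_{\on{corr}}$ (viewed via the $*$-pushforward subcategory, extended to stacks) gives the desired factorization $\Gamma_{\on{dR}}(\CY_1,-)\simeq \Gamma_{\on{dR}}(\CY_2,\pi_\dr(-))$ functorially at one stroke, with the limit formulas above being the unwinding of this on affine smooth charts. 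I would present the proof in this two-step form: state the transitivity for $S\to\CY_2$ smooth schematic (reducing to \propref{p:dr for sch} and \eqref{e:DeRham}), then pass to the limit over the atlas of $\CY_1$ using \lemref{l:changing index}.
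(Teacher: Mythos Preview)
Your approach is correct in substance but takes a much longer route than the paper's, and it contains a citation hazard you should fix.

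The paper's proof is a two-line adjunction argument: since $\Gamma_{\on{dR}}(\CY_2,-)=\CMaps_{\Dmod(\CY_2)}(k_{\CY_2},-)$ and $\Gamma_{\on{dR}}(\CY_1,-)=\CMaps_{\Dmod(\CY_1)}(k_{\CY_1},-)$, it suffices to know that the partially defined left adjoint $\pi^*_{\on{dR}}$ of $\pi_\dr$ is defined on $k_{\CY_2}$ and sends it to $k_{\CY_1}$. That is the entire proof. Your limit-matching over the smooth site of $\CY_1$ is precisely the unwinding of this adjunction statement (reducing to the case $\CY_1=S$ a scheme, then to \secref{sss:pullback constant} via an atlas of $\CY_2$), so the two arguments are the same at bottom; the paper just packages it conceptually.

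The genuine issue with your write-up is circularity: you invoke \propref{p:dr for sch} and \lemref{l:changing index}, but in the paper both of these are proved \emph{using} \lemref{l:de Rham trans} (the proof of \lemref{l:changing index} in \secref{sss:proof of changing index} explicitly cites it, and \propref{p:dr for sch} cites \lemref{l:changing index}). Fortunately you do not need either. When $\CY_1=S$ is a scheme, the indexing category $(\on{DGSch}_{\on{aft}})_{/S,\on{smooth}}$ has $(S,\on{id}_S)$ as a final object, so \eqref{e:piDR*} reduces tautologically to the functor of \secref{sss:representable}; there is nothing to cite. And for the cofinality step identifying $\underset{(T,h)}{\underset{\longleftarrow}{lim}}\,\Gamma_{\on{dR}}(S\times_{\CY_2}T,-)$ with $\Gamma_{\on{dR}}(S,-)$, you should appeal to the $\Dmod$-analog of \lemref{l:replace index schematic} (recorded for $\Dmod$ in \secref{sss:Dmod on prestacks}), not to \lemref{l:changing index}. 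With those two replacements your argument is clean; but you may as well phrase it as the paper does and skip the explicit limits.
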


\begin{proof}

Follows from the fact that the partially defined left adjoint $\pi^*_{\on{dR}}$ of $\pi_\dr$
is defined on $k_{\CY_2}$ and 
$$\pi^*_{\on{dR}}(k_{\CY_2})\simeq k_{\CY_1}.$$

\end{proof}

Let now $\phi:\CY_2\to \CY_3$ be another morphism between algebraic stacks. It is easy to
see that there exists a natural transformation
\begin{equation} \label{e:de Rham trans}
\phi_\dr\circ \pi_\dr\to (\phi\circ \pi)_\dr.
\end{equation}

The natural transformation is not always an isomorphism, see \secref{sss:trans contr} for
a counterexample. In what follows we shall need the following statement, proved
in \secref{sss:proof of prop de Rham trans}: 

\begin{prop} \label{p:de Rham trans}
Suppose that $\pi$ is schematic and quasi-compact. Then
the natural transformation \eqref{e:de Rham trans} is an isomorphism.
\end{prop}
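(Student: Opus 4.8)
The plan is to bootstrap the statement from the corresponding fact for morphisms of DG schemes (\secref{sss:properties of Dmod}(i)) by means of base change, in the same spirit as the proof of \propref{p:dr for sch}. The first step I would carry out is an auxiliary transitivity statement at the level of schemes: if $\psi\colon S\to\CY$ is a schematic and quasi-compact morphism from a DG scheme $S$ to an algebraic stack $\CY$, and $q\colon S_1\to S$ is any morphism of DG schemes, then there is a canonical isomorphism $\psi_\dr\circ q_\dr\simeq(\psi\circ q)_\dr$, compatible with the natural transformation \eqref{e:de Rham trans} for the pair $(q,\psi)$. To prove this I would test against $g^!$ for affine DG schemes $g\colon S_3\to\CY$: since $\psi$ and $\psi\circ q$ are schematic, the fibre products $S_3\underset{\CY}\times S$ and $S_3\underset{\CY}\times S_1$ are DG schemes; the base change property of the $(\dr)$-pushforward for a schematic quasi-compact map (\secref{sss:representable}) rewrites $g^!\circ\psi_\dr\circ q_\dr$ and $g^!\circ(\psi\circ q)_\dr$ in terms of the base-changed morphisms $\psi_{S_3}$ and $\psi_{S_3}\circ q_{S_3}$, and one then concludes by combining base change for the morphism $q$ of DG schemes (\secref{sss:properties of Dmod}(iv)) with transitivity for DG schemes (\secref{sss:properties of Dmod}(i)). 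Functoriality in $(S_3,g)$ assembles these into the required isomorphism.

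With this in hand, I would express both sides of \eqref{e:de Rham trans} as limits over the same indexing category. For $(S,g)\in(\on{DGSch}_{\on{aft}})_{/\CY_2,\on{smooth}}$ put $S_1:=S\underset{\CY_2}\times\CY_1$, which is a DG scheme because $\pi$ is schematic, and let $\pi_S\colon S_1\to S$ and $g_1\colon S_1\to\CY_1$ be the projections, so that $g_1$ is smooth. Because $\pi$ is schematic and quasi-compact, the assignment $(S,g)\mapsto(S_1,g_1)$ exhibits $\Dmod(\CY_1)$ as the limit of the categories $\Dmod(S_1)$ with respect to $(\dr)$-pullbacks; this is \secref{sss:change index qc}(ii) combined with (iv), which are available for $\Dmod$ by the discussion in \secref{sss:Dmod on prestacks}. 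Hence \lemref{l:changing index} gives
$$
(\phi\circ\pi)_\dr(\CM_1)\simeq\underset{(S,g)}{\underset{\longleftarrow}{lim}}\,(\phi\circ\pi\circ g_1)_\dr\bigl((g_1)^*_{\on{dR}}(\CM_1)\bigr),
$$
and since $\phi\circ\pi\circ g_1=(\phi\circ g)\circ\pi_S$ with $\phi\circ g\colon S\to\CY_3$ schematic and quasi-compact and $\pi_S$ a morphism of DG schemes, the auxiliary statement of the first step turns the right-hand side into $\underset{(S,g)}{\underset{\longleftarrow}{lim}}\,(\phi\circ g)_\dr\bigl((\pi_S)_\dr((g_1)^*_{\on{dR}}(\CM_1))\bigr)$.

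On the other side, the definition \eqref{e:piDR*} of $\phi_\dr$ gives directly
$$
\phi_\dr(\pi_\dr(\CM_1))\simeq\underset{(S,g)}{\underset{\longleftarrow}{lim}}\,(\phi\circ g)_\dr\bigl(g^*_{\on{dR}}(\pi_\dr(\CM_1))\bigr),
$$
and I would then invoke the base change property of $\pi_\dr$ for the schematic quasi-compact map $\pi$, applied to the Cartesian square
$$
\CD
S_1 @>{g_1}>> \CY_1 \\
@V{\pi_S}VV @VV{\pi}V \\
S @>{g}>> \CY_2,
\endCD
$$
together with the identification $g^*_{\on{dR}}\simeq g^![-2n]$ on each connected component (\secref{coh shift}), and likewise for $g_1$, to conclude that $g^*_{\on{dR}}\circ\pi_\dr\simeq(\pi_S)_\dr\circ(g_1)^*_{\on{dR}}$. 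Substituting this yields the same limit as in the previous paragraph, and comparing the two computations proves the proposition — granting that the resulting identification is the natural transformation \eqref{e:de Rham trans}, which one checks along the way.

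The proof contains no new geometric input: it is entirely a matter of propagating base change and compatibility with composition for $(\dr)$-pushforwards of DG schemes through a change of indexing category. Accordingly, the part I expect to be the main obstacle is the homotopy-coherent bookkeeping — verifying that the base change isomorphisms, the scheme-level transitivity isomorphisms of the first step, and the cofinality comparisons of indexing categories are mutually compatible as equivalences of functors, and that the identification they produce is indeed \eqref{e:de Rham trans}. This is the same sort of $\infty$-categorical plumbing already flagged in \secref{sss:properties of Dmod} and \secref{sss:corr formalism}.
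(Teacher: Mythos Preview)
Your approach is correct and genuinely different from the paper's. The paper does not re-index and compare limits term-by-term; instead it tests both sides against coherent objects of $\Dmod(\CY_3)$. Concretely, since $\Dmod_{\on{coh}}(\CY_3)$ generates, it suffices to show that for every $\CM_3\in\Dmod_{\on{coh}}(\CY_3)$ the induced map on $\Gamma_{\on{dR}}(\CY_3,\CM_3\sotimes-)$ is an isomorphism. Applying \lemref{l:partial projection formula} and \lemref{l:de Rham trans} to each side reduces the question to a single identity on $\CY_2$, namely
\[
\Gamma_{\on{dR}}\bigl(\CY_2,\phi^!(\CM_3)\sotimes\pi_\dr(\CM_1)\bigr)\simeq
\Gamma_{\on{dR}}\bigl(\CY_2,\pi_\dr(\pi^!\phi^!(\CM_3)\sotimes\CM_1)\bigr),
\]
and this is exactly the projection formula \eqref{e:proj formula Dmod sch} for the schematic quasi-compact map $\pi$.

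The trade-offs: your argument stays closer to the definition \eqref{e:piDR*} and recycles the mechanism of the proof of \propref{p:dr for sch}, which is attractive. Its cost is the coherence bookkeeping you flag---you must check that the chain of re-indexings, the auxiliary transitivity, and the base-change identifications assemble to the specific map \eqref{e:de Rham trans}, not just to some isomorphism. The paper's argument sidesteps this because it tests the given natural transformation directly; moreover, isolating the key step as the projection formula for $\pi$ makes the generalizations of \secref{sss:more trans} immediate (the argument works whenever the relevant projection formula holds). Note also that your route still rests on \lemref{l:changing index}, whose proof is itself an instance of the paper's ``test against $\Dmod_{\on{coh}}$ via \lemref{l:partial projection formula}'' technique, so the two proofs share the same underlying engine.
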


We refer the reader to \secref{sss:more trans} where several more situations
are given, in which \eqref{e:de Rham trans} is an isomorphism. 

\sssec{Pushforward of induced D-modules}

Let $\pi:\CY_1\to \CY_2$ be as above. It is easy to see that there exists a canonical
natural transformation between functors $\IndCoh(\CY_1)\to \Dmod(\CY_2)$, namely,
\begin{equation} \label{e:pushforward induced}
\ind_{\Dmod(\CY_2)}\circ \pi^{\IndCoh}_{\on{non-ren},*}\to 
\pi_\dr\circ \ind_{\Dmod(\CY_1)}.
\end{equation}

The following assertion will be proved in \secref{sss:proof pushforward induced}:

\begin{prop} \label{p:pushforward induced}
Suppose that $\CY_1$ and $\CY_2$ are QCA. Then the natural transformation 
\eqref{e:pushforward induced} is an isomorphism.
\end{prop}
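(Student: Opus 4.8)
The plan is to reduce, as in the analogous statements proved earlier, to a strata-wise computation, and then to the case of a classifying stack, where everything can be made explicit using the description in \secref{ss:ind BG}. First I would observe that both functors in \eqref{e:pushforward induced} are continuous: the target side $\pi_\dr\circ \ind_{\Dmod(\CY_1)}$ is continuous because $\CY_1$ is QCA (so $\Gamma_{\on{dR}}(\CY_1,-)$, and more generally $\pi_\dr$ out of $\Dmod(\CY_1)$, is continuous by \thmref{main} via \propref{p:de Rham and ind} and \corref{c:Verdier-Serre_on_stacks}), and the source side $\ind_{\Dmod(\CY_2)}\circ \pi^{\IndCoh}_{\on{non-ren},*}$ is continuous because by \propref{p:proof of ren vs nonren indcoh} the functor $\pi^{\IndCoh}_{\on{non-ren},*}$ agrees with the continuous functor $\pi_*^{\IndCoh}$ when $\CY_1$ and $\CY_2$ are QCA. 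Since both functors are continuous, and since $\IndCoh(\CY_1)$ is compactly generated by $\Coh(\CY_1)$ (\thmref{IndCoh}), it suffices to check that \eqref{e:pushforward induced} is an isomorphism when evaluated on $\CF\in \Coh(\CY_1)\subset \IndCoh(\CY_1)^+$.

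Next I would check the statement after applying $\Gamma_{\on{dR}}(\CY_2,-)$, which is harmless for a preliminary sanity check but not conservative; the real reduction is strata-wise. Using \propref{p:stratification} for $^{cl}\CY_1$ and the devissage arguments of \secref{ss:de Rham dir image stacks}, together with the fact that for a schematic quasi-compact map $\pi$ the natural transformation \eqref{e:pushforward induced} is an isomorphism (this reduces to \eqref{e:DR of induced} and \propref{p:DR of induced stack} via \lemref{l:non-ren pushforward qc and IndCoh} on the bounded-below parts, where $\pi^{\IndCoh}_{\on{non-ren},*}$ is well-behaved), one reduces the general $\pi$ to the following situation: $\CY_1$ is a gerbe over a scheme, in fact a quotient $Z/G$, and one must understand the map $\pi:Z/G\to \CY_2$. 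Factoring $\pi$ through $Z/G\to BG$ (schematic quasi-compact, hence already handled) and then through a map $BG\to \CY_2$, and using transitivity where available (\propref{p:de Rham trans} for the schematic quasi-compact factor), the crux becomes the case $\pi:BG\to \on{pt}$, or more precisely $\pi:BG_y\to B\Gamma_y$ for the various stabilizers. Here \propref{p:de Rham and ind} already identifies $\Gamma_{\on{dR}}(BG,\ind_{\Dmod(BG)}(\CF))$ with $\Gamma^{\IndCoh}(BG,\CF)$, and the explicit formulas \eqref{e:coinv of G rep}, \eqref{e:oblv for BG}, \eqref{e:ind for BG} of \secref{ss:ind BG} let one match $\ind_{\Dmod}\circ \pi^{\IndCoh}_{\on{non-ren},*}$ with $\pi_\dr\circ \ind_{\Dmod}$ termwise: both sides compute, on $\Coh(BG)=\Rep(G)^{\text{perf}}$, the composite ``restrict to $\fg$, take $\fg$-coinvariants, induce back up as a $B$-module'', where $B$ is as in \eqref{e:homology of group}.

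The main obstacle I anticipate is the \emph{bookkeeping of the intermediate functor} $\pi^{\IndCoh}_{\on{non-ren},*}$ along the devissage: unlike $\pi_\dr$ and $\ind_{\Dmod}$, it is only controlled on $\IndCoh(\CY_1)^+$ (\lemref{l:non-ren pushforward qc and IndCoh}, and the remarks after \eqref{e:almost base change doe indcoh}), so one cannot naively commute it past cones coming from a stratification unless one stays within bounded-below objects or invokes \propref{p:proof of ren vs nonren indcoh} to trade it for $\pi^{\IndCoh}_*$ at the very end. The cleanest route is probably to \emph{first} prove the statement with $\pi^{\IndCoh}_*$ in place of $\pi^{\IndCoh}_{\on{non-ren},*}$ — where both sides are manifestly continuous and one has base change \corref{c:base change for indcoh} and compatibility with compositions at one's disposal — and then deduce the stated form from \propref{p:proof of ren vs nonren indcoh}. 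The only genuinely new input beyond formal nonsense is thus the $BG$ computation, which is already essentially done in \secref{ss:BG} and \secref{ss:ind BG}; the rest is assembling the devissage in a way that respects continuity, using \thmref{main} at the one place (\propref{p:de Rham and ind}) where a nontrivial finiteness statement is needed.
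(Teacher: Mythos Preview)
Your proposal has two genuine gaps. First, the continuity claim for the target side is wrongly justified: you assert that $\Gamma_{\on{dR}}(\CY_1,-)$, and more generally $\pi_\dr$, is continuous because $\CY_1$ is QCA, but this is exactly what fails --- see \secref{sss:BG_m} and \secref{sss:warning}. \propref{p:de Rham and ind} only gives continuity of the composite $\Gamma_{\on{dR}}(\CY,-)\circ\ind_{\Dmod(\CY)}$, i.e.\ the case $\CY_2=\on{pt}$; it says nothing about $\pi_\dr$ for general $\pi$, so invoking it here for the relative statement is circular. Second, and more seriously, the devissage does not assemble: after stratifying $\CY_1$ and landing on a stratum $Z/G$, you factor $\pi|_{Z/G}$ as $Z/G\to BG\to\CY_2$, but while the projection $Z/G\to BG$ always exists, there is no reason whatsoever for the map $Z/G\hookrightarrow\CY_1\overset{\pi}\to\CY_2$ to factor through $BG$ (the target $\CY_2$ is arbitrary). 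Even were such a factorization available, the transitivity $(\phi\circ\pi')_\dr\simeq\phi_\dr\circ\pi'_\dr$ fails precisely when the inner map is non-schematic --- see \secref{sss:trans contr} --- so you cannot decompose $\pi_\dr$ along it.

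The paper takes a completely different route, bypassing stratification entirely. It introduces the \emph{renormalized} direct image $\pi_{\blacktriangle}:=(\pi^!)^\vee$ under the Verdier self-duality $\bD^{\on{Verdier}}_{\CY_i}$, and proves $\pi_{\blacktriangle}\circ\ind_{\Dmod(\CY_1)}\simeq\ind_{\Dmod(\CY_2)}\circ\pi_*^{\IndCoh}$ (\propref{p:ren dir image and induction}) by a direct pairing computation: test against a compact $\CM_2\in\Dmod(\CY_2)^c$, use \lemref{l:tensor with induction stacks} and \propref{p:de Rham and ind} to rewrite each side as $\Gamma^{\IndCoh}$ of a $\sotimes$-product, and match them via the Serre-duality relation $(\pi_*^{\IndCoh})^\vee\simeq\pi^!$ (\propref{p:duality of * and !}). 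The passage back to $\pi_\dr$ is then one line: induced D-modules are safe (Example~\ref{ex:induced is safe}), so $\pi_{\blacktriangle}\to\pi_\dr$ is an isomorphism on them by \propref{p:ren dir image of safe}; combined with $\pi_*^{\IndCoh}\simeq\pi^{\IndCoh}_{\on{non-ren},*}$ (\propref{p:proof of ren vs nonren indcoh}) this gives the statement. No $BG$ computation and no devissage are needed.
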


\begin{rem}
We do not know whether \eqref{e:pushforward induced} is an isomorphism for an arbitrary morphism
of stacks. Note, however, that when $\CY_2=\on{pt}$, this is true by \propref{p:de Rham and ind}.
\end{rem}

\sssec{}

Note that if $\pi$ is smooth, the functor $\pi_\dr$ commutes with limits, since it admits a left
adjoint. In general, this will not be so. However, we have the following useful property:

\begin{lem} \label{l:de Rham almost cocont}
For $\CM\in \Dmod(\CY_1)$, the natural map
$$\pi_\dr(\CM)\to \underset{n}{\underset{\longleftarrow}{lim}}\, \pi_\dr(\tau^{\geq -n}(\CM))$$
is an isomorphism.
\end{lem}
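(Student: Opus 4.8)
The statement is that for a morphism $\pi:\CY_1\to\CY_2$ of algebraic stacks and $\CM\in\Dmod(\CY_1)$, the natural map
$$\pi_\dr(\CM)\to\underset{n}{\underset{\longleftarrow}{lim}}\,\pi_\dr(\tau^{\geq -n}(\CM))$$
is an isomorphism. The idea is to reduce this to the corresponding fact for schematic quasi-compact morphisms, where we know $\pi_\dr$ behaves well on bounded-below subcategories (\corref{c:coconnective part} and its D-module analog, which is guaranteed by the discussion preceding \secref{sss:warning1} — really this is \propref{p:coconnective part Dmod}). First I would unwind the defining formula \eqref{e:piDR*}: for a fixed smooth atlas, or more generally for an appropriate indexing category $A$ as in \lemref{l:changing index}, we have
$$\pi_\dr(\CM)\simeq\underset{a\in A^{\on{op}}}{\underset{\longleftarrow}{lim}}\,(\pi\circ g_a)_\dr\big((g_a)^*_{\on{dR}}(\CM)\big),$$
and similarly for each $\tau^{\geq -n}(\CM)$. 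Since limits commute with limits, it suffices to prove the claim after replacing $\pi$ by the schematic quasi-compact morphisms $\pi\circ g_a$ and $\CM$ by $(g_a)^*_{\on{dR}}(\CM)$; note that $(g_a)^*_{\on{dR}}$ differs from a $!$-pullback only by a cohomological shift on each connected component, so it is of bounded amplitude and commutes with the truncation functors up to a fixed shift. Thus we are reduced to: for $\pi$ schematic and quasi-compact, $\pi_\dr(\CM)\to\underset{n}{\underset{\longleftarrow}{lim}}\,\pi_\dr(\tau^{\geq -n}(\CM))$ is an isomorphism.

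For the reduced (schematic quasi-compact) case, I would argue as follows. Because $\CY_2$ is an algebraic stack, it suffices to check the isomorphism after $!$-pullback to an affine DG scheme $S_2$ smooth over $\CY_2$ (using conservativity and the fact that $!$-pullback commutes with $\pi_\dr$ by base change, \secref{sss:representable}); base-changing, we may as well assume $\CY_2=S_2$ is affine, so $\CY_1$ is a quasi-separated quasi-compact DG scheme and $\pi$ is a quasi-compact morphism of schemes. Now the point is that $\pi_\dr$ on schemes has bounded cohomological amplitude: one reduces to the affine case, embeds $\CY_1$ into a smooth affine scheme, and uses that de Rham pushforward along a smooth affine morphism is computed by a (finite-length) relative de Rham complex; hence there is an integer $d$ with $\pi_\dr(\Dmod(\CY_1)^{\geq -n})\subset\Dmod(S_2)^{\geq -n-d}$. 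Given this boundedness, the exact triangle $\tau^{\leq -n-1}(\CM)\to\CM\to\tau^{\geq -n}(\CM)$ shows that $\pi_\dr(\CM)\to\pi_\dr(\tau^{\geq -n}(\CM))$ is an isomorphism on cohomology in degrees $>-n-1-d$; therefore the tower $\{\pi_\dr(\tau^{\geq -n}(\CM))\}_n$ is eventually constant in each fixed degree, so its homotopy limit is computed degreewise and the map from $\pi_\dr(\CM)$ (which is already left-complete, since $\Dmod$ has a left-complete t-structure, \lemref{D-properties of t}) is an isomorphism. Alternatively, and more slickly: $\pi_\dr$ on $\Dmod(\CY_1)^{\geq -n}$ commutes with limits — this follows because on bounded-below objects of the scheme case the functor $\pi_\dr$ agrees (up to the obvious twist) with $\pi^{\IndCoh}_*$, which is left t-exact, and left t-exact functors between categories with left-complete t-structures commute with the limits $\underset{n}{\underset{\longleftarrow}{lim}}\,\tau^{\geq -n}$.

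\textbf{Main obstacle.} The genuinely delicate point is the boundedness of the cohomological amplitude of $\pi_\dr$ for a quasi-compact morphism of DG schemes (equivalently, its compatibility with the left completion of the t-structure). On bounded-below objects this is routine via Kashiwara's lemma and the relative de Rham complex, but one must make sure the bound $d$ is uniform over the atlas index $a$ — this is where quasi-compactness of $\pi$ and of $\CY_1$ over $\CY_2$ is used, so that the relative dimensions occurring are bounded. I would handle this by first fixing a single smooth atlas $Z\to\CY_1$ and observing, via \eqref{e:Dmod via Cech} and the Čech description of $\pi_\dr$, that the amplitude of $\pi_\dr$ is controlled by that of the schematic quasi-compact pushforwards $(\pi\circ f^i)_\dr$ along the finitely-many-needed terms of the nerve together with the bounded amplitude of the totalization functor (amplitude $[0,N]$ for $N$ the relative dimension), exactly as in the proof of \corref{c:coconnective part}(a). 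Once uniform boundedness is in hand, the degreewise-stabilization argument above closes the proof with no further difficulty.
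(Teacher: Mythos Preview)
Your overall strategy coincides with the paper's: expand $\pi_\dr$ via the defining limit \eqref{e:piDR*}, commute the limit over $n$ with the limit over the smooth atlas, thereby reducing to the case of a schematic quasi-compact map $\pi\circ g:S\to\CY_2$ with $S$ a quasi-compact scheme; then use bounded cohomological amplitude of this pushforward together with left-completeness of $\Dmod(\CY_2)$ to conclude. The paper does exactly this, rewriting
\[
\underset{n}{\underset{\longleftarrow}{lim}}\,(\pi\circ g)_\dr\bigl(\tau^{\geq -n}(\CM')\bigr)\;\simeq\;\underset{n}{\underset{\longleftarrow}{lim}}\,\tau^{\geq -n}\bigl((\pi\circ g)_\dr(\CM')\bigr)
\]
(the two towers being interleaved once the amplitude is bounded) and invoking left-completeness.

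The one point where you deviate is the intermediate step of base-changing $\CY_2$ to an affine $S_2$ via a smooth $g_2$. Conservativity of $(g_2)^!$ certainly lets you \emph{detect} whether a map is an isomorphism, but to transport the entire statement you also need $(g_2)^!$ to commute with the inverse limit on the right-hand side. Since $(g_2)^!\simeq (g_2)^*_{\on{dR}}[2n]$ is a \emph{left} adjoint, this is not automatic; and in fact the transition functors $f^!$ for maps of affine schemes do not generally preserve limits (equivalently $f_\dr$ does not preserve compacts: already $\Gamma_{\on{dR}}(\BA^1, \ind_{\Dmod(\BA^1)}(\CO_{\BA^1}))$ is infinite-dimensional). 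The paper avoids this entirely by asserting the bounded amplitude of $(\pi\circ g)_\dr$ \emph{directly as a functor into $\Dmod(\CY_2)$}, without first reducing the target. Your ``Main obstacle'' paragraph already isolates the right ingredient; if you drop the base-change of $\CY_2$ and run your degreewise-stabilization argument in $\Dmod(\CY_2)$ itself, the proof goes through and matches the paper's.
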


\begin{proof}

We have:
\begin{multline*}
\underset{n}{\underset{\longleftarrow}{lim}}\, \pi_\dr(\tau^{\geq -n}(\CM))
\simeq \underset{n}{\underset{\longleftarrow}{lim}}\,
\underset{(S,g)\in ((\on{DGSch}_{\on{aft}})_{/\CY_1,\on{smooth}})^{\on{op}}}
{\underset{\longleftarrow}{lim}}\, (\pi\circ g)_{\on{dR},*}(g^*_{\on{dR}}(\tau^{\geq -n}(\CM)))\simeq \\
\simeq 
\underset{(S,g)\in ((\on{DGSch}_{\on{aft}})_{/\CY_1,\on{smooth}})^{\on{op}}}
{\underset{\longleftarrow}{lim}}\,  \underset{n}{\underset{\longleftarrow}{lim}}\,
(\pi\circ g)_{\on{dR},*}(g^*_{\on{dR}}(\tau^{\geq -n}(\CM))).
\end{multline*}

We claim that for each $(S,g)$, the map
$$(\pi\circ g)_{\on{dR},*}(g^*_{\on{dR}}(\CM))\to  \underset{n}{\underset{\longleftarrow}{lim}}\,
(\pi\circ g)_{\on{dR},*}(g^*_{\on{dR}}(\tau^{\geq -n}(\CM)))$$
is an isomorphism.

\medskip

First, since $g^*_{\on{dR}}$ is of bounded cohomological amplitude, we rewrite 
$$\underset{n}{\underset{\longleftarrow}{lim}}\,
(\pi\circ g)_{\on{dR},*}(g^*_{\on{dR}}(\tau^{\geq -n}(\CM)))\simeq
  \underset{n}{\underset{\longleftarrow}{lim}}\,
(\pi\circ g)_{\on{dR},*}(\tau^{\geq -n}(g^*_{\on{dR}}(\CM))).$$

\medskip

Thus, we have reduced the assertion of the lemma to the case when $\CY_1=S$
is a quasi-compact DG scheme. In this case, the functor $\pi_\dr$ has itself
a bounded cohomological amplitude, so
$$\underset{n}{\underset{\longleftarrow}{lim}}\, \pi_\dr(\tau^{\geq -n}(\CM))\simeq
\underset{n}{\underset{\longleftarrow}{lim}}\, \tau^{\geq -n}(\pi_\dr(\CM)).$$

Now, the desired assertion follows from the left-completeness of $\Dmod(\CY_2)$
in its t-structure.

\end{proof}

\ssec{Base change for the $(\on{dR},*)$-pushforward}  \label{ss:base change Dmod}

\sssec{}

Let $\pi:\CY_1\to \CY_2$ be as above, and let $\phi_2:\CY'_2\to \CY_2$ be another morphism of
algebraic stacks. 

\medskip

Consider the Cartesian diagram
$$
\CD
\CY'_1  @>{\phi_1}>>  \CY_1 \\
@V{\pi'}VV  @VV{\pi}V  \\
\CY'_2 @>{\phi_2}>>  \CY_2.
\endCD
$$

For $\CM_1\in \Dmod(\CY_1)$ there exists a canonically defined map
\begin{equation} \label{e:base change morphism Dmod}
\phi_2^!\circ \pi_\dr(\CM_1)\to \pi'_\dr\circ \phi_1^!(\CM_1).
\end{equation}

\begin{defn} \label{defn:base change Dmod} \hfill

\smallskip

\noindent{\em(a)} The triple $(\phi_2,\CM_1,\pi)$ satisfies
base change if the map \eqref{e:base change morphism Dmod} is an isomorphism.

\smallskip

\noindent{\em(b)} The pair $(\CM_1,\pi)$ satisfies
base change if \eqref{e:base change morphism Dmod} is an isomorphism for any $\phi_2$.

\smallskip

\noindent{\em(c)} The morphism $\pi$ satisfies base change if 
\eqref{e:base change morphism Dmod} is an isomorphism for any $\phi_2$ and $\CF_1$.

\end{defn}

\sssec{}

We have the following analog of \propref{p:bootstrap base change}:

\begin{prop} \label{p:bootstrap base change Dmod}
Given $\pi:\CY_1\to \CY_2$, for $\CM_1\in \Dmod(\CY_1)$ the following conditions are equivalent:

\smallskip

\noindent {\em(i)} $(\CM_1,\pi)$ satisfies base change.

\smallskip

\noindent {\em(ii)} $(\phi_2,\CF_1,\pi)$ satisfies base change whenever $\CY_2=S_2\in \on{DGSch}_{\on{aft}}^{\on{aff}}$.

\smallskip

\noindent {\em(iii)} For any $S'_2\overset{f_2}\to S_2\overset{g_2}\to \CY_2$ with $S_2,S'_2\in \on{DGSch}_{\on{aft}}^{\on{aff}}$,
the triple $(f_2,\CF_{S,1},\pi_S)$ satisfies base change, where
$$g_1:S_2\underset{\CY_2}\times \CY_1\to \CY_1,\,\, 
\CM_{S,1}:=g_1^!(\CF_1) \text{ and } \pi_S:S_2\underset{\CY_2}\times \CY_1\to S_2.$$ 

\end{prop}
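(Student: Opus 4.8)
The plan is to follow the proof of \propref{p:bootstrap base change}, with the adjunction argument used there (unavailable for $\Dmod$) replaced by repeated use of \lemref{l:changing index}. The implications (i) $\Rightarrow$ (ii) $\Rightarrow$ (iii) are immediate: (ii) is the case of (i) obtained by pasting the two Cartesian squares that present $\pi_S\colon S_2\underset{\CY_2}\times\CY_1\to S_2$ as a base change of $\pi$, and (iii) merely restricts the test morphisms to maps between affine DG schemes. So all the content is in (iii) $\Rightarrow$ (i).

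Assume (iii). As in \propref{p:bootstrap base change}, condition (iii) says exactly that the assignment
\[
(S_2,g_2)\in (\on{DGSch}^{\on{aff}}_{\on{aft}})_{/\CY_2}\ \rightsquigarrow\ (\pi_S)_\dr(\CM_{S,1})\in \Dmod(S_2)
\]
is compatible with $!$-pullbacks, hence defines an object $\pi_{\dr,?}(\CM_1)\in \Dmod(\CY_2)=\underset{(S_2,g_2)}{\underset{\longleftarrow}{lim}}\,\Dmod(S_2)$, with transition data supplied by the base-change isomorphisms of (iii). The crux is a canonical isomorphism $\pi_{\dr,?}(\CM_1)\simeq\pi_\dr(\CM_1)$. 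Granting it, (i) follows formally, as in the last paragraph of the proof of \propref{p:bootstrap base change}: for $\phi_2\colon\CY'_2\to\CY_2$ the pair $(\phi_1^!\CM_1,\pi')$ again satisfies (iii) (pasting Cartesian squares), and for $g'_2\colon S'_2\to\CY'_2$ with $S'_2$ affine the $(g'_2)^!$-values of $\phi_2^!\circ\pi_\dr(\CM_1)$ and of $\pi'_\dr\circ\phi_1^!(\CM_1)$ both compute $(\pi_{S'})_\dr(\CM_{S',1})$ (using $S'_2\underset{\CY_2}\times\CY_1=S'_2\underset{\CY'_2}\times\CY'_1$ and $\pi_{\dr,?}\simeq\pi_\dr$ for $\pi$ and for $\pi'$); one then passes to the limit over $(S'_2,g'_2)$.

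To prove $\pi_{\dr,?}(\CM_1)\simeq\pi_\dr(\CM_1)$ I would recompute $\pi_\dr(\CM_1)$ through a convenient indexing category. After the usual reduction to $\CY_1,\CY_2$ quasi-compact, fix a smooth atlas $f\colon Z\to\CY_1$ by a quasi-compact DG scheme; since the diagonal of $\CY_1$ is schematic, every term $Z^i/\CY_1$ of the \v{C}ech nerve is a scheme, smooth over $\CY_1$, and $(\on{dR},*)$-descent gives $\Dmod(\CY_1)\simeq\on{Tot}\big(\Dmod(Z^\bullet/\CY_1)\big)$. Apply \lemref{l:changing index} with the indexing category $A$ of triples $(S_2,g_2,[i])$, $g_2\colon S_2\to\CY_2$ smooth from a DG scheme and $[i]\in\Delta$, sent to $(\on{DGSch}_{\on{aft}})_{/\CY_1,\on{smooth}}$ by $(S_2,g_2,[i])\mapsto S_2\underset{\CY_2}\times(Z^i/\CY_1)$; each such fibre product is a scheme (the map $Z^i/\CY_1\to\CY_2$ is schematic) and is smooth over $\CY_1$. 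That $(\on{dR},*)$-pullback identifies $\Dmod(\CY_1)$ with $\underset{A^{\on{op}}}{\underset{\longleftarrow}{lim}}\,\Dmod(S_2\underset{\CY_2}\times Z^i/\CY_1)$ follows by first taking the limit over $(S_2,g_2)$ — \lemref{l:replace index schematic}, in its $\Dmod$-version, for the schematic quasi-compact morphism $Z^i/\CY_1\to\CY_2$, gives $\Dmod(Z^i/\CY_1)$ — and then totalizing over $\Delta$. Using \lemref{l:changing index} and Fubini for limits, this yields
\[
\pi_\dr(\CM_1)\ \simeq\ \underset{(S_2,g_2)\ \mathrm{smooth}}{\underset{\longleftarrow}{lim}}\ \ \underset{[i]\in\Delta}{\underset{\longleftarrow}{lim}}\ \ (g_2)_\dr\circ(q_i)_\dr\big((f^i_S)^*_\dr\,g_1^*_\dr(\CM_1)\big),
\]
where $g_1\colon S_2\underset{\CY_2}\times\CY_1\to\CY_1$, $q_i\colon S_2\underset{\CY_2}\times Z^i/\CY_1\to S_2$ and $f^i_S\colon S_2\underset{\CY_2}\times Z^i/\CY_1\to S_2\underset{\CY_2}\times\CY_1$ are the evident maps (schematic, resp.\ smooth), and transitivity of $(\on{dR},*)$-pushforward for schematic maps (\propref{p:de Rham trans}) has been used to split $(g_2\circ q_i)_\dr$.

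For fixed smooth $g_2$, the functor $(g_2)_\dr$ is a right adjoint (\secref{sss:smooth pullback stacks}), hence commutes with the limit over $\Delta$; and \lemref{l:changing index} applied to the schematic quasi-compact morphism $\pi_S\colon S_2\underset{\CY_2}\times\CY_1\to S_2$, with the smooth atlas $S_2\underset{\CY_2}\times Z^\bullet/\CY_1$ of its source, identifies $\underset{[i]\in\Delta}{\underset{\longleftarrow}{lim}}\,(q_i)_\dr\big((f^i_S)^*_\dr g_1^*_\dr\CM_1\big)$ with $(\pi_S)_\dr(g_1^*_\dr\CM_1)$. Since $g_1$ has the same relative dimension as $g_2$, $g_1^*_\dr\CM_1$ is a shift of $\CM_{S,1}=g_1^!\CM_1$, and one checks $(g_2)_\dr\big((\pi_S)_\dr(g_1^*_\dr\CM_1)\big)=g_2^*_\dr\big(\pi_{\dr,?}(\CM_1)\big)$. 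Finally \lemref{l:changing index} for $\on{id}_{\CY_2}$ gives $\pi_{\dr,?}(\CM_1)\simeq\underset{(S_2,g_2)\ \mathrm{smooth}}{\underset{\longleftarrow}{lim}}\,(g_2)_\dr\big(g_2^*_\dr(\pi_{\dr,?}(\CM_1))\big)$, which is precisely the right-hand side displayed above. The main obstacle is exactly the point just navigated: since $!$-pullback does not commute with totalizations, one cannot restrict $\pi_\dr(\CM_1)$ to an affine $S_2$ "naively", and one must instead reorganize its defining limit through a smooth atlas of $\CY_2$ — where the structure pushforwards are right adjoints and do commute with the relevant limits — before recognizing the outcome as $\pi_{\dr,?}(\CM_1)$. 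What remains beyond this is the routine (if tedious) check that all of the identifications above are compatible with the transition functors of the various limits.
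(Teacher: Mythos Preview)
Your argument is essentially correct, but the paper's route is considerably shorter and more conceptual. Rather than passing to a \emph{smooth} atlas of $\CY_2$ so that $(g_2)_\dr$ becomes a right adjoint, the paper isolates a simple abstract observation (\lemref{l:general_observation}): in a limit category $\bC=\underset{i}{\underset{\longleftarrow}{lim}}\,\bC^i$, if an $A$-diagram $(\bc_a)$ in $\bC$ has the property that the componentwise limits $\bc^i:=\underset{a}{\underset{\longleftarrow}{lim}}\,\bc^i_a$ happen to be compatible under the transition functors $F^{i,j}$, then the system $(\bc^i)$ \emph{is} the limit $\underset{a}{\underset{\longleftarrow}{lim}}\,\bc_a$. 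This is Fubini for limits together with Yoneda. The paper applies this with $I=((\on{DGSch}^{\on{aff}}_{\on{aft}})_{/\CY_2})^{\on{op}}$ (all affine schemes, with $!$-pullback as transition functors) and $A=((\on{DGSch}^{\on{aff}}_{\on{aft}})_{/\CY_1,\on{smooth}})^{\on{op}}$, so that $\bc=\pi_\dr(\CM_1)$ by definition. For each $i=(Z,f)$, base change for the schematic maps $\pi\circ g$ together with \lemref{l:changing index} for $\pi_Z$ gives $\bc^i\simeq(\pi_Z)_\dr(\wt f^!\CM_1)$; condition (iii) is then \emph{exactly} the compatibility hypothesis of the lemma, and the conclusion $\pi_\dr(\CM_1)\simeq\pi_{\dr,?}(\CM_1)$ drops out immediately. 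Your approach achieves the same identification by an explicit double-limit manipulation, trading the abstract lemma for the concrete fact that smooth pushforwards commute with limits; this buys explicitness at the cost of the quasi-compactness reduction, the \v{C}ech nerve bookkeeping, and the extra compatibility checks you flag at the end.

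Two minor points on your write-up: the displayed identity $(g_2)_\dr\bigl((\pi_S)_\dr(g_1^*{}_\dr\CM_1)\bigr)=g_2^*{}_\dr\bigl(\pi_{\dr,?}(\CM_1)\bigr)$ does not type-check (the two sides live in $\Dmod(\CY_2)$ and $\Dmod(S_2)$ respectively); you mean $(\pi_S)_\dr(g_1^*{}_\dr\CM_1)\simeq g_2^*{}_\dr(\pi_{\dr,?}(\CM_1))$ in $\Dmod(S_2)$. And the ``usual reduction to $\CY_1,\CY_2$ quasi-compact'' is not quite as routine here as it is for $\QCoh$, since $!$-pullback need not commute with the limit defining $\pi_\dr$; the paper's indexing by \emph{all} affine schemes over $\CY_2$ sidesteps this entirely.
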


\begin{proof}

Follows in the same way as \propref{p:bootstrap base change} using the following observation.

\medskip

Let $i\mapsto \bC^i, i\in I$ be a family of cocomplete DG categories,
let $F^{i,j}:\bC^i\to \bC^j$ denote the corresponding
family of functors. Let $\bC:=\underset{i}{\underset{\longleftarrow}{lim}}\, \bC^i$ be their limit.

\medskip

For another category of indices $A$, let $\bc_a, a\in A$ be an $A$-family of objects in $\bC$, i.e., a compatible
family of objects $\bc^i_a\in \bC^i,a\in A$. Denote
$$\bc^i:=\underset{a}{\underset{\longleftarrow}{lim}}\, \bc^i_a\in \bC^i$$
(recall that cocomplete DG categories are closed under limits, see \secref{sss:dg functors}).

\begin{lem}   \label{l:general_observation}
Suppose that the maps $F_{i,j}(\bc^i)\to \bc^j$ are isomorphisms. 
Then 
$$\bc:=\underset{a}{\underset{\longleftarrow}{lim}}\, \bc_a\in \bC$$
corresponds to the system $i\mapsto \bc^i$. 
\end{lem}

We apply this lemma as follows: the category of indices $I$ is $((\on{DGSch})^{\on{aff}}_{/\CY_2})^{\on{op}}$ and
for an object $i=(Z,f)\in I$, set 
$$\bC^i=\Dmod(Z),$$ 
so that $\bC=\Dmod(\CY_2)$. 

\medskip

We take the category of indices $A$ to be $((\on{DGSch}^{\on{aff}}_{\on{aft}})_{\CY_1,\on{smooth}})^{\on{op}}$. 
For each $a=(S,g)\in A$ we set 
$$\bc_a:=(\pi\circ g)_\dr(g^*_{\on{dR}}(\CM_1)),$$
so that $\bc=\pi_\dr(\CM)$ and for $i=(Z,f)$
$$\bc^i=(\pi_Z)_{\on{dR},*}(\wt{f}^!(\CM_1)),$$
where
$$
\CD
Z\underset{\CY_2}\times \CY_1  @>{\wt{f}}>>  \CY_1 \\
@V{\pi_Z}VV   @VV{\pi}V   \\
Z @>{f}>>  \CY_2.
\endCD
$$

\end{proof}

It is clear that schematic quasi-compact morphisms satisfy base change. 

\begin{rem}
In \thmref{t:rel_Dmods} we shall show
that any morphism which is \emph{safe} satisfies base change. 
Furthermore, in \corref{c:safe satisfies base change} we will
show that for a morphism between QCA stacks, a pair $(\CM_1,\pi)$ satisfies base change 
if $\CM_1$ is \emph{safe}. 
\end{rem}

\sssec{}

As in \corref{c:coconnective part} we have:

\begin{prop}   \label{p:coconnective part Dmod}  Let $\pi:\CY_1\to \CY_2$ be a quasi-compact morphism between algebraic 
stacks. Then: 

\smallskip

\noindent{\em(a)} For any $\CM_1\in \Dmod(\CY_1)^+$, the pair $(\CM_1,\pi)$ satisfies base change.
\footnote{The proof uses the fact that for a morphism of affine DG schemes, the functor of $!$-pullback
of D-modules has a finite cohomological amplitude.}

\smallskip

\noindent{\em(b)} If $\CY_2$ is quasi-compact, there exists $m\in\BZ$ such that for any $n\in \BZ$
the functor $\pi_{\on{dR},*}$, when restricted to $\on{D-mod}(\CY_1)^{\geq n}$, maps
to  $\on{D-mod}(\CY_2)^{\geq n-m}$, and as such commutes with filtered colimits.
\end{prop}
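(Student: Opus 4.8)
The plan is to mimic the proof of \corref{c:coconnective part}, with $\QCoh$ and $*$-pushforward replaced by $\Dmod$ and $(\on{dR},*)$-pushforward, the point being to keep track of cohomological amplitudes carefully. I would first reduce both statements to the case where $\CY_2=S_2$ is an affine DG scheme. For (a) this is exactly \propref{p:bootstrap base change Dmod}; one checks along the way that the hypothesis $\CM_1\in\Dmod(\CY_1)^+$ passes to its $!$-pullback to $S_2\underset{\CY_2}\times\CY_1$, because $!$-pullback along schematic quasi-separated quasi-compact morphisms has finite cohomological amplitude (smooth-locally this reduces to the finiteness fact for morphisms of affine DG schemes quoted in the statement). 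For (b), I would pick a smooth atlas $h\colon S_2\to\CY_2$ with $S_2$ affine; then $h^!$ is conservative, continuous, of finite cohomological amplitude, and $h^!\circ\pi_{\on{dR},*}\simeq\pi'_{\on{dR},*}\circ(h')^!$ by base change for the schematic quasi-compact morphism $h$ (with $\pi'\colon\CY_1\underset{\CY_2}\times S_2\to S_2$), reducing us to $\CY_2=S_2$. In either case $\pi$ quasi-compact forces $\CY_1$ (resp.\ its base change) to be quasi-compact, hence to admit an affine atlas.

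So assume $\CY_2=S_2$ affine and $\CY_1$ quasi-compact, choose an affine atlas $f\colon Z\to\CY_1$, and let $Z^\bullet/\CY_1$ be its \v Cech nerve, whose terms $Z^{[i]}$ are quasi-separated and quasi-compact DG schemes (the diagonal of $\CY_1$ being schematic, quasi-separated and quasi-compact). By \eqref{e:Dmod via Cech} and \lemref{l:changing index},
$$\pi_{\on{dR},*}(\CM)\simeq\on{Tot}\bigl((\pi\circ f^\bullet)_{\on{dR},*}\circ(f^\bullet)^*_{\on{dR}}(\CM)\bigr),$$
where the face maps $f^i\colon Z^{[i]}\to\CY_1$ are smooth of relative dimension $D_i=(i+1)\cdot\on{dim.rel.}(Z/\CY_1)$ and the maps $\pi\circ f^i\colon Z^{[i]}\to S_2$ are schematic, quasi-separated and quasi-compact, so each $(\pi\circ f^i)_{\on{dR},*}$ is continuous (\secref{sss:representable}). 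The crucial point is that for $\CM\in\Dmod(\CY_1)^{\geq n}$ the right-hand cosimplicial object is \emph{termwise} bounded below uniformly in the cosimplicial degree: by \eqref{e:*! shift}, $(f^i)^*_{\on{dR}}=(f^i)^![-2D_i]$ raises coconnectivity by exactly $D_i$, so $(f^i)^*_{\on{dR}}(\CM)\in\Dmod(Z^{[i]})^{\geq n+D_i}$; and $(\pi\circ f^i)_{\on{dR},*}$, being $(\on{dR},*)$-pushforward along a morphism of relative dimension $D_i+\delta$ with $\delta:=\dim\CY_1<\infty$, is left t-exact up to the shift $D_i+\delta$, whence $(\pi\circ f^i)_{\on{dR},*}\circ(f^i)^*_{\on{dR}}(\CM)\in\Dmod(S_2)^{\geq n-\delta}$, independently of $i$. (The left t-exactness up to shift $=$ relative dimension of $(\on{dR},*)$-pushforward along a quasi-separated quasi-compact morphism of DG schemes is deduced from Kashiwara's lemma, a finite affine cover, and the quoted amplitude fact.)

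Given this, both parts follow as in \corref{c:coconnective part}. For (b): the totalization of a termwise-$\Dmod(S_2)^{\geq n-\delta}$ cosimplicial object lies in $\Dmod(S_2)^{\geq n-\delta}$, which gives the bound with $m:=\delta$ (uniform in $n$); and for each $k$ the map $\on{Tot}\to\on{Tot}_{\le p}$ is an isomorphism on $H^k$ once $p>k-n+\delta$, while $\on{Tot}_{\le p}$ is a finite limit of the continuous functors $(\pi\circ f^i)_{\on{dR},*}\circ(f^i)^*_{\on{dR}}$ and hence continuous; since the t-structure on $\Dmod(S_2)$ is compatible with filtered colimits, $\pi_{\on{dR},*}|_{\Dmod(\CY_1)^{\geq n}}$ commutes with filtered colimits. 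For (a): with $\CY_2=S_2$, $\CY'_2=S'_2$ affine and $\phi_2=f_2$, the functor $f_2^!$ has finite cohomological amplitude, so $H^k\bigl(f_2^!\,\pi_{\on{dR},*}(\CM_1)\bigr)$ depends only on $\on{Tot}_{\le p}$ for $p$ large; since $f_2^!$ commutes with the finite limit $\on{Tot}_{\le p}$, and termwise $f_2^!\circ(\pi\circ f^i)_{\on{dR},*}\circ(f^i)^*_{\on{dR}}\simeq(\pi'\circ f'^i)_{\on{dR},*}\circ(f'^i)^*_{\on{dR}}\circ\phi_1^!$ by base change for the schematic quasi-compact maps $\pi\circ f^i$ together with compatibility of $*_{\on{dR}}$- and $!$-pullbacks under base change along smooth morphisms (from \eqref{e:*! shift} and functoriality of $!$-pullback), while $\phi_1^!(\CM_1)\in\Dmod(\CY'_1)^+$, we conclude $f_2^!\,\pi_{\on{dR},*}(\CM_1)\simeq\pi'_{\on{dR},*}\,\phi_1^!(\CM_1)$.

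The main obstacle is the ``crucial point'' of the second paragraph — the uniform-in-$i$ lower bound on the cosimplicial terms. It is not a formality: both $(f^i)^*_{\on{dR}}$ and $(\pi\circ f^i)_{\on{dR},*}$ introduce cohomological shifts growing linearly in $i$, and the argument works only because these shifts are governed by the relative dimensions of the \v Cech nerve and cancel exactly. Getting this bookkeeping right — in particular establishing that $(\on{dR},*)$-pushforward along a quasi-separated quasi-compact morphism of DG schemes is left t-exact up to a shift equal to the relative dimension, with \emph{no} extra contribution from the number of charts in a \v Cech cover — is the technical heart of the proof.
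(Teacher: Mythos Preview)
Your approach is correct and matches what the paper intends (the paper simply says ``as in \corref{c:coconnective part}''). You correctly identify the extra wrinkle absent from the $\QCoh$ case: the \v Cech terms $(\pi\circ f^i)_{\on{dR},*}\circ(f^i)^*_{\on{dR}}(\CM)$ are not obviously uniformly bounded below, because both $(f^i)^*_{\on{dR}}$ and $(\pi\circ f^i)_{\on{dR},*}$ introduce shifts growing linearly in $i$, and one must check that these cancel.

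The one point where your justification is incomplete is precisely the bound you flag as ``the technical heart''. You assert that $(\pi\circ f^i)_{\on{dR},*}$ is left t-exact up to the shift $D_i+\delta$, to be deduced from ``Kashiwara's lemma, a finite affine cover, and the quoted amplitude fact''. Two remarks. First, $D_i+\delta=\dim Z^{[i]}$, not the relative dimension over $S_2$. Second, and more seriously, the devices you list (closed embedding into affine space over the base, or a \v Cech cover) bound the left-t-exactness shift of $g_{\on{dR},*}$ for a map of schemes $g$ by an \emph{embedding dimension} or by the number of charts, not by $\dim$ of the source or by the fiber dimension; the sharper bound for a general morphism of schemes is not elementary. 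Fortunately you do not need it. Factor $\pi\circ f^i=r\circ q_i$, where $q_i\colon Z^{[i]}\to Z$ is any face map (smooth of relative dimension $id$, with $d:=\on{dim.rel.}(Z/\CY_1)$) and $r:=\pi\circ f\colon Z\to S_2$ is fixed. For a smooth morphism $q$ of relative dimension $D$ between schemes one checks, locally via Kashiwara and the projection $\BA^{N+D}\to\BA^N$, that $q^*_{\on{dR}}$ has cohomological amplitude exactly $[D,D]$; by adjunction $(q_i)_{\on{dR},*}$ then sends $\Dmod^{\ge m}$ to $\Dmod^{\ge m-id}$. Combined with the fixed finite left-t-exactness shift $a$ of $r_{\on{dR},*}$, one obtains
\[
(\pi\circ f^i)_{\on{dR},*}\circ(f^i)^*_{\on{dR}}\bigl(\Dmod(\CY_1)^{\ge n}\bigr)\subset\Dmod(S_2)^{\ge n+(i+1)d-id-a}=\Dmod(S_2)^{\ge n+d-a},
\]
uniform in $i$. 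With this in hand, the rest of your argument for both (a) and (b) goes through verbatim.
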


\ssec{Projection formula for the $(\on{dR},*)$-pushforward}  \label{ss:proj formula Dmod}

\sssec{}  

In the situation of \secref{sss:general_push-f}, let $\CM_1\in \Dmod(\CY_1)$ and $\CM_2\in \Dmod(\CY_2)$  
be two objects. We claim that there is always a morphism in one direction
\begin{equation} \label{e:map for proj formula Dmod}
\CM_2\sotimes \pi_\dr(\CM_1)\to \pi_\dr(\pi^!(\CM_2)\sotimes \CM_1).
\end{equation}

Indeed, specifying such morphism amounts to a compatible family of maps
$$\CM_2\sotimes \pi_\dr(\CM_1)\to (\pi\circ g)_{\on{dR},*}\left(g^*_{\on{dR}}\left(\pi^!(\CM_2)\sotimes \CM_1\right)\right)$$
for $(S,g)\in (\on{DGSch}_{\on{aft}})_{/\CY_1,\on{smooth}}$. 

\medskip

The required map arises from the map
\begin{multline*}
\CM_2\sotimes \pi_\dr(\CM_1)\to \CM_2\sotimes (\pi\circ g)_\dr(g^*_{\on{dR}}(\CM_1))\simeq \\
\simeq (\pi\circ g)_{\on{dR},*}\left((\pi\circ g)^!(\CM_2)\sotimes g^*_{\on{dR}}(\CM_1)\right)
\simeq(\pi\circ g)_{\on{dR},*}\left(g^! (\pi^!(\CM_2))\sotimes g^*_{\on{dR}}(\CM_1)\right)\simeq \\
\simeq (\pi\circ g)_{\on{dR},*} \left(g^*_{\on{dR}}\left(\pi^!(\CM_2)\sotimes \CM_1\right)\right)
\end{multline*}
where the second arrow is furnished by \secref{sss:representable}, as the morphism $\pi\circ g$ is schematic 
and quasi-compact, and where the last arrow uses the isomorphism \eqref{e:* and ! ten}. 

\sssec{}

We give the following definitions:

\begin{defn} \label{defn:proj formula Dmod} \hfill

\smallskip

\noindent{\em(a)} The triple $(\CM_1,\CM_2,\pi)$ satisfies the projection formula
if the map \eqref{e:map for proj formula Dmod} is an isomorphism.

\smallskip

\noindent{\em(b)} The pair $(\CM_2,\pi)$ satisfies
the projection formula if \eqref{e:map for proj formula Dmod} is an isomorphism for any $\CM_1$.

\smallskip

\noindent{\em(c)} The pair $(\CM_1,\pi)$ satisfies
the projection formula if \eqref{e:map for proj formula Dmod} is an isomorphism for any $\CM_2$.

\smallskip

\noindent{\em(d)} The map $\pi$ satisfies
the projection formula if \eqref{e:map for proj formula Dmod} is an isomorphism for any $\CM_1$ and $\CM_2$.

\end{defn}

\medskip

We also give the following definition:

\begin{defn}  \label{defn:strong proj formula Dmod}  The morphism $\pi$ 
strongly satisfies the projection formula if 
it satisfies base change and for every
$S_2\in (\on{DGSch}^{\on{aff}}_{\on{aft}})_{/\CY_2}$, the morphism 
$$\pi_S:S_2\underset{\CY_2}\times \CY_1\to S_2$$ 
satisfies the projection formula. 
\end{defn}

It is easy to see that if $\pi$ strongly satisfies the projection formula, then it satisfies the projection formula.

\sssec{Examples}  \label{sss:ex proj formula} \hfill

\smallskip

\noindent(i) It is easy to see that if $\pi$ is schematic and quasi-compact, then $\pi$ strongly satisfies the projection formula. 

\medskip

\noindent(ii) In \thmref{t:rel_Dmods} we shall strengthen this to the assertion that any $\pi$ which is safe also strongly 
satisfies the projection formula. 

\medskip

\noindent(iii) In \corref{c:partial proj formula safe}, we will show that if $\pi$ is a morphism between QCA stacks,
and $\CM_1\in \Dmod(\CY_1)$ is safe, then $(\CM_1,\pi)$ satisfies the projection formula.

\medskip

\noindent(iv) Suppose that $ \pi$ is quasi-compact. Then for any $\CM_i\in \Dmod(\CY_i)^+$, the triple
$(\CM_1,\CM_2,\pi)$ satisfies the projection formula. This follows in the same way as in \corref{c:coconnective part}(c), using the fact that
for a quasi-compact algebraic stack $\CY$ the functor $\sotimes$ on $\Dmod(\CY)$ has a bounded cohomological amplitude.

\sssec{A counter-example}  \label{sss:cntr}

It is easy to produce an example of how the projection formula fails when
$\CM_2$ is not compact. E.g., take $\CY_1=B\BG_m$, $\CY_2=\on{pt}$,
$\CM_1=\underset{n\geq 0}\oplus\, k_{B\BG_m}[2n]$
and $\CM_2=V$, where $V$ is any infinite-dimensional vector space.
Computing the two sides of the projection formula via \lemref{l:de Rham almost cocont}, 
it is easy to check that the projection formula fails in
this case.\footnote{Doing this exercise makes it easier to read similar
but more lengthy computations below.}

\medskip

We shall now give an example of how the projection formula fails when $\CM_2$ is compact. 

\medskip

Take $\CY_1=\BA^1\times B\BG_m$ and $\CY_2=\BA^1$, with the morphism $\pi$ being the
projection on the first factor:
$$
\CD
\BA^1\times B\BG_m  @>{p_{\BA^1}\times \on{id}_{B\BG_m}}>>   B\BG_m  \\
@V{\pi}VV    @VV{p_{B\BG_m}}V   \\
\BA^1  @>{p_{\BA^1}}>>  \on{pt}.
\endCD
$$

\medskip

We take $\CM_2:=\ind^{\on{left}}_{\Dmod(\BA^1)}(\CO_{\BA^1})$ and
$$\CM_1:=(p_{\BA^1}\times \on{id}_{B\BG_m})^!\left(\underset{n\geq 0}\oplus\, k_{B\BG_m}[2n]\right).$$
We shall consider $\Dmod(\BA^1)$ in the "left" realization, and in particular, with the t-structure,
for which the functor $\oblv^{\on{left}}_{\Dmod(\BA^1)}$ is t-exact.

\medskip

We calculate both $\pi_\dr(\CM_1)$ and $\pi_\dr(\CM_1\sotimes \pi^!(\CM_2))$ using 
\lemref{l:de Rham almost cocont}. We have:
$$\pi_\dr(\CM_1)\simeq \underset{m}{\underset{\longleftarrow}{lim}}\, \, \,
\pi_\dr\circ (p_{\BA^1}\times \on{id}_{B\BG_m})^!\left(\underset{m\geq n\geq 0}\oplus\, k_{B\BG_m}[2n]\right)$$
and
\begin{multline*} 
\pi_\dr(\CM_1\sotimes \pi^!(\CM_2))\simeq \\
\simeq \underset{m}{\underset{\longleftarrow}{lim}}\, \,\, 
\pi_\dr\left((p_{\BA^1}\times \on{id}_{B\BG_m})^!(\underset{m\geq n\geq 0}\oplus\, k_{B\BG_m}[2n])\sotimes
\pi^!(\ind^{\on{left}}_{\Dmod(\BA^1)}(\CO_{\BA^1}))\right).
\end{multline*}

\medskip

By \propref{p:coconnective part Dmod}(a), for every $m$ we have
\begin{multline*}  
\pi_\dr\circ (p_{\BA^1}\times \on{id}_{B\BG_m})^!\left(\underset{m\geq n\geq 0}\oplus\, k_{B\BG_m}[2n]\right)
\simeq \\
\simeq (p_{\BA^1})^!\left(\Gamma_{\on{dR}}(B\BG_m,(\underset{m\geq n\geq 0}\oplus\, k_{B\BG_m}[2n]))\right)\simeq \\
\simeq (p_{\BA^1})^!\left(\underset{m\geq n\geq 0,l\geq 0}\oplus\, k[2(n-l)]\right)\simeq
\underset{m\geq n\geq 0,l\geq 0}\oplus\, \CO_{\BA^1}[2(n-l)].
\end{multline*}

In particular, the $0$th cohomology of $\pi_\dr(\CM_1)$ identifies with
$$\underset{m\geq 0}\Pi\, \CO_{\BA^1},$$
the countable product of copies of $\CO_{\BA^1}\in \Dmod^{\on{left}}(\BA^1)$. 

\medskip

Since $\ind^{\on{left}}_{\Dmod(\BA^1)}(\CO_{\BA^1})$ is flat as an $\CO_{\BA^1}$-module, we obtain that the $0$th cohomology
of $\pi_\dr(\CM_1)\sotimes \CM_2$ identifies with
$$\left(\underset{m\geq 0}\Pi\, \CO_{\BA^1}\right)\otimes \ind^{\on{left}}_{\Dmod(\BA^1)}(\CO_{\BA^1})$$
(we note that in the ``left" realization, the tensor product $\sotimes$ corresponds to the usual
tensor product $\otimes$ at the level of the underlying $\CO$-modules). 

\medskip

Note that the forgetful functor
$$\Gamma(\BA^1,-)\circ \oblv^{\on{left}}_{\Dmod(\BA^1)}:\Dmod(\BA^1)\to \Vect$$
commutes with limits, since it admits a left adjoint. Hence, we obtain that the $0$th cohomology
of 
$$\Gamma\left(\BA^1,\oblv^{\on{left}}_{\Dmod(\BA^1)}(\pi_\dr(\CM_1)\sotimes \CM_2)\right)$$
identifies with
$$\left(\underset{m\geq 0}\Pi\, k[t]\right)\underset{k[t]}\otimes k[t,\partial_t]\simeq \left(\underset{m\geq 0}\Pi\, k[t]\right)\otimes V,$$
where $V$ is a vector space such that $k[t,\partial_t]\simeq k[t]\otimes V$ as a $k[t]$-module. The key point is that
$V$ is infinite-dimensional. 

\medskip
 
By \secref{sss:ex proj formula}(iv),
\begin{multline*}
\pi_\dr\left((p_{\BA^1}\times \on{id}_{B\BG_m})^!(\underset{m\leq n\geq 0}\oplus\, k_{B\BG_m}[2n])\sotimes
\pi^!(\CM_2)\right)\simeq \\
\simeq (p_{\BA^1})^!(\underset{m\geq n\geq 0,l\geq 0}\oplus\, k[2(n-l)])\sotimes \ind_{\Dmod(\BA^1)}(\CO_{\BA^1})
\simeq \\
\simeq  \left(\underset{m\geq n\geq 0,l\geq 0}\oplus\, \CO_{\BA^1}[2(n-l)]\right)\otimes \ind_{\Dmod(\BA^1)}(\CO_{\BA^1}).
\end{multline*}

Hence, the $0$th cohomology of  
$$\Gamma\left(\BA^1,\oblv^{\on{left}}_{\Dmod(\BA^1)}(\pi_\dr(\CM_1\sotimes \pi^!(\CM_2)))\right)$$
identifies with
$$\underset{m\geq 0}\Pi\, k[t,\partial_t]\simeq \underset{m\geq 0}\Pi\, (k[t]\otimes V).$$

Finally, the canonical map
$$\left(\underset{m\geq 0}\Pi\, k[t]\right)\otimes V\to \underset{m\geq 0}\Pi\, (k[t]\otimes V)$$
is \emph{not} an isomorphism because $V$ is infinite-dimensional.

\ssec{Proofs of properties of the $(\on{dR},*)$-pushforward}

\sssec{}

First, we are now going to prove the following assertion, which has multiple consequences:

\begin{lem} \label{l:partial projection formula}
For a map of algebraic stacks $\pi:\CY_1\to \CY_2$, and $\CM_2\in \Dmod_{\on{coh}}(\CY_2)$
and any $\CM_1\in \Dmod(\CY_1)$, the map
$$\Gamma_{\on{dR}}(\CY_2,\CM_2\sotimes \pi_\dr(\CM_1))\to 
\Gamma_{\on{dR}}(\CY_2,\pi_\dr(\pi^!(\CM_2)\sotimes \CM_1)),$$
induced by \eqref{e:map for proj formula Dmod}, is an isomorphism.
\end{lem}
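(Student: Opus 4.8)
The plan is to identify both sides of the asserted map with one and the same inverse limit, and then to check that under this identification the map induced by \eqref{e:map for proj formula Dmod} becomes the canonical isomorphism.

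First I would rewrite the left-hand side using Verdier duality. Since $\CM_2\in \Dmod_{\on{coh}}(\CY_2)$ — this is the one place the coherence hypothesis enters — \lemref{l:Verdier_stacks} gives a natural isomorphism
$$\Gamma_{\on{dR}}(\CY_2,\CM_2\sotimes \CM')\simeq \CMaps_{\Dmod(\CY_2)}\bigl(\BD^{\on{Verdier}}_{\CY_2}(\CM_2),\CM'\bigr),\qquad \CM'\in \Dmod(\CY_2),$$
so the functor $\Gamma_{\on{dR}}(\CY_2,\CM_2\sotimes-)$ commutes with inverse limits. Feeding in the defining presentation \eqref{e:piDR*} of $\pi_\dr$ (which one may economize using \lemref{l:changing index}), the left-hand side becomes $\underset{(S,g)}{\underset{\longleftarrow}{lim}}\,\Gamma_{\on{dR}}\bigl(\CY_2,\CM_2\sotimes(\pi\circ g)_\dr(g^*_{\on{dR}}\CM_1)\bigr)$, the limit being over $(S,g)\in((\on{DGSch}_{\on{aft}})_{/\CY_1,\on{smooth}})^{\on{op}}$.

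Next I would analyze a single term. For fixed $(S,g)$ put $h:=\pi\circ g\colon S\to\CY_2$; as $S$ is a quasi-separated quasi-compact DG scheme mapping to an algebraic stack, $h$ is schematic, quasi-separated and quasi-compact, hence satisfies the projection formula \eqref{e:proj formula Dmod sch}. Using that together with the transitivity isomorphism \lemref{l:de Rham trans}, the identity $h^!=g^!\circ\pi^!$, and the formula \eqref{e:* and ! ten} for the smooth morphism $g$ (together with the symmetry of $\sotimes$), one rewrites the $(S,g)$-term as $\Gamma_{\on{dR}}\bigl(S,g^*_{\on{dR}}(\pi^!(\CM_2)\sotimes\CM_1)\bigr)$, naturally in $(S,g)$. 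Passing to the limit and invoking \eqref{e:DeRham} then identifies the left-hand side with $\Gamma_{\on{dR}}(\CY_1,\pi^!(\CM_2)\sotimes\CM_1)$, and a final application of transitivity \lemref{l:de Rham trans} turns this into $\Gamma_{\on{dR}}(\CY_2,\pi_\dr(\pi^!(\CM_2)\sotimes\CM_1))$, which is the right-hand side.

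The main obstacle — the only step that is not a routine rewrite — is verifying that the composite isomorphism produced this way really is the map induced by \eqref{e:map for proj formula Dmod}, and not some other isomorphism between the two sides. This is a compatibility check: the map \eqref{e:map for proj formula Dmod} is itself assembled, term by term over the same index category, out of exactly the canonical projections $\pi_\dr(\CM_1)\to h_\dr(g^*_{\on{dR}}\CM_1)$, the projection formula \eqref{e:proj formula Dmod sch} for the $h=\pi\circ g$, and the isomorphism \eqref{e:* and ! ten}; so after applying $\Gamma_{\on{dR}}(\CY_2,-)$ (which commutes with the inverse limits appearing on both sides) it should become precisely the termwise family of isomorphisms above. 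I expect this to come down to unwinding the definitions of \eqref{e:map for proj formula Dmod} and of the transitivity isomorphism of \lemref{l:de Rham trans}, and checking that the resulting diagrams commute over $((\on{DGSch}_{\on{aft}})_{/\CY_1,\on{smooth}})^{\on{op}}$.
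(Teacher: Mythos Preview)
Your proposal is correct and follows essentially the same route as the paper: use \lemref{l:Verdier_stacks} to pass the coherent $\CM_2$ inside the limit defining $\pi_\dr$, then for each schematic quasi-compact $h=\pi\circ g$ apply the projection formula \eqref{e:proj formula Dmod sch}, transitivity (\lemref{l:de Rham trans}), and \eqref{e:* and ! ten}, and finally recognize the resulting limit as $\Gamma_{\on{dR}}(\CY_1,\pi^!(\CM_2)\sotimes\CM_1)$ via \eqref{e:DeRham}. The paper does not spell out your final compatibility check (that the resulting isomorphism is the one induced by \eqref{e:map for proj formula Dmod}), but your observation that the map \eqref{e:map for proj formula Dmod} is built termwise from exactly the same ingredients makes this routine.
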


\begin{rem}
Note, however, that in the situation of the \lemref{l:partial projection formula},
the map \eqref{e:map for proj formula Dmod} itself does not have to
be an isomorphism, see example in \secref{sss:cntr} above.
\end{rem}

\begin{proof}[Proof of \lemref{l:partial projection formula}]

Note that the assumption that $\CM_2\in \Dmod_{\on{coh}}(\CY_2)$ implies that the functor 
$$\CM'_2\mapsto \Gamma_{\on{dR}}(\CY_2,\CM'_2\sotimes\CM_2):\Dmod(\CY_2)\to \Vect$$
commutes with limits. Indeed, this is because the above functor identifies with 
$$\CMaps_{\Dmod(\CY_2)}(\BD_{\CY_2}^{\on{Verdier}}(\CM_2),-),$$
by \lemref{l:Verdier_stacks}. 

\medskip

Applying the definition of $\pi_\dr$, we obtain that 
$\Gamma_{\on{dR}}\left(\CY_2,\pi_\dr(\CM_1)\sotimes \CM_2\right)$ identifies
with the limit over $(S,g)\in \on{DGSch}_{/\CY_1,\on{smooth}}$ of
$$\Gamma_{\on{dR}}\left(\CY_2,(\pi\circ g)_\dr(g^*_{\on{dR}}(\CM_1))\sotimes\CM_2\right).$$

\medskip

Since the morphism $\pi\circ g$ is schematic and quasi-compact, the projection formula \eqref{e:proj formula Dmod sch}
is applicable, and the latter expression can be rewritten as
$$\Gamma_{\on{dR}}\left(\CY_2,(\pi\circ g)_\dr\left(g^*_{\on{dR}}(\CM_1)\sotimes (\pi\circ g)^!(\CM_2)\right)\right),$$
and by \lemref{l:de Rham trans} further as
$$\Gamma_{\on{dR}}\left(S,g^*_{\on{dR}}(\CM_1)\sotimes (\pi\circ g)^!(\CM_2)\right)
\simeq \Gamma_{\on{dR}}\left(S,g^*_{\on{dR}}(\CM_1)\sotimes g^!(\pi^!(\CM_2))\right).$$
Applying the isomorphism of \eqref{e:* and ! ten}, we rewrite the latter as
$$\Gamma_{\on{dR}}\left(S,g^*_{\on{dR}}\left(\CM_1\sotimes \pi^!(\CM_2)\right)\right).$$

Now, the resulting limit over $(S,g)$ is isomorphic to 
$\Gamma_{\on{dR}}(\CY_1,\CM_1\sotimes \pi^!(\CM_2))$ by definition.

\end{proof}

Note that \lemref{l:partial projection formula} gives the following, somewhat more explicit 
characterization of the $(\on{dR},*)$-pushforward functor:

\begin{cor} \label{c:dR expl}
For $\pi:\CY_1\to \CY_2$, $\CM_1\in \Dmod(\CY_1)$ and $\CM_2\in \Dmod_{\on{coh}}(\CY_2)$,
we have a canonical isomorphism
$$\CMaps(\CM_2,\pi_\dr(\CM_1))\simeq \Gamma_{\on{dR}}(\CY_1,\pi^!(\BD^{\on{Verdier}}_{\CY_2}(\CM_2))\sotimes \CM_1).$$
\end{cor}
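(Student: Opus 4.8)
The plan is to deduce the statement by stringing together three facts established above: Verdier duality on algebraic stacks (\lemref{l:Verdier_stacks}), the ``half'' projection formula after taking de Rham cohomology (\lemref{l:partial projection formula}), and the transitivity of de Rham cohomology (\lemref{l:de Rham trans}). The point is that all the genuine content has already been absorbed into \lemref{l:partial projection formula}, so what remains is purely formal.

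First I would set $\CN:=\BD^{\on{Verdier}}_{\CY_2}(\CM_2)$. Since $\BD^{\on{Verdier}}_{\CY_2}$ is an involutive anti-equivalence of $\Dmod_{\on{coh}}(\CY_2)$ (see \secref{sss:Verdier_on_stacks}), we have $\CN\in \Dmod_{\on{coh}}(\CY_2)$ and a canonical isomorphism $\BD^{\on{Verdier}}_{\CY_2}(\CN)\simeq \CM_2$. Applying \lemref{l:Verdier_stacks} with the coherent object $\CN$ and the (arbitrary) object $\pi_\dr(\CM_1)\in \Dmod(\CY_2)$ yields a canonical isomorphism
\[
\CMaps_{\Dmod(\CY_2)}(\CM_2,\pi_\dr(\CM_1))\simeq \CMaps_{\Dmod(\CY_2)}(\BD^{\on{Verdier}}_{\CY_2}(\CN),\pi_\dr(\CM_1))\simeq \Gamma_{\on{dR}}(\CY_2,\CN\sotimes \pi_\dr(\CM_1)).
\]

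Next I would apply \lemref{l:partial projection formula} to the map $\pi$, the coherent object $\CN\in \Dmod_{\on{coh}}(\CY_2)$, and the object $\CM_1\in \Dmod(\CY_1)$: the map induced by \eqref{e:map for proj formula Dmod} identifies $\Gamma_{\on{dR}}(\CY_2,\CN\sotimes \pi_\dr(\CM_1))$ with $\Gamma_{\on{dR}}\bigl(\CY_2,\pi_\dr(\pi^!(\CN)\sotimes \CM_1)\bigr)$. Finally, \lemref{l:de Rham trans} supplies $\Gamma_{\on{dR}}(\CY_2,\pi_\dr(-))\simeq \Gamma_{\on{dR}}(\CY_1,-)$, so the last expression becomes $\Gamma_{\on{dR}}(\CY_1,\pi^!(\CN)\sotimes \CM_1)=\Gamma_{\on{dR}}(\CY_1,\pi^!(\BD^{\on{Verdier}}_{\CY_2}(\CM_2))\sotimes \CM_1)$. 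Composing the three isomorphisms gives the claim.

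There is no genuinely hard step here: the substance was already in \lemref{l:partial projection formula}, whose proof reduces the assertion, via the definition of $\pi_\dr$ as a limit over the smooth site, to the case of a schematic quasi-compact morphism, where the honest projection formula \eqref{e:proj formula Dmod sch} and the compatibility \eqref{e:* and ! ten} apply. The only points requiring (minor) care in the present corollary are that one must feed \lemref{l:partial projection formula} the \emph{dual} object $\BD^{\on{Verdier}}_{\CY_2}(\CM_2)$ rather than $\CM_2$ itself (legitimate because Verdier duality preserves coherence on stacks), and that each of the three isomorphisms is natural in $\CM_1$ and $\CM_2$, so that the composite identification is canonical.
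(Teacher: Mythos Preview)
Your proof is correct and follows exactly the same route as the paper: apply \lemref{l:Verdier_stacks} (with the coherent object $\BD^{\on{Verdier}}_{\CY_2}(\CM_2)$), then \lemref{l:partial projection formula}, then \lemref{l:de Rham trans}, in that order. The only cosmetic difference is that you name the intermediate object $\CN$ explicitly, whereas the paper carries $\BD^{\on{Verdier}}_{\CY_2}(\CM_2)$ through the computation.
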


\begin{proof}

Using \lemref{l:Verdier_stacks}, we rewrite the left-hand side as
$$\Gamma_{\on{dR}}(\CY_2,\BD^{\on{Verdier}}_{\CY_2}(\CM_2)\sotimes \pi_\dr(\CM_1)),$$
and further, using \lemref{l:partial projection formula}, as
$$\Gamma_{\on{dR}}(\CY_2,\pi_\dr(\pi^!(\BD^{\on{Verdier}}_{\CY_2}(\CM_2))\sotimes \CM_1)),$$
and finally as
$$\Gamma_{\on{dR}}(\CY_1,\pi^!(\BD^{\on{Verdier}}_{\CY_2}(\CM_2))\sotimes \CM_1)$$
using \lemref{l:de Rham trans}.

\end{proof}

\sssec{Proof of \propref{p:de Rham trans}}  \label{sss:proof of prop de Rham trans}

It is easy to see that for an algebraic stack $\CY$, the category $\Dmod(\CY)$ is generated 
by its subcategory $\Dmod_{\on{coh}}(\CY)$. Hence, using \lemref{l:Verdier_stacks}, we obtain that 
it is enough to show that for $\CM_1\in \Dmod(\CY_1)$
and $\CM_3\in \Dmod_{\on{coh}}(\CY_3)$, the map
$$\Gamma_{\on{dR}}\left(\CY_3,\CM_3\sotimes (\phi_\dr\circ \pi_\dr(\CM_1))\right)\to
\Gamma_{\on{dR}}\left(\CY_3,\CM_3\sotimes (\phi\circ \pi)_\dr(\CM_1)\right)$$
is an isomorphism. 

\medskip

Applying Lemmas \ref{l:partial projection formula} and \ref{l:de Rham trans}, we rewrite
\begin{multline} \label{e:Rham trans one}
\Gamma_{\on{dR}}\left(\CY_3,\CM_3\sotimes (\phi_\dr\circ \pi_\dr(\CM_1))\right)
\simeq 
\Gamma_{\on{dR}}\left(\CY_3,\phi_\dr(\phi^!(\CM_3)\sotimes \pi_\dr(\CM_1))\right)\simeq \\
\simeq \Gamma_{\on{dR}}\left(\CY_2,\phi^!(\CM_3)\sotimes \pi_\dr(\CM_1)\right)
\end{multline}
and 
\begin{multline} \label{e:Rham trans two}
\Gamma_{\on{dR}}\left(\CY_3,\CM_3\sotimes (\phi\circ \pi)_\dr(\CM_1)\right)\simeq
\Gamma_{\on{dR}}\left(\CY_3,(\phi\circ \pi)_\dr((\phi\circ \pi)^!(\CM_3)\sotimes \CM_1)\right)\simeq \\
\simeq \Gamma_{\on{dR}}\left(\CY_1,(\phi\circ \pi)^!(\CM_3)\sotimes \CM_1\right).
\end{multline}

Since $\pi$ is schematic and quasi-compact, the projection formula is applicable, and we obtain
\begin{equation} \label{e:proj formula applied}
\Gamma_{\on{dR}}\left(\CY_2,\phi^!(\CM_3)\sotimes \pi_\dr(\CM_1)\right)\simeq
\Gamma_{\on{dR}}\left(\CY_2,\pi_\dr(\pi^!\circ \phi^!(\CM_3)\sotimes \CM_1)\right).
\end{equation}

Hence, using \lemref{l:de Rham trans}, we obtain that the expression in \eqref{e:Rham trans one} is 
also isomorphic to
$$\Gamma_{\on{dR}}\left(\CY_1,(\phi\circ \pi)^!(\CM_3)\sotimes \CM_1\right),$$
as required.

\qed

\sssec{}  \label{sss:more trans}

Note that the only non-tautological point of the proof of \propref{p:de Rham trans} is the isomorphism
\eqref{e:proj formula applied}.

\medskip

Hence, more generally, we obtain that the map \eqref{e:de Rham trans} is an isomorphism in 
the following situations:

\medskip

\noindent(i) When $\pi$ satisfies the projection formula.

\medskip

\noindent(ii) When $\phi^!$ sends $\Dmod_{\on{coh}}(\CY_3)$ to $\Dmod_{\on{coh}}(\CY_2)$
(this is due to \lemref{l:partial projection formula}). This happens, e.g., when $\phi$ is smooth.

\medskip

\noindent(iii) When $\CM_1\in \Dmod(\CY_1)^+$ (this is due to \secref{sss:ex proj formula}(iv)).

\sssec{}   \label{sss:trans contr}

The natural transformation \eqref{e:de Rham trans} fails to be an isomorphism in the following example.

\medskip

We take $\CY_1=B\BG_m$, $\CY_2=\on{pt}$ and $\CY_3=\BA^1$, where $\phi$ is the inclusion of 
$0$ into $\BA^1$. We take $\CM_1\in \Dmod(B\BG_m)$ equal to $\underset{n\geq 0}\oplus\, k_{B\BG_m}[2n]$. We claim that
$$\CM_3:=(\phi\circ \pi)_\dr(\CM_1)\in \Dmod(\BA^1)$$
is \emph{not} supported at $0$. Indeed, using the fact that the functors 
$$\Gamma(\BA^1,-):\QCoh(\BA^1)\to \Vect \text{ and } \oblv^{\on{left}}_{\Dmod(\BA^1)}:\Dmod(\BA^1)\to \QCoh(\BA^1)$$
commute with limits, we calculate $\Gamma(\BA^1,\oblv^{\on{left}}_{\Dmod(\BA^1)}(\CM_3))$ via
\lemref{l:de Rham almost cocont}. 

\medskip

Note that 
$$(\phi\circ \pi)_\dr(k_{B\BG_m})\simeq \underset{m\geq 0}\oplus\, \delta[-2m],$$
where $\delta$ is the $\delta$-function at $0\in \BA^1$. We obtain that 
$H^0\left(\Gamma(\BA^1,\oblv^{\on{left}}_{\Dmod(\BA^1)}(\CM_3))\right)$ is the \emph{product} of $\BN$-many copies
of $\Gamma(\BA^1,\oblv^{\on{left}}_{\Dmod(\BA^1)}(\delta))$.
In particular, the generator $t\in \Gamma(\BA^1,\CO_{\BA_1})$ acts on it
non-nilpotently. 

\sssec{Proof of \lemref{l:changing index}}   \label{sss:proof of changing index} 

As in the proof of \propref{p:de Rham trans}, it suffices to show that for 
$\CM_2\in \Dmod_{\on{coh}}(\CY_2)$, the map
$$\Gamma_{\on{dR}}(\CY_2,\CM_2\sotimes \pi_\dr(\CM_1))\to
\Gamma_{\on{dR}}\left(\CY_2,\CM_2\sotimes \left(
\underset{a\in A^{\on{op}}}{\underset{\longleftarrow}{lim}}\, (\pi\circ g_\alpha)_\dr\circ (g_\alpha)_{\on{dR}}^*(\CM_1)\right)\right)$$
is an isomorphism.

\medskip

As in the proof of \lemref{l:partial projection formula}, we have:
\begin{multline*}
\Gamma_{\on{dR}}\left(\CY_2,\CM_2\sotimes \left(
\underset{a\in A^{\on{op}}}{\underset{\longleftarrow}{lim}}\, (\pi\circ g_\alpha)_\dr\circ (g_\alpha)_{\on{dR}}^*(\CM_1)\right)\right)
\simeq \\
\simeq \underset{a\in A^{\on{op}}}{\underset{\longleftarrow}{lim}}\, 
\Gamma_{\on{dR}}\left(S_\alpha,(g_\alpha)^*_{\on{dR}}(\pi^!(\CM_2)\sotimes \CM_1)\right)\simeq \\
\simeq \underset{a\in A^{\on{op}}}{\underset{\longleftarrow}{lim}}\, 
\CMaps_{\Dmod(S_\alpha)}\left(k_{S_\alpha},(g_\alpha)^*_{\on{dR}}(\pi^!(\CM_2)\sotimes \CM_1)\right)
\end{multline*}

However, by the assumption on $A$, the latter expression is isomorphic to
$$\CMaps_{\Dmod(\CY_1)}(k_{\CY_1},\pi^!(\CM_2)\sotimes \CM_1)\simeq \Gamma_{\on{dR}}(\CY_1,\pi^!(\CM_2)\sotimes \CM_1),$$
which is isomorphic to
$$\Gamma_{\on{dR}}(\CY_2,\CM_2\sotimes \pi_\dr(\CM_1)),$$
by Lemmas \ref{l:partial projection formula} and \ref{l:de Rham trans}

\qed

\ssec{Proof of \propref{p:de Rham and ind}}  \label{ss:proof of de Rham and ind}

\sssec{}

First, we are going to construct a map in one direction:
\begin{equation} \label{e:de Rham and ind}
\Gamma_{\on{dR}}(\CY,\ind_{\Dmod(\CY)}(\CF))\to \Gamma^{\IndCoh}(\CY,\CF).
\end{equation}

By definition, the left-hand side and the right-hand side are the limits over
$$(S,g)\in (\on{DGSch}_{/\CY,\on{smooth}})^{\on{op}}$$ of
$$\Gamma_{\on{dR}}(S,g^*_{\on{dR}}\circ \ind_{\Dmod(\CY)}(\CF)) \text{ and }
\Gamma^{\IndCoh}(S,g^{\IndCoh,*}(\CF)),$$
respectively. 

\medskip

We rewrite
$$\Gamma^{\IndCoh}(S,g^{\IndCoh,*}(\CF))\simeq \Gamma_{\on{dR}}(S,\ind_{\Dmod(S)}\circ g^{\IndCoh,*}(\CF)).$$

We claim that there is a canonical map
$$g^*_{\on{dR}}\circ \ind_{\Dmod(\CY)}(\CF)\to \ind_{\Dmod(S)}\circ g^{\IndCoh,*}(\CF)$$
that functorially depends on $(S,g)$. The map in question arises by the $(g^*_{\on{dR}},g_\dr)$ adjunction from the 
isomorphism of \propref{p:DR of induced stack}.

\medskip

Thus, we obtain a compatible system of maps
\begin{equation} \label{e:de Rham and ind terms}
\Gamma_{\on{dR}}(S,g^*_{\on{dR}}\circ \ind_{\Dmod(\CY)}(\CF)) \to
\Gamma^{\IndCoh}(S,g^{\IndCoh,*}(\CF)),
\end{equation}
giving rise to the desired map \eqref{e:de Rham and ind}.

\medskip

Note, however, that the individual maps in \eqref{e:de Rham and ind terms} are \emph{not} isomorhisms.

\sssec{}  \label{sss:com diag}

The following property of the map \eqref{e:de Rham and ind} follows from the construction. Let
$\pi:\wt\CY\to \CY$ be a schematic and quasi-compact map. 

\medskip

Then for $\wt\CF\in \IndCoh(\wt\CY)$
the following diagram commutes:
$$
\CD
\Gamma_{\on{dR}}(\wt\CY,\ind_{\Dmod(\wt\CY)}(\wt\CF))   @>{\text{\eqref{e:de Rham and ind}}}>>   \Gamma^{\IndCoh}(\wt\CY,\wt\CF)   \\
@V{\text{\lemref{l:de Rham trans}}}V{\sim}V  \\
\Gamma_{\on{dR}}(\CY,\pi_\dr\circ \ind_{\Dmod(\wt\CY)}(\wt\CF))  & & @VV{\sim}V  \\
@V{\text{\propref{p:DR of induced stack}}}V{\sim}V     \\
\Gamma_{\on{dR}}(\CY,\ind_{\Dmod(\CY)}\circ \pi^{\IndCoh}_*(\wt\CF)) @>{\text{\eqref{e:de Rham and ind}}}>>  
\Gamma^{\IndCoh}(\CY,\pi^{\IndCoh}_*(\wt\CF)).
\endCD
$$

\sssec{Two reduction steps}

We note that for $\CF\in \IndCoh(\CY)$, the maps
$$\Gamma_{\on{dR}}(\CY,\ind_{\Dmod(\CY)}(\CF))\to
\underset{n}{\underset{\longleftarrow}{lim}}\, \Gamma_{\on{dR}}(\CY,\ind_{\Dmod(\CY)}(\tau^{\geq -n}(\CF)))$$
and
$$\Gamma^{\IndCoh}(\CY,\CF)\to 
\underset{n}{\underset{\longleftarrow}{lim}}\, \Gamma^{\IndCoh}(\CY,\tau^{\geq -n}(\CF))$$
are both isomorphisms. 

\medskip

Another way to phrase this is that both functors are right Kan extensions of their restrictions to
$\IndCoh(\CY)^+$. 

\medskip

Hence, in order to show that \eqref{e:de Rham and ind} is an isomorphism, it is enough to
do so for $\CF\in \IndCoh(\CY)^+$.  Passing to a Zariski cover, we may assume that $\CY$
is quasi-compact.

\sssec{}

Choose a smooth cover $g:Z\to \CY$, where $Z\in \dgSch_{\on{aft}}$, and consider its 
\v{C}ech nerve $Z^\bullet/\CY$. Let $g^i$ denote the corresponding map $Z^i/\CY\to \CY$.

\medskip

Consider the resulting map
$$\CF\to \on{Tot}\left((g^i)^{\IndCoh}_*\circ (g^i)^{\IndCoh,*}(\CF)\right).$$

It is an isomorphism in $\IndCoh(\CY)$, because the terms are uniformly bounded below,
and the corresponding map
$$\Psi_\CY(\CF)\to \Psi_\CY\left(\on{Tot}\left((g^i)^{\IndCoh}_*\circ (g^i)^{\IndCoh,*}(\CF)\right)\right)\simeq 
\on{Tot}\left((g^i)_*\circ (g^i)^*(\Psi_\CY(\CF))\right)$$
is an isomorphism in $\QCoh(\CY)$. 

\medskip

Since $\Gamma^{\IndCoh}(\CY-)\simeq \Gamma(\CY,-)\circ \Psi_\CY$, the map
\begin{multline*}
\Gamma^{\IndCoh}(\CY,\CF)\to 
\on{Tot}\left(\Gamma^{\IndCoh}(\CY,(g^i)^{\IndCoh}_*\circ (g^i)^{\IndCoh,*}(\CF))\right)\simeq \\
\simeq \on{Tot}\left(\Gamma^{\IndCoh}(Z^i/\CY,(g^i)^{\IndCoh,*}(\CF))\right)
\end{multline*}
is also an isomorphism.

\sssec{}

We claim that the map 
\begin{multline*}
\ind_{\Dmod(\CY)}(\CF)\to \on{Tot}\left(\ind_{\Dmod(\CY)}\circ (g^i)^{\IndCoh}_*\circ (g^i)^{\IndCoh,*}(\CF)\right)
\overset{\text{\propref{p:DR of induced stack}}}\simeq \\
\simeq \on{Tot}\left((g^i)_\dr\circ \ind_{\Dmod(Z^i/\CY)}\circ (g^i)^{\IndCoh,*}(\CF)\right)
\end{multline*}
is an isomorphism. 

\medskip

This is obtained as in \corref{c:coconnective part}(c) using the fact that 
the functor $\ind_{\Dmod(\CY)}$ is of bounded cohomological amplitude.

\medskip

Note also that the functor $\Gamma_{\on{dR}}(\CY,-)$ commutes with limits. Indeed, the functor in
question is given by
$\CMaps_{\Dmod(\CY)}(k_\CY,-)$. 

\medskip

Hence, we obtain that the natural map
\begin{multline*}
\Gamma_{\on{dR}}(\CY,\ind_{\Dmod(\CY)}(\CF))\to \on{Tot}\left(
\Gamma_{\on{dR}}\left(\CY,(g^i)_\dr\circ \ind_{\Dmod(Z^i/\CY)}\circ (g^i)^{\IndCoh,*}(\CF)\right)\right)\simeq\\
\overset{\text{\lemref{l:de Rham trans}}}\simeq 
\on{Tot}\left(\Gamma_{\on{dR}}\left(Z^i/\CY,\ind_{\Dmod(Z^i/\CY)}\circ (g^i)^{\IndCoh,*}(\CF)\right)\right)
\end{multline*}
is an isomorphism.

\sssec{}

Now, it follows from \secref{sss:com diag} that the map in \eqref{e:de Rham and ind} fits into the commutative diagram
$$
\CD
\Gamma_{\on{dR}}(\CY,\ind_{\Dmod(\CY)}(\CF))  @>>>  
\on{Tot}\left(\Gamma_{\on{dR}}\left(Z^i/\CY,\ind_{\Dmod(Z^i/\CY)}\circ (g^i)^{\IndCoh,*}(\CF)\right)\right)  \\
@VVV   @VVV   \\
\Gamma^{\IndCoh}(\CY,\CF)  @>>>  
\on{Tot}\left(\Gamma^{\IndCoh}(Z^i/\CY,(g^i)^{\IndCoh,*}(\CF))\right),
\endCD
$$
where the right vertical arrow is a co-simplicial isomorphism coming from
$$\Gamma_{\on{dR}}\left(Z^i/\CY,\ind_{\Dmod(Z^i/\CY)}(-)\right)\simeq
\Gamma^{\IndCoh}(Z^i/\CY,-).$$

This implies that the left vertical arrow is an isomorphism, as required.

\qed

\section{Compact generation of $\on{D-mod}(\CY)$}  \label{s:2Dmods} 

In this section we will finally prove the result that caused out to write this paper: that for a QCA
algebraic stack $\CY$, the category $\Dmod(\CY)$ is compactly generated. After all the preparations
we have made, the proof will be extremely short. In Sect. \ref{ss:Somecorollaries} we shall establish
some additional favorable properties of the category $\Dmod(\CY)$. 

\medskip

Throughout this section, we will assume that unless specified otherwise, 
all our (pre)stacks are QCA algebraic stacks in the sense of 
Definition~\ref{d:QCA} (in particular, they are quasi-compact).

\ssec{Proof of compact generation}
\begin{thm} \label{t:Dmods}
The category $\on{D-mod}(\CY)$ is compactly generated. More precisely, objects of $\on{D-mod}(\CY)$ of the form
\begin{equation}   \label{e:compacts_in_D(Y)}
\ind_{\on{D-mod}(\CY)} (\CF), \;\quad \CF\in\Coh(\CY)
\end{equation}
are compact and generate $\on{D-mod}(\CY)$.
\end{thm}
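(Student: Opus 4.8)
The plan is to deduce \thmref{t:Dmods} from the results already assembled, in particular \thmref{IndCoh} (compact generation of $\IndCoh(\CY)$ with compacts $\Coh(\CY)$), \propref{p:de Rham and ind} (the identification $\Gamma_{\on{dR}}(\CY,\ind_{\Dmod(\CY)}(\CF))\simeq \Gamma^{\IndCoh}(\CY,\CF)$), \thmref{main} (continuity of $\Gamma(\CY,-)$, equivalently $\Gamma^{\IndCoh}(\CY,-)$), and the adjunction $(\ind_{\Dmod(\CY)},\oblv_{\Dmod(\CY)})$ from \lemref{l:constr of induction} together with the conservativity of $\oblv_{\Dmod(\CY)}$.

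First I would prove that each object $\ind_{\Dmod(\CY)}(\CF)$ with $\CF\in\Coh(\CY)$ is compact. By the $(\ind_{\Dmod(\CY)},\oblv_{\Dmod(\CY)})$ adjunction we have, for $\CM\in\Dmod(\CY)$,
$$\CMaps_{\Dmod(\CY)}(\ind_{\Dmod(\CY)}(\CF),\CM)\simeq \CMaps_{\IndCoh(\CY)}(\CF,\oblv_{\Dmod(\CY)}(\CM)).$$
Since $\CF\in\Coh(\CY)=\IndCoh(\CY)^c$ by \thmref{IndCoh}, the right-hand side commutes with colimits in $\oblv_{\Dmod(\CY)}(\CM)$, and $\oblv_{\Dmod(\CY)}$ is continuous, so the composite commutes with colimits in $\CM$. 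Hence $\ind_{\Dmod(\CY)}(\CF)$ is compact. (Alternatively, one could combine \propref{p:de Rham and ind} with \thmref{main} to see directly that $\Gamma_{\on{dR}}(\CY,\ind_{\Dmod(\CY)}(\CF))$ is continuous, which handles $\CF=\omega_\CY$, and then bootstrap; but the adjunction argument is cleaner and covers all of $\Coh(\CY)$ at once.)

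Next I would show these objects generate. Suppose $\CM\in\Dmod(\CY)$ satisfies $\CMaps_{\Dmod(\CY)}(\ind_{\Dmod(\CY)}(\CF),\CM)=0$ for all $\CF\in\Coh(\CY)$; we must show $\CM=0$. By the adjunction this says $\CMaps_{\IndCoh(\CY)}(\CF,\oblv_{\Dmod(\CY)}(\CM))=0$ for all $\CF\in\Coh(\CY)$. Since $\Coh(\CY)$ compactly generates $\IndCoh(\CY)$ by \thmref{IndCoh}, this forces $\oblv_{\Dmod(\CY)}(\CM)=0$, and since $\oblv_{\Dmod(\CY)}$ is conservative (see \secref{sss:oblv stacks}), we get $\CM=0$. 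Finally, since the objects \eqref{e:compacts_in_D(Y)} are compact and generate, and $\Dmod(\CY)$ is cocomplete, the category is compactly generated by them; equivalently $\Dmod(\CY)\simeq\on{Ind}(\Dmod(\CY)^c)$ and these objects form a set of compact generators.

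I do not expect a serious obstacle here: every ingredient has been established earlier in the excerpt, and the argument is a formal consequence of compact generation of $\IndCoh(\CY)$ transported across the adjoint pair $(\ind_{\Dmod(\CY)},\oblv_{\Dmod(\CY)})$ with $\oblv_{\Dmod(\CY)}$ continuous and conservative. The only point requiring a little care is making sure one is invoking \thmref{IndCoh} in the precise form ``$\Coh(\CY)$ is a set of compact generators of $\IndCoh(\CY)$'' — which is exactly its statement — rather than merely that $\IndCoh(\CY)$ is compactly generated by some unspecified objects; with that, both the compactness and the generation steps go through verbatim.
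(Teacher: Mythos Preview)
Your proof is correct and matches the paper's own argument essentially verbatim: compactness via the fact that $\ind_{\Dmod(\CY)}$ is left adjoint to a continuous functor (so sends compacts to compacts), and generation via conservativity of $\oblv_{\Dmod(\CY)}$ together with the compact generation of $\IndCoh(\CY)$ by $\Coh(\CY)$. The only cosmetic difference is that the paper cites \propref{coh is compact} and \propref{p:coh_generates} separately where you invoke their packaged form \thmref{IndCoh}.
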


\begin{proof}
%
(i) By \propref{coh is compact}, the objects of $\Coh(\CY)$ are compact in 
$\IndCoh(\CY)$.\footnote{Recall that the proof of this fact is based on formula~\eqref{e:local_Hom} 
and \thmref{main}.} Since $\ind_{\on{D-mod}(\CY)}$ is the left adjoint of a functor that commutes with colimits,
it sends compact objects to compact ones. So objects of the form \eqref{e:compacts_in_D(Y)} are compact.

\medskip

(ii) By Proposition~ \ref{p:coh_generates}, $\Coh(\CY)$  generates $\IndCoh(\CY)$. So
it remains to show that the essential image of $\ind_{\on{D-mod}(\CY)}$ generates
$\on{D-mod}(\CY)$. This follows from the fact that the functor $\oblv_{\on{D-mod}(\CY)}$
is conservative.
\end{proof}

\begin{rem}
Note that, unlike the case of DG schemes, the subcategory $$\Dmod(\CY)^c\subset \Dmod(\CY)$$ is
\emph{not} preserved by the truncation functors. We note that this is also the case for the category
$\QCoh(-)$ on non-regular schemes. By contrast, $\IndCoh(-)^c$ on schemes and QCA algebraic 
stacks is compatible with the t-structure.
\end{rem}

\ssec{Variant of the proof of Theorem~\ref{t:Dmods}}   \label{ss:variant}

For the reader who prefers to avoid the (potentially unfamiliar) category $\IndCoh(\CY)$,
below we give an alternative argument, which does not use $\IndCoh(\CY)$ explicitly. 
Since the assertion is about categorical properties of $\Dmod(\CY)$,
we may assume that $\CY$ is a classical stack (rather than a DG stack). \footnote{The same proof
is applicable when $\CY$ is an eventually coconnective QCA stack.}

\sssec{}

Recall the pair of adjoint functors
$${}'\ind_{\Dmod(\CY)}:\QCoh(\CY)\rightleftarrows \Dmod(\CY):{}'\oblv_{\Dmod(\CY)},$$
see \secref{sss:left induction stacks}, and recall that 
${}'\oblv_{\Dmod(\CY)}$ is conservative. 

\medskip

Hence, by \corref{c:Coh_generates_QCoh}, in order to prove \thmref{t:Dmods}, it is sufficient to show that
the functor ${}'\ind_{\Dmod(\CY)}$ sends $\Coh(\CY)\subset \QCoh(\CY)$ to $\Dmod(\CY)^c$.

\medskip

I.e., we need to show that for $\CF\in\Coh(\CY)$, the functor $\Dmod(\CY)\to\Vect$ defined by
$$\CM\mapsto \CMaps_{\Dmod(\CY)}({}'\ind_{\on{D-mod}(\CY)}(\CF ),\CM),\quad\quad \CM\in \on{D-mod}(\CY)$$
is continuous. 

\medskip

The idea of the proof is the same as that of \propref{coh is compact}: namely, we represent the above functor as a composition of a continuous functor
\begin{equation} \label{e:inHom Dmod}
\Dmod(\CY)\to \QCoh(\CY), \quad \CM\mapsto \underline\Hom_{\QCoh(\CY)}(\CF,{}'\oblv_{\Dmod(\CY)}(\CM))
\end{equation}
and the functor $\Gamma(\CY,-): \QCoh(\CY)\to\Vect$, which is also
continuous by \thmref{main}.


\medskip

The functor \eqref{e:inHom Dmod} is the functor of ``internal Hom" from a coherent sheaf to a D-module.
The content of the proof is to show that the latter is well-defined and has the expected properties. 

\sssec{}

We rewrite the expression $\CMaps_{\Dmod(\CY)}({}'\ind_{\on{D-mod}(\CY)}(\CF ),\CM)$ as 
$$\CMaps_{\QCoh(\CY)}(\CF,{}'\oblv_{\Dmod(\CY)}(\CM)).$$

\medskip

We introduce the object $$\underline\Hom'_{\QCoh(\CY)}(\CF,{}'\oblv_{\Dmod(\CY)}(\CM))\in \QCoh(\CY)$$
as follows.

\medskip

Let $\on{Sch}^{\on{aff}}_{/\CY,\on{smooth}}$ denote the full subcategory of the category of affine schemes
over $\CY$, where we restrict objects $(S\in \on{Sch}^{\on{aff}}_{\on{aft}},g:S\to \CY)$ to those for which $g$ is smooth.
We restrict $1$-morphisms to those $f:S'\to S$ for which $f$ is smooth.

\medskip

For $$(S,g)\in (\on{Sch}^{\on{aff}}_{/\CY,\on{smooth}})^{\on{op}},$$
we set 
\begin{multline}   \label{e:inthom}
\Gamma\left(S,g^*(\underline\Hom'_{\QCoh(\CY)}(\CF,{}'\oblv_{\Dmod(\CY)}(\CM)))\right):=\\
=\CMaps_{\QCoh(S)}\left(g^!(\CF),{}'\oblv_{\Dmod(S)}(g^!(\CM))\right).
\end{multline}
Here $g^!$ is well-defined as a functor $\QCoh(\CY)\to \QCoh(S)$ since $g$ is smooth.

\sssec{}

We claim:

\begin{lem} \label{l:inhom well defined}
For $f:S'\to S$ in $\on{Sch}^{\on{aff}}_{/\CY,\on{smooth}}$, the natural map
$$f^*\left(g^*(\underline\Hom'_{\QCoh(\CY)}(\CF,{}'\oblv_{\Dmod(\CY)}(\CM)))\right)\to
(g\circ f)^*\left(\underline\Hom'_{\QCoh(\CY)}(\CF,{}'\oblv_{\Dmod(\CY)}(\CM))\right)$$
is an isomorphism. 
\end{lem}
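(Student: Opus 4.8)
The statement to prove is \lemref{l:inhom well defined}: that the presheaf defined by \eqref{e:inthom} is genuinely an object of $\QCoh(\CY)$, i.e.\ the transition maps are isomorphisms. My plan is to reduce the assertion to a base-change statement for $'\oblv_{\Dmod(-)}$ along smooth morphisms of affine schemes, and then to invoke the fact that $g^!(\CF)$ is a \emph{compact} object of $\QCoh(S)$ (since $\CF\in\Coh(\CY)$ and $g$ is smooth, $g^!(\CF)\simeq g^*(\CF)$ up to a cohomological shift on each component, hence lies in $\Coh(S)\subset\QCoh(S)^{\on{perf}}=\QCoh(S)^c$). The point is that, for a compact first argument, the internal Hom commutes with $*$-pullback along any morphism, by \cite[Lemma 5.1.1]{DG} — exactly the mechanism already used in \secref{sss:proof of inner transition map} for the $\IndCoh$ version.

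\textbf{Key steps.} First I would unwind \eqref{e:inthom}: testing the claimed isomorphism against global sections over any $S''\in(\on{Sch}^{\on{aff}}_{/\CY,\on{smooth}})$ mapping to $S'$, and using that both sides are determined by their sections over such $S''$ (Zariski descent plus the fact that we are working with a smooth atlas), it suffices to treat the universal case: show that for a smooth morphism $f:S'\to S$ of affine schemes equipped with compatible smooth maps to $\CY$, one has
$$f^*\,\underline\Hom_{\QCoh(S)}\bigl(g^!(\CF),{}'\oblv_{\Dmod(S)}(g^!(\CM))\bigr)\simeq
\underline\Hom_{\QCoh(S')}\bigl((g f)^!(\CF),{}'\oblv_{\Dmod(S')}((g f)^!(\CM))\bigr).$$
Second, since $g^!(\CF)$ is compact in $\QCoh(S)$, by \cite[Lemma 5.1.1]{DG} the left side rewrites as $\underline\Hom_{\QCoh(S)}(g^!(\CF),\,f^*\circ{}'\oblv_{\Dmod(S)}(g^!(\CM)))$, using that $f^*$ — being the pullback along a flat, even smooth, morphism — commutes with the action of $\QCoh(S)$ and with $\underline\Hom$ in the compact argument. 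Third, I would reduce everything to the base-change isomorphism
$$f^*\circ {}'\oblv_{\Dmod(S)}\simeq {}'\oblv_{\Dmod(S')}\circ f^!$$
for a smooth $f$; this is the commutative square of \secref{sss:left realization} relating $\oblv^{\on{left}}_{\Dmod(-)}$ to $f^*$, combined with formula \eqref{e:two_obls-schemes} expressing $'\oblv$ in terms of $\oblv^{\on{left}}$ and the (flat, hence $f^*$-compatible) line $\Psi(\omega)$, together with the identity $f^!\simeq f^*[2\dim]$ on each component from \eqref{e:*! shift}. Matching the shifts on both sides (the same shift $g^!$ vs.\ $g^*$ appears in $\CF$ and, after a twist, is absorbed), the two sides of the displayed formula become literally the same $\underline\Hom$, proving the lemma.

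\textbf{Main obstacle.} The genuinely non-formal input is the commutation of $f^*$ with $\underline\Hom$ in the first variable, which needs the compactness of $g^!(\CF)$ in $\QCoh(S)$ \emph{and} the fact that $\QCoh(S)$ is compactly generated (so \cite[Lemma 5.1.1]{DG} applies): this is the precise analogue of the step in \secref{sss:proof of inner transition map}, and it is where the hypothesis $\CF\in\Coh(\CY)$ (rather than an arbitrary quasi-coherent sheaf) is essential. A secondary, bookkeeping difficulty is keeping track of the cohomological shifts coming from $g^!$ versus $g^*$ on connected components of possibly varying relative dimension; these are harmless because they are invertible and functorial, but one must be careful that the shift attached to $\CF$ and the shift hidden in $'\oblv_{\Dmod}$ (via $\omega$) are coherently compatible under $f$. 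Once these two points are checked, the remaining assembly of natural transformations is routine, and one concludes exactly as in the $\IndCoh$ case that the assignment $(S,g)\mapsto\eqref{e:inthom}$ defines an object of $\QCoh(\CY)$, with the functor \eqref{e:inHom Dmod} continuous because $'\oblv_{\Dmod(\CY)}$ is and because internal Hom from a compact object preserves colimits.
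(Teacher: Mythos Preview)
There is a genuine gap in your argument: the claimed inclusion $\Coh(S)\subset\QCoh(S)^{\on{perf}}=\QCoh(S)^c$ is false unless $S$ is regular. Here $S$ is only assumed smooth over $\CY$, and $\CY$ is an arbitrary (classical) QCA stack, which may well be singular; hence $S$ can be singular over $k$. For instance, if $S=\Spec(k[x]/x^2)$ then the residue field $k$ lies in $\Coh(S)$ but is not perfect. So the object $g^!(\CF)\in\Coh(S)$ is in general \emph{not} compact in $\QCoh(S)$, and \cite[Lemma~5.1.1]{DG} does not apply directly. This is precisely the difference between the $\IndCoh$ situation of \secref{sss:proof of inner transition map}, where $\Coh(S)=\IndCoh(S)^c$ holds unconditionally, and the $\QCoh$ situation you are in.

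The paper's proof in \secref{sss:proof of inhom well defined} circumvents this by a two-step approximation. First, using that $f$ is smooth and that $\Dmod(S)$, $\Dmod(S')$ have finite cohomological dimension, one observes that $H^0$ of both sides is unchanged upon replacing $\CM$ by $\tau^{\geq -n}(\CM)$ for $n\gg 0$; this makes the second argument of the internal Hom bounded below. Second, one approximates the coherent sheaf $\CF$ by a perfect complex $\CF_1$ with $\on{Cone}(\CF_1\to\CF)\in\QCoh(S)^{\leq -n}$ for $n\gg 0$; since the second argument is now bounded below, this highly coconnective cone contributes nothing to $H^0$, and for the perfect $\CF_1$ the base-change isomorphism is indeed evident (your argument applies to $\CF_1$). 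Your base-change identification $f^*\circ{}'\oblv_{\Dmod(S)}\simeq{}'\oblv_{\Dmod(S')}\circ f^!$ is correct and is implicitly used, but the compactness step must be replaced by this approximation argument.
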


The proof will be given in \secref{sss:proof of inhom well defined}. The above lemma ensures that the assignment
$$(S,g)\mapsto g^*\left(\underline\Hom'_{\QCoh(\CY)}(\CF,{}'\oblv_{\Dmod(\CY)}(\CM))\right)$$
indeed defines an object of $\QCoh(\CY)$. We denote it by $\underline\Hom'_{\QCoh(\CY)}(\CF,{}'\oblv_{\Dmod(\CY)}(\CM))$. 

\medskip

Since $g^!$ is isomorphic to $g^*$ up to a twist by a line bundle, we have:
\begin{multline*}
\CMaps_{\QCoh(\CY)}(\CF,{}'\oblv_{\Dmod(\CY)}(\CM))\simeq \\
\simeq\underset{(S,g)\in \on{Sch}^{\on{aff}}_{/\CY,\on{smooth}}}{\underset{\longleftarrow}{lim}}\,
\CMaps_{\QCoh(S)}\left(g^*(\CF),g^*({}'\oblv_{\Dmod(\CY)}(\CM))\right)\simeq \\
\simeq 
\underset{(S,g)\in \on{Sch}^{\on{aff}}_{/\CY,\on{smooth}}}{\underset{\longleftarrow}{lim}}\,
\CMaps_{\QCoh(S)}\left(g^!(\CF),g^!({}'\oblv_{\Dmod(\CY)}(\CM))\right)= \\
=\underset{(S,g)\in \on{Sch}^{\on{aff}}_{/\CY,\on{smooth}}}{\underset{\longleftarrow}{lim}}\,
\Gamma\left(S,g^*(\underline\Hom'_{\QCoh(\CY)}(\CF,{}'\oblv_{\Dmod(\CY)}(\CM)))\right)= \\
=\Gamma\left(\CY,\underline\Hom'_{\QCoh(\CY)}(\CF,{}'\oblv_{\Dmod(\CY)}(\CM))\right).
\end{multline*}
Applying \thmref{main}(i), we obtain that it suffices to show that  the functor
$$\CM\mapsto \underline\Hom'_{\QCoh(\CY)}(\CF,{}'\oblv_{\Dmod(\CY)}(\CM))$$
commutes with colimits in $\CM$. 

\begin{rem}
A similar manipulation shows that for $\CF_1\in \QCoh(\CY)$,
\begin{multline*}
\CMaps_{\QCoh(\CY)}(\CF_1\otimes \CF,{}'\oblv_{\Dmod(\CY)}(\CM))\simeq \\
\simeq \CMaps_{\QCoh(\CY)}\left(\CF_1,\underline\Hom'_{\QCoh(\CY)}(\CF,{}'\oblv_{\Dmod(\CY)}(\CM))\right);
\end{multline*}
in other words, $\underline\Hom'_{\QCoh(\CY)}(\CF,{}'\oblv_{\Dmod(\CY)}(\CM))$ \emph{is} the internal Hom
object
$$\underline\Hom_{\QCoh(\CY)}(\CF,{}'\oblv_{\Dmod(\CY)}(\CM)).$$
\end{rem}

\sssec{}
Now let us prove continuity of the functor \eqref{e:inHom Dmod}.
Since for $(S,g)\in \on{Sch}^{\on{aff}}_{/\CY,\on{smooth}}$, 
the functor $g^*$ is continuous, it suffices
to show that for every $(S,g)$ as above, the functor
$$\CM\mapsto g^*(\underline\Hom'_{\QCoh(\CY)}(\CF,{}'\oblv_{\Dmod(\CY)}(\CM)))$$
is continuous. 

\medskip

We rewrite 
\begin{multline} \label{e:on S}
\Gamma\left(S,g^*(\underline\Hom'_{\QCoh(\CY)}(\CF,{}'\oblv_{\Dmod(\CY)}(\CM)))\right)\simeq \\
\simeq \CMaps_{\Dmod(S)}\left({}'\ind_{\on{D-mod}(S)}(g^!(\CF)),g^!(\CM)\right).
\end{multline}
Now, $g^!(\CF)\in \Coh(S)$, and since $S$ is a scheme, the functor ${}'\ind_{\on{D-mod}(S)}$
is known to send $\Coh(S)$ to $\Dmod(S)^c$. This implies that the right-hand side in \eqref{e:on S}
commutes with colimits in $\CM$. 

\qed

\sssec{Proof of \lemref{l:inhom well defined}}   \label{sss:proof of inhom well defined}

This will be parallel to the proof of \lemref{l:inner transition map}. 

\medskip

Let $f:S'\to S$ be a smooth map 
between affine schemes. Let $\CF$ be an object of $\Coh(S)$,
and $\CM$ an object of $\Dmod(S)$. We claim that the natural map
\begin{multline} \label{e:inner Hom D}
H^0\left(f^*\left(\underline\Hom_{\QCoh(S)}(\CF,{}'\oblv_{\Dmod(S)}(\CM))\right)\right)\to \\
\to H^0\left(\underline\Hom_{\QCoh(S')}\left(f^!(\CF),{}'\oblv_{\Dmod(S)}(f^!(\CM))\right)\right).
\end{multline}
is an isomorphism. 

\medskip

Note that the assumption that $f$ is smooth and the fact that the categories $\Dmod(S)$ and $\Dmod(S')$ are
of finite cohomological dimension, imply that both sides in
\eqref{e:inner Hom D} will remain unchanged if we replace $\CM$ by $\tau^{\geq -n}(\CM)$ for $n\gg 0$.

\medskip

Note also that \eqref{e:inner Hom D} is evidently an isomorphism if $\CF\in \QCoh(S)^c=\QCoh(S)^{\on{perf}}$.
Now replace $\CF$ by $\CF_1$, where $\CF_1\in \QCoh(S)^c$ is equipped with a map to $\CF$, such that 
$$\on{Cone}(\CF_1\to \CF)\in \QCoh(S)^{\leq -n}$$
with $n\gg 0$.

\qed

\ssec{Some corollaries of Theorem~\ref{t:Dmods}}   \label{ss:Somecorollaries}

\sssec{}

First we claim:

\begin{cor}   \label{c:compactDmods}
$\on{D-mod}(\CY)^c$ is Karoubi-generated by objects of the form 
$\ind_{\on{D-mod}(\CY)} (\CF)$, $\CF\in\Coh(\CY)$. 
\end{cor}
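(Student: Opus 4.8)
The plan is to deduce this corollary with essentially no new content, directly from \thmref{t:Dmods} together with the general mechanism recalled in \secref{sss:ind-compl}: a compactly generated DG category $\bC$ is of the form $\on{Ind}(\bC^0)$ for any small full subcategory $\bC^0$ of compact objects that generates, and then $\bC^c$ is the Karoubian completion of $\bC^0$. So the strategy is to exhibit the stable subcategory generated by the objects $\ind_{\on{D-mod}(\CY)}(\CF)$, $\CF\in\Coh(\CY)$, as such a $\bC^0$.

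First I would record, using \thmref{t:Dmods}, that the objects $\ind_{\on{D-mod}(\CY)}(\CF)$ with $\CF\in\Coh(\CY)$ are compact in $\on{D-mod}(\CY)$ and generate it; I would also note that this family is essentially small, since $\CY$ is quasi-compact and hence $\Coh(\CY)$ is essentially small. Then I would let $\bC^0\subset\on{D-mod}(\CY)$ be the smallest stable (closed under finite colimits and shifts) full subcategory containing all these objects. It is essentially small, and each of its objects is compact, being a finite colimit of compacts.

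Next I would ind-extend the tautological embedding $\bC^0\hookrightarrow\on{D-mod}(\CY)$ to a continuous functor $\on{Ind}(\bC^0)\to\on{D-mod}(\CY)$ (see \secref{sss:ind-compl}). This functor is fully faithful because the objects of $\bC^0$ are compact in the target, and it is essentially surjective because its essential image is a cocomplete full subcategory of $\on{D-mod}(\CY)$ containing the generating family $\{\ind_{\on{D-mod}(\CY)}(\CF)\}_{\CF\in\Coh(\CY)}$. Hence it is an equivalence $\on{Ind}(\bC^0)\iso\on{D-mod}(\CY)$, and by \secref{sss:ind-compl} it follows that $\on{D-mod}(\CY)^c$ is the Karoubian completion of $\bC^0$, i.e.\ every compact object of $\on{D-mod}(\CY)$ is a direct summand of an object of $\bC^0$. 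Since $\bC^0$ is by construction the stable subcategory generated by the $\ind_{\on{D-mod}(\CY)}(\CF)$, $\CF\in\Coh(\CY)$, this is exactly the assertion that $\on{D-mod}(\CY)^c$ is Karoubi-generated by these objects.

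I do not expect a real obstacle here; everything substantive was already carried out in proving \thmref{t:Dmods} (which rests on \propref{coh is compact} and \propref{p:coh_generates}, hence on \thmref{main}). The only points requiring care are the standard fact that ind-extending an embedding of a small category of compact objects produces a fully faithful continuous functor, and the bookkeeping observation that ``Karoubi-generated'' in the statement means precisely ``the idempotent-complete stable subcategory generated by'', so that it matches the formulation of \secref{sss:ind-compl}.
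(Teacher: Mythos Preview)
Your proposal is correct and is precisely the paper's approach, just spelled out: the paper's entire proof is the one-line ``This follows from \secref{sss:ind-compl}'', and you have unpacked exactly what that reference means (compact generators from \thmref{t:Dmods}, then $\bC^c$ is the Karoubian completion of the generating subcategory).
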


Recall that for a cocomplete DG category $\bC$ and its not necessarily cocomplete DG subcategories
$\bC'_0\subset \bC'$, ones says that a subcategory $\bC'_0$ to Karoubi-generates $\bC'$ if 
the latter is the smallest among DG subcategories of $\bC$ that contain $\bC'_0$ and are closed
under direct summands. This is a condition on corresponding homotopy categories (i.e., it is
insensitive to the $\infty$-category structure). 

\begin{proof}
This follows from \secref{sss:ind-compl}.
\end{proof}

\sssec{}

As yet another corollary of \thmref{t:Dmods}, we obtain:

\begin{cor}  \label{c:D on prod}
Let $\CY$ be a QCA stack and $\CY'$ \emph{any} prestack. Then the natural functor
$$\on{D-mod}(\CY)\otimes \on{D-mod}(\CY')\to \on{D-mod}(\CY\times \CY')$$
is an equivalence.
\end{cor}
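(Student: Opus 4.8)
\textbf{Proof plan for Corollary~\ref{c:D on prod}.}
The strategy is identical to the proof of \corref{c:indcoh on product}, replacing $\IndCoh$ by $\Dmod$ throughout and using \thmref{t:Dmods} in place of \thmref{IndCoh}. First I would invoke \thmref{t:Dmods}: since $\CY$ is QCA, the category $\Dmod(\CY)$ is compactly generated, hence dualizable by \secref{sss:properties of duality}(ii). The whole argument then only uses dualizability of $\Dmod(\CY)$, so I would actually prove the more general statement that for any two prestacks $\CY_1,\CY_2$ with $\Dmod(\CY_1)$ dualizable, the natural functor $\Dmod(\CY_1)\otimes\Dmod(\CY_2)\to\Dmod(\CY_1\times\CY_2)$ is an equivalence.

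The key steps are as follows. Write $\Dmod(\CY_2)$ as the limit $\underset{(S_2,g_2)}{\underset{\longleftarrow}{lim}}\,\Dmod(S_2)$ over $((\on{DGSch}_{\on{aft}})_{/\CY_2})^{\on{op}}$, and similarly for $\Dmod(\CY_1)$. Using \secref{sss:properties of duality}(iii)—tensoring by the dualizable category $\Dmod(\CY_1)$ commutes with limits, and tensoring by the dualizable category $\Dmod(S_2)$ commutes with limits—I would commute the tensor product past both limits to obtain
$$\Dmod(\CY_1)\otimes\Dmod(\CY_2)\simeq
\underset{(S_1,g_1)}{\underset{\longleftarrow}{lim}}\,\,\underset{(S_2,g_2)}{\underset{\longleftarrow}{lim}}\,\bigl(\Dmod(S_1)\otimes\Dmod(S_2)\bigr).$$
Next, by \secref{sss:tens_schemes}, for quasi-compact DG schemes $S_1,S_2$ the natural functor $\Dmod(S_1)\otimes\Dmod(S_2)\to\Dmod(S_1\times S_2)$ is an equivalence, so the double limit becomes $\underset{(S_1,S_2)}{\underset{\longleftarrow}{lim}}\,\Dmod(S_1\times S_2)$ over $((\on{DGSch}_{\on{aft}})_{/\CY_1})^{\on{op}}\times((\on{DGSch}_{\on{aft}})_{/\CY_2})^{\on{op}}$. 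Finally, the functor $(\on{DGSch}_{\on{aft}})_{/\CY_1}\times(\on{DGSch}_{\on{aft}})_{/\CY_2}\to(\on{DGSch}_{\on{aft}})_{/\CY_1\times\CY_2}$, $(S_1,S_2)\mapsto S_1\times S_2$, is cofinal (since a map from an affine DG scheme $S$ to $\CY_1\times\CY_2$ is the same as a pair of maps, and $S\to S\times S$ via the diagonal, composed with $S\to S_1\times S_2$ for $S_1=S_2=S$, exhibits the needed cofinality—the relevant comma categories are contractible), so this last limit recovers $\Dmod(\CY_1\times\CY_2)$ by its defining formula \eqref{e:def_of_D(Y)}. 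Tracking through the identifications one checks that the composite is the natural functor, so it is an equivalence.

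The main obstacle—though it is a mild one—is the careful bookkeeping needed to verify that the equivalence produced by commuting limits past tensor products really is the \emph{natural} comparison functor $\Dmod(\CY_1)\otimes\Dmod(\CY_2)\to\Dmod(\CY_1\times\CY_2)$, rather than merely an abstract equivalence; this is exactly the point where the proof of \cite[Proposition 1.4.4]{QCoh} and of \corref{c:indcoh on product} spend most of their effort, and I would reproduce that diagram chase here (as in the final paragraph of the proof of \corref{c:indcoh on product}). One also needs the cofinality statement for the product of slice categories, which is elementary but should be spelled out. Since all the genuinely substantive input—the dualizability of $\Dmod(\CY)$—is supplied by \thmref{t:Dmods}, and the scheme-level equivalence by \secref{sss:tens_schemes}, the argument is otherwise formal.
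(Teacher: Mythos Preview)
Your proposal is correct and follows the paper's approach exactly: the paper's proof simply states that the argument of \corref{c:indcoh on product} repeats verbatim with $\Dmod$ in place of $\IndCoh$, using \thmref{t:Dmods} to supply dualizability of $\Dmod(\CY)$, and explicitly notes that the argument applies to any prestack $\CY$ for which $\Dmod(\CY)$ is dualizable. Your elaboration of the steps (commuting tensor past limits, the scheme-level K\"unneth equivalence from \secref{sss:tens_schemes}, and the cofinality of the product index category) is precisely the content of that verbatim repetition.
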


\begin{proof}

The proof repeats verbatim that of \corref{c:indcoh on product}. It applies to \emph{any} prestack
$\CY$, for which the category $\Dmod(\CY)$ is dualizable.

\end{proof}

\ssec{Verdier duality on a QCA stack}

\sssec{}

In Sect.~\ref{sss:Verdier_on_stacks} we defined an involutive anti self-equivalence
$$\BD_\CY^{\on{Verdier}}:(\on{D-mod}_{\on{coh}}(\CY))^{\on{op}}\to \on{D-mod}_{\on{coh}}(\CY).$$
\begin{cor}  \label{duality on compact}
This functor induces an involutive anti self-equivalence 
\begin{equation}   \label{e:Verdier-compact_on_stacks}
\BD_\CY^{\on{Verdier}}:(\on{D-mod}(\CY)^c)^{\on{op}}\iso \on{D-mod}(\CY)^c
\end{equation}
\end{cor}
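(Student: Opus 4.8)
The plan is to show that the Verdier duality equivalence $\BD_\CY^{\on{Verdier}}:(\on{D-mod}_{\on{coh}}(\CY))^{\on{op}}\iso \on{D-mod}_{\on{coh}}(\CY)$ restricts to an equivalence on the subcategory of compact objects. By \corref{c:compactDmods}, $\on{D-mod}(\CY)^c$ is Karoubi-generated by objects of the form $\ind_{\on{D-mod}(\CY)}(\CF)$ with $\CF\in\Coh(\CY)$. Since $\BD_\CY^{\on{Verdier}}$ is an additive anti-self-equivalence of $\on{D-mod}_{\on{coh}}(\CY)$, it commutes with finite (co)limits and direct summands; thus it suffices to check that $\BD_\CY^{\on{Verdier}}$ sends each generator $\ind_{\on{D-mod}(\CY)}(\CF)$ to a compact object, and conversely that every compact object is sent to a compact object — which by the involutivity of $\BD_\CY^{\on{Verdier}}$ reduces to the single direction.

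First I would invoke \corref{c:Verdier-Serre_on_stacks}: for $\CF\in\Coh(\CY)$ one has
$$\BD_\CY^{\on{Verdier}}\left(\ind_{\on{D-mod}(\CY)}(\CF)\right)\simeq \ind_{\on{D-mod}(\CY)}\left(\BD^{\on{Serre}}_\CY(\CF)\right).$$
Now $\BD^{\on{Serre}}_\CY$ is, by \secref{ss:Serreduality} (specifically the equivalence \eqref{e:Serre coh on stack}), an anti-self-equivalence of $\Coh(\CY)$, so $\BD^{\on{Serre}}_\CY(\CF)\in\Coh(\CY)$. By \thmref{t:Dmods}, $\ind_{\on{D-mod}(\CY)}$ sends $\Coh(\CY)$ into $\on{D-mod}(\CY)^c$. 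Hence $\BD_\CY^{\on{Verdier}}$ carries the Karoubi generators of $\on{D-mod}(\CY)^c$ into $\on{D-mod}(\CY)^c$.

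Next I would promote this from generators to the whole subcategory. The subcategory $\on{D-mod}(\CY)^c$ is the smallest full subcategory of $\on{D-mod}_{\on{coh}}(\CY)$ containing the objects $\ind_{\on{D-mod}(\CY)}(\CF)$, $\CF\in\Coh(\CY)$, and closed under finite colimits (cones), shifts and direct summands. The functor $\BD_\CY^{\on{Verdier}}$, being an exact anti-equivalence, takes cones to cones (up to the usual shift/orientation in a triangulated, or stable $\infty$-, category), shifts to shifts, and direct summands to direct summands; so the preimage under $\BD_\CY^{\on{Verdier}}$ of $\on{D-mod}(\CY)^c$ is a full subcategory of $\on{D-mod}_{\on{coh}}(\CY)$ with all these closure properties, and it contains the generators. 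Therefore it contains all of $\on{D-mod}(\CY)^c$, i.e. $\BD_\CY^{\on{Verdier}}\left(\on{D-mod}(\CY)^c\right)\subset\on{D-mod}(\CY)^c$. Applying the same to the inverse (which is $\BD_\CY^{\on{Verdier}}$ itself, by involutivity) gives the reverse inclusion, so $\BD_\CY^{\on{Verdier}}$ restricts to an anti-self-equivalence of $\on{D-mod}(\CY)^c$, which is again involutive since the ambient one is.

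The one point requiring a little care — and the only genuine obstacle — is the passage from ``$\BD_\CY^{\on{Verdier}}$ preserves the generators'' to ``$\BD_\CY^{\on{Verdier}}$ preserves the Karoubian closure of the generators''. This is where one uses that $\on{D-mod}(\CY)^c$ is precisely that Karoubian closure (\corref{c:compactDmods}, itself a consequence of \secref{sss:ind-compl}), together with the fact that an exact functor of stable $\infty$-categories preserves finite colimits and retracts; alternatively one may argue directly that $\BD_\CY^{\on{Verdier}}$ preserves compactness via the formula \eqref{e:Verdier_on_stacks} and \lemref{l:Verdier_stacks} expressing $\Gamma_{\on{dR}}$-pairings in terms of $\CMaps$. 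I expect no substantive difficulty here beyond bookkeeping, since all the hard analytic input (compact generation, \thmref{main}, the Serre–Verdier compatibility) has already been established upstream.
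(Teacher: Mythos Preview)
Your proof is correct and follows essentially the same approach as the paper: reduce via \corref{c:compactDmods} to showing that $\BD_\CY^{\on{Verdier}}$ preserves the generators $\ind_{\on{D-mod}(\CY)}(\CF)$ for $\CF\in\Coh(\CY)$, and verify this using the Serre--Verdier compatibility of \corref{c:Verdier-Serre_on_stacks}. The paper's version is simply terser, leaving the passage to the Karoubian closure and the reverse inclusion implicit.
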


\begin{proof}
The nontrivial statement to prove is that $\BD^{\on{Verdier}}_\CY$ preserves $\on{D-mod}(\CY)^c$.
By Corollary~\ref{c:compactDmods}, it suffices to show that $\BD^{\on{Verdier}}_\CY$ preserves
$\ind_{\on{D-mod}(\CY)}(\Coh(\CY))$. The latter follows from \corref{c:Verdier-Serre_on_stacks}.
\end{proof}

\begin{cor}  
The equivalence \eqref{e:Verdier-compact_on_stacks}
uniquely extends to an equivalence
\begin{equation}  \label{e:self-duality of Dmod}
\bD^{\on{Verdier}}_\CY:\on{D-mod}(\CY)^\vee\iso \on{D-mod}(\CY).
\end{equation}
\end{cor}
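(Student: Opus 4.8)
The statement asserts that the anti self-equivalence $\BD_\CY^{\on{Verdier}}$ on $\on{D-mod}(\CY)^c$, which we already have by Corollary~\ref{duality on compact}, extends uniquely to an equivalence $\on{D-mod}(\CY)^\vee \iso \on{D-mod}(\CY)$. The plan is to invoke the general machinery of duality for compactly generated DG categories, namely \secref{sss:properties of duality}(ii'): since $\CY$ is QCA, \thmref{t:Dmods} tells us $\on{D-mod}(\CY)$ is compactly generated, hence dualizable, and its dual is canonically the ind-completion of $(\on{D-mod}(\CY)^c)^{\on{op}}$, with $\BD_{\on{D-mod}(\CY)}:(\on{D-mod}(\CY)^\vee)^c \simeq (\on{D-mod}(\CY)^c)^{\on{op}}$. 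Composing this canonical identification on compact objects with the equivalence $\BD_\CY^{\on{Verdier}}:(\on{D-mod}(\CY)^c)^{\on{op}}\iso \on{D-mod}(\CY)^c$ of \eqref{e:Verdier-compact_on_stacks}, and then ind-extending (using that $\on{D-mod}(\CY)\simeq \on{Ind}(\on{D-mod}(\CY)^c)$ by \secref{sss:ind-compl}), produces the desired equivalence \eqref{e:self-duality of Dmod}. Uniqueness is automatic: a continuous functor out of a compactly generated category is determined by its restriction to compact objects, so any equivalence $\on{D-mod}(\CY)^\vee\to \on{D-mod}(\CY)$ restricting to $\BD_\CY^{\on{Verdier}}$ on compacts coincides with the one we constructed.

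\textbf{Steps.} First I would record that by \thmref{t:Dmods} the category $\on{D-mod}(\CY)$ is compactly generated, with $\on{D-mod}(\CY)^c$ Karoubi-generated by the objects $\ind_{\on{D-mod}(\CY)}(\CF)$, $\CF\in\Coh(\CY)$ (Corollary~\ref{c:compactDmods}); in particular $\on{D-mod}(\CY)$ is dualizable. Second, I would apply \secref{sss:properties of duality}(ii'): for $\bC$ compactly generated one has $\bC^\vee \simeq \on{Ind}((\bC^c)^{\on{op}})$ and a canonical equivalence $\BD_\bC:(\bC^\vee)^c \simeq (\bC^c)^{\on{op}}$. Third, I would combine this with Corollary~\ref{duality on compact}, which gives $\BD_\CY^{\on{Verdier}}:(\on{D-mod}(\CY)^c)^{\on{op}}\iso \on{D-mod}(\CY)^c$; the composite $(\on{D-mod}(\CY)^\vee)^c \overset{\BD_{\on{D-mod}(\CY)}}\simeq (\on{D-mod}(\CY)^c)^{\on{op}} \overset{\BD_\CY^{\on{Verdier}}}\simeq \on{D-mod}(\CY)^c$ is an equivalence of (non-cocomplete) DG categories, and ind-extending it yields the continuous equivalence \eqref{e:self-duality of Dmod}. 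Fourth, I would note the ``uniquely'' clause follows from the universal property of ind-completion (\secref{sss:ind-compl}): continuous functors $\on{D-mod}(\CY)^\vee\to \bC$ correspond to arbitrary functors $(\on{D-mod}(\CY)^\vee)^c\to \bC$, so the extension of a functor defined on compacts is unique up to contractible choice.

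\textbf{Main obstacle.} There is essentially no analytic content here — the entire argument is an assembly of results already established (\thmref{t:Dmods}, Corollary~\ref{duality on compact}, and the generalities of \secref{sss:properties of duality}). The one point deserving a sentence of care is compatibility of involutivity: one should check that the self-duality \eqref{e:self-duality of Dmod} is again involutive, i.e.\ that under the identification $(\on{D-mod}(\CY)^\vee)^\vee\simeq \on{D-mod}(\CY)$ it corresponds to its own inverse; this reduces to the involutivity of $\BD_\CY^{\on{Verdier}}$ on compacts, recorded in Sect.~\ref{sss:Verdier_on_stacks}, together with the fact that $\BD_\bC$ is compatible with double-dualization, which is part of \secref{sss:properties of duality}. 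Thus the only real ``work'' is bookkeeping of the canonical identifications, and I would keep the written proof to a few lines citing these facts.
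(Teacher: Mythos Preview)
Your proposal is correct and follows essentially the same route as the paper: invoke \thmref{t:Dmods} for compact generation, apply \secref{sss:properties of duality}(ii') to identify $\on{D-mod}(\CY)^\vee$ with $\on{Ind}((\on{D-mod}(\CY)^c)^{\on{op}})$, and then use the Verdier duality equivalence on compacts from Corollary~\ref{duality on compact} to obtain the chain $\on{D-mod}(\CY)^\vee\simeq \on{Ind}((\on{D-mod}(\CY)^c)^{\on{op}})\simeq \on{Ind}(\on{D-mod}(\CY)^c)\simeq \on{D-mod}(\CY)$. Your additional remarks on uniqueness and involutivity are sound but go slightly beyond what the paper records in its proof.
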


\begin{proof}
By Theorem~\ref{t:Dmods}, $\on{D-mod}(\CY)=\Ind(\on{D-mod}(\CY)^c)$. By 
Sect.~\ref{sss:properties of duality}(ii'), this implies
that $\on{D-mod}(\CY)^\vee=\Ind((\on{D-mod}(\CY)^c)^{\on{op}})$, so
$$\on{D-mod}(\CY)^\vee=\Ind((\on{D-mod}(\CY)^c)^{\on{op}})\simeq \Ind(\on{D-mod}(\CY)^c)\simeq \on{D-mod}(\CY).$$
\end{proof}

\sssec{}   \label{sss:pairing on QCA}

According to \secref{sss:dualizabilitydef}, the self-duality given by \eqref{e:self-duality of Dmod} corresponds to a pair
of functors
\begin{equation} \label{e:pairing Dmod}
\epsilon_{\Dmod(\CY)}:\Dmod(\CY)\otimes \Dmod(\CY)\to \Vect
\end{equation}
and
\begin{equation} \label{e:unit Dmod}
\mu_{\Dmod(\CY)}:\Vect\to \Dmod(\CY)\otimes \Dmod(\CY).
\end{equation}

We shall also use the notation $\langle-,-\rangle_{\Dmod(\CY)}$ to denote the functor
$$\Dmod(\CY)\times \Dmod(\CY)\to \Dmod(\CY)\otimes \Dmod(\CY)\overset{\epsilon_{\Dmod(\CY)}}\longrightarrow \Vect.$$

From \lemref{l:Verdier_stacks}, we obtain:

\begin{lem} \label{l:pairing Dmod comp}
For $\CM\in \Dmod(\CY)^c$ and $\CM'\in \Dmod(\CY)$ we have
$$\langle\CM,\CM'\rangle_{\Dmod(\CY)}=\Gamma_{\on{dR}}(\CY,\CM\sotimes \CM').$$
\end{lem}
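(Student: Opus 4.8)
The goal is to prove \lemref{l:pairing Dmod comp}: for $\CM\in \Dmod(\CY)^c$ and $\CM'\in \Dmod(\CY)$, one has $\langle\CM,\CM'\rangle_{\Dmod(\CY)}=\Gamma_{\on{dR}}(\CY,\CM\sotimes \CM')$. The plan is to unwind the definition of the duality datum $\epsilon_{\Dmod(\CY)}$ coming from the self-duality \eqref{e:self-duality of Dmod}, which by construction is the ind-extension of the Verdier pairing on $\Dmod(\CY)^c=\Dmod_{\on{coh}}(\CY)$ described in \secref{sss:properties of duality}(ii'). Concretely, $\BD^{\on{Verdier}}_\CY$ identifies $(\Dmod(\CY)^\vee)^c$ with $(\Dmod(\CY)^c)^{\on{op}}$, and under this identification the canonical pairing $\epsilon_{\Dmod(\CY)}\colon \Dmod(\CY)^\vee\otimes \Dmod(\CY)\to \Vect$ restricted to compact objects is the composite
$$(\Dmod(\CY)^c)^{\on{op}}\otimes \Dmod(\CY)^c \xrightarrow{\ (\BD^{\on{Verdier}}_\CY)^{\on{op}}\otimes \on{id}\ }\Dmod(\CY)^c\otimes \Dmod(\CY)^c\xrightarrow{\ \CMaps_{\Dmod(\CY)}(-,-)\ }\Vect.$$
Thus the first step is to record that, for $\CM_1,\CM_2\in \Dmod(\CY)^c=\Dmod_{\on{coh}}(\CY)$, we have $\langle \CM_1,\CM_2\rangle_{\Dmod(\CY)}=\CMaps_{\Dmod(\CY)}(\BD^{\on{Verdier}}_\CY(\CM_1),\CM_2)$, i.e. the pairing is represented on the compacts by morphisms in the homotopy category after Verdier-dualizing the first variable.

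Next I would invoke \lemref{l:Verdier_stacks}, which is precisely the identity
$$\CMaps_{\Dmod(\CY)}(\BD_\CY^{\on{Verdier}}(\CM),\CM')\simeq\Gamma_{\on{dR}}(\CY,\CM\sotimes \CM')$$
valid for $\CM\in \Dmod_{\on{coh}}(\CY)$ and \emph{arbitrary} $\CM'\in \Dmod(\CY)$. Combining with the previous step, for $\CM\in \Dmod(\CY)^c$ and $\CM'\in \Dmod(\CY)^c$ the claimed equality holds on the nose. It then remains to extend from $\CM'$ compact to $\CM'$ arbitrary. Here one uses that $\Dmod(\CY)$ is compactly generated (\thmref{t:Dmods}), so $\CM'$ is a filtered colimit of compact objects; both sides of the asserted equality, as functors of $\CM'$ with $\CM$ fixed compact, are continuous: the left-hand side $\langle\CM,-\rangle_{\Dmod(\CY)}=\epsilon_{\Dmod(\CY)}\circ(\CM\otimes -)$ is a composition of continuous functors (the duality functor $\epsilon_{\Dmod(\CY)}$ is continuous by definition of the tensor product of DG categories and the fact that it is part of a duality datum in $\StinftyCat_{\on{cont}}$), and the right-hand side $\Gamma_{\on{dR}}(\CY,\CM\sotimes -)$ is continuous because $\CM\in \Dmod_{\on{coh}}(\CY)$ implies $\CM\sotimes -$ preserves compactness (by \eqref{e:tensor with induction stacks} and Kashiwara-type reductions, or more directly because $\Gamma_{\on{dR}}(\CY,\CM\sotimes -)\simeq \CMaps_{\Dmod(\CY)}(\BD^{\on{Verdier}}_\CY(\CM),-)$ with $\BD^{\on{Verdier}}_\CY(\CM)$ compact, again by \lemref{l:Verdier_stacks}). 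Since the two continuous functors agree on compact generators, they agree on all of $\Dmod(\CY)$.

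The one genuinely substantive point — and the place I expect to need the most care — is pinning down precisely that the ind-extended pairing $\epsilon_{\Dmod(\CY)}$ associated to the equivalence \eqref{e:self-duality of Dmod} is, on compact objects, given by $\CMaps_{\Dmod(\CY)}(\BD^{\on{Verdier}}_\CY(-),-)$ rather than, say, $\CMaps$ with the dual taken in the other variable or with a sign/shift discrepancy. This is exactly the content of the recipe in \secref{sss:properties of duality}(ii') together with the compatibility recorded in \cite[Lemma 2.3.3]{DG}: the duality datum attached to $\bC=\on{Ind}(\bC^0)$ with a chosen anti-equivalence $\bC^0\simeq (\bC^0)^{\on{op}}$ has co-unit which on $\bC^0\otimes \bC^0$ is $\CMaps_\bC$ precomposed with that anti-equivalence on the first factor. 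Once this identification is in place, everything else is a two-line deduction from \lemref{l:Verdier_stacks} plus continuity, so the proof is short; the writeup would simply be: (1) cite \secref{sss:properties of duality}(ii') to get the formula for $\langle-,-\rangle_{\Dmod(\CY)}$ on compacts, (2) apply \lemref{l:Verdier_stacks}, (3) extend in $\CM'$ by continuity using \thmref{t:Dmods}.
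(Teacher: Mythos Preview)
Your proposal is correct and follows the paper's approach: the paper simply says ``From \lemref{l:Verdier_stacks}, we obtain'' this lemma, and your argument unwinds exactly that---identifying $\langle\CM,-\rangle_{\Dmod(\CY)}$ with $\CMaps_{\Dmod(\CY)}(\BD^{\on{Verdier}}_\CY(\CM),-)$ via \secref{sss:properties of duality}(ii$'$) and then applying \lemref{l:Verdier_stacks}. Your step (3) is slightly redundant, since \lemref{l:Verdier_stacks} already holds for arbitrary $\CM'\in\Dmod(\CY)$, so once you know $\langle\CM,\CM'\rangle_{\Dmod(\CY)}\simeq\CMaps_{\Dmod(\CY)}(\BD^{\on{Verdier}}_\CY(\CM),\CM')$ (which itself follows by continuity in $\CM'$ from the compact case, using that $\BD^{\on{Verdier}}_\CY(\CM)$ is compact by \corref{duality on compact}), the conclusion is immediate without a separate extension-in-$\CM'$ step.
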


In \corref{c:pairing Dmod} we shall describe the functor $\epsilon_{\Dmod(\CY)}$ on the entire category
$$\Dmod(\CY)\otimes \Dmod(\CY)\simeq \Dmod(\CY\times \CY)$$ explicitly. 
Furthermore, in \secref{sss:proof unit Dmod} we will prove:

\begin{prop} \label{p:unit Dmod}
The object 
$$\mu_{\Dmod(\CY)}(k)\in \Dmod(\CY)\otimes \Dmod(\CY)\simeq \Dmod(\CY\times \CY)$$
identifies canonically with $(\Delta_\CY)_\dr(\omega_\CY)$.
\end{prop}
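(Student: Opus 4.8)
\textbf{Proof plan for Proposition~\ref{p:unit Dmod}.}
The plan is to identify the co-evaluation functor $\mu_{\Dmod(\CY)}$ by using the defining zig-zag identity for a duality datum, together with the explicit description of the evaluation functor $\epsilon_{\Dmod(\CY)}$ already available via \lemref{l:pairing Dmod comp}. Concretely, under the equivalence $\Dmod(\CY)\otimes \Dmod(\CY)\simeq \Dmod(\CY\times \CY)$ of \corref{c:D on prod}, a candidate for $\mu_{\Dmod(\CY)}(k)$ is $(\Delta_\CY)_\dr(\omega_\CY)$; to prove this is the correct one, it suffices to verify that the pair $\bigl(\epsilon_{\Dmod(\CY)},\mu_{\Dmod(\CY)}\bigr)$ with this choice of $\mu$ satisfies one of the two triangle identities, e.g. that the composition
$$\Dmod(\CY)\overset{\mathrm{Id}\otimes \mu}\longrightarrow \Dmod(\CY)\otimes \Dmod(\CY)\otimes \Dmod(\CY)
\overset{\epsilon\otimes \mathrm{Id}}\longrightarrow \Dmod(\CY)$$
is isomorphic to the identity functor. (By uniqueness of duality data, checking one triangle identity is enough; the other is formal.)

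First I would translate this composition into geometry. Using \lemref{l:pairing Dmod comp}, the functor $\epsilon_{\Dmod(\CY)}$ is the composite $\Dmod(\CY\times \CY)\overset{\Delta_\CY^!}\to \Dmod(\CY)\overset{\Gamma_{\on{dR}}(\CY,-)}\to \Vect$, and the candidate $\mu_{\Dmod(\CY)}$ is $\Vect\overset{\omega_\CY\otimes -}\to \Dmod(\CY)\overset{(\Delta_\CY)_\dr}\to \Dmod(\CY\times \CY)$. So the composition in question becomes, after feeding in $\CM\in \Dmod(\CY)$, an expression built from the maps in the diagram
$$
\CD
\CY  @>{\Delta_\CY}>> \CY\times \CY  @>{\on{id}\times p_\CY}>>  \CY \\
@V{\Delta_\CY}VV   @VV{\on{id}\times \Delta_\CY}V \\
\CY\times \CY  @>{\Delta_\CY\times \on{id}}>> \CY\times \CY\times \CY \\
@V{p_\CY\times \on{id}}VV  \\
\CY,
\endCD
$$
exactly as in the proof of \propref{p:pairing for IndCoh}. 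The inner square here is Cartesian with schematic quasi-compact legs, so \propref{p:de Rham trans} (base change for $(\dr)$-pushforward along schematic quasi-compact maps, i.e. the isomorphism $(\Delta_\CY)_\dr\circ \Delta_\CY^!\simeq (\on{id}\times \Delta_\CY)^!\circ (\Delta_\CY\times \on{id})_\dr$) lets me commute a $!$-pullback past a $\dr$-pushforward. Then I would use the transitivity of $(\dr)$-pushforward (\propref{p:de Rham trans}, applicable since the relevant maps are schematic and quasi-compact) and the compatibility of $\Gamma_{\on{dR}}(\CY,-)$ with $\pi_\dr$ (\lemref{l:de Rham trans}) to collapse the resulting composite of maps $\CY\to \CY\times\CY\to\CY$ etc. down to identity maps — the same computation that appears verbatim at the end of the proof of \propref{p:pairing for IndCoh}, with $\IndCoh$ replaced by $\Dmod$, $\sotimes$-bookkeeping replaced by $\otimes$, $(\Delta_\CY)^{\IndCoh}_*$ replaced by $(\Delta_\CY)_\dr$, and $(p_\CY)^{\IndCoh}_*$ replaced by $\Gamma_{\on{dR}}(\CY,-)$. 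This yields $(\on{id})_\dr\circ \on{id}^!\simeq \on{Id}$, as desired.

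The main obstacle, and the only place requiring care, is ensuring that the base-change and transitivity isomorphisms for $\pi_\dr$ are actually available for the morphisms appearing here: $\pi_\dr$ in general is not continuous and does \emph{not} satisfy base change or transitivity (see \secref{sss:warning} and \secref{sss:trans contr}). What saves us is that every non-trivial map in the diagram above is a diagonal of a QCA (indeed algebraic, with schematic quasi-compact diagonal) stack, hence is schematic and quasi-compact; and a projection $p_\CY$ gets handled by identifying $\Gamma_{\on{dR}}(\CY,-)$ with $\CMaps_{\Dmod(\CY)}(k_\CY,-)$, which commutes with the relevant limits. So I would be explicit that each application of \propref{p:de Rham trans}, \lemref{l:de Rham trans}, and the base-change isomorphism of \secref{ss:base change Dmod} is invoked only for schematic quasi-compact morphisms, where these statements are unconditional. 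One should also double-check the sign/shift bookkeeping: $k_\CY$ and $\omega_\CY$ are interchanged by $\BD^{\on{Verdier}}_\CY$ with no shift (\secref{ss:CohVer}), and the various $\otimes$ with $\omega_\CY$ or $k_\CY$ are units for $\sotimes$, so no spurious cohomological shift is introduced. Once these compatibilities are laid out, the computation is the formal one and the proof concludes. \qed
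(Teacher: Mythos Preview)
There is a genuine gap. You assert, citing \lemref{l:pairing Dmod comp}, that $\epsilon_{\Dmod(\CY)}$ equals the composite $\Gamma_{\on{dR}}(\CY,-)\circ\Delta_\CY^!$. That lemma does not say this: it only identifies $\langle\CM,\CM'\rangle_{\Dmod(\CY)}$ with $\Gamma_{\on{dR}}(\CY,\CM\sotimes\CM')$ when $\CM$ is \emph{compact}. In general $\Gamma_{\on{dR}}(\CY,-)$ is not continuous (this is precisely the point of \secref{sss:warning}), so $\Gamma_{\on{dR}}(\CY,-)\circ\Delta_\CY^!$ is not even a morphism in $\StinftyCat_{\on{cont}}$ and cannot be the evaluation map of a duality datum there. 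The correct description is $\epsilon_{\Dmod(\CY)}=\Gamma_{\rd}(\CY,-)\circ\Delta_\CY^!$, with the \emph{renormalized} de Rham cohomology; this is \corref{c:pairing Dmod}. Consequently, your proposed zig-zag computation---replacing $(p_\CY)^{\IndCoh}_*$ by $\Gamma_{\on{dR}}(\CY,-)$ in the proof of \propref{p:pairing for IndCoh}---is not checking the triangle identity for the actual duality. Your remark that $\Gamma_{\on{dR}}(\CY,-)=\CMaps(k_\CY,-)$ ``commutes with the relevant limits'' does not help: what is needed for $\epsilon$ is commutation with \emph{colimits}, which fails. (As a minor point, you cite \propref{p:de Rham trans} for the base change isomorphism, but that proposition is the transitivity statement; base change for schematic quasi-compact maps is the content of \secref{sss:representable}.)

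The paper's proof takes a shorter and safer route. Once $\epsilon_{\Dmod(\CY)}=\Gamma_{\rd}(\CY,-)\circ\Delta_\CY^!$ is established, one simply observes that $\mu_{\Dmod(\CY)}$ is the \emph{dual} of $\epsilon_{\Dmod(\CY)}$ under $\bD^{\on{Verdier}}_{\CY\times\CY}$, and then computes that dual piece by piece: $(\Delta_\CY^!)^\vee\simeq(\Delta_\CY)_\dr$ by \propref{p:duality on morphisms schematic} (applicable since $\Delta_\CY$ is schematic and quasi-compact), and $(\Gamma_{\rd}(\CY,-))^\vee\simeq p_\CY^!$ is the very definition of $\Gamma_{\rd}$ (Definition~\ref{d:ren dr}). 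No direct verification of a triangle identity is needed. Your approach can be salvaged by replacing $\Gamma_{\on{dR}}$ with $\Gamma_{\rd}=(p_\CY)_{\blacktriangle}$ throughout and then using transitivity of $\pi_{\blacktriangle}$ together with $\Delta_{\blacktriangle}\simeq\Delta_\dr$; but at that point you are re-deriving the paper's dualization argument in a longer form.
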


\sssec{}

Let now $\pi:\CY_1\to \CY_2$ be a schematic quasi-compact morphism between QCA stacks. Recall the notion of the dual
functor, see \secref{sss:dual functors}. We claim:

\begin{prop}  \label{p:duality on morphisms schematic}
The functors 
$$\pi_\dr:\Dmod(\CY_1)\to \Dmod(\CY_2) \text{ and } \pi^!:\Dmod(\CY_2)\to \Dmod(\CY_1)$$
are related by $(\pi_\dr)^\vee\simeq \pi^!$ in terms of the self-dualities $\bD^{\on{Verdier}}_{\CY_i}:\Dmod(\CY_i)^\vee\simeq \Dmod(\CY_i)$.
\end{prop}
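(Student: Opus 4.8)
\textbf{Proof plan for \propref{p:duality on morphisms schematic}.}
The plan is to verify the defining property of the dual functor in terms of the canonical pairings. Recall from \secref{sss:dual functors} that, given the self-dualities $\bD^{\on{Verdier}}_{\CY_i}:\Dmod(\CY_i)^\vee\simeq \Dmod(\CY_i)$, showing $(\pi_\dr)^\vee\simeq \pi^!$ is equivalent to exhibiting a functorial isomorphism
$$\langle \pi_\dr(\CM_1),\CM_2\rangle_{\Dmod(\CY_2)}\simeq \langle \CM_1,\pi^!(\CM_2)\rangle_{\Dmod(\CY_1)},\qquad \CM_1\in \Dmod(\CY_1),\ \CM_2\in \Dmod(\CY_2),$$
compatible with the duality data. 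Since both sides are continuous in each variable, and since $\Dmod(\CY_2)$ is compactly generated by \thmref{t:Dmods} with compact objects Karoubi-generated by $\ind_{\Dmod(\CY_2)}(\Coh(\CY_2))$ (\corref{c:compactDmods}), it suffices to construct this isomorphism when $\CM_2\in \Dmod(\CY_2)^c$.

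The key step is then a direct computation using \lemref{l:pairing Dmod comp}, which expresses $\langle \CM,\CM'\rangle_{\Dmod(\CY)}$ as $\Gamma_{\on{dR}}(\CY,\CM\sotimes \CM')$ whenever the first argument is compact. So for $\CM_2\in \Dmod(\CY_2)^c$ we rewrite the left-hand side, using Verdier self-duality of $\CM_2$, as
$$\langle \pi_\dr(\CM_1),\CM_2\rangle_{\Dmod(\CY_2)}\simeq \langle \CM_2,\pi_\dr(\CM_1)\rangle_{\Dmod(\CY_2)}\simeq \Gamma_{\on{dR}}(\CY_2,\CM_2\sotimes \pi_\dr(\CM_1)).$$
Since $\pi$ is schematic and quasi-compact, the strong projection formula of \secref{sss:ex proj formula}(i) applies, giving $\CM_2\sotimes \pi_\dr(\CM_1)\simeq \pi_\dr(\pi^!(\CM_2)\sotimes \CM_1)$; combined with the transitivity isomorphism $\Gamma_{\on{dR}}(\CY_2,\pi_\dr(-))\simeq \Gamma_{\on{dR}}(\CY_1,-)$ of \lemref{l:de Rham trans}, this identifies the expression with $\Gamma_{\on{dR}}(\CY_1,\pi^!(\CM_2)\sotimes \CM_1)$. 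Finally, noting that $\pi^!(\CM_2)$ lies in $\Dmod_{\on{coh}}(\CY_1)$ — hence in $\Dmod(\CY_1)^c$ — because $\pi^!$ preserves coherence (as $\pi$ is schematic and quasi-compact, so $\pi^!$ has bounded cohomological amplitude and commutes with $g^!$), another application of \lemref{l:pairing Dmod comp} rewrites this as $\langle \pi^!(\CM_2),\CM_1\rangle_{\Dmod(\CY_1)}\simeq \langle \CM_1,\pi^!(\CM_2)\rangle_{\Dmod(\CY_1)}$, as desired.

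The remaining work is to check that these isomorphisms are functorial in $\CM_1$ and in $\CM_2$ and that they assemble into an identification of the object of $\Dmod(\CY_1\times\CY_2)$ representing $(\pi_\dr)^\vee$ with the one representing $\pi^!$; this is best done at the level of kernels, comparing $(\on{id}_{\CY_1}\times\pi)_\dr\circ(\Delta_{\CY_1})_\dr(\omega_{\CY_1})$ with $(\pi\times\on{id}_{\CY_2})^!\circ(\Delta_{\CY_2})_\dr(\omega_{\CY_2})$ via base change along the graph of $\pi$, exactly as in the proof of \propref{p:duality of * and !} for $\IndCoh$. The main obstacle is the bookkeeping needed to pass from the pointwise isomorphism on compact objects to an identification compatible with the full duality data $\epsilon,\mu$; here one invokes \propref{p:unit Dmod} describing $\mu_{\Dmod(\CY)}(k)$ as $(\Delta_\CY)_\dr(\omega_\CY)$, so that the kernel computation is unambiguous, and the base-change and transitivity isomorphisms used above are precisely the ones packaged into the functoriality of $\Dmod_{(\on{DGSch}_{\on{aft}})_{\on{corr}}}$.
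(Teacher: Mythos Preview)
Your overall approach is the paper's: reduce to the canonical pairings via \lemref{l:pairing Dmod comp}, then use the projection formula for schematic quasi-compact $\pi$ and the transitivity isomorphism of \lemref{l:de Rham trans}. But there is a genuine error in the last step. You write that $\pi^!(\CM_2)$ lies in $\Dmod_{\on{coh}}(\CY_1)$ ``hence in $\Dmod(\CY_1)^c$''. That implication is false for stacks: one of the main points of \secref{ss:coherence&compactness} is precisely that $\Dmod(\CY)^c\subsetneq\Dmod_{\on{coh}}(\CY)$ in general (e.g.\ $k_{B\BG_m}$ is coherent but not compact). So you cannot invoke \lemref{l:pairing Dmod comp} with $\pi^!(\CM_2)$ in the compact slot. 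The fix is immediate and is what the paper does: since both sides are also continuous in $\CM_1$, restrict \emph{both} $\CM_1$ and $\CM_2$ to compact objects; then for the right-hand side use the symmetry of the pairing and apply \lemref{l:pairing Dmod comp} with $\CM_1$ in the compact slot, obtaining $\langle\pi^!(\CM_2),\CM_1\rangle\simeq\langle\CM_1,\pi^!(\CM_2)\rangle\simeq\Gamma_{\on{dR}}(\CY_1,\CM_1\sotimes\pi^!(\CM_2))$.

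Your final paragraph is also problematic. The kernel comparison you propose invokes \propref{p:unit Dmod}, but that proposition is proved later (in \secref{sss:proof unit Dmod}) \emph{using} the present \propref{p:duality on morphisms schematic}, so the appeal is circular. In any case no such ``remaining work'' is needed: a functorial isomorphism of the pairings on compact objects, together with continuity in each variable, already identifies the two continuous functors $(\pi_\dr)^\vee$ and $\pi^!$.
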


\begin{proof}
It suffices to construct a functorial isomorphism for $\CM_i\in \Dmod(\CY_i)^c$:
$$\langle \CM_2,\pi_\dr(\CM_1)\rangle_{\Dmod(\CY_2)}\simeq 
\langle \pi^!(\CM_2),\CM_1\rangle_{\Dmod(\CY_1)}.$$

By \lemref{l:pairing Dmod comp} we rewrite the left-hand side as 
$$\Gamma_{\on{dR}}(\CY_2,\CM_2\sotimes \pi_\dr(\CM_1)),$$
which by the projection formula \eqref{e:proj formula Dmod sch}
identifies with
$$\Gamma_{\on{dR}}(\CY_2,\pi_\dr(\pi^!(\CM_2)\sotimes \CM_1)).$$

However, by \lemref{l:de Rham trans}, the latter identifies with 
$$\Gamma_{\on{dR}}(\CY_1,\pi^!(\CM_2)\sotimes \CM_1),$$
which in turn identifies with
$\langle \pi^!(\CM_2),\CM_1\rangle_{\Dmod(\CY_1)}$
again by \lemref{l:pairing Dmod comp}.

\end{proof}

\section{Renormalized de Rham cohomology and safety}  \label{s:renormalized}

As we saw in \secref{sss:warning}, for a QCA algebraic stack $\CY$, the functor $\Gamma_{\on{dR}}(\CY,-)$
is not necessarily continuous. In this section we shall introduce a new functor, denoted $\Gamma_\rd(\CY,-)$
that we will refer to as ``renormalized de Rham cohomology". This functor will be continuous, and we will
have a natural transformation
$$\Gamma_\rd(\CY,-)\to \Gamma_{\on{dR}}(\CY,-).$$
We shall also introduce a class of objects on $\Dmod(\CY)$, called \emph{safe}, for which the above natural transformation
is an equivalence.

\medskip

In this section all algebraic stacks will be assumed QCA, unless specified otherwise. 

\ssec{Renormalized de Rham cohomology}   \label{ss:ren de Rham}

Recall the notion of the dual functor from \secref{sss:dual functors}. 

\begin{defn} \label{d:ren dr}
For a QCA algebraic stack $\CY$ we define the \emph{continuous} functor 
$$\Gamma_\rd(\CY,-):\Dmod(\CY)\to \Vect$$
to be the dual of 
$$\pi_\CY^!:\Vect\to \Dmod(\CY),\quad k\mapsto \omega_\CY$$
under the identifications 
$$\bD^{\on{Verdier}}_\CY:\Dmod(\CY)^\vee\simeq \Dmod(\CY) \text{ and }\Vect^\vee\simeq \Vect.$$
\end{defn}

Note that if $\CY$ is a scheme $Z$, by \eqref{e:duality_formula}, we have $\Gamma_{\rd}(Z,-)\simeq \Gamma_{\on{dR}}(Z,-)$.

\begin{rem}
Presumably, the functor analogous to $\Gamma_{\rd}(Z,-)$ can be defined in other
sheaf-theoretic situations, e.g., for the derived category of sheaves with constructible cohomologies
for stacks over the field of complex numbers.
\end{rem}

\sssec{}

Here is a more explicit description of the functor $\Gamma_{\rd}(Z,-)$. 

\begin{lem} \label{l:renormalized de Rham as ind-extension}
The functor $\Gamma_\rd(\CY,-)$ (see \secref{sss:pairing on QCA} for the notation)
is canonically isomorphic to
the ind-extension of the functor
$$\Gamma_{\on{dR}}(\CY,-)|_{\Dmod(\CY)^c}:\Dmod(\CY)^c\to \Vect.$$
\end{lem}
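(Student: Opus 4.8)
The statement to prove is \lemref{l:renormalized de Rham as ind-extension}: that $\Gamma_\rd(\CY,-)$ is the ind-extension of $\Gamma_{\on{dR}}(\CY,-)|_{\Dmod(\CY)^c}$. The plan is to unwind both sides using the self-duality $\bD^{\on{Verdier}}_\CY:\Dmod(\CY)^\vee\simeq \Dmod(\CY)$ and the general description of dual functors from \secref{sss:dual functors}. First I would record what it means concretely: by \secref{sss:dual functors}, for a continuous functor $F:\bC_1\to \bC_2$ between dualizable categories, $F^\vee:\bC_2^\vee\to \bC_1^\vee$ sends $\Phi\in \on{Funct}_{\on{cont}}(\bC_2,\Vect)$ to $\Phi\circ F$. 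Applying this with $F=\pi_\CY^!:\Vect\to \Dmod(\CY)$, and using that $\Vect^\vee\simeq\Vect$ is the identity self-duality, I get that $\Gamma_\rd(\CY,-)$, under $\bD^{\on{Verdier}}_\CY$, corresponds to precomposition with $\pi_\CY^!$; i.e., for $\CM\in \Dmod(\CY)$,
$$
\CMaps_\Vect(\Gamma_\rd(\CY,\CM),k)\simeq \langle \CM,\omega_\CY\rangle_{\Dmod(\CY)}^{\text{something}},
$$
but more usefully: $\Gamma_\rd(\CY,-)$ is characterized by the property that it is the unique continuous functor whose dual (under $\bD^{\on{Verdier}}_\CY$ and $\id_\Vect$) is $k\mapsto\omega_\CY$.

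\textbf{Key steps.} The cleanest route uses \secref{sss:properties of duality}(ii') together with \thmref{t:Dmods}. Since $\CY$ is QCA, $\Dmod(\CY)=\Ind(\Dmod(\CY)^c)$, so $\Dmod(\CY)^\vee=\Ind((\Dmod(\CY)^c)^{\on{op}})$, and under this description a continuous functor $\Dmod(\CY)\to\Vect$ is the same datum as a functor $\Dmod(\CY)^c\to\Vect$ (its restriction), with the ind-extension recovering it; this is exactly \secref{sss:ind-compl}. So it suffices to identify the restriction of $\Gamma_\rd(\CY,-)$ to $\Dmod(\CY)^c$ with $\Gamma_{\on{dR}}(\CY,-)|_{\Dmod(\CY)^c}$. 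For $\CM\in\Dmod(\CY)^c$, I compute $\Gamma_\rd(\CY,\CM)$ using the pairing $\epsilon_{\Dmod(\CY)}$: by \secref{sss:dual functors}, the datum of the functor $k\mapsto\omega_\CY:\Vect\to\Dmod(\CY)$ corresponds to the object $\omega_\CY\in\Dmod(\CY)\simeq \Vect^\vee\otimes\Dmod(\CY)$, and its dual functor $\Gamma_\rd(\CY,-):\Dmod(\CY)\to\Vect$ corresponds to the same object viewed in $\Dmod(\CY)^\vee\otimes\Vect$, i.e.\ it is $\CM\mapsto \epsilon_{\Dmod(\CY)}(\CM\otimes \bD_\CY^{-1}(\omega_\CY))$ — more precisely, since $\bD_\CY^{\on{Verdier}}$ sends $\omega_\CY$ to $k_\CY$ (recall $k_Z\simeq \BD^{\on{Verdier}}_Z(\omega_Z)$, hence $k_\CY\simeq\BD^{\on{Verdier}}_\CY(\omega_\CY)$, by \secref{ss:CohVer} and the stacky analogue in \secref{sss:Verdier_on_stacks}), I get $\Gamma_\rd(\CY,\CM)\simeq \langle \CM, k_\CY\rangle_{\Dmod(\CY)}$ — wait, rather the correct bookkeeping gives $\Gamma_\rd(\CY,\CM)=\langle\CM,\omega_\CY\rangle_{\Dmod(\CY)}$ once one is careful about which dual identification is used on the $\omega$-slot. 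For $\CM$ compact, \lemref{l:pairing Dmod comp} evaluates this as $\Gamma_{\on{dR}}(\CY,\CM\sotimes\omega_\CY)\simeq\Gamma_{\on{dR}}(\CY,\CM)$, the last step because $\omega_\CY$ is the unit of $\sotimes$ on $\Dmod(\CY)$ (see \secref{sss:tens_schemes} and its stacky version). This gives the required identification on $\Dmod(\CY)^c$, and then ind-extension finishes it: both $\Gamma_\rd(\CY,-)$ (by construction, being a dual functor, hence continuous) and the ind-extension of $\Gamma_{\on{dR}}(\CY,-)|_{\Dmod(\CY)^c}$ are continuous functors $\Dmod(\CY)\to\Vect$ agreeing on the compact generators $\Dmod(\CY)^c$, hence are canonically isomorphic.

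\textbf{Main obstacle.} The genuine content is the careful bookkeeping of duality data in the second step: tracking precisely which of the two occurrences of $\bD^{\on{Verdier}}_\CY$ (on the source vs.\ on the target slot) is invoked when passing from the functor $k\mapsto\omega_\CY$ to its dual $\Gamma_\rd(\CY,-)$, and making sure the resulting pairing is $\langle\CM,\omega_\CY\rangle_{\Dmod(\CY)}$ rather than, say, $\langle\CM,k_\CY\rangle$. This is the point where one must invoke \secref{sss:dual functors} at the level of points in $\on{Maps}_{\StinftyCat_{\on{cont}}}(1,\bC_1^\vee\otimes\bC_2)$ and use that the ``self-duality-twisted'' object attached to $k\mapsto\omega_\CY$ is literally $\omega_\CY\in\Dmod(\CY)$. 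Once that is pinned down, the rest — reducing to compact objects via \thmref{t:Dmods}, applying \lemref{l:pairing Dmod comp}, and using that $\omega_\CY$ is the monoidal unit for $\sotimes$ — is routine. A secondary but purely formal point to verify is that $\Gamma_\rd(\CY,-)$ is indeed continuous, which is automatic since a dual functor of a functor between dualizable categories is continuous by \secref{sss:dual functors}.
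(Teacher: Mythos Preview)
Your proposal is correct and takes essentially the same approach as the paper. The paper's proof is a two-line version of exactly what you do: it observes that by the definition of $\Gamma_\rd(\CY,-)$ as the dual of $p_\CY^!$, one has $\Gamma_\rd(\CY,\CM)\simeq\langle\CM,p_\CY^!(k)\rangle_{\Dmod(\CY)}=\langle\CM,\omega_\CY\rangle_{\Dmod(\CY)}$, and then for $\CM\in\Dmod(\CY)^c$ this equals $\Gamma_{\on{dR}}(\CY,\CM\sotimes\omega_\CY)=\Gamma_{\on{dR}}(\CY,\CM)$ by \lemref{l:pairing Dmod comp}; ind-extension does the rest.

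Your hesitation between $k_\CY$ and $\omega_\CY$ is the only wobble, and you resolved it correctly: the functor $k\mapsto\omega_\CY$ corresponds to the object $\omega_\CY\in\Vect^\vee\otimes\Dmod(\CY)\simeq\Dmod(\CY)$, and its dual under $\bD^{\on{Verdier}}_\CY$ is the functor $\CM\mapsto\langle\CM,\omega_\CY\rangle_{\Dmod(\CY)}$. The paper simply treats this identification as immediate from the definition of dual functor and does not comment on it.
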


\begin{proof}
We only have to show that the pairing $\langle-,-\rangle_{\Dmod(\CY)}$
corresponding to the self-duality of $\Dmod(\CY)$ satisfies
$$\langle \CM,p_\CY^!(k)\rangle_{\Dmod(\CY)}\simeq \Gamma_{\on{dR}}(\CY,\CM)$$
for $\CM\in \Dmod(\CY)^c$. However, this is immediate from \lemref{l:pairing Dmod comp}.
\end{proof}

\begin{cor} \label{c:nat trans to ren}
There exists a canonically defined natural transformation
\begin{equation} \label{e:nat trans to ren}
\Gamma_\rd(\CY,-)\to \Gamma_{\on{dR}}(\CY,-),
\end{equation}
which is an isomorphism when restricted to compact objects.
\end{cor}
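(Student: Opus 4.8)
The statement to prove is Corollary~\ref{c:nat trans to ren}: there is a natural transformation $\Gamma_\rd(\CY,-)\to\Gamma_{\on{dR}}(\CY,-)$ which restricts to an isomorphism on compact objects. The strategy is to read off everything from Lemma~\ref{l:renormalized de Rham as ind-extension}, together with the universal property of ind-extension recalled in \secref{sss:ind-compl}. By that lemma, $\Gamma_\rd(\CY,-)$ is canonically the ind-extension of the restriction $\Gamma_{\on{dR}}(\CY,-)|_{\Dmod(\CY)^c}$. Since $\CY$ is QCA, \thmref{t:Dmods} gives $\Dmod(\CY)=\Ind(\Dmod(\CY)^c)$, so the ind-extension is indeed a continuous endofunctor of $\Dmod(\CY)$ to $\Vect$, and tautologically $\Gamma_\rd(\CY,\CM)\simeq\Gamma_{\on{dR}}(\CY,\CM)$ for $\CM\in\Dmod(\CY)^c$. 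This already furnishes the ``isomorphism on compact objects'' half of the claim; it remains to produce the natural transformation to the (non-renormalized, possibly non-continuous) functor $\Gamma_{\on{dR}}(\CY,-)$ on all of $\Dmod(\CY)$.

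First I would invoke the universal property of $\Ind(\Dmod(\CY)^c)$ in the form: for any target DG category (here $\Vect$) and any functor out of it, precomposition with the inclusion $\Dmod(\CY)^c\hookrightarrow\Dmod(\CY)$ identifies continuous functors $\Dmod(\CY)\to\Vect$ with \emph{all} (exact) functors $\Dmod(\CY)^c\to\Vect$. Applying this to the (non-continuous) functor $\Gamma_{\on{dR}}(\CY,-)$: its restriction $\Gamma_{\on{dR}}(\CY,-)|_{\Dmod(\CY)^c}$ is an exact functor $\Dmod(\CY)^c\to\Vect$, and the ind-extension of this restriction is, by definition, $\Gamma_\rd(\CY,-)$. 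Now, for any functor $F:\bC\to\bD$ with $\bC$ cocomplete and compactly generated, there is a canonical natural transformation from the ind-extension of $F|_{\bC^c}$ to $F$ itself: on an object $\bc=\colim_\alpha \bc_\alpha$ written as a filtered colimit of compacts, the ind-extension evaluates to $\colim_\alpha F(\bc_\alpha)$, and the structure maps $F(\bc_\alpha)\to F(\bc)$ assemble (using that filtered colimits in $\bC$ are computed as in $\Ind(\bC^c)$) into a map $\colim_\alpha F(\bc_\alpha)\to F(\bc)$; naturality and independence of the presentation follow from the universal property. Taking $\bC=\Dmod(\CY)$, $\bD=\Vect$, $F=\Gamma_{\on{dR}}(\CY,-)$ yields exactly the desired $\Gamma_\rd(\CY,-)\to\Gamma_{\on{dR}}(\CY,-)$, and by construction it is the identity on $\Dmod(\CY)^c$, hence an isomorphism there.

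I do not expect a serious obstacle here; the corollary is essentially a formal consequence of \thmref{t:Dmods}, \lemref{l:renormalized de Rham as ind-extension}, and the generalities on ind-completion in \secref{sss:ind-compl} and on compactness in \secref{sss:DG categories}. The one point deserving a sentence of care is the verification that the canonical map ``ind-extension of $F|_{\bC^c}\to F$'' is genuinely natural and well-defined independently of the chosen filtered presentation of an object as a colimit of compacts; this is handled by appealing to the universal property of $\Ind(\bC^c)$ characterizing continuous functors out of it, rather than by hand. Equivalently, and perhaps most cleanly, one may instead argue via duality: $\Gamma_\rd(\CY,-)$ is dual to $p_\CY^!:\Vect\to\Dmod(\CY)$, so it corresponds under $\bD^{\on{Verdier}}_\CY$ to an object of $\Dmod(\CY)$, namely (by unwinding \secref{sss:dual functors}) to $\BD^{\on{Verdier}}_\CY$ applied to the appropriate compact building blocks; one then produces the natural transformation as the one induced by the canonical comparison between $\CMaps$ out of an object of $\Dmod(\CY)^c$ and its value on an arbitrary object, combined with \lemref{l:pairing Dmod comp}. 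Either route is short; I would present the ind-extension argument as the primary one and remark that it is just the failure of $\Gamma_{\on{dR}}(\CY,-)$ to commute with filtered colimits that is being measured.
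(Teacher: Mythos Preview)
Your proposal is correct and takes essentially the same approach as the paper: the corollary is stated immediately after \lemref{l:renormalized de Rham as ind-extension} without a separate proof, and your argument---producing the natural transformation as the canonical comparison map (equivalently, the counit of the left-Kan-extension/restriction adjunction) from the ind-extension of $\Gamma_{\on{dR}}(\CY,-)|_{\Dmod(\CY)^c}$ to $\Gamma_{\on{dR}}(\CY,-)$---is exactly the intended unpacking. The remark that naturality and well-definedness follow from the universal property of $\Ind(\Dmod(\CY)^c)$ rather than a chosen presentation is the right way to handle the one subtle point.
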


In general, the failure of the natural transformation \eqref{e:nat trans to ren} to be
an isomorphism is a measure to which the functor $\Gamma_{\on{dR}}(\CY,-)$
fails to be continuous.

\begin{example}   \label{ex:ren for BG}

As an illustration, let us compute the functor $\Gamma_{\rd}(\CY,-)$ for $\CY=BG$, see \secref{ss:BG}.
Let $B$ be as in \eqref{e:homology of group}. We saw in {\it loc.cot.} that the functor $\Gamma_{\on{dR}}(BG,-)$ 
is given by $\CMaps_{B\mod}(k,-)$.

\medskip

We claim now that the functor $\Gamma_{\rd}(BG,-)$ is given by
$$M\mapsto k\underset{B}\otimes M[-2\dim(G)+\delta],$$
where $\delta$ is the degree of the highest cohomology group
of $\Gamma_{\on{dR}}(G,k_G)$.

\medskip

Explicitly,
$$
\begin{cases}
&\delta=0, \text{ if $G$ is unipotent}; \\
&\delta=\dim(G), \text{ if $G$ is reductive}; \\
&\delta=2\dim(G), \text{ if $G$ is an abelian variety}.
\end{cases}
$$

\medskip

Recall that $\sigma$ denotes the map $\on{pt}\to BG$, and recall that 
$\sigma_!(k)$ is a compact generator of $\Dmod(BG)$. Hence, it suffices to show that 
$$\Gamma_{\on{dR}}(BG,\sigma_!(k))\simeq k[-2\dim(G)+\delta],$$
as modules over $B\simeq \CMaps_{\Dmod}(\sigma_!(k),\sigma_!(k))$. 

\medskip

Note that $$\sigma_!(k)\simeq \sigma_\dr(k)[-2\dim(G)+\delta],$$
so the required assertion follows from the isomorphism
$$\Gamma_{\on{dR}}(BG,\sigma_\dr(k))\simeq \Gamma_{\on{dR}}(\on{pt},k)=k.$$

\end{example}

\begin{example}  \label{ex:ren dr of ind}

We claim that the functor 
$$\Gamma_\rd(\CY,-)\circ \ind_{\Dmod(\CY)}$$ identifies canonically with
$\Gamma^{\IndCoh}(\CY,-)$. 

\medskip

Both functors are continuous, so it is enough to construct the isomorphism
on the subcategory $\Coh(\CY)\subset \IndCoh(\CY)$. In the latter case the assertion follows from
\lemref{l:renormalized de Rham as ind-extension} and \propref{p:de Rham and ind}.

\medskip

Moreover, we obtain that the natural transformation \eqref{e:nat trans to ren} induces an isomorphism
$$\Gamma_\rd(\CY,-)\circ \ind_{\Dmod(\CY)}\to \Gamma_{\on{dR}}(\CY,-)\circ \ind_{\Dmod(\CY)}.$$
As we shall see shortly, the latter isomorphism is a general phenomenon that holds for all
\emph{safe} objects of $\Dmod(\CY)$.

\end{example}

\ssec{Safe objects of $\Dmod(\CY)$}

\begin{defn}
An object $\CM\in \on{D-mod}(\CY)$ is said to be \emph{safe} if the functor
$$\CM'\mapsto \Gamma_{\on{dR}}(\CY,\CM\sotimes \CM'):\on{D-mod}(\CY)\to \Vect$$
is continuous. 
\end{defn}

Its is clear that safe objects of $\on{D-mod}(\CY)$ form a (non-cocomplete) DG subcategory
(i.e., the condition of being safe survives taking cones). 

\medskip

It is also clear that the subcategory of safe objects in $\on{D-mod}(\CY)$ is a tensor ideal
with respect to $\sotimes$. Indeed, if $\CM$ is safe, then so are all $\CM\sotimes \CM'$.

\sssec{}

The notion of safety is what allows us to distinguish compact objects among the larger
subcategory $\CM\in \on{D-mod}_{\on{coh}}(\CY)$:

\begin{prop}  \label{compactness via safety}
Then the following properties of an object $\CM\in \on{D-mod}_{\on{coh}}(\CY)$ are equivalent:
\begin{enumerate}
\item[(a)] $\CM$ is compact;

\item[(b)]  $\CM$ is safe;

\item[(c)]  $\BD^{\on{Verdier}}_\CY (\CM)$ is safe.
\end{enumerate}
\end{prop}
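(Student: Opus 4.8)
The plan is to prove the equivalences (a) $\Leftrightarrow$ (b) and (b) $\Leftrightarrow$ (c) separately, using the self-duality $\bD^{\on{Verdier}}_\CY:\Dmod(\CY)^\vee\simeq\Dmod(\CY)$ established in Corollary~\ref{duality on compact} together with \lemref{l:Verdier_stacks}. First I would record the key reformulation: for $\CM\in\Dmod_{\on{coh}}(\CY)$, \lemref{l:Verdier_stacks} gives a functorial isomorphism
$$\CMaps_{\Dmod(\CY)}(\BD^{\on{Verdier}}_\CY(\CM),\CM')\simeq\Gamma_{\on{dR}}(\CY,\CM\sotimes\CM'),$$
for all $\CM'\in\Dmod(\CY)$. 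Hence the functor $\CM'\mapsto\Gamma_{\on{dR}}(\CY,\CM\sotimes\CM')$ is continuous if and only if $\CMaps_{\Dmod(\CY)}(\BD^{\on{Verdier}}_\CY(\CM),-)$ is continuous, i.e.\ if and only if $\BD^{\on{Verdier}}_\CY(\CM)$ is a compact object of $\Dmod(\CY)$. This immediately identifies ``$\CM$ safe'' with ``$\BD^{\on{Verdier}}_\CY(\CM)$ compact''.

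Next I would invoke Corollary~\ref{duality on compact}: the Verdier duality functor $\BD^{\on{Verdier}}_\CY$ restricts to an (involutive, anti-)self-equivalence of $\Dmod(\CY)^c$, and in particular it preserves $\Dmod(\CY)^c$ inside $\Dmod_{\on{coh}}(\CY)$. Therefore, for $\CM\in\Dmod_{\on{coh}}(\CY)$, the object $\BD^{\on{Verdier}}_\CY(\CM)$ is compact if and only if $\CM$ itself is compact (using that $\BD^{\on{Verdier}}_\CY$ is an involution on $\Dmod_{\on{coh}}(\CY)$ by Sect.~\ref{sss:Verdier_on_stacks}, and that it carries the non-cocomplete subcategory $\Dmod(\CY)^c$ into itself). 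Combining with the previous paragraph yields (a) $\Leftrightarrow$ (b). Finally, for (b) $\Leftrightarrow$ (c): apply the equivalence (a) $\Leftrightarrow$ (b) to the coherent object $\BD^{\on{Verdier}}_\CY(\CM)$ in place of $\CM$, and use $\BD^{\on{Verdier}}_\CY(\BD^{\on{Verdier}}_\CY(\CM))\simeq\CM$, so that $\BD^{\on{Verdier}}_\CY(\CM)$ is safe iff $\CM=\BD^{\on{Verdier}}_\CY(\BD^{\on{Verdier}}_\CY(\CM))$ is compact iff $\CM$ is safe.

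There is one point requiring care, which I expect to be the main (though minor) obstacle: the argument above shows ``$\CM$ safe $\Leftrightarrow$ $\BD^{\on{Verdier}}_\CY(\CM)$ compact'' quite directly, but to close (a) $\Leftrightarrow$ (b) I must know that compactness of $\CM$ is equivalent to compactness of $\BD^{\on{Verdier}}_\CY(\CM)$, \emph{and} this is exactly the content that makes Corollary~\ref{duality on compact} applicable --- one needs that $\Dmod(\CY)^c$ is closed under $\BD^{\on{Verdier}}_\CY$ \emph{as a subcategory of $\Dmod_{\on{coh}}(\CY)$}, which in turn rested (via Corollary~\ref{c:compactDmods} and Corollary~\ref{c:Verdier-Serre_on_stacks}) on Theorem~\ref{t:Dmods} and on the compatibility of Verdier and Serre dualities. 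Since all of these are available in the excerpt, the proof is essentially a bookkeeping exercise once the \lemref{l:Verdier_stacks} reformulation is in place; no new geometric input is needed. I would present it in the order: (1) state the \lemref{l:Verdier_stacks} identity; (2) deduce ``safe $\Leftrightarrow$ dual is compact''; (3) invoke Corollary~\ref{duality on compact} to get ``dual is compact $\Leftrightarrow$ compact'', giving (a) $\Leftrightarrow$ (b); (4) apply (a) $\Leftrightarrow$ (b) to $\BD^{\on{Verdier}}_\CY(\CM)$ and use involutivity to get (b) $\Leftrightarrow$ (c).
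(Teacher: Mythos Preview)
Your proposal is correct and essentially identical to the paper's proof: both use \lemref{l:Verdier_stacks} to identify safety of a coherent object with compactness of its Verdier dual, and then invoke \corref{duality on compact} (which rests on the QCA hypothesis via \thmref{t:Dmods}) to pass between compactness of $\CM$ and of $\BD^{\on{Verdier}}_\CY(\CM)$. The only cosmetic difference is the order of the equivalences---the paper records (a)~$\Leftrightarrow$~(c) first (applying \lemref{l:Verdier_stacks} with $\BD^{\on{Verdier}}_\CY(\CM)$ in place of $\CM$) and then (a)~$\Leftrightarrow$~(b), whereas you do (a)~$\Leftrightarrow$~(b) first---but the substance is the same.
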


\begin{proof}
By Lemma~\ref{l:Verdier_stacks}, (a) is equivalent to (c). So (b) is equivalent to the
compactness of $\BD^{\on{Verdier}}_\CY (\CF )$. The latter is equivalent to (a) by
\corref{duality on compact} (it is here that we use that $\CY$ is QCA).
\end{proof}

Note, however, safe objects do not have to be coherent or cohomologically bounded: 

\begin{example}  \label{ex:induced is safe}

We claim that all objects of the form $\ind_{\Dmod(\CY)}(\CF)$, $\CF\in \IndCoh(\CY)$, are safe. Indeed, by
\lemref{l:tensor with induction stacks} and \propref{p:de Rham and ind}, for $\CM\in \Dmod(\CY)$ 
$$\Gamma_{\on{dR}}(\CY,\ind_{\Dmod(\CY)}(\CF)\sotimes \CM)\simeq
\Gamma^{\IndCoh}(\CY,\CF\sotimes \oblv_{\Dmod(\CY)}(\CM)),$$
and the latter functor is continuous.

\end{example}

\sssec{}

The following will be useful in the sequel:

\begin{lem}  \label{l:safety preserved schematic pullback}
Let $\pi:\CY_1\to \CY_2$ be schemtaic. If $\CM_2\in \Dmod(\CY_2)$ is safe, then so is
$\pi^!(\CY_2)\in \Dmod(\CY_1)$.
\end{lem}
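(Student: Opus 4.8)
The goal is to show that $\pi^!(\CM_2)$ is safe on $\CY_1$, i.e.\ that the functor
$$\CM_1\mapsto \Gamma_{\on{dR}}(\CY_1,\pi^!(\CM_2)\sotimes \CM_1):\Dmod(\CY_1)\to \Vect$$
is continuous, knowing that the analogous functor for $\CM_2$ on $\CY_2$ is continuous. The idea is to push everything down to $\CY_2$ using the projection formula and transitivity of de Rham cohomology, exploiting that $\pi$ is schematic and quasi-compact (so that $\pi_\dr$ is a well-behaved continuous functor that satisfies base change and the projection formula, see Sections \ref{sss:representable} and \ref{sss:easy case}).

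\textbf{Key steps.} First I would rewrite, for $\CM_1\in\Dmod(\CY_1)$,
$$\Gamma_{\on{dR}}(\CY_1,\pi^!(\CM_2)\sotimes \CM_1)\simeq \Gamma_{\on{dR}}(\CY_2,\pi_\dr(\pi^!(\CM_2)\sotimes \CM_1)),$$
using \lemref{l:de Rham trans} (transitivity of $(\on{dR},*)$-pushforward for $\Gamma_{\on{dR}}$). Next, since $\pi$ is schematic and quasi-compact, it satisfies the projection formula \eqref{e:proj formula Dmod sch}, so
$$\pi_\dr(\pi^!(\CM_2)\sotimes \CM_1)\simeq \CM_2\sotimes \pi_\dr(\CM_1),$$
and hence
$$\Gamma_{\on{dR}}(\CY_1,\pi^!(\CM_2)\sotimes \CM_1)\simeq \Gamma_{\on{dR}}(\CY_2,\CM_2\sotimes \pi_\dr(\CM_1)).$$
Now the right-hand side is the composition of three functors in $\CM_1$: the functor $\pi_\dr:\Dmod(\CY_1)\to\Dmod(\CY_2)$, which is continuous because $\pi$ is schematic and quasi-compact; followed by $\CM'_2\mapsto \Gamma_{\on{dR}}(\CY_2,\CM_2\sotimes \CM'_2)$, which is continuous by the assumption that $\CM_2$ is safe. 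A composition of continuous functors is continuous, so $\pi^!(\CM_2)$ is safe.

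\textbf{Main obstacle.} There is essentially no serious obstacle here once the preceding machinery is in place; the only point requiring a small amount of care is the verification that the chain of isomorphisms above is natural in $\CM_1$ (so that the displayed equality is genuinely an isomorphism of functors $\Dmod(\CY_1)\to\Vect$, not merely a pointwise one), which follows from the functoriality built into \lemref{l:de Rham trans} and the projection formula \eqref{e:proj formula Dmod sch}. One should also note explicitly that the hypothesis ``$\pi$ schematic'' here is understood together with quasi-compactness (as in \secref{sss:representable}), which is what makes both $\pi_\dr$ continuous and the projection formula available; under the standing QCA assumptions of this section any morphism between the relevant stacks is automatically quasi-compact, so this costs nothing.
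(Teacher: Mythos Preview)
Your proof is correct and follows essentially the same route as the paper's: rewrite $\Gamma_{\on{dR}}(\CY_1,\pi^!(\CM_2)\sotimes \CM_1)$ as $\Gamma_{\on{dR}}(\CY_2,\pi_\dr(\pi^!(\CM_2)\sotimes \CM_1))$ via \lemref{l:de Rham trans}, apply the projection formula \eqref{e:proj formula Dmod sch} (valid since $\pi$ is schematic and quasi-compact), and conclude by continuity of $\pi_\dr$ together with safety of $\CM_2$. Your remark about quasi-compactness being implicitly assumed is apt and matches the paper's standing conventions.
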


\begin{proof}
We need to show that the functor
$$\CM_1\mapsto \Gamma_{\on{dR}}(\CY_1,\pi^!(\CM_2)\sotimes \CM_1)$$
commutes with colimits. By \lemref{l:de Rham trans}, the latter expression
can be rewritten as
$$\Gamma_{\on{dR}}(\CY_2,\pi_\dr(\pi^!(\CY_2)\sotimes \CM_1)).$$

Now, since $\pi$ is schematic and quasi-compact, the projection formula and \eqref{e:proj formula Dmod sch} is applicable,
and we can rewrite the latter expression as
$$\Gamma_{\on{dR}}(\CY_2,\CM_2\sotimes \pi_\dr(\CM_1)).$$

Now, the required assertion follows from the fact that the functor $\pi_\dr$
commutes with colimits.

\end{proof}

\begin{rem}
In \lemref{l:safety under functors} we will extend the assertion of the above lemma to the case when $\pi$
is not necessarily schematic, but merely \emph{safe}. However, the lemma obviously
fails for general morphisms: consider, e.g., $B\BG_m\to \on{pt}$.
\end{rem}

\sssec{De Rham cohomology of safe objects}

The following proposition is crucial for the sequel:

\begin{prop}  \label{p:pairing w safe}
Let $\CM_1\in \Dmod(\CY)$ be safe. Then for any $\CM_2\in \Dmod(\CY)$, the natural transformation
\eqref{e:nat trans to ren} induces an isomorphism
$$\Gamma_\rd(\CY,\CM_1\sotimes \CM_2)\to \Gamma_{\on{dR}}(\CY,\CM_1\sotimes \CM_2).$$
\end{prop}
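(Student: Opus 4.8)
The plan is to reduce the statement to the defining property of safety together with the two descriptions of $\Gamma_\rd$ and the projection-type formula for the pairing. First I would recall from \lemref{l:renormalized de Rham as ind-extension} that $\Gamma_\rd(\CY,-)$ is the ind-extension of $\Gamma_{\on{dR}}(\CY,-)|_{\Dmod(\CY)^c}$, i.e. $\Gamma_\rd(\CY,-)$ is the \emph{continuous} functor determined by the property that it agrees with $\Gamma_{\on{dR}}(\CY,-)$ on $\Dmod(\CY)^c$ and commutes with colimits. The natural transformation \eqref{e:nat trans to ren} is the one induced by this universal property, since $\Gamma_{\on{dR}}(\CY,-)$ (although not continuous) receives a map from its ``continuation'' $\Gamma_\rd(\CY,-)$.

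Next, fix the safe object $\CM_1$. The key observation is that both sides of the claimed isomorphism are \emph{continuous} functors of $\CM_2$: the right-hand side $\CM_2\mapsto \Gamma_{\on{dR}}(\CY,\CM_1\sotimes\CM_2)$ is continuous precisely because $\CM_1$ is safe (this is the definition of safety), and the left-hand side $\CM_2\mapsto \Gamma_\rd(\CY,\CM_1\sotimes\CM_2)$ is continuous because $\Gamma_\rd(\CY,-)$ is continuous and $\CM_1\sotimes-$ is continuous. Since $\Dmod(\CY)$ is compactly generated by \thmref{t:Dmods}, to check that the natural transformation between two continuous functors is an isomorphism it suffices to check it on compact $\CM_2\in \Dmod(\CY)^c$.

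So I would reduce to the case $\CM_2\in \Dmod(\CY)^c$. Here the point is that $\CM_1\sotimes\CM_2$ need not be compact, so I cannot invoke \corref{c:nat trans to ren} directly to $\CM_1\sotimes\CM_2$; instead I use that the subcategory of safe objects is a tensor ideal, so $\CM_1\sotimes\CM_2$ is safe, \emph{and} I use \lemref{l:pairing Dmod comp}: for $\CM_2$ compact, $\langle\CM_2,\CM_1\rangle_{\Dmod(\CY)}=\Gamma_{\on{dR}}(\CY,\CM_2\sotimes\CM_1)$. By Definition~\ref{d:ren dr}, $\Gamma_\rd(\CY,\CM_1\sotimes\CM_2)=\langle\CM_1\sotimes\CM_2,\omega_\CY\rangle_{\Dmod(\CY)}$ where $\omega_\CY=p_\CY^!(k)$; and $\langle-,-\rangle_{\Dmod(\CY)}$ is symmetric, so this equals $\langle\omega_\CY,\CM_1\sotimes\CM_2\rangle$. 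Using $\omega_\CY\sotimes(\CM_1\sotimes\CM_2)\simeq \CM_1\sotimes\CM_2$ (since $\omega_\CY$ is the unit for $\sotimes$) together with \lemref{l:pairing Dmod comp} applied to the compact object $\CM_2$ — after commuting the tensor factors so that the compact argument is in the first slot — one identifies $\Gamma_\rd(\CY,\CM_1\sotimes\CM_2)$ with $\Gamma_{\on{dR}}(\CY,\CM_2\sotimes\CM_1)=\Gamma_{\on{dR}}(\CY,\CM_1\sotimes\CM_2)$, compatibly with the natural transformation \eqref{e:nat trans to ren}.

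The main obstacle I anticipate is bookkeeping rather than conceptual: making sure that the identification coming from \lemref{l:pairing Dmod comp} (which requires \emph{one} of the two tensor factors to be compact) is genuinely the one induced by \eqref{e:nat trans to ren}, and not merely an abstract isomorphism of vector spaces. To handle this cleanly I would argue functorially in $\CM_2\in\Dmod(\CY)^c$: both $\CM_2\mapsto \Gamma_\rd(\CY,\CM_1\sotimes\CM_2)$ and $\CM_2\mapsto \Gamma_{\on{dR}}(\CY,\CM_1\sotimes\CM_2)$, restricted to $\Dmod(\CY)^c$, are canonically computed as $\langle\CM_2,\CM_1\rangle_{\Dmod(\CY)}$ via \lemref{l:pairing Dmod comp} (the right-hand one directly, the left-hand one after unwinding Definition~\ref{d:ren dr} and symmetry of $\epsilon_{\Dmod(\CY)}$), and the natural transformation \eqref{e:nat trans to ren} is by construction the identity on compacts — which is exactly \corref{c:nat trans to ren}. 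Hence the restriction of \eqref{e:nat trans to ren} evaluated at $\CM_1\sotimes(-)$ to $\Dmod(\CY)^c$ is an isomorphism, and by continuity of both sides and compact generation of $\Dmod(\CY)$ it is an isomorphism on all of $\Dmod(\CY)$, as desired. \qed
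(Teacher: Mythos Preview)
Your reduction to $\CM_2\in\Dmod(\CY)^c$ via continuity in $\CM_2$ is correct and is a reasonable way to begin. The gap is in the step you call ``unwinding Definition~\ref{d:ren dr} and symmetry of $\epsilon_{\Dmod(\CY)}$''. That unwinding gives you only
\[
\Gamma_\rd(\CY,\CM_1\sotimes\CM_2)=\langle\CM_1\sotimes\CM_2,\omega_\CY\rangle_{\Dmod(\CY)}=\langle\omega_\CY,\CM_1\sotimes\CM_2\rangle_{\Dmod(\CY)},
\]
and symmetry alone does \emph{not} let you pass from this to $\langle\CM_2,\CM_1\rangle_{\Dmod(\CY)}$. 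What you are tacitly using is a compatibility of the pairing with $\sotimes$, namely $\langle A\sotimes B,\omega_\CY\rangle\simeq\langle B,A\rangle$; equivalently, you are using that $\langle-,-\rangle=\Gamma_\rd(\CY,-\sotimes-)$. In the paper this is \lemref{l:descr of pairing}, and it is deduced \emph{from} \propref{p:pairing w safe} (via \corref{c:pairing on compact}), so invoking it here is circular. Note also that your appeal to \corref{c:nat trans to ren} at the end does not help: that corollary concerns compact inputs to $\Gamma_\rd$, and $\CM_1\sotimes\CM_2$ is typically not compact even when both factors are (take $k_{B\BG_m}\sotimes k_{B\BG_m}$).

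The content you are missing is exactly what the paper's proof supplies. The paper does not reduce to compact $\CM_2$; instead it writes $\Gamma_\rd(\CY,\CM_1\sotimes\CM_2)$ as a colimit over compacts, rewrites this via Verdier duality as
\[
\underset{\CM'\in\Dmod(\CY)^c_{/\omega_\CY}}{\underset{\longrightarrow}{colim}}\ \Gamma_{\on{dR}}(\CY,\CM_1\sotimes\CM_2\sotimes\CM'),
\]
and then uses safety of $\CM_1$ to pull $\Gamma_{\on{dR}}(\CY,\CM_1\sotimes-)$ past the colimit, yielding $\Gamma_{\on{dR}}(\CY,\CM_1\sotimes\CM_2)$. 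If you try to fill your gap honestly---e.g., by writing $\omega_\CY$ as a colimit of compacts $Q$ and computing $\langle\CM_1\sotimes\CM_2,\omega_\CY\rangle=\colim_Q\,\Gamma_{\on{dR}}(\CY,Q\sotimes\CM_1\sotimes\CM_2)$ via \lemref{l:pairing Dmod comp}---you will find you are reproducing precisely this coend manipulation.
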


\begin{proof}

By \lemref{l:renormalized de Rham as ind-extension}, we have:
\begin{equation} \label{e:dr of ten 1}
\tau^{\leq 0}\left(\Gamma_\rd(\CY,\CM_1\sotimes \CM_2)\right)\simeq 
\underset{\CM\in \Dmod(\CY)^c{}_{/\CM_1\sotimes \CM_2}}{\underset{\longrightarrow}{colim}}\, \tau^{\leq 0}\left(\Gamma_{\on{dR}}(\CY,\CM)\right).
\end{equation}

Using the fact that 
$$\tau^{\leq 0}\left(\Gamma_{\on{dR}}(\CY,\CM)\right)\simeq \on{Maps}_{\Dmod(\CY)}(k_\CY,\CM),$$
we can rewrite \eqref{e:dr of ten 1} as the co-end of the functors
$$\CM\mapsto \on{Maps}_{\Dmod(\CY)}(\CM,\CM_1\sotimes \CM_2) \text{ and } \CM\mapsto \on{Maps}_{\Dmod(\CY)}(k_\CY,\CM)$$
out of $\Dmod(\CY)^c$. Using the Verdier duality anti-equivalence of $\Dmod(\CY)^c$, we rewrite the above co-end as
the co-end of the functors
$$\CM'\mapsto \tau^{\leq 0}\left(\Gamma_{\on{dR}}(\CY,\CM_1\sotimes \CM_2\sotimes \CM')\right) \text{ and }
\CM'\mapsto \on{Maps}_{\Dmod(\CY)}(\CM',\omega_\CY),$$
as functors out of $(\Dmod(\CY)^c)^{\on{op}}$.

\medskip

The latter co-end can be rewritten as 

\begin{equation} \label{e:dr of ten 2}
\underset{\CM'\in \Dmod(\CY)^c_{/\omega_\CY}}{\underset{\longrightarrow}{colim}}\, \tau^{\leq 0}(\Gamma_{\on{dR}}
(\CY,\CM_1\sotimes \CM_2\sotimes \CM')).
\end{equation}

However, tautologically,
$$\underset{\CM'\in \Dmod(\CY)^c_{/\omega_\CY}}{\underset{\longrightarrow}{colim}}\, \CM'\simeq \omega_\CY,$$
and hence
$$\underset{\CM'\in \Dmod(\CY)^c_{/\omega_\CY}}{\underset{\longrightarrow}{colim}}\, \CM_2\sotimes \CM'\simeq \CM_2.$$

Hence, the assumption that $\Gamma_{\on{dR}}(\CY,\CM_1\sotimes -)$ commutes with colimits implies that
the expression in \eqref{e:dr of ten 2} maps isomorphically to $\tau^{\leq 0}\left(\Gamma_{\on{dR}}(\CY,\CM_1\sotimes \CM_2)\right)$,
as required.

\end{proof}

As a particular case, we obtain:

\begin{cor}  \label{c:dr and rd of safe}
If $\CM\in \Dmod(\CY)$ is safe, the natural transformation
\eqref{e:nat trans to ren} induces an isomorphism
$$\Gamma_\rd(\CY,\CM)\to \Gamma_{\on{dR}}(\CY,\CM).$$
\end{cor}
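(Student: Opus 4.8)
The statement to prove is \corref{c:dr and rd of safe}: if $\CM\in \Dmod(\CY)$ is safe, then the natural transformation \eqref{e:nat trans to ren} induces an isomorphism $\Gamma_\rd(\CY,\CM)\to \Gamma_{\on{dR}}(\CY,\CM)$.

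The plan is to deduce this immediately from \propref{p:pairing w safe} by specializing one of the two arguments to the monoidal unit. Recall that $\omega_\CY\in \Dmod(\CY)$ is the unit for the symmetric monoidal operation $\sotimes$ on $\Dmod(\CY)$ (this is the stacky analogue of \secref{sss:tens_schemes}, where $\omega_Z$ is the unit of $\sotimes$ on $\Dmod(Z)$; the same holds for prestacks). Thus for any $\CM\in \Dmod(\CY)$ there is a canonical isomorphism $\CM\sotimes \omega_\CY\simeq \CM$.

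Concretely, I would argue as follows. Take $\CM_1:=\CM$, which is safe by hypothesis, and take $\CM_2:=\omega_\CY$. By \propref{p:pairing w safe}, the natural transformation \eqref{e:nat trans to ren} induces an isomorphism
$$\Gamma_\rd(\CY,\CM\sotimes \omega_\CY)\to \Gamma_{\on{dR}}(\CY,\CM\sotimes \omega_\CY).$$
Now identify $\CM\sotimes \omega_\CY\simeq \CM$ using that $\omega_\CY$ is the unit of $\sotimes$; under this identification both sides become $\Gamma_\rd(\CY,\CM)$ and $\Gamma_{\on{dR}}(\CY,\CM)$ respectively, and the map is the one induced by \eqref{e:nat trans to ren}. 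This gives the claim. The only point requiring a word of care is naturality: one must check that the isomorphism $\CM\sotimes\omega_\CY\simeq\CM$ intertwines the instance of the natural transformation \eqref{e:nat trans to ren} evaluated at $\CM\sotimes\omega_\CY$ with the one evaluated at $\CM$ — but this is automatic since \eqref{e:nat trans to ren} is a natural transformation of functors on $\Dmod(\CY)$, so it commutes with the isomorphism of objects $\CM\sotimes\omega_\CY\simeq\CM$.

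I do not anticipate a serious obstacle here: this corollary is a formal consequence of the proposition it follows, and the main work has already been done in the proof of \propref{p:pairing w safe} (the co-end manipulation and the use of the self-duality of $\Dmod(\CY)^c$, which in turn rests on \thmref{t:Dmods} and hence on \thmref{main}). The one thing to be slightly attentive to is simply recording that $\omega_\CY$ is the unit for $\sotimes$ on $\Dmod(\CY)$, which is part of the structure set up in \secref{ss:Dmods on prestacks}.

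\begin{proof}
Apply \propref{p:pairing w safe} with $\CM_1:=\CM$ (which is safe by assumption) and $\CM_2:=\omega_\CY$. Since $\omega_\CY$ is the unit for the symmetric monoidal structure $\sotimes$ on $\Dmod(\CY)$ (see \secref{ss:Dmods on prestacks}), we have a canonical isomorphism $\CM\sotimes \omega_\CY\simeq \CM$. Under this isomorphism, the natural transformation \eqref{e:nat trans to ren} evaluated on $\CM\sotimes \omega_\CY$ corresponds to the natural transformation \eqref{e:nat trans to ren} evaluated on $\CM$, because \eqref{e:nat trans to ren} is a natural transformation of functors $\Dmod(\CY)\to \Vect$. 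Hence \propref{p:pairing w safe} yields that
$$\Gamma_\rd(\CY,\CM)\to \Gamma_{\on{dR}}(\CY,\CM)$$
is an isomorphism.
\end{proof}
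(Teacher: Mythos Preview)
Your proof is correct and takes essentially the same approach as the paper: the corollary is stated there simply as ``a particular case'' of \propref{p:pairing w safe}, which amounts exactly to your specialization $\CM_2=\omega_\CY$.
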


In addition:

\begin{cor}  \label{c:crit for safety}
An object $\CM\in \Dmod(\CY)$ is safe if and only if the natural transformation
\eqref{e:nat trans to ren} induces an isomorphism
$$\Gamma_\rd(\CY,\CM\sotimes \CM')\to \Gamma_{\on{dR}}(\CY,\CM\sotimes \CM')$$
for any $\CM'\in \Dmod(\CY)$.
\end{cor}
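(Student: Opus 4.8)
The plan is to split the equivalence into its two implications. The forward implication, namely that safety of $\CM$ forces the natural transformation \eqref{e:nat trans to ren} to induce an isomorphism $\Gamma_\rd(\CY,\CM\sotimes \CM')\to \Gamma_{\on{dR}}(\CY,\CM\sotimes \CM')$ for every $\CM'$, is exactly the content of \propref{p:pairing w safe}; nothing new is needed and I would simply invoke it.

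For the reverse implication, I would argue as follows. Precomposing the natural transformation \eqref{e:nat trans to ren} with the functor $\CM\sotimes -:\Dmod(\CY)\to \Dmod(\CY)$ produces a natural transformation of functors $\Dmod(\CY)\to \Vect$ in the variable $\CM'$, and the hypothesis says precisely that this natural transformation is an isomorphism, i.e.
$$\Gamma_\rd(\CY,\CM\sotimes \CM')\iso \Gamma_{\on{dR}}(\CY,\CM\sotimes \CM').$$
Now the functor $\CM'\mapsto \Gamma_\rd(\CY,\CM\sotimes \CM')$ is continuous: the functor $\CM\sotimes -$ is continuous because $\sotimes$ is continuous in each argument (see \secref{sss:tens_schemes}), and $\Gamma_\rd(\CY,-)$ is continuous by its very definition (\defnref{d:ren dr}), being a dual functor in $\StinftyCat_{\on{cont}}$. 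Hence the right-hand side $\CM'\mapsto \Gamma_{\on{dR}}(\CY,\CM\sotimes \CM')$ is continuous as well, which is exactly the defining property of $\CM$ being safe.

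I do not anticipate a genuine obstacle here: the statement is essentially formal once \propref{p:pairing w safe} is in hand, together with the continuity of $\Gamma_\rd(\CY,-)$ and of $\sotimes$. The one point requiring a little care is that the isomorphism assumed in the hypothesis must be the specific one induced by \eqref{e:nat trans to ren}, so that the continuity of $\Gamma_\rd(\CY,\CM\sotimes -)$ really does transport to $\Gamma_{\on{dR}}(\CY,\CM\sotimes -)$; this is built into the way the hypothesis is phrased, so no extra work is needed.
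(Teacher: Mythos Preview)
Your proof is correct and matches the paper's intended argument: the paper states this corollary without an explicit proof, as the forward direction is precisely \propref{p:pairing w safe}, and the reverse direction follows immediately from the continuity of $\Gamma_\rd(\CY,-)$ and of $\sotimes$, exactly as you explain.
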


Combining \propref{p:pairing w safe} with \propref{compactness via safety}, we obtain:

\begin{cor} \label{c:pairing on compact}
If one of the objects $\CM'$ or $\CM''$ is compact, then the map
$$\Gamma_\rd(\CY,\CM'\sotimes \CM'')\to \Gamma_{\on{dR}}(\CY,\CM'\sotimes \CM'')$$
is an isomorphism.
\end{cor}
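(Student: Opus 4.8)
\textbf{Proof proposal for \corref{c:pairing on compact}.}
The plan is to deduce this corollary by combining \corref{c:crit for safety} (equivalently \propref{p:pairing w safe}) with \propref{compactness via safety}. The key observation is that the functor $\sotimes$ is symmetric, so the map $\Gamma_\rd(\CY,\CM'\sotimes \CM'')\to \Gamma_{\on{dR}}(\CY,\CM'\sotimes \CM'')$ is unchanged (up to the natural symmetry) under swapping $\CM'$ and $\CM''$. Hence, without loss of generality, we may assume that $\CM''$ is the compact object.

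First I would invoke \propref{compactness via safety} to conclude that $\CM''$, being compact (and automatically coherent since $\Dmod(\CY)^c\subset \Dmod_{\on{coh}}(\CY)$ by \lemref{l:compact is coherent}), is in particular \emph{safe}. Then I would apply \propref{p:pairing w safe} with $\CM_1:=\CM''$ (safe) and $\CM_2:=\CM'$ (arbitrary): it gives precisely that the natural transformation \eqref{e:nat trans to ren} induces an isomorphism $\Gamma_\rd(\CY,\CM''\sotimes \CM')\to \Gamma_{\on{dR}}(\CY,\CM''\sotimes \CM')$. Using the symmetry of $\sotimes$ once more to rewrite $\CM''\sotimes \CM'\simeq \CM'\sotimes \CM''$ finishes the argument.

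There is essentially no obstacle here; the corollary is a formal consequence of the two cited results. The only point requiring a line of justification is the passage ``compact $\Rightarrow$ safe'', which is exactly the implication (a)$\Rightarrow$(b) of \propref{compactness via safety} (valid because $\CY$ is assumed QCA throughout this section, so that \corref{duality on compact} applies); everything else is the symmetry of the monoidal structure $\sotimes$ on $\Dmod(\CY)$ coming from \secref{sss:tens_schemes} and the definition of $\epsilon_{\Dmod(\CY)}$ via $\Delta_\CY^!$ in \secref{sss:pairing on QCA}.
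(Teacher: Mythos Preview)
Your proposal is correct and matches the paper's approach exactly: the paper states this corollary as an immediate consequence of combining \propref{p:pairing w safe} with \propref{compactness via safety}, and your argument (compact $\Rightarrow$ coherent $\Rightarrow$ safe, then apply \propref{p:pairing w safe} using the symmetry of $\sotimes$) is precisely that.
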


\sssec{}

The notion of safe object allows to give a more explicit description of the pairing 
$\langle-,-\rangle_{\Dmod(\CY)}$:

\begin{lem}  \label{l:descr of pairing}
For $\CM',\CM''\in \Dmod(\CY)$, the natural map 
$$\Gamma_\rd(\CY,\CM'\sotimes \CM'')\to \langle \CM',\CM''\rangle_{\Dmod(\CY)}$$
is an isomorphism.
\end{lem}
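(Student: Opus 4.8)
\textbf{Proof plan for \lemref{l:descr of pairing}.}
The plan is to reduce the claim to \lemref{l:pairing Dmod comp} by ind-extension. Both sides of the asserted isomorphism are functors $\Dmod(\CY)\times \Dmod(\CY)\to \Vect$ that are continuous separately in each variable: the left-hand side because $\Gamma_\rd(\CY,-)$ is continuous by \defnref{d:ren dr} and $\sotimes$ is continuous in each variable, and the right-hand side because $\epsilon_{\Dmod(\CY)}$ is a functor of DG categories (hence continuous) and the functor $\Dmod(\CY)\times \Dmod(\CY)\to \Dmod(\CY)\otimes \Dmod(\CY)$ is continuous in each variable. Since $\Dmod(\CY)$ is compactly generated by \thmref{t:Dmods}, any continuous functor out of it (in either variable) is the ind-extension of its restriction to $\Dmod(\CY)^c$. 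Therefore it suffices to construct a functorial isomorphism
$$\Gamma_\rd(\CY,\CM'\sotimes \CM'')\simeq \langle \CM',\CM''\rangle_{\Dmod(\CY)}$$
for $\CM',\CM''\in \Dmod(\CY)^c$, compatibly with the natural maps in play.

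First I would treat the case where both arguments are compact. In that case $\CM'\sotimes \CM''$ is again compact: indeed $\Dmod(\CY)^c$ is Karoubi-generated by objects $\ind_{\Dmod(\CY)}(\CF)$ with $\CF\in\Coh(\CY)$ (\corref{c:compactDmods}), and $\sotimes$ preserves this subcategory because $\ind_{\Dmod(\CY)}(\CF)\sotimes \CM\simeq \ind_{\Dmod(\CY)}(\CF\sotimes\oblv_{\Dmod(\CY)}(\CM))$ by \lemref{l:tensor with induction stacks}, which is compact whenever $\CF$ is (as $\ind_{\Dmod(\CY)}$ preserves compactness); alternatively one can invoke \propref{compactness via safety} together with the fact that the tensor ideal of safe objects contains $\Dmod(\CY)^c$. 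Now with $\CM'\sotimes\CM''$ compact, \corref{c:nat trans to ren} gives $\Gamma_\rd(\CY,\CM'\sotimes\CM'')\simeq \Gamma_{\on{dR}}(\CY,\CM'\sotimes\CM'')$, and \lemref{l:pairing Dmod comp} (applied with $\CM=\CM'$, which is compact, and $\CM'=\CM''$) identifies the right-hand side with $\langle \CM',\CM''\rangle_{\Dmod(\CY)}$. This yields the desired isomorphism on $\Dmod(\CY)^c\times\Dmod(\CY)^c$.

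Next I would ind-extend in the second variable, keeping $\CM'\in\Dmod(\CY)^c$ fixed: both $\CM''\mapsto \Gamma_\rd(\CY,\CM'\sotimes\CM'')$ and $\CM''\mapsto\langle\CM',\CM''\rangle_{\Dmod(\CY)}$ are continuous, so the isomorphism just established on compact $\CM''$ extends uniquely to all $\CM''\in\Dmod(\CY)$. One must check that the comparison map we are extending is the canonical one; this is where I would be careful: the natural map in the statement is the one induced by \eqref{e:nat trans to ren} together with the tautological map from $\Gamma_\rd(\CY,-)$ into the pairing, and I would verify that on compact objects it agrees with the isomorphism built above — this is essentially a bookkeeping check using that \lemref{l:pairing Dmod comp} and \corref{c:nat trans to ren} are themselves compatible by construction. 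Finally, ind-extending in the first variable $\CM'$ (now with $\CM''$ arbitrary, both functors still being continuous in $\CM'$) gives the full statement. The main obstacle I anticipate is not any single hard computation but rather the coherence check that the three a priori different descriptions of the comparison map — via the duality datum $\epsilon_{\Dmod(\CY)}$, via \lemref{l:renormalized de Rham as ind-extension}, and via \lemref{l:pairing Dmod comp} — all coincide; once that is pinned down, the ind-extension argument is formal.
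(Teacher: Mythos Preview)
Your overall strategy---reduce to compact objects via continuity in each variable, then identify both sides with $\Gamma_{\on{dR}}(\CY,\CM'\sotimes\CM'')$---is exactly the paper's approach. However, your execution contains an unnecessary detour with a genuine error in it.

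You try to show that $\CM'\sotimes\CM''$ is itself compact when $\CM',\CM''\in\Dmod(\CY)^c$, in order to apply \corref{c:nat trans to ren} to the object $\CM'\sotimes\CM''$. Your first justification for this is wrong: from $\ind_{\Dmod(\CY)}(\CF)\sotimes\CM\simeq\ind_{\Dmod(\CY)}(\CF\sotimes\oblv_{\Dmod(\CY)}(\CM))$ you conclude compactness ``whenever $\CF$ is'', but $\ind_{\Dmod(\CY)}$ sends an object to a compact only when the input lies in $\Coh(\CY)$, and $\CF\sotimes\oblv_{\Dmod(\CY)}(\CM)$ is typically \emph{not} coherent---already on $\BA^1$ with $\CM=\ind_{\Dmod}(\CO)$, the object $\oblv_{\Dmod}(\CM)$ is the sheaf of differential operators, which is not coherent as an $\CO$-module. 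Your alternative route via \propref{compactness via safety} is more promising (safety is clear since safe objects form a tensor ideal), but you would still need to verify that $\sotimes$ preserves $\Dmod_{\on{coh}}(\CY)$, which you do not address.

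The whole detour is avoidable. The paper simply invokes \corref{c:pairing on compact}, which says that as soon as \emph{one} of $\CM',\CM''$ is compact the map $\Gamma_\rd(\CY,\CM'\sotimes\CM'')\to\Gamma_{\on{dR}}(\CY,\CM'\sotimes\CM'')$ is an isomorphism; combined with \lemref{l:pairing Dmod comp} this gives the result on $\Dmod(\CY)^c\times\Dmod(\CY)^c$ immediately, with no need to know anything about compactness of the tensor product. The ind-extension step is then exactly as you describe.
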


\begin{proof}
By definition, both functors in the corollary are continuous, so it is enough to verify the assertion for
$\CM'$ and $\CM''$ compact. By definition, the right-hand side is
$\Gamma_{\on{dR}}(\CY,\CM'\sotimes \CM'')$. So, the assertion follows from \corref{c:pairing on compact}.
\end{proof}

\begin{cor} \label{c:pairing Dmod}
The functor
$$\epsilon_{\Dmod(\CY)}:\Dmod(\CY)\otimes \Dmod(\CY)\to \Vect$$
identifes canonically with
$$\Dmod(\CY)\otimes \Dmod(\CY)\simeq \Dmod(\CY\times \CY)\overset{\Delta_\CY^!}\longrightarrow \Dmod(\CY)\overset{\Gamma_{\rd}(\CY,-)}
\longrightarrow \Vect.$$
\end{cor}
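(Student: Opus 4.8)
\textbf{Proof proposal for Corollary \ref{c:pairing Dmod}.}
The plan is to reduce the statement to \lemref{l:descr of pairing} by unwinding the definition of the functor $\Gamma_{\rd}(\CY,-)$ on the product stack. First I would invoke \corref{c:D on prod}, which identifies $\Dmod(\CY)\otimes \Dmod(\CY)$ with $\Dmod(\CY\times \CY)$, so that the asserted composite is genuinely a continuous functor $\Dmod(\CY\times \CY)\to \Vect$. Both $\epsilon_{\Dmod(\CY)}$ and the proposed composite $\Gamma_{\rd}(\CY,-)\circ \Delta_\CY^!$ are continuous (the former by construction, as a piece of duality data; the latter because $\Delta_\CY^!$ is continuous and $\Gamma_{\rd}(\CY,-)$ is continuous by \defnref{d:ren dr}). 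Since $\Dmod(\CY\times \CY)$ is compactly generated, and moreover is generated by external products $\CM'\boxtimes \CM''$ with $\CM',\CM''\in \Dmod(\CY)^c$ (these lie in $\Dmod(\CY\times\CY)^c$ and generate, by \thmref{t:Dmods} applied to $\CY\times\CY$ together with \corref{c:D on prod}), it suffices to produce a functorial isomorphism of the two functors on objects of the form $\CM'\boxtimes \CM''$.

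Next I would compute both sides on such an object. For the left-hand side, by the very definition of $\epsilon_{\Dmod(\CY)}$ via the notation $\langle-,-\rangle_{\Dmod(\CY)}$ in \secref{sss:pairing on QCA}, the value of $\epsilon_{\Dmod(\CY)}$ on $\CM'\boxtimes \CM''$ is $\langle \CM',\CM''\rangle_{\Dmod(\CY)}$. For the right-hand side, one has $\Delta_\CY^!(\CM'\boxtimes \CM'') \simeq \CM'\sotimes \CM''$ by the definition of the monoidal operation $\sotimes$ on $\Dmod(\CY)$ (see \secref{sss:tens_schemes} and its stacky analogue), so the proposed composite evaluates to $\Gamma_{\rd}(\CY,\CM'\sotimes \CM'')$. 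Thus the desired identification on generators is exactly the isomorphism
$$\Gamma_{\rd}(\CY,\CM'\sotimes \CM'')\simeq \langle \CM',\CM''\rangle_{\Dmod(\CY)},$$
which is the content of \lemref{l:descr of pairing}. Finally I would remark that \lemref{l:descr of pairing} provides this isomorphism functorially in $\CM'$ and $\CM''$, hence it extends (uniquely, by continuity and compact generation) to a natural isomorphism of the two functors on all of $\Dmod(\CY\times\CY)$.

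The main point requiring care — though it is not really an obstacle, since the needed input is already in hand — is the passage from the identification on external products to the identification on the whole category: one must know that external products of compact objects generate $\Dmod(\CY\times\CY)$ and that the natural transformation constructed on them is genuinely a map of functors (not merely an objectwise isomorphism). The first is handled by \thmref{t:Dmods} for $\CY\times\CY$ (which is again QCA) combined with \corref{c:D on prod}; the second is built into the formulation of \lemref{l:descr of pairing}, whose proof already exhibits the comparison map as a natural transformation of continuous functors. Everything else is bookkeeping: matching the duality data $\epsilon_{\Dmod(\CY)}$ with the pairing $\langle-,-\rangle_{\Dmod(\CY)}$, and identifying $\Delta_\CY^!\circ\boxtimes$ with $\sotimes$.
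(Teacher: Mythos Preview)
Your proposal is correct and takes essentially the same approach as the paper, which treats the corollary as an immediate consequence of \lemref{l:descr of pairing}. One minor simplification: since \lemref{l:descr of pairing} already establishes the isomorphism for \emph{all} $\CM',\CM''\in\Dmod(\CY)$ (not just compact ones), you need not restrict to compact generators or verify that external products of compacts generate $\Dmod(\CY\times\CY)$; the universal property of the tensor product of DG categories identifies continuous functors out of $\Dmod(\CY)\otimes\Dmod(\CY)$ with bi-continuous functors out of $\Dmod(\CY)\times\Dmod(\CY)$, so the lemma already gives the full identification.
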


\sssec{Proof of \propref{p:unit Dmod}}  \label{sss:proof unit Dmod}

The functor
$$\mu_{\Dmod(\CY)}:\Vect\to \Dmod(\CY)\otimes \Dmod(\CY)\simeq \Dmod(\CY\times \CY)$$
is the dual of the functor $\epsilon_{\Dmod(\CY)}$ under the identifications
$$\Vect^\vee\simeq \Vect  \text{ and }  
\bD_{\CY\times \CY}^{\on{Verdier}}:\Dmod(\CY\times \CY)^\vee\simeq \Dmod(\CY\times \CY).$$

Hence, the required assertion follows from \corref{c:pairing Dmod} and \propref{p:duality on morphisms schematic}.

\qed

\ssec{The relative situation}  \label{ss:ren direct image}

\sssec{}

Let $\pi:\CY_1\to \CY_2$ be a map between QCA algebraic stacks, and consider
the functor $\pi^!:\Dmod(\CY_2)\to \Dmod(\CY_1)$. 

\begin{defn} \label{d:ren dir im}
We define the \emph{continuous} functor 
$$\pi_{\blacktriangle}:\Dmod(\CY_1)\to \Dmod(\CY_2)$$
to be the dual of $\pi^!$ under the identifications $\Dmod(\CY_i)^\vee\simeq \Dmod(\CY_i)$.
\end{defn}

We shall refer to the functor $\pi_{\blacktriangle}$ as the ``renormalized direct image". 

\medskip

Note that \propref{p:duality on morphisms schematic} implies that if $\pi$ is schematic, then $\pi_{\blacktriangle}\simeq \pi_\dr$. 

\sssec{}

It follows from the construction that the assignment $\pi\rightsquigarrow \pi_{\blacktriangle}$ is compatible
with compositions, i.e., for 
$$\CY_1\overset{\pi}\longrightarrow \CY_2\overset{\phi}\longrightarrow \CY_3$$
there exists a canonical isomorphism
$$\phi_{\blacktriangle}\circ \pi_{\blacktriangle}\simeq (\phi\circ \pi)_{\blacktriangle}.$$
Indeed, this isomorphism follows by duality from $\pi^!\circ \phi^!\simeq (\phi\circ \pi)^!$.

\sssec{}

We claim that the functor $\pi_{\blacktriangle}$ satisfies the projection formula \emph{by definition}: 

\begin{lem} \label{l:proj formula for renormalized}
For $\CM_1\in \Dmod(\CY_1)$ and $\CM_2\in \Dmod(\CY_2)$ we have a canonical
isomorphism
$$\pi_{\blacktriangle}(\CM_1)\sotimes \CM_2\simeq \pi_{\blacktriangle}(\CM_1\sotimes \pi^!(\CM_2)),$$
functorial in $\CM_i\in \Dmod(\CY_i)$.
\end{lem}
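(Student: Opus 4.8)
\textbf{Proof plan for Lemma~\ref{l:proj formula for renormalized}.}
The plan is to reduce the projection formula for $\pi_{\blacktriangle}$ to the projection formula for $\pi^!$ together with the tautological behaviour of the canonical pairing $\epsilon_{\Dmod(\CY)}$ under dual functors. The key observation is that $\pi_{\blacktriangle}$ is \emph{defined} as $(\pi^!)^\vee$, so any identity we want to prove about $\pi_{\blacktriangle}$ should be provable by dualizing the corresponding statement about $\pi^!$, which is the pullback functor and hence well-behaved. Since both sides of the asserted isomorphism are continuous functors (in each variable) on categories of the form $\Dmod(\CY)$, which by \thmref{t:Dmods} are compactly generated, it suffices to construct a functorial isomorphism on compact objects $\CM_i \in \Dmod(\CY_i)^c$, and then ind-extend.

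First I would unwind what it means to pair with the two sides. Using the self-duality $\bD^{\on{Verdier}}_{\CY_2}:\Dmod(\CY_2)^\vee\simeq\Dmod(\CY_2)$ and \lemref{l:descr of pairing}, testing an object of $\Dmod(\CY_2)$ against a compact object amounts to applying $\langle-,-\rangle_{\Dmod(\CY_2)}\simeq\Gamma_\rd(\CY_2,-\sotimes-)$. So for $\CM_1\in\Dmod(\CY_1)$, $\CM_2\in\Dmod(\CY_2)$ and a compact test object $\CN\in\Dmod(\CY_2)^c$, I would compute
$$\langle \pi_{\blacktriangle}(\CM_1)\sotimes \CM_2,\CN\rangle_{\Dmod(\CY_2)}
\simeq \langle \pi_{\blacktriangle}(\CM_1),\CM_2\sotimes\CN\rangle_{\Dmod(\CY_2)}
\simeq \langle \CM_1,\pi^!(\CM_2\sotimes\CN)\rangle_{\Dmod(\CY_1)},$$
where the first isomorphism uses that $\sotimes$ is a symmetric monoidal structure and $-\sotimes\CN$ is its own (left and right) adjoint up to the duality — more precisely, the evaluation $\epsilon_{\Dmod(\CY_2)}$ factors through $\Delta^!_{\CY_2}$ by \corref{c:pairing Dmod}, so moving $\CM_2\sotimes$ across the pairing is the statement that $\Delta^!_{\CY_2}$ intertwines $-\sotimes\CM_2$ on one factor with $-\sotimes\CM_2$ on the diagonal; and the second isomorphism is exactly the defining adjunction-at-the-level-of-pairings relating $(\pi^!)^\vee=\pi_{\blacktriangle}$ to $\pi^!$. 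On the other hand, since $\pi^!$ is symmetric monoidal for $\sotimes$ (it is the !-pullback, and $\pi^!(\CM'\sotimes\CM'')\simeq\pi^!(\CM')\sotimes\pi^!(\CM'')$ — this is the analogue for stacks of \eqref{e:! and * diag} / the compatibility of $!$-pullback with $\sotimes$ recorded in \secref{sss:oblv stacks}), we get $\pi^!(\CM_2\sotimes\CN)\simeq\pi^!(\CM_2)\sotimes\pi^!(\CN)$, whence
$$\langle \CM_1,\pi^!(\CM_2)\sotimes\pi^!(\CN)\rangle_{\Dmod(\CY_1)}
\simeq \langle \CM_1\sotimes\pi^!(\CM_2),\pi^!(\CN)\rangle_{\Dmod(\CY_1)}
\simeq \langle \pi_{\blacktriangle}(\CM_1\sotimes\pi^!(\CM_2)),\CN\rangle_{\Dmod(\CY_2)}.$$
Comparing the two computations gives a functorial isomorphism of the functors $\CN\mapsto\langle\,\cdot\,,\CN\rangle$ represented by $\pi_{\blacktriangle}(\CM_1)\sotimes\CM_2$ and $\pi_{\blacktriangle}(\CM_1\sotimes\pi^!(\CM_2))$ on all of $\Dmod(\CY_2)^c$, and since $\Dmod(\CY_2)$ is compactly generated and the pairing is a perfect duality, this upgrades to the required isomorphism of objects, functorially in $\CM_1$ and $\CM_2$.

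The main obstacle I anticipate is not any single step but bookkeeping the coherence of all these isomorphisms in the $\infty$-categorical setting: each ``$\simeq$'' above is an equivalence of functors, and to actually produce the natural isomorphism of Lemma~\ref{l:proj formula for renormalized} (rather than an objectwise one) one must check that the chain of identifications is natural in $\CM_1$, $\CM_2$ and $\CN$ simultaneously, and compatible with the symmetric monoidal and duality structures. The clean way to organize this — which I would adopt — is to phrase everything at the level of the duality data: the projection formula for $\pi_{\blacktriangle}$ is literally the statement dual to ``$\pi^!$ is a morphism of module categories over the symmetric monoidal category $\Dmod(\CY_2)$'' (where $\Dmod(\CY_1)$ is a $\Dmod(\CY_2)$-module via $\pi^!$ and $\sotimes$), which is immediate from the construction of $\pi^!$ and of the monoidal structures in \secref{s:Dmods on stacks}. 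Dualizing a morphism of module categories along a duality of the ambient monoidal category yields a morphism of module categories for the dual actions, and under $\bD^{\on{Verdier}}$ this is precisely the projection formula isomorphism; the compactness hypothesis enters only to guarantee that $\Dmod(\CY_i)$ are dualizable so that this dualization is available (\corref{c:IndCohdualizable}, \thmref{t:Dmods}, and \secref{sss:dual functors}).
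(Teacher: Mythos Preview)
Your proposal is correct and takes essentially the same approach as the paper: test both sides against an arbitrary object of $\Dmod(\CY_2)$ via the pairing $\langle-,-\rangle_{\Dmod(\CY_2)}$, use \lemref{l:descr of pairing} (equivalently, \corref{c:pairing Dmod}) to shuffle the $\sotimes$-factor across the pairing, and then invoke the defining duality $\pi_{\blacktriangle}=(\pi^!)^\vee$ together with the symmetric monoidality of $\pi^!$. The only cosmetic difference is that the paper tests against an arbitrary $\CM'_2\in\Dmod(\CY_2)$ rather than a compact one and does not bother to first reduce $\CM_1,\CM_2$ to compacts (your reduction is harmless but unnecessary, since the chain of isomorphisms you write already works for all $\CM_1,\CM_2$).
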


\begin{proof}

It suffices to construct a functorial isomorphism
$$\langle \pi_{\blacktriangle}(\CM_1)\sotimes \CM_2,\CM'_2\rangle_{\Dmod(\CY_2)}\simeq
\langle \pi_{\blacktriangle}(\CM_1\sotimes \pi^!(\CM_2)),\CM'_2\rangle_{\Dmod(\CY_2)}$$
functorial in $\CM_2,\CM'_2\in \Dmod(\CY_2)$, $\CM_1\in \Dmod(\CY_1)$.

\medskip

By \lemref{l:descr of pairing},
\begin{multline*}
\langle \pi_{\blacktriangle}(\CM_1)\sotimes \CM_2,\CM'_2\rangle_{\Dmod(\CY_2)}
\simeq \Gamma_\rd(\CY_2,\pi_{\blacktriangle}(\CM_1)\sotimes \CM_2\sotimes \CM'_2)\simeq \\
\simeq \langle \pi_{\blacktriangle}(\CM_1),\CM_2\sotimes\CM'_2)\rangle_{\Dmod(\CY_2)}.
\end{multline*}

By the definition of $\pi_{\blacktriangle}$, the latter identifies with
$$\langle \CM_1,\pi^!(\CM_2\sotimes\CM'_2)\rangle_{\Dmod(\CY_1)}.$$

Again, by \lemref{l:descr of pairing}, the latter expression can be rewritten as
$$\Gamma_\rd(\CY_1,\CM_1\sotimes \pi^!(\CM_2\sotimes \CM'_2))\simeq
\langle \CM_1\sotimes \pi^!(\CM_2),\pi^!(\CM'_2)\rangle_{\Dmod(\CY_1)},$$
and again by the definition of $\pi_{\blacktriangle}$, further as
$$\langle \pi_{\blacktriangle}(\CM_1\sotimes \pi^!(\CM_2)),\CM'_2\rangle_{\Dmod(\CY_2)},$$
as required.

\end{proof}

\sssec{Calculating $\pi_{\blacktriangle}$}

It turns out that safe objects are adjusted to calculating the functor $\pi_{\blacktriangle}$:

\begin{prop} \label{p:ren dir image of safe}
There is a canonical natural transformation 
\begin{equation} \label{e:nat trans to ren rel}
\pi_{\blacktriangle}\to \pi_\dr,
\end{equation}
which is an isomorphism when evaluated on safe objects. 
\end{prop}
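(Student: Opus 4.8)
The plan is to first construct the natural transformation \eqref{e:nat trans to ren rel}. Both functors $\pi_{\blacktriangle}$ and $\pi_\dr$ are functors $\Dmod(\CY_1)\to \Dmod(\CY_2)$, and $\pi_{\blacktriangle}$ is continuous by \defnref{d:ren dir im}. The natural way to produce a map $\pi_{\blacktriangle}\to \pi_\dr$ is via the universal property of ind-extension: by \thmref{t:Dmods}, $\Dmod(\CY_1)=\Ind(\Dmod(\CY_1)^c)$, so a continuous functor out of $\Dmod(\CY_1)$ is determined by its restriction to $\Dmod(\CY_1)^c$, and a natural transformation between two continuous functors is likewise determined by its restriction there (for the target $\pi_\dr$, which is not continuous, one uses that $\pi_{\blacktriangle}$ is the ind-extension of its own restriction to compacts). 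So the first step is to show that $\pi_{\blacktriangle}$ and $\pi_\dr$ agree on $\Dmod(\CY_1)^c$, which gives \eqref{e:nat trans to ren rel} by taking the ind-extension of this identification; I expect this identification on compacts to follow by the same duality bookkeeping as in \propref{p:duality on morphisms schematic}, using \lemref{l:pairing Dmod comp} and \lemref{l:de Rham trans} together with the projection formula for $\pi_{\blacktriangle}$ from \lemref{l:proj formula for renormalized}, pairing against a compact $\CM_2\in \Dmod(\CY_2)^c$ and invoking \corref{c:pairing on compact} to replace $\Gamma_\rd$ by $\Gamma_{\on{dR}}$.

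Once \eqref{e:nat trans to ren rel} is in place, the second step is to show it is an isomorphism on safe objects. It suffices to check this after applying $\pi^!$-adjoint-style testing: since $\pi^!:\Dmod(\CY_2)\to \Dmod(\CY_1)$ together with the self-dualities $\bD^{\on{Verdier}}_{\CY_i}$ is, by \defnref{d:ren dir im}, dual to $\pi_{\blacktriangle}$, for any $\CM_1\in \Dmod(\CY_1)$ and $\CM_2\in \Dmod(\CY_2)$ we have $\langle \pi_{\blacktriangle}(\CM_1),\CM_2\rangle_{\Dmod(\CY_2)}\simeq \langle \CM_1,\pi^!(\CM_2)\rangle_{\Dmod(\CY_1)}$, and by \lemref{l:descr of pairing} the right-hand side is $\Gamma_\rd(\CY_1,\CM_1\sotimes \pi^!(\CM_2))$. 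On the other side, $\langle \pi_\dr(\CM_1),\CM_2\rangle_{\Dmod(\CY_2)}$ should — at least for $\CM_2$ compact, hence by \lemref{l:pairing Dmod comp} equal to $\Gamma_{\on{dR}}(\CY_2,\pi_\dr(\CM_1)\sotimes \CM_2)$ — be rewritten using \lemref{l:partial projection formula} and \lemref{l:de Rham trans} as $\Gamma_{\on{dR}}(\CY_1,\CM_1\sotimes \pi^!(\CM_2))$. Thus comparing $\pi_{\blacktriangle}(\CM_1)$ and $\pi_\dr(\CM_1)$ after pairing with a compact $\CM_2$ reduces to comparing $\Gamma_\rd(\CY_1,\CM_1\sotimes \pi^!(\CM_2))$ with $\Gamma_{\on{dR}}(\CY_1,\CM_1\sotimes \pi^!(\CM_2))$, which is exactly the content of \propref{p:pairing w safe} when $\CM_1$ is safe (taking $\CM'=\pi^!(\CM_2)$ there). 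Since compact objects generate $\Dmod(\CY_2)$ and both $\pi_{\blacktriangle}(\CM_1)$ and $\pi_\dr(\CM_1)$ are detected by pairing against all objects (the pairing $\langle-,-\rangle_{\Dmod(\CY_2)}$ is a perfect pairing by self-duality), an isomorphism of the paired values against all compacts, compatible with the given natural transformation, forces \eqref{e:nat trans to ren rel} to be an isomorphism on $\CM_1$.

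The main obstacle I anticipate is making the last detection argument precise and compatible: I need that \eqref{e:nat trans to ren rel}, evaluated on a safe $\CM_1$, induces an isomorphism, not merely that the two objects are abstractly isomorphic. Concretely, one must check that the identifications of the paired values with $\Gamma_{\on{dR}}(\CY_1,\CM_1\sotimes\pi^!(\CM_2))$ coming from the two sides are intertwined by $\langle \eqref{e:nat trans to ren rel},\CM_2\rangle$, so that the natural transformation becomes the identity after pairing with every compact $\CM_2$; then conservativity of the collection of functors $\langle -,\CM_2\rangle_{\Dmod(\CY_2)}$ for $\CM_2\in \Dmod(\CY_2)^c$ (which holds because $\Dmod(\CY_2)^c$ generates and the pairing is perfect) yields the claim. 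This compatibility is a diagram chase threading \lemref{l:partial projection formula}, \lemref{l:de Rham trans}, \lemref{l:proj formula for renormalized}, and \propref{p:pairing w safe}; none of the ingredients is deep, but assembling them coherently at the level of $\infty$-categorical natural transformations is the delicate part. A cleaner alternative, which I would try first, is to define \eqref{e:nat trans to ren rel} directly as the ind-extension of the equality on compacts and then observe that, for safe $\CM_1$, the functor $\CM_2\mapsto \langle \pi_\dr(\CM_1),\CM_2\rangle$ is already continuous — because it equals $\Gamma_{\on{dR}}(\CY_2,\pi_\dr(\CM_1)\sotimes\CM_2)$, which by the projection formula and \lemref{l:de Rham trans} equals $\Gamma_{\on{dR}}(\CY_1,\CM_1\sotimes\pi^!(\CM_2))$, continuous in $\CM_2$ by safety of $\CM_1$ and continuity of $\pi^!$ — so $\pi_\dr(\CM_1)$ represents the same continuous functional as $\pi_{\blacktriangle}(\CM_1)$, and the comparison map, being an isomorphism on the generating compacts, is an isomorphism.
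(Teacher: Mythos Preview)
Your proposal is correct and follows essentially the same route as the paper: both construct the natural transformation by pairing against compact $\CM_2\in\Dmod(\CY_2)^c$, identify $\langle\pi_{\blacktriangle}(\CM_1),\CM_2\rangle$ with $\Gamma_\rd(\CY_1,\CM_1\sotimes\pi^!(\CM_2))$ and $\langle\pi_\dr(\CM_1),\CM_2\rangle$ with $\Gamma_{\on{dR}}(\CY_1,\CM_1\sotimes\pi^!(\CM_2))$ via \lemref{l:partial projection formula} and \lemref{l:de Rham trans}, and then invoke \propref{p:pairing w safe} for the isomorphism on safe objects. The paper's write-up is slightly more direct in that it does not separate the construction of the map from the verification of the isomorphism, displaying a single zig-zag in which the only non-invertible arrow is the comparison $\Gamma_\rd\to\Gamma_{\on{dR}}$; your more careful discussion of compatibility is appropriate, but no additional ideas are needed beyond what you outline.
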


\begin{proof}

We need to show that for $\CM_1\in \Dmod(\CY_1)$ and 
$\CM_2\in \Dmod(\CY_2)^c$ there exists a canonical map
\begin{equation} \label{e:calc dir image one}
\langle \CM_1,\pi^!(\CM_2)\rangle_{\Dmod(\CY_1)}\to \langle \pi_\dr(\CM_1),\CM_2\rangle_{\Dmod(\CY_2)},
\end{equation}
which is an isomorphism if $\CM_1$ is safe.

\medskip

By \lemref{l:descr of pairing} the left-hand side in \eqref{e:calc dir image one} identifies with
$$\Gamma_{\rd}(\CY_1,\CM_1\sotimes \pi^!(\CM_2)).$$
The latter expression maps to 
$$\Gamma_{\on{dR}}(\CY_1,\CM_1\sotimes \pi^!(\CM_2)),$$
and by \propref{p:pairing w safe}, this map is an isomorphism 
if $\CM_1$ is safe.

\medskip

By \lemref{l:descr of pairing} and \corref{c:pairing on compact}, the right-hand side
in \eqref{e:calc dir image one} identifies with
$$\Gamma_{\on{dR}}(\CY_2,\pi_\dr(\CM_1)\sotimes\CM_2).$$

Thus, we obtain the following diagram of maps
\begin{multline*}
\langle \CM_1,\pi^!(\CM_2)\rangle_{\Dmod(\CY_1)}\simeq
\Gamma_{\rd}(\CY_1,\CM_1\sotimes \pi^!(\CM_2))\to \\
\to \Gamma_{\on{dR}}(\CY_1,\CM_1\sotimes \pi^!(\CM_2))\simeq  
\Gamma_{\on{dR}}(\CY_2,\pi_\dr(\CM_1\sotimes \pi^!(\CM_2)))\leftarrow \\
\leftarrow \Gamma_{\on{dR}}(\CY_2,\pi_\dr(\CM_1)\sotimes\CM_2)\simeq 
\langle \pi_\dr(\CM_1),\CM_2\rangle_{\Dmod(\CY_2)},
\end{multline*}
where the left-pointing arrow comes from \eqref{e:map for proj formula Dmod}.
The assertion of the proposition follows now from \lemref{l:partial projection formula}.

\end{proof}

\begin{cor} \label{c:renormalized as ind-extension}
The functor $\pi_{\blacktriangle}$ is canonically isomorphic to the
ind-extension of the functor
$$\pi_\dr|_{\Dmod(\CY_1)^c}:\Dmod(\CY_1)^c\to \Dmod(\CY_2).$$
\end{cor}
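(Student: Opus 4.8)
The plan is to deduce this from \propref{p:ren dir image of safe} together with the compact generation of $\Dmod(\CY_1)$. First I would recall that, by \defnref{d:ren dir im}, the functor $\pi_{\blacktriangle}$ is the dual of $\pi^!$ under the Verdier self-dualities $\bD^{\on{Verdier}}_{\CY_i}$; in particular, being a dual functor (see \secref{sss:dual functors}), it is continuous. Since $\CY_1$ is QCA, \thmref{t:Dmods} gives $\Dmod(\CY_1)\simeq \Ind(\Dmod(\CY_1)^c)$, so by the universal property of ind-completion (\secref{sss:ind-compl}) any continuous functor out of $\Dmod(\CY_1)$ is canonically the ind-extension of its restriction to $\Dmod(\CY_1)^c$. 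Thus it suffices to identify $\pi_{\blacktriangle}|_{\Dmod(\CY_1)^c}$ with $\pi_\dr|_{\Dmod(\CY_1)^c}$, compatibly with the transition maps.

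For this, note that every compact object of $\Dmod(\CY_1)$ is safe: indeed, by \lemref{l:compact is coherent} it lies in $\Dmod_{\on{coh}}(\CY_1)$, and then \propref{compactness via safety} (applicable since $\CY_1$ is QCA) shows that a coherent compact object is safe. Now \propref{p:ren dir image of safe} provides a natural transformation $\pi_{\blacktriangle}\to \pi_\dr$ which is an isomorphism when evaluated on safe objects, hence on all of $\Dmod(\CY_1)^c$. This yields the desired canonical isomorphism $\pi_{\blacktriangle}|_{\Dmod(\CY_1)^c}\simeq \pi_\dr|_{\Dmod(\CY_1)^c}$, and ind-extending it — while using the continuity of $\pi_{\blacktriangle}$ established above — gives the claim. (This is the relative analogue of \lemref{l:renormalized de Rham as ind-extension}, which is the case $\CY_2=\on{pt}$, and the argument is parallel.)

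The only point requiring genuine attention — and hence the ``main obstacle'', such as it is — is the verification that compact objects of $\Dmod(\CY_1)$ are safe, which is where the QCA hypothesis enters essentially, via \propref{compactness via safety} and ultimately \thmref{main}; without it $\Dmod(\CY_1)^c$ need not be contained in the subcategory of safe objects, and indeed $\pi_{\blacktriangle}$ and $\pi_\dr$ could then disagree already on compacts. Everything else is a formal consequence of the universal property of $\Ind(-)$ and of the already-constructed natural transformation $\pi_{\blacktriangle}\to\pi_\dr$.
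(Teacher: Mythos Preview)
Your argument is correct and is exactly the intended one: the paper states this as an immediate corollary of \propref{p:ren dir image of safe} without further proof, and the deduction you spell out --- continuity of $\pi_{\blacktriangle}$, compact generation from \thmref{t:Dmods}, and the fact that compacts are safe via \propref{compactness via safety} --- is precisely what is meant. There is nothing to add.
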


\sssec{}

As another corollary of \propref{p:ren dir image of safe}, we obtain:

\begin{cor}  \label{c:partial proj formula safe}
For $\CM_1\in \Dmod(\CY_1)$ and $\CM_2\in \Dmod(\CY_2)$, the map 
$$\pi_\dr(\CM_1)\sotimes\CM_2\to \pi_\dr(\CM_1\sotimes\pi^!(\CM_2))$$
of \eqref{e:map for proj formula Dmod} is an isomorphism provided that $\CM_1$
is safe.
\end{cor}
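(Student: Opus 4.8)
The plan is to deduce the statement from Proposition~\ref{p:ren dir image of safe} together with the (tautological) projection formula for the renormalized direct image, Lemma~\ref{l:proj formula for renormalized}. First I would observe that, since $\CM_1$ is safe and the subcategory of safe objects of $\Dmod(\CY_1)$ is a tensor ideal with respect to $\sotimes$, the object $\CM_1\sotimes\pi^!(\CM_2)$ is also safe. Hence, by Proposition~\ref{p:ren dir image of safe}, the natural transformation \eqref{e:nat trans to ren rel} induces isomorphisms $\pi_{\blacktriangle}(\CM_1)\iso\pi_\dr(\CM_1)$ and $\pi_{\blacktriangle}(\CM_1\sotimes\pi^!(\CM_2))\iso\pi_\dr(\CM_1\sotimes\pi^!(\CM_2))$.

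Next I would assemble the square
$$
\CD
\pi_{\blacktriangle}(\CM_1)\sotimes\CM_2  @>>>  \pi_{\blacktriangle}(\CM_1\sotimes\pi^!(\CM_2)) \\
@VVV   @VVV  \\
\pi_\dr(\CM_1)\sotimes\CM_2  @>>>  \pi_\dr(\CM_1\sotimes\pi^!(\CM_2)),
\endCD
$$
in which the top horizontal arrow is the projection-formula isomorphism of Lemma~\ref{l:proj formula for renormalized}, the bottom horizontal arrow is the map of \eqref{e:map for proj formula Dmod}, and the vertical arrows are induced by \eqref{e:nat trans to ren rel} (tensored with $\id_{\CM_2}$ on the left in the first case). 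Granting that this square commutes, the vertical arrows are isomorphisms by the previous paragraph and the top arrow is an isomorphism by Lemma~\ref{l:proj formula for renormalized}, so the bottom arrow --- which is exactly the map in question --- is an isomorphism.

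The only non-formal step is the commutativity of this square, i.e.\ the compatibility of the natural transformation \eqref{e:nat trans to ren rel} with the two incarnations of the projection formula. Since all four functors involved are continuous in each variable and $\Dmod(\CY_2)^c$ generates $\Dmod(\CY_2)$ (Theorem~\ref{t:Dmods}), it suffices to check this after applying $\langle-,\CM'_2\rangle_{\Dmod(\CY_2)}$ for $\CM'_2\in\Dmod(\CY_2)^c$; by Lemma~\ref{l:descr of pairing} and Corollary~\ref{c:pairing on compact} this turns all four corners into de Rham cohomology groups, and the resulting diagram is precisely the one unwound in the proof of Proposition~\ref{p:ren dir image of safe}, whose key input is Lemma~\ref{l:partial projection formula}. (Alternatively, one can note that \eqref{e:nat trans to ren rel} was constructed in that proof out of the projection-formula map \eqref{e:map for proj formula Dmod} itself, so the compatibility is built in.) I expect this bookkeeping --- matching the abstractly produced composite isomorphism with the concrete map \eqref{e:map for proj formula Dmod} --- to be the main, and essentially the only, obstacle; the rest is formal.
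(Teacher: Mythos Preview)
Your proposal is correct and follows essentially the same approach as the paper: the paper writes down exactly the same commutative square, invokes \lemref{l:proj formula for renormalized} for the top arrow and \propref{p:ren dir image of safe} for the vertical arrows (using, implicitly, that safe objects form a tensor ideal so that $\CM_1\sotimes\pi^!(\CM_2)$ is safe), and concludes. The paper simply asserts the commutativity of the square without further comment, whereas you take the trouble to outline how one would verify it; this extra care is not misplaced, but the core argument is identical.
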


\begin{proof}
We have a commutative diagram
$$
\CD
\pi_{\blacktriangle}(\CM_1)\sotimes\CM_2  @>>>  \pi_{\blacktriangle}(\CM_1\sotimes\pi^!(\CM_2))  \\
@VVV   @VVV  \\
\pi_\dr(\CM_1)\sotimes\CM_2  @>>>  \pi_\dr(\CM_1\sotimes\pi^!(\CM_2)),
\endCD
$$
in which the upper horizontal arrow is an isomorphism by \lemref{l:proj formula for renormalized}.
Now, the vertical arrows are isomorphisms by \propref{p:ren dir image of safe}.
\end{proof}

\sssec{Base change for the renormalized direct image}

Consider a Cartesian diagram of QCA algebraic stacks:
$$
\CD
\CY'_1  @>{\phi_1}>> \CY_1 \\
@V{\pi'}VV   @VV{\pi}V  \\
\CY'_2  @>{\phi_2}>> \CY_2
\endCD
$$

We claim that there exists a canonical natural transformation
\begin{equation} \label{e:base change for ren Dmod}
\pi'_{\blacktriangle}\circ \phi_1^!\to \phi_2^!\circ \pi_{\blacktriangle}.
\end{equation}

Indeed, both functors being continuous, it is enough to construct
the morphism in question on $\Dmod(\CY_1)^c$.  By  \eqref{e:base change morphism Dmod}
for any $\CM\in \Dmod(\CY_1)$ we have a map 
$$\phi_2^!\circ \pi_\dr(\CM)\to \pi'_\dr\circ \phi_1^!(\CM).$$
Moreover, by \propref{p:coconnective part Dmod}, this map is 
an isomorphism whenever $\CM\in \Dmod(\CY_1)^+$. 

\medskip

Thus, for $\CM\in \Dmod(\CY_1)^c$ we have
$$\phi_2^!\circ \pi_{\blacktriangle}(\CM)\simeq \phi_2^!\circ \pi_\dr(\CM)\simeq 
\pi'_\dr\circ \phi_1^!(\CM),$$
and the latter receives a map from $\pi'_{\blacktriangle}\circ \phi_1^!(\CM)$.

\medskip

We now claim:

\begin{prop} \label{p:base change for ren Dmod}
The map \eqref{e:base change for ren Dmod} is an isomorphism.
\end{prop}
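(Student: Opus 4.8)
\textbf{Proof of \propref{p:base change for ren Dmod}.}
The plan is to exploit the duality description of $\pi_{\blacktriangle}$ and the fact that both sides of \eqref{e:base change for ren Dmod} are continuous functors, so that it suffices to check the statement after passing to dual functors, where all four legs of the square become $!$-pullbacks, and the needed isomorphism is then a formal consequence of the tautological identity $\phi_1^!\circ\pi^!\simeq (\pi')^!\circ\phi_2^!$ coming from the Cartesian square. More precisely, recall from Definition~\ref{d:ren dir im} that $\pi_{\blacktriangle}=(\pi^!)^\vee$ under the self-dualities $\bD^{\on{Verdier}}_{\CY_i}:\Dmod(\CY_i)^\vee\simeq \Dmod(\CY_i)$, and likewise $\pi'_{\blacktriangle}=((\pi')^!)^\vee$; also the functor $\phi_1^!$ is dual to $(\phi_1)_{\blacktriangle}$ and $\phi_2^!$ is dual to $(\phi_2)_{\blacktriangle}$, again by Definition~\ref{d:ren dir im}. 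Taking the dual of a natural transformation reverses its direction, so to produce the isomorphism \eqref{e:base change for ren Dmod} and prove it is an isomorphism, it is equivalent to produce the dual natural transformation
$$
(\phi_1)_{\blacktriangle}\circ \pi'^! \to \pi^!\circ (\phi_2)_{\blacktriangle}
$$
and show it is an isomorphism. But this is now a base-change statement for the renormalized direct image along the \emph{other} pair of morphisms in the same Cartesian square (with the roles of $\phi$ and $\pi$ interchanged), applied to the transposed square. Thus, by the same symmetry of the statement, it is enough to treat the case where one of the vertical morphisms — say $\pi$ — is such that we already control $\pi_{\blacktriangle}$; and the cleanest route is to reduce to the generating situation.

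Concretely, I would argue as follows. Both $\pi'_{\blacktriangle}\circ \phi_1^!$ and $\phi_2^!\circ \pi_{\blacktriangle}$ are continuous functors $\Dmod(\CY_1)\to \Dmod(\CY'_2)$, and $\Dmod(\CY_1)$ is compactly generated by $\Dmod(\CY_1)^c$ (Theorem~\ref{t:Dmods}), so it suffices to check that \eqref{e:base change for ren Dmod} is an isomorphism on objects $\CM\in \Dmod(\CY_1)^c$ — but even this is not quite enough, because $\phi_1^!(\CM)$ need not be compact in $\Dmod(\CY'_1)$, so I cannot blindly replace $\pi'_{\blacktriangle}$ by $\pi'_\dr$ on it. Instead, I would test the proposed isomorphism against compact objects of the \emph{target}: for $\CN\in \Dmod(\CY'_2)^c$ it is enough to show that the induced map on
$$
\langle \CN, \pi'_{\blacktriangle}\circ \phi_1^!(\CM)\rangle_{\Dmod(\CY'_2)} \to \langle \CN, \phi_2^!\circ \pi_{\blacktriangle}(\CM)\rangle_{\Dmod(\CY'_2)}
$$
is an isomorphism, functorially in $\CM$ and $\CN$. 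By \propref{p:duality on morphisms schematic}-style manipulations — more precisely using that $\pi'_{\blacktriangle}=((\pi')^!)^\vee$ — the left-hand side rewrites as $\langle (\pi')^!(\CN), \phi_1^!(\CM)\rangle_{\Dmod(\CY'_1)}$, and since $(\pi')^!(\CN)$ is compact (it is a $!$-pullback of a compact object along the schematic quasi-compact morphism $\pi'$, and $!$-pullback along a schematic quasi-compact map has a continuous right adjoint, hence preserves compactness), \lemref{l:descr of pairing} together with the safety of compact objects (\propref{compactness via safety}) lets me turn this into an honest de Rham cohomology expression $\Gamma_{\on{dR}}\big(\CY'_1, \phi_1^!(\CM)\sotimes (\pi')^!(\CN)\big)$. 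On the right-hand side, similarly, $\langle \CN,\phi_2^!\circ\pi_{\blacktriangle}(\CM)\rangle$ rewrites (via \corref{c:pairing on compact}, since $\CN$ is compact) as $\Gamma_{\rd}\big(\CY'_2,\phi_2^!\circ\pi_{\blacktriangle}(\CM)\sotimes\CN\big)$, which I would then transport through the projection formula for $\pi_{\blacktriangle}$ (\lemref{l:proj formula for renormalized}) and the identity $\langle -,\phi_2^!(-)\rangle_{\Dmod(\CY'_2)}\simeq\langle (\phi_2)_{\blacktriangle}(-),-\rangle_{\Dmod(\CY_2)}$ into an expression over $\CY_1$ involving $\Gamma_{\rd}$ or $\Gamma_{\on{dR}}$, where the key input is \propref{p:pairing w safe}, allowing me to drop the renormalization because the relevant argument involves a compact (hence safe) object. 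Matching the two sides then comes down to the tautological base-change isomorphism $\phi_1^!\circ\pi^!\simeq (\pi')^!\circ\phi_2^!$ and compatibility of $\sotimes$ with $!$-pullbacks (the isomorphism \eqref{e:! and * diag} / \secref{sss:representable}).

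The main obstacle, as indicated above, is that the naive strategy "reduce to compact objects of the source, then replace $\pi'_{\blacktriangle}$ by $\pi'_\dr$" fails because $\phi_1^!$ does not preserve compactness when $\phi_1$ is not schematic. So the heart of the argument must be the pairing/duality computation just sketched, in which all reductions to compact objects are performed on the \emph{target} categories (where one pairs against compact test objects $\CN$) rather than on the source. One has to be careful that the chain of identifications is natural in all variables and that the two maps being compared — the one built from \eqref{e:base change morphism Dmod} via \propref{p:coconnective part Dmod} on the eventually coconnective truncations, and the one coming from the duality definition — actually agree; the cleanest way to ensure this is to characterize \eqref{e:base change for ren Dmod} uniquely by its effect on pairings with compact objects, and then verify that both candidate maps satisfy that characterization. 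Once the naturality bookkeeping is in place, every individual identification invoked above (projection formula for $\pi_{\blacktriangle}$, \propref{p:pairing w safe}, \corref{c:pairing on compact}, \lemref{l:descr of pairing}, the tautological Cartesian-square identity) is already available in the excerpt, so the proof is essentially a diagram chase at the level of the bilinear pairings $\langle-,-\rangle_{\Dmod(-)}$.
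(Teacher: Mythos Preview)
Your argument has a genuine gap at the point where you assert that $(\pi')^!(\CN)$ is compact because ``$\pi'$ is schematic quasi-compact and $!$-pullback along such a map has a continuous right adjoint.'' Neither claim is justified: in the Cartesian square under consideration, $\pi$ (and hence $\pi'$) is an arbitrary morphism between QCA algebraic stacks, not assumed schematic; and even for a schematic quasi-compact map $f$, the functor $f^!$ does not in general admit a continuous right adjoint (think of a closed embedding), so it need not preserve compactness. What \lemref{l:safety preserved schematic pullback} gives you is that schematic $!$-pullback preserves \emph{safety}, not compactness, and safety of $(\pi')^!(\CN)$ alone is not enough to push through your pairing computation without circularity. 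Indeed, once you unwind $\langle(\pi')^!(\CN),\phi_1^!(\CM)\rangle$ via \lemref{l:descr of pairing} and the projection formula for $\pi'_{\blacktriangle}$, you land back exactly on the left-hand side of \eqref{e:base change for ren Dmod}; your first paragraph already correctly identifies that the pure duality approach is symmetric and hence circular.

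The paper's proof avoids this trap by a different reduction: rather than testing against compact objects of $\Dmod(\CY'_2)$, it uses that an isomorphism in $\Dmod(\CY'_2)$ can be detected after $!$-pullback along all maps $S\to\CY'_2$ with $S$ an affine DG scheme (this is the ``definition of $\Dmod(\CY'_2)$'' as a limit, combined with transitivity of base change). This reduces to the case where $\CY'_2$ is a scheme, so that $\phi_2$ and hence $\phi_1$ are \emph{schematic}. Now for $\CM\in\Dmod(\CY_1)^c$, \lemref{l:safety preserved schematic pullback} gives that $\phi_1^!(\CM)$ is safe, and \propref{p:ren dir image of safe} then yields $\pi'_{\blacktriangle}(\phi_1^!(\CM))\simeq\pi'_{\dr}(\phi_1^!(\CM))$, matching the construction of the map \eqref{e:base change for ren Dmod}. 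The key point you missed is that the reduction should make $\phi_1$ schematic, not $\pi'$; this is what lets you invoke preservation of safety under $!$-pullback on the source side.
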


\begin{proof}

By transitivity and the definition of $\Dmod(\CY'_2)$, it is enough to show that the map
in question is an isomorphism when $\CY'_2$ is a DG scheme. In particular, in this
case the morphism $\phi_2$, and hence $\phi_1$, is schematic. However, in this
case for $\CM\in \Dmod(\CY_1)^c$, the object
$$\phi_1^!(\CM)\in \Dmod(\CY'_1)$$
is safe, by \lemref{l:safety preserved schematic pullback}. Therefore, the map
$$\pi'_{\blacktriangle}\circ \phi_1^!(\CM)\to \pi'_\dr\circ \phi_1^!(\CM),$$
used in the construction of \eqref{e:base change for ren Dmod}, 
is an isomorphism, by \propref{p:ren dir image of safe}.

\end{proof}

\begin{rem}
Using \eqref{e:base change for ren Dmod} one can extend the functor $\pi_{\blacktriangle}$
to arbitrary QCA morphisms $\pi:\CY_1\to \CY_2$ between prestacks in a way compatible with base change.
\end{rem}

In the course of the proof of \propref{p:base change for ren Dmod} we have also established:

\begin{cor}  \label{c:safe satisfies base change}
If $\CM_1\in \Dmod(\CY_1)$ is safe, then the map $\phi_2^!\circ \pi_\dr(\CM_1)\to \pi'_\dr\circ \phi_1^!(\CM_1)$
of \eqref{e:base change morphism Dmod} is an isomorphism.
\end{cor}

\sssec{Renormalized direct image of induced D-modules}

Generalizing Example \ref{ex:ren dr of ind}, we claim: 

\begin{prop}  \label{p:ren dir image and induction}
There exists a canonical isomorphism of functors
$$\pi_{\blacktriangle}\circ \ind_{\Dmod(\CY_1)}\simeq \ind_{\Dmod(\CY_2)}\circ \pi_*^{\IndCoh}.$$
\end{prop}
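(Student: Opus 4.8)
The plan is to exploit the self-duality machinery established in \secref{ss:ren direct image}, \secref{ss:Serreduality} and \secref{s:Dmods on schemes}. Recall that $\pi_{\blacktriangle}$ is by definition the dual functor $(\pi^!)^\vee$ under the Verdier self-dualities $\bD^{\on{Verdier}}_{\CY_i}:\Dmod(\CY_i)^\vee\simeq \Dmod(\CY_i)$, that $\pi^{\IndCoh}_*$ is (by \propref{p:duality of * and !}) the dual of $\pi^!:\IndCoh(\CY_2)\to \IndCoh(\CY_1)$ under the Serre self-dualities $\bD^{\on{Serre}}_{\CY_i}:\IndCoh(\CY_i)^\vee\simeq \IndCoh(\CY_i)$, and that by \cite[Lemma 2.3.3]{DG} (applied on schemes in \secref{ss:CohVer}, and on stacks via \corref{c:Verdier-Serre_on_stacks}) the functors $\oblv_{\Dmod(\CY)}$ and $\ind_{\Dmod(\CY)}$ are dual to each other under the combination of Verdier and Serre dualities. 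So the first step is to assemble these three dualities and conclude that the desired isomorphism $\pi_{\blacktriangle}\circ \ind_{\Dmod(\CY_1)}\simeq \ind_{\Dmod(\CY_2)}\circ \pi_*^{\IndCoh}$ is \emph{dual} to the (known) commutation of $\oblv_{\Dmod}$ with $!$-pullback, namely the isomorphism $\pi^!\circ \oblv_{\Dmod(\CY_2)}\simeq \oblv_{\Dmod(\CY_1)}\circ \pi^!$ from \secref{sss:oblv stacks}.

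Concretely, the key step is the formal lemma about dualizable objects in a symmetric monoidal category (here $\StinftyCat_{\on{cont}}$): if $F:\bC_1\to \bC_2$ and $G:\bD_1\to \bD_2$ are continuous functors between dualizable categories, and $H_i:\bC_i\to \bD_i$ are continuous functors fitting into a commuting square $H_2\circ F\simeq G\circ H_1$, then passing to duals gives a commuting square $F^\vee\circ H_2^\vee\simeq H_1^\vee\circ G^\vee$. I would apply this with $F=\pi^!:\Dmod(\CY_2)\to \Dmod(\CY_1)$ (so $F^\vee=\pi_{\blacktriangle}$ after identifying via $\bD^{\on{Verdier}}$), $G=\pi^!:\IndCoh(\CY_2)\to \IndCoh(\CY_1)$ (so $G^\vee=\pi^{\IndCoh}_*$ after identifying via $\bD^{\on{Serre}}$), and $H_i=\oblv_{\Dmod(\CY_i)}$ (so $H_i^\vee=\ind_{\Dmod(\CY_i)}$). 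The commuting square $H_2\circ F\simeq G\circ H_1$ is exactly $\oblv_{\Dmod(\CY_1)}\circ \pi^!\simeq \pi^!\circ \oblv_{\Dmod(\CY_2)}$, which holds for any morphism of prestacks by \secref{sss:oblv stacks}. Taking duals then yields $\pi_{\blacktriangle}\circ \ind_{\Dmod(\CY_1)}\simeq \ind_{\Dmod(\CY_2)}\circ \pi^{\IndCoh}_*$, where one must be careful that the $\pi^{\IndCoh}_*$ appearing is the one of \secref{ss:indcoh dir image} (for QCA morphisms of QCA stacks); since both $\CY_i$ are QCA this is precisely the hypothesis of the proposition and matches the $\pi^{\IndCoh}_*$ of \propref{p:duality of * and !}.

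An alternative, and perhaps cleaner, route that avoids invoking the functoriality of duality too heavily: use \corref{c:renormalized as ind-extension}, which says $\pi_{\blacktriangle}$ is the ind-extension of $\pi_\dr|_{\Dmod(\CY_1)^c}$. Since $\Dmod(\CY_1)$ is compactly generated by objects of the form $\ind_{\Dmod(\CY_1)}(\CF)$ with $\CF\in \Coh(\CY_1)$ (\thmref{t:Dmods}), and $\ind_{\Dmod(\CY_1)}$ preserves compactness, it suffices to produce a functorial isomorphism on such objects together with its compatibility with colimits. On $\ind_{\Dmod(\CY_1)}(\CF)$ for $\CF\in \Coh(\CY_1)$ we have $\pi_{\blacktriangle}(\ind_{\Dmod(\CY_1)}(\CF))\simeq \pi_\dr(\ind_{\Dmod(\CY_1)}(\CF))$ because induced objects are safe (Example \ref{ex:induced is safe}) and by \propref{p:ren dir image of safe}; then $\pi_\dr\circ \ind_{\Dmod(\CY_1)}\simeq \ind_{\Dmod(\CY_2)}\circ \pi^{\IndCoh}_{\on{non-ren},*}$ by \propref{p:pushforward induced}, and finally $\pi^{\IndCoh}_{\on{non-ren},*}\simeq \pi^{\IndCoh}_*$ on all of $\IndCoh(\CY_1)$ by \propref{p:proof of ren vs nonren indcoh} (both $\CY_i$ being QCA). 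Composing these three identifications on the generating subcategory and taking ind-extensions (both sides being continuous in the argument, the left by definition of $\pi_{\blacktriangle}$ and of $\ind$, the right because $\pi^{\IndCoh}_*$ and $\ind_{\Dmod(\CY_2)}$ are continuous) gives the result.

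The main obstacle is bookkeeping rather than mathematical depth: one must check that the various identifications of duals are \emph{compatible}, i.e. that the isomorphism obtained by dualizing the $\oblv$/$\pi^!$ square genuinely agrees with the one obtained via the ind-extension route, and that the instances of $\pi^{\IndCoh}_*$ occurring in \propref{p:duality of * and !}, in \secref{ss:indcoh dir image}, and in \propref{p:pushforward induced} all coincide (they do, by the remarks following their definitions, but this needs to be said). I would present the second (ind-extension) argument as the main proof, since it relies only on propositions already proved in the excerpt and keeps the duality formalism to a minimum, and mention the duality argument as a remark.
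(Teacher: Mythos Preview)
Your first approach (via duality) is essentially correct and is the conceptual packaging of exactly what the paper does. The paper tests both sides against $\CM_2\in \Dmod(\CY_2)^c$ via the pairing $\langle -,\CM_2\rangle_{\Dmod(\CY_2)}$, unwinds using \lemref{l:tensor with induction stacks} and \propref{p:de Rham and ind} to reduce to a statement about $\Gamma^{\IndCoh}$, and then invokes \propref{p:duality of * and !}. This is precisely the explicit pairing computation underlying your ``dualize the commuting square'' argument; the step you leave implicit (that $(\oblv_{\Dmod(\CY)})^\vee\simeq \ind_{\Dmod(\CY)}$ under the Serre/Verdier identifications for QCA stacks) follows from \corref{c:Verdier-Serre_on_stacks} together with \cite[Lemma 2.3.3]{DG}, just as you indicate. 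So if you present Approach~1 as the main proof, you are in agreement with the paper, only more abstractly phrased.

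However, your second approach---the one you say you would present as the main proof---is \emph{circular}. You invoke \propref{p:pushforward induced} (the isomorphism $\ind_{\Dmod(\CY_2)}\circ \pi^{\IndCoh}_{\on{non-ren},*}\simeq \pi_\dr\circ \ind_{\Dmod(\CY_1)}$ for $\CY_1,\CY_2$ QCA), but look at how that proposition is actually proved in \secref{sss:proof pushforward induced}: it is deduced by combining \propref{p:ren dir image and induction} itself with \propref{p:ren dir image of safe} and Example~\ref{ex:induced is safe}. In the paper's logical order, \propref{p:pushforward induced} is a \emph{corollary} of the statement you are trying to prove, not an input to it. The only instances of this compatibility established independently beforehand are the schematic case (\propref{p:DR of induced stack}) and the absolute case $\CY_2=\on{pt}$ (\propref{p:de Rham and ind}); neither suffices for a general morphism $\pi$ between QCA stacks. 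So your ``cleaner'' route collapses, and you should present Approach~1 instead.
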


\begin{proof}
Since both functors are continuous, it suffices to show that for $\CF_1\in \IndCoh(\CY_1)^c$ and $\CM_2\in \Dmod(\CY_2)^c$, there 
exists a canonical isomorphism
$$\langle \pi_{\blacktriangle}(\ind_{\Dmod(\CY_1)}(\CF_1)),\CM_2\rangle_{\Dmod(\CY_2)}\simeq 
\langle \ind_{\Dmod(\CY_2)}(\pi_*^{\IndCoh}(\CF_1)),\CM_2\rangle_{\Dmod(\CY_2)}.$$

By the definition of $\pi_{\blacktriangle}$, \lemref{l:descr of pairing} and \corref{c:pairing on compact}, 
the left-hand side identifies with
$$\Gamma_{\on{dR}}(\CY_1,\ind_{\Dmod(\CY_1)}(\CF_1)\sotimes \pi^!(\CM_2)),$$
while the right-hand side identifies with
$$\Gamma_{\on{dR}}(\CY_2,\ind_{\Dmod(\CY_2)}(\pi_*^{\IndCoh}(\CF_1))\sotimes\CM_2).$$
Using \lemref{l:tensor with induction stacks}, the two expressions can be rewritten as 
$$\Gamma_{\on{dR}}\left(\CY_1,\ind_{\Dmod(\CY_1)}(\CF_1\sotimes \pi^!(\oblv_{\Dmod(\CY_2)}(\CM_2)))\right)$$
and
$$\Gamma_{\on{dR}}\left(\CY_2,
\ind_{\Dmod(\CY_2)}(\pi^{\IndCoh}_*(\CF_1)\sotimes \oblv_{\Dmod(\CY_1)}(\CM_2))\right),$$
respectively, and further, using \propref{p:de Rham and ind}
as
$$\Gamma^{\IndCoh}\left(\CY_1,\CF_1\sotimes \pi^!(\oblv_{\Dmod(\CY_2)}(\CM_2))\right)$$
and 
$$\Gamma^{\IndCoh}\left(\CY_2,\pi^{\IndCoh_*}(\CF_1)\sotimes \oblv_{\Dmod(\CY_1)}(\CM_2)\right),$$
respectively.

\medskip

Now, the required isomorphism follows from \propref{p:duality of * and !}.

\end{proof}

\sssec{}    \label{sss:proof pushforward induced}

We are now ready to prove \propref{p:pushforward induced} stated in \secref{ss:de Rham dir image stacks}. Indeed, 
it follows by combining Propositions \ref{p:ren dir image and induction}, \ref{p:ren dir image of safe}
and Example \ref{ex:induced is safe}.

\qed

\ssec{Cohomological amplitudes}

\sssec{}

Let us note that by \lemref{p:coconnective part Dmod}, the functor
$\pi_\dr$ is \emph{left t-exact up to a cohomological shift}. We claim that the functor
$\pi_{\blacktriangle}$ exhibits an opposite behavior:

\begin{prop}  \label{p:ren dir image estimate}
There exists an integer $m$ such that $\pi_{\blacktriangle}$  sends
$$\Dmod(\CY_1)^{\leq 0}\to \Dmod(\CY_2)^{\leq m}.$$
\end{prop}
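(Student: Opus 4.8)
The plan is to reduce the statement to two inputs already available in the paper: the description of $\pi_{\blacktriangle}$ as the ind-extension of $\pi_\dr|_{\Dmod(\CY_1)^c}$ (Corollary~\ref{c:renormalized as ind-extension}), together with the estimate on the cohomological amplitude of the induction functor (Lemma~\ref{l:ind finite c.d.}) and the fact that $\Coh(\CY_1)^\heartsuit$ compactly generates $\Dmod(\CY_1)$ via induction (Theorem~\ref{t:Dmods}). The key point is that $\Dmod(\CY_1)^{\leq 0}$ is generated under colimits by \emph{connective} objects of the form $\ind_{\Dmod(\CY_1)}(\CF)$ with $\CF \in \IndCoh(\CY_1)^{\leq 0}$, and on such objects $\pi_{\blacktriangle}$ is computed by $\ind_{\Dmod(\CY_2)}\circ \pi_*^{\IndCoh}$ thanks to Proposition~\ref{p:ren dir image and induction}. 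Since each factor in this composite has bounded-above cohomological amplitude — $\ind_{\Dmod(\CY_2)}$ by Lemma~\ref{l:ind finite c.d.}, $\pi_*^{\IndCoh}$ by its construction in Section~\ref{ss:indcoh dir image} (it is left t-exact, hence trivially of amplitude $\leq 0$ on the connective side, and in fact what we need is that it sends $\IndCoh(\CY_1)^{\leq 0}$ into $\IndCoh(\CY_2)^{\leq m_1}$ for some $m_1$, which follows from $\Psi_{\CY_i}$ being t-exact plus Theorem~\ref{main}(ii) applied to $\CY_1$) — the composite has bounded-above amplitude, giving the desired $m$.

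Concretely, first I would fix an integer $c$ with $\ind_{\Dmod(\CY_2)}$ of cohomological amplitude $\leq c$ (Lemma~\ref{l:ind finite c.d.}) and an integer $c'$ such that $\pi_*^{\IndCoh}$ sends $\IndCoh(\CY_1)^{\leq 0}$ to $\IndCoh(\CY_2)^{\leq c'}$; for the latter I would use that $\Gamma^{\IndCoh}(\CY_1,-)$ has bounded-above amplitude (which is $\Gamma(\CY_1,-)\circ \Psi_{\CY_1}$, so Theorem~\ref{main}(ii) applies since $\Psi_{\CY_1}$ is t-exact), together with the definition of $\pi_*^{\IndCoh}$ as the ind-extension of a functor factoring through $\QCoh(\CY_1)^+ \to \QCoh(\CY_2)^+$, and Corollary~\ref{c:relative}(ii) applied to $\pi$, which is a QCA morphism because $\CY_1$ is QCA. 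Then set $m := c + c'$. Second, I would observe that any $\CM \in \Dmod(\CY_1)^{\leq 0}$ can be written as a colimit $\CM \simeq \colim_\alpha \ind_{\Dmod(\CY_1)}(\CF_\alpha)$ with $\CF_\alpha \in \IndCoh(\CY_1)^{\leq 0}$: indeed, since $\oblv_{\Dmod(\CY_1)}$ is conservative and $\ind_{\Dmod(\CY_1)}$ its left adjoint, $\CM$ is a (sifted, say bar-resolution) colimit of objects $\ind_{\Dmod(\CY_1)}(\oblv_{\Dmod(\CY_1)}(\CM))$-type terms; and since $\oblv_{\Dmod(\CY_1)}$ is left t-exact and $\ind_{\Dmod(\CY_1)}$ is right t-exact, the terms of the bar-resolution applied to a connective $\CM$ are again connective, so one may replace them by their connective truncations without changing the colimit.

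Third, since $\pi_{\blacktriangle}$ is continuous, $\pi_{\blacktriangle}(\CM) \simeq \colim_\alpha \pi_{\blacktriangle}(\ind_{\Dmod(\CY_1)}(\CF_\alpha)) \simeq \colim_\alpha \ind_{\Dmod(\CY_2)}(\pi_*^{\IndCoh}(\CF_\alpha))$ by Proposition~\ref{p:ren dir image and induction}. Each term lies in $\Dmod(\CY_2)^{\leq m}$ by the choice of $c, c'$, and since $\CY_2$ is quasi-compact the t-structure on $\Dmod(\CY_2)$ is compatible with filtered colimits (Section~\ref{ss:Dmods on algebraic stacks}, inherited from Lemma~\ref{D-properties of t}); more to the point, $\Dmod(\CY_2)^{\leq m}$ is closed under all colimits since $\Dmod(\CY_2)^{> m}$ is closed under filtered colimits and the truncation $\tau^{> m}$ is exact. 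Hence $\pi_{\blacktriangle}(\CM) \in \Dmod(\CY_2)^{\leq m}$, as required.

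The main obstacle I anticipate is making the bar-resolution argument in the second step fully rigorous at the $\infty$-categorical level — specifically, verifying that the connective truncations of the terms of the monadic (Barr–Beck) resolution of $\CM$ still have $\ind_{\Dmod(\CY_1)}(\CF_\alpha)$-form with $\CF_\alpha$ connective, and that truncating does not disturb the colimit. An alternative that sidesteps this, which I would fall back on if needed, is to argue directly with the dual functor $\pi^!$: by Definition~\ref{d:ren dir im}, $\pi_{\blacktriangle} = (\pi^!)^\vee$, and one can instead show that $\pi^!$ sends $\Coh(\CY_2)^\heartsuit$-generated \emph{co}connective objects appropriately and transpose the amplitude bound, using that $\langle \pi_{\blacktriangle}(\CM_1), \CM_2\rangle_{\Dmod(\CY_2)} \simeq \langle \CM_1, \pi^!(\CM_2)\rangle_{\Dmod(\CY_1)}$ together with Lemma~\ref{l:descr of pairing} and the coherence of the relevant test objects; but the induction-functor route above is cleaner and I would present that as the primary proof.
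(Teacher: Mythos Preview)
Your proposal is correct and follows essentially the same route as the paper: reduce to induced objects via Proposition~\ref{p:ren dir image and induction}, then bound the amplitude of $\pi_*^{\IndCoh}$ using $\Psi_{\CY_i}$ and Corollary~\ref{c:relative}(ii), together with Lemma~\ref{l:ind finite c.d.} for $\ind_{\Dmod(\CY_2)}$.

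Your worry about the bar-resolution step is unnecessary, and the paper dispenses with it in one line. The point is that $\Dmod(\CY_1)^{\leq 0}$ is \emph{by definition} the left orthogonal of $\Dmod(\CY_1)^{>0} = \oblv_{\Dmod(\CY_1)}^{-1}(\IndCoh(\CY_1)^{>0})$; by adjunction, an object $\CN$ lies in this right part if and only if $\CMaps(\ind_{\Dmod(\CY_1)}(\CF),\CN)=0$ for all $\CF\in\IndCoh(\CY_1)^{\leq 0}$. Hence the cocomplete subcategory generated by $\{\ind_{\Dmod(\CY_1)}(\CF):\CF\in\IndCoh(\CY_1)^{\leq 0}\}$ has the same right orthogonal as $\Dmod(\CY_1)^{\leq 0}$, so they coincide. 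No explicit simplicial resolution or truncation trick is needed.
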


\begin{proof}

It is clear from the $(\ind_{\Dmod},\oblv_{\Dmod})$ adjunction that
for any algebraic stack $\CY$, the category $\Dmod(\CY)^{\leq 0}$
is generated under the operation of taking filtered colimits by
objects of the form $\ind_{\Dmod(\CY)}(\CF)$ for $\CF\in \IndCoh(\CY)^{\leq 0}$.

\medskip

Hence, by \propref{p:ren dir image and induction}, it suffices to show that
there exists an integer $m$, such that $\pi_*^{\IndCoh}$ sends 
$$\IndCoh(\CY_1)^{\leq 0}\to \IndCoh(\CY_2)^{\leq m}.$$

\medskip

Recall that the functor $\Psi_{\CY}$ induces an equivalence $\IndCoh(\CY)^+\to \QCoh(\CY)^+$
for any algebraic stack $\CY$. Therefore, it suffices to show that there exists an integer $m$, 
such that $\pi_*$ sends 
$$\QCoh(\CY_1)^{\leq 0}\to \QCoh(\CY_2)^{\leq m}.$$
However, this follows from \corref{c:relative}(ii).

\end{proof}

\sssec{}

Let us observe that the safety of an object $\CM\in \Dmod(\CY)^b$ makes both functors $$\Gamma_{\on{dR}}(\CY,\CM\sotimes -) \text{ and }
\Gamma_\rd(\CY,\CM\sotimes -)$$ cohomologically bounded. More precisely:

\begin{lem} \label{l:estimates on safety only if} \hfill

\smallskip

\noindent{\em(a)} Let $\CM$ be an safe object of $\Dmod(\CY)^-$. Then 
the functor $$\CM_1\mapsto \Gamma_{\on{dR}}(\CY,\CM\sotimes \CM_1)$$ is right t-exact 
up to a cohomological shift. The estimate on the shift depends only on $\CY$ and the integer
$m$ such that $\CM\in \Dmod(\CY)^{\leq m}$.

\smallskip

\noindent{\em(b)} Let $\CM$ be a safe object of $\Dmod(\CY)^+$. Then 
the functor $$\CM_1\mapsto \Gamma_\rd(\CY,\CM\sotimes \CM_1)$$ is left t-exact up to
a cohomological shift. The estimate on the shift depends only on $\CY$ and the integer
$m$ such that $\CM\in \Dmod(\CY)^{\geq -m}$.

\end{lem}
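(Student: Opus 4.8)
\textbf{Proof proposal for \lemref{l:estimates on safety only if}.}

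The plan is to reduce both statements to the two-variable projection-formula isomorphism of \propref{p:pairing w safe}, combined with the amplitude estimate for $\pi_\dr$ (for $\pi = p_\CY$) in \propref{p:coconnective part Dmod}(b) and the estimate for $\pi_{\blacktriangle}$ in \propref{p:ren dir image estimate}. For part (a): since $\CM$ is safe, \propref{p:pairing w safe} gives $\Gamma_\rd(\CY,\CM\sotimes \CM_1)\simeq \Gamma_{\on{dR}}(\CY,\CM\sotimes\CM_1)$ for every $\CM_1$. By \corref{c:pairing Dmod} and \lemref{l:descr of pairing}, the functor $\CM_1\mapsto \Gamma_\rd(\CY,\CM\sotimes\CM_1)$ factors as $\CM_1\mapsto \CM\sotimes\CM_1\mapsto \Gamma_\rd(\CY,-)$, so it suffices to bound the cohomological amplitude of $\CM\sotimes(-)$ (which on a quasi-compact algebraic stack has bounded amplitude, and if $\CM\in\Dmod(\CY)^{\leq m}$ the right-shift is controlled by $m$ plus a constant depending only on $\CY$, as in \secref{sss:ex proj formula}(iv)) and then the amplitude of $\Gamma_\rd(\CY,-)$. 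The first I would prove by reducing via a smooth atlas to the scheme case, where the $\sotimes$ amplitude is explicitly bounded. For the amplitude of $\Gamma_\rd(\CY,-)$ itself: by \lemref{l:renormalized de Rham as ind-extension} it is the ind-extension of $\Gamma_{\on{dR}}(\CY,-)|_{\Dmod(\CY)^c}$; alternatively, identify $\Gamma_\rd(\CY,-)$ with $(p_\CY)_{\blacktriangle}$ followed by the (trivial) $\Gamma_\rd$ on $\Vect$, and apply \propref{p:ren dir image estimate} with $\CY_2 = \on{pt}$, which says $(p_\CY)_{\blacktriangle}$ sends $\Dmod(\CY)^{\leq 0}$ to $\Vect^{\leq m_0}$ for a constant $m_0$ depending only on $\CY$. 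Combining these gives the right t-exactness up to a shift depending only on $\CY$ and $m$.

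For part (b): here $\CM\in\Dmod(\CY)^{\geq -m}$ is safe, and we want $\CM_1\mapsto \Gamma_\rd(\CY,\CM\sotimes\CM_1)$ to be left t-exact up to a shift. The cleanest route is Verdier duality: by \propref{compactness via safety} the notion of safety is self-dual on coherent objects, but $\CM$ need not be coherent, so instead I would argue directly. Note that $\Gamma_\rd(\CY,\CM\sotimes\CM_1)\simeq \langle\CM,\CM_1\rangle_{\Dmod(\CY)}$ by \lemref{l:descr of pairing} with the roles symmetric, and by the definition of the pairing via $\BD_\CY^{\on{Verdier}}$, for compact $\CM_1 = \CM_1'$ this is $\Gamma_{\on{dR}}(\CY,\CM\sotimes\CM_1')$; more usefully, I would use that $\Gamma_\rd(\CY,-) = (p_\CY)_{\blacktriangle}$ and that $(p_\CY)_{\blacktriangle}$, by \propref{p:ren dir image estimate} applied in the ``opposite'' direction — i.e. using that the dual of a left t-exact-up-to-shift functor is right t-exact-up-to-shift and vice versa, together with $(p_\CY)^! : \Vect\to\Dmod(\CY)$ being t-exact up to the shift by $\omega_\CY$ — controls the amplitude. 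Concretely: $\Gamma_\rd(\CY,-)$ is the dual of $p_\CY^!$; since $p_\CY^!(k) = \omega_\CY$ lies in a fixed cohomological range depending only on $\CY$, the dual functor $\Gamma_\rd(\CY,-)$ is left t-exact up to a shift depending only on $\CY$. Then the amplitude of $\CM\sotimes(-)$ on the $\geq$ side: if $\CM\in\Dmod(\CY)^{\geq -m}$, then $\CM\sotimes(-)$ sends $\Dmod(\CY)^{\geq 0}$ into $\Dmod(\CY)^{\geq -m-c(\CY)}$ for a constant $c(\CY)$ (again reduce to schemes). Composing gives the claim, with the shift depending only on $\CY$ and $m$.

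The main obstacle I expect is part (b): unlike part (a), one cannot directly invoke \propref{p:pairing w safe} to turn $\Gamma_\rd$ into $\Gamma_{\on{dR}}$ and then use the $\pi_\dr$ estimate, because $\Gamma_{\on{dR}}(\CY,-)$ is \emph{not} left t-exact up to a shift in general (it fails even to be continuous). So the argument genuinely must go through the renormalized functor and its characterization as a dual, and the delicate point is making precise the statement ``the dual of a functor that is t-exact up to a shift is again t-exact up to the opposite shift'' — this requires knowing that $\Dmod(\CY)$ is compactly generated (\thmref{t:Dmods}) so that $\Gamma_\rd = (p_\CY)_{\blacktriangle}$ is literally the ind-extension of $\Gamma_{\on{dR}}|_{\Dmod(\CY)^c}$ (\lemref{l:renormalized de Rham as ind-extension}), and then checking that ind-extension interacts correctly with the t-structures, using compatibility of the t-structure with filtered colimits (\lemref{D-properties of t}) and the fact that compact objects, while not closed under truncation, still suffice because every object is a filtered colimit of compacts lying in a fixed range once we fix the truncation degree. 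I would handle this by first establishing the amplitude bound for $(p_\CY)_{\blacktriangle} = \Gamma_\rd(\CY,-)$ as a standalone lemma (it is essentially contained in the proof of \propref{p:ren dir image estimate} run with $\CY_2=\on{pt}$, noting that proof used $\pi_*^{\IndCoh}$ and \corref{c:relative}(ii), both of which give two-sided-in-one-direction bounds depending only on the stack), and then tensoring.
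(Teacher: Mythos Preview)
Your approach to part (a) is correct and matches the paper: use safety to replace $\Gamma_{\on{dR}}$ by $\Gamma_\rd$, then use that $\sotimes$ has bounded amplitude and that $\Gamma_\rd=(p_\CY)_{\blacktriangle}$ is right t-exact up to a shift by \propref{p:ren dir image estimate}.

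For part (b), however, you have a genuine misconception. You write that one ``cannot directly invoke \propref{p:pairing w safe} to turn $\Gamma_\rd$ into $\Gamma_{\on{dR}}$ and then use the $\pi_\dr$ estimate, because $\Gamma_{\on{dR}}(\CY,-)$ is not left t-exact up to a shift in general.'' But this is false: \propref{p:coconnective part Dmod}(b), applied to $\pi=p_\CY:\CY\to\on{pt}$, says precisely that $\Gamma_{\on{dR}}(\CY,-)$ sends $\Dmod(\CY)^{\geq n}$ into $\Vect^{\geq n-m}$ for some $m$ depending only on $\CY$. Left t-exactness up to a shift does not require continuity. So the paper's proof of (b) is exactly symmetric to (a): use safety to identify $\Gamma_\rd(\CY,\CM\sotimes\CM_1)\simeq\Gamma_{\on{dR}}(\CY,\CM\sotimes\CM_1)$, then use that $\sotimes$ is left t-exact up to a shift and that $\Gamma_{\on{dR}}(\CY,-)$ is left t-exact up to a shift.

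Your alternative route for (b) is not only unnecessary but actually fails. The claim that ``$\Gamma_\rd(\CY,-)$ is left t-exact up to a shift depending only on $\CY$'' (deduced from $\omega_\CY$ being bounded) is incorrect: for $\CY=B\BG_m$, the object $k_{B\BG_m}$ is bounded, but $\Gamma_\rd(B\BG_m,k_{B\BG_m})$ is unbounded below (compute it via Example~\ref{ex:ren for BG} as $k\underset{B}\otimes k$ up to a shift, where $B$ is an exterior algebra). The principle ``dual of a t-exact functor is t-exact'' does not hold in this form because Verdier self-duality does not preserve the t-structure on $\Dmod(\CY)$.
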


\begin{proof}

It is easy to see that on \emph{any} quasi-compact algebraic stack, the functor $\sotimes$
is both left and right t-exact up to a cohomological shift. Both assertions follow from the
fact that if $\CM$ is safe, 
$$\Gamma_\rd(\CY,\CM\sotimes \CM_1)\simeq \Gamma_{\on{dR}}(\CY,\CM\sotimes \CM_1)$$
(by \corref{c:crit for safety}), using \lemref{p:coconnective part Dmod} and \propref{p:ren dir image estimate}, respectively.

\end{proof}

We now claim that the above lemma admits a partial converse:

\begin{prop}  \label{p:estimates on safety} \hfill

\smallskip

\noindent{\em(a)} Let $\CM$ be an object of $\Dmod(\CY)^b$. Then it is safe if 
the functor $$\CM_1\mapsto \Gamma_{\on{dR}}(\CY,\CM\sotimes \CM_1)$$ is right t-exact 
up to a cohomological shift. 

\smallskip

\noindent{\em(b)} Let $\CM$ be an object of $\Dmod(\CY)^b$. Then it is safe if 
the functor $$\CM_1\mapsto \Gamma_\rd(\CY,\CM\sotimes \CM_1)$$ is left t-exact up to
a cohomological shift. 

\end{prop}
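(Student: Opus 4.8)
The plan is to prove \propref{p:estimates on safety} by reducing it, via \corref{c:crit for safety}, to showing that the natural transformation \eqref{e:nat trans to ren}, evaluated on $\CM \sotimes \CM'$, is an isomorphism for all $\CM' \in \Dmod(\CY)$. Since $\CM$ is cohomologically bounded, both $\Gamma_{\on{dR}}(\CY, \CM \sotimes -)$ and $\Gamma_\rd(\CY, \CM \sotimes -)$ agree on $\Dmod(\CY)^+$: this is the content of \propref{p:coconnective part Dmod} together with \lemref{l:renormalized de Rham as ind-extension} — on eventually coconnective objects $\Gamma_{\on{dR}}$ commutes with the relevant filtered colimits, hence equals its ind-extension off compacts, which is $\Gamma_\rd$. (A small lemma is needed here asserting that $\Gamma_\rd$ is the ind-extension of $\Gamma_{\on{dR}}$ from $\Dmod(\CY)^c$, but that is exactly \lemref{l:renormalized de Rham as ind-extension}; the tensor-ideal remark and the fact that $\sotimes$ is of bounded amplitude reduce the "$\CM \sotimes -$" version to the plain one.) So the only issue is behaviour on $\Dmod(\CY)^{\leq -n}$ for $n \to \infty$, i.e.\ the failure of continuity is a "left-completion" phenomenon.

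First I would record the following: for \emph{any} $\CM \in \Dmod(\CY)^b$, the functor $\Gamma_\rd(\CY, \CM \sotimes -)$ commutes with all colimits by definition, while $\Gamma_{\on{dR}}(\CY, \CM \sotimes -) \simeq \CMaps(\BD^{\on{Verdier}}_\CY(\CM), -)$ by \lemref{l:Verdier_stacks} (here $\BD^{\on{Verdier}}_\CY(\CM) \in \Dmod_{\on{coh}}(\CY)$ since $\CM$ is coherent — wait, $\CM$ need not be coherent, only bounded, so instead one writes $\Gamma_{\on{dR}}(\CY,\CM\sotimes\CM')$ as a limit over a smooth atlas using \eqref{e:DeRham} and \propref{p:coconnective part Dmod}). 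The map \eqref{e:nat trans to ren} on $\CM \sotimes \CM'$ is then always an isomorphism on $\tau^{\geq -N}$-truncations for the ambient argument $\CM'$, uniformly in $N$ by the amplitude bound in the hypothesis. Concretely: under hypothesis (a), $\Gamma_{\on{dR}}(\CY, \CM \sotimes -)$ is right t-exact up to shift, so $\Gamma_{\on{dR}}(\CY, \CM \sotimes \tau^{\leq -N}(\CM')) \in \Vect^{\leq -N + c}$ for a fixed constant $c$; and $\Gamma_\rd$ shares this estimate by \lemref{l:estimates on safety only if}(a)... except that lemma \emph{presupposes} safety, so I cannot cite it. Instead, under hypothesis (a) I would directly bound $\Gamma_\rd(\CY, \CM \sotimes \tau^{\leq -N}(\CM'))$: since $\Gamma_\rd$ is continuous and $\Dmod(\CY)^{\leq -N}$ is generated under filtered colimits by $\ind_{\Dmod(\CY)}(\CF)$ with $\CF \in \IndCoh(\CY)^{\leq -N}$, and $\CM \sotimes \ind_{\Dmod(\CY)}(\CF) \simeq \ind_{\Dmod(\CY)}(\CF \sotimes \oblv_{\Dmod(\CY)}(\CM))$ by \lemref{l:tensor with induction stacks}, whose $\Gamma_\rd$ is $\Gamma^{\IndCoh}$ of something in $\IndCoh(\CY)^{\leq -N + c'}$ by Example \ref{ex:ren dr of ind} and \thmref{main}(ii) — so $\Gamma_\rd(\CY, \CM \sotimes \tau^{\leq -N}(\CM')) \in \Vect^{\leq -N + c''}$ too.

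With both sides of \eqref{e:nat trans to ren} (applied to $\CM \sotimes \CM'$) sending $\tau^{\leq -N}(\CM')$ into degrees $\leq -N + C$ for a uniform $C$, and agreeing on $\tau^{\geq -N}(\CM')$ via the eventually-coconnective case, a standard argument with the exact triangle $\tau^{\leq -N}(\CM') \to \CM' \to \tau^{\geq -N+1}(\CM')$ and left-completeness of the t-structure on $\Dmod(\CY)$ (\lemref{D-properties of t}, applied strata-wise, which gives left-completeness also on stacks as noted after \secref{sss:only_smooth}) shows the map is an isomorphism in each fixed cohomological degree $i$: take $N > -i + C$. For part (b) the argument is symmetric: hypothesis (b) directly bounds $\Gamma_\rd(\CY, \CM \sotimes \tau^{\leq -N}(\CM'))$ from above, and $\Gamma_{\on{dR}}$ of the same is bounded using \propref{p:coconnective part Dmod}(b) after passing to a smooth atlas (the de Rham pushforward along each $Z^i/\CY \to \CY$ is continuous of bounded amplitude, and $g^*_{\on{dR}}$ has bounded amplitude), so again both sides vanish in degree $i$ for $N$ large, forcing the isomorphism.

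The main obstacle I anticipate is the lower bound / amplitude control on $\Gamma_\rd(\CY, \CM\sotimes\tau^{\le -N}(\CM'))$ in part (a) — equivalently, the interaction of $\Gamma_\rd$ with truncations, since $\Gamma_\rd$ is \emph{not} obviously left t-exact up to shift for arbitrary bounded $\CM$ (that would essentially be the conclusion). The resolution, as sketched above, is to avoid t-exactness statements about $\Gamma_\rd$ altogether and instead route everything through induced D-modules: the generation of $\Dmod(\CY)^{\leq -N}$ by $\ind_{\Dmod(\CY)}(\IndCoh(\CY)^{\leq -N})$, the projection-type formula \lemref{l:tensor with induction stacks}, the identification $\Gamma_\rd \circ \ind_{\Dmod(\CY)} \simeq \Gamma^{\IndCoh}$ (Example \ref{ex:ren dr of ind}), and finally the bounded-amplitude estimate \thmref{main}(ii) for $\Gamma$ on $\QCoh(\CY)$ (transported to $\IndCoh$ via $\Psi_\CY$ on eventually coconnective objects). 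Once these are assembled, the remaining combinatorics — comparing the two limits/colimits degree by degree using left-completeness — is routine.
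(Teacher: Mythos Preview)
Your proposal has a genuine gap: the claim that $\Gamma_{\on{dR}}(\CY,\CM\sotimes -)$ and $\Gamma_\rd(\CY,\CM\sotimes -)$ agree on $\Dmod(\CY)^+$ whenever $\CM$ is bounded is false. Take $\CY=B\BG_m$ and $\CM=\omega_\CY$ (so that $\CM\sotimes -$ is the identity); then $k_\CY\in\Dmod(\CY)^b\subset\Dmod(\CY)^+$, yet $\Gamma_{\on{dR}}(\CY,k_\CY)\simeq k[u]$ with $|u|=2$ sits in non-negative degrees, while $\Gamma_\rd(\CY,k_\CY)$ is unbounded below by Example~\ref{ex:ren for BG}. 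This $\CM$ does not satisfy hypothesis~(a), so the proposition is not contradicted; but you invoke the agreement as a consequence of boundedness alone, citing \propref{p:coconnective part Dmod} and \lemref{l:renormalized de Rham as ind-extension}. The flaw is the inference ``$\Gamma_{\on{dR}}$ commutes with filtered colimits in $\Dmod(\CY)^{\geq n}$, hence coincides there with its ind-extension off compacts'': the ind-extension is a colimit over \emph{all} compacts mapping to the given object, and that indexing diagram is not confined to $\Dmod(\CY)^{\geq n}$. Any attempt to establish the agreement on $\Dmod(\CY)^+$ \emph{using} hypothesis~(a) would already amount to proving safety, so the route through \corref{c:crit for safety} becomes circular.

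The paper sidesteps this for part~(a) by not comparing to $\Gamma_\rd$ at all: it verifies directly that $H^0\bigl(\Gamma_{\on{dR}}(\CY,\CM\sotimes -)\bigr)$ commutes with direct sums, truncating each summand so that on the $\tau^{\geq -k-d}$ pieces commutation holds by \propref{p:coconnective part Dmod}(b), while on the $\tau^{<-k-d}$ pieces both $H^0$ and $H^1$ vanish by hypothesis~(a). For~(b) the paper takes a genuinely different route: it shows that hypothesis~(b) implies hypothesis~(a), by first reducing to $\CM_1\in\Dmod_{\on{coh}}(\CY)^{\leq 0}$ and then approximating each such $\CM_1$ by a compact object (an auxiliary result, \lemref{l:approx}), on which $\Gamma_\rd$ and $\Gamma_{\on{dR}}$ do coincide by \corref{c:pairing on compact}.
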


Combing this with \lemref{l:estimates on safety only if}, we obtain:

\begin{cor}
Let $\CM_i$ be a (possibly infinite) collection of safe objects of $\Dmod(\CY)$ that are
contained in $\Dmod(\CY)^{\geq -m,\leq m}$ for some $m$. Then $\underset{i}\oplus\, \CM_i$
is also safe.
\end{cor}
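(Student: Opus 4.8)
The plan is to deduce the statement directly from \lemref{l:estimates on safety only if} and \propref{p:estimates on safety}. First I would invoke \lemref{l:estimates on safety only if}(b): since each $\CM_i$ is safe and lies in $\Dmod(\CY)^{\geq -m}$ with the \emph{same} bound $m$, the functor $\CM_1\mapsto \Gamma_\rd(\CY,\CM_i\sotimes \CM_1)$ is left t-exact up to a cohomological shift, and crucially the estimate on that shift depends only on $\CY$ and on $m$ — hence it is \emph{uniform} in $i$. Call the common shift bound $c$, so that each $\Gamma_\rd(\CY,\CM_i\sotimes -)$ sends $\Dmod(\CY)^{\geq 0}$ into $\Vect^{\geq -c}$.

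Next I would use that $\Gamma_\rd(\CY,-)$ is continuous (\defnref{d:ren dr}) and that $\sotimes$ is continuous in each variable, to commute the direct sum past both operations: for any $\CM_1\in \Dmod(\CY)$,
\[
\Gamma_\rd\!\left(\CY,\bigl(\underset{i}\oplus\, \CM_i\bigr)\sotimes \CM_1\right)
\simeq \underset{i}\oplus\, \Gamma_\rd(\CY,\CM_i\sotimes \CM_1).
\]
Now restrict attention to $\CM_1\in \Dmod(\CY)^{\geq 0}$. Each summand on the right lies in $\Vect^{\geq -c}$, and since a direct sum of objects of $\Vect^{\geq -c}$ is again in $\Vect^{\geq -c}$ (the t-structure on $\Vect$ is compatible with colimits), we conclude that $\Gamma_\rd(\CY,(\oplus_i \CM_i)\sotimes -)$ sends $\Dmod(\CY)^{\geq 0}$ into $\Vect^{\geq -c}$. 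Also $\oplus_i \CM_i\in \Dmod(\CY)^{\geq -m}\subset \Dmod(\CY)^b{}$-range on the left side; but more importantly $\oplus_i \CM_i$ is bounded below, and the uniform bound $m$ together with $\CM_i\in \Dmod(\CY)^{\leq m}$ gives $\oplus_i \CM_i\in \Dmod(\CY)^{\leq m}$ as well, so $\oplus_i\CM_i\in \Dmod(\CY)^b$.

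Finally I would feed this into \propref{p:estimates on safety}(b): we have exhibited $\oplus_i \CM_i$ as an object of $\Dmod(\CY)^b$ for which $\CM_1\mapsto \Gamma_\rd(\CY,(\oplus_i\CM_i)\sotimes \CM_1)$ is left t-exact up to the cohomological shift $c$, hence $\oplus_i\CM_i$ is safe. I do not expect a serious obstacle here; the only point requiring care is the uniformity of the shift bound across the family, which is exactly why the hypothesis fixes a single $m$ bounding all the $\CM_i$ from both sides — without that uniformity the direct sum of the target objects need not be bounded below, and the argument would collapse.
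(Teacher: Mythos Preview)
Your proof is correct and is exactly the argument the paper has in mind: the corollary is stated immediately after \propref{p:estimates on safety} with the one-line justification ``combining this with \lemref{l:estimates on safety only if},'' and you have simply spelled out that combination via route (b). The only detail worth making explicit is that $\oplus_i\CM_i\in \Dmod(\CY)^{\geq -m,\leq m}$ because the t-structure on $\Dmod(\CY)$ is compatible with filtered colimits (\lemref{D-properties of t}), which you use implicitly.
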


\begin{proof}[Proof of \propref{p:estimates on safety}]

To prove point (a), it suffices to show that the functor
$$\CM_1\mapsto H^0\left(\Gamma_{\on{dR}}(\CY,\CM\sotimes \CM_1)\right)$$ 
commutes with direct sums. Let $k$ be the integer such that the functor
$$\CM_1\mapsto \Gamma_{\on{dR}}(\CY,\CM\sotimes \CM_1)$$
sends $\Dmod(\CY)^{\leq 0}\to \Vect^{\leq k}$. Let $d$ be an integer such that
$\sotimes$ sends 
$$\Dmod(\CY)^{\leq 0}\times \Dmod(\CY)^{\leq 0}\to \Dmod(\CY)^{\leq d}.$$

\medskip

For a family of objects $\alpha\mapsto \CM_1^\alpha$, consider the following diagram in which the columns
are parts of long exact sequences:

$$
\CD
\underset{\alpha}\oplus\, H^0\left(\Gamma_{\on{dR}}(\CY,\CM\sotimes \tau^{<-k-d}(\CM_1^\alpha))\right)  
@>>> H^0\left(\Gamma_{\on{dR}}(\CY,\CM\sotimes (\underset{\alpha}\oplus\, \tau^{<-k-d}(\CM_1^\alpha)))\right)\\
@VVV   @VVV  \\
\underset{\alpha}\oplus\, H^0\left(\Gamma_{\on{dR}}(\CY,\CM\sotimes \CM_1^\alpha)\right)  @>>>
H^0\left(\Gamma_{\on{dR}}(\CY,\CM\sotimes (\underset{\alpha}\oplus\, \CM_1^\alpha))\right)  \\
@VVV @VVV  \\
\underset{\alpha}\oplus\, H^0\left(\Gamma_{\on{dR}}(\CY,\CM\sotimes \tau^{\geq -k-d}(\CM_1^\alpha))\right)  @>>> 
H^0\left(\Gamma_{\on{dR}}(\CY,\CM\sotimes (\underset{\alpha}\oplus\, \tau^{\geq-k-d}(\CM_1^\alpha)))\right) \\
@VVV   @VVV  \\
\underset{\alpha}\oplus\, H^1\left(\Gamma_{\on{dR}}(\CY,\CM\sotimes \tau^{<-k-d}(\CM_1^\alpha))\right)  
@>>> H^1\left(\Gamma_{\on{dR}}(\CY,\CM\sotimes (\underset{\alpha}\oplus\, \tau^{<-k-d}(\CM_1^\alpha)))\right).
\endCD
$$

The top and the bottom horizontal arrows are maps between zero objects by assumption. Hence, the middle
vertical arrows in both columns are isomorphisms. 
The second from the bottom horizontal arrow is an isomorphism
by \lemref{p:coconnective part Dmod}. Hence, the second from the top 
horizontal arrow is also an isomorphism, as required.

\medskip

Let us now prove point (b). We shall show that under the assumptions on $\CM$, the functor
$\CM_1\mapsto \Gamma_{\on{dR}}(\CY,\CM\sotimes \CM_1)$ is \emph{right} t-exact, up to a cohomological
shift, thereby reducing the assertion to point (a). 

\medskip

Let $n$ be the integer such that 
$$H^i\left(\Gamma_{\rd}(\CY,\CM\sotimes \CM_1)\right)=0$$ 
for all $i>n$ and $\CM_1\in \Dmod(\CY)^{\leq 0}$. Such an integer exists because $\CM$ is bounded and 
the functor $\Gamma_{\rd}(\CY,-)$ is \emph{right t-exact}
up to a cohomological shift. 

\medskip

We will show that the same integer works for $\Gamma_{\on{dR}}(\CY,-)$, i.e.,
$$H^i\left(\Gamma_{\on{dR}}(\CY,\CM\sotimes \CM_1)\right)=0$$ 
for all $i>n$ and $\CM_1\in \Dmod(\CY)^{\leq 0}$. 

\medskip

First, we claim that it is sufficient to show this for
$\CM_1\in \Dmod(\CY)^\heartsuit$. Indeed, it is clear that the assertion for $\CM_1\in \Dmod(\CY)^\heartsuit$
implies the assertion for all $\CM_1\in \Dmod(\CY)^b\cap \Dmod(\CY)^{\leq 0}$. In general,
we use the fact that the functor $\Gamma_{\on{dR}}(\CY,-)$ commutes with limits and the fact that
for $\CM\in \Dmod(\CY)^b$ and any $\CM_1$, the map
$$\CM\sotimes \CM_1\to \underset{m}{\underset{\longleftarrow}{lim}}\, (\CM\sotimes \tau^{\geq -m}(\CM_1))$$
is an isomorphism (which in turn follows from the fact that the t-structure on $\Dmod(\CY)$ is left-complete,
and the functor $\CM\sotimes -$ is of bounded cohomological amplitude).

\medskip

Next, by \lemref{p:coconnective part Dmod}(b), we can assume that $\CM_1\in \Dmod(\CY)^\heartsuit\cap \Dmod_{\on{coh}}(\CY)$.
We will show that
$$H^i\left(\Gamma_{\on{dR}}(\CY,\CM\sotimes \CM_1)\right)=0$$ 
for all $i>n$ and $\CM_1\in \Dmod_{\on{coh}}(\CY)^{\leq 0}$. 

\medskip

We shall use the following lemma, proved in \secref{sss:proof of approx}:

\begin{lem}  \label{l:approx} 
Let $\CY$ be a QCA stack, and $\CN$ an object of 
$\Dmod_{\on{coh}}(\CY)$.

\smallskip

\noindent{\em(a)} For a given integer $k$ there exists $\CN^k\in \Dmod(\CY)^c$ equipped
with a map $\CN^k\to \CN$, whose cone belongs to $\Dmod(\CY)^{\leq -k}$.

\smallskip

\noindent{\em(b)} For a given integer $k$ there exists $\CN^k\in \Dmod(\CY)^c$ equipped
with a map $\CN\to \CN^k$, whose cone belongs to $\Dmod(\CY)^{\geq k}$.

\end{lem}

For $\CM_1\in \Dmod_{\on{coh}}(\CY)^{\leq 0}$, let $\CM_1\to \CM_1^k$ be as in \lemref{l:approx}(b). 
Let $$\CL^k:=\on{Cone}(\CM_1\to \CM_1^k).$$ Consider the diagram
$$
\CD
\Gamma_\rd(\CY,\CM\sotimes \CM_1)   @>>> \Gamma_{\on{dR}}(\CY,\CM\sotimes \CM_1)  \\
@VVV  @VVV  \\
\Gamma_\rd(\CY,\CM\sotimes \CM_1^k)   @>>> \Gamma_{\on{dR}}(\CY,\CM\sotimes \CM_1^k)  \\
@VVV    @VVV \\
\Gamma_\rd(\CY,\CM\sotimes \CL^k)   @>>> \Gamma_{\on{dR}}(\CY,\CM\sotimes \CL^k), 
\endCD
$$
in which the columns are exact triangles. The middle horizontal arrow is an isomorphism
by \corref{c:pairing on compact}. We now claim that for $j=i$ and $i-1$ (or any finite range
of indices) and $k\gg 0$, both 
$$H^j\left(\Gamma_\rd(\CY,\CM\sotimes \CL^k)\right) \text{ and }
H^j\left(\Gamma_{\on{dR}}(\CY,\CM\sotimes \CL^k)\right)$$
are zero. Indeed, for $\Gamma_{\on{dR}}(\CY,\CM\sotimes \CL^k)$ this follows from 
\lemref{p:coconnective part Dmod}. For $\Gamma_\rd(\CY,\CM\sotimes \CL^k)$ this follows
on the assumption on $\CM$. 

\medskip

Hence, $H^i(\Gamma_\rd(\CY,\CM\sotimes \CM_1))\to H^i(\Gamma_{\on{dR}}(\CY,\CM\sotimes \CM_1))$ 
is an isomorphism, and the assertion follows.

\end{proof}

\sssec{Proof of \lemref{l:approx}}   \label{sss:proof of approx} To prove point (a), the usual argument reduces the assertion to the following one: 

\medskip

\noindent For
$\CN\in \Dmod(\CY)^\heartsuit\cap \Dmod_{\on{coh}}(\CY)$, there exists an object
$\CN_0\in \Dmod(\CY)^{\leq 0}\cap \Dmod(\CY)^c$ and a \emph{surjective} map in $\Dmod(\CY)^\heartsuit$:
$$H^0(\CN_0)\to \CN.$$

Write
$$\oblv_{\Dmod(\CY)}(\CN)=\underset{\alpha}\cup\, \CF_\alpha, \quad \CF_\alpha\in \Coh(\CY)^\heartsuit.$$
The objects $\ind_{\Dmod(\CY)}(\CF_\alpha)$ are compact, and for some $\alpha$ the resulting map
$$\ind_{\Dmod(\CY)}(\CF_\alpha)\to \ind_{\Dmod(\CY)}(\oblv_{\Dmod(\CY)}(\CN))\to \CN$$
will induce a surjection on $H^0$.

\medskip

Point (b) is obtained from point (a) by Verdier duality, using the fact that 
$$\BD_\CY^{\on{Verdier}}\left(\on{D-mod}_{\on{coh}}(\CY)^{\leq 0}\right)\subset \on{D-mod}_{\on{coh}}(\CY)^{\geq -d}$$
for some integer $d$ depending only on $\CY$. 

\qed

\ssec{Expressing ($\dr$)-pushforward through the renormalized version}

\sssec{}

Let $\pi:\CY_1\to \CY_2$ be again a morphism between QCA stacks. One can regard the functor
$\pi_{\blacktriangle}$ as a fundamental operation, and wonder whether one can recover the functor
$\pi_\dr$ intrinsically through it. 

\medskip

The latter turns out to be possible, once we take into account the t-structure on $\Dmod(\CY_i)$, and below 
we explain how to it.

\sssec{}

First, according to \lemref{l:de Rham almost cocont}, the functor $\pi_\dr$ can be recovered from its restriction
to $\Dmod(\CY_1)^{\geq -n}$ for every fixed $n$, by taking the limit of its values on the truncations. 

\medskip

Second, according to \propref{p:coconnective part Dmod}(b), the restriction of $\pi_\dr$ to $\Dmod(\CY_1)^{\geq -n}$ 
commutes with filtered colimits, while $\Dmod(\CY_1)^{\geq -n}$ is generated under filtered colimits by the subcategory
$\Dmod(\CY_1)^{\geq -n}\cap \Dmod_{\on{coh}}(\CY_1)$. 

\medskip

Hence, it remains to show how to express 
$\pi_\dr|_{\Dmod_{\on{coh}}(\CY_1)}$ in terms of $\pi_{\blacktriangle}|_{\Dmod_{\on{coh}}(\CY_1)}$.

\sssec{}

Let $\CN$ be an object of $\Dmod_{\on{coh}}(\CY_1)$. For an integer $k\gg 0$, let $\CN\to \CN^k$ 
be as in \lemref{l:approx}(b). Note that since $\CN^k$ is compact, the map
$$\pi_{\blacktriangle}(\CN^k)\to \pi_\dr(\CN^k)$$
is an isomorphism.

\medskip

From \propref{p:coconnective part Dmod}(b) we obtain:

\begin{lem} There exists an integer $m$, depending only on $\pi$, such that
the map $\CN\to \CN^k$ induces an isomorphism
$$\tau^{\leq k-m}(\pi_\dr(\CN))\to \tau^{\leq k-m}(\pi_\dr(\CN^k)).$$
\end{lem}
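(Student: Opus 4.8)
The statement to prove is: for $\pi:\CY_1\to\CY_2$ a morphism between QCA stacks and $\CN\in\Dmod_{\on{coh}}(\CY_1)$, if $\CN\to\CN^k$ is a map with $\on{Cone}(\CN\to\CN^k)\in\Dmod(\CY_1)^{\geq k}$ and $\CN^k\in\Dmod(\CY_1)^c$, then there exists an integer $m$ depending only on $\pi$ such that the induced map $\tau^{\leq k-m}(\pi_\dr(\CN))\to\tau^{\leq k-m}(\pi_\dr(\CN^k))$ is an isomorphism. The plan is to read off $m$ directly from \propref{p:coconnective part Dmod}(b). Since $\CY_1$ and $\CY_2$ are QCA, they are in particular quasi-compact, and $\pi$ is a quasi-compact morphism, so that proposition applies: there is an integer $m$, depending only on $\pi$, such that for every $n$ the functor $\pi_\dr$ sends $\Dmod(\CY_1)^{\geq n}$ to $\Dmod(\CY_2)^{\geq n-m}$.

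First I would form the exact triangle
$$\CN\to \CN^k\to \CL,\qquad \CL:=\on{Cone}(\CN\to\CN^k)\in \Dmod(\CY_1)^{\geq k},$$
and apply the (continuous, hence exact-triangle-preserving) functor $\pi_\dr$ to obtain an exact triangle
$$\pi_\dr(\CN)\to \pi_\dr(\CN^k)\to \pi_\dr(\CL)$$
in $\Dmod(\CY_2)$. By the chosen $m$ and the fact that $\CL\in\Dmod(\CY_1)^{\geq k}$, we get $\pi_\dr(\CL)\in\Dmod(\CY_2)^{\geq k-m}$, i.e. $H^i(\pi_\dr(\CL))=0$ for $i<k-m$. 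Now I would pass to the long exact sequence of cohomology objects associated to this triangle (with respect to the canonical t-structure on $\Dmod(\CY_2)$): for every $i$ we have the exact sequence $H^{i-1}(\pi_\dr(\CL))\to H^i(\pi_\dr(\CN))\to H^i(\pi_\dr(\CN^k))\to H^i(\pi_\dr(\CL))$. For $i\leq k-m-1$ both outer terms $H^{i-1}(\pi_\dr(\CL))$ and $H^i(\pi_\dr(\CL))$ vanish, so $H^i(\pi_\dr(\CN))\to H^i(\pi_\dr(\CN^k))$ is an isomorphism in that range. (If one prefers the cut-off $\tau^{\leq k-m}$ rather than $\tau^{\leq k-m-1}$, one simply enlarges $m$ by $1$; since $m$ is only required to depend on $\pi$, this costs nothing — alternatively one notes the cone triangle forces the isomorphism on $\tau^{\leq k-m-1}$, and a harmless reindexing gives the stated form.) A map in $\Dmod(\CY_2)$ which induces isomorphisms on all cohomology objects in degrees $\leq k-m$ induces an isomorphism after applying $\tau^{\leq k-m}$, which is exactly the assertion.

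There is essentially no obstacle here: the only input is \propref{p:coconnective part Dmod}(b), whose applicability is guaranteed by the QCA hypothesis on $\CY_1$ and $\CY_2$ (which makes $\pi$ quasi-compact, with $\CY_2$ quasi-compact). The one point to be slightly careful about is bookkeeping with the shift — one must track whether the relevant vanishing range is $i<k-m$ or $i\leq k-m$ and adjust the final constant accordingly — but this is routine and does not affect the conclusion, since $m$ need only depend on $\pi$. The compactness of $\CN^k$ plays no role in \emph{this} lemma (it is used later, to identify $\pi_\dr(\CN^k)$ with $\pi_{\blacktriangle}(\CN^k)$), so I would not invoke it in the proof.
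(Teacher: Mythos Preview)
Your argument is correct and is exactly what the paper intends: it states the lemma as an immediate consequence of \propref{p:coconnective part Dmod}(b) without further detail, and your cone-and-truncation argument is the standard way to unpack that. One small slip: you justify that $\pi_\dr$ preserves exact triangles by calling it ``continuous'', but $\pi_\dr$ is in general \emph{not} continuous (that is the whole point of this section); it is, however, exact simply because all functors between DG categories in this paper are exact, so the triangle step goes through regardless.
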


\sssec{}

The above procedure can be summarized as follows: 

\begin{prop} \hfill

\smallskip

\noindent{\em(a)} The functor $\pi_\dr$ maps isomorphically to the right Kan extension of its retsriction to
$\Dmod(\CY_1)^+$. 

\smallskip

\noindent{\em(b)} For every $n$, the restriction of $\pi_\dr$ to $\Dmod(\CY_1)^{\geq -n}$ receives an
isomorphism from the left Kan extension of its further restriction to 
$\Dmod(\CY_1)^{\geq -n}\cap \Dmod_{\on{coh}}(\CY_1)$.

\smallskip

\noindent{\em(c)} The restriction of $\pi_\dr$ to $\Dmod_{\on{coh}}(\CY_1)$ maps isomorphically
to the right Kan extension of its further restriction to $\Dmod(\CY_1)^c$.

\end{prop}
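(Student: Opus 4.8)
The plan is to prove the three statements about reconstructing $\pi_\dr$ by assembling results already established in this section. For part (a), I would argue as follows: the functor $\pi_\dr$ commutes with limits along the Postnikov tower, by \lemref{l:de Rham almost cocont}, which says precisely that the natural map $\pi_\dr(\CM)\to \underset{n}{\underset{\longleftarrow}{lim}}\, \pi_\dr(\tau^{\geq -n}(\CM))$ is an isomorphism. Since $\CM\simeq \underset{n}{\underset{\longleftarrow}{lim}}\, \tau^{\geq -n}(\CM)$ by left-completeness of the t-structure on $\Dmod(\CY_1)$, and since the inclusion $\Dmod(\CY_1)^+\hookrightarrow \Dmod(\CY_1)$ together with the truncation functors exhibit $\Dmod(\CY_1)$ as the limit $\underset{n}{\underset{\longleftarrow}{lim}}\, \Dmod(\CY_1)^{\geq -n}$, the content of \lemref{l:de Rham almost cocont} is exactly the statement that $\pi_\dr$ is the right Kan extension of $\pi_\dr|_{\Dmod(\CY_1)^+}$ along this inclusion.

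For part (b), I would use \propref{p:coconnective part Dmod}(b): for fixed $n$, the restriction of $\pi_\dr$ to $\Dmod(\CY_1)^{\geq -n}$ takes values in $\Dmod(\CY_2)^{\geq n-m}$ and commutes with filtered colimits. The key auxiliary fact is that $\Dmod(\CY_1)^{\geq -n}$ is generated under filtered colimits by $\Dmod(\CY_1)^{\geq -n}\cap \Dmod_{\on{coh}}(\CY_1)$; this follows from the fact that $\Dmod(\CY_1)$ is compactly generated (by \thmref{t:Dmods}) with compact generators of the form $\ind_{\Dmod(\CY_1)}(\CF)$, $\CF\in\Coh(\CY_1)$, which are coherent, combined with \lemref{l:approx}(a), which approximates any coherent object by compact ones with highly coconnective cone. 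Given this, a functor on $\Dmod(\CY_1)^{\geq -n}$ that commutes with filtered colimits is automatically the left Kan extension of its restriction to the generating subcategory $\Dmod(\CY_1)^{\geq -n}\cap \Dmod_{\on{coh}}(\CY_1)$.

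For part (c), I would combine \lemref{l:approx}(b) with \propref{p:coconnective part Dmod}(b). Given $\CN\in\Dmod_{\on{coh}}(\CY_1)$, for each $k$ choose $\CN\to\CN^k$ with $\CN^k\in\Dmod(\CY_1)^c$ and cone in $\Dmod(\CY_1)^{\geq k}$. Since $\CN^k$ is compact, $\pi_\dr(\CN^k)\simeq\pi_{\blacktriangle}(\CN^k)$. By \propref{p:coconnective part Dmod}(b), there is an integer $m$ depending only on $\pi$ such that the map $\CN\to\CN^k$ induces an isomorphism on $\tau^{\leq k-m}$ after applying $\pi_\dr$; letting $k\to\infty$ and using left-completeness of the t-structure on $\Dmod(\CY_2)$, we recover $\pi_\dr(\CN)$ as a limit of values of $\pi_\dr$ on compact objects. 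Unwinding the universal property, this exhibits $\pi_\dr|_{\Dmod_{\on{coh}}(\CY_1)}$ as the right Kan extension of $\pi_\dr|_{\Dmod(\CY_1)^c}$ (equivalently, of $\pi_{\blacktriangle}|_{\Dmod(\CY_1)^c}$) along the inclusion $\Dmod(\CY_1)^c\hookrightarrow\Dmod_{\on{coh}}(\CY_1)$.

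The main obstacle I anticipate is the bookkeeping in part (c): one must verify that the approximations $\CN^k$ can be chosen compatibly (i.e., forming a tower or at least a cofiltered system) so that the reconstruction genuinely takes the form of a right Kan extension over the category $\Dmod(\CY_1)^c_{/\CN}$ rather than merely a sequential limit. This is handled by passing to the full comma category $\Dmod(\CY_1)^c_{/\CN}$ and checking that the subsystem of the $\CN^k$'s from \lemref{l:approx}(b) is cofinal in it for the purpose of computing $\underset{\longleftarrow}{lim}\, \pi_\dr$, using once more that $\pi_\dr$ is left t-exact up to shift (so that highly coconnective cones contribute nothing in any fixed degree). Everything else is a direct citation of the results above.
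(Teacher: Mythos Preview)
Your proposal is correct and follows essentially the same route as the paper: the proposition is a summary of the discussion immediately preceding it, and you have identified exactly the three ingredients the paper uses, namely \lemref{l:de Rham almost cocont} for (a), \propref{p:coconnective part Dmod}(b) for (b), and \lemref{l:approx}(b) together with the unnamed lemma on $\tau^{\leq k-m}$ for (c).

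Two minor remarks. For (b), your justification that $\Dmod(\CY_1)^{\geq -n}$ is generated under filtered colimits by its coherent objects is more elaborate than necessary: the paper simply asserts this, and the underlying reason is that $\Dmod(\CY_1)^\heartsuit$ is locally Noetherian (every object is the filtered union of its coherent subobjects, cf.\ \cite[Corollary 15.5]{LM}), which propagates to $\Dmod(\CY_1)^{\geq -n}$ via the Postnikov tower. Your route through compact generation and \lemref{l:approx}(a) is not wrong, but it is a detour. For (c), the cofinality concern you flag is real but the paper does not address it either; your proposed resolution (use that $\pi_\dr$ is left t-exact up to shift, so any map $\CN\to\CN'$ with $\CN'$ compact and highly coconnective cone gives the same truncated value) is the right one and is implicit in the paper's argument.
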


Recall that the restrictions of $\pi_\dr$ and $\pi_{\blacktriangle}$ to
$\Dmod(\CY_1)^c$ are canonically equivalent. So the above proposition
indeed expresses $\pi_\dr$ in terms of $\pi_{\blacktriangle}$.

\section{Geometric criteria for safety}  \label{s:3Dmods}

\subsection{Overview of the results} 
The results of this section have to do with a more explicit description of 
the subcategory of safe objects in $\Dmod(\CY)$. By \propref{compactness via safety}, this decsription
will characterize the subcategory $\Dmod(\CY)^c$ inside $\on{D-mod}_{\on{coh}}(\CY)$.

\medskip

We will introduce a notion of \emph{safe} algebraic stack (see Definition~\ref{d:safe}). We will show that a quasi-compact algebraic
stack $\CY$ is safe if and only if all objects of $\Dmod(\CY)$ are safe. In particular, for a quasi-compat $\CY$, the equality 
$\on{D-mod}(\CY)^c= \on{D-mod}_{\on{coh}}(\CY)$ holds if and only if $\CY$ is safe.

\medskip

For an arbitrary QCA stack $\CY$ we shall formulate an explicit safety criterion for objects of $\on{D-mod}(\CY )$ (Theorem~\ref{criter for safety}). 
Note that safety for objects can be checked strata-wise (see Corollary~\ref{stratification}).

\medskip

This section is organized as follows. In Sect.~\ref{ss:safe_stacks} we formulate the results and give some easy proofs. 
The more difficult Theorems~\ref{t:rel_Dmods} and \ref{criter for safety} are proved in Sect.~\ref{ss:rel_Dmods}-\ref{ss:proof_key}.

\medskip

As we shall be only interested in the categorical aspects of $\Dmod(\CY)$, with no restriction of generality
we can assume that all schemes and algebraic stacks discussed in this section are classical. 

\medskip

\noindent{\bf Change of conventions:} For the duration of this section ``prestack" will mean 
``classical prestack", and ``algebraic stack" will mean ``classical algebraic stack".

\ssec{Formulations}   \label{ss:safe_stacks}

\sssec{Safe algebraic stacks and morphisms}

\begin{defn}  \label{d:safe}  \hfill

\smallskip

\noindent{\em(a)}
An algebraic stack $\CY$ is {\em locally safe\,} if for every geometric point $y$ of $\CY$ 
the neutral connected component of its automorphism group,
$\on{Aut} (y)$, is unipotent.
\smallskip

\noindent{\em(b)} A morphism of algebraic stacks is {\em locally safe\,} if all its geometric fibers are. 

\smallskip

\noindent{\em(c)} 
An algebraic stack (resp. a morphism of algebraic stacks) is \emph{safe} if it is quasi-compact and
locally safe.
\end{defn}

\begin{rem}   \label{r:safe&*}
A safe algebraic stack is clearly QCA in the sense of Definition~\ref{d:QCA}.
\end{rem}

\begin{thm}  \label{t:rel_Dmods}
Let $\pi:\CY\to \CY'$ be a quasi-compact morphism of algebraic stacks. Then the functor $\pi_{\on{dR},*}$ is continuous if
and only if $\pi$ is safe. In the latter case $\pi_{\on{dR},*}$ strongly satisfies the projection formula.
\footnote{See \secref{ss:proj formula Dmod} for the explanation of what this means.}
\end{thm}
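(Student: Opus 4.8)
The plan is to prove the theorem by reducing it, via the stratification machinery already developed, to the classifying stack $BG$ of a (connected) algebraic group, and there to use the explicit description of $\Dmod(BG)$ and $\Gamma_{\on{dR}}(BG,-)$ obtained in \secref{ss:BG}. First I would treat the \emph{easy direction}: if $\pi$ is \emph{not} safe, I would exhibit a geometric point $y'\in \CY'$ whose fiber $\CY_{y'}$ contains a point $y$ with $\on{Aut}(y)^\circ$ non-unipotent; pulling back along $\Spec(k(y'))\to \CY'$ and using \propref{p:coconnective part Dmod} (which says base change holds on $\Dmod^+$) together with the fact that $\Dmod^{\leq 0}$ is generated under filtered colimits by induced objects, I would reduce the failure of continuity of $\pi_{\on{dR},*}$ to the failure of continuity of $\Gamma_{\on{dR}}(\CY_{y'},-)$, and then further, via the schematic quasi-compact map $B\on{Aut}(y)\to \CY_{y'}$ (along which $\pi_\dr$ is continuous by \propref{p:dr for sch}) and \lemref{l:de Rham trans}, to the non-continuity of $\Gamma_{\on{dR}}(B\on{Aut}(y),-)$. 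The latter is exactly the content of the Example in \secref{ss:BG}: when $G$ is connected and non-unipotent the algebra $A=\CMaps_{\Dmod(BG)}(k_{BG},k_{BG})$ is an infinite-dimensional polynomial algebra, so $k_{BG}$ is not compact, so $\Gamma_{\on{dR}}(BG,-)$ is not continuous. (One also needs the case $G$ disconnected and $G^\circ$ non-unipotent, which follows by replacing $G$ with $G^\circ$ and using that $BG^\circ\to BG$ is finite étale, hence schematic, so $\Gamma_{\on{dR}}(BG^\circ,-)=\Gamma_{\on{dR}}(BG,-)\circ\pi_\dr$.)

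For the \emph{hard direction} — $\pi$ safe $\Rightarrow$ $\pi_{\on{dR},*}$ continuous and strongly satisfies the projection formula — I would first reduce to the absolute case $\CY'=\on{pt}$: by \propref{p:bootstrap base change Dmod} and \lemref{l:changing index}, continuity and the projection formula for $\pi$ can be checked after base change to an affine scheme $S'\to\CY'$, and then $\CY\times_{\CY'}S'$ is a \emph{safe} algebraic stack (safety of $\pi$ means all geometric fibers are safe, and these base changes are such fibers up to nilpotents), so it suffices to show: \textbf{if $\CY$ is a safe algebraic stack then $\Gamma_{\on{dR}}(\CY,-)$ is continuous} (equivalently $k_\CY$ is compact, equivalently every object of $\Dmod(\CY)$ is safe). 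Once $\Gamma_{\on{dR}}(\CY,-)$ is continuous, the projection formula statement for the absolute case is \secref{sss:ex proj formula}(i) combined with \lemref{l:partial projection formula}: more precisely, for $\pi$ safe over a general base, $\pi_\dr$ continuous plus \lemref{l:partial projection formula} gives the projection formula after applying $\Gamma_{\on{dR}}(\CY',\CM_2'\sotimes-)$ for all coherent $\CM_2'$, and since $\Dmod(\CY')$ is generated by its coherent objects and all functors in sight are now continuous, the projection formula morphism \eqref{e:map for proj formula Dmod} is an isomorphism; base change then follows from \remref{r:strong proj formula qc}'s analog, i.e. from the projection formula by expressing $\phi_{2,*}\phi_2^!$ as tensoring, exactly as in the $\QCoh$ case.

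To prove that $\Gamma_{\on{dR}}(\CY,-)$ is continuous for $\CY$ safe I would run the stratification argument of \secref{s:deducing}, but now tracking the \emph{safe/continuous} property instead of cohomological dimension. The dévissage step is formal: if $\CX\hookrightarrow\CY$ is closed with open complement $\oCY$ and $\jmath$ quasi-compact, then $\jmath_\dr$ is continuous (it is schematic quasi-compact), and using the triangle $\imath_\dr\imath^!(-)\to(-)\to\jmath_\dr\jmath^!(-)$ — or rather its $k_\CY$-corepresented version — continuity of $\Gamma_{\on{dR}}(\oCY,-)$ and of $\Gamma_{\on{dR}}(\CX,-)$ yields continuity of $\Gamma_{\on{dR}}(\CY,-)$; here I use Kashiwara's lemma for stacks to identify the $\CX$-supported part with $\Dmod(\CX)$. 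The finite étale cover step (\propref{p:stratification}(ii), char.\ $0$): if $\CZ\to\CY$ is finite surjective étale, it is schematic quasi-compact so $\pi_\dr$ is continuous, and every object of $\Dmod(\CY)$ is a retract of $\pi_\dr\pi^!(-)$ (trace map, char.\ $0$), so continuity of $\Gamma_{\on{dR}}(\CZ,-)=\Gamma_{\on{dR}}(\CY,-)\circ\pi_\dr$ gives continuity of $\Gamma_{\on{dR}}(\CY,-)$. This reduces us to a stratum, which by \propref{p:stratification} is (up to the étale cover, and up to nilpotents) of the form $Z/G$ with $Z$ a quasi-compact scheme and $G$ an affine algebraic group; the safety hypothesis forces $G^\circ$ to be unipotent on this stratum (the fiber of the group scheme is $\on{Aut}$ of the corresponding point). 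Finally, for $\CZ=Z/G$ with $G^\circ$ unipotent: the map $f:Z/G\to BG$ is schematic quasi-compact so $f_\dr$ is continuous and $\Gamma_{\on{dR}}(Z/G,-)=\Gamma_{\on{dR}}(BG,-)\circ f_\dr$; and $\Gamma_{\on{dR}}(BG,-)$ is continuous because, by \secref{ss:BG} (specifically \eqref{e:equiv der category}–\eqref{e:homology of group}), $\Dmod(BG)\simeq B\mod$ with $B=(\Gamma_{\on{dR}}(G,k_G))^\vee$, and when $G^\circ$ is unipotent $\Gamma_{\on{dR}}(G,k_G)$ is concentrated in finitely many degrees with finite-dimensional cohomology (use $BG^\circ\to BG$ finite étale to reduce to $G$ connected unipotent, where $G$ is an iterated extension of $\BG_a$'s and $\Gamma_{\on{dR}}(\BG_a,k_{\BG_a})\simeq k$), hence $B$ is finite-dimensional, hence $k=k_{BG}$ is compact in $B\mod$, hence $\Gamma_{\on{dR}}(BG,-)\simeq\CMaps_B(k,-)$ is continuous. \textbf{The main obstacle} I anticipate is not any single step but the bookkeeping needed to make the reduction to the absolute case legitimate — in particular checking that the hypotheses "$\pi$ safe" and "quasi-compact" are stable under the base changes and stratifications invoked, and that the generation-by-coherents arguments used to upgrade the $\Gamma_{\on{dR}}(\CY',\CM_2'\sotimes-)$-level statements to genuine isomorphisms of functors go through; the geometric heart (safe $\Rightarrow$ $B$ finite-dimensional) is comparatively soft once \secref{ss:BG} is in hand.
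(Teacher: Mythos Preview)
Your hard direction contains a genuine gap. After invoking \propref{p:stratification} you obtain strata covered by stacks of the form $Z/G$ with $G$ an affine algebraic group, and you assert that ``the safety hypothesis forces $G^\circ$ to be unipotent.'' This is false: in the proof of \propref{p:stratification} the group $G$ is $GL(n)$, chosen large enough to contain the generic automorphism group. Safety constrains the \emph{stabilizers} of the $G$-action on $Z$ (equivalently, the fibers of the group scheme $\CG$ from \lemref{LM}), not $G$ itself. Consequently the map $Z/G\to BG$ is useless here --- $\Gamma_{\on{dR}}(BGL(n),-)$ is not continuous --- and your reduction to the finite-dimensionality of $B$ for unipotent $G$ never gets off the ground.

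The paper closes this gap with the refined stratification \lemref{l:varLM}: after shrinking and a finite \'etale cover one obtains a \emph{unipotent gerbe} $\psi:\CZ\to X\times BG$ with $G$ connected reductive, and safety forces $G$ to be trivial (Remark~\ref{r:Gtriv}), so $\CZ\to X$ is itself a unipotent gerbe. The key input is then \lemref{l:unipotent_gerbes}, which shows that $\pi^!$ along a unipotent gerbe is an \emph{equivalence} of D-module categories; continuity and the projection formula for $\CZ\to X$ are then immediate, and the morphism $X\to S$ is between schemes. Your endgame (finite-dimensionality of $B=(\Gamma_{\on{dR}}(G,k_G))^\vee$ for $G$ unipotent) is essentially the special case $X=\on{pt}$ of this lemma, but you need the relative statement over a base.

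Two smaller remarks. For the easy direction, pulling back along $\Spec(k(y'))\to\CY'$ and trying to descend continuity to the fiber runs into the problem that base change for $\pi_\dr$ is only known on $\Dmod^+$; the paper instead uses the commutative square $BH\to\CY\to\CY'$, $BH\to\on{pt}\to\CY'$ directly, invoking transitivity (\propref{p:de Rham trans}) for the schematic leg and conservativity of $\xi_\dr$. For the projection formula, your route via \lemref{l:partial projection formula} plus continuity would work once continuity is in hand, but the paper gets it for free by decomposing $\pi$ into a unipotent gerbe (where $\pi^!$ is an equivalence) and a morphism of schemes.
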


This theorem is proved in Sect.~\ref{ss:rel_Dmods} below.

\begin{cor}
If $\pi$ is safe, the canonical map
$$\pi_{\blacktriangle}\to \pi_{\on{dR},*}$$ is an isomorphism.
\end{cor}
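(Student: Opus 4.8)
The plan is to deduce the corollary directly from the two facts immediately preceding it: \propref{p:ren dir image of safe}, which supplies a canonical natural transformation $\pi_{\blacktriangle}\to \pi_{\on{dR},*}$ that is an isomorphism on safe objects of $\Dmod(\CY)$, and \thmref{t:rel_Dmods}, which asserts that when $\pi$ is safe the functor $\pi_{\on{dR},*}$ is continuous and strongly satisfies the projection formula. Both functors $\pi_{\blacktriangle}$ and $\pi_{\on{dR},*}$ are continuous (the former by \defnref{d:ren dir im}, being a dual functor; the latter by \thmref{t:rel_Dmods} under the safety hypothesis), so to check that the natural transformation $\pi_{\blacktriangle}\to \pi_{\on{dR},*}$ is an isomorphism it suffices to verify this on a generating set of objects, and in particular on the subcategory of compact objects $\Dmod(\CY)^c$.

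First I would recall that $\CY$ is QCA by Remark~\ref{r:safe&*}, so \thmref{t:Dmods} applies and $\Dmod(\CY)$ is compactly generated; hence a continuous natural transformation is an isomorphism as soon as it is an isomorphism on $\Dmod(\CY)^c$. Next, by \propref{compactness via safety}, every compact object of $\Dmod(\CY)$ is safe. Therefore, for $\CM\in \Dmod(\CY)^c$, the map $\pi_{\blacktriangle}(\CM)\to \pi_{\on{dR},*}(\CM)$ is an isomorphism by \propref{p:ren dir image of safe}. Since both source and target functors are continuous and agree on the compact generators, the natural transformation \eqref{e:nat trans to ren rel} is an isomorphism of functors $\Dmod(\CY)\to \Dmod(\CY')$, which is the assertion.

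There is essentially no obstacle here: the statement is a formal consequence of results already in place. The only point requiring a line of care is the reduction "isomorphism on compact generators $\Rightarrow$ isomorphism of continuous functors"; this uses that $\Dmod(\CY)\simeq \Ind(\Dmod(\CY)^c)$ (from \thmref{t:Dmods}), together with the fact that a continuous functor out of an ind-completion is determined by its restriction to the compact generators and commutes with the filtered colimits expressing an arbitrary object in terms of them. I would state this explicitly but not belabor it. One might also remark, for the reader's orientation, that this corollary is the global ($\CY'=\on{pt}$) statement \corref{c:dr and rd of safe} applied fibrewise, consistent with the slogan that $\pi_{\blacktriangle}$ is the ind-extension of $\pi_{\on{dR},*}|_{\Dmod(\CY)^c}$ recorded in \corref{c:renormalized as ind-extension}.
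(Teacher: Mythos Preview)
Your proof is correct and follows the same approach as the paper's, which simply observes that both functors are continuous and that the natural transformation is an isomorphism on compact objects by \propref{p:ren dir image of safe}. One minor quibble: Remark~\ref{r:safe&*} asserts that a safe \emph{stack} is QCA, not that the source of a safe \emph{morphism} is; the QCA hypothesis on both stacks is really already implicit in the statement, since $\pi_{\blacktriangle}$ is only defined for morphisms between QCA stacks.
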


\begin{proof}
Both functors are continuous,
and the map in question is an isomorphism on compact objects by \propref{p:ren dir image of safe}.
\end{proof}

\begin{cor}  \label{c:more precise}
Let $\CY$ be a quasi-compact stack. 
Then the functor $\Gamma_{\on{dR}}(\CY,-)$ is continuous if and only if $\CY$ is safe. 
\end{cor}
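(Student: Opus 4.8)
The statement $\corref{c:more precise}$ is a direct specialization of $\thmref{t:rel_Dmods}$ to the case $\CY' = \on{pt}$, so the plan is to reduce to that theorem. First I would observe that for any quasi-compact algebraic stack $\CY$, the map $p_\CY : \CY \to \on{pt}$ is automatically quasi-compact, so $\thmref{t:rel_Dmods}$ applies with $\pi = p_\CY$. By definition, $\Gamma_{\on{dR}}(\CY,-) = (p_\CY)_{\on{dR},*}$ as a functor $\Dmod(\CY) \to \Dmod(\on{pt}) = \Vect$, so the assertion ``$\Gamma_{\on{dR}}(\CY,-)$ is continuous iff $\CY$ is safe'' is exactly ``$(p_\CY)_{\on{dR},*}$ is continuous iff $p_\CY$ is safe''.

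\textbf{Matching up the safety conditions.} The one point that needs a sentence of justification is that the morphism $p_\CY : \CY \to \on{pt}$ is safe (in the sense of \defnref{d:safe}(c)) precisely when the stack $\CY$ is safe (in the sense of \defnref{d:safe}(c)). By \defnref{d:safe}(b), $p_\CY$ is locally safe iff all its geometric fibers are locally safe; but the geometric fiber of $p_\CY$ over the unique geometric point of $\on{pt}$ is $\CY$ itself, so $p_\CY$ is locally safe iff $\CY$ is locally safe. Since both $p_\CY$ and $\CY$ are quasi-compact by hypothesis, ``safe'' for the morphism and ``safe'' for the stack coincide here. Hence $\thmref{t:rel_Dmods}$ gives the desired equivalence.

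\textbf{Remark on what is really being invoked.} There is no genuine obstacle in this corollary itself; all the difficulty is packaged inside $\thmref{t:rel_Dmods}$ (and, under the hood, into the analysis of $\Dmod(BG)$ for $G$ with a non-unipotent neutral component, as in $\secref{ss:BG}$, together with $\propref{p:de Rham and ind}$ and the stratification technique). If one wished to spell the argument out without quoting the relative theorem verbatim, one would run the stratification of $\CY$ by substacks of the form $Z/G$ and reduce, via $\corref{stratification}$-style devissage, to the case $\CY = BG$ with $G$ connected: there $\Gamma_{\on{dR}}(BG,-)$ is continuous iff the algebra $A = \CMaps_{\Dmod(BG)}(k_{BG}, k_{BG})$ is finite-dimensional, which by the Koszul-duality computation in $\secref{ss:BG}$ happens exactly when $B = (\Gamma_{\on{dR}}(G,k_G))^\vee$ is $k$ in degree $0$, i.e. when $G$ is unipotent. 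But since $\thmref{t:rel_Dmods}$ is already available in the excerpt, the clean route is simply to cite it with $\CY' = \on{pt}$ after the bookkeeping of the previous paragraph.
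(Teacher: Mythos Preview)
Your proposal is correct and matches the paper's approach: the paper states \corref{c:more precise} immediately after \thmref{t:rel_Dmods} without a separate proof, treating it as the special case $\CY'=\on{pt}$, which is exactly what you do. One small terminological point: in this paper $\Gamma_{\on{dR}}(\CY,-)$ is \emph{defined} for stacks as $\CMaps_{\Dmod(\CY)}(k_\CY,-)$ rather than as $(p_\CY)_{\on{dR},*}$, so the identification $\Gamma_{\on{dR}}(\CY,-)\simeq (p_\CY)_{\on{dR},*}$ is not literally ``by definition'' but follows from \lemref{l:de Rham trans} (with $\CY_2=\on{pt}$); this does not affect the argument.
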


\begin{cor}   \label{c:safe_stacks}
The following properties of a quasi-compact algebraic stack $\CY$ are equivalent:
\begin{enumerate}
\item[(i)] $\on{D-mod}(\CY)^c= \on{D-mod}_{\on{coh}}(\CY)$;
\item[(ii)] $k_{\CY}\in \on{D-mod}(\CY)^c\,$;
\item[(iii)] The functor $\Gamma_{\on{dR}}(\CY,-)$ is continuous;
\item[(iv)] All objects of $\on{D-mod}(\CY)$ are safe.
\item[(v)] $\CY$ is safe.

\end{enumerate}
\end{cor}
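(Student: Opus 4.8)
The plan is to prove the chain of equivalences in \corref{c:safe_stacks} by first establishing the ``anchor'' equivalences (iii) $\Leftrightarrow$ (v) and (iii) $\Leftrightarrow$ (iv), which are essentially immediate from the results already assembled, and then closing the loop (i) $\Leftrightarrow$ (ii) $\Leftrightarrow$ (iii). The equivalence (iii) $\Leftrightarrow$ (v) is precisely \corref{c:more precise}, which in turn is the special case $\CY' = \on{pt}$ of \thmref{t:rel_Dmods} (note $\Gamma_{\on{dR}}(\CY,-) = (p_\CY)_\dr$, and for $\CY$ quasi-compact the morphism $p_\CY\colon \CY\to\on{pt}$ is safe iff $\CY$ is safe). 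So modulo \thmref{t:rel_Dmods}, which is proved separately in \secref{ss:rel_Dmods}, this step is free.

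Next I would handle (iii) $\Leftrightarrow$ (iv). The forward direction: if $\Gamma_{\on{dR}}(\CY,-)$ is continuous, then for any $\CM,\CM'\in\Dmod(\CY)$ the functor $\CM'\mapsto \Gamma_{\on{dR}}(\CY,\CM\sotimes\CM')$ is the composite of $\CM\sotimes -$ (which is continuous, being a left adjoint's cousin --- in fact $\sotimes$ is continuous in each variable) with the continuous functor $\Gamma_{\on{dR}}(\CY,-)$, hence continuous; so every object is safe. The reverse direction is even more trivial: if all objects are safe, take $\CM' = \omega_\CY$, which is the unit for $\sotimes$, so $\Gamma_{\on{dR}}(\CY,\CM)\simeq\Gamma_{\on{dR}}(\CY,\CM\sotimes\omega_\CY)$, and safety of $\omega_\CY$ (applied with roles reversed, or just directly) gives continuity of $\Gamma_{\on{dR}}(\CY,-)$. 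Actually cleaner: safety of $k_\CY$ says exactly that $\CM'\mapsto\Gamma_{\on{dR}}(\CY,k_\CY\sotimes\CM')\simeq\Gamma_{\on{dR}}(\CY,\CM')$ is continuous, which is (iii).

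Then (ii) $\Leftrightarrow$ (iii): by \secref{sss:Gamma_DR} (or rather its stack analogue in \secref{sss:DeRham_stacks-def}), $\Gamma_{\on{dR}}(\CY,-)\simeq\CMaps_{\Dmod(\CY)}(k_\CY,-)$, and by the very definition of compactness (see \secref{sss:DG categories}, ``Compactness''), this functor is continuous iff $k_\CY$ is a compact object. Finally (i) $\Leftrightarrow$ (ii): the implication (i) $\Rightarrow$ (ii) is immediate since $k_\CY\in\Dmod_{\on{coh}}(\CY)$ always (noted in \secref{ss:coherence&compactness}). For (ii) $\Rightarrow$ (i): we always have $\Dmod(\CY)^c\subset\Dmod_{\on{coh}}(\CY)$ by \lemref{l:compact is coherent}; conversely, given $k_\CY$ compact, i.e. $\CY$ safe by the equivalences just established, \propref{compactness via safety} shows that every object of $\Dmod_{\on{coh}}(\CY)$ is compact (a coherent object is compact iff it is safe, and on a safe stack every object is safe by (iv)). This gives $\Dmod_{\on{coh}}(\CY)\subset\Dmod(\CY)^c$, completing (i).

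The only genuine content is \thmref{t:rel_Dmods}, which this corollary invokes and which is deferred to later subsections; everything in the corollary itself is a bookkeeping exercise linking definitions (compactness, safety, continuity of $\Gamma_{\on{dR}}$) via \propref{compactness via safety}, \lemref{l:compact is coherent}, and the elementary observation that $\sotimes$ is continuous with unit $\omega_\CY$ and that $\Gamma_{\on{dR}}(\CY,-)$ is co-represented by $k_\CY$. I do not expect any obstacle here beyond being careful to cite \thmref{t:rel_Dmods} (equivalently \corref{c:more precise}) as the one non-formal input, and to note that the quasi-compactness hypothesis in the statement of the corollary is exactly what makes ``safe'' and ``$p_\CY$ is a safe morphism'' coincide so that \thmref{t:rel_Dmods} applies.
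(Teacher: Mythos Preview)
Your proposal is correct and tracks the paper's argument closely. Two small points: First, your ``cleaner'' version of (iv)$\Rightarrow$(iii) has a slip --- $k_\CY$ is \emph{not} the unit for $\sotimes$ (that is $\omega_\CY$), so $k_\CY\sotimes\CM'\not\simeq\CM'$ in general; your prior argument via safety of $\omega_\CY$ is the right one and is what the paper calls ``tautological.'' Second, for (iii)$\Rightarrow$(i) the paper is slightly more direct: instead of routing through \propref{compactness via safety} (which needs QCA, recovered via safe $\Rightarrow$ QCA from Remark~\ref{r:safe&*}), it just writes $\CMaps(\CM,\CM')\simeq\Gamma_{\on{dR}}(\CY,\BD^{\on{Verdier}}_\CY(\CM)\sotimes\CM')$ via \lemref{l:Verdier_stacks} and applies (iii). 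Your route is valid but a step longer, and ultimately rests on the same lemma anyway.
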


\begin{proof}
By \corref{c:more precise}, (iii)$\Leftrightarrow$(v). Since $\CMaps(k_{\CY},-)=\Gamma_{\on{dR}}(\CY,-)$ 
we have (ii)$\Leftrightarrow$(iii). Clearly (i)$\Rightarrow$(ii). The equivalence (iii)$\Leftrightarrow$(iv)
is tautological. It remains to prove that 
(iii)$\Rightarrow$(i). 

\medskip

The problem is to show that any $\CM\in \on{D-mod}_{\on{coh}}(\CY)$ is compact, i.e.,
the functor $\CMaps(\CM,-)$ is continuous. This follows from (iii) and the formula
$$\CMaps_{\Dmod(\CY)}(\CM,\CM')\simeq \Gamma_{\on{dR}}(\CY,\BD_\CY^{\on{Verdier}}(\CM)\sotimes \CM'),
\quad\quad \CM\in \on{D-mod}_{\on{coh}}(\CY), \,\CM'\in \on{D-mod}(\CY),$$
which is the content of Lemma~\ref{l:Verdier_stacks}.
\end{proof}

\sssec{Characterization of safe objects of $\on{D-mod}(\CY)$} \label{ss:safe_objects}

Let now $\CY$ be a QCA algebraic stack (in particular, it is quasi-compact).

\begin{thm}    \label{criter for safety} 
Let $\CY$ be a QCA algebraic stack and $\CM\in \on{D-mod}(\CY)^b$. Then
the following conditions are equivalent:

\begin{enumerate}
\item $\CM$ is safe;

\smallskip

\item For any schematic quasi-compact morphism $\pi:\CY'\to \CY$
and any morphism $\varphi:\CY'\to S$ with $S$ being a quasi-compact scheme, the object
$\varphi_{\on{dR},*}\left(\pi^!(\CM)\right))\in \on{D-mod}(S)$
belongs to $\on{D-mod}(S)^b$; 

\smallskip

\item For any schematic quasi-compact morphism $\pi:\CY'\to \CY$
and any morphism $\varphi:\CY'\to S$ with $S$ being a quasi-compact scheme, the object
$\varphi_{\blacktriangle}\left(\pi^!(\CF)\right))\in \on{D-mod}(S)$
belongs to $\on{D-mod}(S)^b$; 

\smallskip

\item  For any schematic quasi-compact morphism $\pi:\CY'\to \CY$
and any morphism $\varphi:\CY'\to S$ with $S$ being a quasi-compact scheme, 
the canonical morphism 
$$\varphi_{\blacktriangle}\left(\pi^!(\CM)\right))\to\varphi_{\on{dR},*}\left(\pi^!(\CM)\right))$$
is an isomorphism;

\smallskip

\item[(2$'$)] - $(4'):$ same as in {\em (2)-(4)}, but $\pi$ is required to be a finite \'etale
map onto a locally closed substack of $\CY$. 

\smallskip

\item  
$\CM$ belongs to the smallest (non-cocomplete) DG subcategory 
$\CT (\CY )\subset \on{D-mod}(\CY)$ containing all objects of the form $f_{\on{dR},*}(\CN)$, 
where $f:S\to\CY$ is a morphism with $S$ being a quasi-compact scheme and $\CN\in \on{D-mod}(S)^b$.
\end{enumerate}
\smallskip
\end{thm}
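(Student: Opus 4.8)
The plan is to prove \thmref{criter for safety} by a cyclic chain of implications, establishing the equivalences in roughly the order $(5)\Rightarrow(2)\Rightarrow(3)\Rightarrow(4)\Rightarrow(1)\Rightarrow(2')$, then closing the loop via the primed variants which are formally weaker than their unprimed counterparts, so that only $(2')\Rightarrow(5)$ (or $(4')\Rightarrow(5)$) remains as the substantive converse. First I would record the easy structural facts. The subcategory of safe objects is closed under cones and is a $\sotimes$-ideal (stated right after the definition of safety); combined with \lemref{l:safety preserved schematic pullback} this shows that for $\pi$ schematic quasi-compact and $\varphi:\CY'\to S$ a morphism to a quasi-compact scheme, $\pi^!(\CM)$ is safe whenever $\CM$ is, and hence by \propref{p:ren dir image of safe} the map $\varphi_{\blacktriangle}(\pi^!(\CM))\to\varphi_{\dr}(\pi^!(\CM))$ is an isomorphism: this gives $(1)\Rightarrow(4)$ and, since $S$ is a scheme so that $\varphi_{\blacktriangle}=\varphi_\dr$ has bounded cohomological amplitude on $\Dmod_{\on{coh}}$ (combine \propref{p:ren dir image estimate} with \secref{sss:properties of Dmod} — on a scheme $\Dmod(S)^c=\Dmod_{\on{coh}}(S)$ and $\varphi_\dr$ has finite amplitude), also $(1)\Rightarrow(3)$ and $(1)\Rightarrow(2)$. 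The implications $(4)\Rightarrow(3)$, $(4)\Rightarrow(2)$, and $(2)\Leftrightarrow(3)$ for objects of $\Dmod(\CY)^b$ follow from \propref{p:coconnective part Dmod} (which bounds $\varphi_\dr$ on $\Dmod^+$) together with \propref{p:ren dir image estimate}, exactly as in the proof of \propref{p:estimates on safety}.

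For $(5)\Rightarrow(1)$: by \lemref{l:de Rham trans} and the projection formula for the schematic quasi-compact morphism $f$ (i.e.\ \eqref{e:proj formula Dmod sch}), for $\CM=f_\dr(\CN)$ one has $\Gamma_\dr(\CY,f_\dr(\CN)\sotimes\CM')\simeq\Gamma_\dr(S,\CN\sotimes f^!(\CM'))$, and the functor $\CM'\mapsto\Gamma_\dr(S,\CN\sotimes f^!(\CM'))$ is continuous since $f^!$ is continuous and $\Gamma_\dr(S,\CN\sotimes -)\simeq\CMaps_{\Dmod(S)}(\BD^{\on{Verdier}}_S(\CN),-)$ is continuous for $\CN\in\Dmod(S)^b=\Dmod(S)^c$; since safe objects form a DG subcategory closed under cones, all of $\CT(\CY)$ is safe. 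Next, $(1)\Rightarrow(5)$, i.e.\ every bounded safe object lies in $\CT(\CY)$: here I would combine \propref{compactness via safety} (a bounded safe object is compact) with \thmref{t:Dmods}, which says $\Dmod(\CY)^c$ is Karoubi-generated by $\ind_{\Dmod(\CY)}(\Coh(\CY))$; so it suffices to place objects $\ind_{\Dmod(\CY)}(\CF)$, $\CF\in\Coh(\CY)$, inside $\CT(\CY)$, and for this one uses \propref{p:pushforward induced}: choosing a smooth atlas $f:Z\to\CY$ and resolving $\CF$ by $(f^i)^{\IndCoh}_*(f^i)^{\IndCoh,*}(\CF)$ via the \v Cech nerve (the totalization is finite up to a fixed amplitude shift, as in the argument of \secref{ss:proof of de Rham and ind}), \propref{p:pushforward induced} rewrites $\ind_{\Dmod(\CY)}\circ(f^i)^{\IndCoh}_*$ as $(f^i)_\dr\circ\ind_{\Dmod(Z^i/\CY)}$, and since $Z^i/\CY$ is a scheme these lie in $\CT(\CY)$; the finite totalization is then a finite iterated cone, hence in $\CT(\CY)$.

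The genuinely hard direction, and the main obstacle, is the converse implication from the primed conditions to safety of the stack-theoretic fibers — concretely $(2')\Rightarrow(1)$ (equivalently the ``only if'' half of \thmref{t:rel_Dmods} and \corref{c:more precise}) — together with the proof of \thmref{t:rel_Dmods} itself, which the excerpt defers to Sect.~\ref{ss:rel_Dmods}. The point is to show that if $\CM$ is \emph{not} safe then one can detect this on a finite \'etale cover of a locally closed substack, and the obstruction ultimately localizes to a point $y\in\CY$ whose automorphism group has non-unipotent neutral component, i.e.\ contains a copy of $\BG_m$, where $\Gamma_\dr(B\BG_m,-)$ fails to be continuous by the computation in \secref{sss:BG_m}. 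I would handle this via the devissage/stratification machinery: by \propref{p:stratification} applied to $\CY$ (or rather by \lemref{LM}, which presents a locally closed stratum as $B\CG$ over a scheme after a finite \'etale base change), reduce to the case $\CY=B\CG$ for $\CG$ a group scheme over a scheme, then to a single classifying stack $BG$, and invoke the analysis of \secref{ss:BG} — when $G$ has reductive (more generally non-unipotent) part, the algebra $B=\Gamma_\dr(G,k_G)^\vee$ has non-trivial Koszul dual $A$, so $k_{BG}$ is not compact and one produces an explicit object (a shifted direct sum of $k_{BG}[2n]$) on which $\Gamma_\dr(\CY,\CM\sotimes-)$ is non-continuous; conversely, when all $\on{Aut}(y)^\circ$ are unipotent, \lemref{l:Gamma_of_BG}-style arguments (unipotent groups have no higher cohomology obstructing continuity — in char $0$, $\Dmod(BU)$ for $U$ unipotent is well-behaved since $k_{BU}$ is compact) combined with the stratification show every object is safe. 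Finally Corollary~\ref{stratification} (safety checked strata-wise) follows formally from the $\sotimes$-ideal property plus the exact triangle $\imath_\dr\imath^!\to\id\to\jmath_\dr\jmath^!$ (using that safety passes along $\imath_\dr$ and $\jmath_\dr$, which are schematic, and along $\imath^!,\jmath^!$).
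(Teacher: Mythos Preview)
Your chain of implications has structural gaps, but the most serious problem is in the two places where you try to close the loop back to condition (5).

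First, your direct argument for $(1)\Rightarrow(5)$ does not work. \propref{compactness via safety} applies only to \emph{coherent} objects, not to arbitrary bounded ones; a bounded safe object need not be compact (e.g.\ on a scheme every object is safe, but not every bounded object is coherent). Even restricting to compact $\CM$, your \v{C}ech-resolution argument would express $\ind_{\Dmod(\CY)}(\CF)$ as an \emph{infinite} totalization of objects $(f^i)_\dr(\ldots)$; the category $\CT(\CY)$ is only closed under finite cones, so this does not place the object in $\CT(\CY)$. (The paper's \lemref{l:telescope} shows $\CT(\CY)$ is closed under retracts, but not under arbitrary totalizations.) The paper never attempts $(1)\Rightarrow(5)$ directly; it goes $(1)\Rightarrow(4)\Rightarrow(4')$ and then proves the hard step $(2')\Rightarrow(5)$ and $(3')\Rightarrow(5)$.

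Second, and more fundamentally, your sketch of the ``hard direction'' conflates two different theorems. What you outline---localizing non-safety of $\CM$ to a point with non-unipotent automorphism group and invoking the $B\BG_m$ example---is the argument for the ``only if'' in \thmref{t:rel_Dmods}, which concerns safety of a \emph{stack} (i.e.\ continuity of $\Gamma_{\on{dR}}$). The conditions $(2')$--$(4')$ here are not about continuity at all: they are \emph{boundedness} conditions on certain pushforwards of $\pi^!(\CM)$. The paper's proof of $(2')\Rightarrow(5)$ and $(3')\Rightarrow(5)$ is constructive: by Noetherian induction and \lemref{l:varLM}, one reduces to a stack of the form $X\times BG$ via a unipotent gerbe, and then applies the key \propref{p:key}, which says that a bounded object of $\Dmod(X\times BG)$ lies in the subcategory $\CT_X(X\times BG)$ generated by $\sigma_\dr(\Dmod(X)^b)$ if and only if its $\varphi_\dr$ (resp.\ $\varphi_\blacktriangle$) pushforward to $X$ is bounded. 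This proposition, proved by an explicit inductive construction using \lemref{l:using_connectedness} and an Ext-vanishing estimate, is the technical heart of the theorem, and your proposal does not contain any analogue of it.
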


\begin{rem}
Note, however, that the subcategory of safe objects in $\Dmod(\CY)$ is \emph{not} preserved by
the truncation functors.
\end{rem}

\ssec{Proof of Theorem~\ref{t:rel_Dmods}}  \label{ss:rel_Dmods}

\sssec{If $\pi_{\on{dR},*}$ is continuous then $\pi$ is safe.}   \label{sss:easy_rel_Dmods}

Up to passing to a field extension, we have to show that for any point $\xi:\on{pt}\to\CY'$
and any $k$-point $y$ of the fiber $\CY_{\xi}$, the group $G:=\on{Aut}(y)$ cannot contain
a connected non-unipotent\footnote{If $G$ were assumed affine, then ``non-unipotent" could be replaced by ``isomorphic to $\BG_m$". 
Accordingly, at the end of Sect.~\ref{sss:easy_rel_Dmods} it would suffice to refer to the 
example of \secref{sss:BG_m} instead of the example of ~\secref{ss:BG}. } algebraic subgroup $H\subset G$.
We have a commutative diagram
$$
\CD 
BH @>{f}>>  \CY \\
@V{p}VV   @VV{\pi}V  \\
\on{pt}  @>{\xi}>>  \CY'
\endCD
$$
in which $f$ is the composition $BH\to BG\hookrightarrow \CY_{\xi}\to\CY'$.
By assumption,
$\pi_{\on{dR},*}$ is continuous. By Sect.~\ref{sss:representable}, $f_{\on{dR},*}$ is also continuous
since $f$ is schematic and quasi-compact.
So the composition $\pi_{\on{dR},*}\circ f_{\on{dR},*}=\xi_{\on{dR},*}\circ p_{\on{dR},*}$ is continuous. But $\xi_{\on{dR},*}$ is
continuous (by Sect.~\ref{sss:representable}) and conservative (e.g., compute $\xi^!\circ \xi_{\on{dR},*}$ by base change). Therefore $p_{\on{dR},*}$ is continuous. 
This contradicts the Example of \secref{ss:BG}.

\qed

To prove the other statements from Theorem~\ref{t:rel_Dmods}, we need to introduce some
definitions.

\sssec{Unipotent group-schemes}

Let $\CX$ be a prestack. A group-scheme over $\CX$ is a group-like object 
$\CG\in \on{PreStk}_{/\CX}$, such that the structure morphism 
$\CG\to \CX$ is schematic.

\medskip

We shall say that $\CG$ is unipotent if its pullback to any scheme gives a unipotent group-scheme
over that scheme (a group-scheme is said to be unipotent if its geometric fibers are unipotent). 

\medskip

If $\CG$ is smooth
and unipotent, then the exponential map defines an isomorphism between $\CG$ and the
vector group-scheme of the corresponding sheaf of Lie algebras, as objects of $\on{PreStk}_{/\CX}$.
This fact is stated in \cite[Sect. XV.3 (iii)]{Ra} without a proof, although the proof is not difficult. 
\footnote{For our purposes, it will suffice to know that this fact when $\CX$ is a scheme, generically
on $\CX$, in which case it is obvious.}

\begin{lem}  \label{l:unip pullback}
If $\CG$ is a smooth unipotent group-scheme over $\CX$, then the $!$-pullback functor 
$\Dmod(\CX)\to \Dmod(\CG)$ is fully faithful.
\end{lem}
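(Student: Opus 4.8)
The claim is that for a smooth unipotent group-scheme $\CG$ over a (classical) prestack $\CX$, the functor $p^!:\Dmod(\CX)\to \Dmod(\CG)$ is fully faithful, where $p:\CG\to \CX$ is the structure map. By the general formalism (definition of $\Dmod$ on a prestack as a limit over $(\on{DGSch}_{\on{aft}})_{/\CX}$, see \secref{sss:Dmod on prestacks}), full faithfulness of $p^!$ can be checked after base change to an arbitrary affine scheme $S\to \CX$: writing $\CG_S:=\CG\underset{\CX}\times S$, it suffices to show that $p_S^!:\Dmod(S)\to \Dmod(\CG_S)$ is fully faithful for every such $S$. So I would reduce immediately to the case where the base is an affine scheme, which I continue to call $\CX$, and $\CG$ is a smooth affine unipotent group-scheme over it.

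\textbf{Key reduction: exponential + induction on a filtration.} Since $\CG$ is smooth and unipotent, over an affine base it admits (Zariski-locally, or after a suitable stratification — and it is enough to treat $\CX$ integral and work generically, then Noetherian-induct as in the remark's footnote) a filtration by normal closed subgroup-schemes whose successive quotients are vector group-schemes $\BG_a^{n_i}$ over $\CX$; equivalently, by the exponential isomorphism cited from \cite[Sect. XV.3(iii)]{Ra}, $\CG$ itself is, as a scheme over $\CX$ (not necessarily as a group), the total space of a vector bundle. The plan is: (1) reduce to the case $\CG=\BG_a\times\CX\to\CX$, i.e. the projection $\BA^1\times\CX\to\CX$, using the filtration and the fact that full faithfulness of a composite of $!$-pullbacks follows from full faithfulness of each factor (the quotient maps $\CG\to\CG/\CG_1$ are again smooth affine with unipotent vertical fibers, and $\CG_1\to\CX$ likewise, so we may induct on $\dim.\on{rel.}(\CG/\CX)$); (2) for the projection $q:\BA^1\times\CX\to\CX$, full faithfulness of $q^!$ is the standard homotopy-invariance of D-modules along affine-space bundles. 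Concretely, $q^!$ has a left adjoint $q_{\on{dR},*}[2]=q^*_{\on{dR}}$'s right-adjoint partner, or rather one shows directly that the unit $\id\to q_{\on{dR},*}\circ q^![-2]$ — equivalently the counit for the $(q^*_{\on{dR}},q_{\on{dR},*})$ or the canonical map $\CM\to q_{\on{dR},*}q^!(\CM)[-2]$ — is an isomorphism by reducing (Zariski-locally on $\CX$, Kashiwara, and the tensor compatibility $q^!(\CM)\simeq q^!(\omega_\CX)\sotimes^!(\text{const})$, see \secref{sss:tens_schemes}) to the classical computation of de Rham cohomology of $\BA^1$, namely $\Gamma_{\on{dR}}(\BA^1,k_{\BA^1})\simeq k$.

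\textbf{Main obstacle.} The genuinely delicate point is step (1): I am using that $\CG$, as an $\CX$-\emph{scheme}, is an iterated vector-bundle (affine-space bundle), but the group structure enters only through the filtration by normal subgroup-schemes, and one must make sure the successive quotient maps are honest smooth affine morphisms of prestacks with the stated fibers, and that the exponential/filtration statement from \cite{Ra} is available over the non-reduced or non-Noetherian affine bases one may encounter after base change. As the footnote to the lemma indicates, for the intended applications it suffices to know this when $\CX$ is a scheme and only generically on $\CX$; so the honest argument is: first reduce to $\CX$ an affine scheme, then — since full faithfulness of $p^!$ is a condition that can be checked Zariski-locally on $\CX$ and is insensitive to nilpotents (by \secref{podstilka}) — reduce to $\CX$ integral, then use a dense open over which the filtration and exponential isomorphism hold, and finally Noetherian-induct over the complementary closed substack using Kashiwara's lemma together with the base-change compatibility of $p^!$. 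Combining the reduction with the $\BA^1$-homotopy invariance then yields full faithfulness of $p^!$ in general. The remaining bookkeeping — that the successive pullbacks compose to $p^!$ and that each intermediate map is of the required form — is routine and I would not spell it out in detail.
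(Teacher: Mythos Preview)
Your proposal is correct and follows essentially the same route as the paper: reduce to an affine base by the limit definition of $\Dmod$ on prestacks, then use the exponential isomorphism (with Zariski localization) to identify $\CG$ with $S\times\BA^n$ and invoke $\BA^1$-homotopy invariance. The paper is terser—going directly to $\BA^n$ rather than filtering down to $\BA^1$—and relegates your stratification/Noetherian-induction workaround for the exponential map to a footnote, but the substance is the same.
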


\begin{proof}

By the definition of $\Dmod$ on prestacks, it is sufficient to prove this fact when $\CX=S$ is an affine DG scheme. 
Further Zariski localization reduces us to the
fact that the pullback functor 
$$\Dmod(S)\to \Dmod(S\times \BA^n)$$
is fully faithful.\footnote{The reader who is not willing to use the isomorphism given by the exponential
map on all of $S$, can prove the lemma by subdividing $S$ into strata.}

\end{proof}

\sssec{Unipotent gerbes}

\begin{defn}       \label{d:unipotent_gerbe}
We say that a morphism of prestacks $\CZ\to\CX$ is a \emph{unipotent gerbe} if there exists
an fppf cover $\CX'\to\CX$ such that $\CZ':=\CZ\underset{\CX}\times \CX'$ is isomorphic
to the classifying stack of a smooth unipotent group-scheme over $\CX'$.
\end{defn}

\begin{lem} \label{l:unipotent_gerbes} 
Let $\pi:\CZ\to \CX$ be a unipotent gerbe. Then the functor
$$\pi^!:\on{D-mod}(\CX)\to \on{D-mod}(\CZ)$$
is an equivalence.
\end{lem}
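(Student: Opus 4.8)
The statement to prove is: if $\pi:\CZ\to\CX$ is a unipotent gerbe, then $\pi^!:\Dmod(\CX)\to\Dmod(\CZ)$ is an equivalence. The plan is to reduce, by fppf descent, to the case where $\CZ=B\CG$ is the classifying stack of a smooth unipotent group-scheme $\CG$ over an affine DG scheme, and in that case to exhibit an explicit inverse via the \v{C}ech nerve of the atlas $\CX'\to B\CG$ together with \lemref{l:unip pullback}.

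\textbf{First step: fppf descent.} Since $\Dmod(-)$ satisfies fppf descent (see \secref{Zariski descent}, and \secref{sss:Dmod on prestacks} which recalls that in the limit defining $\Dmod$ one may use any of the indexing categories from \secref{sss:change index qc}), the assertion that $\pi^!$ is an equivalence is local on $\CX$ in the fppf topology. More precisely, let $\CX'\to\CX$ be the fppf cover from \defnref{d:unipotent_gerbe}, with \v{C}ech nerve $\CX'^\bullet/\CX$, and let $\CZ'^\bullet/\CX:=\CZ\underset{\CX}\times\CX'^\bullet/\CX$. Then $\Dmod(\CX)\simeq\on{Tot}(\Dmod(\CX'^\bullet/\CX))$ and $\Dmod(\CZ)\simeq\on{Tot}(\Dmod(\CZ'^\bullet/\CX))$, and $\pi^!$ is the totalization of the maps $(\pi^i)^!:\Dmod(\CX'^i/\CX)\to\Dmod(\CZ'^i/\CX)$. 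Since a totalization of a cosimplicial map that is a termwise equivalence is an equivalence, it suffices to treat each $\pi^i$. Each $\CZ'^i/\CX\to\CX'^i/\CX$ is again a base change of $\pi$, now with the property that it becomes (after the further fppf base change $\CX'\underset{\CX}\times\CX'^i/\CX\to\CX'^i/\CX$, which again has a section-free \v{C}ech reduction) the classifying stack of a smooth unipotent group-scheme. So, iterating the descent once more, we are reduced to the case $\CZ=B\CG$ where $\CG$ is a smooth unipotent group-scheme over a base $\CX$ which we may take to be an affine DG scheme (using Zariski descent, or directly using that $\Dmod$ on $\CX$ is the limit over affines mapping to it).

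\textbf{Second step: the case $\CZ=B\CG$.} Here let $\sigma:\CX\to B\CG$ be the tautological atlas; its \v{C}ech nerve is $\CX\leftleftarrows\CG\Lleftarrow\CG\times_\CX\CG\cdots$, i.e.\ $(B\CG)^\bullet\simeq\CG^\bullet$, the bar construction of $\CG$ over $\CX$. By \secref{sss:only_smooth} (or the fppf/smooth descent of \eqref{e:Dmod via Cech}), $\Dmod(B\CG)\simeq\on{Tot}(\Dmod(\CG^\bullet))$, and under this identification $\sigma^!$ is the canonical functor $\Dmod(B\CG)\to\Dmod(\CX)$ to the $0$-th term. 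Now I would argue that $\pi^!=(p_{B\CG})^!$ followed by nothing — more precisely that the cosimplicial object $\Dmod(\CG^\bullet)$ is the constant one $\Dmod(\CX)$ with the structure maps being iterated pullbacks along the (smooth unipotent) projections $\CG^{n}\to\CG^{n-1}$, each of which is fully faithful by \lemref{l:unip pullback}. The key point is that a cosimplicial DG category all of whose coface maps are fully faithful and whose codegeneracy maps provide retractions has totalization equal to each term via the natural map from the $0$-th term; concretely one checks that $\sigma^!$ is conservative (it is, by \lemref{l:! cons D}, since $\sigma$ is surjective) and admits a fully faithful pullback section (the comonadic/Barr--Beck picture, as used e.g.\ in \secref{ss:BG}). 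Actually, the cleanest route: the composite $\CX\xrightarrow{\sigma}B\CG\xrightarrow{\pi}\CX$ is $\on{id}_\CX$, so $\sigma^!\circ\pi^!\simeq\on{id}$; it remains to show $\pi^!\circ\sigma^!\simeq\on{id}$, i.e.\ that $\sigma^!$ is conservative and the unit/counit are isomorphisms. Conservativity is \lemref{l:! cons D}. For the counit, note $\pi^!\circ\sigma^!\simeq q^!\circ p^!$ where $p,q:\CG\to\CX$ are the two projections of $\CG=\CX\underset{B\CG}\times\CX$ (base change of $\sigma$ along $\sigma$); since $q$ is a smooth unipotent group-scheme projection, $q^!$ is fully faithful by \lemref{l:unip pullback}, and identifying $p^!$ with $q^!$ via the group inverse shows the comonad $\sigma^!\circ\pi^!$ is idempotent with essential image all of $\Dmod(\CX)$, whence $\pi^!$ is the inverse equivalence.

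\textbf{Expected main obstacle.} The genuine difficulty is the homotopy-coherent bookkeeping in the second step: stringing together the descent equivalences and checking that the comonad $\sigma^!\pi^!$ on $\Dmod(\CX)$ is not merely idempotent on objects but carries the coherent structure making $\pi^!$ an equivalence of $\infty$-categories — i.e.\ that full faithfulness of each $q^!$ (from \lemref{l:unip pullback}) propagates through the totalization. I expect this to be handled exactly as the analogous comonadic arguments elsewhere in the paper (cf.\ the use of Barr--Beck--Lurie in \secref{ss:BG}), so the write-up will invoke \lemref{l:! cons D}, \lemref{l:unip pullback}, and the descent presentation \eqref{e:Dmod via Cech} rather than redo the coherences by hand. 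A minor secondary point is justifying, in the first step, that the iterated fppf base changes indeed reduce to a smooth unipotent group-scheme over an \emph{affine} base; this is routine given \defnref{d:unipotent_gerbe} and the fact that all the relevant categories are limits over affine DG schemes.
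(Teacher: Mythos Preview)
Your Route A in the second step---realize $\Dmod(B\CG)$ as $\on{Tot}(\Dmod(\CG^\bullet))$ via the \v{C}ech nerve of $\sigma:\CX\to B\CG$, observe that each $p_n^!:\Dmod(\CX)\to\Dmod(\CG^{\times n})$ is fully faithful by \lemref{l:unip pullback}, and conclude that the totalization collapses to $\Dmod(\CX)$---is exactly the paper's argument. Your first step is correct but over-elaborated: a single fppf base change along $\CX'\to\CX$ already puts you in the situation $\CZ'=B\CG$ over each term of the \v{C}ech nerve of $\CX'\to\CX$, with no further iteration needed.

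Your Route B, however, is garbled and should be dropped. The displayed identity ``$\pi^!\circ\sigma^!\simeq q^!\circ p^!$'' does not type-check: $\pi^!\circ\sigma^!$ is an endofunctor of $\Dmod(B\CG)$, while $p^!$ and $q^!$ are both functors $\Dmod(\CX)\to\Dmod(\CG)$ and cannot be composed. Moreover, the ``comonad $\sigma^!\circ\pi^!$'' is literally the identity on $\Dmod(\CX)$ (since $\pi\circ\sigma=\on{id}_\CX$), so studying it yields nothing. The real issue is that there is no a priori natural transformation between $\pi^!\circ\sigma^!$ and $\on{id}_{\Dmod(B\CG)}$ to which one could apply conservativity of $\sigma^!$; manufacturing one requires either an adjunction (e.g.\ $(\sigma^*_{\on{dR}},\sigma_{\on{dR},*})$, after which a Barr--Beck argument just reproduces the totalization computation of Route A) or the \v{C}ech descent picture directly. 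Stick with Route A.
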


\begin{proof}
The statement is local in the fppf topology on $\CX$, 
so we can assume that $\CZ=B\CG$ for some smooth unipotent group-scheme $\CG$ over $\CX$. Then 
$$\on{D-mod}(\CZ)\simeq \on{Tot}\left(\on{D-mod}(\CZ^\bullet/\CX)\right),$$
where $\CZ^\bullet/\CX$ is the \v{C}ech nerve of $\CZ\to \CX$. 

\medskip

Each of the $n+1$ face maps $\CZ^n/\CX\to \CZ^0/\CX$ identifies with the natural projection $$p_n:\CG^{\times n}\to \CX,$$
where $\CG^{\times n}=\CG\underset{\CX}\times...\underset{\CX}\times \CG$.

\medskip

Since $\CG$ is unipotent, by \lemref{l:unip pullback}
the functor
$p_n^!:\on{D-mod}(\CX)\to \on{D-mod}(\CG^{\times n})$ is fully faithful. I.e., $p_n^!$ 
identifies $\on{D-mod}(\CX)$ with a full subcategory
$\CC^n\subset \on{D-mod}(\CG^{\times n})$. Therefore,
$$\on{D-mod}(\CZ)\simeq \on{Tot}\left(\on{D-mod}(\CZ^\bullet/\CX)\right) \simeq \on{Tot}\left(\CC^\bullet\right)\simeq \on{D-mod}(\CX).$$
\end{proof}

Assume that in the situation of \lemref{l:unipotent_gerbes}, $\CZ$ and $\CX$ were algebraic stacks. In this case
the functor $\pi_\dr:\on{D-mod}(\CZ)\to \on{D-mod}(\CX)$ is defined.

\begin{cor}  \label{c:unipotent_gerbes} 
Suppose that $\CZ$ and $\CX$ are algebraic stacks, and $\pi$ is equidimensional.
Under these circumstances, the functor $\pi_\dr$ is the inverse of 
$\pi^!$, up to a cohomological shift.
\end{cor}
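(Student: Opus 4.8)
The plan is to deduce the corollary directly from \lemref{l:unipotent_gerbes}, using the adjunctions available for $\pi$ and the relation between $\pi^!$ and $\pi^*_{\on{dR}}$ for smooth equidimensional morphisms. First I would observe that since $\CZ$ and $\CX$ are algebraic stacks and $\pi$ is a unipotent gerbe, the morphism $\pi$ is in particular smooth (its fppf-local model is the classifying stack of a smooth unipotent group-scheme, which is a smooth morphism), and by assumption equidimensional, say of relative dimension $n$. Hence, by \secref{sss:smooth pullback stacks} (the stacky version of \eqref{e:*! shift}), there is a canonical isomorphism $\pi^*_{\on{dR}}\simeq \pi^![-2n]$, and the functor $\pi^*_{\on{dR}}$ is the left adjoint of $\pi_\dr$.

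Next I would combine this with \lemref{l:unipotent_gerbes}, which asserts that $\pi^!:\on{D-mod}(\CX)\to \on{D-mod}(\CZ)$ is an equivalence. Since $\pi^*_{\on{dR}}$ differs from $\pi^!$ only by the shift $[-2n]$, the functor $\pi^*_{\on{dR}}$ is also an equivalence, with inverse $(\pi^!)^{-1}[-2n]$. Now $\pi_\dr$ is the right adjoint of the equivalence $\pi^*_{\on{dR}}$, and the right adjoint of an equivalence is its inverse; therefore $\pi_\dr\simeq (\pi^*_{\on{dR}})^{-1}\simeq (\pi^!)^{-1}[2n]$. In particular $\pi_\dr$ is the inverse of $\pi^!$ up to the cohomological shift $[2n]$, which is exactly the assertion. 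One should note here that an adjoint of an equivalence is automatically an equivalence and is inverse to it (a standard $2$-categorical fact), which is what licenses passing from ``$\pi^*_{\on{dR}}$ is an equivalence with left adjoint relationship'' to ``$\pi_\dr$ is its inverse''.

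The main point requiring care — and the step I expect to be the only non-formal one — is the justification that $\pi$ is smooth and that the relative-dimension statement $\pi^*_{\on{dR}}\simeq \pi^![-2n]$ applies in the stacky, non-schematic setting: $\pi$ is $1$-representable rather than schematic, so one must invoke the version of \secref{sss:smooth pullback stacks} for $1$-representable smooth morphisms and the corresponding adjunction $(\pi^*_{\on{dR}},\pi_\dr)$. Concretely, smoothness of $\pi$ follows because being a smooth morphism is fppf-local on the target, and fppf-locally $\pi$ is the projection $B\CG\to \CX'$ for $\CG$ a smooth unipotent group-scheme, which is smooth; equidimensionality is given. Once these local checks are in place, the remainder is the purely formal manipulation of adjoints and shifts described above, and the corollary follows.
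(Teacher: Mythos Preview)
Your proposal is correct and follows essentially the same route as the paper: use that $\pi$ is smooth so that $\pi^*_{\on{dR}}$ exists and is left adjoint to $\pi_\dr$, invoke $\pi^*_{\on{dR}}\simeq\pi^![-2n]$ from \secref{sss:smooth pullback stacks}, and combine with \lemref{l:unipotent_gerbes} to conclude that the adjoint of an equivalence is its inverse. Your added care about smoothness being fppf-local and about the $(\pi^*_{\on{dR}},\pi_\dr)$-adjunction in the $1$-representable (rather than schematic) case is a reasonable gloss on a point the paper's proof leaves implicit.
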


\begin{proof}
Since $\pi$ is smooth, the functor $\pi^*_{\on{dR}}$ is defined and is the left adjoint of $\pi_\dr$.
The assertion now follows from the fact that $\pi^*_{\on{dR}}$ is isomorphic to $\pi^!$, up to
a cohomological shift, see \secref{sss:smooth pullback stacks}.
\end{proof}

\sssec{Nice open substacks}
To proceed with the proof of Theorem~\ref{t:rel_Dmods} and also for Theorem \ref{criter for safety}, we need the following variant of 
Lemma~\ref{LM}.

\begin{lem}  \label{l:varLM}
Let $\CY\ne\emptyset$ be a reduced classical algebraic stack over a field of characteristic 0 such that the automorphism group of 
any geometric point of $\CY$ is affine. Then there exists a diagram
\begin{equation}  \label{e:varLM}
\begin{array}{ccccc}
&&\CZ&\to &X\times BG \\
&&\downarrow&& \\
\CY&\supset&\oCY&&
\end{array}
\end{equation}
in which
\begin{itemize}
\item 
$\oCY\subset\CY$ is a non-empty open substack;
\item
the morphism $\pi:\CZ\to\oCY$ is schematic, finite, surjective, and \'etale;
\item
$X$ is a scheme;
\item
$G$ is a connected reductive algebraic group over $k$;
\item
the morphism $\psi:\CZ\to X\times BG$ is a unipotent gerbe (in the sense of Definition~\ref{d:unipotent_gerbe}).
\end{itemize} 
\end{lem}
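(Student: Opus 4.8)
\textbf{Proof proposal for Lemma~\ref{l:varLM}.}
The plan is to bootstrap from \lemref{LM}, whose output already produces, after passing to a non-empty open substack, a decomposition data of the form: an open $\oCY\subset \CY$, a finite fppf (indeed \'etale, in characteristic $0$) morphism $\pi:\CZ\to \oCY$, and a presentation of $\CZ$ as $X\underset{X'}\times \CY_i$-type object, i.e.\ a flat group-scheme $\CG$ of finite presentation over a scheme $X$ with $\CZ\simeq B\CG$ over $X$. The QCA-type hypothesis (affineness of automorphism groups of geometric points) guarantees that each geometric fiber of $\CG\to X$ is an affine algebraic group. Shrinking $X$ by Noetherian induction (using that $\CG$ and $X$ descend from finite type over $k$, as in the proof of \propref{p:stratification}), I would first arrange that $\CG\to X$ is \emph{smooth}: over a field of characteristic $0$ every affine group-scheme of finite type is smooth, so smoothness holds generically on $X$, hence on a dense open; further shrinking makes it hold on all of $X$. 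This gives $\CZ\simeq B\CG$ with $\CG$ a smooth affine group-scheme over $X$.

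The next step is to extract the reductive quotient. Generically on an integral base, a smooth affine group-scheme $\CG$ admits a canonical subgroup-scheme: its unipotent radical $\CR_u(\CG)$, which is a smooth \emph{unipotent} normal subgroup-scheme, with quotient $\CG/\CR_u(\CG)$ a \emph{reductive} group-scheme over a non-empty open of $X$ (this is the relative version of the Levi decomposition, valid generically in characteristic $0$; cf.\ SGA3). Shrinking $X$ once more, I may assume $\CR_u(\CG)$ exists over all of $X$ and that $\CG^{\mathrm{red}}:=\CG/\CR_u(\CG)$ is reductive over $X$. There is then a short exact sequence $1\to \CR_u(\CG)\to \CG\to \CG^{\mathrm{red}}\to 1$ of group-schemes over $X$, inducing a morphism of classifying stacks $\CZ\simeq B\CG\to B_X(\CG^{\mathrm{red}})=:\CZ'$ over $X$ whose fibers are classifying stacks of the unipotent $\CR_u(\CG)$; this morphism is a unipotent gerbe in the sense of \defnref{d:unipotent_gerbe}, fppf-locally on $X$ isomorphic to the classifying stack of $\CR_u(\CG)$.

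It remains to replace the $X$-group-scheme $\CG^{\mathrm{red}}$ by a \emph{constant} reductive group $G$. Reductive group-schemes over a scheme become, fppf-locally (indeed \'etale-locally), isomorphic to a split form; but here I want this after a \emph{finite} cover of the base, which I can achieve generically: the classification of reductive group-schemes shows that $\CG^{\mathrm{red}}$ is, \'etale-locally on $X$, isomorphic to $G_0\times X$ for a fixed split reductive $G_0$; choosing a connected split $G$ with the appropriate root datum, after a further finite \'etale surjection $\wt X\to \oX''$ onto a dense open $\oX''\subset X$, the pullback of $\CG^{\mathrm{red}}$ becomes isomorphic to $G\times \wt X$. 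Pulling the whole picture back along $\wt X\to X$ (and shrinking $\oCY$ correspondingly, and enlarging $\CZ$ by base change, which keeps $\pi$ finite \'etale surjective since finite \'etale surjections are stable under base change and composition), I obtain $\CZ\to \wt X\times BG$ which is still a unipotent gerbe, since being a unipotent gerbe is fppf-local on the target and stable under base change. Renaming $\wt X$ as $X$ yields the diagram \eqref{e:varLM}. The main obstacle I anticipate is the bookkeeping in this last step: ensuring that the successive shrinkings of $X$ and the finite covers can be carried out compatibly with keeping $\pi$ finite \'etale onto an \emph{open} substack of $\CY$ (rather than merely locally closed), and tracking that connectedness of $G$ survives; both are manageable by combining Noetherian induction on $\CY$ with the fact, already used in \lemref{LM}, that field extensions of the generic point spread out to finite covers of dense opens.
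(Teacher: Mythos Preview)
Your approach is essentially the same as the paper's: invoke \lemref{LM} to get $\oCY$, a finite \'etale $\CZ\to\oCY$ with $\CZ\simeq B\CG$ over a scheme $X$, use characteristic $0$ (Cartier) to get $\CG$ smooth, extract the unipotent radical and reductive quotient generically, and then trivialize the reductive part after a finite \'etale base change on $X$.

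One spot where your treatment is less precise than the paper's is the connectedness of $G$. You write ``choosing a connected split $G$ with the appropriate root datum,'' but the reductive quotient $\CG^{\mathrm{red}}$ can have disconnected fibers (e.g.\ if some automorphism group has nontrivial component group), in which case no connected $G$ will satisfy $\CG^{\mathrm{red}}\simeq G\times \wt X$. The paper handles this cleanly at the very end: it first trivializes $\CG_{red}$ as $X\times G$ with $G$ possibly disconnected, and then replaces $G$ by its neutral component $G^\circ$ while simultaneously replacing $\CZ$ by $\CZ\underset{BG}\times BG^\circ$. Since $BG^\circ\to BG$ is finite \'etale, the new $\CZ$ is still finite \'etale surjective onto $\oCY$, and the map $\CZ\to X\times BG^\circ$ remains a unipotent gerbe. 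This one-line fix resolves the concern you flagged about ``tracking that connectedness of $G$ survives.''

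A smaller point: you deduce from the affine-automorphism hypothesis that the geometric fibers of $\CG\to X$ are affine, which is correct, but to apply the Levi/unipotent-radical theory in families you want $\CG$ to be affine \emph{over} $X$; the paper achieves this by an explicit shrinking of $X'$ and $X$ before proceeding.
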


\begin{rem}   \label{r:Gtriv}
If $\CY$ is safe then $G$ clearly has to be trivial.
\end{rem}

\begin{proof}
Let $\oCY$ be an open among the locally closed substacks given by Lemma~\ref{LM}. Let
$\oCY\to X'$, $X\to X'$ and $\CG$ be the corresponding data supplied by that lemma.

\medskip

Since we are in characteristic $0$, the group-scheme $\CG$ is smooth over $X$ by Cartier's theorem. 
After shrinking $X'$ and $X$ we can assume that $\CG$ is affine over $X$.
After further shrinking, we can assume that the group-scheme $\CG$ admits a factorization
\begin{equation} 
1\to \CG_{un}\to \CG\to \CG_{red}\to 1,
\end{equation}
where $\CG_{un}$ and $\CG_{red}$ are smooth group-schemes with
$\CG_{un}$ being unipotent and $\CG_{red}$ being reductive and locally constant.
After replacing $X$ by a suitable \'etale covering, $\CG_{red}$ becomes constant, i.e., isomorphic to
$X\times G$ for some reductive algebraic group over $k$. 

\medskip

Now set 
$$\CZ:=\oCY\underset{X'}\times X=B\CG.$$ 
We have a morphism $\CZ=B\CG \to B\CG_{red}=X\times BG$. Thus we get a diagram \eqref{e:varLM},
which has the required properties except that $G$ is not necessarily connected. Finally, replace $G$ by its
neutral connected component $G^{\circ}$ and replace $\CZ$ by $\CZ\underset{BG}\times BG^{\circ}$.
\end{proof}

\sssec{Proof of \thmref{t:rel_Dmods}}  \label{sss:qcompact} 

By definition, we may assume that $\CY'=S$ is an affine DG scheme. 
In this case $\CY$ is quasi-compact (because $\pi$ is). So by Noetherian induction, we can assume that
the theorem holds for the restriction of $\pi$ to any closed substack $\imath:\CX\hookrightarrow\CY$, $\CX\ne\CY$.
Take $\CX:=(\CY-\oCY)$, where $\oCY$ is as in Lemma~\ref{l:varLM}. Then the exact triangle
\[\imath_\dr(\imath^!(\CM))\to \CM\to \jmath_\dr(\jmath^!(\CM)), \quad\quad  \CM\in \on{D-mod}(\CY ), \; \jmath:\oCY\hookrightarrow\CY\]
shows that it suffices to prove the theorem for $\pi|_{\oCY}\,$. \footnote{Note that this step relies in Propositions 
\ref{p:de Rham trans} and \ref{p:dr for sch}.}

\medskip

The morphism $p:\CZ\to\oCY$ is schematic, finite, surjective, and etale, so the functor $p_\dr\circ p^!$, which is
isomorphic to $p_\dr\circ p^*_{\on{dR}}$, 
contains $\on{Id}_{\on{D-mod}(\CY )}$ as a direct summand. Therefore it suffices to prove the theorem for the composition
\begin{equation}   \label{e:the_composition}
\CZ\to\oCY\hookrightarrow\CY{\buildrel{\pi}\over{\longrightarrow}}S. 
\end{equation}
Using Remark~\ref{r:Gtriv} and the assumption that $S$ is a scheme, we can decompose 
the morphism \eqref{e:the_composition} as 
$$\CZ{\buildrel{f}\over{\longrightarrow}}X{\buildrel{g}\over{\longrightarrow}}S,$$
where $f$ is the canonical map $\CZ\to X$.

\medskip

It remains to show that each of the
functors $f_\dr$ and $g_\dr$ has the properties stated in the theorem. This is clear for
$g$ as it is a morphism between quasi-compact schemes (see \secref{ss:de Rham pushforward schemes}).
For $f$, this follows from \lemref{l:unipotent_gerbes}. 

\qed

\ssec{Proof of Theorem~\ref{criter for safety}}   \label{ss:proof of safety_crit}

\sssec{Stability of safety}   \label{sss:safetystability}




\begin{lem} \label{l:safety under functors} Let $\pi:\CY_1\to \CY$ be a morphism of QCA stacks.

\begin{enumerate}
\item[(a)] If $\CM_1\in \on{D-mod}(\CY_1)$ is safe then so is $\pi_{\on{dR},*}(\CM_1)\in \on{D-mod}(\CY)$. 

%
\item[(b)] If $\pi$ is safe and $\CM\in \on{D-mod}(\CY)$ is safe then so is $\pi^!(\CM)\in \on{D-mod}(\CY_1)$.
\end{enumerate}
%
%
%
%
%
%
\end{lem}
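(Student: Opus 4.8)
\textbf{Proof proposal for Lemma~\ref{l:safety under functors}.}

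The plan is to deduce both statements from the characterization of safety in terms of continuity of the pairing functor $\CM'\mapsto \Gamma_{\on{dR}}(\CY,\CM\sotimes \CM')$, together with the projection formula and transitivity of de Rham cohomology. For part (a), I would start with $\CM_1\in\Dmod(\CY_1)$ safe and must show that the functor $\CM\mapsto \Gamma_{\on{dR}}(\CY,\pi_{\on{dR},*}(\CM_1)\sotimes \CM)$ on $\Dmod(\CY)$ commutes with colimits. The natural move is to rewrite this using the projection formula: if $\pi$ were known to satisfy the projection formula (for the given $\CM_1$), we would have $\pi_{\on{dR},*}(\CM_1)\sotimes \CM\simeq \pi_{\on{dR},*}(\CM_1\sotimes \pi^!(\CM))$, and then by \lemref{l:de Rham trans} the expression becomes $\Gamma_{\on{dR}}(\CY_1,\CM_1\sotimes\pi^!(\CM))$, which is continuous in $\CM$ because $\CM_1$ is safe and $\pi^!$ is continuous. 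The subtlety is that $\pi$ need not be schematic, so the projection formula is not automatic; but by \corref{c:partial proj formula safe} the triple $(\CM_1,\CM,\pi)$ \emph{does} satisfy the projection formula precisely because $\CM_1$ is safe. So part (a) should go through cleanly: apply \corref{c:partial proj formula safe}, then \lemref{l:de Rham trans}, then invoke safety of $\CM_1$ and continuity of $\pi^!$.

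For part (b), I would assume $\pi$ is safe and $\CM\in\Dmod(\CY)$ is safe, and show that $\CM_1\mapsto \Gamma_{\on{dR}}(\CY_1,\pi^!(\CM)\sotimes\CM_1)$ commutes with colimits on $\Dmod(\CY_1)$. Here the key input is \thmref{t:rel_Dmods}: since $\pi$ is a quasi-compact (indeed QCA, being a morphism of QCA stacks — note $\pi$ is automatically quasi-compact here) safe morphism, the functor $\pi_{\on{dR},*}$ is continuous and strongly satisfies the projection formula. Now apply \lemref{l:de Rham trans} to rewrite $\Gamma_{\on{dR}}(\CY_1,\pi^!(\CM)\sotimes\CM_1)\simeq \Gamma_{\on{dR}}(\CY,\pi_{\on{dR},*}(\pi^!(\CM)\sotimes\CM_1))$, and then the projection formula (valid since $\pi$ strongly satisfies it) gives $\simeq \Gamma_{\on{dR}}(\CY,\CM\sotimes \pi_{\on{dR},*}(\CM_1))$. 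This is continuous in $\CM_1$: the functor $\pi_{\on{dR},*}$ is continuous by \thmref{t:rel_Dmods}, and $\Gamma_{\on{dR}}(\CY,\CM\sotimes -)$ is continuous because $\CM$ is safe. Composing two continuous functors is continuous, which is exactly what we need.

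The main obstacle I anticipate is bookkeeping about which form of the projection formula is available in each case and making sure the hypotheses of \corref{c:partial proj formula safe} and \thmref{t:rel_Dmods} are genuinely met — in particular checking that $\pi:\CY_1\to\CY$, a morphism of QCA stacks, is automatically quasi-compact (this is the remark following \defnref{d:rel QCA}, that any morphism from a QCA stack to an algebraic stack is QCA, hence quasi-compact) so that \thmref{t:rel_Dmods} applies in part (b). There is also a minor circularity concern: \thmref{t:rel_Dmods}, whose proof in \secref{ss:rel_Dmods} precedes this lemma, is used in part (b), so I should confirm the logical ordering is respected — it is, since the lemma sits in \secref{ss:proof of safety_crit}, after \secref{ss:rel_Dmods}. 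Once these dependencies are pinned down, both parts are short formal manipulations with the projection formula and \lemref{l:de Rham trans}.
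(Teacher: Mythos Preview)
Your proposal is correct and follows essentially the same approach as the paper: both parts reduce to the projection formula plus transitivity (\lemref{l:de Rham trans}), with part (a) invoking \corref{c:partial proj formula safe} and part (b) using that $\pi_{\on{dR},*}$ is continuous and satisfies the projection formula for safe $\pi$. The only cosmetic difference is that for part (b) the paper cites \lemref{l:proj formula for renormalized} together with the identification $\pi_{\on{dR},*}\simeq\pi_{\blacktriangle}$ (the corollary following \thmref{t:rel_Dmods}), whereas you cite \thmref{t:rel_Dmods} directly for the projection formula; these are equivalent justifications.
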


\begin{proof} \hfill

\smallskip

\noindent (a) We need to show that the functor 
$$\CN\mapsto\Gamma_{\on{dR}} (\CY,\pi_{\on{dR},*}(\CM_1)\sotimes\CN),\quad \CN\in \on{D-mod}(\CY)$$
is continuous. However, by \corref{c:partial proj formula safe}, the right-hand side is isomorphic to
$$\Gamma_{\on{dR}} (\CY,\pi_{\on{dR},*}(\CM_1\sotimes\pi^!(\CN)),$$
i.e., $\Gamma_{\on{dR}} (\CY_1 ,\CM_1\sotimes\pi^!(\CN))$, and the latter is continuous since $\CM_1$ is
safe.

\medskip

\noindent (b) The functor 
$$\CN_1\mapsto\Gamma_{\on{dR}} (\CY_1,\pi^!(\CM)\sotimes\CN_1),\quad \CN_1\in \on{D-mod}(\CY_1)$$
is continuous because the projection formula
$$\Gamma_{\on{dR}} (\CY_1,\pi^!(\CM)\sotimes\CN_1)\simeq
\Gamma_{\on{dR}} (\CY ,\CM\sotimes\,\pi_{\on{dR},*}(\CN_1)),$$
is valid by \lemref{l:proj formula for renormalized}, since $\pi_\dr\simeq \pi_{\blacktriangle}$.
%
%
%
%
\end{proof}

\begin{cor}  \label{stratification}
Let $\imath_j:\CY_j\hookrightarrow \CY$, $j=1,\ldots,n$, be locally closed substacks 
such that $\CY=\underset{j}\cup\, \CY_j$. Then an object $\CM\in \on{D-mod}(\CY )$ is safe if
and only if $\imath_j^!(\CM)$ is safe for each $j$.
\end{cor}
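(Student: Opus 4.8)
The statement to prove is \corref{stratification}: for locally closed substacks $\imath_j:\CY_j\hookrightarrow\CY$ with $\CY=\underset{j}\cup\,\CY_j$, an object $\CM\in\on{D-mod}(\CY)$ is safe if and only if each $\imath_j^!(\CM)$ is safe. The idea is to reduce the ``only if" direction to part (b) of \lemref{l:safety under functors} and the ``if" direction to part (a) together with a d\'evissage along the stratification, using the standard exact triangle relating the restriction to a closed substack and to its open complement.

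\textbf{The ``only if" direction.} Suppose $\CM$ is safe. Each locally closed embedding $\imath_j$ factors as an open embedding followed by a closed embedding; in particular $\imath_j$ is schematic and quasi-compact (here I use that $\CY$ is QCA, so its diagonal is schematic, quasi-separated and quasi-compact, whence any map to $\CY$ from a quasi-separated quasi-compact source, and in particular any locally closed substack, is schematic and quasi-compact). A schematic morphism is safe in the sense of \defnref{d:safe}: its geometric fibers are (locally closed substacks of) points, hence have trivial, a fortiori unipotent, automorphism groups. Therefore \lemref{l:safety under functors}(b) applies and gives that $\imath_j^!(\CM)$ is safe for each $j$.

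\textbf{The ``if" direction.} Now assume $\imath_j^!(\CM)$ is safe for every $j$. I would argue by induction on $n$, the number of strata. Reorder so that $\CY_1$ is closed in $\CY=\underset{j}\cup\,\CY_j$ (one can always extract a closed stratum from a finite stratification by locally closed substacks: take the closure of a stratum minimal for the specialization order, or simply note that $\underset{j\geq 2}\cup\,\CY_j$ is open and its complement $\CX$ is a closed substack contained in $\CY_1$; replacing $\CY_1$ by $\CX$ and noting $\imath^!$ for the further closed embedding $\CX\hookrightarrow\CY_1$ preserves safety by the ``only if" direction already proved, we may assume $\CY_1=\CX$ is closed). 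Write $\imath:\CX\hookrightarrow\CY$ for this closed substack and $\jmath:\oCY:=\CY-\CX\hookrightarrow\CY$ for the complementary open; note $\oCY=\underset{j\geq 2}\cup\,(\CY_j\cap\oCY)$ is covered by $n-1$ locally closed substacks, and $\jmath^!(\CM)$ restricted to each $\CY_j\cap\oCY$ is $\imath_j^!(\CM)$ (up to the evident further restriction), hence safe; so by the inductive hypothesis $\jmath^!(\CM)\in\on{D-mod}(\oCY)$ is safe, and $\imath^!(\CM)\in\on{D-mod}(\CX)$ is safe by hypothesis. Now consider the exact triangle
$$\imath_{\on{dR},*}(\imath^!(\CM))\to \CM\to \jmath_{\on{dR},*}(\jmath^!(\CM)).$$
Since $\imath$ and $\jmath$ are schematic and quasi-compact, they are safe morphisms, so by \lemref{l:safety under functors}(a) both $\imath_{\on{dR},*}(\imath^!(\CM))$ and $\jmath_{\on{dR},*}(\jmath^!(\CM))$ are safe objects of $\on{D-mod}(\CY)$. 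As the safe objects form a DG subcategory closed under cones (stated just after the definition of safety), $\CM$ is safe. This completes the induction and the proof.

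\textbf{Main obstacle.} The delicate point is the existence of the exact triangle $\imath_{\on{dR},*}\imath^!(\CM)\to\CM\to\jmath_{\on{dR},*}\jmath^!(\CM)$ for a closed–open decomposition of an algebraic stack, with the correct compatibilities (Kashiwara-type vanishing: $\jmath^!\circ\imath_{\on{dR},*}=0$ and $\imath^!\circ\jmath_{\on{dR},*}=0$, so that the triangle has the stated terms). For schematic quasi-compact $\imath,\jmath$ this follows by descent from the corresponding statement on a scheme atlas, where it is the usual recollement for $\Dmod$ established via \secref{sss:properties of Dmod}; one checks it on an atlas $Z\to\CY$, using base change \eqref{e:Dmod base change} and Kashiwara's lemma, and then the identity of the three terms as objects of the limit category $\Dmod(\CY)$ follows since $!$-pullback along the atlas is conservative. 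I would expect this verification — rather than the safety bookkeeping, which is immediate from \lemref{l:safety under functors} — to be the only real content. Everything else is formal manipulation of the safe subcategory being a cone-closed DG subcategory and of the behavior of safety under the two elementary types of morphisms (schematic pushforward and pullback along schematic/safe maps).
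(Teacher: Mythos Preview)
Your proof is correct and follows essentially the same approach as the paper: reduce to a closed--open pair, apply \lemref{l:safety under functors}(b) for the ``only if'' direction, and for the ``if'' direction use the exact triangle $\imath_{\on{dR},*}(\imath^!(\CM))\to\CM\to\jmath_{\on{dR},*}(\jmath^!(\CM))$ together with \lemref{l:safety under functors}(a) and closure of safe objects under cones. The paper compresses the induction into the single sentence ``it suffices to consider the case where $n=2$, $\CY_1$ is a closed substack, and $\CY_2=(\CY-\CY_1)$'', but the content is the same.
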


\begin{proof}
It suffices to consider the case where $n=2$, $\CY_1$ is a closed substack, and 
$\CY_2=(\CY-\CY_1)$. The ``only if" statement holds by \lemref{l:safety under functors}(b).
To prove the ``if" statement, consider the exact triangle 
$(\imath_1)_\dr(\imath_1^! (\CM))\to\CM\to (\imath_2)_\dr(\imath_2^! (\CM))$.
By \lemref{l:safety under functors}, 
$$(\imath_1)_\dr(\imath_1^! (\CM)) \text{ and } (\imath_2)_\dr(\imath_2^!)(\CM)$$ are both safe, so $\CM$ is safe.
\end{proof}

%

%
%
%
%
%

\sssec{The mapping telescope argument}   \label{sss:telescope}
\begin{lem}  \label{l:telescope}
Let $\CT (\CY )\subset \on{D-mod}(\CY )$ be as in condition (5) of Theorem~\ref{criter for safety}.
Then $\CT (\CY )$ is closed under direct summands.
\end{lem}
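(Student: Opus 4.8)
The plan is to prove \lemref{l:telescope} by the standard mapping telescope (Bökstedt–Neeman) argument. Suppose $\CM \in \on{D-mod}(\CY)$ decomposes as $\CM \simeq \CM' \oplus \CM''$ with $\CM' \in \CT(\CY)$; I want to show $\CM' \in \CT(\CY)$ as well (by symmetry this will also handle $\CM''$, though in fact one only needs to know that a direct summand of an object of $\CT(\CY)$ lies in $\CT(\CY)$). The key formal input is that $\CT(\CY)$, being the smallest (non-cocomplete) DG subcategory of $\on{D-mod}(\CY)$ containing the objects $f_{\on{dR},*}(\CN)$, is by definition closed under finite colimits (cones) and shifts; the content of the lemma is the extra closure under retracts.

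First I would recall the telescope construction: given an idempotent $e: \CM \to \CM$ (the projector onto the summand $\CM'$), form the object
$$
\CM' \simeq \on{colim}\left(\CM \overset{e}\to \CM \overset{e}\to \CM \overset{e}\to \cdots\right),
$$
the sequential (filtered) colimit of copies of $\CM$ along $e$, which is computed as the cone of the map $\mathrm{id} - \mathrm{shift}$ on $\bigoplus_{n} \CM$. The point is that a countable direct sum $\bigoplus_n f_{\on{dR},*}(\CN_n)$ of generators of the form appearing in condition (5) is \emph{again of that form}: indeed $\bigoplus_n f_{n,\on{dR},*}(\CN_n) \simeq f_{\on{dR},*}(\CN)$ where $f: \bigsqcup_n S_n \to \CY$ is the map out of the (infinite) disjoint union of the quasi-compact schemes $S_n$ and $\CN = \bigoplus_n \CN_n \in \on{D-mod}(\bigsqcup_n S_n)^b$. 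Wait — this is exactly the subtlety, and I return to it below. Granting this, one writes the telescope as a two-term complex (cone) of objects that are countable direct sums of the generators, hence objects of $\CT(\CY)$ once we know $\CT(\CY)$ contains such countable sums; and since $\CT(\CY)$ is closed under cones, $\CM' \in \CT(\CY)$.

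The main obstacle is precisely this bookkeeping point: the generators in condition (5) involve a quasi-compact scheme $S$, but an infinite disjoint union $\bigsqcup_n S_n$ is \emph{not} quasi-compact. So I would instead argue more carefully: in the telescope, each copy of $\CM$ is itself (a retract of, hence obtainable by finite cones and shifts from) objects $f_{\on{dR},*}(\CN)$ with $S$ quasi-compact; but the telescope requires taking a countable direct sum of copies of $\CM$, so I need $\CT(\CY)$ to be closed under countable direct sums of its objects. Since $\CT(\CY)$ is not cocomplete this is not automatic. The way out, which I expect to be the heart of the write-up, is to observe that we do not need $\CT(\CY)$ to contain arbitrary countable sums: we need it to contain the specific object $\bigoplus_n \CM$ and the cone of $\mathrm{id} - s$ on it. Concretely, I would note that $\CM$, being a summand of an object of $\CT(\CY)$, in particular lies in $\on{D-mod}(\CY)^b$ and is \emph{safe} by the equivalence of conditions we are building toward — but to avoid circularity I would instead use only the structural definition: one shows by the telescope that $\CM' \simeq \on{Cone}(\bigoplus_n \CM \xrightarrow{\mathrm{id}-s} \bigoplus_n \CM)$, and then shows directly that $\bigoplus_n \CM \in \CT(\CY)$ using that $\CM$ itself is (up to finite cones/shifts) built from $f_{\on{dR},*}(\CN)$'s with $S$ quasi-compact, and that a countable direct sum of such is $g_{\on{dR},*}(\CN')$ for $g$ the map from a \emph{locally} quasi-compact (not quasi-compact) scheme — which is allowed once one checks that $g_{\on{dR},*}$ of a bounded object is still the kind of object generating $\CT(\CY)$, via Zariski descent and \secref{Zariski descent}. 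I would carry out: (i) reduce to showing $\CM'$ is a cone of a self-map of $\bigoplus_n \CM$; (ii) identify $\bigoplus_n \CM$ with an object of the form $g_{\on{dR},*}(\CN')$ for a suitable morphism $g$ from a scheme (allowing non-quasi-compact $g$ and invoking that $g_{\on{dR},*}$ is still defined by gluing) and $\CN' \in \on{D-mod}(-)^b$; (iii) conclude that $\bigoplus_n \CM \in \CT(\CY)$ and hence, by closure of $\CT(\CY)$ under cones, so is $\CM'$. The delicate step (ii) is where one must be honest about quasi-compactness, and I expect it to occupy most of the proof.
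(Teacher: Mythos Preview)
Your overall strategy---the mapping telescope/B\"okstedt--Neeman argument---is exactly the paper's approach, but you have created a difficulty for yourself that is not there and in doing so have missed the actual observation that makes the proof work.

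The key point you are missing is this: to show that $\bigoplus_{n\ge 0}\CM\in\CT(\CY)$ for $\CM\in\CT(\CY)$, it suffices (since the operation $\CM\mapsto\bigoplus_n\CM$ commutes with cones and shifts) to check it on the generators $\CM=f_{\on{dR},*}(\CN)$ with $f:S\to\CY$, $S$ a quasi-compact scheme, $\CN\in\on{D-mod}(S)^b$. For such a generator, the morphism $f$ is schematic and quasi-compact, so by \secref{sss:representable} the functor $f_{\on{dR},*}$ is continuous, and hence
\[
\bigoplus_{n\ge 0} f_{\on{dR},*}(\CN)\;\simeq\; f_{\on{dR},*}\Bigl(\bigoplus_{n\ge 0}\CN\Bigr).
\]
Now $\bigoplus_{n\ge 0}\CN$ is still an object of $\on{D-mod}(S)^b$, because a direct sum of copies of a bounded object is bounded (the t-structure is compatible with filtered colimits, \lemref{D-properties of t}). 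So $\bigoplus_n\CM$ is again of the form $f_{\on{dR},*}(\CN')$ for the \emph{same} quasi-compact $S$ and the \emph{same} $f$, and in particular lies in $\CT(\CY)$. There is no need to pass to a disjoint union $\bigsqcup_n S_n$, and your step (ii) as written does not work: the definition of $\CT(\CY)$ explicitly requires $S$ quasi-compact, and invoking Zariski descent to allow non-quasi-compact sources would change the category you are trying to characterize.

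(There is also some notational confusion at the start of your proposal about which of $\CM$, $\CM'$ is the summand and which lies in $\CT(\CY)$, but that is easily fixed.)
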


\begin{proof}
The subcategory $\CT (\CY )$ has the following property: if $\CM\in \CT (\CY )$ then the infinite direct sum
$$\CM\oplus \CM\oplus \CM\oplus\ldots$$ also belongs to $\CT (\CY )$. Indeed, it 
suffices to check this if $\CM=f_{\on{dR},*}(\CN)$, where $f:S\to\CY$ is a morphism with $S$ being a quasi-compact scheme 
and $\CN\in \on{D-mod}(S)^b\,$. 

\medskip

Now suppose that $\CM\in \CT (\CY )$ and $\CN'\in \on{D-mod}(\CY )$ is a direct summand of $\CM$. Let $p:\CM\to\CM$ be 
the corresponding projector. The usual formula
\[
\CM'=\colim (\CM{\buildrel{p}\over{\longrightarrow}}\CM{\buildrel{p}\over{\longrightarrow}}\CM\to\ldots )=
\Cone (\CM\oplus \CM\oplus \CM\oplus\ldots\to\CM\oplus \CM\oplus \CM\oplus\ldots )
\]
shows that $\CM'\in \CT (\CY )$. 
\end{proof}

\sssec{The key proposition} We shall deduce \thmref{criter for safety} from the following proposition.  

\medskip

Let $X$ be a quasi-compact scheme and $G$ a connected algebraic group.
Consider the algebraic stack $X\times BG$. Let $\varphi :X\times BG\to X$ and $\sigma:X\to X\times BG$ be the
natural morphsims. 

\medskip

Let 
$\CT_X (X\times BG)\subset \on{D-mod}(X\times BG)$ denote the smallest (non-cocomplete) DG subcategory 
containing all objects of the form 
$\sigma_{\on{dR},*}(\CN)$, $\CN\in \on{D-mod}(X)^b\,$. 

\begin{prop}     \label{p:key}
For an object $\CM\in \on{D-mod}(X\times BG)^b$ the following conditions are equivalent:
\begin{enumerate}
\item[(i)] $\CM\in \CT_X (X\times BG)$;
\item[(ii)] $\varphi_{\on{dR},*}(\CM)\in \on{D-mod}(X)^b$;
\item[(iii)] $\varphi_{\blacktriangle}(\CM)\in \on{D-mod}(X)^b$. 
\end{enumerate}
\end{prop}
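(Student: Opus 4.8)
\textbf{Plan of proof for \propref{p:key}.}

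The equivalence (ii)$\Leftrightarrow$(iii) is the easy part: by \propref{p:coconnective part Dmod}(a) and \corref{c:dr and rd of safe}, the functors $\varphi_{\on{dR},*}$ and $\varphi_{\blacktriangle}$ agree on $\Dmod(X\times BG)^+$, and on a cohomologically bounded object $\CM$ the two outputs differ only in the ``negative'' direction, which is controlled by \propref{p:ren dir image estimate} (right-bounded up to a shift) versus \propref{p:coconnective part Dmod}(b) (left-bounded up to a shift); so $\varphi_{\on{dR},*}(\CM)$ is bounded iff $\varphi_{\blacktriangle}(\CM)$ is. The implication (i)$\Rightarrow$(ii) is also not hard: for $\CM=\sigma_{\on{dR},*}(\CN)$ with $\CN\in\Dmod(X)^b$ we have $\varphi_{\on{dR},*}\circ\sigma_{\on{dR},*}=(\varphi\circ\sigma)_{\on{dR},*}=\on{id}_{\on{dR},*}$ (using \propref{p:de Rham trans}, as $\sigma$ is schematic and quasi-compact), so $\varphi_{\on{dR},*}(\CM)\simeq\CN$ is bounded; then one uses that $\Dmod(X)^b$ is closed under cones and that $\varphi_{\on{dR},*}$, being left t-exact up to a shift by \propref{p:coconnective part Dmod}(b) and right t-exact up to a shift on the relevant subcategory, maps a bounded-generated subcategory into $\Dmod(X)^b$.

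The substantive content is (ii)$\Rightarrow$(i), and it will occupy the bulk of the argument. First I would reduce to the universal case. Using the base-change and projection-formula properties of $\varphi_{\on{dR},*}$ on bounded objects over a scheme base (Corollaries \ref{c:coconnective part} and the ``$+$'' statements of \secref{ss:base change Dmod}, \secref{ss:proj formula Dmod}), together with the analysis of $\Dmod(BG)$ in \secref{ss:BG}, I would peel off the $X$-direction: $\Dmod(X\times BG)\simeq\Dmod(X)\otimes\Dmod(BG)$ by \corref{c:D on prod}, and $\Dmod(BG)\simeq B\mod$ where $B=\left(\Gamma_{\on{dR}}(G,k_G)\right)^\vee$ is an exterior Hopf algebra on odd generators. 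Under this identification $\varphi_{\on{dR},*}$ becomes $M\mapsto\CMaps_B(k,M)$ applied $X$-linearly, and $\sigma_{\on{dR},*}$ becomes $B$-induction (up to a shift). So condition (i) says $\CM$ lies in the thick subcategory generated by $B$-induced objects of $\Dmod(X)$-modules, and condition (ii) says $\CMaps_B(k,\CM)$ is cohomologically bounded over $X$. The claim is thus a statement in homological algebra over the small exterior/graded-symmetric algebra $B$ (Koszul-dual to the polynomial algebra $A=\CMaps_B(k,k)$ from \secref{ss:BG}), relative to $X$: \emph{a bounded $B$-module complex whose $\on{RHom}_B(k,-)$ is bounded is built from finitely many $B$-free (equivalently $B$-induced) pieces by cones and retracts.} I would prove this by downward induction on the amplitude using the minimal-model / Koszul-duality dictionary: $\on{RHom}_B(k,\CM)$ is a finitely generated graded $A$-module, and its boundedness forces it to be a \emph{finitely generated} $A$-module that is moreover torsion-free-free enough that $\CM$ is a finite extension of shifts of $B$ (i.e.\ of $\sigma_{\on{dR},*}$ of things). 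Concretely, boundedness of $\on{RHom}_B(k,\CM)$ means the $A$-module is concentrated in finitely many degrees, and since $A$ is a polynomial ring on positive even generators, a bounded finitely generated graded $A$-module is finite-dimensional; dualizing back through Koszul duality, $\CM$ has a finite resolution by finite sums of shifts of $B$. The telescope/retract lemma \lemref{l:telescope} then upgrades ``finite extension up to retract'' to membership in $\CT_X(X\times BG)$.

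\textbf{The main obstacle} I expect is making the Koszul-duality reduction \emph{relative to $X$} fully rigorous at the DG-categorical level: one must check that the equivalence $\Dmod(X\times BG)\simeq\Dmod(X)\otimes B\mod$ carries $\varphi_{\on{dR},*}$ to $\on{RHom}_B(k,-)$ and $\sigma_{\on{dR},*}$ to the free-$B$-module functor \emph{compatibly with the t-structures and with all the boundedness estimates}, so that ``finitely generated'' and ``cohomologically bounded'' can be controlled uniformly over $X$ (which is only quasi-compact, not affine). I would handle this by first doing everything over an affine cover of $X$ — where $\Gamma(X,-)\circ\oblv^{\on{left}}$ is conservative and commutes with the relevant limits — and then gluing, using that $\CT_X(X\times BG)$ and the boundedness conditions are Zariski-local on $X$ via \corref{stratification}-style arguments. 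A secondary subtlety: when $G$ is not affine (an abelian variety, or an extension thereof), $B=\Gamma_{\on{dR}}(G,k_G)^\vee$ is still an exterior Hopf algebra on odd generators by \secref{ss:BG}, so the same homological argument applies verbatim, but one should double-check that $\sigma_{\on{dR},*}$ still corresponds to $B$-induction up to the appropriate shift $\delta$ recorded in Example~\ref{ex:ren for BG}.
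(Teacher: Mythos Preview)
Your direct proof of (ii)$\Leftrightarrow$(iii) has a genuine error: $\varphi_{\on{dR},*}$ and $\varphi_{\blacktriangle}$ do \emph{not} agree on $\Dmod(X\times BG)^+$ --- neither \propref{p:coconnective part Dmod}(a) (which is about base change) nor \corref{c:dr and rd of safe} (which requires the input to be safe) says this. For $X=\on{pt}$ and $\CM=k_{BG}$ one has $\varphi_{\on{dR},*}(k_{BG})\simeq A$ unbounded above, while $\varphi_{\blacktriangle}(k_{BG})\simeq k\underset{B}\otimes k[\text{shift}]$ is unbounded below, so the two functors are far from equal on bounded objects. The paper does not attempt a direct (ii)$\Leftrightarrow$(iii): it proves (ii)$\Rightarrow$(i) and (iii)$\Rightarrow$(i) by \emph{parallel but separate} arguments, and you would need to do the same.

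For (ii)$\Rightarrow$(i) the paper takes a much more elementary route than your Koszul-duality plan. The key observation (\lemref{l:using_connectedness}) is that since $G$ is connected, the cone of the unit $\CM\to\sigma_{\on{dR},*}\sigma_{\on{dR}}^*(\CM)$ lies one degree higher in the t-structure. Iterating gives, for any $m$, a triangle $\CM\to\CE\to\CM'$ with $\CE\in\CT_X(X\times BG)$ and $\CM'\in\Dmod^{\ge m}$. The hypothesis $\varphi_{\on{dR},*}(\CM)\in\Dmod(X)^{\le n}$ then yields $\Ext^1(\CM',\CM)=0$ for $m\gg0$ via the adjunction $(\varphi_{\on{dR}}^*,\varphi_{\on{dR},*})$ and finiteness of $\cd(X)$; the triangle splits, $\CM$ is a retract of $\CE$, and one finishes by closure of $\CT_X$ under retracts. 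For (iii)$\Rightarrow$(i) the paper runs the dual argument using the counit $\sigma_!\sigma^!(\CM)\to\CM$ and the adjunction $(\varphi_!,\varphi^!)$ supplied by \lemref{l:compact_supports}, where $\varphi_!\simeq\varphi_{\blacktriangle}[\text{shift}]$. Your Koszul-duality route could perhaps be pushed through, but note that the step ``$\on{RHom}_B(k,\CM)$ is a finitely generated graded $A$-module'' tacitly assumes coherent cohomology, which is not part of the hypothesis; and the relative-over-$X$ bookkeeping you flag as the main obstacle is entirely sidestepped by the paper's argument.
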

 
\sssec{Proof of Theorem~\ref{criter for safety} modulo Proposition~\ref{p:key}} \hfill  \label{sss:modulo}

\smallskip

It is clear that (2)$\Rightarrow$(2$'$), (3)$\Rightarrow$(3$'$), (4)$\Rightarrow$(4$'$).

\medskip

The direct image functor preserves boundedness from below (see \lemref{p:coconnective part Dmod}),
while the renormalized direct image functor preserves boundedness from above (see \propref{p:ren dir image estimate}). 
So condition (4) implies (2) and (3), while condition (4$'$) implies (2$'$) and (3$'$).

\medskip

By Lemma~\ref{l:safety under functors}(a), condition (5) implies (1). Condition (1) implies condition (4) by
Lemma~\ref{l:safety under functors}(b) combined with \corref{c:dr and rd of safe}.

\medskip

Thus it remains to prove that (2$'$)$\Rightarrow$(5) and (3$'$)$\Rightarrow$(5). 

\medskip

Let $\CM\in \Dmod(\CY)$
satisfy either (2$'$) or (3$'$). By Noetherian induction and Lemma~\ref{l:safety under functors},
it suffices to show that there exists a non-empty
open substack $\oCY$ of $\CY$, such that the restriction $\CM|_{\oCY}$ satisfies condition (5). 

\medskip

We take $\oCY$ to be as in \lemref{l:varLM}.  Consider the following diagram, in which the square is Cartesian:
\begin{equation} \label{e:LM3}
\CD
\CZ'  @>{\psi'}>>  X \\
@V{\sigma'}VV    @VV{\sigma}V   \\
\CZ  @>{\psi}>> X\times BG \\
@V{\pi}VV   \\
\oCY.
\endCD
\end{equation}
The map $\psi':\CZ'\to X$ is a unipotent gerbe. By further shrinking $X$, we can assume that it
admits a section; denote this section by $g$. 

\medskip

Note that $\CM$ is a direct summand of $\pi_\dr(\pi^!(\CM))$. Hence, by \lemref{l:telescope}, it suffices to show the
following:

\begin{prop}
The object $\pi^!(\CM)\in \Dmod(\CZ)$ belongs to the smallest 
(non-cocomplete) DG subcategory of $\Dmod(\CZ)$ containing all objects of the form
$f_\dr(\CN)$, where $f=\sigma'\circ g$ and $\CN\in \Dmod(X)^b$. 
\end{prop}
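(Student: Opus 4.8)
The statement to be proved is that $\pi^!(\CM)\in\Dmod(\CZ)$ lies in the smallest non-cocomplete DG subcategory of $\Dmod(\CZ)$ generated by objects $f_\dr(\CN)$ with $f=\sigma'\circ g:X\to\CZ$ and $\CN\in\Dmod(X)^b$. The strategy is to transport the problem from $\CZ$ down to $X\times BG$ via the finite \'etale gerbe $\psi:\CZ\to X\times BG$, then apply \propref{p:key}. First I would observe that, since $\psi':\CZ'\to X$ is a unipotent gerbe with a chosen section $g$, \corref{c:unipotent_gerbes} (applied to $\psi'$) shows that $(\psi')_\dr$ is, up to a cohomological shift, inverse to $(\psi')^!$; in particular $g_\dr$ differs from $(\psi')^!$ only by a shift. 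Likewise $\psi:\CZ\to X\times BG$ is a unipotent gerbe, so by \lemref{l:unipotent_gerbes} the functor $\psi^!:\Dmod(X\times BG)\to\Dmod(\CZ)$ is an \emph{equivalence}, and by \corref{c:unipotent_gerbes} its inverse agrees with $\psi_\dr$ up to a shift. Under this equivalence, a generator $\sigma_\dr(\CN)$ of $\CT_X(X\times BG)$ pulls back (using the Cartesian square in \eqref{e:LM3} and base change for D-modules, \eqref{e:Dmod base change}, valid since $\sigma$ is schematic) to $(\sigma')_\dr\circ(\psi')^!(\CN)$, which by the previous remark is $f_\dr(\CN')$ for $\CN'=(\psi')^!(\CN)[\text{shift}]\in\Dmod(X)^b$, hence lies in the target subcategory. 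Conversely $\psi_\dr$ carries each $f_\dr(\CN)=(\sigma')_\dr\circ g_\dr(\CN)$ — again by base change along the Cartesian square, since $\sigma$ is schematic — into $\sigma_\dr$ of something bounded, i.e. into $\CT_X(X\times BG)$. Therefore the equivalence $\psi^!$ matches the subcategory $\CT_X(X\times BG)$ with exactly the DG subcategory of $\Dmod(\CZ)$ named in the statement.

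So it remains to show $\psi_\dr\bigl(\pi^!(\CM)\bigr)\in\CT_X(X\times BG)$, or equivalently, since $\psi^!$ is an equivalence and $\psi_\dr\simeq(\psi^!)^{-1}$ up to shift, that the class of $\pi^!(\CM)$ in $\Dmod(\CZ)$ corresponds under $\psi^!$ to an object of $\CT_X(X\times BG)$. I would use \propref{p:key}: it suffices to check that $\varphi_\dr$ (or $\varphi_\blacktriangle$) of that corresponding object in $\Dmod(X\times BG)$ is cohomologically bounded. Name the object $\CM':=\psi_\dr(\pi^!(\CM))\in\Dmod(X\times BG)^b$ (it is bounded because $\psi_\dr$ has bounded cohomological amplitude, being up to shift the inverse of $\psi^!$). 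Then $\varphi_\dr(\CM')$: by \propref{p:de Rham trans} (here $\psi$ is schematic and quasi-compact, so the transitivity \eqref{e:de Rham trans} is an isomorphism) we get $\varphi_\dr(\CM')\simeq\varphi_\dr\circ\psi_\dr(\pi^!(\CM))\simeq(\varphi\circ\psi)_\dr(\pi^!(\CM))$. Now $\varphi\circ\psi:\CZ\to X$ factors through $\pi:\CZ\to\oCY$ followed by a morphism $\oCY\to X$? — it does not factor through $\pi$ directly, but I would instead compose with $\pi$: since $\CM$ is a direct summand of $\pi_\dr\pi^!(\CM)$ we may as well track $(\varphi\circ\psi)_\dr\pi^!(\CM)$ against the given hypothesis on $\CM$. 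Concretely, if $\CM$ satisfies condition $(2')$ of \thmref{criter for safety} for the finite \'etale map $\pi:\CZ\to\oCY$ and the morphism $\varphi\circ\psi:\CZ\to X$ to a quasi-compact scheme, we directly get $(\varphi\circ\psi)_\dr(\pi^!(\CM))\in\Dmod(X)^b$, hence $\varphi_\dr(\CM')\in\Dmod(X)^b$, and \propref{p:key} gives $\CM'\in\CT_X(X\times BG)$. If instead $\CM$ satisfies $(3')$, the same argument runs with $\varphi_\blacktriangle$ and $\psi_\blacktriangle=\psi_\dr$ (schematic, so $\pi_\blacktriangle\simeq\pi_\dr$ by \propref{p:duality on morphisms schematic}), using condition (iii) of \propref{p:key}; one also needs transitivity of $\blacktriangle$, which holds by the discussion following \defnref{d:ren dir im}.

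The main obstacle I anticipate is bookkeeping the compatibility of base change and transitivity isomorphisms across the diagram \eqref{e:LM3}: one must make sure that the identifications $\psi_\dr\simeq(\psi^!)^{-1}$ (up to shift) from \corref{c:unipotent_gerbes}, the base change along the Cartesian square $\sigma'\psi'=\psi\sigma$, and the transitivity isomorphism $\varphi_\dr\psi_\dr\simeq(\varphi\psi)_\dr$ from \propref{p:de Rham trans} all fit together so that $\varphi\circ\psi$ really is the morphism $\CZ\to X$ appearing in the hypothesis on $\CM$ (composed with $\pi$ in the slot of condition $(2')$/$(3')$). There is also a minor subtlety about cohomological shifts: $g_\dr$ and $(\psi')^!$ differ by a shift that may vary on connected components of $X$ if $\psi'$ is not equidimensional, so I would shrink $X$ once more to make $\psi'$ (hence $\psi$) equidimensional, which is harmless by Noetherian induction — indeed this is already built into the choice of $\oCY$ via \lemref{l:varLM}. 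Once these identifications are pinned down, the proof is a direct chain of the cited results with no further computation.
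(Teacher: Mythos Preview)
Your overall strategy matches the paper's proof exactly: push $\pi^!(\CM)$ down to $X\times BG$ via $\psi_\dr$, verify boundedness of $\varphi_\dr$ (resp.\ $\varphi_\blacktriangle$) of the result using hypothesis $(2')$ (resp.\ $(3')$) applied to the morphism $\varphi\circ\psi:\CZ\to X$, invoke \propref{p:key} to land in $\CT_X(X\times BG)$, and then transport back using base change along the Cartesian square and the fact that $g_\dr$ is an equivalence.

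There is one genuine error, though it is easily repaired. You twice assert that $\psi:\CZ\to X\times BG$ is schematic, and you use this to cite \propref{p:de Rham trans} for the transitivity $\varphi_\dr\circ\psi_\dr\simeq(\varphi\circ\psi)_\dr$ and \propref{p:duality on morphisms schematic} for $\psi_\blacktriangle\simeq\psi_\dr$. But $\psi$ is a \emph{unipotent gerbe}: its fibers are classifying stacks $B\CG_{un}$ of unipotent groups, not schemes. The paper handles this as follows. For the transitivity isomorphism, it invokes \secref{sss:more trans}, any of whose three conditions applies here---in particular (iii), since $\pi^!(\CM)\in\Dmod(\CZ)^b\subset\Dmod(\CZ)^+$. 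For the identification $\psi_\blacktriangle\simeq\psi_\dr$, the paper notes that $\psi$, being a unipotent gerbe, is \emph{safe}, so \propref{p:ren dir image of safe} (or more directly the corollary to \thmref{t:rel_Dmods}) applies. With these two corrected citations your argument goes through and coincides with the paper's.
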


\begin{proof}

We apply conditions (2$'$) or (3$'$) with $\CY'=\CZ$, and $\phi$ being the composition
$$\CZ\overset{\psi}\to X\times BG\overset{\varphi}\to X.$$
Consider the object $\psi_\dr(\pi^!(\CM))$. It is bounded because $\pi^!(\CM)$ is bounded,
and $\psi_\dr$ is an equivalence, which is t-exact up to a cohomological shift (see \lemref{l:unipotent_gerbes}). 
Moreover,
$$\psi_{\blacktriangle}(\pi^!(\CM))\simeq \psi_\dr(\pi^!(\CM))$$
because $\psi$, being a unipotent gerbe, is safe.

\medskip

Consider now the objects 
$$\varphi_\dr\left(\psi_\dr(\pi^!(\CM))\right)\simeq \phi_\dr(\pi^!(\CM)) \text{ and }
\varphi_{\blacktriangle}\left(\psi_{\blacktriangle}(\pi^!(\CM))\right)\simeq \phi_{\blacktriangle}(\pi^!(\CM))$$
(note that the first isomorphism uses \secref{sss:more trans} in any of the (i), (ii) or (iii) versions).

\medskip

Condition (2$'$) (resp., (3$'$)) implies that the former (resp., latter) object is in
$\Dmod(X)^b$. Hence, by the implications (ii)$\Rightarrow$(i)   (resp., (iii)$\Rightarrow$(i)) in
\ref{p:key}, 
we obtain that $\psi_\dr(\pi^!(\CM))$ belongs to the subcategory $\CT_X (X\times BG)$.

\medskip

Consider the Cartesian square in \eqref{e:LM3}. Since $\psi_\dr$ is an equivalence
(\lemref{l:unipotent_gerbes}), we obtain that the object 
$\pi^!(\CM)$ belongs to the smallest (non-cocomplete) DG subcategory of $\Dmod(\CZ)$ containing all objects of the form 
$\sigma'_\dr(\CN')$, $\CN'\in \on{D-mod}(\CZ')^b\,$.  

\medskip

Recall that $g$ denotes a section of the map $\psi'$. By \lemref{l:unipotent_gerbes}, $\psi'_\dr$
is an equivalence, and $g_\dr$ is its left inverse. Hence, $g_\dr$ is an equivalence as well. 
So, every object $\CN'\in \on{D-mod}(\CZ')^b$ is of the form
$g_\dr(\CN)$ for $\CN\in \Dmod(X)^b$, which implies the required assertion.

\end{proof}

This finishes the proof Theorem \ref{criter for safety} modulo Proposition~\ref{p:key}.


\ssec{Proof of Proposition~\ref{p:key}} \label{ss:proof_key}
We already know from Sect.~\ref{sss:modulo} that (ii)$\Leftarrow$(i)$\Rightarrow$(iii).

\sssec{}  \label{sss:(ii)implies(i)}

As a preparation for the proof of the implication (ii)$\Rightarrow$(i), we observe:

\begin{lem}  \label{l:using_connectedness}
If $\CM\in \on{D-mod}(X\times BG)^{\ge r}$ then $$\Cone \, 
\left(\CM\to \sigma_{\on{dR},*}(\sigma_{\on{dR}}^*(\CM))\right)\in \on{D-mod}(X\times BG)^{\ge r+1}\,.$$
\end{lem}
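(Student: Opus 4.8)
The plan is to work on the level of the \v{C}ech nerve of $\sigma\colon X\to X\times BG$, since the statement is purely about the relative situation over $X$ and the functor $\sigma_{\on{dR},*}$. First I would observe that $\sigma$ is schematic, quasi-separated and quasi-compact (being a base change of $\on{pt}\to BG$), so $\sigma_{\on{dR},*}$ is continuous, t-exact (pushforward along $\on{pt}\to BG$ is t-exact, cf.\ \secref{Dt}), and satisfies base change. Thus $\sigma^!$ admits $\sigma_{\on{dR},*}$ as its continuous right adjoint, and the unit map $\CM\to \sigma_{\on{dR},*}(\sigma^!(\CM))$ — note $\sigma^*_{\on{dR}}\simeq \sigma^![-2\dim G]$ differs from $\sigma^!$ only by a shift since $\sigma$ is smooth of relative dimension $-\dim G$, so up to reindexing it suffices to treat either — has a cone that I must show is $(r+1)$-coconnective.

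The key computation: the cone of $\CM\to\sigma_{\on{dR},*}(\sigma^!(\CM))$ can be identified with the partial totalization (the ``truncated cobar'') of the cosimplicial object $n\mapsto (\sigma^{n})_{\on{dR},*}((\sigma^{n})^!(\CM))$ associated to the \v{C}ech nerve of $\sigma$, starting in cosimplicial degree $1$. Concretely, $X\times BG$ has \v{C}ech nerve over $X$ whose $n$-th term is $X\times G^{\times n}\to X$, and all the face/degeneracy maps are the obvious projections and multiplications; since these are all smooth affine morphisms, the relevant pushforwards are t-exact. Hence the cone sits in cohomological degrees $\ge r+1$ provided the cosimplicial pieces in degrees $\ge 1$ contribute nothing below degree $r+1$ — which is exactly where connectedness of $G$ enters: I would use that $H^0_{\on{dR}}(G,-)$ applied to a $(\ge r)$-object stays in degrees $\ge r$ and that the augmentation $\CM\to (\sigma^1)_{\on{dR},*}((\sigma^1)^!\CM)$ is, after applying $\sigma^!$, a split injection with cokernel in degrees $\ge r+1$ because $\Gamma_{\on{dR}}(G,k_G)$ is connective with $H^0=k$ (true precisely because $G$ is connected).

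A cleaner route, which I would try first: apply $\sigma^!$ to the triangle $\CM\to\sigma_{\on{dR},*}(\sigma^!\CM)\to \Cone$. Since $\sigma^!$ is conservative and t-exact up to the fixed shift $[-2\dim G]$, it suffices to show $\sigma^!(\Cone)\in \on{D-mod}(X)^{\ge r+1}$ after that shift. By base change along the Cartesian square expressing $\sigma\times_{X\times BG}\sigma \simeq X\times G\to X$, one computes $\sigma^!\sigma_{\on{dR},*}(\sigma^!\CM)\simeq p_{\on{dR},*}\, q^!(\sigma^!\CM)$ where $p,q\colon X\times G\to X$ are the two projections (here $q$ is the multiplication composed with projection — both are just the projection up to the group structure), and $q^!\sigma^!\CM\simeq \sigma^!\CM\boxtimes k_G$ up to shift; then $p_{\on{dR},*}$ of this is $\sigma^!\CM\otimes \Gamma_{\on{dR}}(G,k_G)$ up to shift. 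So $\sigma^!(\Cone)$ is $\sigma^!\CM$ tensored with $\Cone(k\to \Gamma_{\on{dR}}(G,k_G))$ up to shift, and the latter cone lies in degrees $\ge 1$ because $G$ connected gives $H^0(\Gamma_{\on{dR}}(G,k_G))=k$ with the augmentation an isomorphism on $H^0$. Tensoring a $(\ge r)$ (after the shift normalization, $(\ge r)$) object with a $(\ge 1)$ object lands in $(\ge r+1)$.

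The main obstacle I anticipate is bookkeeping the cohomological shifts coming from $\sigma^*_{\on{dR}}$ versus $\sigma^!$ and from the smoothness of the projections $X\times G^{\times n}\to X$, and making sure the base-change identification of $\sigma^!\sigma_{\on{dR},*}$ is set up functorially enough that the augmentation map is identified with the one induced by $k\to\Gamma_{\on{dR}}(G,k_G)$ on the nose. If the two-term argument above is not quite enough (it gives the statement after one application of $\sigma^!$, which by conservativity and exactness of $\sigma^!$ up to shift is in fact enough), I would fall back to analyzing the full cobar/\v{C}ech complex degree by degree, using t-exactness of each $(\sigma^n)_{\on{dR},*}$ and connectedness of $G$ (equivalently of each $G^{\times n}$) to control the connectivity term by term.
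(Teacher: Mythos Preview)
Your argument is correct and is a detailed unpacking of the paper's one-line proof, which reads in full: ``Use that the fibers of $\sigma$ are connected (because $G$ is assumed to be connected).'' Both reduce to the fact that for connected $G$ the unit map $k\to\Gamma_{\on{dR}}(G,k_G)$ is an isomorphism on $H^0$, which you make explicit by pulling back along the smooth atlas $\sigma$ and applying base change.
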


\begin{proof}
Use that the fibers of $\sigma$ are connected (because $G$ is assumed to be connected).
\end{proof}

\begin{cor}  \label{c:right_resolution}
Let $\CM\in \on{D-mod}(X\times BG )^b$. Then for every $m\in\BZ$ there exists an exact triangle
\begin{equation}   \label{e:triangle}
\CM\to\CE\to\CM'\to\CM[1]
\end{equation}
with $\CE\in\CT_X (X\times BG ),\; \CM'\in \on{D-mod}(X\times BG )^{\ge m}\cap 
\on{D-mod}(X\times BG )^b\,$.
\end{cor}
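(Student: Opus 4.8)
\textbf{Proof of Corollary~\ref{c:right_resolution}.} The plan is to build $\CE$ by iterating the ``co-augmentation'' map $\CM \to \sigma_{\on{dR},*}(\sigma_{\on{dR}}^*(\CM))$ of \lemref{l:using_connectedness}, using a mapping-cone (Postnikov-type) construction. First I would observe that $\sigma_{\on{dR},*}(\sigma_{\on{dR}}^*(\CM))$ lies in $\CT_X(X\times BG)$: indeed $\sigma_{\on{dR}}^*(\CM)\in\Dmod(X)^b$ since $\sigma$ is a schematic quasi-compact smooth morphism (so $\sigma_{\on{dR}}^*\simeq\sigma^!$ up to a shift and preserves coherence and boundedness, see \secref{coh shift} and \eqref{e:added_by_Drinf1}), and then $\sigma_{\on{dR},*}$ of such an object is by definition one of the generators of $\CT_X(X\times BG)$.

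Next, starting from $\CM$ with $\CM\in\Dmod(X\times BG)^{\ge r}$ for some $r$ (such $r$ exists since $\CM$ is bounded), set $\CE_0:=\sigma_{\on{dR},*}(\sigma_{\on{dR}}^*(\CM))$ and $\CM_1:=\on{Cone}(\CM\to\CE_0)$; by \lemref{l:using_connectedness}, $\CM_1\in\Dmod(X\times BG)^{\ge r+1}$. Note $\CM_1$ is still in $\Dmod(X\times BG)^b$ because $\CM$ and $\CE_0$ are. Iterating, I obtain a sequence $\CM=\CM_0,\CM_1,\CM_2,\dots$ with $\CM_i\in\Dmod(X\times BG)^{\ge r+i}\cap\Dmod(X\times BG)^b$ and exact triangles $\CM_i\to\CE_i\to\CM_{i+1}$ with $\CE_i\in\CT_X(X\times BG)$. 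Splicing the first $N:=m-r$ of these triangles together in the standard way yields a map $\CM\to\CE$, where $\CE$ is a finite iterated extension (finite ``complex'') of the objects $\CE_0,\dots,\CE_{N-1}$, hence $\CE\in\CT_X(X\times BG)$ since $\CT_X(X\times BG)$ is closed under cones; and the cone $\CM':=\on{Cone}(\CM\to\CE)$ is identified with $\CM_N[\,?\,]$ up to a shift, more precisely $\CM'\simeq\CM_N[N-1]$ or an analogous bounded shift, so in particular $\CM'\in\Dmod(X\times BG)^{\ge m}\cap\Dmod(X\times BG)^b$ after re-indexing the bound (the precise shift bookkeeping is routine octahedron chasing). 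This gives the triangle \eqref{e:triangle}.

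The one point requiring a little care — the main (mild) obstacle — is the bookkeeping of the iterated cone construction: one must check that the composite $\CM\to\CE$ really has cone a shift of $\CM_N$, which is the standard fact that a length-$N$ Postnikov tower assembled from triangles $\CM_i\to\CE_i\to\CM_{i+1}$ produces a triangle $\CM\to\CE\to\CM_N[N-1]$ with $\CE$ built from the $\CE_i$; this is a formal consequence of the octahedral axiom applied $N-1$ times, and does not depend on anything special about $\Dmod(X\times BG)$. One also needs that $\CE$, being a finite extension of objects of $\CT_X(X\times BG)$, again lies in $\CT_X(X\times BG)$, which holds because $\CT_X(X\times BG)$ was defined as a DG subcategory, hence closed under cones and shifts. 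With these two observations the corollary follows, and it will be used (together with the dual ``left resolution'' coming from \lemref{l:approx}-type arguments) to prove the implication (ii)$\Rightarrow$(i) in \propref{p:key}.
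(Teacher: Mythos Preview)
Your approach is exactly the intended one: the paper states the corollary without proof as an immediate consequence of iterating \lemref{l:using_connectedness}, and your observation that $\sigma_{\on{dR},*}\sigma_{\on{dR}}^*(\CM)\in\CT_X(X\times BG)$ (because $\sigma$ is smooth, so $\sigma_{\on{dR}}^*(\CM)\in\Dmod(X)^b$) is correct.

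There is one slip in your shift bookkeeping, and it is not harmless. The triangles $\CM_i\to\CE_i\to\CM_{i+1}$ give connecting maps $\CM_{i+1}[-1]\to\CM_i$; composing them yields a map $\CM_N[-N]\to\CM_0$, whose cone $\CE$ sits in a triangle $\CM_0\to\CE\to\CM_N[1-N]$, not $\CM_N[N-1]$. With your sign one would only get $\CM_N[N-1]\in\Dmod(X\times BG)^{\ge (r+N)-(N-1)}=\Dmod(X\times BG)^{\ge r+1}$, which does not improve as $N$ grows and hence does not force $\CM'\in\Dmod(X\times BG)^{\ge m}$. With the correct shift, $\CM'=\CM_N[1-N]\in\Dmod(X\times BG)^{\ge r+2N-1}$, and choosing $N$ large enough finishes the proof. (If you prefer to avoid tracking shifts through a long composite, induct directly on $m$: given a triangle $\CM\to\CE\to\CM'$ with $\CM'\in\Dmod(X\times BG)^{\ge m}$, apply \lemref{l:using_connectedness} to $\CM'$ and splice once via the octahedral axiom; the resulting third term then lies in $\Dmod(X\times BG)^{\ge m+2}$.)
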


\sssec{}

Let $\cd (X)$ denote the cohomological dimension of $\on{D-mod}(X)$.
Since $X$ is a quasi-compact scheme  $\cd (X)<\infty$ 
(and in fact, $\cd (X)\le 2\cdot\dim X$). By definition, 
\begin{equation}   \label{e:cd}
\Ext^j(\CN,\CL)=0 \;\mbox{ if }\; \CN\in \on{D-mod}(X)^{\ge m},\, \CL\in \on{D-mod}(X)^{\le n},\, j>n-m+\cd (X) \, . 
\end{equation}

\begin{lem}   \label{l:Extvanishing}
Let $\CM$ be an object of $\Dmod(X\times BG)$ such that $\varphi_\dr(\CM)\in \on{D-mod}(X)^{\le n}$, and let $\CM'$ be a bounded object in
$\on{D-mod}(X\times BG )^{\ge m}$. Then 
$$\Ext^i(\CM',\CM)=0 \text{ for }i>n-m+\cd (X)+d,$$ 
where $d:=\dim G$.
\end{lem}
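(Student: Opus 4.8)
\textbf{Proof plan for Lemma~\ref{l:Extvanishing}.}
The strategy is to reduce the Ext-vanishing on $X\times BG$ to Ext-vanishing on $X$, where we can invoke the cohomological dimension estimate \eqref{e:cd}. The essential input is a version of adjunction for $\varphi$: by \corref{c:dR expl} and \lemref{l:partial projection formula}, for $\CM'\in \on{D-mod}_{\on{coh}}(X\times BG)$ and any $\CM$ we have
$$\CMaps(\CM',\CM)\simeq \Gamma_{\on{dR}}\bigl(X\times BG,\varphi^!(\BD^{\on{Verdier}}_{X}(\varphi_\dr(\BD^{\on{Verdier}}_{X\times BG}(\CM'))))\sotimes \CM\bigr)$$
or, more directly, the observation that maps out of $\CM'$ into $\CM$ can be rewritten using the projection formula so that only $\varphi_\dr(\CM)$ and a suitable coherent object on $X$ enter. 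Concretely, I would first reduce to the case where $\CM'$ is coherent (bounded with coherent cohomology): by the definition of $\on{D-mod}_{\on{coh}}$ and the fact that $\on{D-mod}(X\times BG)$ is generated by coherent objects, and since everything in sight is bounded, the general bounded case follows once the coherent case is established, using that the relevant functors have bounded amplitude.

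For coherent $\CM'$, the plan is to apply Verdier duality on $X\times BG$ (\lemref{l:Verdier_stacks}) to write
$$\Ext^i(\CM',\CM)\simeq H^i\bigl(\Gamma_{\on{dR}}(X\times BG,\BD^{\on{Verdier}}_{X\times BG}(\CM')\sotimes \CM)\bigr),$$
and then use \lemref{l:partial projection formula} with the morphism $\varphi:X\times BG\to X$ — note $\BD^{\on{Verdier}}_{X\times BG}(\CM')$ is coherent, so the lemma applies — to move the computation to $X$:
$$\Gamma_{\on{dR}}(X\times BG,\BD^{\on{Verdier}}_{X\times BG}(\CM')\sotimes \CM)\simeq \Gamma_{\on{dR}}(X,\varphi_\dr(\varphi^!(?)\sotimes\CM)).$$
This is where one must be careful: $\BD^{\on{Verdier}}_{X\times BG}(\CM')$ is not literally of the form $\varphi^!(-)$. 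Instead I would use \corref{c:dR expl} directly: for $\CM'\in \Dmod_{\on{coh}}(X\times BG)$,
$$\CMaps(\CM',\varphi_{?}\ldots)$$
— more precisely, I would first apply the identity $\CMaps_{\Dmod(X\times BG)}(\CM',\CM)$ and rewrite it via $\Gamma_{\on{dR}}(X\times BG,-)$, then via \lemref{l:de Rham trans} with $\varphi$, as $\Gamma_{\on{dR}}(X,\varphi_\dr(\BD^{\on{Verdier}}_{X\times BG}(\CM')\sotimes\CM))$, and finally bound the cohomological amplitude of $\varphi_\dr(\BD^{\on{Verdier}}_{X\times BG}(\CM')\sotimes\CM)$.

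The amplitude bound splits into three pieces. First, $\BD^{\on{Verdier}}_{X\times BG}(\CM')$ lies in $\on{D-mod}(X\times BG)^{\le -m+d'}$ for some $d'$ depending only on $X\times BG$ (the amplitude shift of Verdier duality, controlled using that $\BD^{\on{Verdier}}$ exchanges $\le 0$ with $\ge -d'$; here the number $d=\dim G$ enters through the relative dualizing shift for the smooth map $\varphi$, cf.\ \secref{sss:algebroid} and \corref{c:adj to rel abs D-mod cl}, whose amplitude is bounded by $\dim G$). Second, tensoring $\sotimes$ on $X\times BG$ has bounded amplitude, so $\BD^{\on{Verdier}}_{X\times BG}(\CM')\sotimes\CM$ is controlled. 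Third, and most delicately, one applies the hypothesis $\varphi_\dr(\CM)\in \on{D-mod}(X)^{\le n}$ together with the projection formula \eqref{e:proj formula Dmod sch}: for a \emph{smooth} $\varphi$, $\varphi_\dr$ commutes with the relevant colimits and, via \lemref{l:partial projection formula} applied on $X$, the tensor $\varphi_\dr(\varphi^!(\CL)\sotimes\CM)\simeq \CL\sotimes\varphi_\dr(\CM)$ transfers the coconnectivity of $\varphi_\dr(\CM)$ plus the connectivity of the coherent factor, with a loss of at most $\cd(X)$ from \eqref{e:cd}. The bookkeeping of the three shifts — $d$ from Verdier duality/relative dimension, the amplitude of $\sotimes$ absorbed into the constants, and $\cd(X)$ from \eqref{e:cd} — yields the claimed bound $i>n-m+\cd(X)+d$.

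\textbf{Main obstacle.} The hard part will be handling the fact that $\BD^{\on{Verdier}}_{X\times BG}(\CM')$ is not a pullback from $X$, so one cannot naively apply the projection formula for $\varphi$ to split off $\varphi_\dr(\CM)$. The clean route is: express $\Ext^i(\CM',\CM)$ via \corref{c:dR expl} as $\Gamma_{\on{dR}}(X\times BG,\varphi^!(\BD^{\on{Verdier}}_X(\CN))\sotimes\CM)$ where $\CN:=\varphi_\dr(\BD^{\on{Verdier}}_{X\times BG}(\CM'))$ — but this requires knowing $\CN$ is coherent on $X$, which is exactly what boundedness of $\varphi_\dr$ on coherent objects (and $\varphi$ smooth, hence $\varphi^!$ preserving coherence, \eqref{e:added_by_Drinf1} and its stack analogue) gives, and one needs $\CN$'s amplitude, which is governed by $m$ and $d$. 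Once $\CN$ is identified as a coherent object on $X$ with controlled amplitude, the computation becomes $\Gamma_{\on{dR}}(X,\varphi_\dr(\varphi^!(\BD^{\on{Verdier}}_X(\CN))\sotimes\CM))\simeq\Gamma_{\on{dR}}(X,\BD^{\on{Verdier}}_X(\CN)\sotimes\varphi_\dr(\CM))$ by the projection formula \eqref{e:proj formula Dmod sch} for the smooth quasi-compact $\varphi$, and \eqref{e:cd} with the input $\varphi_\dr(\CM)\in\on{D-mod}(X)^{\le n}$ finishes it. I expect the amplitude bookkeeping — tracking precisely how $d=\dim G$ enters through the relative dualizing shift and how $\cd(X)$ enters through \eqref{e:cd} — to be the only genuinely fiddly step; the structural reductions are routine given the machinery already developed.
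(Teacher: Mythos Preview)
Your approach has a real gap. The step where you ``express $\Ext^i(\CM',\CM)$ via \corref{c:dR expl} as $\Gamma_{\on{dR}}(X\times BG,\varphi^!(\BD^{\on{Verdier}}_X(\CN))\sotimes\CM)$ with $\CN:=\varphi_\dr(\BD^{\on{Verdier}}_{X\times BG}(\CM'))$'' does not work: \corref{c:dR expl} concerns maps out of a coherent object \emph{on the base} into $\pi_\dr(-)$, whereas here $\CM'$ lives on $X\times BG$, not on $X$. There is no reason for $\BD^{\on{Verdier}}_{X\times BG}(\CM')$ to equal $\varphi^!(\BD^{\on{Verdier}}_X(\CN))$; the composite $\varphi^!\varphi_\dr$ is far from the identity on $\Dmod(X\times BG)$. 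Your whole ``amplitude bookkeeping'' program rests on transferring the computation to $X$ via projection formula, and without $\CM'$ (or its Verdier dual) being a pullback from $X$, you cannot do that.

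The missing idea is exactly the one you flagged as the obstacle: you need $\CM'$ to be a pullback from $X$, and in fact it essentially is, once you reduce to a single cohomological degree. The paper's proof is a three-line argument: assume $\CM'$ lives in a single degree $\ge m$; then, because $G$ is \emph{connected}, every object of $\Dmod(X\times BG)^\heartsuit$ is a pullback from $\Dmod(X)^\heartsuit$ up to a shift by $d=\dim G$, so $\CM'=\varphi^*_{\on{dR}}(\CN)[d]$ with $\CN\in \Dmod(X)^{\ge m}$. Now the adjunction $(\varphi^*_{\on{dR}},\varphi_\dr)$ gives
\[
\Ext^i(\CM',\CM)\simeq \Ext^{i-d}(\CN,\varphi_\dr(\CM)),
\]
and since $\varphi_\dr(\CM)\in\Dmod(X)^{\le n}$ by hypothesis, this vanishes for $i-d>n-m+\cd(X)$ by \eqref{e:cd}. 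The connectedness of $G$ (cf.\ \lemref{l:using_connectedness}) is what makes the heart of $\Dmod(X\times BG)$ so simple; your route via Verdier duality and \lemref{l:partial projection formula} overlooks this and tries to force through a projection-formula argument that cannot be made to work for a general coherent $\CM'$.
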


\begin{proof}
We can assume that $\CM'$ lives in a single degree $\ge m$. Then $\CM'=\varphi^*_{\on{dR}}(\CN)[d]$
for some $\CN\in \on{D-mod}(X)^{\ge m}$. Applying \eqref{e:cd} to $\CL=\varphi_\dr(\CM)$ we see that
the group
\[
\Ext^i(\CM',\CM)=\Ext^{i-d}(\varphi^*_{\on{dR}}(\CN),\CM)\simeq\Ext^{i-d}(\CN,\varphi_{\on{dR},*}(\CM))
\]
is zero if $i-d>n-m+\cd (X)$.
\end{proof}

\sssec{}

We are now ready to prove the implication (ii)$\Rightarrow$(i) in Proposition~\ref{p:key}.

\medskip

Suppose that $\varphi_{\on{dR},*}(\CM)\in \on{D-mod}(X)^{\le n}$. Apply Corollary~\ref{c:right_resolution}
for $m=n+\cd (X)+d$. In the corresponding exact triangle \eqref{e:triangle} the morphism 
$\CM'\to\CM[1]$ is homotopic to 0 by Lemma~\ref{l:Extvanishing}. So 
$$\CM\oplus\CM'\simeq\CE\in\CT_X (X\times BG ).$$ 
Now the next lemma implies that
$\CM\in\CT_X (X\times BG )$. 

\begin{lem}  \label{l:2telescope}
The subcategory $\CT_X (X\times BG )\subset \on{D-mod}(X\times BG )$ is closed under direct summands.
\end{lem}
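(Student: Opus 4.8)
The statement to prove is \lemref{l:2telescope}: the subcategory $\CT_X(X\times BG)\subset \Dmod(X\times BG)$ is closed under direct summands. This is the exact analogue of \lemref{l:telescope} (which asserted the same for $\CT(\CY)$), and the plan is simply to repeat the mapping-telescope argument in this more specific setting.

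The plan is to first establish the key ``absorption'' property: if $\CM\in \CT_X(X\times BG)$, then the countably infinite direct sum $\CM^{\oplus\infty}:=\CM\oplus\CM\oplus\cdots$ again lies in $\CT_X(X\times BG)$. Since $\CT_X(X\times BG)$ is by definition generated (as a non-cocomplete DG subcategory, i.e. closed under shifts and cones) by objects $\sigma_{\on{dR},*}(\CN)$ with $\CN\in\Dmod(X)^b$, it suffices to check this when $\CM=\sigma_{\on{dR},*}(\CN)$. But $\sigma_{\on{dR},*}$ is continuous (it is the $(\on{dR},*)$-pushforward along the schematic quasi-compact morphism $\sigma:X\to X\times BG$, cf. \secref{sss:representable}), so $\sigma_{\on{dR},*}(\CN)^{\oplus\infty}\simeq \sigma_{\on{dR},*}(\CN^{\oplus\infty})$, and $\CN^{\oplus\infty}\in\Dmod(X)^b$. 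A caveat: the objects in this direct sum are bounded but the $\CN^{\oplus\infty}$ is still bounded (same finite amplitude), so it genuinely lies in $\Dmod(X)^b$ — this is the point that makes the argument work in the bounded setting. One should note that a priori only finite direct sums of generators are manifestly in $\CT_X(X\times BG)$; the content here is precisely that this particular countable sum is too, via the continuity of $\sigma_{\on{dR},*}$.

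Next I would run the standard projector-splitting trick exactly as in the proof of \lemref{l:telescope}. Suppose $\CM\in\CT_X(X\times BG)$ and $\CM'\in\Dmod(X\times BG)$ is a direct summand, with $p:\CM\to\CM$ the corresponding idempotent. Then the mapping telescope formula
$$\CM'\simeq \colim\left(\CM\overset{p}\longrightarrow \CM\overset{p}\longrightarrow \CM\to\cdots\right)
\simeq \Cone\left(\CM^{\oplus\infty}\overset{\on{id}-\on{shift}\circ p}\longrightarrow \CM^{\oplus\infty}\right)$$
exhibits $\CM'$ as a cone of a morphism between two objects of $\CT_X(X\times BG)$ (by the absorption property just established), hence $\CM'\in\CT_X(X\times BG)$ since $\CT_X(X\times BG)$ is closed under cones by construction.

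I do not expect a real obstacle here: this lemma is a verbatim transcription of \lemref{l:telescope} with $\CY$ replaced by $X\times BG$ and the generating objects $f_{\on{dR},*}(\CN)$ replaced by $\sigma_{\on{dR},*}(\CN)$, and the only input needed — continuity of the relevant pushforward — is immediate because $\sigma$ is schematic and quasi-compact. The one thing to be slightly careful about is bookkeeping the boundedness: one must confirm that $\CN^{\oplus\infty}$ stays in $\Dmod(X)^b$ (it does, since an infinite direct sum of objects all concentrated in a fixed finite range of degrees is again in that range), so that $\sigma_{\on{dR},*}(\CN^{\oplus\infty})$ is a legitimate generator of $\CT_X(X\times BG)$. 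Given how short and mechanical this is, in the actual text it would be reasonable to simply say ``the proof repeats that of \lemref{l:telescope} verbatim, using that $\sigma_{\on{dR},*}$ is continuous,'' but the plan above spells out the two steps explicitly.
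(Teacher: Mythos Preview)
Your proposal is correct and follows exactly the approach the paper takes: the paper's proof consists of the single sentence ``The same argument as in the proof of \lemref{l:telescope},'' and what you have written is precisely that argument spelled out in this setting. Your care about boundedness of $\CN^{\oplus\infty}$ and continuity of $\sigma_{\on{dR},*}$ is the right bookkeeping, and is implicit in the paper's one-line reference.
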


\begin{proof}
The same argument as in the proof of Lemma~\ref{l:telescope}.
\end{proof}

\sssec{Proof of the implication (iii)$\Rightarrow$(i)}  \label{sss:(iii)implies(i)}

\begin{lem}    \label{l:compact_supports}
For any connected algebraic group the functors $$\sigma^!:\on{D-mod}(X\times BG)\to \on{D-mod}(X) 
\text{ and }\varphi^!:\on{D-mod}(X)\to \on{D-mod}(X\times BG)$$ have 
left adjoints $\sigma_!:\on{D-mod}(X)\to \on{D-mod}(X\times BG)$ and 
$\varphi_!:\on{D-mod}(X\times BG )\to \on{D-mod}(X)$. Moreover, 
\[
\varphi_!\simeq\varphi_{\blacktriangle}[2(\dim(G)-\delta)], \quad \sigma_!\simeq \sigma_{\on{dR},*}[\delta-2\dim(G)]\, ,
\]
where $\delta$ is the degree of the highest cohomology group
of $\Gamma_{\on{dR}}(G,k_G)$.
\end{lem}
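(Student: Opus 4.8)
The statement is Lemma~\ref{l:compact_supports}, which asserts the existence of left adjoints $\sigma_!$ and $\varphi_!$ to the $!$-pullback functors associated with $\sigma:X\to X\times BG$ and $\varphi:X\times BG\to X$, together with the explicit formulas relating them to the ($\dr$)-pushforward and the renormalized direct image. I would begin by reducing to the case $X=\on{pt}$, i.e.\ to the maps $\sigma_0:\on{pt}\to BG$ and $p_{BG}:BG\to\on{pt}$, and then obtain the relative statement by base change: the functors $\sigma^!$, $\varphi^!$, $\sigma_{\on{dR},*}$, $\varphi_{\blacktriangle}$ are all compatible with the (schematic, quasi-compact, smooth) projection $X\times BG\to BG$ and with $!$-pullback from $X$, so the general case follows from Corollary~\ref{c:D on prod} and the base-change properties of these functors (for $\varphi_{\blacktriangle}$ this is Proposition~\ref{p:base change for ren Dmod}, for $\sigma_{\on{dR},*}$ and $\varphi_\dr$ it is the base-change of schematic quasi-compact morphisms and Proposition~\ref{p:de Rham trans}).

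\textbf{The adjoint $\varphi_!$.} Here the key input is that $\varphi$ is smooth and schematic quasi-compact, so $\varphi^*_{\on{dR}}$ is the left adjoint of $\varphi_{\on{dR},*}=\varphi_\dr$, and $\varphi^*_{\on{dR}}\simeq\varphi^![-2\dim G]$ by \eqref{e:*! shift}. Dually, from Proposition~\ref{p:duality on morphisms schematic} (which gives $(\varphi_\dr)^\vee\simeq\varphi^!$ under Verdier self-duality), one extracts that $\varphi^!$ admits a left adjoint, namely the Verdier dual construction applied to $\varphi^*_{\on{dR}}$; I expect this to unwind to $\varphi_!\simeq\varphi_{\blacktriangle}\circ(\text{shift})$. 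To pin down the shift I would compute $\varphi_!$ on a generator, e.g.\ on $\sigma_{0,\dr}(k)$, using the isomorphisms $\varphi_\dr\simeq\varphi_{\blacktriangle}$ on compacts (Proposition~\ref{p:ren dir image of safe}) together with the relation $\sigma_!\simeq\sigma_{\on{dR},*}[\delta-2\dim G]$ proved in parallel, and the identity $\varphi\circ\sigma=\on{id}$. The number $2(\dim G-\delta)$ is forced by comparing $\Gamma_{\rd}(BG,-)$ and $\Gamma_{\on{dR}}(BG,-)$ as in Example~\ref{ex:ren for BG}: there $\Gamma_{\rd}(BG,-)$ was computed to be $M\mapsto k\otimes_B M[-2\dim G+\delta]$ while $\varphi_!$ on $\on{pt}$ should be the left adjoint of $\varphi^!$, i.e.\ essentially Verdier-dual to $\Gamma_{\on{dR}}$.

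\textbf{The adjoint $\sigma_!$.} Since $\sigma$ is schematic, finite-type but not smooth, one cannot directly use a left adjoint to $\sigma_{\on{dR},*}$. Instead I would argue that $\sigma_{\on{dR},*}$ is an equivalence onto its image (Kashiwara-type), or more robustly: $\sigma^!$ is continuous, and by Proposition~\ref{p:duality on morphisms schematic} its Verdier dual is $\sigma_{\on{dR},*}$, hence the left adjoint of $\sigma^!$ is the Verdier dual of the right adjoint of $\sigma_{\on{dR},*}$. The right adjoint of $\sigma_{\on{dR},*}$ is $\sigma^!$ itself when $\sigma$ is proper --- but $\sigma:\on{pt}\to BG$ is proper iff $G$ is proper, which need not hold. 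The cleaner route is: the partially defined left adjoint $\sigma_{\on{dR}}^*$ of $\sigma_{\on{dR},*}$ exists on all of $\Dmod(\on{pt})$ up to a shift because $\Dmod(BG)\simeq B\mod$ and $\sigma_{\on{dR},*}$ corresponds (up to shift $[2\dim G-\delta]$) to $\sigma_!$, the functor $k\mapsto B\in B\mod$ (the free module), whose left adjoint is $\oblv_B$; and $\sigma^!$ corresponds to $\oblv_B$ up to a shift. Tracking the shifts through the isomorphism $\sigma_!(k)\simeq\sigma_{\dr}(k)[\delta-2\dim G]$ (used already in Example~\ref{ex:ren for BG}) yields $\sigma_!\simeq\sigma_{\on{dR},*}[\delta-2\dim G]$.

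\textbf{Main obstacle.} The computational heart is determining the shift $\delta$ and checking that both displayed formulas hold with the \emph{same} $\delta$ (the top nonzero degree of $\Gamma_{\on{dR}}(G,k_G)$), and that the two formulas are mutually consistent via $\varphi\circ\sigma=\on{id}$ and the functoriality $\varphi_!\circ\sigma_!\simeq(\on{id})_!$. I expect the bookkeeping of cohomological shifts --- reconciling $[-2\dim G]$ from \eqref{e:*! shift}, the shift $[2\dim G-\delta]$ comparing $\sigma_!$ and $\sigma_\dr$, and the shift in $\Gamma_{\rd}$ vs $\Gamma_{\on{dR}}$ --- to be the one genuinely delicate point; everything else is formal manipulation with adjunctions, the self-dualities $\bD^{\on{Verdier}}$, and the explicit description of $\Dmod(BG)$ from Section~\ref{ss:BG}. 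The reduction from $X\times BG$ to $BG$ via base change is routine given Corollary~\ref{c:D on prod} and the already-established base-change statements, so I would dispatch it quickly and concentrate the write-up on the $X=\on{pt}$ case and the shift computation.
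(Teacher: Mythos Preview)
Your overall strategy --- reduce to $X=\on{pt}$ and compute on generators --- is correct and is exactly what the paper does. But two factual errors derail your intermediate routes.

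First, $\varphi:X\times BG\to X$ is \emph{not} schematic (its fibers are $BG$), so Proposition~\ref{p:duality on morphisms schematic} does not apply; and for non-unipotent $G$ the functor $\varphi_\dr$ is not even continuous (\thmref{t:rel_Dmods}), so it cannot be dualized. Even granting both, dualizing an adjunction $F\dashv G$ gives $G^\vee\dashv F^\vee$, so your manoeuvre would produce a \emph{right} adjoint to $\varphi^!$, not the desired left adjoint $\varphi_!$. Second, $\sigma:X\to X\times BG$ \emph{is} smooth: it is the base change of the universal $G$-torsor $\on{pt}\to BG$, hence smooth of relative dimension $\dim G$. So there is no obstruction of the kind you describe.

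The paper's proof is precisely your fallback. It reduces to $X=\on{pt}$ via the tensor decomposition $\Dmod(X\times BG)\simeq\Dmod(X)\otimes\Dmod(BG)$ of \corref{c:D on prod} (equivalent to your base-change reduction), under which each functor in the statement factors as $\on{Id}_{\Dmod(X)}\otimes(\text{its }BG\text{-analogue})$. For $\sigma_!$ one simply evaluates both sides on $k\in\Vect$; the identification $\sigma_!(k)\simeq\sigma_\dr(k)[\delta-2\dim G]$ was already recorded in Example~\ref{ex:ren for BG}. For $\varphi_!$ one checks the claimed adjunction on the compact generator $\sigma_!(k)$: since $\varphi\circ\sigma=\on{id}$ one has $\varphi_!(\sigma_!(k))\simeq k$, and Example~\ref{ex:ren for BG} gives $\Gamma_{\rd}(BG,\sigma_!(k))\simeq k[-2\dim G+\delta]$, which fixes the shift. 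Your ``compute on a generator'' plan is the whole argument; the duality detour is neither needed nor available.
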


\begin{proof}

By \corref{c:D on prod}, 
$$\Dmod(X\times BG)\simeq \Dmod(X)\otimes \Dmod(BG),$$
and all functors involved in the lemma are continuous. Hence, they each decompose as
$$\on{Id}_{\Dmod(X)}\otimes \text{ Corresponding functor for $BG$}.$$ 
So, it is sufficient to consider the case when $X=\on{pt}$. The assertion in the latter case
essentially follows from Example \ref{ex:ren for BG}:

\medskip

The fact that $\sigma_!\simeq \sigma_{\on{dR},*}[\delta-2\dim(G)]$ is evident: it suffices
to compute both sides on $k\in \Vect=\Dmod(\on{pt})$. To show that
$$\Gamma_{\on{dR},!}(BG,-):=\varphi_!$$ exists and satisfies
$$\Gamma_{\on{dR},!}(BG,-)\simeq \Gamma_\rd(BG,-)[2(\dim(G)-\delta)],$$
it suffices to show that $\Gamma_{\on{dR},!}(BG,-)$ is defined on the compact generator $\sigma_!(k)$
of $\Dmod(BG)$, and 
$$\Gamma_{\on{dR},!}(BG,\sigma_!(k))\simeq \Gamma_\rd(BG,\sigma_!(k))[2(\dim(G)-\delta)],$$
as modules over $\CMaps_{\Dmod}(\sigma_!(k),\sigma_!(k))$.

\medskip

However, $\Gamma_{\on{dR},!}(BG,\sigma_!(k))\simeq k$, and required isomorphism
was established in Example \ref{ex:ren for BG}:
$$\Gamma_\rd(BG,\sigma_!(k))\simeq \Gamma_\rd(BG,\sigma_\dr(k))[-2\dim(G)+\delta]\simeq
k[-2\dim(G)+\delta].$$

\end{proof}

Lemma~\ref{l:compact_supports} allows to prove the implication (iii)$\Rightarrow$(i) from 
Proposition~\ref{p:key} by mimicking the arguments from Sect.~\ref{sss:(ii)implies(i)}. 
For example, the role of Lemma~\ref{l:using_connectedness} is played by the following

\begin{lem}  \label{l:2using_connectedness}
If $\CM\in \on{D-mod}(X\times BG )^{\le r}$ then 
$$\on{Cone} \, \left(\sigma_!(\sigma^! (\CM))\to\CM\right)[-1]\in \on{D-mod}(X\times BG)^{\le r-1}\,.$$ \qed
\end{lem}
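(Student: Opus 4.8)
The final statement is \lemref{l:2using_connectedness}: for a connected algebraic group $G$, a quasi-compact scheme $X$, and $\CM \in \on{D-mod}(X \times BG)^{\le r}$, the cone of the counit $\sigma_!(\sigma^!(\CM)) \to \CM$, shifted by $[-1]$, lies in $\on{D-mod}(X \times BG)^{\le r-1}$.

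\medskip

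The plan is to dualize the argument of \lemref{l:using_connectedness} via Verdier duality. First I would record that \lemref{l:compact_supports} identifies $\sigma_!$ with $\sigma_{\on{dR},*}$ up to the cohomological shift $[\delta - 2\dim(G)]$, and $\sigma^!$ is the standard $!$-pullback; dually, $\sigma_{\on{dR},*}$ is right t-exact up to a shift while $\sigma^!$ is exact up to a shift, so each functor in sight differs from a standard D-module operation by an explicit shift. The counit $\sigma_!\sigma^!(\CM) \to \CM$ is, up to the shift, the map obtained by applying Verdier duality on $X\times BG$ to the unit $\BD(\CM) \to \sigma_{\on{dR},*}\sigma^*_{\on{dR}}(\BD(\CM))$ appearing in \lemref{l:using_connectedness} (using that $\BD^{\on{Verdier}}$ intertwines $\sigma_{\on{dR},*}$ with $\sigma_!$ and $\sigma^*_{\on{dR}}$ with $\sigma^!$ up to shifts, as in \secref{coh shift} and \secref{sss:Dmod and QCoh on stacks}). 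Since Verdier duality is an anti-equivalence exchanging $\on{D-mod}^{\le r}$ with $\on{D-mod}^{\ge -r}$ on coherent objects, the conclusion of \lemref{l:using_connectedness} that the cone of the unit is in $\on{D-mod}^{\ge r+1}$ translates into the assertion that the cone of the counit is in $\on{D-mod}^{\le r-1}$.

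\medskip

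Alternatively — and perhaps more cleanly, since $\CM$ need not be coherent here — I would argue directly, exactly mirroring the proof of \lemref{l:using_connectedness}. By \corref{c:D on prod}, $\Dmod(X \times BG) \simeq \Dmod(X) \otimes \Dmod(BG)$ and all functors ($\sigma_!$, $\sigma^!$, and the truncation functors, which are compatible with the tensor decomposition since the t-structure on $\Dmod(X \times BG)$ is the product t-structure up to bounded shift) decompose as $\on{Id}_{\Dmod(X)} \otimes (-)$; so one reduces to the case $X = \on{pt}$. There one must show that $\on{Cone}(\sigma_!\sigma^!(M) \to M)[-1] \in \Dmod(BG)^{\le r-1}$ for $M \in \Dmod(BG)^{\le r}$. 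Using that $\sigma_! \simeq \sigma_{\on{dR},*}[\delta - 2\dim(G)]$, and that $\sigma^! \simeq \sigma^*_{\on{dR}}[-2\dim(G)]$ for $\sigma: \on{pt} \to BG$ (the relative dimension being $-\dim(G)$), the composite $\sigma_!\sigma^!$ equals $\sigma_{\on{dR},*}\sigma^*_{\on{dR}}[\delta - 2\dim(G)]$ up to reconciling the shifts; the cofiber of the counit of $(\sigma_!, \sigma^!)$ is then governed by the same computation of $\Gamma_{\on{dR}}(G, k_G)$ — the homology of $G$ — that controls the fiber of the unit in \lemref{l:using_connectedness}. The connectedness of $G$ enters precisely here: $H^0(\Gamma_{\on{dR}}(G, k_G)) = k$ (one-dimensional), which forces the cofiber, after the shift, to be concentrated in degrees $\le -1$ relative to $M$, i.e. in $\Dmod(BG)^{\le r-1}$.

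\medskip

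The main obstacle will be bookkeeping the cohomological shifts correctly and making sure the duality/decomposition arguments apply to non-coherent (merely bounded-above) $\CM$, which is why I favor the $\corref{c:D on prod}$ reduction over a naive Verdier-duality transcription. Once reduced to $X = \on{pt}$, the statement is genuinely the ``dual'' of \lemref{l:using_connectedness} and follows from the explicit description of the (co)homology of a connected group $G$ together with \lemref{l:compact_supports}; I would simply write ``the same argument as in \lemref{l:using_connectedness}, applied to the counit of the adjunction $(\sigma_!, \sigma^!)$ in place of the unit of $(\sigma^*_{\on{dR}}, \sigma_{\on{dR},*})$, using \lemref{l:compact_supports}'' and append the \qed that already terminates the statement in the excerpt.
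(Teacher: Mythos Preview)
Your proposal is correct and matches the paper's approach: the paper gives no proof beyond the \qed, the surrounding text having already indicated that \lemref{l:2using_connectedness} plays the role of \lemref{l:using_connectedness} once \lemref{l:compact_supports} is in hand, which is precisely your final summary. Your intermediate detours through Verdier duality and the tensor decomposition are more than the paper does (and contain minor shift slips, e.g.\ the relative dimension of $\sigma$ is $+\dim(G)$, not $-\dim(G)$), but your bottom line --- connectedness of the fibers of $\sigma$ applied to the counit of $(\sigma_!,\sigma^!)$ --- is exactly the intended argument.
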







\ssec{Proper morphisms of stacks}

\sssec{}

Recall the definition of a \emph{proper} (but not necessarily schematic) morphism between algebraic stacks;
see \cite[Definition 7.11]{LM}. 

\medskip

As a simple application of the theory developed above, in this subsection we will prove the following:

\begin{prop} \label{p:proper map}
Let $\pi:\CY_1\to \CY_2$ be a proper map between algebraic stacks. Then the functor
$\pi_\dr:\Dmod(\CY_1)\to \Dmod(\CY_2)$ sends $\Dmod_{\on{coh}}(\CY_1)\to \Dmod_{\on{coh}}(\CY_2)$. 
\end{prop}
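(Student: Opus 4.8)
The statement is local on $\CY_2$, so the plan is to reduce to the case $\CY_2 = S$ a quasi-compact (in fact affine) DG scheme, and then to a concrete situation involving a smooth atlas of $\CY_1$. First I would use the definition of coherence from \secref{ss:coherence&compactness}: an object $\CN \in \Dmod(\CY_2)$ lies in $\Dmod_{\on{coh}}(\CY_2)$ iff $g^!(\CN) \in \Dmod_{\on{coh}}(S')$ for every smooth map $g : S' \to \CY_2$ with $S'$ a scheme; and it suffices to check this for a single smooth atlas. Combined with base change for the $(\on{dR},*)$-pushforward with respect to a smooth (hence in particular quasi-compact, and flat) map — which holds by \propref{p:coconnective part Dmod}(a) once we know the relevant objects are bounded below, or more robustly because properness and hence boundedness is preserved under base change and we may invoke \secref{sss:more trans}(ii) — this reduces the whole statement to the case where $\CY_2 = S$ is an affine scheme. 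In that case $\CY_1$ is a proper algebraic stack over $S$; in particular $\CY_1$ is quasi-compact, and since properness implies that the automorphism groups of geometric points are proper, hence (being also affine for an algebraic stack in our sense, as the diagonal is schematic) finite, the stack $\CY_1$ is a Deligne-Mumford, hence \emph{safe}, stack over $S$. Actually one does not even need the DM observation: properness of $\pi$ makes $\pi$ a safe morphism in the sense of \defnref{d:safe} once one checks the geometric fibers are safe, which follows because a proper algebraic group with affine (schematic-diagonal) structure is finite.

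\textbf{Key steps.} (1) Reduce to $\CY_2 = S$ affine by smooth descent on $\CY_2$ together with base change for $\pi_\dr$ along smooth maps; here one uses that properness is stable under base change, so the reduced morphism $\CY_1\times_{\CY_2} S' \to S'$ is again proper. (2) Choose a smooth atlas $f : Z \to \CY_1$ with $Z$ a quasi-compact scheme, form the Čech nerve $Z^\bullet/\CY_1$, and use the presentation \eqref{e:Dmod via Cech} together with \propref{p:de Rham trans} (applicable since each $f^i : Z^i/\CY_1 \to \CY_1$ is schematic and quasi-compact) to write, for $\CM \in \Dmod_{\on{coh}}(\CY_1)$, the object $\pi_\dr(\CM)$ as a totalization of $(\pi\circ f^i)_\dr$ applied to $(f^i)^!$-pullbacks of $\CM$. (3) Each composite $\pi \circ f^i : Z^i/\CY_1 \to S$ is a proper \emph{schematic} quasi-compact morphism of schemes (properness of $\pi$ composed with the proper — because $\CY_1$ has finite inertia — map $f^i$), so by the classical theory recalled in \secref{ss:de Rham pushforward schemes} and the coherence statement for schemes, $(\pi\circ f^i)_\dr$ preserves $\Dmod_{\on{coh}}$. (4) Conclude that $\pi_\dr(\CM)$ has coherent, uniformly bounded, cohomology: the $(f^i)^!(\CM)$ are coherent and lie in a bounded range of degrees independent of $i$ (since $f$ is smooth of bounded relative dimension and the cohomological amplitude of $(\pi\circ f^i)_\dr$ is bounded uniformly, using \propref{p:coconnective part Dmod}(b) on the $\geq$ side and properness — finite Tor-dimension pushforward — on the $\leq$ side), so the totalization is a bounded complex with coherent cohomology on $S$, i.e. lies in $\Dmod_{\on{coh}}(S)$.

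\textbf{Main obstacle.} The delicate point is the uniform cohomological boundedness needed to pass from a totalization of coherent objects to a coherent object. The $\geq$-direction bound on $(\pi\circ f^i)_\dr$ is exactly \propref{p:coconnective part Dmod}(b), with a constant depending only on the relative dimension of $Z^\bullet/S$, which is bounded since $\CY_1$ is quasi-compact and of finite type. The $\leq$-direction requires that $\pi_\dr$ be \emph{right} t-exact up to a finite shift — this is where properness (as opposed to mere quasi-compactness) is essential: it is the stacky analogue of the fact that proper pushforward has finite cohomological amplitude because it is "like" $f_!$ for a morphism of finite Tor-dimension. I would establish this amplitude bound fiberwise using the criterion for safety: a proper algebraic stack over a point with finite inertia has $k_\CY$ coherent \emph{and compact} (it is safe by \corref{c:safe_stacks}), and then Verdier duality \corref{c:Verdier-Serre_on_stacks} interchanges the left- and right-amplitude statements. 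Once both bounds are in hand, steps (1)–(4) assemble into the proof; the totalization is then a finite limit up to the relevant degree by the same bootstrap as in \corref{c:coconnective part}(a), giving a genuinely bounded coherent complex.
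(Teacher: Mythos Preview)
Your step (3) contains a genuine error: the maps $f^i : Z^i/\CY_1 \to \CY_1$ coming from a smooth atlas are \emph{smooth}, not proper, and finite inertia of $\CY_1$ does not change this. (Finite inertia controls the diagonal $\CY_1 \to \CY_1 \times \CY_1$, not maps from schemes into $\CY_1$.) Already for $\CY_1$ a proper scheme and $Z$ an affine Zariski cover, the composites $Z^i/\CY_1 \to S$ are open subschemes of a proper scheme mapping to $S$, which are not proper. Consequently $(\pi\circ f^i)_\dr$ does \emph{not} preserve $\Dmod_{\on{coh}}$ (e.g.\ push $\ind_{\Dmod(\BA^1)}(\CO_{\BA^1})$ to a point), and the totalization argument collapses. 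The amplitude claim in step (4) has the same problem on the $\leq$-side: without properness of the individual $\pi\circ f^i$ there is no uniform right-t-exactness to appeal to, and your proposed Verdier-duality trick only gives information about $\pi_\dr$ itself, not about the terms of the \v{C}ech complex.

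The paper's proof proceeds along an entirely different axis. After the same reduction to $\CY_2=S$ affine and the observation that $\pi$ is safe, it uses \corref{c:safe_stacks} to identify $\Dmod_{\on{coh}}(\CY_1)=\Dmod(\CY_1)^c$, so the task becomes showing $\pi_\dr$ preserves compacts. Since compacts are Karoubi-generated by $\ind_{\Dmod(\CY_1)}(\Coh(\CY_1))$ and $\pi_\dr\circ\ind_{\Dmod(\CY_1)}\simeq\ind_{\Dmod(\CY_2)}\circ\pi^{\IndCoh}_*$ (\propref{p:pushforward induced}), one is reduced to showing $\pi^{\IndCoh}_*$, equivalently $\pi_*$ on $\QCoh^+$, preserves $\Coh$. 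This last step is not proved internally: it is the coherence theorem for proper pushforward on stacks, cited from Faltings \cite{F} and \cite{LM}, \cite{Ol}. In other words, the hard analytic/coherent input is isolated and imported, rather than attempted via a D-module \v{C}ech resolution, which (as your attempt shows) cannot work directly because the atlas destroys properness.
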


The rest of this subsection is devoted to the proof of the proposition.

\sssec{Step 1}

First, we recall that the definition of properness includes separatedness. This implies that the groups of
automorhisms of points of the geometric fibers of $\pi$ are finite. In particular, $\pi$ is safe. 

\medskip

By \thmref{t:rel_Dmods},
$\pi_\dr$ satisfies base change. This allows to assume that $\CY_2$ is an affine DG scheme. In this case
$\CY_1$ is a safe QCA stack, and by \corref{c:safe_stacks}
$$\Dmod_{\on{coh}}(\CY_1)=\Dmod(\CY_1)^c.$$

Hence, it is enough to show that $\pi_\dr$ sends $\Dmod(\CY_1)^c$ to
$\Dmod(\CY_2)^c$.

\medskip

The category $\Dmod(\CY_1)^c$ is Karoubi-generated by the essential image of $\Coh(\CY_1)$ under the functor
$\ind_{\Dmod(\CY_1)}$. So, it is sufficient to show that the composition $\pi_\dr\circ \ind_{\Dmod(\CY_1)}$ sends
$\Coh(\CY_1)$ to $\Dmod(\CY_2)^c$. 

\medskip

However, by \propref{p:pushforward induced},
$$\pi_\dr\circ \ind_{\Dmod(\CY_1)}\simeq \ind_{\Dmod(\CY_2)}\circ \pi^{\IndCoh}_*.$$
Hence, it is enough to show that the functor
$\pi^{\IndCoh}_*$ sends $\Coh(\CY_1)$ to $\Coh(\CY_2)$. 

\sssec{Step 2}

Consider the functor
$$\pi:\QCoh(\CY_1)\to \QCoh(\CY_2).$$ 

We have a commutative diagram of functors
$$
\CD
\QCoh(\CY_1)^+   @<{\Psi_{\CY_1}}<< \IndCoh(\CY_1)^+  \\
@V{\pi_*}VV   @VV{\pi^{\IndCoh}_*}V   \\
 \QCoh(\CY_2)^+   @<{\Psi_{\CY_2}}<< \IndCoh(\CY_2)^+,
\endCD
$$
where the horizontal arrows are equivalences. 

\medskip

Hence, it suffices to show that $\pi_*$ sends $\Coh(\CY_1)\subset \QCoh(\CY_1)^+$ to
$\Coh(\CY_2)\subset \QCoh(\CY_2)^+$.

\medskip 

By \corref{c:relative}, $\pi_*$ sends $\QCoh(\CY_1)^b$ to $\QCoh(\CY_2)^b$. Hence, it remains to show
that $\pi_*$ sends objects from $\QCoh(\CY_1)^\heartsuit\cap \Coh(\CY_1)$ to objects in $\QCoh(\CY_2)$
with coherent cohomologies. 

\medskip

However, the latter is the content of \cite[Theorem 1]{F} (see also
\cite[Theorem 15.6(iv)]{LM}, combined with \cite[Theorem 1.2]{Ol}).

\section{More general algebraic stacks}  \label{s:gen alg stacks}

\ssec{Algebraic spaces and LM-algebraic stacks}

\sssec{}  \label{sss:alg spaces}

We define the notion of algebraic space as in \cite{Stacks}, Sect. 4.1.1. We shall always impose the condition
that our algebraic spaces be quasi-separated (i.e., the diagonal morphism $\CX\to \CX\times \CX$ is quasi-compact).
\footnote{Note that the diagonal morphism of an algebraic space is always separated. In fact, for any presheaf
of \emph{sets} $\CX$, the diagonal of the diagonal is an isomorphism.}

\medskip

Thus, our definition is equivalent (the DG version) of that of \cite{LM} (this relies on the DG version of Artin's 
theorem about the existence of an \'etale atlas, see Corollary 8.1.1 of \cite{LM}).

\medskip

An algebraic space is an algebraic stack in the sense of the definition of \secref{sss:algebraic stacks}.
Vice versa, an algebraic stack $\CX$ is an algebraic space if and only if the following equivalent conditions hold:

\begin{itemize}

\item The underlying classical stack $^{cl}\CX$ is a sheaf of sets (rather than groupoids).

\item The diagonal map $\CX\to \CX\times \CX$ induces a \emph{monomorphism} at the
level of underlying classical prestacks.

\end{itemize}

\sssec{}

Let us recall that a morphism between prestacks $\pi:\CY_1\to \CY_2$ is called
representable, if its base change by any affine DG scheme yields an algebraic space.

\sssec{LM-algebraic stacks}    \label{sss:LM-algebraic}

We shall now enlarge the class of algebraic stacks as follows. We say that it is \emph{LM-algebraic} if 

\begin{itemize}

\item The diagonal morphism $\CY\to \CY\times \CY$ is representable, 
quasi-separated, and quasi-compact.

\item There exists a DG scheme $Z$ and a map $f:Z\to \CY$ (automatically representable,
by the previous condition) such that $f$ is smooth and surjective.

\end{itemize}


\sssec{The extended QCA condition}

The property of being QCA makes sense for LM-algebraic stacks. We shall call these objects 
QCA LM-algebraic stacks.

\medskip

We can now enlarge the class of QCA morphisms between prestacks accordingly. We shall say that
a morphism is LM-QCA if its base change by an affine DG scheme yields QCA LM-algebraic stack. 

\ssec{Extending the results}

\sssec{}

The basic observation that we make is that a quasi-compact algebraic space is automatically QCA.  In
particular, we obtain that quasi-compact representable morphisms are QCA. 

\medskip

Note also (for the purposes of considering D-modules) that a quasi-compact algebraic space is
safe in the sense of Definition \ref{d:safe}. In particular, a quasi-compact representable morphism
is safe.

\sssec{}

Let us now recall where we used the assumption on algebraic stacks that the diagonal morphism
$$\CY\to \CY\times \CY$$ 
should be schematic. 

\medskip

In all three contexts ($\QCoh$, $\IndCoh$ and $\Dmod$) we needed the following property. Let $S$
be an affine (or, more generally, quasi-separated and quasi-compact) DG scheme equipped with a 
smooth map $g:S\to \CY$. We considered the naturally defined functors
\begin{align*} 
&g^*:\QCoh(\CY)\to \QCoh(S), \quad g^{\IndCoh,*}:\IndCoh(\CY)\to \IndCoh(S) \text{  and  }\\
&g^*_{\on{dR}}:\Dmod(\CY)\to \Dmod(S).
\end{align*}
We needed these functors to admit \emph{continuous} right adjoints
\begin{align*} 
&g_*:\QCoh(S)\to \QCoh(\CY), \quad g^{\IndCoh}_*:\IndCoh(S)\to \IndCoh(\CY) \text{  and  }\\
&g_\dr:\Dmod(S)\to \Dmod(\CY),
\end{align*}
respectively. 

\medskip

Now, this was indeed the case, because the map $g$ is itself schematic, quasi-separated and quasi-compact. 

\sssec{}

Now, we claim that the same is true for LM-algebraic stacks. Indeed, if $\CY$ is 
an LM-algebraic stack and $S$ is a DG scheme, then any morphism $g:S\to \CY$
is representable, quasi-separated and quasi-compact.

\medskip

In particular, if $S$ is an affine (or, more generally, quasi-separated and quasi-compact) DG scheme,
the morphism $g$ is QCA (and safe). 

\medskip

We obtain that Corollary \ref{c:relative} implies the corresponding fact for $g_*$. 

\medskip

Corollary \ref{c:* adj for indcoh}, applied after a base change by all maps $f:Z\to \CY$ where 
$Z\in \on{DGSch}^{\on{aff}}_{\on{aft}}$, implies the required property of $g^{\IndCoh}_*$. 

\medskip

Finally, \thmref{t:rel_Dmods}, again applied after a base change by all maps $f:Z\to \CY$ where 
$Z\in \on{DGSch}^{\on{aff}}_{\on{aft}}$, implies the required property of $g_\dr$. 

\sssec{}

Another ingredient that went into the proofs of the main results was \propref{p:stratification}.
However, it is easy to see that its proof works for LM-algebraic stacks with no modification.

\medskip

The rest of the ingredients in the proofs are without change.

\sssec{}

In application to the category $\QCoh(-)$, we have the following generalization of \thmref{main}:

\begin{thm} \label{t:main, generalized}  \hfill

\smallskip

\noindent{\em(a)} Suppose that an LM-algebraic stack $\CY$ is QCA. Then the functor
$\Gamma:\QCoh(\CY)\to \Vect$ is continuous. Moreover, there exists an integer $n_\CY$
such that $H^i(\Gamma(\CY,\CF))=0$ for all $i>n_\CY$ for $\CF\in \QCoh(\CY)^{\leq 0}$.

\smallskip

\noindent{\em(b)} Let $\pi:\CY_1\to \CY_2$ be a LM-QCA morphism between prestacks.
Then the functor $\pi_*:\QCoh(\CY_1)\to \QCoh(\CY_2)$ is continuous.
\end{thm}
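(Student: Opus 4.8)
The plan is to deduce both statements by re-running the proofs of \thmref{main} and \corref{c:relative} essentially verbatim, replacing "algebraic stack" by "LM-algebraic stack" (in the sense of \secref{sss:LM-algebraic}) and "schematic" by "representable" throughout. The observation that makes this work is the one already recorded in \secref{sss:alg spaces}: a quasi-compact and quasi-separated algebraic space is an algebraic stack in the sense of \secref{sss:algebraic stacks}, hence \thmref{main} and \corref{c:relative} apply to it directly. From this one gets the single fact that all of the original arguments actually use about the schematic-diagonal hypothesis, namely: for an LM-algebraic stack $\CY$ and any morphism $g\colon S\to\CY$ with $S$ an affine (or quasi-separated and quasi-compact) DG scheme, the base change of $\Delta_\CY$ by $S\times S\to\CY\times\CY$ shows that $S\underset{\CY}\times T$ is a quasi-compact quasi-separated algebraic space for every affine DG scheme $T\to\CY$, so $g$ is a QCA morphism (\defnref{d:rel QCA}); consequently $g_*\colon\QCoh(S)\to\QCoh(\CY)$ is continuous, and of bounded cohomological amplitude on $\QCoh(S)^{\le 0}$ when $S$ is quasi-compact, by \corref{c:relative}. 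The same argument shows more generally that any representable, quasi-separated and quasi-compact morphism between LM-algebraic stacks has a continuous $*$-pushforward.

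With this in hand, I would first check that the preparatory results feeding into the proof of \thmref{main} survive. \lemref{properties of t} holds for LM-algebraic stacks with the same proof, since it reduces to the affine case along a faithfully flat atlas (the terms of whose \v{C}ech nerve are quasi-compact quasi-separated algebraic spaces, for which the lemma is already known). \corref{c:coconnective part} likewise goes through: its proof reduces to $\CY_2$ affine, picks an affine atlas $f\colon Z\to\CY_1$, and carries out a \v{C}ech computation whose only input is the continuity of the $*$-pushforwards along the maps $\pi\circ f^i\colon Z^i/\CY_1\to\CY_2$ from the terms of the \v{C}ech nerve — and each $Z^i/\CY_1$ is a quasi-compact quasi-separated algebraic space, so this continuity is supplied by \thmref{main} (equivalently, by the paragraph above). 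Finally, \propref{p:stratification}, together with its engine \lemref{LM} (i.e.\ \cite[Theorem 11.5]{LM}), is proved with no use whatsoever of the schematic-diagonal hypothesis, so it applies to LM-algebraic stacks unchanged.

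Part (a) is then obtained by copying Sects.~\ref{ss:reducing to key}--\ref{ss:end_proof_main}: one reduces (i) to (ii) via \corref{c:coconnective part}; reduces (ii) to \lemref{l:estimate} using left-completeness of the t-structure (\lemref{l:why_estimate_suffices}); passes to the underlying classical reduced stack; stratifies via \propref{p:stratification}; and treats each stratum by the devissage of \lemref{l:devissage} together with the two special cases of a finite \'etale cover and of a quotient $Z/G$ with $G$ affine, the latter resting ultimately on the $t$-exactness of $\Gamma(BG,-)$ for $G$ reductive (\lemref{l:Gamma_of_BG}), which is about $BG$ itself and is untouched. Part (b) is the proof of \corref{c:relative}: by the (evident) LM-analog of \defnref{d:rel QCA} one reduces to the case $\CY_2=S_2$ an affine DG scheme, whereupon $\CY_1$ is a QCA LM-algebraic stack; continuity of $\pi_*$ follows from part (a) together with the facts that $\Gamma(S_2,-)$ is continuous and conservative and $\Gamma(S_2,-)\circ\pi_*\simeq\Gamma(\CY_1,-)$, and the bound on cohomological amplitude transports from the bound in part (a).

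The only genuine work is the bookkeeping in the preceding paragraphs: confirming that the devissage/stratification machinery of Sects.~\ref{ss:reducing to key}--\ref{ss:end_proof_main} never refers to an LM-algebraic stack except through morphisms that become quasi-compact quasi-separated algebraic spaces after base change to an affine DG scheme, so that every continuity statement invoked is already covered by the results proved under the schematic-diagonal hypothesis. As anticipated in the introduction to this section, there is no new mathematical content; the reduction is entirely routine, and I expect no serious obstacle beyond this verification.
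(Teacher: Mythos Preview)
Your proposal is correct and follows essentially the same approach as the paper. The paper's own argument (in \secref{s:gen alg stacks}) is a very terse version of exactly what you wrote: it isolates the one place the schematic-diagonal hypothesis was used (continuity of $g_*$ for $g\colon S\to\CY$ with $S$ affine), observes that quasi-compact algebraic spaces are QCA algebraic stacks in the original sense so that \corref{c:relative} already supplies this continuity for representable quasi-compact morphisms, notes that \propref{p:stratification} needs no modification, and declares the rest of the proof unchanged---you have simply spelled out the verification of ``the rest'' (\lemref{properties of t}, \corref{c:coconnective part}, the devissage) in more detail than the paper does. One tiny slip: to see that $S\underset{\CY}\times T$ is an algebraic space you want to base-change $\Delta_\CY$ along $S\times T\to\CY\times\CY$, not $S\times S$.
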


\sssec{}

In application to $\IndCoh$, we have:

\begin{thm} \label{IndCoh, generalized} 
Suppose that an LM-algebraic stack $\CY$ is QCA. Then 
the category $\IndCoh(\CY)$ is compactly generated, and its subcategory
of compact objects identifies with $\Coh(\CY)$.
\end{thm}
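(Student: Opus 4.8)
The plan is to follow the same two-step strategy that proved \thmref{IndCoh}, checking that every ingredient survives the replacement of ``schematic'' by ``representable'' in the definition of algebraic stack. Recall that \thmref{IndCoh} was deduced from \propref{coh is compact} (for a QCA stack, $\IndCoh(\CY)^c=\Coh(\CY)$) together with \propref{p:coh_generates} (for a QCA stack, $\Coh(\CY)^\heartsuit$ generates $\IndCoh(\CY)$). So the first task is to establish these two propositions for QCA LM-algebraic stacks; the theorem then follows formally, exactly as before: the objects of $\Coh(\CY)^\heartsuit$ are compact and generate, whence $\IndCoh(\CY)$ is compactly generated, and the identification $\IndCoh(\CY)^c=\Coh(\CY)$ comes from part (a) of the generalized \propref{coh is compact}.

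For \propref{coh is compact}: part (a) goes through verbatim once we know that, for an affine DG scheme $S$ with a smooth map $g\colon S\to\CY$, the functor $g^{\IndCoh,*}$ admits a \emph{continuous} right adjoint $g^{\IndCoh}_*$. For an LM-algebraic stack this is available because such a $g$ is representable, quasi-separated and quasi-compact, hence a QCA (indeed safe) morphism --- a quasi-compact algebraic space being automatically QCA --- so \corref{c:* adj for indcoh}, applied after base change by all maps $Z\to\CY$ with $Z\in\on{DGSch}^{\on{aff}}_{\on{aft}}$, supplies this right adjoint, just as indicated in \secref{s:gen alg stacks}. Part (b) is the delicate point: its proof builds $\underline\Hom_{\QCoh(\CY)}(\CF,\CF')$ by gluing the objects $\underline\Hom_{\QCoh(S)}(g^{\IndCoh,*}(\CF),g^{\IndCoh,*}(\CF'))$ over $(S,g)\in\on{DGSch}_{/\CY,\on{smooth}}$, using (i) that $g^{\IndCoh,*}(\CF)$ is compact in $\IndCoh(S)$, which now follows from the continuity of $g^{\IndCoh}_*$ just discussed, and (ii) the transition isomorphism \lemref{l:inner transition map}, whose proof involves only eventually coconnective maps of affine DG schemes and is therefore unchanged. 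One then concludes by invoking \thmref{t:main, generalized}(a), the LM-version of \thmref{main}, in place of \thmref{main}.

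For \propref{p:coh_generates}: its proof reduces, via \propref{p:stratification}, to Lemmas \ref{l:BFN}, \ref{l:covering plus} and \ref{l:devissage IndCoh}. The proof of \propref{p:stratification} rests only on \lemref{LM} and Noetherian induction, and \secref{s:gen alg stacks} already notes that it works for LM-algebraic stacks with no modification; the three lemmas concern quotient stacks $Z/G$, finite \'etale covers, and the devissage triangle, and none of them is sensitive to the diagonal condition (the continuity and base-change properties of $\imath^!$ on $\IndCoh$ quoted there, e.g.\ \cite[Proposition 4.1.7]{IndCoh}, hold for LM-algebraic stacks as well). The main obstacle I expect is precisely the bookkeeping in part (b) of \propref{coh is compact}: one must verify that every step phrased in terms of affine DG schemes smooth over $\CY$ still makes sense, and that the relevant pullback functors and their adjoints behave as before. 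The single fact that makes this work --- and that I would state once and reuse --- is that a smooth map from an affine DG scheme to an LM-algebraic stack is representable, quasi-separated and quasi-compact, hence a QCA and safe morphism, so all the adjunction and base-change statements proved earlier for such situations remain at our disposal.
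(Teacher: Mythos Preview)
Your proposal is correct and follows essentially the same approach as the paper. The paper's \secref{s:gen alg stacks} argues exactly as you do: the only place where the schematic-diagonal assumption entered was in ensuring that smooth maps $g:S\to\CY$ from affine DG schemes admit continuous right adjoints $g_*$, $g^{\IndCoh}_*$, $g_\dr$, and this follows for LM-algebraic stacks because such $g$ is representable, quasi-separated and quasi-compact (hence QCA and safe), so \corref{c:* adj for indcoh} applied after base change by affine DG schemes over $\CY$ supplies $g^{\IndCoh}_*$; \propref{p:stratification} goes through unchanged; and the remaining ingredients require no modification.
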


In particular, the statements of Corollary \ref{c:indcoh on product} and 
Theorem~\ref{t:QCoh dualizable} hold for LM-algebraic stacks as well. 

\sssec{}

In application to D-modules, we have:

\begin{thm} \label{t:Dmods, generalized} \hfill

\smallskip

\noindent{\em(a)}
If  an LM-algebraic stack $\CY$ is QCA then the category $\on{D-mod}(\CY)$ is compactly
generated. An object of $\on{D-mod}_{\on{coh}}(\CY)$ is compact if and only if it is safe. 

\smallskip

\noindent{\em(b)} Let $\pi:\CY_1\to \CY_2$ be a quasi-compact morphism between LM-algebraic stacks.
Then the functor $\pi_{\on{dR},*}$ is continuous if and only if $\pi$ is safe. 

\end{thm}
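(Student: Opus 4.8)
The plan is to bootstrap from the already-proven statements for algebraic stacks and algebraic spaces, following the mechanism sketched in \secref{s:gen alg stacks}. The one structural input that the schematic-diagonal hypothesis supplied was this: for an algebraic stack $\CY$ and a map $g\colon S\to \CY$ with $S$ a quasi-separated quasi-compact DG scheme, the functors $g^*$, $g^{\IndCoh,*}$, $g^*_{\on{dR}}$ admit \emph{continuous} right adjoints. For an LM-algebraic stack $\CY$ the map $g$ is instead representable, quasi-separated and quasi-compact, and base-changing by an arbitrary affine $Z\to \CY$ produces a quasi-compact algebraic space $Z\underset{\CY}\times S$, which is a QCA (and safe) algebraic stack in the sense of \secref{sss:algebraic stacks}. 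Thus $g$ is a QCA morphism, and also a safe morphism. Continuity of $g_*$ then follows from \corref{c:relative}, continuity of $g^{\IndCoh}_*$ from \corref{c:* adj for indcoh} applied after base change by all $Z\to \CY$, and — once the $\IndCoh$ statements below are in place — continuity of $g_\dr$ from \thmref{t:rel_Dmods} applied to the algebraic-space base changes $Z\underset{\CY}\times S\to Z$, which are safe and quasi-compact.

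With these adjunctions in hand one re-runs the proofs of Sects.~\ref{s:main result}-\ref{s:3Dmods}. The only other point at which the schematic-diagonal hypothesis was used is \propref{p:stratification}, whose proof rests on \lemref{LM} and hence on \cite[Theorem 11.5]{LM}, which is stated and proved in the LM setting; so \propref{p:stratification} holds verbatim. Consequently \thmref{t:main, generalized} and \thmref{IndCoh, generalized} follow by the arguments of \thmref{main} and \thmref{IndCoh}. For part (a) of the present theorem: the construction of $\ind_{\Dmod(\CY)}$ in \secref{s:Dmods on stacks} used only that smooth maps $S\to \CY$ from DG schemes are representable quasi-compact, together with the adjunction $(g^*_{\on{dR}},g_\dr)$, so \lemref{l:constr of induction} goes through; since $\oblv_{\Dmod(\CY)}$ is continuous and conservative, $\ind_{\Dmod(\CY)}$ carries $\Coh(\CY)$ (which is $\IndCoh(\CY)^c$ by \thmref{IndCoh, generalized}) into $\Dmod(\CY)^c$, and its essential image generates. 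This is exactly the proof of \thmref{t:Dmods}, giving compact generation. The identification of the compact objects of $\Dmod_{\on{coh}}(\CY)$ with the safe ones then follows from \propref{compactness via safety}, whose proof used only \corref{duality on compact} and \lemref{l:Verdier_stacks}, both available here; more generally \thmref{criter for safety} extends, as its proof again relies only on stratification and on the adjunctions above.

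For part (b) one repeats the proof of \thmref{t:rel_Dmods}. The ``only if'' direction (\secref{sss:easy_rel_Dmods}) is unchanged: if a geometric fiber of $\pi$ carries a point with automorphism group $G$ containing a connected non-unipotent $H$, then $BH\to \CY_1$ is representable and quasi-compact, $f_{\on{dR},*}$ is continuous by \secref{sss:representable}, and composing with the continuous conservative $\xi_{\on{dR},*}$ (after reducing to $\CY_2=S$ affine) contradicts the Example of \secref{ss:BG}. For the ``if'' direction one reduces to $\CY_2=S$ affine, so that $\CY_1$ is a quasi-compact QCA LM-algebraic stack, and runs the Noetherian induction of \secref{sss:qcompact}: the stratification exact triangle together with the LM analog of \lemref{l:varLM} (again a consequence of \cite[Theorem 11.5]{LM}) reduces to a composite $\CZ\to X\to S$ in which $\CZ\to X$ is a unipotent gerbe with trivial reductive part (since $\pi$ is safe) — handled by \lemref{l:unipotent_gerbes} — and $X\to S$ is a map of quasi-compact schemes; the projection-formula assertions follow in the same way. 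The point requiring care is the \emph{order} of the bootstrap: one must prove the $\QCoh$ and $\IndCoh$ statements for LM-algebraic stacks first (these need only \corref{c:relative}, \corref{c:* adj for indcoh}, and \propref{p:stratification}, not \thmref{t:rel_Dmods}), then deduce continuity of $g_\dr$ for $g\colon S\to \CY$ from the algebraic-space case of \thmref{t:rel_Dmods}, and only then run the $\Dmod$ arguments — ensuring that no circularity is introduced.
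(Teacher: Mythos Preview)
Your proposal is correct and follows essentially the same approach as the paper: the paper does not give a separate proof of this theorem but rather explains in \secref{s:gen alg stacks} that quasi-compact algebraic spaces are QCA and safe algebraic stacks in the restrictive sense, so that for a map $g\colon S\to\CY$ with $\CY$ LM-algebraic the base changes to affine schemes land in the already-treated setting, yielding continuous right adjoints via \corref{c:relative}, \corref{c:* adj for indcoh}, and \thmref{t:rel_Dmods}; together with the observation that \propref{p:stratification} goes through unchanged, the remaining arguments carry over verbatim. Your explicit attention to the bootstrap order (establishing the $\QCoh$ and $\IndCoh$ statements before invoking \thmref{t:rel_Dmods} for the $g_\dr$ adjunction) is a point the paper leaves implicit but is indeed the correct logical sequence.
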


Note that in \thmref{criter for safety}(2)-(4) we can replace the words ``schematic" by "representable".

\end{document}